  % !TEX TS-program = pdflatex
\documentclass{amsart}
\usepackage[all]{xy}
\usepackage{latexsym}
\usepackage{amsmath}
\usepackage{amssymb,amscd}
\usepackage{setspace}
\usepackage{mathdots}
\usepackage[mathscr]{eucal}
\usepackage{bbm}
\usepackage{stmaryrd}
\usepackage{color}
\usepackage[colorlinks,linkcolor=blue]{hyperref}
\usepackage{hyperref}
\usepackage{enumitem}

% The \newtheorem command is used to define theorem-like environments
%that normally REQUIRE A PROOF, for example:6

\newtheorem{theorem}{Theorem}[section] % 1st argument is your name for it
\newtheorem{lemma}[theorem]{Lemma}     % 2nd argument is what is printed
 \newtheorem{corollary}[theorem]{Corollary}
\newtheorem{proposition}[theorem]{Proposition}
\newtheorem{remark}[theorem]{Remark}
\newtheorem{definition}[theorem]{Definition}

%To control the numbering sequence of these environments, see
%Lamport's book on LaTeX [2, p. 193].

%The \newnumbered command can be used to define environments or
%independent statements that DO NOT REQUIRE A PROOF. The usual ones are:
     % 1st argument is your name for it

\numberwithin{equation}{section}

  % This is usually unnumbered
% The numbering sequence of these environments can be controlled in the
% same way as for \newtheorem; see Lamport's book on LaTeX, p. 193.

% The default LMS numbering of equations in long papers is (1.1), (1.2), (2.1), etc.
% In short papers, to change the numbering to (1), (2), etc., 'uncomment' the next line.
% \simpleequations
% Otherwise, use the AMS \numberwithin command.

% TOP MATTER

\newcommand{\unSym}{\underline{\Sym}} 
\newcommand{\tw}{\mathrm{tw}}

\newcommand{\fa}{\mathfrak{a}}
\newcommand{\p}{\mathfrak{p}}
\newcommand{\fm}{\mathfrak{m}}

\newcommand{\cF}{\mathcal{F}}
\newcommand{\cO}{\mathcal{O}}

\newcommand{\cM}{\mathcal{M}}

\newcommand{\cS}{\mathcal{S}}
\newcommand{\cI}{\mathcal{I}}
\newcommand{\F}{\mathbb F}
\newcommand{\EE}{\mathrm{E}}

\newcommand{\N}{\mathbb N}
\newcommand{\Q}{\mathbb Q}

\newcommand{\C}{\mathbb C}
\newcommand{\Z}{\mathbb Z}

\newcommand{\A}{\mathbb A}
\newcommand{\bP}{\mathbb{P}}

\newcommand{\rInj}{\mathrm{Inj}}
\newcommand{\rsoc}{\mathrm{soc}}
\newcommand{\rcosoc}{\mathrm{cosoc}}

\newcommand{\et}{\mathrm{\acute{e}t}}

\newcommand{\ra}{\rightarrow}
\newcommand{\lra}{\longrightarrow}

\newcommand{\wh}{\widehat}
\newcommand{\wt}{\widetilde}

\newcommand{\defn}{\overset{\rm{def}}{=}}

\newcommand{\GL}{\mathrm{GL}}
\newcommand{\SL}{\mathrm{SL}}
\newcommand{\JL}{\mathrm{JL}}

\newcommand{\bFp}{\overline{\F}_p}
\newcommand{\bQp}{\overline{\Q}_p}

\newcommand{\Sym}{\mathrm{Sym}}

\providecommand{\cInd}{\mathrm{c}\textrm{-}\mathrm{Ind}}

\newcommand{\rec}{\mathrm{rec}}

\newcommand{\brho}{\overline{\rho}}

\newcommand{\ide}{\mathbf{1}}

\newcommand{\plim}{\varprojlim}
\newcommand{\ilim}{\varinjlim}

\newcommand{\xto}[1][]{\xrightarrow{#1}}
\newcommand{\simto}{%\stackrel{\sim}{\to}}%isomorphic to
\xto[\sim]} %isomorphic to;

\providecommand{\ligne}{\ \textbf{---}\ }

\newcommand{\matr}[4]{\begin{pmatrix}{#1}&{#2}\\ {#3}&{#4}\end{pmatrix}}
\newcommand{\smatr}[4]{\bigl(\begin{smallmatrix} {#1}& {#2}\\ {#3}&{#4}\end{smallmatrix}\bigl)}

\def\onto{\twoheadrightarrow}

\def\OC{{\mathcal{O}}}

\def\AM{{\mathbb{A}}}
\def\TM{{\mathbb{T}}}
\def\Fov{{\overline{F}}}
\def\into{\hookrightarrow}

 \def\JL{\mathrm{JL}}

\def\wt{\widetilde}

\DeclareMathOperator{\Ann}{{\mathrm{Ann}}}
\DeclareMathOperator{\Art}{{\mathrm{Art}}}

\DeclareMathOperator{\Coker}{{\mathrm{Coker}}}

\DeclareMathOperator{\End}{{\mathrm{End}}}
\DeclareMathOperator{\Ext}{{\mathrm{Ext}}}

\DeclareMathOperator{\Frob}{{\mathrm{Frob}}}
\DeclareMathOperator{\Gal}{{\mathrm{Gal}}}
\DeclareMathOperator{\gr}{{\mathrm{gr}}}
\DeclareMathOperator{\Hom}{{\mathrm{Hom}}}

\DeclareMathOperator{\Ind}{{\mathrm{Ind}}}

\DeclareMathOperator{\JH}{{\mathrm{JH}}}

\DeclareMathOperator{\Ker}{{\mathrm{Ker}}}
\DeclareMathOperator{\loc}{{\mathrm{loc}}}

\DeclareMathOperator{\Mod}{\mathrm{Mod}}

\DeclareMathOperator{\Proj}{{\mathrm{Proj}}}

\DeclareMathOperator{\soc}{{\mathrm{soc}}}
\DeclareMathOperator{\Sp}{{\mathrm{Sp}}}
\DeclareMathOperator{\Spec}{{\mathrm{Spec}}}

\DeclareMathOperator{\Tor}{{\mathrm{Tor}}}

\def\a{\alpha}

\def\g{\gamma}
\def\G{\Gamma}
\def\d{\delta}

\def\e{\varepsilon}

\def\l{\lambda}

\def\o{\omega}

\def\s{\sigma}

\def\z{\zeta}

\def\To#1{\buildrel\hbox{\tiny{$#1$}}\over\longrightarrow}

\newcommand{\NEW}{\ide_{G_{\Q_p}}}

\newcommand{\quash}[1]{}

\iffalse
\addtolength{\oddsidemargin}{-0.8cm}
\addtolength{\evensidemargin}{-0.8cm} \addtolength{\textwidth}{1.6cm}
\addtolength{\topmargin}{-0.8cm} \addtolength{\textheight}{1.6cm}

\setlength{\parskip}{2mm}

\makeatletter
\def\thm@space@setup{%
  \thm@preskip=0.3cm plus 0.1 cm minus 0.1 cm
  \thm@postskip=\thm@preskip
}
\makeatother
\fi

\topmargin=0.00in
\oddsidemargin=0in
\evensidemargin=0in
\textwidth=6.46in
\textheight=8.593in
\marginparwidth=15mm
\setlength{\parskip}{2.4mm}
\setlist{itemsep=1mm}

\begin{document}

\title[]
{On some mod $p$ representations of quaternion algebra over $\Q_p$}
%: the non-semisimple case
\author{Yongquan HU \and Haoran WANG }
\date{}

\maketitle

\begin{abstract}
 Let  $F$ be a totally real field in which $p$ is unramified and let $B$ be a quaternion algebra over $F$ which splits at at most one infinite place. Let $\overline{r}:\Gal(\overline{F}/F)\ra\GL_2(\overline{\mathbb{F}}_p)$ be a modular  Galois representation which satisfies the Taylor-Wiles hypotheses. Assume that for some fixed place $v|p$, $B$ ramifies at $v$ and $F_v$ is isomorphic to $ \mathbb{Q}_p$ and $\overline{r}$ is generic at $v$. 
We prove that the admissible smooth representations of the quaternion algebra over $\Q_p$ coming from mod $p$ cohomology of Shimura varieties associated to $B$ have Gelfand-Kirillov dimension $1$. As an application  we prove that the degree two Scholze's functor (which is defined in \cite{Scholze}) vanishes on generic supersingular representations of $\GL_2(\Q_p)$. We also prove some finer structure theorem about the image of Scholze's functor in the reducible case. 
 \end{abstract}

\setcounter{tocdepth}{1}
\tableofcontents

\section{Introduction}
\label{sec:introduction}

Let $p$ be a prime number. The mod $p$ (and $p$-adic) Langlands program has been emerged starting from the fundamental work  of Breuil \cite{Br03}. Up to present, the  correspondence in the case of $\GL_2(\Q_p)$ has been well-understood in various aspects, by  the work of \cite{Br03}, \cite{Co}, \cite{Em3}, and \cite{PaskunasIHES}.
Recently, there have been significant progress towards a  mod $p$ Langlands correspondence for $\GL_2(L)$, when $L$ is a finite unramified extension of $\Q_p$ (\cite{BHHMS1}, \cite{Hu-Wang}, \cite{BHHMS2}).
 However, a  mod $p$ Jacquet-Langlands correspondence is still largely unknown, even in the case of $\GL_2(\Q_p)$. 
   
Inspired by the local-global compatibility results  (\cite{Em3}, \cite{BDJ}), it is natural to search for  the correspondence in the cohomology of Shimura curves. 
 To explain this, let $F$ be a totally real extension of $\Q$ in which $p$ is unramified. Let $B$ be a quaternion algebra over $F$, which we assume to be split at only one infinite place in this introduction (in the text we will also treat the case $B$ is definite). If $U$ is a compact open subgroup of $(B\otimes_F\mathbb{A}_{F,f})^{\times}$, let $X_U$ be the associated smooth projective Shimura curve over $F$.  Let   $\overline{r}:\Gal(\overline{F}/F)\ra\GL_2(\bFp)$ be a continuous absolutely irreducible  representation.  Fix a place $v$ above $p$ and  a compact open subgroup   $U^v\subset (B\otimes_{F}\mathbb{A}_{F,f}^{\{v\}})^{\times}$, where $\mathbb{A}_{F,f}^{\{v\}}$ denotes the ring of finite ad\`eles of $F$ outside $v$.   We define
  \[\pi_v^B(\overline{r}):=\varinjlim_{U_v } \Hom_{\Gal(\overline{F}/F)}(\overline{r},H^1_{\et}(X_{U^vU_v}\times_F\overline{F},\bFp)\] 
  where $U_v$ runs over compact open subgroups of $B_v^{\times} :=(B\otimes_{F}F_v)^{\times}$.  In this way, we obtain an admissible smooth representation of $B_v^{\times}$. We assume that $B$ ramifies at $v$ from now on.
   
Assume that $\pi_v^B(\overline{r})$ is nonzero, i.e.~$\overline{r}$ is modular for $B$ and $U^v$; we also need to impose some extra assumptions on $\overline{r}$, see \S\ref{section-autom-forms} for details.     Then it is known that $\pi^B_v(\overline{r})$ is infinite dimensional (cf.~\cite[Cor.~3.5.4]{BreuilDiamond}, \cite[Thm.~1.4]{Scholze}).
On the other hand, since $B_v^{\times}$ is compact modulo its centre,  irreducible smooth mod $p$ representations of $B_v^{\times}$ (with a fixed central character)  are easy to classify. Actually, such a representation always has dimension $\leq 2$ and  there are only finitely many isomorphism classes. This implies that $\pi_v^B(\overline{r})$ is necessarily of \emph{infinite} length, and is built out by infinitely many pieces of a finite number of isomorphism classes of irreducible representations of $B_v^{\times}$ in a highly non-semisimple way.   A natural way to study such a representation is to look at its socle filtration. More conceptually, there is a standard invariant which measures the growth of the dimension of this socle filtration,  called \emph{Gelfand-Kirillov dimension} (cf. \S\ref{Sec::notation}).  

In this paper, we  study the Gelfand-Kirillov dimension of $\pi_v^B(\overline{r})$ in the case $F_v\cong \Q_p$. We make this assumption and assume $p\geq 5$ from now on; the reason for this restriction will be explained below after more notation is introduced.

Let $\brho:=\overline{r}_v(1).$ We make the following assumption on $\brho.$

\begin{enumerate}[label=(H\arabic*),ref=(H\arabic*)]
\item \label{H1} Assume that $\brho$ has one of the following forms:

\begin{itemize}
\item[$\bullet$] $\brho$ is absolutely irreducible and up to twist $\brho|_{I(\bQp/\Q_p)} \sim  \bigl(\begin{smallmatrix}
\omega_{2}^{r+1} & 0 \\ 0& \omega_2^{p(r+1)}
\end{smallmatrix} \bigr),$ with $2\leq r\leq p-3$, where $\omega_2$ is Serre's fundamental character of niveau 2;

\item[$\bullet$] $\brho$ is reducible nonsplit and up to twist $\brho|_{I(\bQp/\Q_p)} \sim \bigl(\begin{smallmatrix}
 \omega^{r+1} & * \\ 0& 1
\end{smallmatrix} \bigr)$,  with $0\leq r \leq p-3$, where $\omega$ is the mod $p$ cyclotomic character of $\Gal(\bQp/\Q_p)$.
\end{itemize}
\end{enumerate}
Below is our main result.  

\begin{theorem}\label{thm:intro-1}
Keep the above assumptions on $F, B$ and $\overline{r}$. Then $\pi_v^B(\overline{r})$ has Gelfand-Kirillov dimension    $1$. 
\end{theorem}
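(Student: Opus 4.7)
The plan is to reduce the Gelfand-Kirillov dimension computation for $\pi_v^B(\overline{r})$ to a commutative-algebra problem on a patched module, in the spirit of the Taylor-Wiles-Kisin patching method as adapted to the $p$-adic Langlands setting in \cite{CEGGPS}, \cite{Scholze}, and \cite{BHHMS1,BHHMS2}.

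The lower bound $\GKdim(\pi_v^B(\overline{r})) \geq 1$ is essentially formal: since every irreducible smooth $\overline{\mathbb{F}}_p$-representation of $B_v^\times$ with a fixed central character is finite-dimensional (of dimension at most $2$, as recalled in the introduction), any $B_v^\times$-representation of finite length is automatically finite-dimensional. Since $\pi_v^B(\overline{r})$ is known to be infinite-dimensional by \cite{BreuilDiamond, Scholze}, it must have infinite length, and so its Pontryagin dual must have positive dimension over the mod-$p$ Iwasawa algebra $\Lambda = \overline{\mathbb{F}}_p\llbracket K_1/Z_1\rrbracket$, where $K_1$ is the pro-$p$ radical of $\mathcal{O}_D^\times$.

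For the upper bound, I would apply the patching method to the completed cohomology of the tower $X_{U^v U_v}$ at $v$, producing a finitely generated module $M_\infty$ over $R_\infty\llbracket \mathcal{O}_D^\times\rrbracket$ (with fixed central character), where $R_\infty$ is a completed tensor product of a framed local deformation ring $R_v^\square$ for $\brho$, the other (nice) local deformation rings, and an auxiliary power series ring in the Taylor-Wiles variables. By construction, $M_\infty/\mathfrak{a} M_\infty \cong \pi_v^B(\overline{r})^\vee$ for an augmentation ideal $\mathfrak{a}$ generated by a regular sequence, and the action of $R_\infty$ is controlled by $R_v^\square$ at $v$, whose Krull dimension is known explicitly in both Case 1 and Case 2. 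Combining a flatness or Cohen-Macaulayness property of $M_\infty$ over $R_\infty$ with the dimension formula $\GKdim(\pi) = \dim \Lambda - j_\Lambda(\pi^\vee) = 3 - j_\Lambda(\pi^\vee)$, a careful dimension count involving $\dim R_\infty$, $\dim \Lambda$ and the support of $M_\infty$ on the patched ring yields $\GKdim(\pi_v^B(\overline{r})) \leq 1$.

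The main obstacle will be the last step, namely establishing the precise commutative-algebra input: that $M_\infty$ is Cohen-Macaulay (or at least has the expected generic support) over $R_\infty \hat\otimes \Lambda$, together with identifying the correct codimension of $\pi_v^B(\overline{r})^\vee$ over $\Lambda$. The genericity hypotheses on $\brho$ in Cases 1 and 2 are crucial: they keep the $K$-socle of $\pi_v^B(\overline{r})$ under control via the weight part of Serre's conjecture for quaternionic forms, and they ensure that $R_v^\square$ has the expected shape. The reducible nonsplit case (Case 2) will be the more delicate one, since the local deformation ring has additional components, so one most likely has to decompose $M_\infty$ according to the Serre weight packets attached to $\brho$ and analyze each piece separately, making essential use of the precise description of the weights in the generic range $0 \le r \le p-3$.
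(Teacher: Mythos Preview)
Your lower bound argument is fine and matches the paper. The upper bound strategy, however, has a genuine gap: the ``flatness or Cohen-Macaulayness of $M_\infty$ over $R_\infty$'' that you hope to use as input is in fact \emph{equivalent} to the conclusion you want, not a route to it. Concretely, $M_\infty^B$ is known to be projective over $S_\infty[\![\mathcal{O}_D^\times/Z_D^1]\!]$, so $\delta_{S_\infty[\![\mathcal{O}_D^\times]\!]}(M_\infty^B)=\dim S_\infty+3$; the miracle flatness criterion then says that $M_\infty^B$ is flat over $R_\infty^{\psi\varepsilon^{-1}}$ \emph{if and only if} $\dim_{\mathcal{O}_D^\times}(\pi_v^B(\overline{r}))=1$ (this is exactly Proposition~\ref{prop:equiv}(i)$\Leftrightarrow$(ii)). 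So a ``careful dimension count'' cannot break this circle; you need an independent input. In Pa\v{s}k\=unas' reducible case that input is Ludwig's vanishing of $\mathcal{S}^2$ on principal series, which is unavailable for supersingular $\pi$.

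The paper takes a completely different route to the upper bound. It proves a socle-filtration criterion (Corollary~\ref{cor-gkdim-control}): if for each $\chi\in W_B(\overline{r})$ the inclusion $\Hom_{\mathcal{O}_D^\times}(\chi,\pi)\hookrightarrow\Hom_{\mathcal{O}_D^\times}(\overline{W}_{\chi,3},\pi)$ is an isomorphism, then $\dim_{\mathcal{O}_D^\times}(\pi)\le 1$. This is then verified by constructing an explicit glued lattice $\widetilde{\Theta}$ in a direct sum of locally algebraic types for $\mathcal{O}_D^\times$ with $\widetilde{\Theta}/p\widetilde{\Theta}\cong\overline{W}_{\chi,3}$, and showing that $M_\infty^B(\widetilde{\Theta})$ is free of rank $m$ over its scheme-theoretic support (Theorem~\ref{thm--freeness-wtTheta}). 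The freeness comes from gluing $M_\infty^B(\Theta_i)$ for three tame supercuspidal types, and the key input making \emph{those} free is the regularity of the corresponding potentially crystalline deformation rings of Hodge--Tate weights $(0,1)$ and $(0,2)$ (Theorem~\ref{thm-regular-HT02}), proved via Pa\v{s}k\=unas' functor and Morra's multiplicity-one results for $\pi(\brho)[\mathfrak{m}_{K_1}^2]$. None of this is visible from a global dimension count on $M_\infty$; it requires evaluating the patching functor on carefully chosen finite lattices and comparing annihilator ideals.
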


An analogue of Theorem \ref{thm:intro-1} was previously proved by Pa\v{s}k\=unas \cite{Paskunas-JL} when $\brho$ is reducible, using  Scholze's functor (introduced in \cite{Scholze}) and a result of Ludwig (\cite{Ludwig}). Combined with  some argument of \cite{Paskunas-JL}, Theorem \ref{thm:intro-1} implies some vanishing result on Scholze's functor, see Theorem \ref{thm:intro-2} below.

The proof of Theorem \ref{thm:intro-1} follows the method innovated in \cite{BHHMS1} (which treats the case  of $\GL_2$ over an unramified extension of $\Q_{p}$), but has several differences in technique. To explain this,   recall that one key step in \cite{BHHMS1}  is to  compare some  potentially crystalline deformation rings of $\brho$ of different (tame) types, and use it to gain information about the first $3$ steps of the socle filtration of certain $\bFp$-representations of $\GL_2$  with respect to the Iwahori subgroup.  In \emph{loc.~cit.}, the relevant  deformation rings are explicitly worked out by complicated computations, but unfortunately in doing this a  stronger genericity condition on $\brho$ is imposed, for example $12\leq r\leq p-15$ when $\brho$ is reducible. One may wonder, assuming this stronger genericity condition, if (the analogue of) Theorem \ref{thm:intro-1} remains true when $F_v$ is an  unramified extension  of $\Q_p$, namely  if $\pi_v^B(\overline{r})$ has Gelfand-Kirillov dimension equal to $[F_v:\Q_p]$. We believe this should be true and provable using the method of \cite{BHHMS1}. In fact,  we do give a criterion for controlling the Gelfand-Kirillov dimension  in this generality, see Corollary \ref{cor-gkdim-control} (which is an analogue of \cite[Cor.~5.3.5]{BHHMS1}). However, we caution that using only the deformation rings computed in \cite{BHHMS1} may not be enough to prove this statement,   because by the classical Jacquet-Langlands correspondence only those involving \emph{discrete series} inertial types are useful to obtain information about $\pi_v^B(\overline{r})$. Namely, to check the condition of Corollary  \ref{cor-gkdim-control}, one possibly needs to compute extra deformation rings (of discrete series inertial type), even when $F_v=\Q_p$. 
 
For the above reason and also with the  wish to weaken as much as possible the genericity condition in Theorem \ref{thm:intro-1}, we have chosen to restrict to the case $F_v \cong \Q_p$. The point is that  in this case there is an alternative construction   of Kisin's  potentially semistable deformation rings, due to Pa\v{s}k\={u}nas (\cite{Paskunas-BM}). This construction works only for two-dimensional representations of  $\Gal(\bQp/\Q_p)$ and in general does not  allow to determine the explicit form of these rings,  but  it fits perfectly our aim for the following two reasons.  \begin{itemize}
\item Firstly,  to carry out the strategy in \cite{BHHMS1},  we don't really need the explicit form of these deformation rings, but only certain congruence relations between them (cf.~\cite[Prop.~4.3.3]{BHHMS1}). In Pa\v{s}k\={u}nas' construction, these congruence relations can be proved    by  congruence relations between suitably chosen integral lattices inside the corresponding types. 
\item Secondly,  this construction closely relates  the structure of the deformation rings to the structure of $\pi(\brho)$,   the admissible smooth representation of $\GL_2(\Q_p)$ associated to $\brho$ by the mod $p$ local Langlands correspondence (see \S\ref{ss:LLC} for the precise definition). Thus, we may make use of the results of \cite{BL}, \cite{Br03}, \cite{Morra-ss}, \cite{Morra-atom} on $\pi(\brho)$ to study these  deformation rings; see Theorem \ref{thm-regular-HT02}  for such an example.   
\end{itemize}
Besides, in \cite{BHHMS1} they use  potentially crystalline deformation rings of Hodge-Tate weights  $(-1,2)$ (and of $(0,1)$), while we use deformation rings of Hodge-Tate weights  $(0,2)$. This also allows a further (minor) improvement on the genericity condition.

Theorem \ref{thm:intro-1} can be applied to study   Scholze's functors. Let $L$ be a finite extension of $\Q_p$ (not necessarily  unramified). Let $D$ be the central division algebra over $L$ of dimension $n^2$ and invariant $1/n,$ Scholze (\cite{Scholze}) has constructed  a cohomological covariant $\delta$-functor $\{\cS^i, i\geq0\}$ from the category of admissible smooth representations of $\GL_n(L)$ over $\bFp$ to admissible smooth representations of $ D^{\times}$ which carry a continuous and commuting action of $\Gal(\overline{L}/L)$. If $\pi$ is an admissible smooth representation of $\GL_n(L)$ over $\bFp$, then $\cS^i(\pi)$ is defined as the cohomolgy group $H^i_{\et}(\mathbb{P}_{\C_p}^{n-1},\mathcal{F}_{\pi})$, where $\mathcal{F}_{\pi}$ is a certain Weil-equivariant sheaf on the adic space $\mathbb{P}_{\C_p}^{n-1}$.  His construction is expected to realize both $p$-adic local Langlands   and Jacquet-Langlands correspondences. In general, these cohomology groups seem very difficult to compute, but Scholze has computed $\cS^0(\pi)$ and showed that $\cS^i(\pi)$ vanishes whenever $i>2(n-1)$.
Specializing to $n=2$, the case we are interested in, we have $\cS^i(-)=0$ for $i>2$. Later on, Ludwig proved  that $\cS^2(\pi)=0$  if either $\pi$ is principal series or special series of $\GL_2(\Q_p)$,  using the geometry of perfectoid modular curves (\cite{Ludwig}). Since it is easy to compute $\cS^2(\pi)$ if $\pi$ is one-dimensional, this leaves only the case of supersingular representations for $\cS^2.$ 

By Breuil's classification (\cite{Br03}), any supersingular representation of $\GL_2(\Q_p)$ with a central character is up to twist isomorphic to 
\[\big(\cInd_{\GL_2(\Z_p)\Q_p^{\times}}^{\GL_2(\Q_p)}\Sym^{r}\bFp^2\big)/T\]
where $0\leq r\leq p-1$ and $T$ is a certain Hecke operator (\cite{BL}).  As an  application of Theorem \ref{thm:intro-1}, we have the following result. 
\begin{theorem}\label{thm:intro-2}
Let $\pi$ be a supersingular representation of $\GL_2(\Q_p)$ as above and  assume $2\leq r\leq p-3$. Then $\cS^2(\pi)=0$.
\end{theorem}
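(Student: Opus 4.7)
The plan is to globalize $\pi$ into the setup of Theorem~\ref{thm:intro-1} and then read off the vanishing of $\cS^2(\pi)$ from the vanishing of the top-degree \'etale cohomology of the associated quaternionic Shimura curve on the non-Eisenstein part. Concretely, since $2\leq r\leq p-3$, the Galois parameter $\brho$ attached to $\pi$ by the mod $p$ local Langlands correspondence of \cite{BL}, \cite{Br03} is absolutely irreducible and generic, so standard globalization techniques produce a totally real field $F$ with $p$ unramified, a place $v\mid p$ with $F_v\cong\Q_p$, an indefinite quaternion algebra $B/F$ (ramified at $v$ and split at exactly one infinite place), and a modular $\overline{r}:\Gal(\overline{F}/F)\to\GL_2(\bFp)$ satisfying the Taylor--Wiles hypotheses, with $\overline{r}|_{G_{F_v}}(1)\cong\brho$. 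Let $B'$ be the totally definite quaternion algebra obtained from $B$ by switching ramification at $v$ and one chosen infinite place (so $B'$ is split at $v$); by Jacquet--Langlands $\overline{r}$ is modular for $B'$ as well. Form the admissible smooth $\GL_2(\Q_p)$-representation
\[
\Pi\;:=\;\bigl(\varinjlim_{U_v}S(U^vU_v,\bFp)\bigr)_{\mathfrak{m}_{\overline{r}}},
\]
the $\overline{r}$-localisation of mod $p$ automorphic forms on $B'$ of tame level $U^v$.

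Next, Scholze's construction globalizes (cf.~\cite[\S7]{Scholze}) to a natural isomorphism of admissible smooth $B_v^\times$-representations with commuting $G_{\Q_p}$-action
\[
\cS^i(\Pi)\otimes_{\bFp}\overline{r}^{\vee}\;\cong\;H^i_{\et}\bigl(X^B_{U^v}\times_F\overline{F},\bFp\bigr)_{\mathfrak{m}_{\overline{r}}},\qquad i\geq 0,
\]
where $X^B_{U^v}=\varprojlim_{U_v}X^B_{U^vU_v}$. Since each $X^B_{U^vU_v}$ is a smooth projective curve and $\overline{r}$ is absolutely irreducible (so $\mathfrak{m}_{\overline{r}}$ is non-Eisenstein), Poincar\'e duality identifies $H^2_{\et}(X^B_{U^vU_v}\times_F\overline{F},\bFp)(1)$ with the dual of $H^0$, which is Eisenstein; both therefore die after localising at $\mathfrak{m}_{\overline{r}}$, and the vanishing passes to the inverse limit. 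Consequently $\cS^2(\Pi)=0$.

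To descend from $\Pi$ to $\pi$ itself, the plan is to invoke mod $p$ local--global compatibility in the supersingular case: up to suitable normalization, $\Pi\cong\pi\otimes_{\bFp}M$ as $\GL_2(\Q_p)$-representations for a nonzero Hecke multiplicity space $M$, following Emerton \cite{Em3} and its quaternionic adaptation via Pa\v{s}k\=unas' categorical framework \cite{PaskunasIHES}, \cite{Paskunas-BM}. Since $\cS^2$ is additive and commutes with tensoring by the trivial-action space $M$, we obtain $\cS^2(\pi)\otimes_{\bFp} M=\cS^2(\Pi)=0$, and hence $\cS^2(\pi)=0$. Theorem~\ref{thm:intro-1} enters essentially here: it certifies that $\pi^B_v(\overline{r})\cong\cS^1(\Pi)\otimes\overline{r}$ is infinite-dimensional of Gelfand--Kirillov dimension $1$, which on the one hand forces $M\neq 0$ and on the other hand underpins the ``thickening of $\pi$'' shape of $\Pi$ required for the identification $\Pi\cong\pi\otimes M$ to take hold. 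The main obstacle is thus the precise formulation of this local--global compatibility for $\Pi$ in the quaternionic setting---a supersingular-case analogue of what \cite{Paskunas-JL} does in the reducible case, where Theorem~\ref{thm:intro-1} here plays the role occupied there by Ludwig's vanishing \cite{Ludwig}.
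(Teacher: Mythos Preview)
Your argument has a genuine gap at the descent step, and the role you assign to Theorem~\ref{thm:intro-1} is not the one it actually plays.

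First, the displayed isomorphism $\cS^i(\Pi)\otimes\overline{r}^{\vee}\cong H^i_{\et}(X^B_{U^v}\times_F\overline F,\bFp)_{\mathfrak m_{\overline r}}$ is not what Scholze proves: his local--global compatibility (cf.\ Theorem~\ref{thm--K-injectivity}(iii)) is an isomorphism $\cS^i(\wt S(U^v,E/\cO)_{\mathfrak m})\cong \wt H^i(U^v,E/\cO)_{\mathfrak m}$ at the integral/divisible level, with no $\overline r^{\vee}$ factor. One can indeed extract $\cS^2(\Pi)=0$ from this (or more directly from the fact that $\Pi=\wt S(U^v,\F)_{\mathfrak m}$ is injective as a $K$-representation, so Theorem~\ref{thm-Scholze}(iii) applies), so that part survives.

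The real problem is the claim $\Pi\cong\pi\otimes_{\bFp}M$. This is false: by local--global compatibility in the stronger categorical form (Remark~\ref{rem:lg}(ii)), $\Pi^{\vee}$ is isomorphic to $(\mathbb T(U^v)_{\mathfrak m}/\varpi)\,\widehat\otimes_{R_{\brho}^{\psi\e}}N^{\oplus d}$, and the block of $\pi$ has $\Ext^1_{G/Z}(\pi,\pi)\cong\mathfrak m_{R_{\brho}}/\mathfrak m_{R_{\brho}}^2\neq 0$. Hence $\Pi$ is built from genuinely nonsplit self-extensions of $\pi$, not a direct sum. Since $\cS^2$ is only right-exact (not left-exact), vanishing on $\Pi$ does not propagate to the subobject $\pi$; and $\pi$ is not a quotient of $\Pi$ either (e.g.\ $N$ is $\cO$-flat, so $\Hom_{\frak C}(\pi^{\vee},N)=0$, and similarly $\Pi^{\vee}$ has no $\pi^{\vee}$-submodule when $\mathbb T_{\mathfrak m}$ is $R_{\brho}$-flat).

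The paper's proof is genuinely different. It passes through Proposition~\ref{prop:equiv}: using that $R_v^{\psi\e^{-1}}$ is formally smooth (as $\brho$ is irreducible) and $\dim_K(\pi)=1$, miracle flatness makes $M_\infty$ flat over $R_\infty^{\psi\e^{-1}}$. Then the three statements (i) $\dim_{\cO_D^\times}(\pi^B(\overline r))=1$, (ii) $N_\infty$ flat over $R_\infty^{\psi\e^{-1}}$, and (iii) $\cS^2(\pi^{B'}(\overline r))=0$ are proved equivalent via a Koszul-complex argument on the patched modules. Theorem~\ref{thm:intro-1} supplies (i), giving (iii); finally Theorem~\ref{thm:lg} identifies $\pi^{B'}(\overline r)\cong\pi^{\oplus d}$, whence $\cS^2(\pi)=0$. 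So Theorem~\ref{thm:intro-1} enters through the flatness of $N_\infty$ and the Koszul argument, not merely to guarantee $M\neq 0$.
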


Our proof of Theorem \ref{thm:intro-2} is inspired by Pa\v{s}k\=unas' work  \cite{Paskunas-JL}, where he has  used Ludwig's  vanishing result of $\cS^2$ to prove  Theorem \ref{thm:intro-1} in the case $\brho$ is reducible. We observe that his argument can actually go in reverse direction, namely the vanishing of $\cS^2$ on supersingular $\pi$ can be deduced from the  Gelfand-Kirillov dimension of $\cS^1(\pi)$ (see Proposition \ref{prop:equiv}).  Thus, Theorem \ref{thm:intro-2} follows from  Theorem \ref{thm:intro-1} and a local-global compatibility result \`a la Emerton (\cite{Em3}, \cite{Dosp-Lebras}).

Another reason for focusing on the case of $\GL_2(\Q_p)$ is that we can prove some finer results on the structure of $\cS^1(\pi(\brho))$. We put 
\[\JL(\brho)=\left\{\begin{array}{lll} \Hom_{G_{\Q_p}}\big(\chi\omega^{-1},\cS^1(\pi(\brho))\big) &\mathrm{if}\ \brho\sim \smatr{\chi}*0{\chi\omega} \medskip\\
\Hom_{G_{\Q_p}}\big(\brho\otimes\omega^{-1},\cS^1(\pi(\brho))\big) &\mathrm{otherwise}.
\end{array}\right.\]

\begin{theorem}\label{thm:intro-3}
Let $\brho$ be as in \ref{H1}.

(i) Assume $\brho\nsim \smatr{\chi}*0{\chi\omega}$ for any character $\chi$. Then $\cS^1(\pi(\brho))\cong (\brho\otimes\omega^{-1})\otimes \JL(\brho)$ as representations of $\Gal(\bQp/\Q_p)\times B_v^{\times}$.

(ii) Assume $\brho$ is reducible.  Denote by $\brho^{\rm ss}$ the semisimplification of $\brho$.

\begin{enumerate}
\item[(a)] Assume $\brho^{\rm ss}\nsim \chi\oplus \chi\omega$ for any $\chi$. Then   $\JL(\brho)$   depends only on $\brho^{\rm ss}$.

\item[(b)]  Let $\brho_1\sim \smatr{\omega}*01$ and $\brho_2\sim\smatr{1}*0{\omega}$  be nonsplit extensions.   Then there exists an admissible $\bFp$-representation $V$ of $B_v^{\times}$ such that 
\[0\ra \ide_{D^{\times}}\ra \JL(\brho_1)\ra V\ra0\]
\[0\ra V\ra \JL(\brho_2)\ra (\ide_{D^{\times}})^{\oplus 2}\ra 0.\]
\end{enumerate}
\end{theorem}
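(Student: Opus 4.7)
The plan is to deduce all three parts from Theorem \ref{thm:intro-1} together with Pa\v{s}k\={u}nas's structure theory of $\pi(\brho)$, the vanishing computations of Scholze (\cite{Scholze}) and Ludwig (\cite{Ludwig}) for $\cS^i$, and a global realization of $\cS^1(\pi(\brho))$. The first step is to globalize $\brho$ to a modular $\overline{r}$ as in Theorem \ref{thm:intro-1} and relate the local Scholze functor $\cS^1(\pi(\brho))$ to the global $B_v^\times$-representation $\pi_v^B(\overline{r})$ via an Emerton-style local-global compatibility for $\cS^1$ (\cite{Scholze}, \cite{Dosp-Lebras}). This transports the Galois action on $\pi_v^B(\overline{r})$, which is just multiplication by $\overline{r}|_{G_{F_v}}=\brho$, to a Galois action on $\cS^1(\pi(\brho))$ that is isotypic of type $\brho\otimes\omega^{-1}$; the twist by $\omega^{-1}$ comes from the Tate twist implicit in the definition of $\cS^i$.

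For part (i), the hypothesis $\brho\nsim\smatr{\chi}*0{\chi\omega}$ ensures $\End_{G_{\Q_p}}(\brho\otimes\omega^{-1})=\F$ and that $\brho\otimes\omega^{-1}$ does not share a Jordan--H\"older constituent with any Tate twist of itself appearing in $\cS^1(\pi(\brho))$. Combined with the isotypicity statement above, the natural evaluation map
\[(\brho\otimes\omega^{-1})\otimes_{\F}\JL(\brho)\longrightarrow \cS^1(\pi(\brho))\]
is injective by the very definition of $\JL(\brho)$ in this case, and surjective because every element of $\cS^1(\pi(\brho))$ generates a Galois-stable subquotient isomorphic to $\brho\otimes\omega^{-1}$.

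For part (ii)(a), in the non-exceptional reducible block the representation $\pi(\brho)$ is controlled by $\brho^{\mathrm{ss}}$ up to extensions whose Galois signatures are invisible to $\Hom_{G_{\Q_p}}(\brho\otimes\omega^{-1},-)$. Applying $\cS^{\bullet}$ to the natural filtration of $\pi(\brho)$ by principal series constituents, and using vanishing of $\cS^2$ on the JH factors (\cite{Ludwig} for principal/special series, Theorem \ref{thm:intro-2} for supersingulars), one checks that $\JL(\brho)$ coincides with $\JL(\brho^{\mathrm{ss}})$. For part (ii)(b), the block of $1\oplus\omega$ is the exceptional one: $\pi(\brho_1)$ and $\pi(\brho_2)$ admit explicit filtrations whose graded pieces are principal series and (twists of) Steinberg, related by the symmetry of the two nonsplit extensions. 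Scholze's direct computation gives $\cS^1(\St)\cong\ide_{D^\times}$ (up to twist) while Ludwig's vanishing kills higher contributions from principal series. Chasing the long exact sequences associated with these filtrations, and noting that the principal series constituent is common to $\pi(\brho_1)$ and $\pi(\brho_2)$, produces the two claimed short exact sequences with the same middle term $V$; the asymmetry in the multiplicities of $\ide_{D^\times}$ reflects that $\St$ appears as a subobject in one filtration and as a quotient in the other.

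The hard part is the first step: establishing that the Galois action on $\cS^1(\pi(\brho))$ is isotypic of type $\brho\otimes\omega^{-1}$. Purely locally there is no obvious reason for such a tensor decomposition, and the argument must go through the global realization, using Theorem \ref{thm:intro-1} to bound the multiplicity space on the $B_v^\times$-side and the genericity of $\brho$ to pin down the Galois side. The exceptional block in (ii)(b) is especially delicate, since there $\brho\otimes\omega^{-1}$ has nontrivial self-extensions, so the isotypicity argument must be carried out constituent-by-constituent along the filtration, which is why part (i) explicitly excludes this case.
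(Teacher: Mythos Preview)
Your outline for (i) and (ii)(a) is in the right spirit and close to the paper's approach, though you gloss over one key point: why the inclusion $\cS^1(\pi(\brho)^{\oplus d})\hookrightarrow \wt{H}^1(U^v,\F)[\fm_{\overline{r}}]$ from \cite{Scholze} is an \emph{equality}. The cokernel is only known a priori to be finite-dimensional. The paper closes this by showing $\wt{H}^1(U^v,\F)[\fm_{\overline{r}}]^\vee$ is Cohen--Macaulay over $\F[\![U_D^1]\!]$, which forces $(\wt{H}^1[\fm])_{\rm fd}=0$; this Cohen--Macaulayness comes from Theorem \ref{thm:intro-1} plus formal smoothness of $R_v^{\psi\e^{-1}}$ via a miracle-flatness argument. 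Once equality holds, Scholze's $\overline{r}$-typicity of $\wt{H}^1[\fm]$ transfers to $\cS^1(\pi(\brho))$ by the elementary fact that a direct summand of a $\sigma$-typic module is $\sigma$-typic.

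There is a genuine error in (ii)(b): the claim that $\cS^1(\Sp)\cong\ide_{D^\times}$ is false. In fact $\cS^1(\Sp)$ is infinite-dimensional; the paper shows it fits into
\[0\ra (\ide_{G_{\Q_p}}\otimes\ide_{D^{\times}})^{\oplus 2}\ra \cS^1(\Sp)\ra (\ide_{G_{\Q_p}}\otimes V)\oplus (\omega^{-1}\otimes\ide_{D^{\times}})\ra 0,\]
with the same infinite-dimensional $V$ as in the statement. So the asymmetry in the multiplicities of $\ide_{D^\times}$ cannot be explained by where $\Sp$ sits in a filtration. Its actual source is that $R_{\brho_2}^{\psi\e}$ is \emph{not} formally smooth, so the Cohen--Macaulay argument above fails for $\brho_2$ and the Scholze inclusion has a nonzero cokernel. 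The paper refines Scholze's argument to identify this cokernel with $\check{\cS}^0(\Tor_1^{\mathbb{T}}(\F,P))$, and computes the latter to be $(\ide_{G_{\Q_p}}\otimes\ide_{D^\times})^{\oplus 2}$ using the known structure of $\Tor_1^{R_{\brho_2}^{\psi\e}}(\F,N)\cong(\ide_G^\vee)^{\oplus 2}$; this is what produces the $(\ide_{D^\times})^{\oplus 2}$ quotient of $\JL(\brho_2)$. The single $\ide_{D^\times}$ in $\JL(\brho_1)$ arises differently, from the $\ide_{\cO_D^\times}$-isotypic piece of the $\cO_D^\times$-socle, and identifying the two quotients $V_1\cong V_2$ requires a further argument comparing $\cS^1(\kappa)$ computed from both sides.
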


It may look  surprising that the representation $\JL(\brho)$  does not determine $\brho$, but only $\brho^{\rm ss}$, in case (a) of Theorem \ref{thm:intro-3}(ii); see Remark  \ref{rem:test} for an explanation. It would be  interesting to describe the precise structure of $\JL(\brho)$. We plan to come back to this question in future work. 

\medskip

We now give a brief overview of the contents of each section. In \S\ref{sec:Koh}, we study the structure of the $p$-adic group $B_v^{\times}$ and prove a criterion for controlling the Gelfand-Kirillov dimension of its representations (analogous to \cite[\S5]{BHHMS1}). In \S\ref{section-lattices-GL2} we study the structure of integral lattices in various locally algebraic types of $\GL_2(\Z_p)$. In \S\ref{sec:deform}, we use Pa\v{s}k\={u}nas' technique to study potentially crystalline  deformation rings of tame type and Hodge-Tate weights $(0,2)$. In \S\ref{section-autom-forms} and \S\ref{sec:GK}, we carry out the gluing process for $B_v^{\times}$-representations and prove our main result, Theorem \ref{thm:intro-1}. Finally, we  study Scholze's functors, and prove Theorem \ref{thm:intro-2} in \S\ref{section:application} and Theorem \ref{thm:intro-3} in \S\ref{sec:JL}.

\subsection{Notation}
\label{Sec::notation}

 \hfill
 
We fix a prime number $p\geq 5.$ Let $E\subset \bQp$ be a finite unramified extension of $\Q_p$, with ring of integers $\cO$ and residue field $\F .$ We will assume without further comment that $\F$ is sufficiently large.

If $F$ is a field, let $G_F := \Gal(\Fov/F)$ denote its absolute Galois group. Let $\e$ denote the $p$-adic cyclotomic character of $G_F$, and $\omega$ the mod $p$ cyclotomic character.

If $F$ is a $p$-adic field, $V$ is a de Rham $p$-adic representation of $G_F$ over $E,$ and $\kappa: F\into E,$ then we will write ${\rm HT}_{\kappa} (V)$ for the multiset of Hodge-Tate weights of $V$ with respect to $\kappa.$ By definition, ${\rm HT}_{\kappa} (V)$ consists of $- i$ with multiplicity $\dim_E(V \otimes_{\kappa, F} \wh{\Fov}(i))^{G_F}$, e.g. ${\rm HT}_{\kappa} (\e) = \{1\}$ at all embedding $\kappa.$

If $G$ is a $p$-adic analytic group, we denote by $\Mod_G^{\rm sm}(\cO)$ the category of smooth representations of $G$ on $\cO$-torsion modules. Let $\Mod_G^{\rm l.adm}(\cO)$ (resp. $\Mod_G^{\rm adm}(\cO)$) denote the full subcategory of locally admissible (resp. admissible) representations. If $\z: Z_G \to \cO^{\times}$ is a continuous character of the centre of $G,$ then we denote by $\Mod_{G,\z}^{\rm sm}(\cO)$ (resp. $\Mod_{G,\z}^{\rm l.adm}(\cO)$, resp. $\Mod_{G,\z}^{\rm adm}(\cO)$) the full subcategory of $\Mod_G^{\rm sm}(\cO)$ consisting of smooth (resp. locally admissible, resp. admissible) representations on which $Z_G$ acts by the character $\z.$

The Pontryagin duality $M \mapsto M^{\vee}: = \Hom_{\cO}^{\rm cont}(M , E/\cO)$ induces an anti-equivalence between the category of discrete $\cO$-modules and the category of pseudocompact $\cO$-modules. Under this duality  the category $\Mod_G^{\rm sm}(\cO)$ is anti-equivalent to the category of profinite augmented $G$-representations over $\cO$ which is denoted by $\Mod_G^{\rm pro}(\cO).$ Let $\frak{C}_{G}(\cO)$ (resp. $\frak{C}_{G,\z}(\cO)$) denote the full subcategory of $\Mod_G^{\rm pro}(\cO)$ which is anti-equivalent to $\Mod_{G}^{\rm l.adm}(\cO)$ (resp. $\Mod_{G,\z}^{\rm l.adm}(\cO)$) under the Pontryagin duality. Note that on an object in $\frak{C}_{G,\z}(\cO)$ the centre is acting by $\zeta^{-1}.$

Let $ (R,\fm)$ be a complete noetherian local commutative $\cO$-algebra with residue field $\F.$ We define the category $\Mod_{G}^{\rm sm}(R)$ of \emph{smooth} $R[G]$-modules, and the category $\Mod_{G}^{\rm l.adm}(R)$ of \emph{locally admissible} smooth $R[G]$-modules as in \cite[\S2]{PaskunasIHES}. Let $\frak{C}_{G}(R)$ be the dual category of $\Mod_{G}^{\rm l.adm}(R)$ under the Pontryagin duality. If $\z: Z_G \to \cO^{\times}$ is a continuous character of the centre of $G,$ we can similarly define $\Mod_{G,\z}^{\rm l.adm}(R)$ and its dual category $\frak{C}_{G,\z}(R).$

If $M$ is a torsion-free linear-topological $\cO$-module,  $M^{d}$ denotes its Schikhof dual $\Hom^{\rm cont}_{\cO} (M, \cO).$ The functor $M \mapsto M^d$ induces an anti-equivalence of categories between the category of pseudo-compact torsion-free linear-topological $\cO$-modules and the category of $\varpi$-adically complete and separated  torsion-free  $\cO$-modules.

If $R$ is a ring and $M$ is a left $R$-module, we denote by $\soc_R(M)$ (resp. $\mathrm{cosoc}_R(M)$) the socle (resp. cosocle) of $M.$ Inductively we define the socle (resp. cosocle)  filtration   of $M$.  If $M$ has finite length, we denote by $\JH(M)$  the set of Jordan-H\"older factors of $M$.

The \emph{grade} $j_{R}(M)$ of $M$ over $R$ is defined by
\[
j_{R}(M)=\mathrm{inf}\{i \in \N ~|~ \Ext^i_{R}(M,R)\neq0\}.
\]
Assume $R$ is noetherian. The ring $R$ is called \emph{Auslander-Gorenstein} if it has finite left and right injective dimension and the following Auslander condition holds: for any $R$-module $M$, any integer $m\geq0$ and any $R$-submodule $N$ of $\Ext^m_R(M,R)$, we have $j_{R}(N)\geq m$. An Auslander-Gorenstein ring is  called \emph{Auslander regular} if it has finite global dimension. If $R$ is an Auslander regular ring and $M$ is a finitely generated $R$-module, define the {\em dimension} 
\begin{equation*}
\d_R(M) : = {\rm gld}(R) - j_R(M),
\end{equation*}
 where ${\rm gld}(R)$ is the global dimension of $R.$

Let $G_0$ be a compact $p$-adic analytic group. The ring-theoretic properties of $ \cO [\![G_0]\!]$ are established by the fundamental works of Lazard \cite{Lazard} and Venjakob \cite{Ven}. In particular, if $G_0$ has no element of order $p$,  then $  \cO [\![G_0]\!]$ is an 
Auslander regular ring of dimension $1+\dim_{\Q_p} G_0$, where $\dim_{\Q_p} G_0$ is the dimension of $G_0$ as a $p$-adic analytic group. If $M$ is nonzero, we have
\[
0 \leq j_{  \cO [\![G_0]\!]} (M) \leq 1+\dim_{\Q_p} G_0,
\]
and $\d_{ \cO [\![G_0]\!]}( M) = 1+\dim_{\Q_p} G_0 - j_{ \cO [\![G_0]\!]}(M).$
If $G$ is a $p$-adic analytic group with a fixed open compact subgroup $G_0 \subseteq G$ and $M $ is a finitely generated $ \cO [\![G_0]\!]$-module equipped with a compatible $G$-action, we define $j_{G}(M)$ (resp. $\d_G(M)$) as $j_{\cO[\![G_0]\!]}(M)$ (resp. $\delta_{\cO[\![G_0]\!]}(G)$); this does not 
 depend on the choice of $G_0.$

If $\pi$ is an admissible smooth representation of $G$ over $\F,$ then $\pi^{\vee}$ is finitely generated over $\cO [\![G_0]\!].$ The {\em Gelfand-Kirillov} dimension  of $\pi$ is defined by (see \cite[Rem. 5.1.1]{BHHMS1})
\begin{equation*}
\dim_G(\pi) := \d_G(\pi^{\vee}) .
\end{equation*}

\subsection{Acknowledgements}
We thank  Yiwen Ding, Andrea Dotto and Vytautas Pa\v{s}k\=unas for several interesting  discussions during the preparation of the paper, and we thank Dingxin Zhang and Weizhe Zheng for answering our questions. We would like to thank Christophe Breuil and Florian Herzig for their comments on an earlier version. We are  grateful to Judith Ludwig for her help with the proof of Theorem \ref{thm-Scholze}(iii). We are also grateful to the anonymous referee for several helpful corrections and suggestions.  Y. H. has presented this work at Beijing International Center for Mathematical Research in November 2021, and he thanks Yiwen Ding for the invitation. 
It will be obvious to the reader that this work is largely inspired by the previous work  \cite{BHHMS1} and \cite{Paskunas-JL}.

H. W. is partially supported by National Key R\&D Program of China 2023YFA1009702 and National Natural Science Foundation of China Grants 12371011, 11971028. Y. H. is partially supported by  National Key R$\&$D Program of China 2020YFA0712600, CAS Project for Young Scientists in Basic Research, Grant No. YSBR-033; National Natural Science Foundation of China Grants 12288201 and 11971028; National Center for Mathematics and Interdisciplinary Sciences and Hua Loo-Keng Key Laboratory of Mathematics, Chinese Academy of Sciences.

\section{The $p$-adic Lie group $D^{\times}$}
\label{sec:Koh}

\subsection{Results of Kohlhaase}
We recall and extend some results of \cite{Ko}.

Let $L = \Q_{p^f}$ be the unramified extension of degree $f$ over $\Q_p.$ Let $D$ be the unique central division algebra of dimension $4$ over $L.$ For $a\in D$, define  $v_D(a) := v_p ({\rm Nrd}_D(a)),$ where $v_p$ is the $p$-adic valuation on $L$ normalized so that $v_p(p) = 1,$ and ${\rm Nrd}_D : D \to L$ is the reduced norm map; this gives   a non-archimedean valuation on $D$. Let $\OC_D : =\{a\in D~|~ v_D(a) \geq 0\}$ be the ring of integers  and $\frak{p}_D: = \{a\in D~|~ v_D(a) \geq 1\}$ the maximal ideal, which can be generated by a uniformizer $\varpi_D.$ The residue field $k_D : = \OC_D/\frak{p}_D$ is isomorphic to $\F_{q^2},$ where $q:= p^f.$ Let $L' $ be the unramified quadratic extension of $L$ in $\overline{\Q}_p.$ We denote by $\s : L' \to L'$ a lift of the Frobenius map $x \mapsto x^q$ on $\F_{q^2}.$ Let $L'\langle X \rangle$ denote the non-commutative polynomial ring in one variable over $L'$ satisfying the relation $X a   = \s(a) X,~ \forall a \in L'.$ Then the homomorphism $L'\langle X \rangle \to D,$ $X\mapsto \varpi_D$ induces an isomorphism of $L$-algebras 
\begin{equation}\label{eq:division-D}
L'\langle X \rangle / ( X^2 -  p) \cong D.
\end{equation}

Let $D^{\times}$ (resp. $\cO_D^{\times}$) denote the group of invertible elements of $D$ (resp. $\cO_D$) and 
\begin{equation}\label{eqn-U^i_D}
U^n_D: = 1+\varpi_D^n \OC_D,~n\geq 1
\end{equation} 
which are compact open normal (pro-$p$) subgroups of $D^{\times}.$  
We have \[D^{\times} = \OC^{\times}_D \rtimes \varpi_D^{\Z},\ \ \OC_D^{\times} / U^1_D \cong \F_{q^2}^{\times}. \]    Let $Z_D$ denote the centre of $D^{\times}$ which is isomorphic to $L^{\times}.$ Then   $Z_D\OC_D^{\times}$ is of index $2$ in $D^{\times}.$ Let
 $Z^1_D = Z_D \cap U^1_D.$

Assume $p \geq 5.$ Let  $\o : U^1_D \backslash \{1\} \to (0, \infty)$ be the map defined by $\o(g) : = \frac{1}{2} v_D (g - 1)$, and set $\o(1):=\infty$. As in \cite[Example 23.2]{Schneider}, one shows that $\o$ is a {\em $p$-valuation} on $U^1_D$ in the sense of Lazard \cite[III.2.1.2]{Lazard}. For any real number $\nu > 0,$ let
\[
(U^1_D)_{\nu} : = \big\{ g\in U^1_D~|~ \o (g)\geq \nu \big\},\ \   (U^1_D)_{\nu+} : = \big\{ g\in U^1_D~|~ \o (g)> \nu \big\}.
\]
We set 
\[
\gr U^1_D: = \bigoplus_{\nu>0} (U^1_D)_{\nu} / (U^1_D)_{\nu+}   .
\]
It is easy to see that $U_D^i=(U^1_D)_{\frac{i}{2}}$ and $U^{i+1}_D  = (U^1_D)_{\frac{i}{2} +},
$ so
we have
\[
\gr U^1_D = \bigoplus_{i\geq 1} U^i_D / U^{i+1}_D.
\]
We say a nonzero homogeneous element $t\in \gr U^1_D $ is of degree $i$ if $t \in U^i_D/ U^{i+1}_D.$

As explained in \cite[\S 25]{Schneider}, $\gr U^1_D$ is a graded Lie algebra over the polynomial ring $\F_p[\e]$ by setting
\[
[g U^{i+1}_D , g' U^{j+1}_D] : = g g' g^{-1} g'^{-1} U^{i+j+1}_D,~~ g \in U^{i}_D,~g' \in U^{j}_D,
\]
and
\[
\e (g U^{i+1}_D) : = g^p U^{i+3}_D,~ g \in U^i_D.
\]
Note that $U^i_D/ U^{i+1}_D \cong (\F_{q^2}, + )$ is an $\F_q$-vector space by setting 
\[
\lambda\cdot (1 + \varpi_D^i a) U^{i+1}_D := (1 + \varpi_D^i [\l]a) U^{i+1}_D,
\] 
where $[\l] \in \cO_L$ is the Teichm\"uller lift of $\l \in \F_q.$  One checks that the Lie bracket on $\gr U^1_D$ is $\F_q$-bilinear,  hence $\gr U^1_D$ becomes a graded Lie algebra over the polynomial ring $\F_q[\e].$

\begin{proposition}
The natural map $\F_q[\e] \otimes_{\F_q} (U^1_D/U^2_D \oplus U^2_D / U^3_D) \to \gr U^1_D$ is an isomorphism of $\F_q[\e]$-modules.
\end{proposition}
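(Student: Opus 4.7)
The plan is to show directly that for each $i\geq 1$, the map $\e: U^i_D/U^{i+1}_D \to U^{i+2}_D/U^{i+3}_D$ induced by $g\mapsto g^p$ is an $\F_q$-linear isomorphism; this immediately yields the claim by splitting $\gr U^1_D$ according to the parity of the degree.

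First I would fix the identification $U^i_D/U^{i+1}_D \simto \F_{q^2}$, $1+\varpi_D^i a \mapsto \overline{a}$. By construction of the $\F_q$-structure via Teichm\"uller lifts, this is an isomorphism of $\F_q$-vector spaces (and even of $\F_{q^2}$-vector spaces). Next I would record the key numerical facts: $v_D(\varpi_D)=1$, $v_D(p)=2$ (since $\mathrm{Nrd}_D(\varpi_D^2)=\mathrm{Nrd}_D(p)=p^2$), and $v_p\binom{p}{k}=1$ for $1\leq k\leq p-1$, so $v_D\binom{p}{k}=2$ for such $k$.

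The core computation: for $g = 1+\varpi_D^i a \in U^i_D$, apply the binomial theorem (which holds in any unital ring in a single indeterminate) to get
\[
g^p = 1 + p\varpi_D^i a + \sum_{k=2}^{p-1}\binom{p}{k}(\varpi_D^i a)^k + (\varpi_D^i a)^p.
\]
The middle terms have $v_D \geq 2+ki \geq 2+2i \geq i+3$, and the last term has $v_D \geq pi \geq i+3$ because $(p-1)i \geq 4 \geq 3$ under our hypothesis $p\geq 5$ and $i\geq 1$. Since $p=\varpi_D^2$ is central, $p\varpi_D^i a = \varpi_D^{i+2}a$, hence
\[
g^p \equiv 1+\varpi_D^{i+2}a \pmod{U^{i+3}_D}.
\]
Under the identifications of the two graded pieces with $\F_{q^2}$, this shows that $\e$ is the identity map on $\F_{q^2}$, and in particular an $\F_q$-linear isomorphism $U^i_D/U^{i+1}_D \simto U^{i+2}_D/U^{i+3}_D$.

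To finish, I would decompose the target: the map in question restricts to
\[
\F_q[\e]\otimes_{\F_q}(U^1_D/U^2_D) \lra \bigoplus_{k\geq 0} U^{1+2k}_D/U^{2+2k}_D,\qquad \F_q[\e]\otimes_{\F_q}(U^2_D/U^3_D) \lra \bigoplus_{k\geq 0} U^{2+2k}_D/U^{3+2k}_D,
\]
and each is an isomorphism by iterating the previous paragraph (on generators $v \in U^i_D/U^{i+1}_D$, the image $\e^k\otimes v$ is identified with $\e^k(v) \in U^{i+2k}_D/U^{i+2k+1}_D$). The only genuine input is the valuation bookkeeping above; the argument is robust as long as $p\geq 5$ and does not interact with the noncommutativity of $D$ beyond the fact that $p$ is central.
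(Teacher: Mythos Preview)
Your proof is correct and is essentially the argument the paper has in mind: the paper's own proof simply cites \cite[Lem.~3.12]{Ko} and remarks that the computation there (done for $L=\Q_p$) extends verbatim to general $L$, and what you have written is exactly that computation. The key point---that $g^p\equiv 1+\varpi_D^{i+2}a\pmod{U_D^{i+3}}$ for $g=1+\varpi_D^ia$, so that $\e$ becomes the identity under the identifications $U_D^i/U_D^{i+1}\cong\F_{q^2}$---is precisely the content of Kohlhaase's lemma, and your valuation bookkeeping (using $p\geq 5$) is correct.
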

\begin{proof}
The proof of \cite[Lem.~3.12]{Ko} (when $L=\Q_p$) extends to the general case.
\end{proof}

 Let $\overline{\gr U^1_D} : = \gr U^1_D \otimes_{\F_q[\e]} \F_q$ where the map $\F_q[\e] \to \F_q$ sends $\e$ to $0$. We first determine the Lie algebra structure of $\overline{\gr U^1_D}$.   Fix $\xi \in \F_{q^2} \setminus \F_q$ and set \[\g_{1} : = 1+\varpi_D,\ \g_{2} : = 1+ \varpi_D [\xi],\ \g_{3} := \g_{1} \g_{2} \g_{1}^{-1} \g_{2}^{-1}, \    \g_{4} : = 1+p,\] where $[\xi] \in \cO_{L'}$ is the Teichm\"uller lift of $\xi.$  We have $\o(\g_{1}) = \o(\g_{2}) = 1/2$ and $\o(\g_{3} ) = \o(\g_{4}) = 1.$\footnote{One checks that $\g_{3} \equiv 1 + p([\xi] - [\xi]^q) \pmod{U^3_D}.$} Let $\overline{\g}_{1},\overline{\g}_{2}\in U^1_D/ U^2_D$ be the images of $\g_1$ and $\g_2$ and let $\overline{\g}_{3},\overline{\g}_{4}\in U^2_D/ U^3_D$ be the images of $\g_3$ and $\g_4.$ Then $\overline{\g}_{1},$ $\overline{\g}_{2},$ $\overline{\g}_{3}$, $\overline{\g}_{4}$ form an $\F_q$-basis of $U_D^1/U_D^2\oplus U_D^2/U_D^3$, hence also an $\F_q$-basis of  $\overline{\gr U^1_D}$. They  satisfy (in $\overline{\gr U^1_D}$, i.e.  after modulo $\e$)
\begin{equation}\label{eq:relation}
[\overline{\g}_{1},\overline{\g}_{2}] = \overline{\g}_{3},~~[\overline{\g}_{1},\overline{\g}_{3}]=[\overline{\g}_{2}, \overline{\g}_{3}] = [\overline{\g}_{4},\overline{\g}_{1}] = [\overline{\g}_{4},\overline{\g}_{2}] = [\overline{\g}_{4},\overline{\g}_{3}] = 0,
\end{equation}
see the discussion after \cite[Rem.~3.15]{Ko}.

Passing to the quotient group $U^1_D /Z^1_D,$ we can consider $\overline{\gr U^1_D/Z^1_D} : = \gr U^1_D/Z^1_D \otimes_{\F_q[\e]} \F_q,$ with the induced filtration on $ U^1_D/Z^1_D.$ Then $\overline{\gr U^1_D/Z^1_D} $ is isomorphic to $ \overline{\gr U^1_D}/ (\overline{\g}_{4} )$ as graded Lie algebras over $\F_q$, where  $ (\overline{\g}_{4}) : = \F_q \overline{\g}_4$ is the sub-Lie algebra of $\overline{\gr U^1_D}$ generated by $\overline{\g}_{4}.$

Let $\frak{g}_{\F_p} = \F_p e \oplus \F_p f \oplus \F_p h $ be the graded Lie algebra of dimension $3$ over $\F_p,$ with $e$  and $f$ in degree $1,$ $h$ in degree $2$ and satisfying the relations
\[
[e,f] = h,~ [h,e] = [h,f] =0.
\]
From \eqref{eq:relation} we easily deduce the following result.

\begin{corollary}\label{coro--lie-alg}

The graded Lie algebra $\overline{\gr U^1_D/Z_D^1}$  is isomorphic to $\frak{g}_{\F_q} : =\F_q \otimes_{\F_p} \frak{g}_{\F_p}.$
\end{corollary}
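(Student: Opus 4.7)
The plan is to exhibit an explicit isomorphism by matching the graded bases and checking the bracket relations; once the quotient description $\overline{\gr U^1_D/Z_D^1} \cong \overline{\gr U^1_D}/(\overline{\g}_4)$ (already recorded just before the statement) is in hand, essentially nothing remains to compute.

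First, I would identify the underlying graded $\F_q$-vector space. Since $\{\overline{\g}_1,\overline{\g}_2,\overline{\g}_3,\overline{\g}_4\}$ is an $\F_q$-basis of $\overline{\gr U^1_D}$ with $\overline{\g}_1,\overline{\g}_2$ of degree $1$ and $\overline{\g}_3,\overline{\g}_4$ of degree $2$, the images of $\overline{\g}_1,\overline{\g}_2,\overline{\g}_3$ in the quotient by the one-dimensional graded ideal $(\overline{\g}_4) = \F_q\overline{\g}_4$ form an $\F_q$-basis in degrees $1,1,2$ respectively. On the other hand, $\frak{g}_{\F_q} = \F_q e \oplus \F_q f \oplus \F_q h$ has the same graded dimensions. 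Hence the $\F_q$-linear map sending
\[
e \mapsto \overline{\g}_1,\qquad f \mapsto \overline{\g}_2,\qquad h \mapsto \overline{\g}_3
\]
is a well-defined isomorphism of graded $\F_q$-vector spaces.

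Second, I would check it preserves brackets. The relations \eqref{eq:relation}, read modulo the ideal $(\overline{\g}_4)$, become
\[
[\overline{\g}_1,\overline{\g}_2] = \overline{\g}_3,\qquad [\overline{\g}_1,\overline{\g}_3] = [\overline{\g}_2,\overline{\g}_3] = 0,
\]
which are exactly the defining relations $[e,f]=h$, $[e,h]=[f,h]=0$ of $\frak{g}_{\F_p}$. Since these determine all brackets among basis elements and the bracket on $\gr U^1_D$ is $\F_q$-bilinear (as recalled in the paragraph preceding the proposition), the map is a morphism of graded Lie algebras over $\F_q$, hence an isomorphism.

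There is essentially no hard step: everything has been arranged by the preceding discussion so that the corollary is a direct translation of the relations \eqref{eq:relation} after killing $\overline{\g}_4$. The only point that deserves a sentence of justification is that passing from $\overline{\gr U^1_D}$ to $\overline{\gr U^1_D/Z_D^1}$ really amounts to quotienting by $\F_q\overline{\g}_4$; this follows from $Z_D^1 = 1 + p\cO_L \subset U^2_D$ together with the fact that the image of $1+p$ generates an $\F_q$-line in $U^2_D/U^3_D$ and that the higher degree pieces of $\gr Z^1_D$ are obtained from $\overline{\g}_4$ via the $\F_q[\e]$-action, so they vanish after tensoring with $\F_q$ over $\F_q[\e]$.
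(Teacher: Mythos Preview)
Your proposal is correct and follows exactly the approach the paper intends: the paper's own proof is the single sentence ``From \eqref{eq:relation} we easily deduce the following result,'' and you have simply spelled out that deduction by exhibiting the obvious graded $\F_q$-linear bijection $e\mapsto\overline{\g}_1$, $f\mapsto\overline{\g}_2$, $h\mapsto\overline{\g}_3$ and checking the bracket relations modulo $(\overline{\g}_4)$. Your final paragraph justifying $\overline{\gr U^1_D/Z_D^1}\cong\overline{\gr U^1_D}/(\overline{\g}_4)$ goes slightly beyond what the paper does (it simply asserts this identification in the paragraph preceding the corollary), but the reasoning is sound and in the same spirit.
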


\begin{remark}
One can also deduce the structure of the Lie algebra $ \overline{\gr U^1_D/Z^1_D} \cong \overline{\gr U^1_D}/ (\overline{\g}_{4} )$ from the results of \cite[\S 5.3]{BHHMS1} by comparing with the pro-$p$-Iwahori subgroup of $\GL_2$ over $\cO_{L'}.$
\end{remark}
 
\subsection{The graded group algebra} \label{section-graded-algebra}

Let $\Z_p[\![ U^1_D ]\!]=\varprojlim_{i\geq 1}\Z_p[U_D^1/U_D^i]$ be the Iwasawa algebra of $U^1_D$ over $\Z_p.$ It is a pseudocompact local $\Z_p$-algebra. For $\nu \geq 0,$ let $J_{\nu}$ denote the smallest closed $\Z_p$-submodule of $\Z_p[\![ U^1_D]\!]$ which contains all elements of the form $p^{\ell} (h_1 - 1)\cdots (h_s - 1)$ with $\ell, s \geq 0,$ $h_1,\ldots, h_s \in U^1_D$ and \[\ell + \o(h_1) + \cdots +\o(h_s) \geq \nu.\] Let $J_{\nu+} : = \bigcup_{\nu' > \nu} J_{\nu'}$.    Let   
\[
\gr_J  \Z_p[\![ U^1_D ]\! ] := \bigoplus_{\nu\geq 0} J_{\nu}/J_{\nu+}.
\]
which is   an associative graded algebra  over $\gr \Z_p := \bigoplus_{i\geq 0} p^i \Z_p/p^{i+1} \Z_p.$ It naturally has a graded Lie algebra structure. 

The homomorphism of abelian groups $\mathcal{L}_{\nu} : \gr_{\nu} U^1_D \to J_{\nu} / J_{\nu +},$ $g (U^1_D)_{\nu + }
\mapsto (g-1) + J_{\nu +}$ extends to a homomorphism of graded $\F_p[\e]$-Lie algebras $\mathcal{L} : \gr U^1_D \to \gr \Z_p[\![ U^1_D ]\! ]$, where the  $\F_p[\varepsilon]$-algebra structure on $\gr\Z_p[\![U_D^1]\!]$ is given through  the isomorphism  $\F_p[\e] \simto \gr \Z_p,$ $\e \mapsto p+ p^2 \Z_p \in \gr^1 \Z_p.$ 
Let $U_{\F_p[\varepsilon]}(\gr U^1_D)$ be the universal enveloping algebra of $\gr U^1_D$ over $\F_p[\varepsilon].$ By the universal property of $U_{\F_p[\varepsilon]}(\gr U_D^1)$, we have a homomorphism of associative $\gr \Z_p$-algebras
\begin{equation} \label{eqn::1-1}
\wt{\mathcal{L}} :  U_{\F_p[\varepsilon]}(\gr U^1_D) \to \gr_J \Z_p[\![ U^1_D ]\!].
\end{equation}
By \cite[Thm.~28.3]{Schneider}, $\wt{\mathcal{L}}$ is an isomorphism.

In practice, we will consider the Iwasawa algebra associated to the quotient group $U_D^1/Z_D^1$.   Let $\Z_p[\![U_D^1/Z_D^1]\!]$ (resp. $\F_p[\![U_D^1/Z_D^1]\!]$) be the Iwasawa algebra of $U_D^1/Z_D^1$ over $\Z_p$ (over $\F_p$). We have $\F_p[\![U_D^1/Z_D^1]\!]=\Z_p[\![U_D^1/Z_D^1]\!]\otimes_{\Z_p}\F_p$. The filtration $\{J_{\nu},  \nu\geq 0\}$ induces a filtration on $\Z_p[\![U_D^1/Z_D^1]\!]$ and on $\F_p[\![U_D^1/Z_D^1]\!]$. On the other hand,  letting $\frak{m}_{D}$ denote the maximal ideal of $\F_p[\![ U^1_D/Z^1_D ]\!]$, we may consider the $\fm_{D}$-adic filtration on   $\F_p[\![ U^1_D/Z^1_D ]\!] $. The following result shows that these two filtrations coincide up to rescaling indices.

\begin{lemma}\label{lem:Ko-3.13}
Denote by $\overline{J}_{\nu}$ the image of $J_{\nu}$ in $\F_p[\![U_D^1/Z_D^1]\!]$. Then $\overline{J}_{i/2}=\fm_{D}^i$   for any $i\geq 0$. 
\end{lemma}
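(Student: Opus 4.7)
I would prove this by sandwiching $\overline{J}_{i/2}$ between two copies of $\fm_D^i$. The inclusion $\fm_D^i \subseteq \overline{J}_{i/2}$ follows from the base case $\overline{J}_{1/2} = \fm_D$ combined with the multiplicativity $J_\nu \cdot J_{\nu'} \subseteq J_{\nu+\nu'}$ from \cite[Lem.~28.1]{Schneider}. To see the base case, note that after reducing mod $p$ only the generators $(h_1-1)\cdots(h_s-1)$ of $J_{1/2}$ with $s \geq 1$ survive, giving $\overline{J}_{1/2} \subseteq \fm_D$; conversely every $h \in U^1_D$ satisfies $\o(h) \geq 1/2$ so $\overline{h-1} \in \overline{J}_{1/2}$, forcing $\fm_D \subseteq \overline{J}_{1/2}$.

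The reverse inclusion $\overline{J}_{i/2} \subseteq \fm_D^i$, after killing $p$, reduces to the Key Claim: $h \in U^k_D \Longrightarrow \overline{h-1} \in \fm_D^k$. Granted this, any generator $\overline{(h_1-1)\cdots(h_s-1)}$ (with $\sum_j v_D(h_j-1) \geq i$) lies in $\prod_j \fm_D^{v_D(h_j-1)} \subseteq \fm_D^i$. To prove the Key Claim I would invoke Lazard's ordered basis theorem (\cite[Thm.~26.4]{Schneider}) and choose a $p$-adic basis of the pro-$p$ group $U^1_D/Z^1_D$ as follows: $2f$ elements of $\o$-value $1/2$ lifting an $\F_p$-basis of $\F_q\overline{\g}_1 \oplus \F_q\overline{\g}_2 \subset \overline{\gr U^1_D/Z^1_D}$, together with $f$ elements of $\o$-value $1$ chosen as commutators $\g_3^{(c)} = [\g_1^{(a(c))}, \g_2^{(b(c))}]$ whose graded images form an $\F_p$-basis of $\F_q\overline{\g}_3$. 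Any $h$ has a unique factorization $h = \prod_\a \g^{x_\a}$ with $x_\a \in \Z_p$ and $\o(h) = \min_\a (\o(\g) + v_p(x_\a))$, so $h \in U^k_D$ forces $2\o(\g^{x_\a}) \geq k$ for every $\a$. The telescoping identity $h-1 = \sum_\a (\g^{x_\a}-1)(\g^{x_{\a+1}}\cdots\g^{x_n})$ in the group algebra then reduces the Key Claim to the single-factor statement $\overline{\g^x - 1} \in \fm_D^{2\o(\g^x)}$ for each basis element $\g$.

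For the single-factor statement I would write $x = p^\ell u$ with $u \in \Z_p^\times$; the characteristic-$p$ Frobenius identity $(g-1)^{p^\ell} = g^{p^\ell} - 1$ in $\F_p[\![U^1_D/Z^1_D]\!]$, together with the binomial expansion $\overline{\g^u - 1} \equiv u\,\overline{\g-1} \pmod{(\overline{\g-1})^2}$, reduce matters to proving $\overline{\g-1} \in \fm_D^{2\o(\g)}$ for each Lazard basis element $\g$, plus the numerical inequality $2\o(\g)\cdot p^\ell \geq 2\o(\g) + 2\ell$, which holds since $p \geq 5$. For basis elements of $\o$-value $1/2$ the claim is tautological. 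The main obstacle is $\g = \g_3^{(c)}$ of $\o$-value $1$: having chosen $\g_3^{(c)}$ as a commutator $[\g_1, \g_2]$, the group relation $\g_1\g_2 = \g_3\g_2\g_1$ yields in the group algebra $\g_3 - 1 = \bigl[(\g_1-1)(\g_2-1) - (\g_2-1)(\g_1-1)\bigr](\g_2\g_1)^{-1}$, manifestly in $\fm_D^2$ since $(\g_2\g_1)^{-1}$ is a unit. This closes the argument; it extends Kohlhaase's treatment \cite[Lem.~3.13]{Ko} of the case $L = \Q_p$ to arbitrary $f$.
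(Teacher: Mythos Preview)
Your proof is correct and follows the same route the paper intends: the paper's own proof consists entirely of the sentence ``The proof of \cite[Lem.~3.13]{Ko} (when $L=\Q_p$) extends to the general case,'' and you have supplied exactly that extension, carrying out the Lazard ordered-basis argument with the commutator trick for the $\omega$-value-$1$ basis elements. Your explicit verification that one can choose the degree-$1$ basis elements as commutators of the degree-$1/2$ ones (so that $\overline{\g_3-1}\in\fm_D^2$ follows from the group-ring identity) is the step that genuinely requires adaptation from $f=1$ to general $f$, and you handle it correctly.
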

\begin{proof}
The proof of \cite[Lem.~3.13]{Ko} (when $L=\Q_p$) extends to the general case.    
\end{proof} 

One checks that $J_{\nu}\neq J_{\nu+}$ exactly when $\nu=\frac{i}{2}$ for some $i\geq 0$. Thus, by Lemma  \ref{lem:Ko-3.13} the graded algebra
\begin{equation}\label{eq:gr-Lambda}
\gr_{\fm_D} \F_p[\![ U^1_D /Z^1_D ]\! ] := \bigoplus_{i\geq 0} \frak{m}_{D}^i /  \frak{m}_{D}^{i+1}
\end{equation}
is identical to $\bigoplus_{\nu\geq 0}\overline{J}_{\nu}/\overline{J}_{\nu+}$.

\begin{proposition}\label{prop:PBW}
There is an isomorphism of graded $\F_p$-algebras
\[ \gr_{\fm_D}  \F_p[\![ U^1_D /Z^1_D ]\!] \cong U_{\F_p}(\frak{g}_{\F_{q}}).\]
\end{proposition}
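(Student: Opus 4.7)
The plan is to derive the proposition by chaining Lazard's isomorphism \eqref{eqn::1-1} with the quotient by $Z^1_D$ and then reducing modulo $\varepsilon$, so as to invoke Corollary \ref{coro--lie-alg} and Lemma \ref{lem:Ko-3.13} at the end. Concretely, I would proceed in two steps.

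First, I would pass from Lazard's isomorphism $\widetilde{\mathcal{L}} : U_{\F_p[\varepsilon]}(\gr U^1_D) \simto \gr_J\Z_p[\![U^1_D]\!]$ to its analogue on the central quotient. The closed central subgroup $Z^1_D$ is topologically generated by $\g_4 = 1+p$, and by Proposition 2.1 its graded image $\gr Z^1_D \subset \gr U^1_D$ is the free $\F_p[\varepsilon]$-submodule spanned by $\overline{\g}_4$, which is a central Lie ideal. The kernel of the surjection $\Z_p[\![U^1_D]\!] \twoheadrightarrow \Z_p[\![U^1_D/Z^1_D]\!]$ is the closed two-sided ideal $\mathcal{I}$ generated by $\{z-1 : z \in Z^1_D\}$; under $\widetilde{\mathcal{L}}$ its associated graded corresponds to the two-sided ideal of $U_{\F_p[\varepsilon]}(\gr U^1_D)$ generated by $\overline{\g}_4$, which is exactly the kernel of $U_{\F_p[\varepsilon]}(\gr U^1_D) \twoheadrightarrow U_{\F_p[\varepsilon]}(\gr U^1_D/Z^1_D)$. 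Passing to the quotient therefore yields an induced isomorphism $U_{\F_p[\varepsilon]}(\gr U^1_D/Z^1_D) \simto \gr_J \Z_p[\![U^1_D/Z^1_D]\!]$.

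Second, I would reduce modulo $\varepsilon$ by tensoring with $\F_p$ over $\F_p[\varepsilon]$ on both sides. On the Lie-algebraic side, $U_{\F_p[\varepsilon]}(\gr U^1_D/Z^1_D) \otimes_{\F_p[\varepsilon]} \F_p \cong U_{\F_p}(\overline{\gr U^1_D/Z^1_D}) \cong U_{\F_p}(\mathfrak{g}_{\F_q})$ by the base-change property of the universal enveloping algebra together with Corollary \ref{coro--lie-alg}. On the group-algebra side, this tensor product is $\gr_J \F_p[\![U^1_D/Z^1_D]\!] = \bigoplus_{\nu \geq 0} \overline{J}_\nu/\overline{J}_{\nu+}$, which by Lemma \ref{lem:Ko-3.13} (together with the observation recorded above that $J_\nu$ only jumps at $\nu = i/2$) coincides with $\gr_{\fm_D} \F_p[\![U^1_D/Z^1_D]\!]$. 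Combining these identifications gives the desired isomorphism.

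The main technical obstacle I expect is the rigorous justification of the first step, namely that the filtration $\{J_\nu\}$ descends strictly to the quotient and that $\mathcal{I}$ corresponds under $\widetilde{\mathcal{L}}$ precisely to the ideal generated by $\overline{\g}_4$. This amounts to a strictness/saturatedness check for the $\omega$-filtration restricted to $Z^1_D$, which follows from the fact (a consequence of Proposition 2.1) that $\gr Z^1_D$ is a direct $\F_p[\varepsilon]$-summand of $\gr U^1_D$. Once this is in place the rest of the argument is essentially formal, being a successive application of two functorial operations (quotient by a central ideal and base change) to the already-known Lazard isomorphism.
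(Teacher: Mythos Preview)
Your proposal is correct and follows essentially the same route as the paper: the paper's one-line proof (``a direct consequence of Corollary~\ref{coro--lie-alg} via \eqref{eqn::1-1}'') is precisely the chain you describe --- Lazard's isomorphism, passage to the quotient by $Z^1_D$, reduction modulo $\varepsilon$, and the identification of the $\overline{J}$-grading with the $\fm_D$-adic grading via Lemma~\ref{lem:Ko-3.13}. Your concern about strictness of the filtration on the quotient is exactly the point the paper sweeps into ``the above discussion''; your resolution via the $\F_p[\varepsilon]$-freeness of $\gr U^1_D$ (Proposition~2.1) and the fact that $\overline{\g}_4$ spans a direct summand is the right one.
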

\begin{proof}
By the above discussion, the result is a direct consequence of Corollary \ref{coro--lie-alg} via \eqref{eqn::1-1}.
\end{proof}

Let $\F$ be a finite extension of $\F_p$ such that $\F_q$ embeds into $\F.$ Let $\mathcal{J}$ denote the set of embeddings $\F_{q} \into \F$ and fix $\s_0\in \mathcal{J}$. We label the embeddings $\s_j = \s_0 \circ \varphi^j,$ so that $\mathcal{J}$ is identified with $\{0,\ldots, f-1\}.$ Let $\frak{g}_j: = \F \otimes_{\F_{q}, \s_j} \frak{g}_{\F_{q}}.$ We then have $\F\otimes_{\F_p}\frak{g}_{\F_{q}} = \bigoplus_{j=0}^{f-1} \frak{g}_j.$ Let $e_j,f_j,h_j \in \frak{g}_j$ denote $1\otimes e,$ $1\otimes f,$ $1\otimes h \in \F \otimes_{\F_{q}, \s_j} \frak{g}_{\F_{q}}.$  

We again denote by $\frak{m}_{D}$  the maximal ideal of $\F[\![ U^1_D/Z^1_D ]\!]=\F\otimes_{\F_p}\F_p[\![U_D^1/Z_D^1]\!]$.   Then Proposition \ref{prop:PBW} implies that
\begin{equation} \label{eqn-1-1-isom}
\gr_{\fm_D} \F[\![ U^1_D /Z^1_D ]\!] =\F\otimes_{\F_p} (\gr_{\fm_D}  \F_p[\![ U^1_D /Z^1_D ]\! ] ) \cong U_{\F}(\F\otimes_{\F_p}\frak{g}_{\F_q})\cong \bigotimes_{j=0}^{f-1} U_{\F} (\frak{g}_j).
\end{equation}
In particular, we have 
$\gr_{\fm_D}^1 \F[\![ U^1_D /Z^1_D ]\!]=\bigoplus_{j=0}^{f-1}(\F e_j\oplus\F f_j)$.

\begin{theorem}\label{thm:gradedring}
(i) The graded ring $\gr_{\fm_D} \F[\![ U^1_D /Z^1_D ]\!]$ is Auslander regular.

(ii) The sequence $(h_0,\ldots, h_{f-1})$ is a regular sequence of central elements of $\gr_{\fm_D}  \F[\![ U^1_D /Z^1_D ]\!].$ The quotient  $\gr_{\fm_D} \F[\![ U^1_D /Z^1_D ]\!]/(h_0,\ldots, h_{f-1})$ is commutative and is isomorphic to the polynomial ring $\F [e_j,f_j;~ 0\leq j\leq f-1].$
\end{theorem}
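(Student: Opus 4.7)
The plan is to reduce both statements to the structure of the universal enveloping algebra $U_\F(\frak{g})$ of the finite-dimensional Lie algebra $\frak{g} := \bigoplus_{j=0}^{f-1} \frak{g}_j$, and then invoke classical results for such algebras. By Proposition \ref{prop:PBW} together with the isomorphism \eqref{eqn-1-1-isom}, and the general fact that a tensor product of enveloping algebras is the enveloping algebra of the direct sum of the Lie algebras,
\[\gr_{\fm_D} \F[\![U^1_D/Z^1_D]\!] \cong \bigotimes_{j=0}^{f-1} U_\F(\frak{g}_j) \cong U_\F(\frak{g}),\]
as graded $\F$-algebras, where elements from different summands $\frak{g}_j$ commute.

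For part (ii), I would first verify centrality: within each $\frak{g}_j$ the element $h_j$ commutes with $e_j, f_j$ by the defining relations, and it also commutes with every element of $\frak{g}_i$ for $i\neq j$ because the summands commute in $\frak{g}$; hence $h_j$ is central in $\frak{g}$, and via PBW, central in $U_\F(\frak{g})$. Next, $\F h_0 \oplus \cdots \oplus \F h_{f-1}$ is an abelian Lie subalgebra, so its enveloping algebra is the polynomial ring $\F[h_0,\ldots,h_{f-1}]$, and the PBW theorem exhibits $U_\F(\frak{g})$ as a free module over this subring, with basis the ordered monomials in the $e_j$'s and $f_j$'s. Freeness immediately yields that $(h_0,\ldots,h_{f-1})$ is a regular sequence. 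For the final assertion, the quotient Lie algebra $\frak{g}/\langle h_0,\ldots,h_{f-1}\rangle$ is abelian of dimension $2f$ (since each relation $[e_j,f_j]=h_j$ becomes trivial and all cross-summand brackets already vanish), so
\[U_\F(\frak{g})/(h_0,\ldots,h_{f-1}) \cong U_\F\bigl(\frak{g}/\langle h_0,\ldots,h_{f-1}\rangle\bigr) \cong \F[e_j,f_j;\ 0\leq j\leq f-1].\]

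For part (i), there are two routes. The first is to observe that $U_\F(\frak{g})$, being the enveloping algebra of a finite-dimensional Lie algebra over a field, is Auslander-regular by a classical result: its associated graded ring with respect to the standard PBW filtration is a polynomial ring in $3f$ variables, which is Auslander-regular of global dimension $3f$, and Auslander-regularity lifts from the associated graded to the filtered ring by standard theorems on filtered rings (compare the analogous reduction in \cite[\S 5.2]{BHHMS1}). The second route uses part (ii) directly: the quotient $\F[e_j,f_j]$ is a commutative regular Noetherian ring, hence Auslander-regular, and Auslander-regularity lifts along a central regular sequence, giving the desired conclusion on $U_\F(\frak{g})$.

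The main subtlety I expect is purely bookkeeping: ensuring that the grading used in the definition of $\gr_{\fm_D}$ matches the PBW filtration on $U_\F(\frak{g})$ (up to the rescaling of degrees already recorded in Lemma \ref{lem:Ko-3.13}), and confirming that the lifting of Auslander-regularity works in the graded context. Neither genuinely obstructs the argument; both are standard once the identification with $U_\F(\frak{g})$ is in place.
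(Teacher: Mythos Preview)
Your proposal is correct and follows essentially the same approach as the paper, which simply refers to \cite[Thm.~5.3.4]{BHHMS1}: the identification \eqref{eqn-1-1-isom} with $U_\F(\frak{g})$ is exactly the input, and from there PBW gives freeness over the central polynomial subalgebra $\F[h_0,\ldots,h_{f-1}]$ (hence the regular sequence and the quotient description) while Auslander-regularity follows from the fact that the associated graded for the PBW filtration is a polynomial ring. One small caution: your second route for (i), lifting Auslander-regularity along a central regular sequence, is valid but less commonly cited than the filtered-graded transfer; the first route is the safer reference and is what \cite{BHHMS1} uses.
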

\begin{proof}
The proof is the same as that of \cite[Thm.~5.3.4]{BHHMS1}.
\end{proof}

Theorem \ref{thm:gradedring} is not enough for the application to Gelfand-Kirillov dimension, namely Corollary \ref{cor-gkdim-control} below. We shall find eigenbases of $\F\otimes_{\F_p}\frak{g}_{\F_q}$ for the $\F_{q^2}^{\times}$-action in the next subsection.

\subsection{Gelfand-Kirillov dimension}\label{subsection:GK}

We regard $\F_{q^2}^{\times} $ as a subgroup of $\cO_{L'}^{\times}$ via the Teichm\"uller lifting map, and then as a subgroup of $\cO_D^{\times}$ via the fixed embedding $L'\hookrightarrow D$. It normalizes $U_D^1$, thus  acts on $\overline{\gr U_D^1}$  and on $\frak{g}_{\F_{q}}$. 
In practice we need a basis of $\F\otimes_{\F_p}\frak{g}_{\F_q} $ consisting of eigenvectors for the action of $\F_{q^2}^{\times}$. Note that $e_j$ and $f_j$ are only eigenvectors for the action of $\F_q^{\times}$, but not for $\F_{q^2}^{\times}$.

Choose an embedding $\F_{q^2}\hookrightarrow \F$ which extends the fixed embedding $\sigma_0:\F_q\hookrightarrow \F$; we again denote it by $\sigma_0$ and let $\sigma_j=\sigma_0\circ\varphi^j$ for $0\leq j\leq 2f-1$. 

For $0\leq j\leq 2f-1$, define the following elements in $\F[\![U_D^1/Z_D^1]\!]$:
\[Y_j:=\sum_{\lambda\in\F_{q^2}^{\times}}\sigma_j(\lambda)^{-1}(1+\varpi_D[\lambda]),\]
where the term $1+\varpi_D[\lambda]$ is considered as an element in the \emph{group} $U_D^1/Z_D^1$. Since $\sum_{\lambda\in\F_{q^2}^{\times}}\sigma_{j}(\lambda)^{-1}=0$, we have $Y_j\in \fm_{D}$. If $\mu\in\F_{q^2}^{\times}$, then one  checks  that  \begin{equation}\label{eq:Yj-char}\mu\cdot Y_j:=[\mu]Y_j[\mu]^{-1}=\alpha_j(\mu)Y_j\end{equation}
where $\alpha_j:\cO_D^{\times}\ra\F^{\times}$ denotes the character defined by 
\begin{equation}\label{eq:def-alpha}\alpha_j(x):=\sigma_j(\overline{x})^{q-1}.\end{equation}
 Note that $\alpha_{j+f}=\alpha_j^q=\alpha_j^{-1}$.
 
For $0\leq j\leq 2f-1,$ let $y_j:= Y_j+ \frak{m}_D^2 \in \gr^1_{\fm_D}\F[\![U_D^1/Z_D^1]\!]$.

\begin{lemma}\label{lem:Schraen}
(i) The elements $\{Y_j, 0\leq j\leq 2f-1\}$ generate the ideal $\fm_{D}$. 

(ii) The elements $\{y_j, 0\leq j\leq 2f-1\}$ form a basis of $\gr^1_{\fm_D}\F[\![U_D^1/Z_D^1]\!]$. 
\end{lemma}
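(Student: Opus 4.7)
The plan is to prove (ii) first and deduce (i) from it via Nakayama's lemma. Both parts reduce to a character-orthogonality calculation over $\F_{q^2}^{\times}$.

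First I would verify that each $Y_j$ lies in $\fm_D$. Since $0\le j\le 2f-1$, the map $\sigma_j\colon\F_{q^2}^{\times}\to\F^{\times}$ is a nontrivial character, so $\sum_{\lambda\in\F_{q^2}^{\times}}\sigma_j(\lambda)^{-1}=0$. Consequently
\[
Y_j=\sum_{\lambda\in\F_{q^2}^{\times}}\sigma_j(\lambda)^{-1}\bigl((1+\varpi_D[\lambda])-1\bigr),
\]
which is visibly a sum of elements of the augmentation ideal $\fm_D$.

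Next I would compute $y_j\in\gr^1_{\fm_D}\F[\![U_D^1/Z_D^1]\!]$. By Lemma~\ref{lem:Ko-3.13}, $\gr^1_{\fm_D}\F[\![U_D^1/Z_D^1]\!]=\overline{J}_{1/2}/\overline{J}_{1/2+}$, and the map $\mathcal{L}_{1/2}$ from Subsection~\ref{section-graded-algebra} identifies this with $\F\otimes_{\F_p}(U_D^1/U_D^2)$, which is isomorphic to $\F\otimes_{\F_p}\F_{q^2}$ via the reduction $1+\varpi_D a\mapsto\bar a$. Under this identification, each summand $(1+\varpi_D[\lambda])-1$ of $Y_j$ has image $1\otimes\lambda$, so
\[
y_j=\sum_{\lambda\in\F_{q^2}^{\times}}\sigma_j(\lambda)^{-1}\otimes\lambda\ \in\ \F\otimes_{\F_p}\F_{q^2}.
\]
To check this family is a basis, I would compose with the canonical $\F$-algebra isomorphism $\F\otimes_{\F_p}\F_{q^2}\xrightarrow{\sim}\prod_{k=0}^{2f-1}\F$ sending $a\otimes b\mapsto(a\,\sigma_k(b))_k$. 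The $k$-th coordinate of $y_j$ is then
\[
\sum_{\lambda\in\F_{q^2}^{\times}}(\sigma_k\sigma_j^{-1})(\lambda),
\]
which by orthogonality equals $q^2-1\equiv-1$ in $\F$ when $k=j$ and $0$ otherwise. Hence $\{y_j:0\le j\le 2f-1\}$ maps to $-1$ times the standard basis of $\prod_k\F$, proving (ii).

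Finally, part (i) follows from (ii) by a standard Nakayama argument: $\F[\![U_D^1/Z_D^1]\!]$ is a complete (pseudo-compact) local Noetherian $\F$-algebra with maximal ideal $\fm_D$, and since the images $y_j$ span $\fm_D/\fm_D^2$ over $\F$, the lifts $Y_j$ generate $\fm_D$ as an ideal. The only subtle point is the character-sum computation — this is elementary but crucial, and the rest is essentially bookkeeping between the two filtration descriptions of $\gr^1_{\fm_D}\F[\![U_D^1/Z_D^1]\!]$ established in the previous subsection.
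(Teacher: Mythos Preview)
Your proof is correct and follows essentially the same approach as the paper: both reduce to showing that the images $y_j$ in $\fm_D/\fm_D^2$ are linearly independent via a character-orthogonality computation (which the paper calls a ``standard technique'' and attributes to \cite{Schraen}), and both use Nakayama to pass between generation of $\fm_D$ and spanning of $\fm_D/\fm_D^2$. The only difference is cosmetic: the paper proves (i) first and deduces (ii) from the dimension count $\dim_\F\gr^1_{\fm_D}=2f$, whereas you prove (ii) directly and deduce (i); your explicit computation is exactly the argument the paper leaves implicit.
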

\begin{proof}
(i) It is equivalent to checking that the images of $Y_j$ in $\fm_{D}/\fm_{D}^2$ are linearly independent (over $\F$). This is proved by a standard technique; see the proof of  \cite[Prop.~2.13]{Schraen} for a similar argument.

(ii)  This is clear, because  $\gr^1_{\fm_D}\F[\![U_D^1/Z_D^1]\!]$ has dimension $2f$   (with a basis $\{e_j,f_j, 0\leq j\leq f-1\}$). 
\end{proof}

\begin{lemma}\label{lem:gh-1}
For $g\in U_D^i/(U_D^i\cap Z_D^1)$ and $h\in U_D^j/(U_D^j\cap Z_D^1)$, we have 
\[gh-1\equiv (g-1)+(h-1) \mod \fm_D^{i+j}.\]
\end{lemma}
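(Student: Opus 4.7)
The plan is to reduce the congruence to an elementary membership statement and then invoke Lemma \ref{lem:Ko-3.13}. First, the universal identity
\[gh-1=(g-1)+(h-1)+(g-1)(h-1),\]
valid in any unital ring, holds in $\F[\![U_D^1/Z_D^1]\!]$ once $g$ and $h$ are regarded as elements of $U_D^1/Z_D^1$ via the natural injections $U_D^i/(U_D^i\cap Z_D^1)\hookrightarrow U_D^1/Z_D^1$ and $U_D^j/(U_D^j\cap Z_D^1)\hookrightarrow U_D^1/Z_D^1$. Thus the conclusion is equivalent to the single assertion $(g-1)(h-1)\in\fm_D^{i+j}$.

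To establish this, I would combine two ingredients. The first is that for any $g\in U_D^i/(U_D^i\cap Z_D^1)$, the element $g-1$ belongs to $\fm_D^i$: lifting $g$ to $\tilde g\in U_D^i=(U_D^1)_{i/2}$ one has $\omega(\tilde g)\geq i/2$, hence $\tilde g-1\in J_{i/2}$ by the very definition of the filtration (take $\ell=0$, $s=1$, $h_1=\tilde g$); passing to the quotient $\F[\![U_D^1/Z_D^1]\!]$ and applying Lemma \ref{lem:Ko-3.13} gives $g-1\in\overline J_{i/2}=\fm_D^i$. The same argument gives $h-1\in\fm_D^j$. The second ingredient is the trivial fact that $\fm_D^i\cdot\fm_D^j\subseteq\fm_D^{i+j}$. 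Combining the two yields $(g-1)(h-1)\in\fm_D^{i+j}$, as required.

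No genuine difficulty arises here: the proof is essentially a bookkeeping exercise combining an elementary group-algebra identity with Kohlhaase's comparison between the $J_\nu$-filtration and the $\fm_D$-adic filtration, already recorded in Lemma \ref{lem:Ko-3.13}. The only minor point to observe is the independence of the choice of lifts of $g$ and $h$, which is immediate since two lifts differ by an element of $U_D^i\cap Z_D^1$ (resp.\ $U_D^j\cap Z_D^1$), which maps to the identity in $U_D^1/Z_D^1$ and hence to $0$ in the augmentation ideal.
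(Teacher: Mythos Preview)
Your proof is correct and follows the same approach as the paper: use the identity $gh-1=(g-1)+(h-1)+(g-1)(h-1)$ and invoke Lemma~\ref{lem:Ko-3.13} to see that $g-1\in\fm_D^i$ and $h-1\in\fm_D^j$, hence $(g-1)(h-1)\in\fm_D^{i+j}$. The paper's proof is a one-line version of exactly this argument.
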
 
\begin{proof}
Using Lemma \ref{lem:Ko-3.13}, this is a consequence of the equality $(g-1)(h-1)=(gh-1)-(g-1)-(h-1).$  
\end{proof}

For $t\in \F_{q^2}^{\times}$, write \begin{equation}\label{eq:def-gt}g_{t}:=1+p [t]\in U_D^1/Z_D^1.\end{equation}
Note that $\omega(g_t)=1$, so  $g_t-1\in \fm_D^2$ by Lemma \ref{lem:Ko-3.13}. Let  $u_t$ denote the image of $g_t-1$ in $\gr^2_{\fm_D}\F[\![U_D^1/Z_D^1]\!]$. 

\begin{proposition}\label{prop:yiyj}
(i) We have $[y_i,y_j]=0$ for any pair $(i,j)$ with $i-j\neq f$ (in $\Z/2f\Z$).

(ii) Set $h_j':=[y_j,y_{f+j}]$ for $0\leq j\le f-1$. Then $\{h_j',~0\leq j\leq f-1\}$ are linearly independent in $\gr^2_{\fm_D}\F[\![U_D^1/Z_D^1]\!]$ and  they span the same subspace as $\{h_j,~0\leq j\leq f-1\}$.

\end{proposition}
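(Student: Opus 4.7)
The plan is to prove part (i) by a character-theoretic argument using the $\F_{q^2}^\times$-conjugation action on the graded algebra, and to prove part (ii) by computing $h_j'$ as an explicit element of $\F\otimes_{\F_p}(\F_{q^2}/\F_q)$ and then checking linear independence by orthogonality of characters on $\F_{q^2}^\times$.

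For (i): Conjugation by $[\mu]\in\F_{q^2}^\times\subset\cO_D^\times$ is an algebra automorphism of $\F[\![U_D^1/Z_D^1]\!]$ preserving $\fm_D$, hence it acts on the associated graded algebra. By \eqref{eq:Yj-char} this action scales $Y_j$ by $\alpha_j(\mu)$, so $[y_i,y_j]\in\gr^2_{\fm_D}\F[\![U_D^1/Z_D^1]\!]$ is an $\alpha_i\alpha_j$-eigenvector. Under the isomorphism \eqref{eqn-1-1-isom}, the associative commutator of two degree-$1$ generators agrees with their Lie bracket in $\F\otimes_{\F_p}\mathfrak{g}_{\F_q}$, which lands in the degree-$2$ subspace $\bigoplus_k\F h_k$. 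Since $[\mu]\in\cO_{L'}^\times$ commutes with every element of $\cO_{L'}\subset\cO_D$, conjugation by $\F_{q^2}^\times$ fixes each element of the form $1+p[a]$ ($a\in\cO_{L'}$), so it acts trivially on $U_D^2/U_D^3$ and a fortiori on $\bigoplus_k\F h_k$. Thus $[y_i,y_j]=0$ whenever $\alpha_i\alpha_j\ne 1$; using $\alpha_{j+f}=\alpha_j^{-1}$ and the pairwise distinctness of the $\alpha_k$'s, this vanishing condition is exactly $i-j\ne f\pmod{2f}$.

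For (ii): By (i) the $f$ elements $h_j'$ all lie in the $f$-dimensional subspace $\bigoplus_k\F h_k$, so it suffices to establish linear independence. Identify $\bigoplus_k\F h_k$ with $\F\otimes_{\F_p}(\F_{q^2}/\F_q)$ via $U_D^2/U_D^3\cong\cO_D/\varpi_D\cO_D=\F_{q^2}$, noting that the image of $Z_D^1\cap U_D^2=1+p\cO_L$ is precisely the subfield $\F_q\subset\F_{q^2}$. Using the relation $\varpi_D[a]\varpi_D=p[a^q]$, a direct computation gives
$$(1+\varpi_D[\lambda])(1+\varpi_D[\mu])\cdot\bigl((1+\varpi_D[\mu])(1+\varpi_D[\lambda])\bigr)^{-1}\equiv 1+p([\lambda^q\mu]-[\mu^q\lambda])\pmod{\p_D^3}.$$
Since this element lies in $U_D^2$, the difference $Y_j Y_{j+f}-Y_{j+f}Y_j$, modulo $\fm_D^3$, becomes the image of $\sum_{\lambda,\mu\in\F_{q^2}^\times}\sigma_j(\lambda)^{-1}\sigma_{j+f}(\mu)^{-1}(\lambda^q\mu-\mu^q\lambda)$ in $\F\otimes_{\F_p}(\F_{q^2}/\F_q)$. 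Performing the substitutions $\nu=\lambda^q\mu$ and $\nu=\mu^q\lambda$ in the two summands, together with $\sigma_{j+f}(x)=\sigma_j(x)^q$ and $q^2-1\equiv -1\pmod p$, simplifies this to the image of
$$Z_j:=\sum_{\nu\in\F_{q^2}^\times}\bigl(\sigma_j(\nu)^{-1}-\sigma_{j+f}(\nu)^{-1}\bigr)\nu\in\F\otimes_{\F_p}\F_{q^2}.$$

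To conclude, pair $\overline{Z_j}$ with the functionals $c_k\colon\F\otimes_{\F_p}(\F_{q^2}/\F_q)\to\F$ defined by $c_k(\bar a):=\sigma_{k+f}(a)-\sigma_k(a)$; since $\sigma_{k+f}(a)-\sigma_k(a)=\sigma_k(a)^q-\sigma_k(a)$ vanishes for $a\in\F_q$, the $c_k$ descend to the quotient, and the $c_k$ for $0\leq k\leq f-1$ are $\F$-linearly independent on the $f$-dimensional space $\F\otimes_{\F_p}(\F_{q^2}/\F_q)$. Expanding $c_k(\overline{Z_j})$ yields a sum of four character sums on $\F_{q^2}^\times$ attached to the characters $\sigma_j^{-1}\sigma_{k+f}$, $\sigma_j^{-1}\sigma_k$, $\sigma_{j+f}^{-1}\sigma_{k+f}$, $\sigma_{j+f}^{-1}\sigma_k$; by orthogonality each vanishes unless the character is trivial, and for $0\leq j,k\leq f-1$ this happens exactly when $j=k$ (in two of the four terms). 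This gives $c_k(\overline{Z_j})=2\,\delta_{jk}$ in $\F$, which is invertible since $p\geq 5$; hence the $h_j'$ are linearly independent and span $\bigoplus_k\F h_k$. The main obstacle is the bookkeeping required to identify the Lie-bracket component of $\gr^2_{\fm_D}\F[\![U_D^1/Z_D^1]\!]$ with $\F\otimes_{\F_p}(\F_{q^2}/\F_q)$, relying on Lemma \ref{lem:Ko-3.13} and the non-trivial fact that the contribution of the centre $Z_D^1$ in $U_D^2/U_D^3$ is the subfield $\F_q$ and not $\F_p$.
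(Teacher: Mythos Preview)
Your proof is correct, but both halves take a different route from the paper.

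For (i), the paper carries out a direct computation of $Y_iY_j$ in the Iwasawa algebra: it expands each product $(1+\varpi_D[\lambda])(1+\varpi_D[\mu])$ in $\cO_D$, uses Lemma~\ref{lem:gh-1} to separate the ``symmetric'' piece $h_{\lambda,\mu}=1+\varpi_D[\lambda]+\varpi_D[\mu]$ from the degree-two piece $g_{\lambda^q\mu}=1+p[\lambda^q\mu]$, and then evaluates the resulting character sum over $\F_{q^2}^\times$; the vanishing for $i-j\neq f$ drops out of orthogonality. Your argument is more structural: you observe that $[y_i,y_j]$ is an $\alpha_i\alpha_j$-eigenvector lying in $\bigoplus_k\F h_k$, and that conjugation by $\F_{q^2}^\times$ is trivial on $U_D^2/U_D^3$ (hence on this subspace), forcing $[y_i,y_j]=0$ unless $\alpha_i\alpha_j=1$. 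This is cleaner and avoids any manipulation in $\cO_D$, at the cost of needing the (easy but unstated) fact that $p$ has exact order $2f$ modulo $q+1$, so that the $\alpha_k$ are pairwise distinct.

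For (ii) the roles reverse. You compute $h_j'$ explicitly as the image of $Z_j=\sum_\nu(\sigma_j(\nu)^{-1}-\sigma_{j+f}(\nu)^{-1})\nu$ in $\F\otimes_{\F_p}(\F_{q^2}/\F_q)$ and then test independence by pairing against the functionals $c_k=\sigma_{k+f}-\sigma_k$, obtaining $c_k(\overline{Z_j})=2\delta_{jk}$. The paper instead gives a short non-computational argument: since $\{y_0,\dots,y_{2f-1}\}$ and $\{e_j,f_j\}$ are two bases of $\gr^1$, every $h_j=[e_j,f_j]$ is a linear combination of brackets $[y_i,y_k]$; by part~(i) only the $h_j'$ survive, so $\mathrm{span}\{h_j\}\subseteq\mathrm{span}\{h_j'\}$, and the reverse inclusion is immediate. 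Thus the paper gets linear independence for free from the known independence of the $h_j$. Your method is more hands-on but has the advantage of producing the explicit matrix of the $h_j'$ against a natural dual basis, which is extra information the paper's argument does not give.
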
 

\begin{proof}
A direct computation shows
\[
Y_iY_{j}=\sum_{\lambda,\mu\in\F_{q^2}^{\times}}\sigma_i(\lambda)^{-1}\sigma_{j}(\mu)^{-1} (1+\varpi_D[\lambda]+\varpi_D[\mu]+p[\lambda^{q}\mu]).
\]
We may write (in $U_D^1$)
\[
1+\varpi_D[\lambda]+\varpi_D[\mu]+p[\lambda^q\mu]=(1+\varpi_D[\lambda]+\varpi_D[\mu])(1+p[\lambda^q\mu]+x)
\]
with $x\in \varpi_D^3\cO_D$ and note that $(1+p[\lambda^q\mu]+x)-1$ has the same image as $(1+p[\lambda^q\mu])-1$ in $\gr^2_{\fm_D}\F[\![U_D^1/Z_D^1]\!]$ by Lemma \ref{lem:gh-1}. Using Lemma \ref{lem:gh-1} again, we have
\[
(1+\varpi_D[\lambda]+\varpi_D[\mu]+p[\lambda^q\mu])-1\equiv (h_{\lambda,\mu}-1)+(g_{\lambda^q\mu}-1) \mod \fm_D^3.
\] 
where  $h_{\lambda,\mu}:=1+\varpi_D[\lambda]+\varpi_D[\mu]$ and $g_{\lambda^q\mu}$ is defined by \eqref{eq:def-gt}. 
Similarly, we have
\[
Y_jY_i\equiv\sum_{\lambda,\mu\in\F_{q^2}^{\times}}\sigma_i(\lambda)^{-1}\sigma_{j}(\mu)^{-1}\big((h_{\lambda,\mu}-1)+(g_{\lambda\mu^q}-1)\big)\mod \fm_D^3 
\]
and so 
\[
[Y_i,Y_j]\equiv \sum_{\lambda,\mu\in\F_{q^2}^{\times}}\sigma_i(\lambda)^{-1}\sigma_j(\mu)^{-1}\big((g_{\lambda^q\mu}-1)-(g_{\lambda\mu^q}-1)\big)\mod \fm_D^3.
\]
Taking the image in $\gr^2_{\fm_D}\F[\![U_D^1/Z_D^1]\!]$ and noting that $\sigma_i(\lambda)=\sigma_{i-f}(\lambda^q)$, we obtain 
\[
[y_i,y_{j}]=\sum_{\lambda,\mu\in\F_{q^2}^{\times}}\frac{\sigma_j(\lambda^q)}{\sigma_{i-f}(\lambda^q)}\sigma_{j}(\lambda^q\mu)^{-1}(u_{\lambda^q\mu}-u_{\lambda\mu^q}).
\]   
  The map 
\[
  \F_{q^2}^{\times}\times \F_{q^2}^{\times}\ra \F_{q^2}^{\times},\ (\lambda,\mu)\mapsto \lambda^q\mu
 \]
is surjective and  each fibre is bijective to $\F_{q^2}^{\times}$ (by projecting to the second component), thus 
\[
[y_i,y_{j}]=\sum_{t\in\F_{q^2}^{\times}}\Big(\sum_{\lambda\in\F_{q^2}^{\times}} \frac{\sigma_j(\lambda^q)}{\sigma_{i-f}(\lambda^q)}\Big)\cdot \sigma_j(t)^{-1}(u_{t}-u_{t^q}). 
\]
If $i-j\neq f$, then $\sum_{\lambda\in\F_{q^2}^{\times}}\frac{\sigma_j(\lambda^q)}{\sigma_{i-f}(\lambda^q)}=0$ and so $[y_i,y_{j}]=0$, proving (i). 
If $i-j=f$, then the last sum equals to $-1$, and so
\[
[y_{j+f},y_j]=-\sum_{t\in\F_{q^2}^{\times}}\sigma_j(t)^{-1}(u_t-u_{t^q}).
\]
To prove (ii), one could argue as in Lemma \ref{lem:Schraen}, but this needs to make explicit the $h_j$'s.  Nonetheless,  we can conclude by the following observation: since $y_i$ lies in $\bigoplus_{0\leq j\leq f-1}(\F e_j\oplus \F f_j)$,  $[y_i,y_j]$ lies in the subspace spanned by $h_j=[e_j,f_j]$ (recall $[e_i,e_j]=[e_i,f_j] = [f_i,f_j]=0$ whenever $i\neq j$), and vice versa by (i) and Lemma \ref{lem:Schraen}(ii). 
\end{proof}

To make the notation more transparent, we write $z_i:=y_{i+f}$ for $0\leq i\leq f-1$. Lemma \ref{lem:Schraen} and Proposition \ref{prop:yiyj} imply that the Lie algebra $\F\otimes_{\F_p}\frak{g}_{\F_q}$ has another basis over $\F$ given by $\{y_j,z_j,h_j'; 0\leq j\leq f-1\}$, with $y_j$ and  $z_j$ in degree $1$, $h_j'$ in degree $2$  and satisfying the relations
\[
h_j'=[y_j,z_j],\ \ [y_i,z_j]=0\ \mathrm{if}\ i\neq j,\ \ [y_i,y_j]=[z_i,z_j]=[y_i,h_j']=[z_i,h_j']=0.
\]   

 Let $I_D$ be the left ideal of  $\gr_{\fm_D}  \F[\![ U^1_D /Z^1_D ]\!]$ generated by the degree two elements $y_jz_j$ and $h'_j$ for all $0\leq j\leq f-1$. The ideal $I_D$ is in fact a two-sided ideal of  $\gr_{\fm_D} \F[\![ U^1_D /Z^1_D ]\!]$; it is also the left ideal generated by $(y_j z_j,  h_j; ~ 0\leq j \leq f-1)$ by Proposition \ref{prop:yiyj}(ii).

\begin{corollary}\label{cor:hj'}
(i) The sequence $(h_0',\cdots,h_{f-1}')$ is a regular sequence of central elements of $\gr_{\fm_D}\F[\![U_D^1/Z_D^1]\!]$. The quotient $\gr_{\fm_D}\F[\![U_D^1/Z_D^1]\!]/(h_0',\dots,h_{f-1}')$ is commutative and is isomorphic to the polynomial ring   $\F[y_j,z_j;0\leq j\leq f-1]$.

(ii) The quotient $\gr_{\fm_D} \F[\![ U^1_D /Z^1_D ]\!] / I_D$ is isomorphic to $\F [y_j,z_j; 0\leq j\leq f-1]/(y_jz_j; 0\leq j\leq f-1).$
\end{corollary}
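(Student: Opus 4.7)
The plan is to reduce everything to Theorem \ref{thm:gradedring} using Proposition \ref{prop:yiyj}(ii), after translating from the basis $\{e_j,f_j,h_j\}$ to the basis $\{y_j,z_j,h_j'\}$ of $\F\otimes_{\F_p}\frak{g}_{\F_q}$.

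For part (i), I first observe that since $\{h_0',\dots,h_{f-1}'\}$ and $\{h_0,\dots,h_{f-1}\}$ span the same $\F$-subspace of $\gr^2_{\fm_D}\F[\![U_D^1/Z_D^1]\!]$ by Proposition \ref{prop:yiyj}(ii), they generate the same (two-sided) ideal. Because each $h_j'$ lies in degree $2$ in a PBW decomposition analogous to the one underlying Theorem \ref{thm:gradedring}, these elements are central (the bracket relations at the end of the previous subsection show $[y_i,h_j']=[z_i,h_j']=0$, and trivially $[h_i',h_j']=0$). The regularity of $(h_0',\dots,h_{f-1}')$ then follows from the regularity of $(h_0,\dots,h_{f-1})$ established in Theorem \ref{thm:gradedring}(ii), since the two ideals agree. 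Finally, to identify the quotient as $\F[y_j,z_j;0\leq j\leq f-1]$, I use the isomorphism $\gr_{\fm_D}\F[\![U_D^1/Z_D^1]\!]\cong U_\F(\F\otimes_{\F_p}\frak{g}_{\F_q})$ from \eqref{eqn-1-1-isom}. Killing the $h_j'$ forces all remaining bracket relations $[y_i,z_j]=[y_i,y_j]=[z_i,z_j]=0$ (the only nontrivial bracket being $[y_j,z_j]=h_j'$, which becomes zero), so the PBW theorem identifies the quotient with the commutative polynomial ring on $\{y_j,z_j;\,0\leq j\leq f-1\}$.

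For part (ii), I start from the isomorphism just established in (i). Since $I_D$ equals the two-sided ideal generated by $(y_jz_j,h_j';\,0\leq j\leq f-1)$ (as remarked right before the corollary, using that $\{h_j\}$ and $\{h_j'\}$ span the same subspace), further quotienting $\F[y_j,z_j;0\leq j\leq f-1]$ by the images of $y_jz_j$ yields the ring $\F[y_j,z_j;0\leq j\leq f-1]/(y_jz_j;\,0\leq j\leq f-1)$, as desired.

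The only step that requires any care is checking that the $h_j'$ are central and that the quotient in part (i) is genuinely commutative, but both assertions follow mechanically from the bracket relations listed immediately after Proposition \ref{prop:yiyj}. No calculation beyond PBW is needed.
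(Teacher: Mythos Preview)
Your proof is correct and follows essentially the same approach as the paper, which simply says ``The proof of \cite[Thm.~5.3.4]{BHHMS1} goes through by the above discussion'': both rely on the fact (established just before the corollary) that $\{y_j,z_j,h_j'\}$ satisfies exactly the same Lie bracket relations as $\{e_j,f_j,h_j\}$, so one may transport Theorem~\ref{thm:gradedring} verbatim. One small expositional point: your phrase ``since the two ideals agree'' is not by itself a sufficient justification for regularity (sequences generating the same ideal need not both be regular); the real reason, which you have available from Proposition~\ref{prop:yiyj}(ii), is that $\{h_j'\}$ and $\{h_j\}$ are related by an invertible $\F$-linear change of basis, and such a change preserves regularity of a sequence of central homogeneous elements.
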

\begin{proof}
The proof of \cite[Thm.~5.3.4]{BHHMS1} goes through by the above discussion.\end{proof}

Let $\chi : \cO_D^{\times} \to \F^{\times}$ be a smooth character. Let $\Proj_{\F[\![\cO_{D}^{\times} / Z^1_D ]\!]} \chi$ denote the projective envelope of $\chi$ in the category of $\F[\![\cO_{D}^{\times} / Z^1_D]\!]$-modules. For $n\geq 1,$ let
\begin{equation}\label{def--W_chi}
W_{\chi_,n}: = (\Proj_{\F[\![\cO_{D}^{\times} / Z^1_D]\!]} \chi) / \frak{m}_{D}^n.
\end{equation}
It is clear that $W_{\chi,n}\cong \chi\otimes W_{\ide,n}$ where $\ide$ denotes the trivial character. 
The module $W_{\chi,3}$ is of particular importance to us.

\begin{corollary}\label{cor:structure-W_chi,3}
The module $W_{\ide,3}$ has the following graded structure:
\[
\gr^0 W_{\ide,3} = \F,~ \gr^1W_{\ide,3} = \bigoplus_{i=0}^{f-1}\F\alpha_i \oplus \F\a_i^{-1},\]\[ \gr^2 W_{\ide,3} = \F^{2f}\oplus \bigoplus_{0\leq i\leq j\leq f-1}\F\alpha_i\alpha_j\oplus \bigoplus_{0\leq i\leq j\leq f-1}\F\alpha_{i}^{-1}\alpha_{j}^{-1}\oplus \bigoplus_{0\leq i\neq j\leq f-1} \F\alpha_i\alpha_j^{-1},
\]
where $\alpha_j:\cO_D^{\times}\ra\F^{\times}$ is the character defined in \eqref{eq:def-alpha}. 
\end{corollary}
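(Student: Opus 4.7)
The plan is to identify $W_{\ide,3}$ with the truncation $\F[\![U_D^1/Z_D^1]\!]/\fm_D^3$ endowed with the $\F_{q^2}^{\times}$-action by conjugation, pass to the associated graded, and then read off the $\F_{q^2}^{\times}$-characters from the eigenbasis constructed in Lemma \ref{lem:Schraen} and Proposition \ref{prop:yiyj}.

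First I would establish this identification. Since $\cO_D^{\times}/Z_D^1 \cong \F_{q^2}^{\times}\ltimes (U_D^1/Z_D^1)$ via the Teichm\"uller lift and $|\F_{q^2}^{\times}|$ is coprime to $p$, the central idempotents of $\F[\F_{q^2}^{\times}]\subset \F[\![\cO_D^{\times}/Z_D^1]\!]$ cut out the projective envelope of each character of $\F_{q^2}^{\times}$. The block indexed by the trivial character identifies with $\F[\![U_D^1/Z_D^1]\!]$ as a left $\F[\![U_D^1/Z_D^1]\!]$-module, with $\F_{q^2}^{\times}$ acting by conjugation. Since $\fm_D$ is $\F_{q^2}^{\times}$-stable, one obtains the $\F_{q^2}^{\times}$-equivariant isomorphism
\[
W_{\ide,3}\cong \F[\![U_D^1/Z_D^1]\!]/\fm_D^3,
\]
reducing the statement to computing the $\F_{q^2}^{\times}$-character decomposition of $\gr^i_{\fm_D}\F[\![U_D^1/Z_D^1]\!]$ for $i\in\{0,1,2\}$.

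The degree $0$ piece is $\F$ with trivial character. In degree $1$, Lemma \ref{lem:Schraen}(ii) provides the basis $\{y_0,\ldots,y_{2f-1}\}$ and \eqref{eq:Yj-char} shows that $y_j$ has character $\alpha_j$; using $z_j=y_{j+f}$ and $\alpha_{j+f}=\alpha_j^{-1}$ yields the claimed $\bigoplus_{j=0}^{f-1}(\F\alpha_j\oplus \F\alpha_j^{-1})$. In degree $2$, Proposition \ref{prop:PBW} identifies $\gr^2_{\fm_D}\F[\![U_D^1/Z_D^1]\!]$ with the total-degree-$2$ component of $U_\F(\F\otimes_{\F_p}\frak{g}_{\F_q})$. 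With the eigenbasis $\{y_j,z_j,h'_j;\,0\leq j\leq f-1\}$ of the Lie algebra (Proposition \ref{prop:yiyj}), recalling that $[y_j,z_j]=h'_j$ and that all other brackets among these basis vectors vanish, the PBW theorem gives as a basis the monomials $y_iy_j$ ($i\leq j$), $z_iz_j$ ($i\leq j$), $y_iz_j$ (all $i,j$), together with $h'_j$. The $\F_{q^2}^{\times}$-action is by algebra automorphisms, and $h'_j$ carries trivial character since the Lie bracket is $\F_{q^2}^{\times}$-equivariant, giving $\mu\cdot h'_j=[\alpha_j(\mu)y_j,\alpha_j(\mu)^{-1}z_j]=h'_j$; the characters of the four families of monomials are therefore $\alpha_i\alpha_j$, $\alpha_i^{-1}\alpha_j^{-1}$, $\alpha_i\alpha_j^{-1}$ and $\ide$ respectively.

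To finish, I would repackage: the $2f$ vectors $\{y_jz_j, h'_j;\,0\leq j\leq f-1\}$ of trivial character combine into the summand $\F^{2f}$, and the remaining $y_iz_j$ with $i\neq j$ give $\bigoplus_{i\neq j}\F\alpha_i\alpha_j^{-1}$, reproducing the stated decomposition of $\gr^2 W_{\ide,3}$. The main obstacle I anticipate is the very first step, namely verifying that the $\F_{q^2}^{\times}$-action on $P_\ide$ obtained from the idempotent block decomposition agrees with the naive conjugation action on $\F[\![U_D^1/Z_D^1]\!]$ with no extra twist; once this point is settled via the Teichm\"uller splitting, all subsequent steps are routine PBW bookkeeping.
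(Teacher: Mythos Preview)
Your proposal is correct and follows essentially the same approach as the paper, just in far greater detail: the paper's proof is a one-line reference to Corollary \ref{cor:hj'} (the polynomial-plus-central-$h'_j$ description of the graded ring) together with \eqref{eq:Yj-char}, and your identification of $W_{\ide,3}$ with $\F[\![U_D^1/Z_D^1]\!]/\fm_D^3$ followed by the PBW/eigenbasis bookkeeping is precisely how one unpacks that reference. The obstacle you flag about the conjugation action is indeed the only point requiring care, and your argument via the Teichm\"uller splitting handles it.
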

\begin{proof}
It follows from Corollary \ref{cor:hj'} using \eqref{eq:Yj-char}; cf. \cite[(44)]{BHHMS1}. 
\end{proof}

We have the following criterion which allows to control the Gelfand-Kirillov dimension of an admissible smooth $\F$-representation of $\cO_D^{\times}/Z_D^1$. It is  an  analogue of \cite[Cor.~5.3.5]{BHHMS1}. Let $\overline{W}_{\chi,3}$ denote the quotient of $W_{\chi,3}$ by the sum of characters which occur in $\gr^2W_{\chi,3}$ and  non-isomorphic to $\chi$.  For example, if $L=\Q_p$ then $\dim_{\F}\overline{W}_{\chi,3}=5$ and has a socle filtration as follows (with $\alpha=\alpha_0$):
\[
(\chi\oplus\chi)\ligne (\chi\alpha\oplus\chi\alpha^{-1})\ligne \chi. 
\]

\begin{corollary}\label{cor-gkdim-control}
Let $\pi$ be an admissible smooth representation of $\cO_D^{\times} / Z_D^1$ over $\F.$ Assume for each character $\chi$ such that $\Hom_{\cO_D^{\times}}(\chi , \pi) \neq 0,$ the natural injection
\begin{equation}\label{cond-gkdim}
\Hom_{\cO_D^{\times}}(\chi , \pi) \into \Hom_{\cO_D^{\times}}( \overline{W}_{\chi,3} , \pi)
\end{equation}
is an isomorphism. Then $\dim_{\cO_{D}^{\times}} (\pi) \leq f,$ where $\dim_{\cO_{D}^{\times}}(\pi)$  is the Gelfand-Kirillov dimension of $\pi$ over $\cO_{D}^{\times }$.
\end{corollary}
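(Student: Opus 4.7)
My plan is to show that the hypothesis forces $I_D \cdot \gr M = 0$, where $M := \pi^\vee$ and $\gr M$ is the associated graded of $M$ for the $\fm_D$-adic filtration, viewed as a finitely generated module over $\gr := \gr_{\fm_D}\F[\![U_D^1/Z_D^1]\!]$. By admissibility $M$ is finitely generated over $\F[\![\cO_D^\times/Z_D^1]\!]$, and by the standard theory of good filtrations on Iwasawa algebras, $\d_{\cO_D^\times}(\pi) = \d_{\gr}(\gr M)$. By Corollary \ref{cor:hj'}(ii), $\gr/I_D \cong \F[y_j,z_j; 0\le j\le f-1]/(y_jz_j; 0\le j\le f-1)$ is a commutative noetherian ring whose spectrum is a union of $2^f$ coordinate $f$-planes in $\A^{2f}$, hence of Krull dimension $f$. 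Since $\gr$ is Auslander-regular (Theorem \ref{thm:gradedring}(i)) and $\gr/I_D$ is Cohen--Macaulay of dimension $f$, the annihilation $I_D\cdot \gr M = 0$ will force $\gr M$ to be a finitely generated $\gr/I_D$-module and yield $\d_{\gr}(\gr M) \leq \dim(\gr/I_D) = f$ by the usual grade-change-of-ring formula, as in \cite[proof of Cor.~5.3.5]{BHHMS1}.

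The heart of the argument is therefore to establish $I_D\cdot \gr M = 0$. Since $|\F_{q^2}^\times|$ is coprime to $p$, $M$ decomposes into $\F_{q^2}^\times$-isotypic components $M = \bigoplus_\chi M_\chi$. Pontryagin duality translates the hypothesis \eqref{cond-gkdim} into the statement that every $\cO_D^\times$-homomorphism $M \to \overline{W}_{\chi,3}^\vee$ has image contained in the socle $\chi^{-1} \hookrightarrow \overline{W}_{\chi,3}^\vee$. By Corollary \ref{cor:structure-W_chi,3}, the characters quotiented out in passing from $W_{\chi,3}$ to $\overline{W}_{\chi,3}$ are exactly those non-isomorphic to $\chi$, leaving a $2f$-dimensional $\chi$-isotypic part of $\gr^2\overline{W}_{\chi,3}$. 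Using \eqref{eq:Yj-char} together with the identity $\alpha_{j+f} = \alpha_j^{-1}$, this isotypic part is spanned precisely by the images of the only degree-$2$ elements of $\gr$ on which $\F_{q^2}^\times$ acts trivially, namely $\{h_j', y_jz_j:  0\leq j \leq f-1\}$. Unwinding the Hom condition on the cosocle $M/\fm_D M$ therefore forces each of these elements to annihilate the cosocle of every $M_{\chi^{-1}}$.

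A standard induction on the filtration degree then propagates this vanishing to all of $\gr M$, using that $\gr$ is generated in degree $1$ by the eigenvectors $\{y_j, z_j\}$ (Lemma \ref{lem:Schraen}) and that both $h_j'$ (central) and $y_jz_j$ commute with all $y_i, z_i$ for $i \neq j$ modulo $I_D$ (Proposition \ref{prop:yiyj}). The main obstacle is the second step: carefully tracking characters through Pontryagin duality and matching each degree-$2$ graded element to its contribution in $\gr^2 W_{\chi,3}$. This matching becomes transparent precisely because of the eigenbasis $\{y_j, z_j, h_j'\}$ provided by Proposition \ref{prop:yiyj}, rather than the original $\{e_j, f_j, h_j\}$ basis which diagonalizes only the $\F_q^\times$-action; without this refinement of the basis, the identification of the surviving characters in $\overline{W}_{\chi,3}$ with the generators of $I_D$ would not be direct.
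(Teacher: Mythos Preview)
Your proposal is correct and follows essentially the same route as the paper: dualize to $M=\pi^\vee$, use the $\F_{q^2}^\times$-eigenbasis $\{y_j,z_j,h_j'\}$ to identify the $\chi$-isotypic part of $\gr^2 W_{\chi,3}$ with the span of $\{y_jz_j,h_j'\}$ so that condition \eqref{cond-gkdim} forces these elements to kill $\gr^0_{\fm_D}(\pi^\vee)$, and then conclude via Corollary \ref{cor:hj'} exactly as in \cite[Cor.~5.3.5]{BHHMS1}. One minor simplification: the ``induction on filtration degree'' you describe is unnecessary, since $\gr M$ is generated over $\gr$ by $\gr^0 M$ and $I_D$ is a \emph{two-sided} ideal (as noted just before Corollary \ref{cor:hj'}), so $I_D\cdot\gr^0 M=0$ immediately gives $I_D\cdot\gr M=0$.
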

\begin{proof}
The Pontryagin dual  $\pi^{\vee}$ is naturally a finitely generated module over $\F[\![U_D^1/Z_D^1]\!]$ as $\pi$ is admissible,   so  the graded module $\gr_{\fm_D}(\pi^{\vee})$  is finitely generated over $\gr_{\fm_D}\F[\![U_D^1/Z_D^1]\!]$. The condition (\ref{cond-gkdim}) implies that   $ \gr^0_{\frak{m}_D} (\pi^{\vee}) $ is killed by $y_jz_j$ and $h_j'$ (for $1\leq j\leq f-1$), hence also by $I_D$.  The  result then follows from Corollary \ref{cor:hj'}; see    \cite[Cor.~5.3.5]{BHHMS1} for details.
\end{proof}

\subsection{$\Ext^i$ groups when $L=\Q_p$}

We assume $L=\Q_p$ with $p\geq 5$. We write $\alpha=\alpha_0$.  

\begin{proposition}\label{prop-Ext1-U1}
Let $\psi,\chi:\cO_D^{\times}\ra\F^{\times}$ be two smooth characters. Then $\Ext^1_{\cO^{\times}_D/Z_D^1}(\psi,\chi)$ is nonzero if and only if $\psi=\chi\alpha$ or $\psi=\chi\alpha^{-1}$. Moreover,
\[\dim_{\F}\Ext^1_{\cO_D^{\times}/Z_D^1}(\chi\alpha,\chi)=\dim_{\F}\Ext^1_{\cO_D^{\times}/Z_D^1}(\chi\alpha^{-1},\chi)=1.\]
\end{proposition}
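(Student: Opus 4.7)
The plan is to reduce the Ext computation to a multiplicity count in the first graded piece $\gr^1_{\fm_D} W_{\chi,3}$, whose structure as an $\F_{q^2}^\times$-representation is given by Corollary~\ref{cor:structure-W_chi,3} specialized to $f = 1$.

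First, via Pontryagin duality I translate $\Ext^1_{\cO_D^\times/Z_D^1}(\psi,\chi)$ into an Ext group in the anti-equivalent category of compact $\F[\![\cO_D^\times/Z_D^1]\!]$-modules. Concretely, this duality sends a smooth character to its inverse and reverses the arguments of $\Ext^1$, so it identifies $\Ext^1_{\cO_D^\times/Z_D^1}(\psi,\chi)$ with the compact Ext group $\Ext^1(\chi^{-1},\psi^{-1})$. Since the conclusion we are trying to prove is symmetric under $(\psi,\chi) \leftrightarrow (\psi^{-1},\chi^{-1})$ (because $\psi = \chi\alpha^{\pm 1}$ iff $\psi^{-1} = \chi^{-1}\alpha^{\mp 1}$), I will just write $\chi,\psi$ below.

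Second, I exploit the projective envelope $P_\chi$ and the short exact sequence $0 \to \fm_D P_\chi \to P_\chi \to \chi \to 0$ in the compact category. Together with the observation that any homomorphism from a pseudocompact module into the irreducible $\psi$ factors through its cosocle, the long exact $\Hom/\Ext^1$ sequence collapses to
\begin{equation*}
\dim_{\F}\Ext^1(\chi,\psi) \;=\; \dim_{\F}\Hom_{\cO_D^\times}\!\bigl(\gr^1_{\fm_D} W_{\chi,3},\,\psi\bigr),
\end{equation*}
that is, the multiplicity of the character $\psi$ in $\gr^1_{\fm_D} W_{\chi,3}$ as an $\F_{q^2}^\times$-representation.

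Third, Corollary~\ref{cor:structure-W_chi,3} with $L = \Q_p$ (so $f = 1$) gives $\gr^1_{\fm_D} W_{\ide,3} = \F\alpha \oplus \F\alpha^{-1}$, and the twist $W_{\chi,3} \cong \chi \otimes W_{\ide,3}$ recorded just after \eqref{def--W_chi} yields
\begin{equation*}
\gr^1_{\fm_D} W_{\chi,3} \;\cong\; \F\chi\alpha \,\oplus\, \F\chi\alpha^{-1}.
\end{equation*}
Hence the multiplicity of $\psi$ is nonzero exactly when $\psi \in \{\chi\alpha,\chi\alpha^{-1}\}$, proving the first assertion. Finally, since $\alpha$ restricted to $\F_{p^2}^\times$ is the character $x \mapsto \sigma_0(x)^{p-1}$ of order $p+1 \geq 6$ (as $p \geq 5$), we have $\alpha^2 \neq \ide$, so the two characters $\chi\alpha$ and $\chi\alpha^{-1}$ are distinct and each contributes a one-dimensional Ext group.

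The only mildly delicate step is the bookkeeping through Pontryagin duality, but the symmetry of the conclusion trivializes it; everything else is a direct reading of Corollary~\ref{cor:structure-W_chi,3}.
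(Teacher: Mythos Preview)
Your proof is correct and takes essentially the same approach as the paper, which simply says the result ``is a consequence of Corollary~\ref{cor:structure-W_chi,3}''; you have spelled out the standard derivation (via the projective envelope and $\gr^1_{\fm_D}$) that the paper leaves implicit. The observation that $\alpha^2\neq\ide$ (so $\chi\alpha\neq\chi\alpha^{-1}$) is a nice point to make explicit, and the symmetry trick to handle the Pontryagin-duality bookkeeping is fine, though one could equally well argue directly with the injective envelope of $\chi$ in the smooth category to avoid the relabeling.
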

\begin{proof}
It is a consequence of Corollary \ref{cor:structure-W_chi,3}. 
 \end{proof}

\begin{proposition}\label{prop-Poincare}
Let $\tau_1,\tau_2$ be finite dimensional smooth representations of $\cO_D^{\times}/Z_D^1$. Then there is an isomorphism $\Ext^i_{\cO_D^{\times}/Z_D^1}(\tau_1,\tau_2)\cong\Ext^{3-i}_{\cO_D^{\times}/Z_D^1}(\tau_2,\tau_1)^{\vee}$ for $0\leq i\leq 3$.
\end{proposition}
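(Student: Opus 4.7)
The plan is to deduce this Poincaré-type statement from the duality for the open pro-$p$ subgroup $\Gamma := U^1_D / Z^1_D$ of $G := \cO_D^{\times} / Z^1_D$. Since $L = \Q_p$ we have $\dim_{\Q_p} \Gamma = \dim_{\Q_p} D^{\times} - \dim_{\Q_p} Z_D = 4 - 1 = 3$. Because $p \geq 5$ the $p$-valuation $\o$ recalled above equips $\Gamma$ with the structure of a uniform pro-$p$ group; in particular $\Gamma$ is torsion-free, so by the theorem of Lazard (see \cite{Lazard}) and Serre on compact $p$-adic analytic groups it is a Poincaré duality group of dimension $3$. Moreover its dualizing character, being the reduction mod $p$ of the determinant of the adjoint representation on the Lie algebra, is a continuous character from a pro-$p$ group to $\F^{\times}$ and hence trivial. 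In particular $\Gamma$ is orientable.

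Next, I would apply Poincaré duality for $\Gamma$ with coefficients in the finite-dimensional continuous $\F[\Gamma]$-module $V := \Hom_{\F}(\tau_1, \tau_2)$ to produce a natural isomorphism $H^i(\Gamma, V) \cong H^{3-i}(\Gamma, V^{\vee})^{\vee}$. Identifying $V^{\vee}$ with $\Hom_{\F}(\tau_2, \tau_1)$ as $\F[\Gamma]$-modules rewrites this as
\[
\Ext^i_{\Gamma}(\tau_1, \tau_2) \cong \Ext^{3-i}_{\Gamma}(\tau_2, \tau_1)^{\vee}, \qquad 0 \leq i \leq 3.
\]

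To pass from $\Gamma$ to $G$, I would use that $G/\Gamma \cong \cO_D^{\times}/U^1_D \cong \F_{p^2}^{\times}$ has order $p^2 - 1$, prime to $p$. The Hochschild--Serre spectral sequence for the normal subgroup $\Gamma \subset G$ with coefficients in $\Hom_{\F}(\tau_1, \tau_2)$ therefore degenerates on the $E_2$-page, producing $\Ext^i_{G}(\tau_1, \tau_2) \cong \Ext^i_{\Gamma}(\tau_1, \tau_2)^{G/\Gamma}$ for all $i \geq 0$. Taking $G/\Gamma$-invariants of the displayed isomorphism above, and using that invariants under a group of order prime to $p$ are exact, agree with coinvariants, and commute with the $\F$-linear dual, yields the stated identification on $G$.

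The main subtle point is the orientability of $\Gamma$ and the verification that one does obtain Poincaré duality of dimension exactly $3$; both reduce to $\Gamma$ being torsion-free and uniform, which follows from Lazard's theory applied to the $p$-valuation $\o$ (for $p \geq 5$), and may also be read off the explicit description of $\gr U^1_D/Z^1_D$ obtained in Corollary \ref{coro--lie-alg} and Proposition \ref{prop:PBW}. The rest of the argument is formal.
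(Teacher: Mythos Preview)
Your proof is correct and follows essentially the same approach as the paper: both reduce to Poincar\'e duality of dimension $3$ for the pro-$p$ group $\Gamma = U^1_D/Z^1_D$ (the paper cites \cite[\S4.5]{Se97}), then pass through the prime-to-$p$ quotient $G/\Gamma \cong \F_{q^2}^{\times}$ by taking invariants. Your version is slightly more explicit about orientability (triviality of the dualizing character, via the observation that a continuous character from a pro-$p$ group into $\F^{\times}$ must be trivial) and about the Hochschild--Serre degeneration, both of which the paper leaves implicit.
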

\begin{proof}
Firstly,  we have isomorphisms   \[\Ext^i_{\cO^{\times}_D/Z_D^1}(\tau_1,\tau_2)\cong\Ext^i_{\cO_D^{\times}/Z_D^1}(\ide,\tau_1^{\vee}\otimes\tau_2)\cong H^i(\cO_D^{\times}/Z_D^1,\tau_1^{\vee}\otimes\tau_2)\cong H^i(U_D^1/Z_D^1,\tau_1^{\vee}\otimes\tau_2)^{\F_{p^2}^{\times}}.\]
Secondly, since $U_D^1/Z_D^1$ is a Poincar\'e group of dimension $3$ (cf. \cite[\S4.5]{Se97}),  Poincar\'e duality induces an isomorphism  \[H^i(U_D^1/Z_D^1,\tau)\cong H^{3-i}(U_D^1/Z_D^1,\tau^{\vee})^{\vee}\]
for $0\leq i\leq 3$ and any finite dimensional representation $\tau$. The result easily follows.
\end{proof}

\section{Lattices in some locally algebraic representations of $\GL_2(\Z_p)$}\label{section-lattices-GL2}

Let $K: = \GL_2(\Z_p),$ $\G : = \GL_2(\F_p),$ and $K_1: = \Ker(K \onto \G)$. Let $I$ (resp. $I_1$) denote the upper Iwahori (resp. pro-$p$ Iwahori) subgroup of $K.$ Let $Z$ denote the centre of $G,$ $Z_1 : = Z\cap K_1.$ Let \[H : = \Big\{\matr{[a]}00{[d]}
,~ a,d\in \F_p^{\times}\Big\}.\]   Let $\a: H \to \F^{\times}$ be the character of $H$ sending $\bigl(\begin{smallmatrix}
[a] & 0 \\ 0& [d]
\end{smallmatrix} \bigr)$ to $a d^{-1}.$  By abuse of notation we also denote   the image of $H$ in $\Gamma$ by the same letter. If $\chi$ is a character of $H$, we denote by $\chi^s$ the character sending $h$ to $\chi(shs)$ 
where $s:=\smatr0110$. We regard a character of $H$ as a character of $I$ via the quotient map $I\twoheadrightarrow H$; note that any smooth $\F$-valued  character of $I$  arises in this way.

For $m,n\in\N$, we denote  \[\s_{m,n} := \Sym^m \F^2 \otimes {\det}^n\] which are naturally representations of $\G$ over $\F$. We also regard them as representations of $K$   via the natural projection $K\onto \G.$ Up to isomorphism the set $\{\s_{m,n},~ 0\leq m \leq p-1,~0\leq n \leq p-2\}$ forms a complete list of {\em irreducible} representations of $\G$ (and of $K$) over $\F.$ 

We choose the standard basis of $\sigma_{m,n}$ to be $\{X^iY^{m-i}; 0\leq i\leq m\}$, with the action of $\Gamma$  given by 
\[ \matr{a}bcd X^iY^{m-i}=(aX+cY)^i(bX+dY)^{m-i}.\] 
It is well-known that $\sigma_{m,n}^{I_1}$ is $1$-dimensional (spanned by $X^m$), on which $H$ acts via the character sending $\bigl(\begin{smallmatrix}
{[a]} & 0 \\ 0& {[d]}
\end{smallmatrix} \bigr)$ to $a^{m+n} d^n$, which we denote by   $\chi_{m,n}$. Similarly, the space of coinvariants $(\sigma_{m,n})_{I_1}$ is  $1$-dimensional on which $H$ acts via $\chi_{m,n}^s$. 
 
Recall $E := W(\F)[1/p],$ where $\cO:=W(\F)$ is the ring of Witt vectors in $\F.$ If $V$ is a finite dimensional representation of $K$ over $E$, then $V^{\circ}$ will denote a $K$-stable $\cO$-lattice in $V$ and $\overline{V^{\circ}}$ its reduction   modulo $p$. We will write  $\overline{V}^{\rm ss}$ for the semi-simplification of $\overline{V^{\circ}}.$ Following \cite{EGS}, we say $V$ is {\em residually multiplicity free} if any of the Jordan-H\"older factors of $\overline{V}^{\rm ss}$ occurs with multiplicity one. In this section,  a lattice \emph{always} means a $K$-stable $\cO$-lattice. 

\subsection{Preliminaries} 

Denote by $U(\Z_p)$ (resp. $B(\Z_p)$)   the (upper) unipotent (resp.  Borel) subgroup of $K$. Note that $H$ normalizes $U(\Z_p)$. 

\begin{proposition}\label{prop:U-inv}
Let $W$ be a finite dimensional $\F$-representation of $B(\Z_p)$, of dimension $\geq 2$. 

(i) Assume that $W^{U(\Z_p)}$ is $1$-dimensional and isomorphic to $\chi$ as an $H$-representation. Then \[(\Sym^{1}\F^2\otimes W)^{U(\Z_p)}\cong \chi\chi_{1,0}\oplus \chi\chi_{1,0}^s.\]

(ii) Assume that $W_{U(\Z_p)}$ is $1$-dimensional and isomorphic to $\chi$ as an $H$-representation. Then \[(\Sym^1\F^2\otimes W)_{U(\Z_p)}\cong\chi\chi_{1,0}\oplus \chi\chi_{1,0}^s.\]

\end{proposition}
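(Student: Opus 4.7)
The plan is to analyze the $B(\Z_p)$-stable short exact sequence
\[0 \to \chi_{1,0}\otimes W \to \Sym^1\F^2\otimes W \to \chi_{1,0}^s\otimes W \to 0\]
coming from the line $\F X \subset \Sym^1\F^2$ of $U(\Z_p)$-invariants (on which $H$ acts via $\chi_{1,0}$, while the quotient $\Sym^1\F^2/\F X \cong \F Y$ carries the character $\chi_{1,0}^s$). Since $U(\Z_p)$ acts trivially on both outer tensor factors, applying $(-)^{U(\Z_p)}$ or $(-)_{U(\Z_p)}$ to the end terms immediately yields $\chi_{1,0}\chi$ and $\chi_{1,0}^s\chi$, so the task reduces to verifying that the induced sequence on $(\Sym^1\F^2\otimes W)^{U(\Z_p)}$ (resp.\ $(\Sym^1\F^2\otimes W)_{U(\Z_p)}$) is short exact, after which the splitting into eigencharacters will be automatic.

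The key ingredient is the $\F[u_1]$-module structure on $W$, where $u_1 := \smatr 1101$ topologically generates $U(\Z_p) \cong \Z_p$. The action of $u_1$ on the finite-dimensional $\F$-vector space $W$ has $p$-power order, so $u_1 - 1$ is nilpotent and $W$ decomposes into Jordan blocks for $u_1-1$; consequently both $\dim_\F W^{U(\Z_p)}$ and $\dim_\F W_{U(\Z_p)}$ equal the number of blocks. The 1-dimensionality hypothesis in either (i) or (ii) therefore forces $W$ to consist of a single Jordan block, i.e.\ $W \cong \F[u_1]/((u_1-1)^n)$ with $n := \dim_\F W \geq 2$; I fix a basis $w_0, w_1, \ldots, w_{n-1}$ of $W$ with $(u_1-1)w_j = w_{j-1}$ for $j \geq 1$ and $w_0$ spanning $W^{U(\Z_p)}$.

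For (i), write an arbitrary element of $\Sym^1\F^2\otimes W$ as $X\otimes v_1 + Y\otimes v_2$. The identities $u_1 X = X$ and $u_1 Y = X + Y$ translate $U(\Z_p)$-invariance into the pair of equations $u_1 v_2 = v_2$ and $(u_1-1)v_1 = -v_2$. The first forces $v_2 \in \F w_0$, and because $n \geq 2$ we have $w_0 = (u_1-1)w_1 \in (u_1-1)W$, so a corresponding $v_1$ always exists (and is unique up to $W^{U(\Z_p)}$). This yields $\dim_\F(\Sym^1\F^2\otimes W)^{U(\Z_p)} = 2$. For (ii), one directly computes $(u_1-1)(\Sym^1\F^2\otimes W)$ in the basis $\{X\otimes w_j,~ Y\otimes w_j : 0 \leq j \leq n-1\}$: the relations $(u_1-1)(X\otimes w_{j+1}) = X\otimes w_j$ and $(u_1-1)(Y\otimes w_{j+1}) = X\otimes w_{j+1} + X\otimes w_j + Y\otimes w_j$ show this image has codimension exactly $2$, hence $\dim_\F(\Sym^1\F^2\otimes W)_{U(\Z_p)} = 2$ as well. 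In both situations the induced sequence $0 \to \chi_{1,0}\chi \to (\cdot) \to \chi_{1,0}^s\chi \to 0$ of $H$-representations is therefore short exact by dimension count.

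Finally, since $H \cong (\F_p^{\times})^2$ has order prime to $p$, every $\F[H]$-module is semisimple by Maschke's theorem, and $\chi_{1,0} \neq \chi_{1,0}^s$ (they send $\diag([a],[d])$ to $a$ and $d$ respectively), so both extensions split canonically. A quick verification on the obvious spanning vectors (namely $X\otimes w_0$ and the class of $Y\otimes w_0 - X\otimes w_1$ for (i), and the classes of $X\otimes w_{n-1}$ and $Y\otimes w_{n-1}$ for (ii)) realizes the $H$-characters $\chi_{1,0}\chi$ and $\chi_{1,0}^s\chi$; the computation uses only the commutation $hu_1 h^{-1} = u_{ad^{-1}}$ for $h = \diag([a],[d])$. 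The main work in writing this up is the cyclic-module identification and the bookkeeping of the $(u_1-1)$-action on $\Sym^1\F^2\otimes W$; there is no substantive obstacle beyond this.
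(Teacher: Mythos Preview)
Your proof is correct. For part (i), both you and the paper reduce to the same pair of equations on $v_1,v_2$ and reach the same two-dimensionality; you set this up via an explicit single-Jordan-block basis for the cyclic $\F[u_1]$-module $W$, whereas the paper instead shows that $(W/W_0)^{U(\Z_p)}$ is one-dimensional by embedding it into $H^1(U(\Z_p),\chi)\cong\chi\alpha^{-1}$ (quoting \cite[Lem.~5.5]{Pa10}). For part (ii), the paper takes a shorter route than your direct codimension-two computation: it simply applies Pontryagin duality $(W_{U(\Z_p)})^\vee\cong(W^\vee)^{U(\Z_p)}$ (and likewise for $\Sym^1\F^2\otimes W$) to reduce to (i). Your Jordan-block approach has the mild advantage of being self-contained, while the paper's $H^1$ and duality arguments are a bit more conceptual and avoid the explicit basis bookkeeping.

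One small imprecision: in your closing ``quick verification'', the Jordan-basis vector $w_1$ was chosen only for its behaviour under $u_1-1$ and need not be an $H$-eigenvector, so $Y\otimes w_0 - X\otimes w_1$ is not obviously an $H$-eigenvector either. This does not affect correctness, since your Maschke argument (together with $\chi_{1,0}\neq\chi_{1,0}^s$) already yields the splitting of the short exact sequence of $H$-modules; but if you want to retain the explicit eigenvectors, you should first note that $\ker(u_1-1)^2$ is $H$-stable (since $h u_1 h^{-1}=u_{[a][d]^{-1}}$ and $(u_c-1)$ differs from $(u_1-1)$ by a unit in $\F[u_1]/(u_1-1)^n$ for $c\in\Z_p^\times$), and then replace $w_1$ by an $H$-eigenvector there lifting the generator of $(W/W_0)^{U(\Z_p)}\cong\chi\alpha^{-1}$.
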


\begin{proof}
(i) Let $W_0:=W^{U(\Z_p)}\cong \chi$. We first prove that  $(W/W_0)^{U(\Z_p)}$ is $1$-dimensional and isomorphic to $\chi\alpha^{-1}$ as an $H$-representation. Since $\dim_{\F}W\geq 2$ by assumption, $W/W_0$ is nonzero, hence $(W/W_0)^{U(\Z_p)}$ is also nonzero because $U(\Z_p)$ is a pro-$p$-group. On the other hand,   
we have an $H$-equivariant injection
\[0\ra (W/W_0)^{U(\Z_p)}\ra H^1(U(\Z_p),W_0)\]
which is actually an isomorphism because $H^1(U(\Z_p),\chi)\cong \chi\alpha^{-1}$ is $1$-dimensional (see e.g.~\cite[Lem.~5.5]{Pa10}).   This proves the claim.

Any element  $w\in \Sym^1\F^2\otimes W$ can be written as 
$Y\otimes w_0+X\otimes w_1$
for (unique) $w_0,w_1\in W$. Let $g=\smatr{1}{t}01\in U(\Z_p)$. Then 
\[ 
gw=(\bar{t}X+Y)\otimes gw_0+X\otimes gw_1 
=Y\otimes gw_0+X\otimes(\bar{t}\cdot gw_0+gw_1).
\]
Hence $w$ is fixed by $U(\Z_p)$ if and only if 
\[\left\{\begin{array}{ll}
gw_0=w_0\\
gw_1=w_1-\bar{t}gw_0.
\end{array}\right.\]
We have two cases:
\begin{itemize}
\item[(a)] If $w_0=0$, then the above condition becomes $gw_1=w_1$, i.e. $w_1\in W_0$.
\item[(b)] If $w_0\neq0$, then $w_0\in W_0$ and $w_1\in (W/W_0)^{U(\Z_p)}$. Moreover, $(W/W_0)^{U(\Z_p)}$ is $1$-dimensional as seen above and  the condition $gw_1=w_1-\bar{t}w_0$ determines uniquely $w_1$ (whenever $w_0\neq0$ is fixed). \end{itemize}
The result easily follows.

(ii) It follows from (i) via the  fact that   $(W_{U(\Z_p)})^{\vee}\cong (W^{\vee})^{U(\Z_p)}$ (and similarly for $\Sym^1\F^2\otimes W$).  
\end{proof}

\begin{corollary}\label{cor:U-inv}
Let $V=\Ind_{I}^{K}\chi$ for some smooth character $\chi:I\ra \F^{\times}$. Then \[(\Sym^1\F^2\otimes V)^{U(\Z_p)}\cong \chi^s\chi_{1,0}\oplus\chi^s\chi_{1,0}^s\oplus \chi\chi_{1,0},\]
\[(\Sym^1\F^2\otimes V)_{U(\Z_p)}\cong \chi^s\chi_{1,0}\oplus\chi^s\chi_{1,0}^s\oplus \chi\chi_{1,0}^s.\]
\end{corollary}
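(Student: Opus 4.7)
The plan is to decompose $V|_{B(\Z_p)}$ via Mackey's formula into two pieces, each accessible either to Proposition~\ref{prop:U-inv} or to a trivial direct computation, and then add up the contributions to $\Sym^1\F^2\otimes V$.

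Since $K = I \sqcup IsI$ with $s = \smatr{0}{1}{1}{0}$ and $B(\Z_p)\subset I$, the double cosets $B(\Z_p)\backslash K / I$ are represented by $\{1, s\}$. The Mackey formula then yields a direct sum decomposition of $B(\Z_p)$-representations
\[V|_{B(\Z_p)} \cong V_1 \oplus V_2, \qquad V_1 := \chi|_{B(\Z_p)}, \quad V_2 := \Ind_{B^{(1)}(\Z_p)}^{B(\Z_p)} \chi^s,\]
where $B^{(1)}(\Z_p) := B(\Z_p)\cap sIs^{-1} = \{\smatr{a}{pb'}{0}{d}\}$ is of index $p$ in $B(\Z_p)$. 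Thus $V_1$ is one-dimensional (and Proposition~\ref{prop:U-inv} will not directly apply to it) while $V_2$ has dimension $p\geq 2$.

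Next I would identify the $U(\Z_p)$-invariants and coinvariants of each summand as $H$-modules. Since $\chi$ factors through $H = I/I_1$ and $U(\Z_p)\subset I_1$, the action of $U(\Z_p)$ on $V_1$ is trivial, giving $V_1^{U(\Z_p)} = (V_1)_{U(\Z_p)} = \chi$ as $H$-representations. For $V_2$, the factorization $B(\Z_p) = U(\Z_p)\cdot B^{(1)}(\Z_p)$ with $U(\Z_p)\cap B^{(1)}(\Z_p) = \{\smatr{1}{pb'}{0}{1}\}$, together with the triviality of $\chi^s$ on this intersection, gives by Mackey
\[V_2|_{U(\Z_p)} \cong \Ind_{U(\Z_p)\cap B^{(1)}(\Z_p)}^{U(\Z_p)} \ide,\]
which is the regular representation of $U(\Z_p)/(U(\Z_p)\cap B^{(1)}(\Z_p)) \cong \F_p$. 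Choosing coset representatives $u_a := \smatr{1}{[a]}{0}{1}$, $a\in\F_p$, and the resulting basis $\{\psi_a\}$ of $V_2$, both $V_2^{U(\Z_p)}$ and $(V_2)_{U(\Z_p)}$ are one-dimensional, spanned respectively by $\sum_a\psi_a$ and by the class of any single $\psi_a$. The conjugation relation $hu_a h^{-1} = u_{a_0 d_0^{-1}a}$ for $h = \smatr{[a_0]}{0}{0}{[d_0]}\in H$ then yields $h\cdot\psi_a = \chi^s(h)\,\psi_{a_0 d_0^{-1}a}$, whence $V_2^{U(\Z_p)} \cong (V_2)_{U(\Z_p)} \cong \chi^s$ as $H$-modules.

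Finally, applying Proposition~\ref{prop:U-inv} to $V_2$ (with distinguished character $\chi^s$) gives
\[(\Sym^1\F^2\otimes V_2)^{U(\Z_p)} \cong (\Sym^1\F^2\otimes V_2)_{U(\Z_p)} \cong \chi^s\chi_{1,0} \oplus \chi^s\chi_{1,0}^s,\]
while a direct computation with the basis $\{X,Y\}$ of $\Sym^1\F^2$ (using $\smatr{1}{[t]}{0}{1}X = X$ and $\smatr{1}{[t]}{0}{1}Y = [t]X+Y$) shows that $(\Sym^1\F^2\otimes V_1)^{U(\Z_p)} = \F X\otimes\chi \cong \chi\chi_{1,0}$ and $(\Sym^1\F^2\otimes V_1)_{U(\Z_p)} = \F\bar{Y}\otimes\chi \cong \chi\chi_{1,0}^s$. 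Taking direct sums over $V_1$ and $V_2$ recovers the two isomorphisms in the corollary. The only step requiring any real computation is the identification of the $H$-character of $V_2^{U(\Z_p)}$ and $(V_2)_{U(\Z_p)}$ as $\chi^s$ (rather than some other twist of $\chi$), but this is immediate once one writes down the coset-sum basis vector and applies the conjugation formula for $H$ on the $u_a$.
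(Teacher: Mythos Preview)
Your proof is correct and follows essentially the same approach as the paper's: both use Mackey's decomposition to split $V$ into a one-dimensional piece carrying $\chi$ and a $p$-dimensional piece $V'$ with $(V')^{U(\Z_p)}\cong (V')_{U(\Z_p)}\cong \chi^s$, then apply Proposition~\ref{prop:U-inv} to the second piece and handle the first by the obvious computation with $\F X$ and $\F Y$. The only cosmetic difference is that the paper restricts $V$ to $I$ (writing $V'=\Ind_{HK_1}^{I}\chi^s$) and leaves the further restriction to $B(\Z_p)$ implicit when invoking Proposition~\ref{prop:U-inv}, whereas you restrict directly to $B(\Z_p)$ and describe the same piece as $\Ind_{B^{(1)}(\Z_p)}^{B(\Z_p)}\chi^s$; these are the same representation upon restriction.
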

\begin{proof}
Mackey's decomposition theorem  gives an isomorphism $V|_{I}\cong \chi\oplus V'$, where $V':= \Ind_{HK_1}^{I}\chi^s$. It is easy to see that $V'$ has dimension $p$, and $V'^{U(\Z_p)}\cong V'_{U(\Z_p)}\cong\chi^s$. Thus Proposition \ref{prop:U-inv} applies to $\Sym^{1}\F^2\otimes V'$. The results then follow by noting that $(\Sym^1\F^2\otimes \chi)^{U(\Z_p)}\cong \F X\otimes \chi$ and $(\Sym^1\F^2\otimes \chi)_{U(\Z_p)}\cong \F Y\otimes \chi$.
\end{proof}

Consider the following situation: $V_1, V_2$ are two  irreducible  locally algebraic representations of $K$, and $L_i\subset V_i$ is a lattice for $i=1,2$. Assume that we are given an $\F[K]$-module $W$, together with $K$-equivariant morphisms $r_i:L_i\ra W$.  Let $L$ be the fibered product of $r_1$ and $r_2$, namely    
\begin{equation}\label{eq:glue-L}0\ra L\ra L_1\oplus L_2\To{r_1-r_2} W.\end{equation}
Then $L$ is a lattice in $V_1\oplus V_2$. 
We also call $L$ the gluing lattice in $L_1$ and $L_2$ along $W$. Remark that, if either $r_1$ or $r_2$ is surjective, then so is $r_1-r_2$. 

\begin{lemma}\label{lem:glue-L/p}
Assume that $r_1$ is surjective.
\begin{enumerate}
\item[(i)] There exists a short exact sequence
\[0\ra \Ker(r_1)/p\Ker(r_1)\ra L/pL\ra L_2/pL_2\ra0.\]

\item[(ii)] Let $r_L$ denote the composite morphism $L\ra L/pL\ra  L_2/pL_2$, where the second map is as in (i). Then $\Ker(r_L)=\Ker(r_1)+pL$ and
\[\Ker(r_L)/p\Ker(r_L)\cong \Ker(r_1)/p\Ker(r_1)\oplus pL_2/p^2L_2.\] 
\end{enumerate}
\end{lemma}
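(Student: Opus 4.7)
For (i), the plan is to exhibit a short exact sequence $0 \to \Ker(r_1) \to L \to L_2 \to 0$ via projection to the second factor and then reduce mod $p$. Starting from the defining fibered product
\[0 \to L \to L_1 \oplus L_2 \xrightarrow{r_1-r_2} W \to 0\]
(where surjectivity on the right uses surjectivity of $r_1$), I consider the projection $p_2 \colon L \to L_2$, $(x_1,x_2) \mapsto x_2$. Its kernel is $\Ker(r_1) \subset L_1 \hookrightarrow L_1 \oplus L_2$, and it is surjective: given $x_2 \in L_2$, the surjectivity of $r_1$ produces $x_1 \in L_1$ with $r_1(x_1) = r_2(x_2)$, whence $(x_1,x_2) \in L$. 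Because $L_2$ is $\cO$-torsion free (being a lattice in $V_2$), the sequence $0 \to \Ker(r_1) \to L \to L_2 \to 0$ splits $\cO$-linearly, so it remains exact after $\otimes_\cO \F$, yielding (i).

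For (ii), the identity $\Ker(r_L) = \Ker(r_1) + pL$ is immediate from the construction of $r_L$: it is the preimage in $L$ of the kernel $\Ker(r_1)/p\Ker(r_1)$ of the second map in (i). To get the direct sum decomposition of $\Ker(r_L)/p\Ker(r_L)$, the key input will be the intersection identity
\[\Ker(r_1) \cap pL = p\Ker(r_1).\]
I will check this by hand: if $(x,0) = p(y_1,y_2)$ with $(y_1,y_2) \in L$, then $py_2 = 0$ in $L_2$ forces $y_2 = 0$ by $\cO$-torsion freeness of $L_2$, so $(y_1,0) \in L$ (i.e. $y_1 \in \Ker(r_1)$) and $x = py_1 \in p\Ker(r_1)$. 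Granted this, restricting $p_2$ to $pL$ produces a short exact sequence $0 \to p\Ker(r_1) \to pL \to pL_2 \to 0$, and the second isomorphism theorem applied to $\Ker(r_L) = \Ker(r_1) + pL$ inside $L$ yields
\[0 \to \Ker(r_1) \to \Ker(r_L) \to pL_2 \to 0.\]
Since $pL_2$ is $\cO$-free, this sequence splits over $\cO$; tensoring with $\F$ then gives the desired isomorphism $\Ker(r_L)/p\Ker(r_L) \cong \Ker(r_1)/p\Ker(r_1) \oplus pL_2/p^2L_2$.

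The only delicate step is the intersection identity $\Ker(r_1) \cap pL = p\Ker(r_1)$, which relies crucially on the $\cO$-torsion freeness of $L_2$. Once this is in hand the rest is routine bookkeeping: every short exact sequence that appears ends in a free $\cO$-module ($L_2$ or $pL_2$), hence splits over $\cO$ and stays exact after reduction mod $p$. I therefore do not anticipate any genuine obstacle beyond keeping track of which elements live in which lattice.
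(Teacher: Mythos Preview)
Your proof of (i) is correct and follows the same route as the paper. For (ii), the identification $\Ker(r_L)=\Ker(r_1)+pL$ and the intersection $\Ker(r_1)\cap pL=p\Ker(r_1)$ are fine, and your short exact sequence $0\to\Ker(r_1)\to\Ker(r_L)\to pL_2\to 0$ is correct and $K$-equivariant. The gap is in the last step: splitting this sequence over $\cO$ using freeness of $pL_2$ gives only an $\cO$-linear splitting, not a $K$-equivariant one, so after reducing mod $p$ you obtain only an isomorphism of $\F$-vector spaces. But the direct sum in the lemma is meant---and is later used, for instance when computing $\rcosoc(\Ker(r_R))$ in the proof of Proposition~\ref{prop:tildeR}---as an isomorphism of $K$-representations. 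An $\cO$-split sequence of $\cO[K]$-modules need not become $K$-split after reduction.

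The fix is short and close to the paper's argument. Map the exact sequence $0\to p\Ker(r_1)\to pL\to pL_2\to 0$ (which is just (i) scaled by $p$) into your sequence $0\to\Ker(r_1)\to\Ker(r_L)\to pL_2\to 0$ via the natural inclusions. Reducing mod $p$, the leftmost vertical map $p\Ker(r_1)/p^2\Ker(r_1)\to\Ker(r_1)/p\Ker(r_1)$ is identically zero, so the middle vertical map $pL/p^2L\to\Ker(r_L)/p\Ker(r_L)$ factors through $pL_2/p^2L_2$, yielding a $K$-equivariant section. The paper phrases this slightly differently: it writes $\Ker(r_L)$ as the image of the $K$-equivariant addition map $\Ker(r_1)\oplus pL\to\Ker(r_L)$, reduces mod $p$, and observes that the kernel $p\Ker(r_1)/p^2\Ker(r_1)$ lands entirely in the second summand, whence the quotient is $K$-equivariantly $\Ker(r_1)/p\Ker(r_1)\oplus pL_2/p^2L_2$.
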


\begin{proof}
(i) We have the following commutative diagram
\[\xymatrix{0\ar[r] & \Ker(r_1)\ar[r]\ar@{^(->}[d]&L_1\ar^{r_1}[r]\ar^{\mathrm{id}\oplus 0}@{^(->}[d]&W\ar[r]\ar@{=}[d]&0\\
0\ar[r]&L\ar[r]&L_1\oplus L_2\ar^{\ \ r_1-r_2}[r]&W\ar[r]&0.}\]
By the snake lemma, it  induces  a short exact sequence 
$0\ra \Ker(r_1)\ra L\ra L_2\ra0$. We obtain the result by taking  mod $p$ reduction (as $L_2$ is $\cO$-flat).

(ii) It is clear from (i) that $\Ker(r_L)=\Ker(r_1)+pL$, so we have a short exact sequence
\[0\ra \Ker(r_1)\cap pL\ra \Ker(r_1)\oplus pL\ra \Ker(r_L)\ra0.\]
Taking mod $p$ reduction and noting that $\Ker(r_1)\cap pL=p\Ker(r_1)$ by (i), we obtain an exact sequence 
\[0\ra p\Ker(r_1)/p^2\Ker(r_1)\ra \Ker(r_1)/p\Ker(r_1)\oplus pL/p^2L\ra \Ker(r_L)/p\Ker(r_L)\ra0. \] 
But the map $p\Ker(r_1)/p^2\Ker(r_1)\ra \Ker(r_1)/p\Ker(r_1)$ is identically zero, so the result follows from (i).
\end{proof}

\begin{lemma}\label{lem:cosoc-glue}
Assume that both $r_1$ and $r_2$ are surjective. Assume moreover   
\begin{enumerate}
\item[(a)]  $\rcosoc(L_1)=\rcosoc(W)$;  

\item[(b)] $\rcosoc(\Ker(r_1))$ and $\rcosoc(\Ker(r_2))$ do not admit common Jordan-H\"older factors. 
\end{enumerate} Then
$\rcosoc(L)\cong \rcosoc(L_2)$.
\end{lemma}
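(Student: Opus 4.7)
My plan is to compare $\rcosoc(L)$ and $\rcosoc(L_2)$ by testing against irreducible $\F[K]$-modules. By Lemma~\ref{lem:glue-L/p}(i) there is an exact sequence
\[
0 \to \Ker(r_1)/p\Ker(r_1) \to L/pL \to L_2/pL_2 \to 0,
\]
and applying $\Hom_K(-,S)$ to it for an irreducible $S$ reduces the claim to showing that every non-zero $K$-equivariant map $q : L \to S$ vanishes on $\Ker(\pi_2)$, where $\pi_i$ denotes the composite $L \into L_1 \oplus L_2 \twoheadrightarrow L_i$. Note that $\Ker(\pi_1) = \{0\} \times \Ker(r_2)$ and $\Ker(\pi_2) = \Ker(r_1) \times \{0\}$.

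Assuming for contradiction that $q|_{\Ker(\pi_2)} \neq 0$, irreducibility of $S$ forces this restriction to be surjective, so $S \in \JH(\rcosoc(\Ker(r_1)))$. I would then examine $q|_{\Ker(\pi_1)}$: if non-zero, it is likewise surjective and places $S \in \JH(\rcosoc(\Ker(r_2)))$, contradicting hypothesis (b). Hence $q|_{\Ker(\pi_1)} = 0$, so $q$ factors through $\pi_1$ as $q = q_1 \circ \pi_1$ for some surjection $q_1 : L_1 \twoheadrightarrow S$.

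The final step invokes hypothesis (a). The surjection $r_1$ induces a surjection $\rcosoc(L_1) \twoheadrightarrow \rcosoc(W)$, which by (a) must be an isomorphism. Consequently the two natural maps $L_1 \to \rcosoc(W)$, namely $L_1 \twoheadrightarrow \rcosoc(L_1) \simto \rcosoc(W)$ and $L_1 \xrightarrow{r_1} W \twoheadrightarrow \rcosoc(W)$, coincide, so any surjection from $L_1$ onto an irreducible module factors through $r_1$. In particular $q_1|_{\Ker(r_1)} = 0$; but $\pi_1$ identifies $\Ker(\pi_2)$ with $\Ker(r_1)$, contradicting $q|_{\Ker(\pi_2)} \neq 0$.

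I expect the main subtle point to be this precise use of (a): it only asserts an isomorphism of cosocles, not any containment of $\Ker(r_1)$ inside $\rad(L_1)$ as a submodule of $L_1$, yet this weaker information is exactly what is needed to force $q_1$ to kill $\Ker(r_1)$. The remainder is a clean case analysis made possible by the symmetric description of $L$ as a fibered product.
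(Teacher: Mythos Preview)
Your proof is correct and takes a genuinely different route from the paper's. The paper applies $\Hom_{\cO[K]}(-,\sigma)$ directly to the defining sequence $0\to L\to L_1\oplus L_2\to W\to 0$ and works with the resulting five-term sequence involving $\Ext^1$: hypothesis~(a) gives $\Hom(W,\sigma)\simto\Hom(L_1,\sigma)$, and hypothesis~(b) is used to show that $\Ext^1(W,\sigma)\to\Ext^1(L_1,\sigma)\oplus\Ext^1(L_2,\sigma)$ is injective (since for each $\sigma$ at least one $\Hom(\Ker(r_i),\sigma)$ vanishes). You instead exploit the concrete fibred-product description of $L$ and analyse a putative surjection $q:L\to S$ by cases according to whether it kills $\Ker(\pi_1)$ or $\Ker(\pi_2)$; this avoids $\Ext^1$ entirely and is arguably more elementary. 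The key deduction from~(a)---that the induced surjection $\rcosoc(L_1)\twoheadrightarrow\rcosoc(W)$ is an isomorphism, whence $\Ker(r_1)\subseteq\rad(L_1)$ and every irreducible quotient of $L_1$ factors through $r_1$---is the only subtle step, and you have identified it correctly. Both arguments use~(a) and~(b) in essentially the same places; the paper's approach generalises more readily to longer exact sequences, while yours makes the role of the fibred product more transparent.
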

\begin{proof}
We need to show that the natural map
\[\Hom_{\cO[K]}(L_2,\sigma)\ra\Hom_{\cO[K]}(L,\sigma)\]
is an isomorphism for any Serre weight $\sigma$. 
By applying $\Hom_{\cO[K]}(-,\sigma)$ to \eqref{eq:glue-L} we obtain a long exact sequence
  \begin{multline*}0\ra \Hom(W,\sigma)\ra\Hom(L_1,\sigma)\oplus \Hom(L_2,\sigma)\ra \Hom(L,\sigma)\\ \ra \Ext^1(W,\sigma)\ra \Ext^1(L_1,\sigma)\oplus \Ext^1(L_2,\sigma).  \end{multline*}
By (a), the surjection $r_1:L_1\twoheadrightarrow W$ induces an isomorphism  $\Hom(W,\sigma)\simto \Hom(L_1,\sigma)$. To conclude we need to show that the morphism 
\[\Ext^1(W,\sigma)\ra \Ext^1(L_1,\sigma)\oplus \Ext^1(L_2,\sigma)\]
is injective. For this it is enough to prove that either $\Hom(\Ker(r_1),\sigma)$ or $\Hom(\Ker(r_2),\sigma)$ vanishes, which is a consequence of  (b).
\end{proof}

Finally, we record a result which will be used later on. 

\begin{proposition}
Let $V$ be an irreducible smooth representation of $K$ over $E$. Then the $K$-representation $ \Sym^1 E^2 \otimes V $ is again irreducible.
\end{proposition}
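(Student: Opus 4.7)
The plan is to show that every $K$-subrepresentation of $\Sym^1 E^2 \otimes V$ has the form $\Sym^1 E^2 \otimes V'$ for some $K$-stable $E$-subspace $V'\subseteq V$, and then invoke irreducibility of $V$.

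First I would exploit smoothness of $V$. Since $V$ is a smooth finite-dimensional representation, it factors through some finite quotient $K/K_n$, so the Lie algebra $\mathfrak{k}=\mathfrak{gl}_2(\Q_p)$ (which coincides with the Lie algebra of the open subgroup $K_n$) acts trivially on $V$. Consequently, for the diagonal $K$-action on $\Sym^1 E^2 \otimes V$, the derived $\mathfrak{k}$-action is concentrated on the algebraic factor, and as a $\mathfrak{k}$-module we have an isomorphism $\Sym^1 E^2 \otimes V \cong (\Sym^1 E^2)^{\oplus \dim_E V}$. Any $K$-subrepresentation $W\subseteq \Sym^1 E^2\otimes V$ is then automatically $\mathfrak{k}$-stable, by differentiating $\exp(t\xi)\cdot w\in W$ at $t=0$ for $\xi\in\mathfrak{k}$ (which is valid because $W$ is finite-dimensional over $E$, hence closed).

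Next I would invoke Schur's lemma. $\Sym^1 E^2$ is absolutely irreducible as a $\mathfrak{gl}_2(E)$-module, and since the image of $\mathfrak{k}$ in $\End_E(\Sym^1 E^2)$ has full $E$-span (it contains the standard $\mathfrak{sl}_2$-triple), it is already irreducible as a $\mathfrak{k}$-module with $\End_{\mathfrak{k}}(\Sym^1 E^2)=E$. The standard isotypic-component argument then identifies the $\mathfrak{k}$-submodules of $(\Sym^1 E^2)^{\oplus \dim_E V}$ with the subspaces of the multiplicity space: concretely, every $\mathfrak{k}$-stable $E$-subspace $W\subseteq \Sym^1 E^2\otimes V$ equals $\Sym^1 E^2\otimes V'$ where $V'=\Hom_{\mathfrak{k}}(\Sym^1 E^2,W)\subseteq V$.

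Finally I would check that $K$-stability of $\Sym^1 E^2\otimes V'$ forces $V'$ itself to be $K$-stable. For $v\in V'$ and $k=\smatr{a}{b}{c}{d}\in K$, the identity
\[k(X\otimes v)=X\otimes(akv)+Y\otimes(ckv)\in \Sym^1 E^2\otimes V'\]
implies $akv,\ ckv\in V'$; since $(a,c)\neq(0,0)$ for any invertible $k$, we conclude $kv\in V'$. Irreducibility of $V$ then forces $V'\in\{0,V\}$, so $W\in\{0,\Sym^1 E^2\otimes V\}$, giving the claim. The only nontrivial step is the reduction to $\mathfrak{k}$-submodules via differentiation; everything else is formal, so I do not anticipate a real obstacle.
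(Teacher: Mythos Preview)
Your argument is correct and is essentially the same approach as the one the paper invokes (Prasad's appendix to Schneider--Teitelbaum): use that the Lie algebra acts trivially on the smooth factor, deduce via differentiation that any $K$-subrepresentation is $\mathfrak{k}$-stable, hence of the form $\Sym^1 E^2\otimes V'$ by irreducibility of $\Sym^1 E^2$ as a $\mathfrak{k}$-module, and then check $V'$ is $K$-stable. The explicit matrix computation at the end is a pleasant concrete verification of the last step, which in Prasad's argument is handled more abstractly via the identification of the multiplicity space.
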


\begin{proof}
This is \cite[Prop.~3.4]{Prasad}.
\end{proof}

\subsection{Lattices in tame types} \label{sec::lattices-in-tame-types}

We consider the following representations of $\G$ over $E,$ and  view them as smooth representations of $K$ via the projection $K \onto \G.$ 
\begin{enumerate}
\item[$\bullet$] Let $\chi_1,\chi_2: \F_{p}^{\times} \to E^{\times}$ be two characters. Let $I(\chi_1,\chi_2)$ denote the principal series representation $\Ind_{B(\F_p)}^{\Gamma}\chi_1\otimes \chi_2$, where $B(\F_p)$ is the (upper) Borel subgroup of $\G$.   It is well-known that $I(\chi_1,\chi_2)$ is irreducible if $\chi_1 \neq \chi_2.$ If $\chi_1 =\chi_2 =\chi,$ then 
\[
I(\chi, \chi) \cong (\chi \circ\det) \oplus \  ({\rm sp} \otimes\chi\circ\det),
\]
where ${\rm sp}$ denotes the  Steinberg representation. 
\item[$\bullet$]Let $\psi :\F_{p^2}^{\times} \to E^{\times}$ be a character which doesn't factor through the norm map $ \F_{p^2}^{\times} \to \F_{p}^{\times}.$ This is equivalent to requiring $\psi \neq \psi^p.$ There is an irreducible $(p-1)$-dimensional representation $\Theta(\psi)$ characterized by the isomorphism  $\Theta(\psi)\otimes {\rm sp} \cong \Ind_{\F_{p^2}^{\times}}^{\GL_2(\F_p)} \psi,$ where $\F_{p^2}^{\times} \into \GL_2(\F_p)$ is a fixed group embedding. For two such characters $\psi,\psi',$ $\Theta(\psi) \cong \Theta(\psi')$ if and only if $\psi'\in \{\psi, \psi^p\}.$ 
\end{enumerate}
The Jordan-H\"older factors of the reduction mod $p$ of any lattice in the above representations are determined in \cite{Diamond}. We recall the results in the next proposition.

Let $x: \F_p \hookrightarrow \F$ denote the natural embedding and $[x]: \F_p \to \cO$ be the Teichm\"uller lift of $x$ which will be viewed as a multiplicative character of $\F_p^{\times}.$ Let $\xi: \F_{p^2} \hookrightarrow \F$ be an embedding extending $x.$ Let $\xi':= \xi^p$ and $\zeta: = \xi \xi'.$ Let $[\xi] : \F_{p^2} \to \cO$ be the Teichm\"uller lift of $\xi$ which will be viewed as a multiplicative character of $\F_{p^2}^{\times}.$ We have $[x]^{p-1} = \ide$ and $[\xi]^{p+1} = [x].$

\begin{proposition}\label{prop:Diamond}
(i) Let   $0\leq a \leq p-1$ and $0\leq b\leq p-2.$ Then
\[
\overline{I([x]^b , [x]^{b+a})}^{\rm ss} \cong \s_{a,b} \oplus \s_{p-1-a, a+b}.
\]

 (ii) Let $\psi:\F_{p^2}^{\times}  \to E^{\times}$ with $\psi\neq \psi^p.$ Write $\psi = [\xi]^{a+1 + (p+1)b}$ with $0\leq a \leq p-1$ and $0\leq b\leq p-2.$ Then
\[
 \overline{\Theta(\psi)}^{\rm ss} \cong \s_{a-1, b+1} \oplus  \s_{p-2-a, a+b+1},
\]
with the convention that $\s_{-1, b} = 0.$

 (iii) The representations $I([x]^b , [x]^{b+a})$ and $ \Theta(\psi) $ are residually multiplicity free.
\end{proposition}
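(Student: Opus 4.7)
The plan is to prove (i) and (ii) by reducing mod $p$ a convenient $\cO$-lattice in each representation, identifying the Jordan--H\"older factors of the reduction, and then deducing (iii) from the dimension count.

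For (i), I take the standard integral lattice $L := \mathrm{Ind}_{B(\F_p)}^{\Gamma}\bigl([x]^b\otimes [x]^{a+b}\bigr)$ of $\cO$-valued functions inside $I([x]^b,[x]^{a+b})$; its reduction is the mod-$p$ principal series $\bar I := \mathrm{Ind}_{B(\F_p)}^{\Gamma}\bigl(x^b\otimes x^{a+b}\bigr)$, of dimension $p+1$. Applying Frobenius reciprocity
\[
\Hom_\Gamma(\sigma_{m,n},\,\bar I) \;\cong\; \Hom_H\bigl((\sigma_{m,n})_{U(\Z_p)},\, x^b \otimes x^{a+b}\bigr),
\]
together with the identifications $(\sigma_{m,n})_{I_1}\cong\chi_{m,n}^s$ and $(\sigma_{m,n})^{I_1}\cong\chi_{m,n}$ recalled above, picks out the socle of $\bar I$ as $\sigma_{a,b}$ (by matching the coinvariant $H$-character with $x^b\otimes x^{a+b}$) and dually the cosocle as $\sigma_{p-1-a,a+b}$ (by matching the invariant $H$-character). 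Since $(a+1)+(p-a)=p+1=\dim\bar I$ and these two Serre weights are distinct (otherwise $a=(p-1)/2$ and $a\equiv 0\pmod{p-1}$, impossible), they exhaust the Jordan--H\"older constituents, each with multiplicity one.

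For (ii), I would use the defining isomorphism $\Theta(\psi)\otimes\mathrm{sp}\cong\mathrm{Ind}_{\F_{p^2}^{\times}}^{\Gamma}\psi$. Choosing compatible $\cO$-lattices on both sides and reducing mod $p$, and using the standard fact that $\overline{\mathrm{sp}}\cong\sigma_{p-1,0}$ is irreducible (the Steinberg Serre weight), I obtain in the Grothendieck group of $\F[\Gamma]$ the identity
\[
\overline{\Theta(\psi)}^{\rm ss}\otimes \sigma_{p-1,0} \;=\; \bigl(\mathrm{Ind}_{\F_{p^2}^{\times}}^{\Gamma}\bar\psi\bigr)^{\rm ss}.
\]
To extract $\overline{\Theta(\psi)}^{\rm ss}$, I would pass to Brauer characters on the $p$-regular conjugacy classes of $\Gamma$ (central, split-regular semisimple, non-split semisimple). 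A direct computation with Teichm\"uller lifts (writing the eigenvalues of $\alpha\in\F_{p^2}^\times\setminus\F_p^\times$ as $[\alpha],[\alpha]^p$ and summing the geometric series) shows that $\chi_{\sigma_{p-1,0}}$ takes the values $p,\,1,\,-1$ on these classes respectively, hence is invertible pointwise, so one may divide. The right-hand Brauer character is computed via the standard induction formula on the non-split torus $\F_{p^2}^{\times}$, and a matching of exponents (using $\psi=[\xi]^{a+1+(p+1)b}$ modulo $p\pm 1$) identifies the quotient with $\chi_{\sigma_{a-1,b+1}}+\chi_{\sigma_{p-2-a,a+b+1}}$. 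The dimension identity $a+(p-1-a)=p-1=\dim\Theta(\psi)$ and distinctness of the two weights then give multiplicity-freeness, establishing (iii) for the cuspidal case.

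The main obstacle is the explicit Brauer-character matching in (ii): though essentially combinatorial, it requires careful bookkeeping of exponents of $[\xi]$ modulo $p^2-1$ and a clean separation of the split versus non-split contributions to the induced character on $\F_{p^2}^{\times}$. Should the division in the Grothendieck ring prove awkward, a cleaner alternative would be to realize $\Theta(\psi)$ via an explicit Deligne--Lusztig model on the non-split torus, exhibit a natural $\cO$-lattice, and mimic the socle/cosocle argument from (i) directly; this avoids Brauer characters at the cost of setting up the geometric model.
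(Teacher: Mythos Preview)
Your approach is correct. The paper itself simply cites \cite[Prop.~1.1, 1.3]{Diamond} for (i) and (ii), and observes that (iii) is immediate from the explicit lists; you are essentially reconstructing Diamond's argument. For (i), the socle/cosocle analysis of the mod-$p$ principal series via Frobenius reciprocity together with the dimension count is exactly the standard route (cf.\ also \cite{BL}, \cite[\S2]{BP}); one small point to tighten is that your Frobenius-reciprocity computation only shows $\sigma_{a,b}$ embeds and $\sigma_{p-1-a,a+b}$ surjects, not that they exhaust the socle and cosocle, but the dimension identity $(a+1)+(p-a)=p+1$ together with their distinctness finishes it as you say. For (ii), the Brauer-character division using $\Theta(\psi)\otimes\mathrm{sp}\cong\Ind_{\F_{p^2}^\times}^{\Gamma}\psi$ is precisely Diamond's method; your computed values $p,1,-1$ for $\sigma_{p-1,0}$ on the three $p$-regular classes are correct, and the remaining bookkeeping, while fiddly, goes through without obstruction. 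Your alternative via an explicit Deligne--Lusztig lattice would also work but is not needed.
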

\begin{proof}
(i) follows from \cite[Prop.~1.1]{Diamond}; (ii) follows from \cite[Prop.~1.3]{Diamond}. (iii) follows directly from (i) and (ii).
\end{proof}

We recall Lemma 4.1.1 of \cite{EGS} on the lattices of finite dimensional irreducible residually multiplicity free $E$-representations of $K$.

\begin{proposition}[\cite{EGS}]\label{prop-lattice-EGS}
Let $V$ be a finite dimensional irreducible representation of $K$ over $E$ which is residually multiplicity free. Let $\s$ be a Jordan-H\"older factor of $\overline{V}^{\rm ss}.$ Then there is up to homothety a unique lattice $V^{\circ}_{\s}$ in $V$ such that the socle of $\overline{V^{\circ}_{\s}}$ is $\s.$ Similarly, there is up to homothety a unique lattice $V^{\circ,\s}$ in $V$ such that the cosocle of $\overline{V^{\circ,\s}}$ is $\s.$
\end{proposition}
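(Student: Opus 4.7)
The plan is to reduce the two parts to a single assertion via duality, establish uniqueness by a multiplicity-counting argument on the mod-$p$ reductions, and derive existence from an iterative lattice construction.

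I would first observe that the two statements are equivalent under the duality $L \mapsto L^d := \Hom_{\cO}(L,\cO)$, which gives an anti-equivalence between $K$-stable $\cO$-lattices in $V$ and those in $V^{\vee} := \Hom_E(V,E)$ satisfying $\overline{L^d} \cong (\overline{L})^{\vee}$ as $\F[K]$-modules. Since contragredient duality exchanges socle and cosocle, and since $V^{\vee}$ is again finite dimensional, irreducible, and residually multiplicity free, it suffices to treat the socle case.

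For uniqueness, suppose $L_1, L_2$ are two $K$-stable lattices in $V$ with $\soc(\overline{L_i}) \cong \sigma$. After rescaling $L_2$ by a power of $p$ one may assume $L_2 \subseteq L_1$ and $L_2 \not\subseteq pL_1$. Consider the natural $\cO[K]$-linear map
\[
\phi : \overline{L_2} = L_2/pL_2 \longrightarrow L_1/pL_1 = \overline{L_1}.
\]
Its image $(L_2+pL_1)/pL_1$ is a nonzero subrepresentation of $\overline{L_1}$, hence contains $\soc(\overline{L_1}) \cong \sigma$; its kernel $(L_2\cap pL_1)/pL_2$ is a subrepresentation of $\overline{L_2}$, so if nonzero contains $\soc(\overline{L_2}) \cong \sigma$. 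In the latter case $\sigma$ would appear as a Jordan-H\"older constituent of both the kernel and the image of $\phi$, yielding multiplicity at least two in $\overline{L_2}$; but $\overline{L_2}^{\rm ss} \cong \overline{V}^{\rm ss}$ has every Jordan-H\"older constituent with multiplicity one, a contradiction. Hence $\phi$ is injective, and comparing Jordan-H\"older lengths forces it to be an isomorphism, so $L_2+pL_1 = L_1$; Nakayama's lemma then gives $L_2 = L_1$.

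For existence, given $\sigma \in \JH(\overline{V})$, I would start from an arbitrary $K$-stable lattice $L_0$ and exploit the order-preserving bijection
\[
\{K\text{-stable lattices } L' \text{ with } pL\subseteq L'\subseteq L\} \longleftrightarrow \{K\text{-subrepresentations } W \subseteq \overline{L}\}, \quad L' \longmapsto L'/pL,
\]
under which $\overline{L'}$ sits in a short exact sequence $0\to \overline{L}/W \to \overline{L'} \to W \to 0$. Starting from $\overline{L_0}$, iteratively choosing $W$ to excise unwanted socle constituents while preserving $\sigma$, and invoking the residually multiplicity free hypothesis to track the evolution of the socle, one obtains after finitely many steps a lattice with socle exactly $\sigma$. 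The main technical subtlety I anticipate lies in this existence step, ensuring the iteration terminates with socle equal to $\sigma$ rather than too large (or with $\sigma$ accidentally removed); by contrast, the uniqueness argument is the conceptual core and uses the multiplicity-one hypothesis in one decisive stroke.
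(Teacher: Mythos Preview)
The paper does not prove this proposition; it merely cites \cite[Lem.~4.1.1]{EGS} and moves on. So there is no in-paper argument to compare against, and your write-up already goes further than the paper does.

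Your duality reduction and your uniqueness argument are both correct and are essentially the standard proof (and indeed the proof in \cite{EGS}): the key step is exactly the observation that if $\phi:\overline{L_2}\to\overline{L_1}$ had nonzero kernel, then $\sigma$ would occur in both $\Ker\phi$ and $\im\phi$, contradicting multiplicity one. Your existence sketch is pointed in the right direction but, as you yourself flag, is not complete: you have not specified which $W$ to take at each step nor given a termination argument. One clean way to finish is to take $W\subset\overline{L}$ maximal among subrepresentations not containing $\sigma$ as a Jordan--H\"older factor; then $\overline{L}/W$ has socle $\sigma$, and passing to the lattice $L'$ with $L'/pL=W$ gives $\soc(\overline{L'})\supseteq\sigma$. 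Iterating and tracking the multiset of Jordan--H\"older factors ``above'' $\sigma$ in the socle filtration (which strictly decreases) yields termination. Alternatively, one can argue via the cosocle side first, where existence is immediate: pick any lattice, quotient its reduction to $\sigma$, and take the kernel.
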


\subsection{Lattices in $\unSym^1E^2 \otimes  \Theta(\psi)$}
\label{subsection:lattice}

\hfill

Let  $\mathrm{pr}:\Q_p^{\times}\ra 1+p\Z_p$ denotes the projection sending $p$ to $1$.  
As $p>2$, we can define the square root on $1+p\Z_p$ by the usual binomial formula. Define
\begin{equation}\label{eq:def-unSym}
\unSym^1E^2:=\Sym^{1}E^2\otimes (\mathrm{pr}\circ\det)^{-1/2}.
\end{equation}
The reason to introduce the twist is to make the central character of $\unSym^1E^2$ to be trivial on $Z_1$. Note that the mod $p$ reduction of $\unSym^1\cO^2:=\Sym^{1}\cO^2\otimes (\mathrm{pr}\circ\det)^{-1/2}$  still gives $\Sym^1\F^2$.

Let $\psi: \F_{p^2}^{\times} \to E^{\times}$ be a character with $\psi \neq \psi^p.$ Write $\psi = [\xi]^{a+1 + (p+1)b}$ with $0\leq a \leq p-1$ and $0\leq b\leq p-2.$ By Proposition \ref{prop:Diamond},  $\overline{\Theta(\psi)}^{\rm ss}$ is multiplicity free and has two (resp. one) Jordan-H\"older factors if $1 \leq a \leq p-2$ (resp. if $a\in\{0,p-1\}$).

Assume first $1\leq a\leq p-2$. By Propositions \ref{prop:Diamond} and \ref{prop-lattice-EGS}, there are two lattices $T,~T'$ in $\Theta(\psi)$ such that 
\begin{equation}\label{eq:T/pT}
0\ra \s_{p-2-a, a+b+1}  \ra T/pT\ra \s_{a-1, b+1}\ra 0
\end{equation}
\begin{equation}\label{eq:T'/pT'}
0\ra  \s_{a-1, b+1} \ra  T'/pT'\ra  \s_{p-2-a, a+b+1}\ra 0
\end{equation}
where both extensions are nonsplit. Note that $T/pT$ and $T'/pT'$ are $\Gamma$-representations as $\Theta(\psi)$ itself is. Moreover, if we fix $T$ and normalize $T'$ (by a scalar) so that $T'\subset T$ and $T'\nsubseteq pT$, then by \cite[Prop.~5.2.3(1)]{EGS} we have
\begin{equation}\label{eq:T-T'}
pT\subset T'\subset T.\end{equation}

\begin{lemma}\label{lemma-I1-coinv-T/pT}
 (i) We have $(T/pT)^{I_1}\cong \chi_{p-2-a,a+b+1}$ and $(T'/pT')^{I_1}\cong \chi_{a-1,b+1}$. 

(ii) We have $(T / pT)_{I_1} = \chi^s_{ a -1, b +1}$ and $(T'/pT')_{I_1}\cong\chi_{p-2-a,a+b+1}^{s}$. 
\end{lemma}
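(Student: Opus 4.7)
The plan is to deduce both parts by noting that the obvious contribution to $(T/pT)^{I_1}$ (resp.\ to $(T/pT)_{I_1}$) comes from the socle (resp.\ cosocle) of $T/pT$, and then to rule out a second $H$-eigencharacter via Frobenius reciprocity together with a short Jordan--H\"older comparison.

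For part (i), applying the left-exact functor $(-)^{I_1}$ to the nonsplit sequence \eqref{eq:T/pT} gives
\[
0 \to \chi_{p-2-a,a+b+1} \to (T/pT)^{I_1} \to \chi_{a-1,b+1},
\]
and it suffices to show that the last arrow vanishes. If not, then (using that $H$ has order prime to $p$) $\chi_{a-1,b+1}$ would appear as a sub-$H$-character of $(T/pT)^{I_1}$, and Frobenius reciprocity would provide a nonzero $K$-homomorphism $\Ind_I^K \chi_{a-1,b+1}\to T/pT$. By Proposition \ref{prop:Diamond}(i) the source has Jordan--H\"older constituents $\{\sigma_{a-1,b+1},\sigma_{p-a,a+b}\}$, which meets the constituents $\{\sigma_{p-2-a,a+b+1},\sigma_{a-1,b+1}\}$ of $T/pT$ only in $\sigma_{a-1,b+1}$. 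The image would then be a nonzero subrepresentation of $T/pT$ isomorphic to $\sigma_{a-1,b+1}$, contradicting $\soc(T/pT)=\sigma_{p-2-a,a+b+1}$.

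Part (ii) is proved dually. The right-exact functor $(-)_{I_1}$ applied to \eqref{eq:T/pT} yields
\[
\chi_{p-2-a,a+b+1}^s \to (T/pT)_{I_1} \to \chi_{a-1,b+1}^s \to 0,
\]
and we must show that the first arrow is zero. If it were nonzero, then $\chi_{p-2-a,a+b+1}^s$ would also be a quotient of $(T/pT)_{I_1}$ (by $H$-semisimplicity), and Frobenius reciprocity would give a nonzero $K$-map $T/pT\to \Ind_I^K \chi_{p-2-a,a+b+1}^s$. A computation via Proposition \ref{prop:Diamond}(i) shows this principal series has constituents $\{\sigma_{p-2-a,a+b+1},\sigma_{a+1,b}\}$, whose intersection with those of $T/pT$ is only $\sigma_{p-2-a,a+b+1}$. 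The image would therefore be a copy of $\sigma_{p-2-a,a+b+1}$, producing a surjection $T/pT\twoheadrightarrow \sigma_{p-2-a,a+b+1}$ whose composition with the socle inclusion is an automorphism of $\sigma_{p-2-a,a+b+1}$; this splits \eqref{eq:T/pT}, a contradiction.

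The statements for $T'/pT'$ are handled by exactly the same scheme, interchanging the roles of socle and cosocle: the auxiliary principal series become $\Ind_I^K\chi_{p-2-a,a+b+1}$ for (i) and $\Ind_I^K\chi_{a-1,b+1}^s$ for (ii), whose constituents are computed in the same way via Proposition \ref{prop:Diamond}(i). The principal obstacle---ultimately a routine verification---is checking in each of the four cases that the ``irrelevant'' constituent of the auxiliary principal series never coincides with one of the two constituents of $T/pT$; this is a direct check using $1\le a\le p-2$ and $0\le b\le p-2$.
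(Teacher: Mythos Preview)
Your proof is correct and follows essentially the same approach as the paper: Frobenius reciprocity promotes a putative second $H$-eigencharacter to a nonzero $K$-map to or from an auxiliary principal series, and a Jordan--H\"older comparison then contradicts the nonsplitness of \eqref{eq:T/pT}. The only cosmetic differences are that the paper cites \cite[Lem.~2.3]{BP} rather than Proposition~\ref{prop:Diamond}(i) for the constituents of the principal series, and for (ii) simply invokes duality rather than spelling out the direct argument.
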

\begin{proof}
(i) We only give the proof in the case of $T/pT$. 
Using \eqref{eq:T/pT} we obtain an exact sequence
\[
0\ra (\s_{p-2-a,a+b+1})^{I_1} \to (T/pT)^{I_1} \to (\s_{a-1 ,b+1})^{I_1}.
\]
Assume for a contradiction that $(T / pT)^{I_1}$ is $2$-dimensional.  Then  we would obtain \[(T/pT)^{I_1}\cong \chi_{p-2-a,a+b+1}\oplus \chi_{a-1,b+1},\] 
and consequently an $I$-equivariant injection $\chi_{a-1,b+1}\hookrightarrow T/pT.$ By Frobenius reciprocity, we would get a nonzero $K$-equivariant map
\[
 \Ind_{I}^{K} \chi_{a-1,b+1}\ra T/pT.
\]
 By comparing the Jordan-H\"older factors, this map can not be injective and must have image isomorphic to $\sigma_{a-1,b+1}$ (see \cite[Lem.~2.3]{BP}).  This gives a contradiction because the sequence \eqref{eq:T/pT} is nonsplit.

(ii) It is proved in a similar way as (i). Alternatively, it can be deduced from (i) by taking dual.
\end{proof}

Recall that $E$ is unramified over $\Q_p.$ Consider $\unSym^1\cO^2:=\cO Y\oplus \cO X$, the standard lattice in $\unSym^1E^2$ 
and set
\begin{equation}\label{eq:def-L}
L:=  \unSym^1\cO^2\otimes_{\cO} T\end{equation}
\begin{equation}\label{eq:def-L'}
L' := \unSym^1\cO^2\otimes_{\cO} T'.\end{equation}
Then we have\footnote{To remind of the distinguished role of $\Sym^{1}\F^2$, here and below  we write $\Sym^{1}\F^2$ instead of $\sigma_{1,0}$.}
\begin{align*}
L / p L & \cong \Sym^1\F^2\otimes T/pT\\
L' / p L' & \cong \Sym^1\F^2\otimes T'/p T',
\end{align*}
and \eqref{eq:T-T'} implies $p L \subset L' \subset L$. 
\begin{lemma}\label{lem:K1-trivial}
$K_1$ acts trivially on $L/pL$ and $L'/pL'.$
\end{lemma}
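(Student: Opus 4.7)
The plan is straightforward: show that both tensor factors, after reduction mod $p$, carry representations of $K$ that factor through the quotient $K\twoheadrightarrow \Gamma = K/K_1$.

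First, I would observe that $\Theta(\psi)$ is by construction inflated from a representation of $\Gamma$ via $K\twoheadrightarrow \Gamma$ (this is how the paper introduces $\Theta(\psi)$ in the list of representations of $\Gamma$ that are viewed as $K$-representations). Consequently any $K$-stable $\cO$-lattice $T\subset \Theta(\psi)$ is automatically stable under $\Gamma$, and the reductions $T/pT$ and $T'/pT'$ are $\Gamma$-representations (a fact already used explicitly in the paper, e.g.\ in the discussion following \eqref{eq:T'/pT'} and in Lemma \ref{lemma-I1-coinv-T/pT}). Therefore $K_1$ acts trivially on $T/pT$ and on $T'/pT'$.

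Second, I would check the same for the other factor. The lattice $\Sym^1\cO^2=\cO Y\oplus \cO X$ reduces to $\Sym^1\F^2$, which is exactly the restriction to $K$ of the standard representation $\sigma_{1,0}$ of $\Gamma$. Hence $K_1$ acts trivially on $\Sym^1\F^2$ as well.

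Finally, since $\Sym^1\cO^2$ is $\cO$-free (hence $\cO$-flat), the formation of the tensor product commutes with reduction mod $p$, giving the $K$-equivariant isomorphism
\[
L/pL \;=\; (\Sym^1\cO^2\otimes_\cO T)/p \;\cong\; \Sym^1\F^2\otimes_\F T/pT,
\]
and similarly for $L'/pL'$. Since $K_1$ acts trivially on each tensor factor, it acts trivially on the tensor product. There is no real obstacle here; the lemma is essentially an unwinding of definitions, recorded because the $\Gamma$-module structure on $L/pL$ and $L'/pL'$ will be used in the subsequent computation (e.g.\ to apply results about $\Gamma$-representations such as Propositions \ref{prop:U-inv}--\ref{prop:Diamond} and Corollary \ref{cor:U-inv}).
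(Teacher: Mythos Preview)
Your proof is correct and follows exactly the same approach as the paper's (one-sentence) proof, which simply notes that $K_1$ acts trivially on both $\Sym^1\F^2$ and $\Theta(\psi)$. You have spelled out in more detail why each tensor factor has trivial $K_1$-action and why the reduction commutes with the tensor product, but the underlying idea is identical.
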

\begin{proof}
This is because $K_1$ acts trivially on both $\Sym^1\F^2$ and $\Theta(\psi)$.
\end{proof}

\begin{lemma}\label{lem:L-I1}
(i) We have $(L/pL)^{I_1}\cong\chi_{p-1-a,a+b+1}\oplus\chi_{p-3-a,a+b+2}$ and $(L'/pL')^{I_1}\cong \chi_{a,b+1}\oplus\chi_{a-2,b+2}$.

 (ii) We have $(L / p L)_{I_1} \cong \chi^s_{ a, b +1} \oplus \chi^s_{ a - 2, b +2} $ and $(L'/pL')_{I_1}\cong \chi_{p-1-a,a+b+1}^s\oplus \chi_{p-3-a.a+b+2}^s$. 
\end{lemma}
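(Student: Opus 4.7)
The plan is to reduce everything to Proposition \ref{prop:U-inv} by exploiting the tensor-product structure of $L/pL$ and $L'/pL'$.

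First I would observe that by the definitions \eqref{eq:def-L} and \eqref{eq:def-L'} one has $K$-equivariant isomorphisms $L/pL \cong \Sym^1\F^2\otimes_{\F}(T/pT)$ and $L'/pL' \cong \Sym^1\F^2\otimes_{\F}(T'/pT')$. By Lemma \ref{lem:K1-trivial}, $K_1$ acts trivially on both sides; since every element of $I_1$ can be written as an element of $U(\Z_p)$ times an element of $K_1$, the $I_1$-invariants (resp. coinvariants) on these modules coincide with the $U(\Z_p)$-invariants (resp. coinvariants), as $H$-representations.

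Next, by Lemma \ref{lemma-I1-coinv-T/pT}, each of $(T/pT)^{U(\Z_p)}$, $(T/pT)_{U(\Z_p)}$, $(T'/pT')^{U(\Z_p)}$, $(T'/pT')_{U(\Z_p)}$ is one-dimensional with an explicit $H$-character. Since $\dim_{\F}(T/pT)=\dim_{\F}(T'/pT')=p-1\ge 2$, both parts of Proposition \ref{prop:U-inv} apply to $W=T/pT$ and to $W=T'/pT'$, giving for each an abstract decomposition of the form $\chi\chi_{1,0}\oplus \chi\chi_{1,0}^s$ (where $\chi$ is the corresponding character from Lemma \ref{lemma-I1-coinv-T/pT}).

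The final step is to identify these products with the $\chi_{m,n}$'s or $\chi_{m,n}^s$'s appearing in the statement. Using $\chi_{m,n}\bigl(\smatr{[a']}{0}{0}{[d']}\bigr)=(a')^{m+n}(d')^n$, the identity $(a')^{p-1}=1$ on $\F_p^\times$, and the definition $\chi^s(h)=\chi(shs)$, one computes for instance
\[
\chi_{p-2-a,a+b+1}\cdot\chi_{1,0}=\chi_{p-1-a,a+b+1},\qquad \chi_{p-2-a,a+b+1}\cdot\chi_{1,0}^s=\chi_{p-3-a,a+b+2},
\]
which yields the first isomorphism of (i); the other three identifications (the second isomorphism of (i), and both isomorphisms of (ii)) are handled by completely analogous character arithmetic. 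This last step is purely bookkeeping, and I do not expect any substantive obstacle; the only minor point to keep in mind is that when $a\in\{1,p-2\}$ one of the indices $a-2$ or $p-3-a$ may be $-1$, but the character formula continues to make sense and produces the claimed $H$-character.
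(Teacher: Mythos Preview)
Your proof is correct and follows essentially the same route as the paper: reduce $I_1$-(co)invariants to $U(\Z_p)$-(co)invariants via Lemma~\ref{lem:K1-trivial}, then apply Proposition~\ref{prop:U-inv} with the input from Lemma~\ref{lemma-I1-coinv-T/pT}. The only difference is that you spell out the character arithmetic explicitly, whereas the paper leaves it implicit.
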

\begin{proof}
By Lemma \ref{lem:K1-trivial}, we have $(L/pL)^{I_1}=(L/pL)^{U(\Z_p)}$ and $(L/pL)_{I_1}=(L/pL)_{U(\Z_p)}$, so the results follow from Proposition \ref{prop:U-inv} and  Lemma \ref{lemma-I1-coinv-T/pT}.
\end{proof}

\begin{proposition}\label{prop-reduction-L}
Assume $1\leq a \leq p-2.$

(i)   $L/pL$ is  multiplicity free and has a two-step socle (and cosocle) filtration
\begin{equation}\label{eq:L=twostep}
(\s_{p-3-a, a+b+2} \oplus\s_{p-1-a, a+b+1} )  \ligne (\s_{a, b+1} \oplus \s_{a-2, b+2})
\end{equation}
(with the convention $\s_{-1,b+1} = \s_{-1,b+2} =0$). Moreover, the following nonsplit extensions 
\begin{align*}
E_1 & = (\s_{p-3-a, a+b +2} \ligne \s_{a, b+1})\\
 E_2 & = (\s_{p-1-a, a+b+1} \ligne \s_{a-2, b+2})\\
 E_3 &= (\s_{p-1-a, a+b+1} \ligne \s_{a, b+1})
\end{align*} 
occur in  $L/pL$ as subquotients,  with the exception that $E_1$ (resp. $E_2$) doesn't exist if $a=p-2$ (resp. $a=1$). 

(ii)  $L'/pL'$ is multiplicity free and has a two-step socle (and cosocle) filtration
\[
 (\s_{a, b+1} \oplus \s_{a-2, b+2}) \ligne (\s_{p-3-a, a+b+2} \oplus\s_{p-1-a, a+b+1} ).
\]
(with the convention $\s_{-1,b+1} = \s_{-1,b+2} =0$). Moreover, the following nonsplit extensions 
\begin{align*}
E'_1 &= (\s_{a, b+1} \ligne  \s_{p-3-a, a+b +2} )\\
E'_2 & = ( \s_{a-2, b+2} \ligne \s_{p-1-a, a+b+1})\\
E'_3 & = (\s_{a, b+1}  \ligne \s_{p-1-a, a+b+1} )
\end{align*} occur in $L'/pL'$ as subquotients, with the exception that $E'_1$ (resp. $E'_2$) doesn't exist if $a=p-2$ (resp. $a=1$). 
\end{proposition}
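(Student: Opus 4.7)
The strategy is to transfer the socle--cosocle structure of $T/pT$ to $L/pL$ via the exact functor $\Sym^1\F^2\otimes_\F(-)$. For $0\leq m\leq p-2$ one has the Clebsch-Gordan decomposition $\Sym^1\F^2\otimes \sigma_{m,n}\cong \sigma_{m+1,n}\oplus \sigma_{m-1,n+1}$ (with the convention $\sigma_{-1,\ast}:=0$). Tensoring the non-split sequence \eqref{eq:T/pT} with $\Sym^1\F^2$ therefore yields an exact sequence
\[
0\to \sigma_{p-1-a,a+b+1}\oplus \sigma_{p-3-a,a+b+2}\to L/pL\to \sigma_{a,b+1}\oplus \sigma_{a-2,b+2}\to 0
\]
whose outer terms are semisimple. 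A direct inspection of the parameters, using $1\leq a\leq p-2$, shows the four Jordan--H\"older factors are pairwise distinct as Serre weights, which gives multiplicity freeness; the boundary cases $a\in\{1,p-2\}$ simply remove $\sigma_{a-2,b+2}$ or $\sigma_{p-3-a,a+b+2}$ from the list, matching the stated exceptions in the descriptions of $E_1$ and $E_2$.

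Next I would read off the socle and cosocle from Lemma~\ref{lem:L-I1}. The semisimple subobject $\Sym^1\F^2\otimes \sigma_{p-2-a,a+b+1}$ is contained in $\soc(L/pL)$; on the other hand $\soc(L/pL)^{I_1}\hookrightarrow (L/pL)^{I_1}$, which is $2$-dimensional by Lemma~\ref{lem:L-I1}(i). Since every Serre weight contributes exactly one $I_1$-invariant character, both inclusions must be equalities, so $\soc(L/pL)=\sigma_{p-1-a,a+b+1}\oplus \sigma_{p-3-a,a+b+2}$. The dual argument using $(L/pL)_{I_1}$ and Lemma~\ref{lem:L-I1}(ii) gives $\cosoc(L/pL)=\sigma_{a,b+1}\oplus \sigma_{a-2,b+2}$. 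Since socle and cosocle are disjoint and exhaust all Jordan--H\"older factors, the Loewy length of $L/pL$ equals $2$, giving the filtration in~(i).

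The most delicate point is exhibiting the non-split subquotients $E_1$, $E_2$, $E_3$. Set $S_1=\sigma_{p-1-a,a+b+1}$, $S_2=\sigma_{p-3-a,a+b+2}$, $Q_1=\sigma_{a,b+1}$, $Q_2=\sigma_{a-2,b+2}$. The extension above is classified by a class in $\bigoplus_{i,j\in\{1,2\}}\Ext^1_\Gamma(Q_i,S_j)$; each such $\Ext^1$ is at most one-dimensional, and $E_3, E_1, E_2$ correspond to the components indexed by $(1,1), (1,2), (2,1)$ respectively. The plan is to show each such component is non-zero by analyzing an appropriate length-$2$ subquotient of $L/pL$: quotienting out the socle constituent $S_{3-j}$ and cutting down to the preimage of the cosocle constituent $Q_i$ yields an extension of $Q_i$ by $S_j$ whose class is precisely the $(i,j)$-component. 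If this subquotient were split, the resulting direct summand $S_j$ of an intermediate sub-object would lead to a direct summand of $L/pL$ incompatible with the explicit description of $(L/pL)^{I_1}$ and $(L/pL)_{I_1}$ in Lemma~\ref{lem:L-I1}; alternatively, one can trace the tensor product construction $L=\Sym^1\cO^2\otimes T$ together with the gluing machinery of Lemma~\ref{lem:glue-L/p} to show the relevant Ext-classes are non-zero directly.

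Part~(ii) follows by applying the same argument to \eqref{eq:T'/pT'}, with the roles of socle and cosocle exchanged. I anticipate the main obstacle to be the non-vanishing of the three Ext-components in the third paragraph: this cannot be detected by Iwahori-level invariants alone, and one must either chase the tensor product structure explicitly or invoke the classification of residually multiplicity-free lattices (Proposition~\ref{prop-lattice-EGS}) to pin down the indecomposable summands of $L/pL$.
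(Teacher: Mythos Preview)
Your first two paragraphs are correct and match the paper: tensoring \eqref{eq:T/pT} by $\Sym^1\F^2$ gives the filtration, and Lemma~\ref{lem:L-I1} pins down socle and cosocle exactly.

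The gap is in your third paragraph. Writing the extension class of $L/pL$ as a matrix $(e_{ij})\in\bigoplus_{i,j}\Ext^1_\Gamma(Q_i,S_j)$, your argument is: if $e_{ij}=0$ then $Q_i$ would appear in the socle (or $S_j$ in the cosocle), contradicting Lemma~\ref{lem:L-I1}. But this inference is wrong. If $e_{ij}=0$, then $Q_i$ embeds in $L/pL\big/S_{3-j}$, but it lifts to a subobject of $L/pL$ only if $e_{i,3-j}$ \emph{also} vanishes. So the socle/cosocle dimension count by itself rules out only the simultaneous vanishing of $e_{i1}$ and $e_{i2}$ (resp.\ $e_{1j}$ and $e_{2j}$), not any single component.

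For $E_1$ and $E_2$ your strategy is salvageable once you add the observation that $\Ext^1_\Gamma(\s_{a-2,b+2},\s_{p-3-a,a+b+2})=0$ (i.e.\ $e_{22}=0$), after which $e_{21}\neq 0$ and $e_{12}\neq 0$ follow immediately from the socle/cosocle. The paper does this differently: it pushes $\rInj_\Gamma(\s_{a-2,b+2})$ into $L/pL$, notes the image $W_1$ avoids $\s_{p-3-a,a+b+2}$ and hence equals $E_2$, and then the cokernel $W_2$ is forced to be $E_1$.

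For $E_3$ there is no such shortcut, and your proposal does not contain an argument. The paper's method is the following: assuming $L/pL\cong W_1\oplus W_2$, one uses the exact sequence $0\to T/pT\to \rInj_\Gamma(\s_{p-2-a,a+b+1})\to \Ind_I^K\chi_{a+1,b}^s\to 0$ tensored with $\Sym^1\F^2$. Lemma~\ref{lem--injotimesSym} decomposes the middle term as a direct sum of injective envelopes; the assumed splitting then forces a nonzero map from $\rInj_\Gamma(\s_{p-3-a,a+b+2})/W_2\cong \Ind_{B(\F_p)}^\Gamma\chi_{p-3-a,a+b+2}$ into $\Sym^1\F^2\otimes\Ind_I^K\chi_{a+1,b}^s$, which contradicts the explicit computation of $I_1$-invariants of the latter in Corollary~\ref{cor:U-inv}. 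This last step is exactly what replaces the ``Iwahori-level'' obstruction you correctly anticipated was missing; neither Proposition~\ref{prop-lattice-EGS} nor a direct chase of the tensor product will produce it.
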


\begin{proof} 
It suffices to prove (i).  Recall the following facts (see \cite[Lem.~3.8]{BP})
\begin{align*}
\Sym^1\F^2\otimes \s_{a-1, b+1} &\cong \s_{a, b+1} \oplus \s_{a-2, b+2} \\
\Sym^1\F^2\otimes\s_{p-2-a,a+b+1} &\cong \sigma_{p-1-a,a+b+1}\oplus\s_{p-3-a,a+b+2},
\end{align*}
with the convention $\s_{-1,b+1} = \s_{-1,b+2}= 0$. Using \eqref{eq:T/pT} this implies that  $L/pL\cong \Sym^1\F^2\otimes T/pT$ has a two-step filtration as claimed in \eqref{eq:L=twostep}, and is multiplicity free. By Lemma \ref{lem:L-I1},  the filtration gives exactly the socle (and cosocle) filtration. This also completes the proof if $a\in\{1,p-2\}$.  

Assume $2\leq a \leq p-3$ in the rest of the proof. For a Serre weight $\sigma$, denote by ${\rm Inj}_{\Gamma}(\s)$ the injective  envelope of $\s$ in the category of $\F[\Gamma]$-modules; we remark that $\rInj_{\G}(\s)$ is also projective. Let $W_1$ denote the image of the unique (up to scalar) nonzero map ${\rm Inj}_{\G}(\s_{a-2,b+2}) \to L/ p L$. Since  $\s_{p-3-a, a+b+2}$ is not a Jordan-H\"older factor of ${\rm Inj}_{\G}(\s_{a-2,b+2})$ (see \cite[Lem.~3.2]{BP}), $W_1$ does not admit $\sigma_{p-3-a,a+b+2}$ as a subquotient. Since $\rcosoc(W_1)\cong \sigma_{a-2,b+2}$ by construction, we deduce from \eqref{eq:L=twostep} that \[W_1\cong  (\s_{p - 1-a, a+b+1} \ligne \s_{a-2, b+2}),\]
i.e. the nonsplit extension $E_2$ occurs in $L/pL$. Consequently, the cokernel of the inclusion $W_1\into L/ p L$, denoted by $W_2$, has $\{\sigma_{a,b+1},\s_{p-3-a,a+b+2}\}$ as the set of Jordan-H\"older  factors, hence is isomorphic to the nonsplit extension $E_1=(\s_{p- 3-a, a+b+2} \ligne \s_{a , b+1})$ because $\s_{p-3-a,a+b+2}$ does not occur in the cosocle of $W_2$ by \eqref{eq:L=twostep}. 

We are left to show that $L / pL$ is a nonsplit extension of $W_2$ by $W_1$ (this  implies that $E_3$ occurs in $L/pL$). 
Assume for a contradiction that $L / pL \cong W_1 \oplus W_2.$ Let $V$ denote the principal series $\Ind_{I}^{K}\chi_{a+1,b}^s$ which is isomorphic to the  (unique) nonsplit extension $(\s_{a+1,b}\ligne \s_{p-2-a,a+b+1})$.  By \cite[\S3]{BP}, there exists a short exact sequence
\[
0 \to T / pT \to {\rm Inj}_{\G} (\s_{p - 2 - a, a+b +1}) \to V \to 0
\]
which induces a short exact sequence
\[
0 \to L/pL \to \Sym^1 \F^2 \otimes {\rm Inj}_{\G} (\s_{p - 2 - a, a+b +1}) \to \Sym^1 \F^2 \otimes 
V \to 0.
\]
By Lemma \ref{lem--injotimesSym} below, if $2 \leq a \leq p-4,$ then
\[
\Sym^1 \F^2 \otimes {\rm Inj}_{\G} (\s_{p - 2 - a, a+b +1}) = {\rm Inj}_{\G} (\s_{p -1-a, a+b+1}) \oplus {\rm Inj}_{\G} (\s_{p-3-a,a+b+2}).
\]
Comparing the socles, it is clear that $W_2\cap {\rm Inj}_{\G} (\s_{p -1-a, a+b+1}) = 0$,   thus $W_2\hookrightarrow 
 {\rm Inj}_{\G} (\s_{p-3-a,a+b+2})$.  Moreover, we have 
\[L/pL \cap \rInj_{\G}\sigma_{p-3-a,a+b+2}=W_2\]
which induces a (nonzero) morphism
\[
{\rm Inj}_{\G} (\s_{p-3-a,a+b+2}) / W_2  \ra\Sym^1 \F^2 \otimes V.
\]
However, by \cite[\S3]{BP} we have \[{\rm Inj}_{\G} (\s_{p-3-a,a+b+2}) / W_2 \cong \Ind_{B(\F_p)}^{\Gamma}\chi_{p-3-a,a+b+2},\]
so by Frobenius reciprocity we obtain a nonzero $I$-equivariant morphism 
\[\chi_{p-3-a,a+b+2}\ra \Sym^1\F^2\otimes V.\] But this contradicts Corollary \ref{cor:U-inv},  by which $(\Sym^{1}\F^2\otimes V)^{I_1}\cong \chi_{a+2,b}\oplus \chi_{a,b+1}\oplus \chi_{p-1-a,a+b+1}$.

The case $a = p-3$ is a little subtle.  By Lemma \ref{lem--injotimesSym} below we have \[\Sym^1 \F^2 \otimes {\rm Inj}_{\G} (\s_{1, b -1}) = {\rm Inj}_{\G} (\s_{2, b- 1}) \oplus {\rm Inj}_{\G} (\s_{0,b}) \oplus \s_{p-1,b}.\]
Comparing the socles, one checks that $W_2 $ embeds into ${\rm Inj}_{\G} (\s_{0,b})$ and actually
\[L/pL\cap \rInj_{\Gamma}(\s_{0,b})=W_2.\] Hence, we obtain a nonzero morphism from ${\rm Inj}_{\G} (\s_{0,b})/ W_2 \cong \s_{0,b} $ to $\Sym^1\F^2 \otimes V$. On the other hand, $\sigma_{p-1,b}$ also occurs in $\Sym^1\F^2\otimes V$, and in fact is a direct summand because $\sigma_{p-1,b}$ is an injective $\F[\Gamma]$-module. Thus  there exists an embedding 
\[\s_{0,b}\oplus\s_{p-1,b}\hookrightarrow \Sym^1\F^2\otimes V.\]
However, by Corollary \ref{cor:U-inv} we have $(\Sym^{1}\F^2\otimes V)^{I_1}\cong \chi_{p-1,b}\oplus \chi_{p-3,b+1}\oplus \chi_{2,b-1}$, in which the character $\chi_{p-1,b}$ ($=\chi_{0,b}$) occurs only once.   This gives a contradiction  and finishes the proof.
\end{proof}

Recall the following facts (see \cite[\S3]{BP}): ${\rm Inj}_{\G} (\s_{a , b})$ is of dimension $2p$ if $1\leq a \leq p-2$; ${\rm Inj}_{\G}(\s_{p-1,b}) \cong \s_{p-1 , b}$ is of dimension $p$;  ${\rm Inj}_{\G} (\s_{0,b}) \cong (\s_{0,b} \ligne \s_{p-3, b+1} \ligne \s_{0,b})$ is of dimension $p.$

\begin{lemma}\label{lem--injotimesSym}
(i) If $a = 1,$ then $\Sym^1 \F^2 \otimes {\rm Inj}_{\G} (\s_{1, b}) \cong {\rm Inj}_{\G} (\s_{2, b})\oplus {\rm Inj}_{\G}(\s_{0, b+1}) \oplus \s_{p-1, b+1}.$

(ii) If $a = p-2,$ then  $\Sym^1 \F^2 \otimes {\rm Inj}_{\G} (\s_{p-2, b}) \cong\s_{p-1, b}\oplus \s_{p-1, b} \oplus {\rm Inj}_{\G}(\s_{p-3, b+1}).$

(iii) If $0 \leq a \leq p - 1 $ and $a\notin \{1,p-2\},$ then $\Sym^1 \F^2 \otimes {\rm Inj}_{\G} (\s_{a, b}) = {\rm Inj}_{\G} (\s_{a+1, b})\oplus {\rm Inj}_{\G}(\s_{a - 1, b+1})$ with the convention ${\rm Inj}_{\G}(\s_{- 1, b+1}) = {\rm Inj}_{\G}(\s_{p, b}) =0.$
\end{lemma}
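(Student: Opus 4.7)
The plan is to show that both sides are projective (equivalently, injective) $\F[\G]$-modules and then match direct summands by a Hom-counting argument.

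First I observe that $P := \Sym^1\F^2 \otimes \rInj_\G(\s_{a,b})$ is projective: since $\G$ is finite, projective and injective $\F[\G]$-modules coincide, and for any finite-dimensional $\F$-representation $V$ of $\G$ the functor $V \otimes_\F -$ preserves projectives, because $V \otimes_\F \F[\G]$ is free as a left $\F[\G]$-module via the isomorphism $v\otimes g \mapsto g^{-1}v\otimes g$. Hence $P = \bigoplus_\tau \rInj_\G(\tau)^{\oplus n_\tau}$ summed over irreducible Serre weights $\tau$, and it suffices to determine each $n_\tau$.

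The multiplicities can be computed via
\[
n_\tau \;=\; \dim_\F \Hom_\G(P, \tau) \;=\; \dim_\F\Hom_\G\!\big(\rInj_\G(\s_{a,b}),\,\Sym^1\F^2\otimes\tau\otimes{\det}^{-1}\big) \;=\; [\Sym^1\F^2 \otimes \tau \otimes {\det}^{-1} : \s_{a,b}],
\]
combining: (a) that for projective $P$, multiplicities of indecomposable summands are read off as $\dim\Hom_\G(P,-)$ evaluated at each irreducible; (b) tensor-hom adjunction together with the self-duality $(\Sym^1\F^2)^\vee \cong \Sym^1\F^2 \otimes {\det}^{-1}$; and (c) the identity $\dim_\F \Hom_\G(\rProj_\G(\s), X) = [X:\s]$ for finite-length $X$, using $\rInj_\G(\s)=\rProj_\G(\s)$.

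It then remains to enumerate, for $\tau = \s_{c,d}$, those $(c,d)$ such that $\s_{a,b}$ occurs in $\Sym^1\F^2\otimes\s_{c,d-1}$. By \cite[Lem.~4.5]{BP}, this tensor product equals $\s_{c+1,d-1}\oplus\s_{c-1,d}$ when $1\leq c\leq p-2$ and $\s_{1,d-1}$ when $c=0$. The delicate case is $c=p-1$: I would deduce from the exact sequence $0\to\s_{p-2,1}\to\Sym^1\F^2\otimes\Sym^{p-1}\F^2\to\Sym^p\F^2\to 0$, together with the composition factors of $\Sym^p\F^2$ (namely $\s_{1,0}$ and $\s_{p-2,1}$, using the Frobenius inclusion $X,Y\mapsto X^p,Y^p$), that $\Sym^1\F^2\otimes\s_{p-1,d-1}$ has $\s_{1,d-1}$ once and $\s_{p-2,d}$ twice as composition factors. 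Matching against $\s_{a,b}$ in each of cases (i), (ii), (iii) and cross-checking with a dimension count then yields the claimed decomposition---for instance, in (i) the three contributing $\tau$ are $\s_{2,b}$ (from $c=2$), $\s_{0,b+1}$ (from $c=0$), and $\s_{p-1,b+1}$ (from $c=p-1$), each with multiplicity one. The main, and really only nontrivial, obstacle is the $c=p-1$ analysis: it is precisely what accounts for the Steinberg summand in (i) and the double $\s_{p-1,b}$ summand in (ii), where the naive extrapolation of the $1\leq c\leq p-2$ formula breaks down.
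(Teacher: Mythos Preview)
Your proof is correct and follows essentially the same approach as the paper: both arguments begin by observing that $\Sym^1\F^2\otimes\rInj_\G(\s_{a,b})$ is injective (equivalently projective), and then identify its indecomposable summands via the formula for $\Sym^1\F^2\otimes\s_{c,d}$ from \cite{BP}. The paper compresses the entire computation into a one-line reference to \cite[Lem.~3.8]{BP}, whereas you spell out the Hom-counting and give an independent treatment of the boundary case $c=p-1$ via the exact sequence $0\to\s_{p-2,1}\to\Sym^1\F^2\otimes\s_{p-1,0}\to\Sym^p\F^2\to0$; this is a welcome expansion but not a genuinely different method.
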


\begin{proof}
Using the fact that $\Sym^{1}\F^2\otimes {\rm Inj}_{\G}\sigma$ is an injective object in the category of $\F[\Gamma]$-modules, the results can be easily deduced from  \cite[Lem.~3.8]{BP}.
\end{proof}
Finally, we treat the case $a\in \{0,p-1\}$.

\begin{proposition}\label{prop-reduction-L-2}
Assume $a\in \{0,p-1\}.$ There are two lattices (unique up to homothety) $L, L'$ of $\unSym^1 E^2\otimes\Theta(\psi)$ such that $pL \subset L' \subset L$ and 
\begin{align*}
L/pL & \cong \s_{p-1, b+1} \oplus \s_{p-3, b+2},\\
L'/pL' & \cong (\s_{p-1, b+1} \ligne \s_{p-3, b+2}).
\end{align*}
The lattice $pL$ is then identified with the kernel of the natural projection $L' \onto \s_{p-3,b+2}.$ Moreover, $(L'/pL')_{K_1} \cong \s_{p-3, b+2}.$
\end{proposition}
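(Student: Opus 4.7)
When $a\in\{0,p-1\}$, the convention $\sigma_{-1,\bullet}=0$ combined with Proposition \ref{prop:Diamond}(ii) shows $\overline{\Theta(\psi)}^{\rm ss}\cong\sigma_{p-2,b+1}$ is irreducible, so by Proposition \ref{prop-lattice-EGS} the lattice $T\subset\Theta(\psi)$ is unique up to homothety with $T/pT\cong\sigma_{p-2,b+1}$. I would set $L:=\Sym^1\cO^2\otimes_\cO T$; the Clebsch--Gordan decomposition $\Sym^1\F^2\otimes\sigma_{p-2,b+1}\cong\sigma_{p-1,b+1}\oplus\sigma_{p-3,b+2}$ from \cite[Lem.~4.5]{BP} yields $L/pL\cong\sigma_{p-1,b+1}\oplus\sigma_{p-3,b+2}$. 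Next, define $L'\subset L$ as the preimage of the $\sigma_{p-3,b+2}$ summand under $L\twoheadrightarrow L/pL$, so that $pL\subset L'\subset L$ and $L/L'\cong\sigma_{p-1,b+1}$. The snake lemma applied to $pL'\subset pL\subset L'\subset L$ produces a $K$-equivariant short exact sequence
\[
0\to pL/pL'\to L'/pL'\to L'/pL\to 0
\]
in which $pL/pL'\cong\sigma_{p-1,b+1}$ (via multiplication by $p$) and $L'/pL\cong\sigma_{p-3,b+2}$; the identification $pL=\ker(L'\twoheadrightarrow\sigma_{p-3,b+2})$ is built into the construction.

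\textbf{Non-triviality of the $K_1$-action.} Since $K_1$ acts trivially on $T$ (inflated from a $\Gamma$-representation) and trivially on $\Sym^1\F^2$, for $g\in K_1$ and $v\in L'$ one has $(g-1)v\in pL$; reducing modulo $pL'$ defines a cocycle $K_1^{\mathrm{ab}}/p\to\Hom_\F(\sigma_{p-3,b+2},\sigma_{p-1,b+1})$ classifying the $K$-extension. My plan is to verify non-triviality by explicit calculation. Take $g=\bigl(\begin{smallmatrix}1 & p\\ 0 & 1\end{smallmatrix}\bigr)\in K_1$, which reduces to the identity in $\Gamma$ and satisfies $gX=X$, $gY=Y+pX$ on $\Sym^1\cO^2$. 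Writing $e_k:=X^kY^{p-2-k}$ for the standard basis of $\sigma_{p-2,b+1}$, fix lifts $\tilde e_k\in T$ and consider the element $v:=X\otimes\tilde e_j-Y\otimes\tilde e_{j+1}$ for $0\le j\le p-3$; its reduction $X\otimes e_j-Y\otimes e_{j+1}$ equals the antisymmetric element $X\otimes YQ-Y\otimes XQ$ with $Q=X^jY^{p-3-j}$, which belongs to the $\sigma_{p-3,b+2}$ summand, so $v\in L'$. A direct computation gives $(g-1)v=-pX\otimes\tilde e_{j+1}$, whose image in $pL/pL'\cong\sigma_{p-1,b+1}$ corresponds (via $p$-multiplication) to the $\sigma_{p-1,b+1}$-component of $-X\otimes e_{j+1}$, namely $-X^{j+2}Y^{p-3-j}$, which is non-zero. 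Hence the cocycle is non-trivial.

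\textbf{Non-splitness, $K_1$-coinvariants, and uniqueness.} A non-trivial $K_1$-action on $L'/pL'$ forces it to be non-split as a $K$-module: any $K$-equivariant decomposition $L'/pL'=A\oplus B$ with $A\cong\sigma_{p-1,b+1}$ and $B\cong\sigma_{p-3,b+2}$ would admit only $\Gamma$-actions on each summand, contradicting the non-triviality. Moreover, the image of $K_1-1$ acting on $L'/pL'$ is a non-zero $K$-stable subspace of the irreducible socle $\sigma_{p-1,b+1}$ and therefore equals the whole socle, giving $(L'/pL')_{K_1}\cong L'/pL'/\sigma_{p-1,b+1}\cong\sigma_{p-3,b+2}$. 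For uniqueness: $L$ is unique up to homothety since $T$ is; once $L'/pL'$ has been shown to have irreducible socle $\sigma_{p-1,b+1}$, Proposition \ref{prop-lattice-EGS} applied to $V:=\Sym^1 E^2\otimes\Theta(\psi)$ (irreducible by the last proposition of Section 3.1 and residually multiplicity free) identifies $L'$ up to homothety with the unique lattice $V^{\circ}_{\sigma_{p-1,b+1}}$.

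The main place where care is needed is the explicit cocycle calculation, where one must consistently track the Clebsch--Gordan splitting of $\Sym^1\F^2\otimes\sigma_{p-2,b+1}$; the rest of the argument is formal once non-triviality of the $K_1$-action is in hand.
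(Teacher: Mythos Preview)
Your proof is correct and follows essentially the same approach as the paper: construct $L=\Sym^1\cO^2\otimes T$, define $L'$ as the preimage of the $\sigma_{p-3,b+2}$-summand, and verify non-triviality of the $K_1$-action by applying $g=\smatr{1}{p}{0}{1}$ to an explicit element of $L'$, which forces the extension to be nonsplit and pins down $(L'/pL')_{K_1}$. The only differences are cosmetic: the paper chooses its test vector via Proposition~\ref{prop:U-inv} (the $I_1$-invariant $Y\otimes w_0+X\otimes w_1$, whose image under $(g-1)$ is the $I_1$-generator $pX\otimes w_0$ of $pL/pL'$), whereas you pick the Clebsch--Gordan element $X\otimes\tilde e_j-Y\otimes\tilde e_{j+1}$ directly; and you spell out the uniqueness via Proposition~\ref{prop-lattice-EGS}, which the paper's proof leaves implicit.
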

\begin{proof}
Let $T$ be any lattice in $\Theta(\psi) $ and  $L: = \unSym^1 \cO^2 \otimes T.$  Then $T/pT \cong \s_{p-2, b+1}$ by Proposition \ref{prop:Diamond}, and consequently $L/pL \cong \s_{p-1, b+1} \oplus \s_{p-3,b+2}$ by \cite[Lem.~3.8]{BP}. 
 Let $L'$ be the kernel of the composition $L \to L/pL \overset{p_1}{\twoheadrightarrow} \s_{p-1,b+1}.$ Then $pL \subsetneq L'\subsetneq L$. Moreover, we have a short exact sequence  
 \begin{equation}\label{ses:a=0,p-1}
0\to pL/pL' \to L'/pL' \to \s_{p-3,b+2} \to 0.
\end{equation}
We claim that  (\ref{ses:a=0,p-1}) induces an isomorphism $(L'/pL')_{K_1} \simto \s_{p-3,b+2};$ this will imply that $L'/pL'$ is a nonsplit extension of $\sigma_{p-3,b+2}$ by $\sigma_{p-1,b+1}$. 

The proof of Proposition \ref{prop:U-inv} shows that there exist $\overline{w}_0,\overline{w}_1\in  T/pT$ such that $X\otimes \overline{w}_0$ and $Y\otimes\overline{w}_0+X\otimes \overline{w}_1$ span $(L/pL)^{I_1}$. 
Comparing the $H$-action, we must have 
\[(\s_{p-1,b+1})^{I_1}=\F (X\otimes \overline{w}_0),\ \ (\s_{p-3,b+2})^{I_1}=\F (Y\otimes \overline{w}_0+X\otimes \overline{w}_1).\]
 Let $w_0,w_1 \in T$ be a lift of $\overline{w}_0,\overline{w}_1$ respectively.  From the definition of $L'$ we see that $Y\otimes w_0+X\otimes w_1\in L'$.  As $K_1$ acts trivially on $T,$ we have
\[
\big(\smatr{1}p01 - 1\big) (Y\otimes w_0 + X \otimes w_1) = (pX )\otimes w_0  \in pL.
\]  
Since $(pX )\otimes \overline{w}_0 $ generates $pL/pL',$ the claim follows.

The uniqueness of $L'$ (up to homothety) follows from Proposition \ref{prop-lattice-EGS}. Since $pL$ is identified with the kernel of the natural projection $L' \onto \s_{p-3,b+2},$ the uniqueness of $L$ follows.
\end{proof}

\subsubsection{Sublattices in $L$}

In this subsection we specify some sublattices in $L$ in the case $1 \leq a \leq p-2.$
Recall that  $\s_{-1, b+1} = \s_{-1, b+2} = 0$ by our convention.

Let $L_1 : = \Ker( L \onto L/p L \onto \s_{a-2, b+2} ).$ It is clear that $pL\subset L_1\subset L$. 

\begin{proposition} \label{prop--sublattice-1}
The following nonsplit  extensions 
\begin{align*}
&(\s_{p-3-a, a+b +2} \ligne \s_{a, b+1})\\
&(\s_{p-1-a, a+b+1}  \ligne \s_{a, b+1})\\
&(\s_{a-2, b+2}  \ligne \s_{p-1-a, a+b+1}) 
\end{align*} 
occur in $L_1/ p L_1$ as subquotients. Consequently, $L_1/pL_1$ has a cosocle filtration
\[
\s_{a-2, b+2} \ligne  ( \s_{p-1-a, a+b+1}  \oplus \s_{p-3-a, a+b+2} ) \ligne \s_{a, b+1}
\]
and $L_1$ is the unique (up to homothety) lattice  in $\unSym^1 E^2 \otimes \Theta(\psi)$ whose reduction has cosocle $\s_{a, b+1}$.
Moreover, we have
\[
(L_1/ p L_1)_{K_1} = ( \s_{p-1-a, a+b+1}  \oplus \s_{p-3-a, a+b+2} ) \ligne \s_{a, b+1}.
\]
\end{proposition}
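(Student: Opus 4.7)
The plan is to filter $L_1/pL_1$ via a chain of lattices between $pL_1$ and $L_1$, then compute cosocle information using $K_1$-coinvariants, drawing on the structure of $L'/pL'$ from Proposition \ref{prop-reduction-L}.

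First, I observe that $L'\subset L_1$. Since $T/T'\cong\sigma_{a-1,b+1}$, one has $L/L'\cong\Sym^1\F^2\otimes\sigma_{a-1,b+1}\cong\sigma_{a,b+1}\oplus\sigma_{a-2,b+2}$, which coincides with the cosocle of $L/pL$. Hence the surjection $L\twoheadrightarrow\sigma_{a-2,b+2}$ defining $L_1$ factors through $L/L'$, giving $L'\subset L_1$ with $L_1/L'\cong\sigma_{a,b+1}$. Combined with $pL\subset L'$ (from $pT\subset T'$), this produces the chain $pL_1\subset pL\subset L'\subset L_1$, inducing on $L_1/pL_1$ a three-step filtration with graded pieces (from bottom to top) $pL/pL_1\cong L/L_1\cong\sigma_{a-2,b+2}$ (via multiplication by $p$), $L'/pL\cong\Sym^1\F^2\otimes(T'/pT)\cong\sigma_{p-1-a,a+b+1}\oplus\sigma_{p-3-a,a+b+2}$, and $L_1/L'\cong\sigma_{a,b+1}$.

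The key computation is the cosocle of $L_1/pL_1$, which I extract from $K_1$-coinvariants. For $g=\smatr 1 p 0 1$ and $g'=\smatr 1 0 p 1$ in $K_1$, acting trivially on $T$, a direct calculation gives $(g-1)(Y\otimes t+X\otimes t')=pX\otimes t$ and $(g'-1)(Y\otimes t+X\otimes t')=pY\otimes t'$ for any $t,t'\in T$. A Clebsch--Gordan computation (invoking invertibility of $a$ modulo $p$ for $1\leq a\leq p-2$) shows the map $\sigma_{a-1,b+1}\to\sigma_{a-2,b+2}$ sending $v$ to the $\sigma_{a-2,b+2}$-component of $Y\otimes v$ is surjective, and similarly for $X$. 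Hence for any $t\in T$ one can find $t'\in T$ with $Y\otimes t+X\otimes t'\in L_1$, and vice versa, so $pL\subset (K_1-1)L_1+pL_1$. Combined with the trivial $K_1$-action on $L_1/pL$ (Lemma \ref{lem:K1-trivial}), the right-exact sequence $(pL/pL_1)_{K_1}\to(L_1/pL_1)_{K_1}\to(L_1/pL)_{K_1}\to 0$ yields $(L_1/pL_1)_{K_1}\cong L_1/pL$. Because cosocle commutes with $K_1$-coinvariants for $\F[K]$-modules and $L_1/pL\cong(\sigma_{p-3-a,a+b+2}\oplus\sigma_{p-1-a,a+b+1})\ligne\sigma_{a,b+1}$ by Proposition \ref{prop-reduction-L}(i), I obtain $\rcosoc(L_1/pL_1)=\sigma_{a,b+1}$ and the claimed form of $(L_1/pL_1)_{K_1}$. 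The nonsplit extensions $(\sigma_{p-3-a,a+b+2}\ligne\sigma_{a,b+1})$ and $(\sigma_{p-1-a,a+b+1}\ligne\sigma_{a,b+1})$ then appear as subquotients of $L_1/pL$, hence of $L_1/pL_1$.

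To identify the middle layer of the cosocle filtration, observe that $pL_1/pL'\cong L_1/L'\cong\sigma_{a,b+1}$ via multiplication by $p$, and by Proposition \ref{prop-reduction-L}(ii) this subobject sits inside $\soc(L'/pL')\subset\rad(L'/pL')$. Quotienting by a submodule of the radical preserves cosocle, so $\rcosoc(L'/pL_1)=\rcosoc(L'/pL')=\sigma_{p-1-a,a+b+1}\oplus\sigma_{p-3-a,a+b+2}$, giving $\rad(L'/pL_1)=pL/pL_1$ and completing the cosocle filtration. The third nonsplit extension $(\sigma_{a-2,b+2}\ligne\sigma_{p-1-a,a+b+1})$ is $E'_2$ of Proposition \ref{prop-reduction-L}(ii), which survives in the quotient $L'/pL_1$ of $L'/pL'$ since neither of its constituents is $\sigma_{a,b+1}$; hence it occurs as a subquotient of $L_1/pL_1$. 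Uniqueness of $L_1$ up to homothety, among lattices with cosocle $\sigma_{a,b+1}$, follows from Proposition \ref{prop-lattice-EGS} applied to $\Sym^1 E^2\otimes\Theta(\psi)$, which is irreducible (by the unlabeled proposition in \S\ref{section-lattices-GL2}) and residually multiplicity free (Proposition \ref{prop-reduction-L}). The most delicate point is the Clebsch--Gordan surjectivity used to kill $pL/pL_1$ in the coinvariants; everything else reduces to careful bookkeeping of the three-step filtration.
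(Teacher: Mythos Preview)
Your proof is correct and follows the same skeleton as the paper: both arguments use the chain $pL_1\subset pL\subset L'\subset L_1$, read off the first two nonsplit extensions from the quotient $L_1/pL$, and extract the third extension $E'_2$ from $L'/pL_1$ viewed as a quotient of $L'/pL'$ (Proposition~\ref{prop-reduction-L}(ii)).

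The one substantive difference is how the cosocle and the $K_1$-coinvariants are obtained. The paper first establishes all three nonsplit extensions, deduces from them that $\sigma_{a,b+1}$ is the only possible cosocle constituent, and then simply asserts that the shape of $(L_1/pL_1)_{K_1}$ ``easily follows'' (implicitly because any $\Gamma$-quotient with cosocle $\sigma_{a,b+1}$ is a quotient of $\mathrm{Inj}_\Gamma(\sigma_{a,b+1})$, which does not contain $\sigma_{a-2,b+2}$ as a Jordan--H\"older factor). You instead compute $(L_1/pL_1)_{K_1}$ directly, showing $pL\subset (K_1-1)L_1$ via the explicit identities $(g-1)(Y\otimes t+X\otimes t')=pX\otimes t$ and $(g'-1)(Y\otimes t+X\otimes t')=pY\otimes t'$, together with the Clebsch--Gordan surjectivity of $v\mapsto[\text{$\sigma_{a-2,b+2}$-component of }X\otimes v]$ (and its $Y$-analogue); then you read off the cosocle from the coinvariants. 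Your route is more hands-on and makes the $K_1$-coinvariant claim fully transparent, while the paper's route is more structural and avoids the Clebsch--Gordan step; both are valid and lead to the same conclusions.
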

\begin{proof}
By construction,  we have $pL\subset L_1$  and $L_1 / pL = \Ker (L/pL \to \s_{a-2, b+2})$. By Proposition \ref{prop-reduction-L}(i), the cosocle filtration of $L_1/pL$ is 
\[(\sigma_{p-1-a,a+b+1}\oplus\sigma_{p-3-a,a+b+2}) \ligne\sigma_{a,b+1},\]
thus the nonsplit extensions  $(\s_{p-3-a, a+b +2} \ligne \s_{a, b+1})$ and $(\s_{p-1-a, a+b+1} \ligne \s_{a, b+1})$ occur in $L_1/pL$,  hence also in $L_1/pL_1$.  

We need to show that the nonsplit extension $(\s_{a-2, b+2} \ligne \s_{p-1-a, a+b+1})$ also occurs in $L_1/pL_1$. For this we note that $pL_1 \subset L' \subset L_1$, where $L'$ is  defined in \eqref{eq:def-L'}. Consequently, $L' / pL_1 $ is a subrepresentation of $L_1 / pL_1$, and it is easy to see that \[\JH(L'/pL_1)=\{\s_{a-2,b+2},\s_{p-1-a,a+b+1},\s_{p-3-a,a+b+2}\}.\]  As a quotient of $L' / pL',$ $L' / pL_1 $ admits the nonsplit extension $(\s_{a-2, b+2} \ligne \s_{p-1-a, a+b+1})$ as a subquotient, see  Proposition \ref{prop-reduction-L}(ii). The structure of $(L_1 / pL_1)_{K_1} $ and other statements easily follow. \end{proof}

Let $L_1':=\Ker(L'\onto L'/pL'\onto \sigma_{p-3-a,a+b+2}),$ where $L'$ is defined in \eqref{eq:def-L'}. Then $pL'\subset L_1'\subset L'$. Alternatively, $L_1'$ is characterized by the following exact sequence 
\begin{equation}\label{eq:L1'}0\ra pL\ra L_1'\ra \sigma_{p-1-a,a+b+1}\ra0. \end{equation}
In a similar way to Proposition \ref{prop--sublattice-1}, we have the  following result. 
   
\begin{proposition} \label{prop--sublattice3}
 The following  nonsplit extensions 
 \begin{align*} 
 &(\s_{p-3-a, a+b +2} \ligne \s_{a, b+1}) \\
 &( \s_{a, b+1} \ligne \s_{p-1-a, a+b+1} ) \\
 & (\s_{a-2, b+2} \ligne \s_{p-1-a, a+b+1})
 \end{align*}
  occur in $L'_1/ p L'_1.$ Consequently, $L'_1/pL'_1$ has a cosocle filtration
\[
\s_{p-3-a, a+b+2}  \ligne  ( \s_{a-2, b+2}\oplus  \s_{a, b+1} ) \ligne \s_{p-1-a, a+b+1},
\]
and $L_1'$ is the unique (up to homothety) lattice  in $\unSym^1 E^2 \otimes \Theta(\psi)$ whose reduction has cosocle $ \s_{p-1-a, a+b+1}$.
Moreover, we have
\[
(L'_1/ p L'_1)_{K_1} =  ( \s_{a-2, b+2}\oplus  \s_{a, b+1} ) \ligne\s_{p-1-a, a+b+1}.
\]
\end{proposition}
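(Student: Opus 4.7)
The plan is to follow the strategy of Proposition \ref{prop--sublattice-1}, interchanging the roles of $L/pL$ and $L'/pL'$ (and of $L$ and $L'$). By construction, $pL'\subset L_1'$ and $L_1'/pL' = \Ker(L'/pL'\onto\sigma_{p-3-a,a+b+2})$. Proposition \ref{prop-reduction-L}(ii) then tells us that $L_1'/pL'$ has cosocle $\sigma_{p-1-a,a+b+1}$, socle $\sigma_{a,b+1}\oplus\sigma_{a-2,b+2}$, and contains the nonsplit extensions $E_2'=(\sigma_{a-2,b+2}\ligne\sigma_{p-1-a,a+b+1})$ and $E_3'=(\sigma_{a,b+1}\ligne\sigma_{p-1-a,a+b+1})$ as subquotients. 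Pulling back through the surjection $L_1'/pL_1'\onto L_1'/pL'$ shows these two extensions also occur as subquotients of $L_1'/pL_1'$.

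For the remaining extension $E_1=(\sigma_{p-3-a,a+b+2}\ligne\sigma_{a,b+1})$, I exploit the inclusion $pL\subset L_1'$ furnished by the defining exact sequence \eqref{eq:L1'}, which makes $pL/pL_1'$ a subrepresentation of $L_1'/pL_1'$. Via the $K$-equivariant multiplication-by-$p$ isomorphism $L\simto pL$ together with \eqref{eq:L1'}, this subrepresentation is identified with $L/L_1'\cong(L/pL)/\sigma_{p-1-a,a+b+1}$, the quotient by the unique (socle) copy of $\sigma_{p-1-a,a+b+1}$ inside $L/pL$. From the proof of Proposition \ref{prop-reduction-L}(i), there is a short exact sequence $0\to W_1\to L/pL\to W_2\to 0$ with $W_1\cong E_2$ having socle $\sigma_{p-1-a,a+b+1}$ and $W_2\cong E_1$. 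Quotienting by this socle copy therefore exhibits $W_2=E_1$ as a quotient of $(L/pL)/\sigma_{p-1-a,a+b+1}\cong pL/pL_1'$, so $E_1$ appears as a subquotient of $L_1'/pL_1'$ as well.

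With the three nonsplit extensions established and the four Jordan-H\"older constituents each appearing with multiplicity one, the (Loewy) cosocle filtration follows: $\sigma_{p-1-a,a+b+1}$ sits at the top as the cosocle (from the quotient $L_1'/pL\cong\sigma_{p-1-a,a+b+1}$), $\sigma_{p-3-a,a+b+2}$ occupies the bottom layer coming from $pL'/pL_1'\cong L'/L_1'\cong\sigma_{p-3-a,a+b+2}$, and $\sigma_{a,b+1}\oplus\sigma_{a-2,b+2}$ fills the middle. Uniqueness of $L_1'$ up to homothety is then immediate from Proposition \ref{prop-lattice-EGS}, as $\Sym^1 E^2\otimes\Theta(\psi)$ is irreducible and residually multiplicity free. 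For the $K_1$-coinvariants, Lemma \ref{lem:K1-trivial} implies $kx-x\in pL'\cap L_1'=pL'$ for any $k\in K_1$ and $x\in L_1'\subset L'$, so the commutators have image in $pL'/pL_1'\cong\sigma_{p-3-a,a+b+2}$; a short explicit calculation in the spirit of Proposition \ref{prop-reduction-L-2} shows this bottom layer is exactly killed in the coinvariants, yielding the claimed structure. The most delicate step I foresee is making this $K_1$-calculation rigorous enough to conclude that the whole copy of $\sigma_{p-3-a,a+b+2}$ is actually generated by such commutators; the remaining assertions are structural consequences of the three extensions and parallel Proposition \ref{prop--sublattice-1} closely.
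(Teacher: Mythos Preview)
Your proof is correct and follows exactly the approach the paper intends: the paper gives no explicit proof here, only the remark that one argues ``in a similar way to Proposition~\ref{prop--sublattice-1}'', and your adaptation---swapping the roles of $L$ and $L'$ and using the inclusion $pL\subset L_1'$ from \eqref{eq:L1'} (in place of $pL_1\subset L'\subset L_1$) to extract the third extension $E_1$ from the quotient $(L/pL)/\sigma_{p-1-a,a+b+1}$---is precisely that parallel. Your caveat about the $K_1$-coinvariants is fair, but note that the paper's proof of Proposition~\ref{prop--sublattice-1} leaves this step equally implicit (``easily follow''), so your level of detail matches the original.
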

 \medskip

Let $L_2 : = \Ker( L \onto L/p L \onto (\s_{p-3-a, a+b+2} \ligne \s_{a, b+1}) ).$ Then $pL\subset L_2\subset L$ and there is a short exact sequence 
\begin{equation}\label{eq:L2}0\ra pL\ra L_2\ra (\s_{p-1-a,a+b+1}\ligne\s_{a-2,b+2})\ra0.\end{equation}

\begin{proposition}\label{prop--sublattice2}
Assume $2\leq a \leq p-2.$  Then the following nonsplit extensions 
\begin{align*}
&(\s_{p-3-a, a+b +2} \ligne \s_{a, b+1})\\
&(\s_{a,b+1} \ligne \s_{p-1-a, a+b+1} )\\
&(\s_{p-1-a, a+b+1} \ligne \s_{a-2, b+2})
\end{align*} 
occur in $L_2/ p L_2.$ Consequently, $L_2/pL_2$ has a cosocle filtration
\[
\s_{p-3-a, a+b +2} \ligne \s_{a,b+1} \ligne \s_{p-1-a, a+b+1}  \ligne \s_{a-2, b+2},
\]
and $L_2$ is the unique (up to homothety) lattice in $\unSym^1 E^2 \otimes \Theta(\psi)$ whose reduction has cosocle $\s_{a-2, b+2}$.
Moreover, we have
\[
(L_2 / pL_2)_{K_1} = ( \s_{p-1-a, a+b+1}  \ligne \s_{a-2, b+2}).
\] 
\end{proposition}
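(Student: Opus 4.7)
The plan is to mirror the proofs of Propositions \ref{prop--sublattice-1} and \ref{prop--sublattice3}, building up $L_2/pL_2$ from controlled sub- and quotient pieces. From the definition of $L_2$ and the proof of Proposition \ref{prop-reduction-L}(i), we have $L/L_2\cong E_1$ and the short exact sequence \eqref{eq:L2}, whose mod-$p$ reduction gives
\[
0\to pL/pL_2\to L_2/pL_2\to L_2/pL\to 0
\]
with $pL/pL_2\cong L/L_2\cong E_1=(\s_{p-3-a,a+b+2}\ligne\s_{a,b+1})$ and $L_2/pL\cong E_2=(\s_{p-1-a,a+b+1}\ligne\s_{a-2,b+2})$. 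This immediately exhibits the first and third listed nonsplit extensions as a subobject and a quotient of $L_2/pL_2$, respectively.

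The middle extension $(\s_{a,b+1}\ligne\s_{p-1-a,a+b+1})$ will be extracted from $L_1'$. First I verify $pL_2\subset L_1'\subset L_2$: since $L/pL$ is residually multiplicity free, $L_1'/pL\cong \s_{p-1-a,a+b+1}$ must coincide, as a subobject of $L/pL$, with the unique $\s_{p-1-a,a+b+1}$-subobject, which is the socle of $L_2/pL=E_2$; hence $L_1'\subset L_2$, and $pL_2\subset pL\subset L_1'$ using \eqref{eq:L1'}. Thus $L_1'/pL_2$ embeds in $L_2/pL_2$, has JH factors $\{\s_{p-3-a,a+b+2},\s_{a,b+1},\s_{p-1-a,a+b+1}\}$, and arises as the quotient of $L_1'/pL_1'$ by the simple subobject $pL_2/pL_1'\cong L_2/L_1'\cong \s_{a-2,b+2}$. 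Applying $\rcosoc$ repeatedly to $L_1'/pL_2$ and using Proposition \ref{prop--sublattice3} (the middle cosocle layer $\s_{a-2,b+2}\oplus\s_{a,b+1}$ of $L_1'/pL_1'$ loses its $\s_{a-2,b+2}$-summand), one finds $L_1'/pL_2$ uniserial with layers $\s_{p-3-a,a+b+2},\s_{a,b+1},\s_{p-1-a,a+b+1}$ from bottom to top, yielding the middle nonsplit extension.

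To pin down the cosocle filtration of $L_2/pL_2$, I show $\rcosoc(L_2/pL_2)=\s_{a-2,b+2}$. Any surjection $L_2/pL_2\twoheadrightarrow\s$ either kills $pL/pL_2$ (forcing $\s=\rcosoc(E_2)=\s_{a-2,b+2}$) or restricts to a surjection $E_1=pL/pL_2\twoheadrightarrow\s$ (forcing $\s=\rcosoc(E_1)=\s_{a,b+1}$); in the second case, restriction to $L_1'/pL_2\supset E_1$ is forced to be zero because $\rcosoc(L_1'/pL_2)=\s_{p-1-a,a+b+1}\neq\s_{a,b+1}$, a contradiction. By Proposition \ref{prop-lattice-EGS}, $L_2$ is the unique lattice with cosocle $\s_{a-2,b+2}$. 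Next, $\rad(L_2/pL_2)$ has the three JH factors of $L_1'/pL_2$ and contains it, so the two agree; iterating the cosocle computation on the uniserial $L_1'/pL_2$ yields the stated uniserial filtration.

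The remaining and most calculational step is the identification $(L_2/pL_2)_{K_1}=L_2/pL$. Since $K_1$ acts trivially on $L/pL$, it acts trivially on both the sub $pL/pL_2\cong L/L_2$ and the quotient $L_2/pL$, so $\sum_{k\in K_1}(k-1)(L_2/pL_2)\subset pL/pL_2=E_1$. The hard part is to show equality. I expect this to follow from an explicit computation with $K_1$-generators such as $\smatr 10p1$ acting on the tensor basis of $L=\Sym^1\cO^2\otimes T$, in the spirit of Proposition \ref{prop:U-inv}: applying $(k-1)$ to lifts of the two cosocle factors $\s_{p-1-a,a+b+1}$ and $\s_{a-2,b+2}$ of $L_2/pL$ produces elements whose images in $E_1$ generate both its socle $\s_{p-3-a,a+b+2}$ and its cosocle $\s_{a,b+1}$, hence all of $E_1$ as a $K$-module.
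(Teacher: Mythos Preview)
Your proof of the three nonsplit extensions and the cosocle filtration is correct and follows the paper's approach closely: the paper also reduces to showing the middle extension $(\s_{a,b+1}\ligne\s_{p-1-a,a+b+1})$ occurs, and extracts it from $L_1'/pL_2\hookrightarrow L_2/pL_2$ via Proposition~\ref{prop--sublattice3} in exactly the way you do. Your extra care in verifying $\rcosoc(L_2/pL_2)=\s_{a-2,b+2}$ (ruling out a $\s_{a,b+1}$-quotient by comparing with $\rcosoc(L_1'/pL_2)$) is more explicit than the paper, which simply records the cosocle filtration as a consequence.

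The one place you diverge is the $K_1$-coinvariants, where you propose an explicit computation with matrices like $\smatr10p1$ but do not carry it out. This would work, but it is harder than necessary, and the paper's implicit argument (parallel to the ``easily follow'' in Proposition~\ref{prop--sublattice-1}) is structural: $(L_2/pL_2)_{K_1}$ is a $\Gamma$-module with irreducible cosocle $\s_{a-2,b+2}$, hence a quotient of $\Proj_\Gamma\s_{a-2,b+2}$. For $3\le a\le p-2$ the Jordan--H\"older factors of $\Proj_\Gamma\s_{a-2,b+2}$ are $\s_{a-2,b+2}$ (twice), $\s_{p-1-a,a+b+1}$, and $\s_{p+1-a,a+b}$; for $a=2$ they are $\s_{0,b+2}$ (twice) and $\s_{p-3,b+3}$. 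In either case neither $\s_{a,b+1}$ nor $\s_{p-3-a,a+b+2}$ appears, so $(L_2/pL_2)_{K_1}$ can have at most the two factors $\s_{a-2,b+2}$ and $\s_{p-1-a,a+b+1}$; since it already surjects onto the $\Gamma$-module $L_2/pL\cong E_2$, equality follows. This bypasses any explicit matrix computation.
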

\begin{proof}
Since $L_2 / pL_2$ is an extension of $L_2 / pL$ by $pL / pL_2,$ it suffices to show that the nonsplit extension $(\s_{a, b+1} \ligne \s_{p-1-a, a+b+1})$ occurs in $L_2 / pL_2.$ 

It follows from \eqref{eq:L1'} and \eqref{eq:L2} that  $pL\subset L'_1\subset L_2$ and there is a short exact sequence  
\[0\ra L_1'\ra L_2\ra \s_{a-2,b+2}\ra0.\]
This implies that 
\[L_1'/pL_2\cong (L_1'/pL_1')/(\s_{a-2,b+2}),\]
and the nonsplit extension $(\s_{a, b+1} \ligne \s_{p-1-a, a+b+1})$   occurs in $L'_1 / pL_2$ by  Proposition \ref{prop--sublattice3}. Since $L_1'/pL_2$ embeds in   $L_2/pL_2$, this nonsplit extension also occurs in $L_2/pL_2$.
\end{proof}

The sublattices $L_1$ and $L_2$ of $L$ satisfy the following property.

\begin{proposition}\label{prop-control-K1-coinv}
Assume $3\leq a\leq p-2$. Fix $i\in \{1,2\}.$ Then for every $x\in p L,$ there exist $r\in \N,$ $k_1,\ldots, k_r \in K_1,$ $y_1,\ldots, y_r \in L_i,$ such that
\[
x= (k_1 -1) y_1 + \cdots + (k_r - 1) y_r.
\]
\end{proposition}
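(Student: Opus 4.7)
The strategy is to prove the stronger statement $pL\subseteq (K_1-1)L_i$, of which the proposition is a direct consequence. The key observation is that since $K_1$ acts trivially on the $\Gamma$-representation $T$, for any $k\in K_1$, $v\in\Sym^1\cO^2$ and $t\in T$ we have the simple formula
\[(k-1)(v\otimes t) = ((k-1)v)\otimes t\]
inside $L = \Sym^1\cO^2\otimes T$. Applying this to the four elements $k_1 = \smatr{1}{p}{0}{1}$, $k_2 = \smatr{1}{0}{p}{1}$, $k_3 = \smatr{1+p}{0}{0}{1}$, $k_4 = \smatr{1}{0}{0}{1+p}$ of $K_1$ gives the identities $(k_1-1)Y=pX$, $(k_3-1)X=pX$, $(k_2-1)X=pY$, $(k_4-1)Y=pY$, with the remaining $(k_j-1)$-actions on $\{X,Y\}$ vanishing. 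Thus each generator $pX\otimes t$ or $pY\otimes t$ of $pL$ can be realized as $(k-1)u$ for suitable $u\in L$; the problem is to arrange $u\in L_i$.

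For $i=1$, the single projection $L\twoheadrightarrow L/L_1 = \sigma_{a-2,b+2}$ together with the explicit formula for the map $\phi\colon \Sym^1\F^2\otimes\sigma_{a-1,b+1}\to\sigma_{a-2,b+2}$ reduces the membership $u := Y\otimes t + X\otimes t'\in L_1$ to the triangular linear system $(a-1-j)c'_j = (j+1)c_{j+1}$ relating the cosocle coefficients of $t$ and $t'$. Since $a-1-j\in\{1,\dots,a-1\}\subset\{1,\dots,p-3\}$ is invertible in $\F$, the parameter $t'$ can be found for every $t\in T$, yielding $(k_1-1)u = pX\otimes t\in(K_1-1)L_1$; a symmetric argument with $k_2$ handles $pY\otimes t$.

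For $i=2$, the quotient $L/L_2 = (\sigma_{p-3-a,a+b+2}\ligne\sigma_{a,b+1})$ has two Jordan--H\"older factors, and a single choice $u=Y\otimes t+X\otimes t'$ forces the constraint $c_0=0$ on $t$. I avoid this obstruction by writing
\[pX\otimes t = (k_1-1)u_1 + (k_3-1)u_2,\]
with $u_1 = Y\otimes s_1+X\otimes s'_1$ and $u_2 = Y\otimes s_2 + X\otimes s'_2$ in $L_2$, subject to $s_1 + s'_2 = t$ in $T$. The membership $u_i\in L_2$ splits into a \emph{cosocle constraint} (vanishing of the $\sigma_{a,b+1}$-image of $\bar u_i$) and a \emph{socle constraint} (vanishing of the $\sigma_{p-3-a,a+b+2}$-component of $\bar u_i$, which makes sense because $\sigma_{p-3-a,a+b+2}$ does not extend with $\sigma_{a-2,b+2}$ in $L/pL$). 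A parameter count — $4a$ cosocle parameters against $3a+2$ linear equations, and $4(p-1-a)$ socle parameters against $3p-5-3a$ equations — leaves positive solvability dimension once $2\le a\le p-2$; the relevant denominators $a-1-j$ and $p-2-a-j$ are again invertible in $\F$, and one verifies consistency by solving in sequence.

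The main obstacle is the coupling of the four tensors $s_1, s'_1, s_2, s'_2$ through the socle-and-cosocle membership conditions for $u_1, u_2\in L_2$ together with the global equality $s_1+s'_2=t$. The plan is to solve the cosocle system first (the separated obstructions at $c_0$ and $c_{a-1}$ are now absorbed respectively into $u_1$ and $u_2$), then use the remaining socle freedom to enforce the $\sigma_{p-3-a,a+b+2}$-vanishing for each $u_i$; finally, an exact lift from $T/pT$ to $T$ is harmless because adjusting $s'_2$ by an element of $pT$ alters $u_2$ only by an element of $pL\subset L_2$, preserving $L_2$-membership. The analogous arguments using $(k_2, k_4)$ dispose of $pY\otimes t$, completing the proof that $pL\subseteq (K_1-1)L_i$.
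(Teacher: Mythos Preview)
Your approach for $i=1$ is correct: the single constraint ``image in $\sigma_{a-2,b+2}$ vanishes'' is indeed a triangular system in the cosocle coordinates, and the denominators $a-1-j\in\{1,\dots,a-1\}$ are units in $\F$ since $a\le p-2$. The symmetric argument for $pY\otimes t$ works the same way.

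Your argument for $i=2$, however, has a genuine gap. A parameter count showing that the number of unknowns exceeds the number of linear equations establishes only that the associated homogeneous system has a nontrivial kernel; it does \emph{not} prove that the inhomogeneous system is solvable for every right-hand side $t$. What you need is surjectivity of the relevant linear map, and you never verify this. The phrases ``leaves positive solvability dimension'' and ``one verifies consistency by solving in sequence'' are placeholders, not arguments. Moreover, your splitting of the membership condition $u_i\in L_2$ into independent ``cosocle'' and ``socle'' constraints is imprecise: the quotient $L/L_2\cong(\sigma_{p-3-a,a+b+2}\ligne\sigma_{a,b+1})$ is a nonsplit extension, so vanishing in the socle piece only makes sense \emph{after} the cosocle image is already zero, and the two systems are coupled through the choice of lifts.

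The paper avoids all of this explicit linear algebra by a structural argument you have overlooked. First, the central element $\smatr{1+p}{0}{0}{1+p}\in K_1$ acts on $\Sym^1\cO^2$ by $1+p$ and trivially on $T$, so $(\smatr{1+p}{0}{0}{1+p}-1)w=pw$ for every $w\in L_i$; this gives $pL_i\subset (K_1-1)L_i$ in one line. Second, since $(K_1-1)L_i$ is the kernel of $L_i\to (L_i/pL_i)_{K_1}$, the problem reduces to showing that the surjection $L_i/pL_i\twoheadrightarrow L_i/pL$ induces an isomorphism on $K_1$-coinvariants. But $(L_i/pL_i)_{K_1}$ was already computed in Propositions~\ref{prop--sublattice-1} and~\ref{prop--sublattice2}, and it coincides with $L_i/pL$. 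This two-step reduction replaces your four-variable linear system entirely.
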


The proof of Proposition \ref{prop-control-K1-coinv} requires a technique introduced in  \cite[\S 7]{BHHMS1}, so we first recall some notation. Let $\frak{sl}_{2,\F_p}$ be the Lie algebra consisting of trace zero $2\times 2$ matrices with coefficients in $\F_p.$ It is a $3$-dimensional vector space over $\F_p$ with a basis 
\[
e = \left(
                   \begin{array}{cc}
                     0 & 1 \\
                     0 &  0\\
                   \end{array}
                 \right),~ f = \left(
                   \begin{array}{cc}
                     0 & 0 \\
                     1 &  0\\
                   \end{array}
                 \right),~ h = \left(
                   \begin{array}{cc}
                     1 & 0 \\
                     0 &  -1\\
                   \end{array}
                 \right)
                 \]
subject to the Lie bracket relations
\[
[e,f] = h, ~ [h,e] = 2e ,~ [h,f] = -2f.
\]
Let $(V,\rho)$ be a continuous finite dimensional representation of $K/Z_1$ over $E.$ Assume that $V^{\circ}$ is a $K$-stable $\cO$-lattice in $V$ such that $K_1$ acts trivially on $V^{\circ} / p V^{\circ}.$ \cite[\S 7.1]{BHHMS1} defines a Lie algebra action of $\frak{sl}_{2,\F_p}$ on $V^{\circ}/pV^{\circ}$, which induces an $\F$-linear map
\[
\beta_{V^{\circ}} : \frak{sl}_{2,\F_p} \otimes_{\F_p} (V^{\circ}/pV^{\circ} )\to V^{\circ}/pV^{\circ}
\]
sending $x \otimes v$ to $p^{-1} (\rho (\exp (p \wt{x})) \wt{v} - \wt{v}) \pmod{p V^{\circ}},$ where $\wt{x} \in \frak{sl}_{2,\Z_p}$ is a trace zero $2\times 2$ matrix with coefficients in $\Z_p$ lifting $x\in \frak{sl}_{2,\F_p},$ and $\wt{v} \in V^{\circ}$ is a lift of $v \in V^{\circ}/pV^{\circ}.$ The definition does not depend on the choice of the lifts. Moreover, letting $K$ act on $\frak{sl}_{2,\F_p}$ by conjugation $k\cdot x:=\overline{k}x\overline{k}^{-1}$ (for $k\in K$ and $k\mapsto \overline{k}\in \GL_2(\F_p)$), $\beta_{V^{\circ}}$ is $K$-equivariant. Indeed,
\begin{align*}\beta_{V^{\circ}}(k(x\otimes v))=\beta_{V^{\circ}}(\overline{k}x\overline{k}^{-1}\otimes kv)&=p^{-1}\big(\rho(\exp(pk\wt{x}k^{-1}))k\wt{v}-k\wt{v}\big) \pmod{p V^{\circ}}\\
&=p^{-1}\big(\rho(k\exp(p\wt{x}))\wt{v}-k\wt{v}\big)\pmod{p V^{\circ}}\\
&=k\beta_{V^{\circ}}(x\otimes v). 
\end{align*}

In the special case $V=\unSym^1E^2$  and $V^{\circ}=\unSym^1\cO^2 $, we easily check that  
the map $\beta_{\unSym^1\cO^2}:\frak{sl}_{2,\F_p}\otimes\Sym^1\F^2\ra \Sym^2\F^2$ is given by
\begin{equation}\label{eq::beta_Sym}\beta_{\unSym^1\cO^2}(\alpha\otimes \ell(X,Y))=\ell(a_{11}X+a_{21}Y,a_{12}X-a_{11}Y)\end{equation}
for $\a = \bigl(\begin{smallmatrix}
a_{11} & a_{12} \\ a_{21}& -a_{11}
\end{smallmatrix} \bigr) \in \frak{sl}_{2,\F_p}$ and $\ell(X,Y) \in \Sym^1 \F^2$.  

\begin{proof}[Proof of Proposition \ref{prop-control-K1-coinv}]
We only give the proof  for $L_2$ (which will be used in the proof of Proposition \ref{prop:R-cosoc}). 
Take $V=\unSym^1E^2\otimes\Theta(\psi)$ and $V^{\circ}=L=\unSym^1\cO^2\otimes T$ in the above discussion. Since $K_1$ acts trivially on $T$,  the map $\beta_{L}:\frak{sl}_{2,\F_p}\otimes \Sym^1\F^2\otimes T/pT\ra \Sym^1\F^2\otimes T/pT$ is given by (cf. \cite[Rem.~7.1.3]{BHHMS1})
\begin{equation}\label{eq::beta_L}
\beta_{L}=\beta_{\unSym^1\cO^2}\otimes \mathrm{Id}_{T/pT}.\end{equation}
  
Let $W_1=(\s_{p - 1-a, a+b+1} \ligne \s_{a-2, b+2})$ be the subrepresentation of $L/ pL$ defined in the proof of Proposition \ref{prop-reduction-L}. Then $L_2$ is exactly the preimage of $W_1$ in $L$ under the surjection $L \onto L/pL.$ Taking $W= W_1$ (and  $V^\circ = L$) in \cite[Lem.~7.1.4]{BHHMS1}, we obtain a commutative diagram in which both rows are exact:
$$\xymatrix{\frak{sl}_{2, \F_p} \otimes W_1 \ar@{.>}[dr]^{\varphi}  \ar[r]^{\ \ \beta_{L}|_{\frak{sl}_{2,\F_p} \otimes W_1}} \ar@{^(->}[d] & L/pL \ar@{=}[d] \ar[r]^{p\ \ \ }& (L_2 / p^2 L)_{K_1} \ar@{^(->}[d] \ar[r] &  W_1  \ar@{^(->}[d] \ar[r] & 0 \\
\frak{sl}_{2,\F_p} \otimes L /pL  \ar[r]^{\ \ \ \beta_{L}}  & L/pL  \ar[r]^{p\ \ \ }& (L / p^2 L)_{K_1} \ar[r] & L / pL  \ar[r] &0. 
}
$$
To prove the proposition, it suffices to check that the dotted map $\varphi,$ which is the composite
\begin{equation}\label{eqn--surj}
\varphi: \frak{sl}_{2, \F_p} \otimes W_1 \into \frak{sl}_{2,\F_p} \otimes L / pL \To{\beta_{L}} L/ pL,
\end{equation}
is surjective, since this implies that the images of $pL$ and $p^2L$ in $(L_2)_{K_1}$ coincide. Recall that $T/pT$ fits into a short exact sequence 
\[
0\ra \sigma_{p-2-a,a+b+1}\ra T/pT\ra \sigma_{a-1,b+1}\ra0
\]
and $W_1\cap (\Sym^1\F^2\otimes \sigma_{p-2-a,a+b+1})=\sigma_{p-1-a,a+b+1}$ (see the proof of Proposition \ref{prop-reduction-L}). Using \eqref{eq::beta_L}, we see that $\varphi$ sends $\frak{sl}_{2,\F_p}\otimes \sigma_{p-1-a,a+b+1}$ to $\Sym^1\F^2\otimes \sigma_{p-2-a,a+b+1}$, so that $\varphi$ induces a $K$-equivariant map
\[\overline{\varphi}: \frak{sl}_{2,\F_p}\otimes \sigma_{a-2,b+2}\hookrightarrow \frak{sl}_{2,\F_p}\otimes \Sym^1\F^2\otimes\sigma_{a-1,b+1}\ra \Sym^1\F^2\otimes\sigma_{a-1,b+1}\] 
where the second map is given by $\beta_{\unSym^1\cO^2}\otimes\mathrm{Id}_{\sigma_{a-1,b+1}} $. By Proposition \ref{prop-reduction-L}(i), $\mathrm{cosoc}(L/pL)=\Sym^1\F^2\otimes\sigma_{a-1,b+1}$. Hence, to prove $\varphi$ is surjective it suffices to prove $\overline{\varphi}$ is surjective.
 
We prove that $\overline{\varphi}$ is surjective by a direct computation (analogous to \cite[Lem.~7.2.1]{BHHMS1}). Fix a nonzero element $v\in (\sigma_{a-1,b+1})^{U(\Z_p)}$. Then $ (\sigma_{a-1,b+1})^{U(\Z_p)} = \F v$ and the group $H = \big\{\smatr{[a]}00{[d]},~a,d\in \F_p^{\times} \big\}$ acts on $v$ by $\chi_{a-1,b+1}.$ Recall that there is a natural action of  $\frak{sl}_{2,\F_p}$ on $\sigma_{a-1,b+1}$ and the set $\{v,f(v),\ldots, f^{a-1}(v)\}$ forms an  $\F$-basis of $\s_{a-1,b+1},$  see \cite[\S 4.2.1]{Paskunas-MSMF}.  By construction we have $e(v)=0$ and $h(v)=(a-1)v$. 
Consider the following nonzero elements of $\Sym^1\F^2\otimes\sigma_{a-1,b+1}$: \[l_1 : = X \otimes v,\ \ \ l_2: = Y\otimes v - \frac{1}{a-1} X \otimes f(v).\]  It is easy to see that $H$ acts on $l_1$ (resp. $l_2$) by $\chi_{ a, b +1}$ (resp. $\chi_{ a - 2, b +2}$).  Moreover,  using the fact $ef(v)=fe(v)+h(v)=(a-1)v$, one  checks that $e(l_1)=e(l_2)=0$, so that $l_1$ and $l_2$ are fixed by $U(\Z_p)$. Since $\Sym^1\F^2\otimes \sigma_{a-1,b+1}\cong \sigma_{a,b+1}\oplus\sigma_{a-2,b+2}$, we deduce that $l_1\in(\sigma_{a,b+1})^{U(\Z_p)}$ and $l_2\in (\sigma_{a-2,b+2})^{U(\Z_p)}$  under this decomposition. 
In particular, $\Sym^1\F^2\otimes\sigma_{a-1,b+1}$ is generated by $l_1$ and $l_2$ as a $K$-representation. 
 
Since $\overline{\varphi}$ is $K$-equivariant, to finish the proof it suffices to prove that both $l_1$ and $l_2$ lie in the image of $\overline{\varphi}.$   
We let  (recall $a\geq 3$)
\[
w_1 := e \otimes l_2,\quad w_2 : = -h\otimes l_2 - \frac{2}{a-2} e\otimes f(l_2)
\]
be elements of $ \frak{sl}_{2,\F_p} \otimes \sigma_{a-2,b+2}$ and claim that $\overline{\varphi}(w_i) = l_i,$ $i=1,2.$ Indeed, as $\beta_{\unSym^1 \cO^2}(e\otimes  X)=0$ and $ \beta_{\unSym^1 \cO^2}(e\otimes  Y)=X$ by \eqref{eq::beta_Sym}, 
\begin{align*}
\overline{\varphi} (w_1) & = \beta_L(e \otimes l_2) = \beta_L\big(e\otimes  (Y\otimes v - \frac{1}{a-1} X \otimes f(v))\big) \\
& = \beta_{\unSym^1 \cO^2} (e\otimes Y) \otimes v - \frac{1}{a-1} \beta_{\unSym^1 \cO^2} (e\otimes X) \otimes f(v) = X\otimes v = l_1.
\end{align*}
Similarly, $
\overline{\varphi}(w_2) = \beta_L\big( -h\otimes l_2 - \frac{2}{a-2} e\otimes f(l_2)\big).$ We have
\begin{align*}
f (l_2) &= f \big(Y\otimes v - \frac{1}{a-1} X \otimes f(v)\big)\\ &= Y \otimes f(v)  - \frac{1}{a-1} (f(X) \otimes f(v) + X \otimes f^2(v))\\
&= \frac{a-2}{a-1}Y\otimes f(v) - \frac{1}{a-1}X \otimes f^2 (v),
\end{align*}
hence
\begin{align*}
\beta_L( e\otimes f (l_2)) & = \frac{a-2}{a-1}\beta_{\unSym^1 \cO^2} (e\otimes Y) \otimes f(v) -  \frac{1}{a-1}  \beta_{\unSym^1 \cO^2}(e\otimes  X) \otimes f^2 (v) \\
& =  \frac{a-2}{a-1} X\otimes f(v) .
\end{align*}
As $h(X)=X$ and $h(Y)=-Y$ by \eqref{eq::beta_Sym}, we obtain
\begin{align*}
\overline{\varphi} (w_2) & =- \beta_{L} \big(h \otimes (Y\otimes v - \frac{1}{a-1}X \otimes f(v))\big) - \frac{2}{a-2}\cdot \frac{a-2}{a-1} X\otimes f(v)  \\
& = Y \otimes v + \frac{1}{a-1}X \otimes f(v) - \frac{2}{a-1} X\otimes f(v) = l_2. 
\end{align*}
This proves the claim and finishes the proof of the proposition.
\end{proof}

\subsection{Gluing Lattices}\label{subsection:gluing}

\hfill

Assume $1 \leq a \leq p-3.$ Consider the following three characters of $\F_{p^2}^{\times}:$
\begin{equation}\label{equation-choice-psi}
\psi_1 = [\xi]^{a+2 + (p+1)b},~ \psi_{2} = [\xi]^{a+3 + (p+1)(b-1)},~ \psi_{3} = [\xi]^{a+1 + (p+1)b}.
\end{equation}
In this subsection, we construct a lattice $\widetilde{R}$ in \[\Theta(\psi_1) \oplus \left(\unSym^1 E^2 \otimes\Theta(\psi_2)\right) \oplus \left(\unSym^1 E^2 \otimes\Theta(\psi_3)\right)\] such that $\widetilde{R} / p\widetilde{R}$ is killed by $\frak{m}_{K_1}^2$ and $\rcosoc (\widetilde{R} / p\widetilde{R}) = \s_{a,b+1}.$ We divide the construction into two cases: $1\leq a\leq p-4$ and $a=p-3.$

\subsubsection{The case  $1\leq a\leq p-4$}\label{subsubsection:p-4}
Denote by $W$ the nonsplit $\G$-extension $( \s_{p- 3 - a, a + b + 2} \ligne \s_{a, b+1}).$ 
\begin{enumerate}
\item[(1)] Let $R_1$ be the unique (up to homothety) lattice in $\Theta(\psi_1)$ such that $\rcosoc(R_1/ p R_1) = \s_{a,b+1}.$ Then $R_1/ p R_1\cong W.$ Let $r_1$ denote the composite
\[
r_1: R_1 \onto R_1 / pR_1 \cong W.
\]
\item[(2)] By Proposition \ref{prop:Diamond} and \cite[Lem.~3.8]{BP}, we have 
\[\JH\left(\overline{\unSym^1E^2\otimes\Theta(\psi_2)}^{\rm ss}\right)=\big\{\s_{a,b+1},\s_{p-3-a,a+b+2},\s_{a+2,b},\s_{p-5-a,a+b+3}\big\}\]
with the convention $\s_{-1,b}=0$. Let $R_2\subset \unSym^1E^2\otimes\Theta(\psi_2)$ be the unique (up to homothety) lattice such that $\rcosoc(R_2/ p R_2) = \s_{a,b+1}.$  The structure of $R_2 / p R_2$ is given by Proposition \ref{prop--sublattice2}, i.e. 
\begin{equation}\label{eq:R2/p}R_2/pR_2\cong(\s_{p-5-a,a+b+3}\ligne \s_{a+2,b}\ligne\s_{p-3-a,a+b+2}\ligne\s_{a,b+1}).\end{equation}
 \item[(3)]  By Proposition \ref{prop:Diamond} and \cite[Lem.~3.8]{BP}, we have \[\JH\left(\overline{\unSym^1E^2\otimes\Theta(\psi_3)}^{\rm ss}\right)=\big\{\sigma_{a,b+1},\s_{p-3-a,a+b+1},\s_{a-2,b+2},\s_{p-1-a,a+b+1}\big\}\]
 with the convention $\s_{-1,b+2}=0$. Let $R_3\subset \unSym^1E^2\otimes\Theta(\psi_3)$ be the unique (up to homothety) lattice such that $\rcosoc(R_3)\cong\s_{a,b+1}$. By Proposition \ref{prop--sublattice-1}, $R_3/pR_3$ has a cosocle filtration
\begin{equation}\label{eq:R3/p}
\s_{a-2, b+2} \ligne  ( \s_{p-1-a, a+b+1}  \oplus \s_{p-3-a, a+b+2} ) \ligne \s_{a, b+1}.\end{equation}
\end{enumerate}

Note that there exists a surjection  $R_2\twoheadrightarrow W$ which we denote by $r_2$; let $R_2':=\Ker(r_2)$. The structure of $R_2'/pR_2'$ is determined in Proposition \ref{prop-reduction-L}(i). Precisely, it has a two-step socle and cosocle filtration 
\begin{equation}\label{eq:R2'/p}(\s_{p-3-a,a+b+2}\oplus \s_{p-5-a,a+b+3})\ligne (\s_{a+2,b}\oplus\s_{a,b+1})\end{equation}
 and all possible extensions do occur. 

Similarly, there exists a surjection $R_3\twoheadrightarrow W$ which we denote by $r_3$;  let $R_3':=\Ker(r_3)$.  The structure of $R_3'/pR_3'$  is also determined in Proposition \ref{prop-reduction-L}(i). Precisely, it 
has a cosocle filtration
\begin{equation}\label{eq:R3'/p}
\s_{p-3-a, a+b+2} \ligne  ( \s_{a, b+1}  \oplus \s_{a-2, b+2} ) \ligne \s_{p-1-a, a+b+1}\end{equation}
 and all possible extensions do occur.

\subsubsection{Glue $R_1$ and $R_2$ ($1\leq a\leq p-4$)}

Let $R$ be the lattice in $\Theta(\psi_1) \oplus (\unSym^1 E^2 \otimes\Theta(\psi_2))$ obtained by gluing $R_1$ and $R_2$ along $W$, i.e. $R$ is given by the short exact sequence 
 \begin{equation} \label{equation-ses3}
0 \to R \to R_1 \oplus R_2 \To{r_1 - r_2} W \to 0.
\end{equation}

Let $r_{R}$ denote the composition $R \onto R/pR \onto R_1/pR_1 \cong W.$

\begin{lemma}\label{lem--first-glue}
(i) $r_R$ induces a short exact sequence
\begin{equation} \label{equation-ses1}
0 \to R_2' / p R_2' \ra R /p R \to W \to 0 .
\end{equation}
In particular, $R/ pR$ is killed by $\frak{m}_{K_1}^2.$

(ii) $\Ker(r_R)=R_2'+pR$ and
\[\Ker(r_R)/p\Ker(r_R)\cong  R_2'/pR_2'\oplus W.\]
\end{lemma}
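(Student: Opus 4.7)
The plan is to derive both parts of the lemma directly from Lemma \ref{lem:glue-L/p} applied to the gluing sequence \eqref{equation-ses3}. The only subtlety is that $r_R$, as defined, lands in $W \cong R_1/pR_1$ rather than in $R_2/pR_2$; to match the notation of Lemma \ref{lem:glue-L/p}, I would apply it after relabelling $L_1 := R_2$ and $L_2 := R_1$. Since the glued lattice depends only on the pair $(r_1,r_2)$ up to sign (the kernel of $r_1-r_2$ equals the kernel of $r_2-r_1$), and both $r_1$ and $r_2$ are surjective, this relabelling is harmless. Under this identification, the composite $R \onto R/pR \onto R_1/pR_1$ coincides precisely with the map $r_L$ of Lemma \ref{lem:glue-L/p}, composed with the fixed isomorphism $R_1/pR_1 \simto W$.

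For (i), Lemma \ref{lem:glue-L/p}(i) then produces the short exact sequence
\[0 \to \Ker(r_2)/p\Ker(r_2) \to R/pR \to R_1/pR_1 \to 0,\]
which is exactly \eqref{equation-ses1} once we use $\Ker(r_2) = R_2'$ and $R_1/pR_1 \cong W$. For the annihilation statement, it is enough to check that $K_1$ acts trivially on both outer terms, since then $\mathfrak{m}_{K_1}^2$ kills the extension. The group $K_1$ acts trivially on $W$ because $W$ is inflated from a $\Gamma$-representation, and it acts trivially on $R_2'/pR_2'$ because $R_2' \subset L = \Sym^1 \cO^2 \otimes T$ for an appropriate lattice $T \subset \Theta(\psi_2)$, and $K_1$ acts trivially on $L/pL$ by Lemma \ref{lem:K1-trivial}.

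For (ii), Lemma \ref{lem:glue-L/p}(ii) under the same relabelling gives $\Ker(r_R) = \Ker(r_2) + pR = R_2' + pR$, together with
\[\Ker(r_R)/p\Ker(r_R) \cong \Ker(r_2)/p\Ker(r_2) \oplus pR_1/p^2R_1.\]
Identifying $\Ker(r_2) = R_2'$ and observing that multiplication by $p$ induces an isomorphism $R_1/pR_1 \simto pR_1/p^2R_1$ (so $pR_1/p^2R_1 \cong W$), the desired decomposition follows. There is no serious obstacle here: both parts reduce to Lemma \ref{lem:glue-L/p} with some bookkeeping; the one point to watch is the asymmetric definition of $r_R$ through $R_1$, which is what forces the interchange of $R_1$ and $R_2$ in the appeal to the lemma.
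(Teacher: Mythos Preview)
Your proposal is correct and matches the paper's own proof exactly: the paper simply writes ``This is a special case of Lemma~\ref{lem:glue-L/p} applied to $L_1=R_2$ and $L_2=R_1$,'' which is precisely your relabelling. One small imprecision: the inclusion $R_2' \subset L$ alone does not immediately force $K_1$ to act trivially on $R_2'/pR_2'$; the cleaner justification is that $R_2'$ is in fact homothetic to a lattice of the form $\Sym^1\cO^2 \otimes T'$ (indeed $R_2' = pL$, as follows from Proposition~\ref{prop--sublattice2} identifying $(R_2/pR_2)_{K_1}$ with $W$), so Lemma~\ref{lem:K1-trivial} applies directly.
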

\begin{proof}
This is  a special case of Lemma \ref{lem:glue-L/p} applied to $L_1=R_2$ and $L_2=R_1$.
\end{proof}

\begin{proposition}\label{prop:R-cosoc}
The short exact sequence (\ref{equation-ses1}) induces an isomorphism $(R/pR)_{K_1} \cong W.$ In particular $\rcosoc (R/ p R) = \s_{a,b+1}.$
\end{proposition}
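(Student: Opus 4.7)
The plan is to apply the $K_1$-coinvariants functor $(-)_{K_1}$ to the sequence \eqref{equation-ses1}. Denote by $\tilde L$ the big lattice in $\Sym^1 E^2\otimes\Theta(\psi_2)$ defined by \eqref{eq:def-L} for the $\psi_2$-parameters, so that $R_2$ coincides with the sublattice $L_2$ of Proposition \ref{prop--sublattice2} and $R_2'=\ker(r_2)=p\tilde L$. Multiplication by $p$ gives a $K$-equivariant isomorphism $R_2'/pR_2'\cong\tilde L/p\tilde L$, and by Lemma \ref{lem:K1-trivial} both $R_2'/pR_2'$ and $W$ (a $\G$-representation) are $K_1$-trivial. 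Applying $(-)_{K_1}$ to \eqref{equation-ses1} thus yields a right-exact sequence
\[
R_2'/pR_2'\ \longrightarrow\ (R/pR)_{K_1}\ \longrightarrow\ W\ \longrightarrow\ 0,
\]
and the task reduces to showing the first arrow vanishes, i.e., $(0,x)\in\sum_{k\in K_1}(k-1)R+pR$ for every $x\in R_2'$.

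The principal input is Proposition \ref{prop-control-K1-coinv}: for each $x\in R_2'=p\tilde L$ there exist $k_j\in K_1$ and $y_j\in R_2$ with $x=\sum_j(k_j-1)y_j$. Surjectivity of $r_1$ lets us lift each $y_j$ to $\tilde y_j=(z_j,y_j)\in R$ where $z_j\in R_1$ satisfies $r_1(z_j)=r_2(y_j)$. Since $K_1$ acts trivially on $R_1/pR_1\cong W$, the element $u:=\sum_j(k_j-1)z_j$ lies in $pR_1$, and the identity
\[
\sum_j(k_j-1)\tilde y_j=(u,\,x)\ \in\ R
\]
displays $(u,x)$ as a $K_1$-commutator. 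Hence $(0,x)\equiv -(u,0)$ modulo $\sum_{k\in K_1}(k-1)R$, and the image of $R_2'/pR_2'$ in $(R/pR)_{K_1}$ is contained in the image of the connecting map $pR_1/p^2R_1\to (R/pR)_{K_1}$ arising from the companion short exact sequence $0\to pR_1/p^2R_1\to R/pR\to R_2/pR_2\to 0$ given by Lemma \ref{lem:glue-L/p}(i) with the roles of $R_1,R_2$ swapped.

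To eliminate the remaining contribution, exploit the central element $c=\smatr{1+p}{0}{0}{1+p}\in K_1$, for which $(c-1)(a,b)=(pa,pb)$. Given $u=pv\in pR_1$, surjectivity of $r_2$ allows a lift $(v,w)\in R$, and $(c-1)(v,w)=(u,pw)\in\sum_{k\in K_1}(k-1)R$, so $(u,0)\equiv -(0,pw)$ with $pw\in pR_2\subset p\tilde L=R_2'$. A second application of Proposition \ref{prop-control-K1-coinv} to $pw$, combined with the same lifting procedure, yields $(0,pw)\equiv -(u',0)$ for some $u'\in pR_1$, so iterating defines a $\G$-equivariant endomorphism $\phi:W\to W$ (well-defined on coinvariants, with $W\cong pR_1/p^2R_1$) such that $[(u,0)]=[(\phi(u),0)]$ in $(R/pR)_{K_1}$. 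Since $W$ is indecomposable with distinct Jordan--H\"older factors $\sigma_{p-3-a,a+b+2}$ and $\sigma_{a,b+1}$, Schur's lemma gives $\End_{\G}(W)=\F$ and $\phi$ is scalar; provided $\phi\neq 1$, the relation $(1-\phi)[(u,0)]=0$ then forces $[(u,0)]=0$ for all $u\in pR_1$, so the image of $R_2'/pR_2'$ in $(R/pR)_{K_1}$ vanishes. Alternatively, invoking $(R_2/pR_2)_{K_1}\cong W$ from Proposition \ref{prop--sublattice2} (whose top cosocle-layer for the $\psi_2$-parameters is precisely $W$), one checks that both surjections $(R/pR)_{K_1}\twoheadrightarrow W$ coming from \eqref{equation-ses1} and its companion coincide (they send $(a,b)\in R$ to $r_1(a)=r_2(b)\in W$), and the common kernel is a quotient of both $W$ and $R_2'/pR_2'$, which can be shown to be zero by combining the above iteration with an inspection of Jordan--H\"older factors via Proposition \ref{prop-reduction-L}(i).

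Once $(R/pR)_{K_1}\cong W$ is established, the cosocle statement is immediate: any $K$-equivariant map $R/pR\to\sigma$ to a Serre weight $\sigma$ factors through the $K_1$-coinvariants because $\sigma$ is $K_1$-trivial, so $\rcosoc(R/pR)=\rcosoc(W)=\sigma_{a,b+1}$. The main obstacle in this plan is the verification that the scalar $\phi$ is not the identity on $W$; this requires tracing through the choices of lifts and the interaction between the two invocations of Proposition \ref{prop-control-K1-coinv} with the central element $c$.
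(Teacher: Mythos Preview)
Your initial setup is correct, and the use of Proposition~\ref{prop-control-K1-coinv} together with lifting $y_j$ to $\tilde y_j=(z_j,y_j)\in R$ is exactly the right move. However, you miss the key observation that makes the argument work in one step and instead enter an unnecessary iteration that you yourself flag as incomplete.

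The point is that $K_1$ acts trivially on the entire lattice $R_1$, not merely on $R_1/pR_1$. Indeed, $R_1$ is an $\cO$-lattice in $\Theta(\psi_1)$, and $\Theta(\psi_1)$ is a representation of $\Gamma=\GL_2(\F_p)$ inflated to $K$ via $K\twoheadrightarrow K/K_1\cong\Gamma$; hence $K_1$ acts trivially on all of $\Theta(\psi_1)$ and in particular on $R_1$. Consequently $u=\sum_j(k_j-1)z_j=0$ in $R_1$ (not just in $R_1/pR_1$), and the identity $\sum_j(k_j-1)\tilde y_j=(0,x)$ already exhibits $(0,x)$ as an element of $\sum_{k\in K_1}(k-1)R$. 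This finishes the proof immediately: the image of $R_2'/pR_2'$ in $(R/pR)_{K_1}$ is zero, so the surjection $(R/pR)_{K_1}\twoheadrightarrow W$ is an isomorphism.

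The entire second half of your argument (the iteration producing $\phi$, the companion exact sequence, the endomorphism ring computation) is therefore superfluous, and the acknowledged obstacle that $\phi$ might equal the identity never arises. This is not a minor shortcut: without the observation that $K_1$ is trivial on $R_1$, your iteration does not terminate and the scalar $\phi$ cannot be pinned down by the considerations you outline.
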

\begin{proof}
By Lemma \ref{lem--first-glue}(i), it suffices to show that for any $x \in R_2',$ there exist $r\in \N,$ $k_1,\ldots, k_r \in K_1,$ $v_1,\ldots, v_r \in R$ such that $ x= (k_1 -1) v_1 + \cdots + (k_r - 1) v_r.$ By Proposition \ref{prop-control-K1-coinv} (applied to $L=p^{-1}R_2'$ and $L_2=R_2$), there exist $r\in \N,$ $k_1,\ldots, k_r \in K_1,$ $y_1,\ldots, y_r \in R_2$  such that
 \[
  x =  (k_1 -1) y_1 + \cdots + (k_r - 1) y_r. 
 \]
For  $1\leq i \leq r$, choose $z_i \in R_1$ such that $r_1(z_i) = r_2 (y_i)$ and let $v_i = (z_i ,y_i)\in R.$ Since $K_1$ acts trivially on $R_1,$ we have
 \[
 (0, x) = \big(\sum_{i=1}^r (k_i -1) z_i, \sum_{i=1}^r (k_i -1) y_i \big) = \sum_{i=1}^r (k_i - 1)  v_i,
 \]
giving the result.
\end{proof}

\subsubsection{Glue $R$ and $R'_3$ ($1\leq a\leq p-4$)}

We define $\widetilde{R}$ to be the lattice in $\Theta(\psi_1)\oplus \big(\unSym^1E^2\otimes\Theta(\psi_2)\big)\oplus \big(\unSym^1E^2\otimes \Theta(\psi_3)\big)$ obtained by gluing  $R$ and $R_3$ along $W$, i.e. 
\begin{equation}\label{eq:tildeR}0\ra \wt{R}\ra R\oplus R_3\To{r_R-r_3} W\ra0.\end{equation}

\begin{proposition}\label{prop:tildeR}
(i) There exists a short exact sequence
\[ 
0 \to \Ker(r_R) / p \Ker(r_R) \ra \wt{R} /p \wt{R} \to R_3/pR_3 \to 0 .
\]

(ii) $\rcosoc (\wt{R}/ p \wt{R}) = \s_{a,b+1}.$
\end{proposition}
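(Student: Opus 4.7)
The plan is to apply the two general gluing lemmas developed earlier (Lemma \ref{lem:glue-L/p} and Lemma \ref{lem:cosoc-glue}) to the fibered product defining $\wt{R}$ in (\ref{eq:tildeR}), with $L_1 = R$, $L_2 = R_3$, $r_1 = r_R$, and $r_2 = r_3$.

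For (i), both $r_R$ and $r_3$ are surjective onto $W$: surjectivity of $r_R$ is immediate from Lemma \ref{lem--first-glue}(i) (which produces the SES $0 \to R_2'/pR_2' \to R/pR \to W \to 0$), while $r_3$ is surjective by construction. Lemma \ref{lem:glue-L/p}(i), applied with $L_1=R$ and $L_2=R_3$, then yields precisely the short exact sequence claimed in (i).

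For (ii), I want to invoke Lemma \ref{lem:cosoc-glue} to conclude $\rcosoc(\wt R)\cong \rcosoc(R_3)=\s_{a,b+1}$. Hypothesis (a) of that lemma, $\rcosoc(R)=\rcosoc(W)$, holds since $\rcosoc(R)=\s_{a,b+1}$ by Proposition \ref{prop:R-cosoc} and $W=(\s_{p-3-a,a+b+2}\ligne\s_{a,b+1})$ by definition. The real work is to verify hypothesis (b): that $\rcosoc(\Ker(r_R))$ and $\rcosoc(\Ker(r_3))$ share no Jordan--H\"older factor. Using the standard fact that for an $\cO$-lattice $M$ one has $\rcosoc_K(M)=\rcosoc_K(M/pM)$, I compute both cosocles from the reductions. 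Lemma \ref{lem--first-glue}(ii) gives $\Ker(r_R)/p\Ker(r_R)\cong R_2'/pR_2'\oplus W$, whose cosocle by (\ref{eq:R2'/p}) is $\s_{a+2,b}\oplus \s_{a,b+1}^{\oplus 2}$; on the other side $\Ker(r_3)=R_3'$ and (\ref{eq:R3'/p}) gives $\rcosoc(R_3'/pR_3')=\s_{p-1-a,a+b+1}$.

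It then remains to check that $\s_{p-1-a,a+b+1}$ is distinct from both $\s_{a+2,b}$ and $\s_{a,b+1}$. Matching the second (determinant) coordinates modulo $p-1$ would force $a+1\equiv 0$ and $a\equiv 0 \pmod{p-1}$ respectively, i.e.\ $a=p-2$ or $a=0$, both of which are excluded by the hypothesis $1\leq a\leq p-4$. Hence hypothesis (b) holds, and Lemma \ref{lem:cosoc-glue} yields $\rcosoc(\wt R/p\wt R)=\rcosoc(\wt R)=\s_{a,b+1}$, proving (ii). The only nontrivial point in the argument is this last numerical comparison of Serre weights; everything else is a straightforward unwinding of the gluing formalism already in place.
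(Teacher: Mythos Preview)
Your proof is correct and follows essentially the same approach as the paper: both parts are reduced to Lemma~\ref{lem:glue-L/p} and Lemma~\ref{lem:cosoc-glue} respectively, with the cosocles of $\Ker(r_R)$ and $\Ker(r_3)$ computed exactly as in the paper via Lemma~\ref{lem--first-glue}(ii), \eqref{eq:R2'/p}, and \eqref{eq:R3'/p}. Your explicit numerical check that $\sigma_{p-1-a,a+b+1}$ differs from $\sigma_{a+2,b}$ and $\sigma_{a,b+1}$ (by comparing second coordinates modulo $p-1$) is a detail the paper leaves implicit, but is otherwise identical in spirit.
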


\begin{proof}
(i) This is  a special case of Lemma \ref{lem:glue-L/p}.

(ii) This is a special case of Lemma \ref{lem:cosoc-glue}, with $L_1=R$ and $L_2=R_3$.  Firstly, Condition (a) in \emph{loc. cit.} holds by Proposition \ref{prop:R-cosoc}. Secondly, we have \[\rcosoc(\Ker(r_R))=\rcosoc(W)\oplus \rcosoc(R'_2)=\sigma_{a,b+1}\oplus \sigma_{a,b+1}\oplus \sigma_{a+2,b}\]
by \eqref{eq:R2'/p} and Lemma \ref{lem--first-glue}(ii), and  \[\rcosoc(\Ker(r_3))\cong\sigma_{p-1-a,a+b+1}\] 
by \eqref{eq:R3'/p}, hence Condition (b) in Lemma \ref{lem:cosoc-glue} also holds. \end{proof}

\begin{proposition}\label{prop-lattice-red-wtR}
Let $V$ denote the quotient of $R_3/pR_3$ by $\sigma_{a-2,b+2}$ via \eqref{eq:R3/p}. 
Then there exists a short exact sequence
\begin{equation}\label{equation-ses2}
0 \to   R_2'/ p R_2'\oplus W \oplus \s_{a-2, b+2} \to \wt{R} / p\wt{R} \to  V \to 0.
\end{equation}
In particular, $\wt{R} / p\wt{R}$ is killed by $\frak{m}_{K_1}^2.$
\end{proposition}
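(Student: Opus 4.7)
The plan is to refine the short exact sequence of Proposition \ref{prop:tildeR}(i), which (together with Lemma \ref{lem--first-glue}(ii)) reads
\[
0 \to R_2'/pR_2' \oplus W \to \wt{R}/p\wt{R} \to R_3/pR_3 \to 0.
\]
Recall from Proposition \ref{prop--sublattice-1} (cf.\ \eqref{eq:R3/p}) that $\sigma_{a-2,b+2}$ is a simple subrepresentation of $R_3/pR_3$ sitting at the bottom of its cosocle filtration, with corresponding quotient $V$. Letting $X \subset \wt{R}/p\wt{R}$ denote the preimage of $\sigma_{a-2,b+2}$, it suffices to show that the resulting extension $0 \to R_2'/pR_2' \oplus W \to X \to \sigma_{a-2,b+2} \to 0$ splits, for then $X \cong R_2'/pR_2' \oplus W \oplus \sigma_{a-2,b+2}$ and the claimed sequence follows by taking the quotient of $\wt{R}/p\wt{R}$ by $X$.

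To construct the splitting I would work explicitly with the fibered product structure of $\wt{R}$. Pick $y \in R_3$ lifting a generator of $\sigma_{a-2,b+2} \subset R_3/pR_3$; since $\sigma_{a-2,b+2}$ is not a Jordan--H\"older factor of $W$, we may arrange $r_3(y)=0$, so that $(0,0,y) \in \wt{R}$ and its class in $\wt{R}/p\wt{R}$ covers the generator. The $K$-submodule it generates is a priori an extension of $\sigma_{a-2,b+2}$ by the image of the boundary map $W \to R_3'/pR_3'$ in the Tor long exact sequence $0 \to W \to R_3'/pR_3' \to R_3/pR_3 \to W \to 0$. This unwanted boundary copy of $W$ can be eliminated using the defining relations of $\wt{R}$: for any $w \in W$ and any lifts $x_1 \in R_1$, $\tilde{x}_2 \in R_2$, $\tilde{y} \in R_3$ with $r_1(x_1) = r_2(\tilde{x}_2) = r_3(\tilde{y}) = w$, the identity $(p x_1, p\tilde{x}_2, p\tilde{y}) = p (x_1, \tilde{x}_2, \tilde{y}) \in p\wt{R}$ yields a relation in $\wt{R}/p\wt{R}$. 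Using these relations to correct the original lift produces an element whose $K$-submodule is a clean copy of $\sigma_{a-2,b+2}$ intersecting $R_2'/pR_2' \oplus W$ trivially, giving the required section.

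Once the splitting is established, the $\frak{m}_{K_1}^2$-killing of $\wt{R}/p\wt{R}$ follows: by Proposition \ref{prop:R-cosoc} combined with Lemma \ref{lem--first-glue}(i) the module $R_2'/pR_2'$ is identified with $\frak{m}_{K_1}(R/pR)$ and hence is $\frak{m}_{K_1}$-annihilated, so $X$ is $\frak{m}_{K_1}$-killed as a direct sum of $\G$-representations; likewise $V \cong (R_3/pR_3)_{K_1}$ is $\frak{m}_{K_1}$-killed by Proposition \ref{prop--sublattice-1}. Hence $\frak{m}_{K_1} \cdot (\wt{R}/p\wt{R}) \subset X$ and $\frak{m}_{K_1}^2 \cdot (\wt{R}/p\wt{R}) = 0$.

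The main obstacle is verifying the cancellation step rigorously: one must check that the corrected element really generates $\sigma_{a-2,b+2}$ transversely to $R_2'/pR_2' \oplus W$, which is a delicate bookkeeping argument combining the Tor sequence with the three-fold fibered product structure. As a backup route, one could compute $\Ext^1_{K/Z_1}(\sigma_{a-2,b+2}, \tau) = 0$ for each Jordan--H\"older factor $\tau$ of $R_2'/pR_2' \oplus W$ via the Hochschild--Serre spectral sequence relative to $K_1$ (both arguments being $K_1$-trivial $\G$-representations), reducing the splitting to a case-check of Serre-weight extensions over $\GL_2(\Z_p)$.
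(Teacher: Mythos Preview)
Your backup route is exactly what the paper does, and your primary route is not needed. The paper simply observes that $\sigma_{a-2,b+2}$ has no nontrivial $K$-extension with any Jordan--H\"older factor of $W$ or of $R_2'/pR_2'$: most cases follow from \cite[Cor.~5.6]{BP}, while the one delicate case $\Ext^1_K(\sigma_{a-2,b+2},\sigma_{a,b+1})=0$ is handled by a short separate lemma (Lemma~\ref{lem:BP5.6}), proved by an Iwahori computation rather than Hochschild--Serre. Once $\Ext^1_K(\sigma_{a-2,b+2},R_2'/pR_2'\oplus W)=0$, the pulled-back extension $0\to R_2'/pR_2'\oplus W\to X\to\sigma_{a-2,b+2}\to0$ splits and the sequence~\eqref{equation-ses2} follows immediately from Proposition~\ref{prop:tildeR}(i).

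Your explicit-element approach runs into a genuine difficulty that the relations you write down do not obviously resolve. The element $(0,0,y)$ with $y\in R_3'$ lands, inside $\wt{R}/p\wt{R}$, in the image of $R_3'/pR_3'$, and the $K$-submodule it generates there contains part of the boundary copy of $W$ coming from the $\Tor$-sequence. Your correction identity $(px_1,p\tilde x_2,p\tilde y)\in p\wt{R}$ only tells you how that boundary copy of $W$ is identified with the $W$-summand of $R_2'/pR_2'\oplus W$; it does not produce a new element whose $K$-span avoids $R_2'/pR_2'\oplus W$. In effect you would still need to know that the extension class in $\Ext^1_K(\sigma_{a-2,b+2},W)$ vanishes, which is precisely the backup computation. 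So the explicit construction does not bypass the $\Ext$-vanishing; it just repackages it with more bookkeeping.

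Your argument for $\frak{m}_{K_1}^2$-annihilation is correct and matches the paper's: both $R_2'/pR_2'\oplus W\oplus\sigma_{a-2,b+2}$ and $V\cong(R_3/pR_3)_{K_1}$ are $K_1$-trivial, so \eqref{equation-ses2} forces $\frak{m}_{K_1}^2\cdot(\wt{R}/p\wt{R})=0$.
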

\begin{proof}
By definition we have   
\[0\ra \sigma_{a-2,b+2}\ra R_3/pR_3\ra V\ra0.\]
 Note that $\s_{a-2, b+2}$ has no nontrivial extensions with any Jordan-H\"older factor of $W$ and of $R_2 / pR_2$, using \cite[Cor.~5.6]{BP} and  Lemma \ref{lem:BP5.6} below. The result easily follows by Proposition \ref{prop:tildeR}.
\end{proof}

\begin{lemma}\label{lem:BP5.6}
Assume $2\leq a\leq p-2$. Then $\Ext^1_{K}(\sigma_{a-2,b+2}, \sigma_{a,b+1})=0.$ 
\end{lemma}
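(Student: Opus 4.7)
The plan is to deduce the vanishing by invoking \cite[Cor.~5.6]{BP}, which gives a complete classification of pairs of Serre weights of $K=\GL_2(\Z_p)$ admitting nontrivial $\Ext^1_{K/Z_1}$. First, I would check that both $\sigma_{a-2,b+2}$ and $\sigma_{a,b+1}$ have the same central character $z\mapsto z^{a+2b+2}$ (modulo $p-1$), so that $\Ext^1_K$ coincides with $\Ext^1_{K/Z_1}$ between these two weights, placing us in the framework of \cite{BP}. The main step is then a short combinatorial check: under the hypotheses $2\leq a\leq p-2$ and $p\geq 5$, the pair $(\sigma_{a-2,b+2},\sigma_{a,b+1})$ does not appear in the list of ``neighbouring'' pairs in \cite[Cor.~5.6]{BP}, hence the Ext vanishes.

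A self-contained alternative is to use the Hochschild--Serre spectral sequence
\[
E_2^{i,j}=H^i\big(\G,\,H^j(K_1/Z_1,\F)\otimes\Hom_{\F}(\sigma_{a-2,b+2},\sigma_{a,b+1})\big)\Rightarrow \Ext^{i+j}_{K/Z_1}(\sigma_{a-2,b+2},\sigma_{a,b+1}).
\]
Here one first verifies that $E_2^{1,0}=\Ext^1_{\G}(\sigma_{a-2,b+2},\sigma_{a,b+1})=0$: indeed, $\sigma_{a-2,b+2}$ is not a Jordan--H\"older constituent of the injective envelope $\rInj_{\G}(\sigma_{a,b+1})$, whose constituents $\{\sigma_{a,b+1},\sigma_{p-3-a,a+b+2},\sigma_{p-1-a,a+b+1}\}$ are recalled after Lemma \ref{lem--injotimesSym}; a direct comparison of parameters shows that none of them equals $\sigma_{a-2,b+2}$ for $2\leq a\leq p-2$ and $p\geq 5$. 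Next, one computes $E_2^{0,1}$ via the identification $H^1(K_1/Z_1,\F)\cong \mathfrak{sl}_2(\F_p)\otimes_{\F_p}\F\cong \sigma_{2,-1}$, coming from the log map and the adjoint $\G$-action, and mod-$p$ Clebsch--Gordan applied to $\sigma_{2,-1}\otimes \Hom_{\F}(\sigma_{a-2,b+2},\sigma_{a,b+1})$ to extract the $\G$-invariant piece. Any surviving contribution must then be killed by the differential $d_2\colon E_2^{0,1}\to E_2^{2,0}=H^2(\G,\Hom_{\F}(\sigma_{a-2,b+2},\sigma_{a,b+1}))$.

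The main obstacle in the self-contained route is controlling the $d_2$-differential and carrying through the Clebsch--Gordan bookkeeping in the various boundary cases for $a$; this is precisely the delicate input that \cite[Cor.~5.6]{BP} encapsulates, which is why invoking it directly is the cleanest and most efficient approach.
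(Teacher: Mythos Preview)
Your primary approach—reducing $\Ext^1_K$ to $\Ext^1_{K/Z_1}$ via the matching central characters and then invoking the classification in \cite[Cor.~5.6]{BP}—is correct: for $\sigma=\sigma_{a-2,b+2}$ the only Serre weights $\tau\not\cong\sigma$ with $\Ext^1_{K/Z_1}(\sigma,\tau)\neq 0$ are $\sigma_{p-1-a,a+b+1}$ and $\sigma_{p+1-a,a+b}$, and one checks that $\sigma_{a,b+1}$ equals neither under $2\le a\le p-2$, $p\ge 5$.

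The paper, however, takes a genuinely different route. It does \emph{not} invoke \cite[Cor.~5.6]{BP} for the pair $(\sigma_{a-2,b+2},\sigma_{a,b+1})$ itself. Instead it uses the short exact sequence $0\to\sigma_{p+1-a,a+b}\to\Ind_I^K\chi_{a-2,b+2}\to\sigma_{a-2,b+2}\to 0$ and Frobenius reciprocity (together with \cite[Cor.~5.6]{BP} only for the auxiliary vanishing $\Ext^1_K(\sigma_{p+1-a,a+b},\sigma_{a,b+1})=0$) to reduce to showing $\Ext^1_I(\chi_{a-2,b+2},\sigma_{a,b+1})=0$; this is then proved by hand. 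Namely, any such $I$-extension is first shown to split over $U(\Z_p)$ because $H^1(U(\Z_p),\sigma_{a,b+1})\cong\chi_{a,b+1}^s\alpha^{-1}$, and $\chi_{a,b+1}^s\alpha^{-1}\neq\chi_{a-2,b+2}$ precisely when $2\le a\le p-2$; the $U(\Z_p)$-splitting is then upgraded to an $I$-splitting as in the proof of \cite[Prop.~7.2]{Pa10}. Your route is the shorter one granted the full strength of \cite[Cor.~5.6]{BP}; the paper's argument is more self-contained and makes explicit the mechanism (essentially the $d_2$-computation you flagged as the delicate point in your Hochschild--Serre alternative) rather than citing it as a black box.
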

\begin{proof}
We have a short exact sequence $0\ra \sigma_{p+1-a,a+b}\ra \Ind_I^{K}\chi_{a-2,b+2}\ra \sigma_{a-2,b+2}\ra0$. Since 
$\Ext^1_{K}(\sigma_{p+1-a,a+b},\sigma_{a,b+1})=0$ by \cite[Cor.~5.6]{BP}, we are reduced to proving $$\Ext^1_K(\Ind_I^K\chi_{a-2,b+2},\sigma_{a,b+1})=0,$$ equivalently $\Ext^1_{I}(\chi_{a-2,b+2},\sigma_{a,b+1})=0$ by Frobenius reciprocity.

Consider an $I$-extension $0\ra \sigma_{a,b+1}|_I\ra \mathcal{E}\ra \chi_{a-2,b+2}\ra0$.  We first prove that it splits as $U(\Z_p)$-representation. Since $\sigma_{a,b+1}$ is a cyclic $\F[\![U(\Z_p)]\!]$-module, we have $H^1(U(\Z_p),\sigma_{a,b+1})\cong H^1(U(\Z_p),\chi_{a,b+1}^s)$, where $\chi_{a,b+1}^s$ is identified with the $U(\Z_p)$-cosocle of $\sigma_{a,b+1}$. As seen in the proof of Proposition \ref{prop:U-inv}, we get \[H^1(U(\Z_p),\sigma_{a,b+1})\cong\chi_{a,b+1}^s\alpha^{-1}.\]
As $2\leq a\leq p-2$, this implies $\chi_{a-2,b+2}\neq \chi_{a,b+1}^s\alpha^{-1}$, and so $\Ext^1_{U(\Z_p)}(\chi_{a-2,b+2},\sigma_{a,b+1})=0$. 

As a consequence, we may choose $v\in \mathcal{E}$ which is fixed by $U(\Z_p)$ and on which $H$ acts via $\chi_{a-2,b+2}$. Next, as in the proof of \cite[Prop.~7.2]{Pa10}, we show that $v$ is actually fixed by $I_1$, showing that $\mathcal{E}$ splits as $I$-representation. This finishes the proof.
\end{proof}

We  obtain the following corollary.

\begin{corollary}\label{cor-wtL-structure} 
$\wt{R} / p \wt{R}$ has cosocle $\s_{a,b+1}$ and is a quotient of $(\Proj_{ \cO[\![K/Z_1]\!]}\s_{a,b+1}) / \frak{m}_{K_1}^2.$
\end{corollary}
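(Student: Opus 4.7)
The plan is to combine directly the two propositions that immediately precede. The cosocle assertion $\rcosoc(\wt{R}/p\wt{R}) \cong \s_{a,b+1}$ is already the content of Proposition \ref{prop:tildeR}(ii), so no further work is required there.

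For the second assertion, I would invoke the universal property of the projective cover in the category of $\cO[\![K/Z_1]\!]$-modules. Since $\wt{R}/p\wt{R}$ has cosocle $\s_{a,b+1}$, there is a surjection
\[
\Proj_{\cO[\![K/Z_1]\!]}(\s_{a,b+1}) \twoheadrightarrow \wt{R}/p\wt{R}.
\]
Proposition \ref{prop-lattice-red-wtR} tells us that $\wt{R}/p\wt{R}$ is annihilated by $\frak{m}_{K_1}^2$, so this surjection factors through the quotient $\Proj_{\cO[\![K/Z_1]\!]}(\s_{a,b+1})/\frak{m}_{K_1}^2$, giving the desired conclusion.

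The only routine verification I foresee is that $Z_1$ acts trivially on $\wt{R}/p\wt{R}$, so that working in the category of $\cO[\![K/Z_1]\!]$-modules is legitimate. This holds because $Z_1$ is pro-$p$ and $E$ is unramified over $\Q_p$: any continuous central character on an $\cO$-lattice takes values in $1 + p\cO$ when restricted to $Z_1$, and hence becomes trivial after reduction mod $p$. I do not foresee any real obstacle, since the substantive content of the corollary is entirely packaged into Propositions \ref{prop:tildeR} and \ref{prop-lattice-red-wtR}; the corollary itself is a clean two-line consequence.
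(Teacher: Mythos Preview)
Your proposal is correct and matches the paper's approach: the corollary is stated without proof, as an immediate consequence of Proposition~\ref{prop:tildeR}(ii) (for the cosocle) and Proposition~\ref{prop-lattice-red-wtR} (for the annihilation by $\frak{m}_{K_1}^2$), exactly as you outline.
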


\subsubsection{The case $a = p-3$}
In this case we need to slightly modify the above construction. We only sketch the construction and leave the detail to the reader.
\begin{enumerate}
\item[(1)] Let $R_1$ be the unique (up to homothety) lattice in $\Theta(\psi_1)$ such that $\rcosoc(R_1/ p R_1) = \s_{p-3,b+1}.$ Then \[R_1/ p R_1\cong (\s_{0,b} \ligne \s_{p-3,b+1}) = : W.\]
Let $r_1$  denote the projection 
$ R_1  \onto \s_{p-3,b+1}.$
 
\item[(2)] By Proposition \ref{prop:Diamond} and \cite[Lem.~3.8]{BP}, we have 
\[\JH\left(\overline{\unSym^1 \cO^2 \otimes \Theta(\psi_2)}^{\rm ss}\right)=\{\sigma_{p-1,b},\sigma_{p-3,b+1}\}.\]
Let $R_2$ be  the unique lattice in $\unSym^1 E^2 \otimes \Theta(\psi_2)$ such that $\rcosoc (R_2/pR_2) = \s_{p-3, b+1}.$ Then \[R_2/pR_2\cong(\s_{p-1,b}\ligne\s_{p-3,b+1}).\]
Let $r_2$ denote the projection $R_2\onto \s_{p-3,b+1}$ and $R_2':=\Ker(r_2)$. Proposition \ref{prop-reduction-L-2} implies that 
\[R_2'/pR_2'\cong \s_{p-3,b+1}\oplus\s_{p-1,b}.\] 

\item[(3)] Let $R_3$ and $R'_3$ be the lattices in $\unSym^1 E^2 \otimes \Theta(\psi_3)$ constructed as in the case $1\leq a\leq p-4.$ Namely, $R_3$ has cosocle $\s_{p-3,b+1}$, and $R_3':=\Ker(r_3)$ where $r_3$ denotes the projection $R_3\onto W$. 
\end{enumerate}

\medskip

We first glue $R_1 $ and $R_2$ along $\s_{p-3,b+1}$, namely 
\[
0\ra R\ra  R_1 \oplus R_2 \To{r_1-r_2} \s_{p-3,b+1}\ra0. 
\]
Then by Lemma \ref{lem:glue-L/p}(i) there is a short exact sequence
\[0\ra R_2'/pR_2'\ra R/pR\To{r_R} W\ra0.\]
Moreover, as in the proof of Proposition \ref{prop:R-cosoc} one can show that $r_R$ induces an isomorphism  $(R/pR)_{K_1} \cong  W.$ In particular, $\rcosoc(R/pR)\cong \s_{p-3,b+1}$.

The gluing of $R$ and $R_3$ is exactly as in the case $1\leq a \leq p-4.$ Let $\wt{R}$ be defined by
\[0\ra \wt{R}\ra R\oplus R_3\To{r_R-r_3} W\ra0.\]
One can follow the proof of Proposition \ref{prop-lattice-red-wtR} and Corollary \ref{cor-wtL-structure}, to show the following result.
\begin{proposition}\label{prop:tildeR-p-3}
(i) $\wt{R} / p \wt{R}$ has cosocle $\s_{p-3,b+1}$.

(ii) Let $V$ denote the quotient of $R_3/pR_3$ by $\sigma_{p-5,b+2}$. Then there is a short exact sequence 
\begin{equation}\label{equation-ses2'}
0 \to W \oplus R'_2/ p R'_2 \oplus \s_{p-5, b+2} \to \wt{R} / p\wt{R} \to  V \to 0.
\end{equation}
As a consequence,  $\wt{R} / p \wt{R}$ is a quotient of $(\Proj_{ \cO[\![K/Z_1]\!]}\s_{p-3,b+1}) / \frak{m}_{K_1}^2.$
\end{proposition}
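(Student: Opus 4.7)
The plan is to follow the proofs of Proposition~\ref{prop:tildeR}, Proposition~\ref{prop-lattice-red-wtR} and Corollary~\ref{cor-wtL-structure} from the case $1\leq a\leq p-4$, making the substitutions forced by the fact that $\overline{\Theta(\psi_2)}\cong \s_{p-2,b}$ is now residually irreducible, so that the reduction of $R_2$ is governed by Proposition~\ref{prop-reduction-L-2} rather than by Proposition~\ref{prop-reduction-L}.

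For part (i), I would apply Lemma~\ref{lem:cosoc-glue} to the defining sequence $0\to \wt R\to R\oplus R_3\To{r_R-r_3}W\to 0$, taking $L_1=R$ and $L_2=R_3$. Condition (a), $\rcosoc(R/pR)\cong\rcosoc(W)=\s_{p-3,b+1}$, is exactly what the sketch preceding the proposition records via the identification $(R/pR)_{K_1}\cong W$. For condition (b), applying Lemma~\ref{lem:glue-L/p}(ii) to the first gluing gives $\Ker(r_R)/p\Ker(r_R)\cong R_2'/pR_2'\oplus W$, whose cosocle is supported on $\{\s_{p-3,b+1},\s_{p-1,b}\}$, while \eqref{eq:R3'/p} specialised to $a=p-3$ yields $\rcosoc(\Ker(r_3))\cong \s_{p-1-a,a+b+1}=\s_{2,b-1}$; these share no common Jordan--H\"older factor, so Lemma~\ref{lem:cosoc-glue} produces $\rcosoc(\wt R/p\wt R)\cong \s_{p-3,b+1}$.

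For part (ii), I would start from the sequence supplied by Lemma~\ref{lem:glue-L/p}(i) applied to~\eqref{eq:tildeR}, namely
\[
0\to R_2'/pR_2'\oplus W\to \wt R/p\wt R\to R_3/pR_3\to 0,
\]
and pull back along $0\to \s_{p-5,b+2}\to R_3/pR_3\to V\to 0$ (using the cosocle filtration of $R_3/pR_3$ from Proposition~\ref{prop--sublattice-1} with $a=p-3$). The main step is to show the resulting extension of $\s_{p-5,b+2}$ by $R_2'/pR_2'\oplus W$ inside $\wt R/p\wt R$ splits, which amounts to $\Ext^1_K(\s_{p-5,b+2},\sigma)=0$ for each $\sigma\in\{\s_{0,b},\s_{p-3,b+1},\s_{p-1,b}\}$. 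The case $\sigma=\s_{p-3,b+1}$ is precisely Lemma~\ref{lem:BP5.6} with $a=p-3$, and the remaining two cases follow from \cite[Cor.~5.6]{BP}; this yields the sequence \eqref{equation-ses2'}.

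For the final claim, the submodule $W\oplus R_2'/pR_2'\oplus\s_{p-5,b+2}$ is a $\G$-representation ($W$ and $\s_{p-5,b+2}$ factor through $\G$ by construction, and $R_2'/pR_2'$ does so via Lemma~\ref{lem:K1-trivial} applied to the overlattice of $R_2$), hence annihilated by $\frak{m}_{K_1}$. Proposition~\ref{prop--sublattice-1} identifies $V$ with the $K_1$-coinvariants $(R_3/pR_3)_{K_1}$, so $V$ is also annihilated by $\frak{m}_{K_1}$; consequently $\frak{m}_{K_1}^2$ kills $\wt R/p\wt R$, and combined with (i) this gives the stated surjection from $\Proj_{\cO[\![K/Z_1]\!]}(\s_{p-3,b+1})/\frak{m}_{K_1}^2$. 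The most delicate point is the $\Ext^1_K$-vanishing in step (ii), since the weight $\s_{p-5,b+2}$ sits near the boundary of the range treated by \cite[Cor.~5.6]{BP} and each pair must be verified separately.
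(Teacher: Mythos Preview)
Your proof is correct and follows essentially the same approach the paper indicates (the paper itself merely says ``one can follow the proof of Proposition~\ref{prop-lattice-red-wtR} and Corollary~\ref{cor-wtL-structure}''); you have faithfully carried out that adaptation, correctly identifying $\Ker(r_R)/p\Ker(r_R)\cong R_2'/pR_2'\oplus W$ via Lemma~\ref{lem:glue-L/p}(ii) and handling the $\Ext^1_K$-vanishings needed to split off $\s_{p-5,b+2}$ exactly as in the generic case. One minor point: your appeal to Lemma~\ref{lem:K1-trivial} for $R_2'/pR_2'$ is really an appeal to its (general) proof, since the lemma as stated sits in the context $1\le a\le p-2$; the identification $R_2'=pL$ with $L=\Sym^1\cO^2\otimes T$ from Proposition~\ref{prop-reduction-L-2} makes this transparent.
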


\section{Galois deformation rings}\label{sec:deform}

Assume  $p\geq 5.$ Let $\brho: G_{\Q_p}\to \GL_2(\F)$ be a two dimensional continuous representation of $G_{\Q_p} = \Gal(\overline{\Q}_p / \Q_p).$
In this section, we study the congruence relation of Galois deformation rings of different (tame) types. Our method does not allow to determine the precise structure of the Galois deformation rings, but is enough for application in \S\ref{section:GK-dim}.

\subsection{Preliminaries}

We collect some results on the set of Serre weights associated to $\brho$ and some results of  Pa\v{s}k\=unas and of Morra. We prove in \S\ref{subsubsection:regular} a criterion for some Galois deformation rings to be regular.

\subsubsection{Serre weights } \label{section-GL2-Serrewt}

Let $\omega$ (resp. $\omega_2$) be the mod $p$ cyclotomic character (resp. Serre's fundamental character of niveau 2) of $G_{\Q_p}.$ Up to isomorphism, $\brho$ has one of the following forms:

\begin{enumerate}[label=(Case \arabic*),ref=(Case \arabic*)]
\item\label{Case1} $\brho$ is absolutely irreducible and $\brho|_{I_p} \sim  \bigl(\begin{smallmatrix}
\omega_{2}^{r+1} & 0 \\ 0& \omega_2^{p(r+1)}
\end{smallmatrix} \bigr) \otimes  \omega^{s+1},$ $0\leq r\leq p-1,$ $0\leq s \leq p-2.$

\item\label{Case2} $\brho \sim \bigl(\begin{smallmatrix}
{\rm unr}_1 \omega^{r+1} & * \\ 0& {\rm unr}_2
\end{smallmatrix} \bigr) \otimes \omega^{s+1}$ is reducible nonsplit, where ${\rm unr}_1,$ ${\rm unr}_2$ are unramified characters, and $0\leq r \leq p-2,$ $0\leq s \leq p-2.$

\item\label{Case3}  $\brho \sim \bigl(\begin{smallmatrix}
{\rm unr}_1 \omega^{r+1} & 0 \\ 0& {\rm unr}_2
\end{smallmatrix} \bigr) \otimes \omega^{s+1}$ is reducible split, where ${\rm unr}_1,$ ${\rm unr}_2$ are unramified characters, and $0 \leq r\leq p-2,$ $0\leq s \leq p-2.$
\end{enumerate}

Let $W(\brho)$ be the set of Serre weights associated to $\brho$ in \cite{BDJ}. We have the following explicit description of $W(\brho)$. 

\begin{theorem} $\mathrm{(}$\cite[Thm.~3.17]{BDJ}$\mathrm{)}$\label{thm:Serrewt-BDJ}
\begin{enumerate}
\item[(i)] Assume $\brho$ is in \ref{Case1}. Then  $W(\brho)= \{ \s_{r,s+1},~ \s_{p-1-r, r+s+1} \} .$
\medskip

\item[(ii)] Assume $\brho$ is in \ref{Case2}.
\begin{enumerate}
\item[(ii-a)] If $r \neq 0,$ then  $W(\brho)= \{\s_{r, s+1}\}.$

\item[(ii-b)] If $r=0,$ ${\rm unr}_1 = {\rm unr}_2$ and $\brho $ is tr\`es ramifi\'e, then  $W(\brho) =\{ \s_{p-1,s+1}\}.$

\item[(ii-c)] For other $\brho ,$ $W(\brho)= \{ \s_{0,s+1},~ \s_{p-1,s+1}  \}.$
\end{enumerate}\medskip
\item[(iii)] Assume $\brho$ is in \ref{Case3}.
\begin{enumerate}
\item[(iii-a)] If $1\leq r\leq p-4,$ then $W(\brho)=\{ \s_{r,s+1},~ \s_{p-3-r, r+s+2}\}.$

\item[(iii-b)] If $r = 0,$ then  $W(\brho) =\{ \s_{0,s+1},~ \s_{p-3, s+2},~ \s_{p-1, s+1} \}.$

\item[(iii-c)] If $r = p-3,$ then $W(\brho)=\{ \s_{0, s},~ \s_{p-3,s+1} , ~\s_{p-1, s} \}.$

\item[(iii-d)] If $r = p-2$, then  $W(\brho)=\{ \s_{p-2, s+1}\}.$
\end{enumerate}
\end{enumerate}
\end{theorem}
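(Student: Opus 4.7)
The statement is the classical result \cite[Thm.~3.17]{BDJ} specialized to $F_v \cong \Q_p$, so in practice one would simply cite it; nonetheless, let me sketch the strategy of its proof. The plan is to use the definition of $W(\brho)$ in terms of crystalline lifts: $\sigma_{a,b} \in W(\brho)$ if and only if $\brho$ admits a crystalline lift $\rho : G_{\Q_p} \to \GL_2(\cO)$ whose Hodge-Tate weights (with appropriate normalization) are prescribed by $(a,b)$. Since $p \geq 5$ and all relevant Hodge-Tate weights lie in the Fontaine-Laffaille range $[0,p-1]$, such a $\rho$ is classified up to isomorphism by a Fontaine-Laffaille module of the corresponding type, so one can enumerate the candidate lifts and directly compute their reductions mod $p$.

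I would then proceed by a case analysis matching the three shapes of $\brho$. In \ref{Case1}, irreducibility forces any lift to be non-ordinary, and a Fontaine-Laffaille module of rank $2$ with distinct Hodge-Tate weights is determined up to isomorphism by the filtration jump and the Frobenius eigenvalues. A direct inspection shows that the only pairs $(a,b)$ for which $\brho|_{I_p} \sim \smatr{\omega_2^{r+1}}{0}{0}{\omega_2^{p(r+1)}} \otimes \omega^{s+1}$ arises as a reduction correspond to the Serre weights $\sigma_{r,s+1}$ and its ``Frobenius twist'' $\sigma_{p-1-r,r+s+1}$, accounting for the two niveau-$2$ characters sitting on the diagonal after mod $p$ semisimplification.

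In \ref{Case2} and \ref{Case3}, I would write $\brho$ as a (nonsplit or split) extension of two characters and separately consider ordinary crystalline lifts, which give the ``expected'' weight $\sigma_{r,s+1}$, and non-ordinary lifts whose mod $p$ reduction happens to be reducible. For the latter, the key input is an $\Ext^1$ computation: Kummer theory on $H^1(G_{\Q_p},\omega)$ together with its peu ramifi\'e / tr\`es ramifi\'e decomposition (combined with the Fontaine-Laffaille classification of crystalline extensions) identifies exactly when a prescribed extension of characters admits a crystalline lift of the desired Hodge-Tate weights. This is what produces the extra weights $\sigma_{p-3-r,r+s+2}$, $\sigma_{p-1,s+1}$, $\sigma_{p-3,s+2}$, \emph{etc.}, and it also explains the exclusion of $\sigma_{0,s+1}$ in the tr\`es ramifi\'e case (ii-b), where the only eligible extension class fails to be peu ramifi\'e.

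The main obstacle is the boundary analysis: when $r \in \{0, p-3, p-2\}$ in \ref{Case3}, several characters coincide (typically because $\omega^{r+1}$ becomes $1$ or $\omega$), causing some Serre weights to collapse and others to appear, which is the source of the specific lists (iii-a)-(iii-d). Tracking how the Fontaine-Laffaille filtration interacts with the unramified parameters ${\rm unr}_1,{\rm unr}_2$ in these degenerate cases is the combinatorial heart of the BDJ computation, and for the purposes of this paper I would not reproduce it but simply invoke their result.
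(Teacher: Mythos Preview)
Your proposal is correct and matches the paper's approach exactly: the paper gives no proof at all and simply cites \cite[Thm.~3.17]{BDJ}, which is precisely what you recommend doing in practice. Your sketch of the BDJ argument via Fontaine--Laffaille theory and crystalline lifts is accurate and goes beyond what the paper itself provides.
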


\subsection{Mod $p$ representations of $\GL_2(\Q_p)$}\label{ss:LLC}  Assume that $\brho$ satisfies $\End_{G_{\Q_p}}(\brho)\cong \F$. We associate to $\brho$ an admissible smooth $\F$-representation  $\pi(\brho)$ of $G:=\GL_2(\Q_p)$ as follows. 

\begin{enumerate}
\item[\ref{Case1}] If $\brho$ is absolutely irreducible, then $\pi(\brho)$ is the irreducible supersingular representation of $G$ associated to $\brho$ by the mod $p$ local Langlands correspondence defined in \cite{Br03}.

 \medskip

\item[\ref{Case2}]  If $\brho  \sim \bigl(\begin{smallmatrix}
\chi_1 & * \\ 0& \chi_2
\end{smallmatrix} \bigr)$ with $\chi_1 \chi_2^{-1} \neq \o^{\pm 1}, \ide,$ then there is an exact nonsplit sequence
\[
0 \to \Ind_{B(\Q_p)}^{G} \chi_2\otimes \chi_1 \o^{-1} \to \pi(\brho) \to \Ind_{B(\Q_p)}^{G} \chi_1\otimes \chi_2 \o^{-1} \to 0.
\]
 If $\brho  \sim \bigl(\begin{smallmatrix}
\chi & * \\ 0& \chi\o
\end{smallmatrix} \bigr),$ then there is an exact nonsplit sequence
\[
0\to \Ind_{B(\Q_p)}^{G} \chi\omega \otimes \chi\o^{-1} \to \pi(\brho) \to \tau_1 \otimes \chi\circ \det \to 0,
\]
where ${\rm Sp}$ is the Steinberg representation of $G$ and $\tau_1$ is a nonsplit extension $0 \to {\rm Sp} \to \tau_1 \to \ide_G^{\oplus 2} \to 0$ with $\soc_G(\tau_1)=\Sp$.\\
If $\brho\sim\smatr{\chi\omega}*0{\chi}$, then $\pi(\brho)$ is the representation  defined in \cite[Lem.~6.7]{Paskunas-BM} (denoted by $\beta$ there). Its precise structure will be recalled in \S\ref{ss:S-nongeneric}. 
\end{enumerate}

Remark that the representation $\pi(\brho)$ is just the representation corresponding to $\brho$ in the  mod $p$ local Langlands correspondence for $\GL_2(\Q_p)$, except in the case  $\brho\sim \smatr{\chi}*0{\chi\omega}$, $\pi(\brho)$ has one extra copy of $\chi\circ \det$ than the usual form.

The following theorem is a consequence of results of Morra (\cite{Morra-ss} \cite{Morra-atom}).

\begin{theorem}\label{thm-morra}
Assume $\brho$ is either in \ref{Case1} of \S\ref{section-GL2-Serrewt} with $r\notin \{1,p-2\}$ or  $\brho$ is in \ref{Case2} of \S\ref{section-GL2-Serrewt} with $1 \leq r\leq p-3$.\footnote{The case where $r=0$ may also be considered, see the footnote of \cite[Thm.~1.1]{Morra-atom}. But as our method requires  to exclude this case in \S \ref{section-def-ring}, we choose to ignore it here.} Then for any $\s \in W(\brho),$ $\s$ occurs in $\pi(\brho) [\frak{m}_{K_1}^2]$ with multiplicity one.
\end{theorem}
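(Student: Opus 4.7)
The plan is to extract the multiplicity statement directly from Morra's explicit computations of the low $\fm_{K_1}$-torsion of $\pi(\brho)$, together with some elementary bookkeeping of $I_1$-characters.

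First I would observe that $\pi(\brho)[\fm_{K_1}^2]$ is a finite-dimensional $K$-subrepresentation, since $\pi(\brho)$ is admissible. For any Serre weight $\sigma_{m,n}$, the space $(\sigma_{m,n})^{I_1}$ is one-dimensional with $H$-character $\chi_{m,n}$, and distinct Serre weights give distinct $H$-characters. Consequently, for any $K$-representation $V$ of finite length,
\[
[V : \sigma_{m,n}] \;\leq\; \dim_{\F}\Hom_H\bigl(\chi_{m,n},\, V^{I_1}\bigr),
\]
with equality provided no other Jordan-H\"older constituent of $V$ carries the character $\chi_{m,n}$ on its $I_1$-invariants. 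The task thus reduces to (a) describing $(\pi(\brho)[\fm_{K_1}^2])^{I_1}$ as an $H$-module, and (b) ruling out other Jordan-H\"older factors sharing a relevant $H$-character.

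For Case 1, \cite{Morra-ss} computes---under genericity assumptions that translate to $r \notin \{1,p-2\}$---the explicit graded pieces of the $\fm_{K_1}$-adic filtration on $\pi(\brho)^{I_1}$. Assembling the first two layers gives $(\pi(\brho)[\fm_{K_1}^2])^{I_1}$ as an explicit direct sum of $H$-characters, and matching against $\chi_{r,s+1}$ and $\chi_{p-1-r,\,r+s+1}$---the $I_1$-characters of the two Serre weights of $W(\brho)$ in Theorem~\ref{thm:Serrewt-BDJ}(i)---shows that each appears exactly once. For Case 2 with $1\leq r\leq p-3$, the only weight in $W(\brho)$ is $\sigma_{r,s+1}$; using the non-split extension defining $\pi(\brho)$ together with the ``atome automorphe'' computations of \cite{Morra-atom}, one obtains an explicit $H$-module description of $(\pi(\brho)[\fm_{K_1}^2])^{I_1}$ in which $\chi_{r,s+1}$ appears exactly once.

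The main obstacle is step (b): ensuring that no non-Serre Jordan-H\"older constituent of $\pi(\brho)[\fm_{K_1}^2]$ carries an $I_1$-character coinciding with that of a member of $W(\brho)$. This is precisely what the excluded ranges avoid: outside them, Morra's formulas exhibit no accidental $H$-character coincidences, so the multiplicity of each $\sigma \in W(\brho)$ in $\pi(\brho)[\fm_{K_1}^2]$ coincides with the multiplicity of its $I_1$-character in $(\pi(\brho)[\fm_{K_1}^2])^{I_1}$, which is one. The remaining technical work is to check carefully that the normalization conventions in \cite{Morra-ss, Morra-atom} match those used here, and to verify that no JH factor outside $W(\brho)$ contributing to $\pi(\brho)[\fm_{K_1}^2]$ competes for the same $H$-characters.
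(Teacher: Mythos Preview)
Your proposal contains a genuine error in the key inequality. You assert that for a finite-length $K$-representation $V$,
\[
[V : \sigma_{m,n}] \;\leq\; \dim_{\F}\Hom_H\bigl(\chi_{m,n},\, V^{I_1}\bigr),
\]
but this inequality points the wrong way. Taking $I_1$-invariants is only \emph{left} exact, so passing from the Jordan--H\"older factors of $V$ to $V^{I_1}$ can lose invariants, not gain them. The correct general inequality (when $\sigma_{m,n}$ is the unique Serre weight with $I_1$-character $\chi_{m,n}$) is
\[
[V : \sigma_{m,n}] \;\geq\; \dim_{\F}\Hom_H\bigl(\chi_{m,n},\, V^{I_1}\bigr).
\]
For a concrete counterexample present in the paper itself, see Lemma~\ref{lemma-I1-coinv-T/pT}: the nonsplit $\Gamma$-extension $T/pT \cong (\sigma_{p-2-a,a+b+1}\ligne\sigma_{a-1,b+1})$ has $(T/pT)^{I_1}\cong\chi_{p-2-a,a+b+1}$, which is one-dimensional; the factor $\sigma_{a-1,b+1}$ contributes nothing to the $I_1$-invariants. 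Thus your method can only give a \emph{lower} bound on the multiplicity of $\sigma$ in $\pi(\brho)[\fm_{K_1}^2]$, and the lower bound $\geq 1$ is already immediate from $\sigma\in W(\brho)$. (A secondary inaccuracy: distinct Serre weights do \emph{not} always give distinct $H$-characters, since $\chi_{0,n}=\chi_{p-1,n}$; this does not arise under the stated genericity, but the general claim is false.)

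The paper's proof avoids this issue entirely by citing Morra's results in a stronger form than you use. In \cite{Morra-ss} and \cite{Morra-atom}, Morra computes the full $K$-socle filtration of $\pi(\brho)$ (respectively of the atome automorphe $A_{r,\lambda}$), giving each graded piece as an explicit sum of Serre weights, not merely its $I_1$-invariants. From those tables one reads off directly that each $\sigma\in W(\brho)$ occurs exactly once in the first two layers, with the only additional care being that in the case $\brho\sim\smatr{\ide}{*}{0}{\omega}\otimes\chi$ the extra copy of $\chi\circ\det$ in $\pi(\brho)$ (compared to $A_{r,\lambda}$) contributes no Serre weight of $\brho$. So the fix is simply to quote Morra's socle-filtration output rather than attempting a reduction to $I_1$-invariants.
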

\begin{proof}
If $\brho$ is absolutely irreducible, then $\pi(\brho)$ is the representation $\pi(\brho)$ in \cite{Morra-ss} whose $K$-socle filtration is given by \cite[Thm.~1.1]{Morra-ss}. If $\brho$ is reducible nonsplit and $\brho \nsim  \bigl(\begin{smallmatrix}
\ide  & * \\ 0& \omega
\end{smallmatrix} \bigr) \otimes \chi,$ then $\pi(\brho) $ equals to the representation $A_{r,\l}$ (in \cite[Thm.~1.1]{Morra-atom}) for some $\l\in\F^{\times}.$ If $\brho \sim  \bigl(\begin{smallmatrix}
\ide  & * \\ 0& \omega
\end{smallmatrix} \bigr) \otimes \chi,$ then $\pi(\brho)$ has an extra copy of $\chi\circ\det$ than  the representation $A_{r,\l}$. However, in this case  $(\chi\circ\det)|_K$ is not a Serre weight of $\brho$. Thus, for any $\sigma\in W(\brho)$ the multiplicity of $\s$ in $\pi(\brho) [\frak{m}_{K_1}^2]$ is equal to the multiplicity of $\s$ in $A_{r,\l} [\frak{m}_{K_1}^2].$
The $K$-socle filtration of $A_{r,\l}$ is given by \cite[Thm.~1.1]{Morra-atom} and \cite[Thm.~1.2]{Morra-ss}, from which the result follows.
\end{proof}

\subsubsection{Results of Pa\v{s}k\=unas}\label{section-Morra}

 Recall that $\brho$ is called {\em generic} in the sense of \cite{Paskunas-BM} if either $\brho$ is absolutely irreducible or $\brho  \sim \bigl(\begin{smallmatrix}
\chi_1 & * \\ 0& \chi_2
\end{smallmatrix} \bigr)$ is reducible nonsplit with $\chi_1 \chi_2^{-1} \neq \o, \ide.$ We assume $\brho$ is generic, so  in particular $\End_{G_{\Q_p}} (\brho) = \F.$ Let $\eta: G_{\Q_p} \to \cO^{\times}$ be a character such that $\eta \pmod{\varpi} = \det \brho.$ Let $R_{\brho}^{\eta}$ denote the universal deformation ring of $\brho$ with determinant $\eta$ and let $\rho^{\rm univ}$ denote the universal object over $R_{\brho}^{\eta}.$ 

Let $\psi = \eta \e^{-1}.$ According to \cite[\S6.1]{Paskunas-BM}, there exists  $N \in \frak{C}_{G,\psi}(\cO)$ with a faithful continuous action of $R^{\eta}_{\brho}$ which commutes with the action of $G$ such that:

(N0)  $ \F\wh{\otimes}_{R_{\brho}^{\eta}} N$ is of finite length in $\frak{C}_{G,\psi}(\cO)$ and is finitely generated over $\cO[\![ K ]\!];$

(N1)  $\Hom_{\SL_2(\Q_p)}(\ide_G,N^{\vee})=0$;

(N2) $\End_{\frak{C}_{G,\psi}(\cO)} (N) \cong R_{\brho}^{\eta}$ and $\check{\mathbf{V}}(N)$ is isomorphic to $\rho^{\rm univ}$ as $R_{\brho}^{\eta}[\![ G_{\Q_p} ]\!]$-modules, where $\check{\mathbf{V}}$ is the modified Colmez functor in \cite[\S3]{Paskunas-BM};

(N3) $N$ is projective in $\frak{C}_{G,\psi}(\cO)$, and there exists $x\in R^{\eta}_{\brho}$ such that $N/xN$ is isomorphic to a projective envelope of $\oplus_{\sigma\in W(\brho)}\sigma^{\vee}$ in $\Mod^{\rm pro}_{K ,\psi}(\cO)$.

\begin{remark}
Under our assumption on $\brho$, $N$ is just a projective envelope of $\F\wh{\otimes}_{R_{\brho}^{\eta}} N$ in $\frak{C}_{G,\psi}(\cO)$. Hence  (N3) follows from \cite[Thm.~5.2]{Paskunas-BM}.
\end{remark}

\begin{proposition}\label{prop:kappa}
Assume $\brho$ is generic. Then there is an isomorphism $\F\wh{\otimes}_{R_{\brho}^{\eta}}N\cong\pi(\brho)^{\vee}$.
\end{proposition}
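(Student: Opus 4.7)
The plan is to identify $\pi^{\vee}:=\F\wh{\otimes}_{R^{\eta}_{\brho}}N = N/\fm N$ (with $\fm$ the maximal ideal of $R^{\eta}_{\brho}$) with $\pi(\brho)^\vee$ by invoking Pa\v{s}k\=unas' block decomposition of $\frak{C}_{G,\psi}(\cO)$ in the generic case, together with the modified Colmez functor $\check{\mathbf{V}}$. First I would observe that $\pi^\vee$ is of finite length in $\frak{C}_{G,\psi}(\cO)$ by (N0). Applying the exact functor $\check{\mathbf{V}}$ to the short exact sequence $0\to \fm N\to N\to \pi^\vee\to 0$ and using (N2), one obtains $\check{\mathbf{V}}(\pi^\vee)\cong \rho^{\rm univ}/\fm\rho^{\rm univ}\cong \brho$.

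Next I would use (N3) to place $\pi^\vee$ in the correct block. Since $x\in\fm$, $\pi^\vee$ is a quotient of $N/xN$, which is a projective envelope of $\bigoplus_{\sigma\in W(\brho)}\sigma^\vee$ in $\Mod^{\rm pro}_{K,\psi}(\cO)$; hence the $K$-cosocle of $\pi^\vee$ is contained in $\bigoplus_{\sigma\in W(\brho)}\sigma^\vee$. This, combined with $\check{\mathbf{V}}(\pi^\vee)\cong\brho\neq 0$ and condition (N1) (which rules out spurious one-dimensional summands killed by $\check{\mathbf{V}}$), forces every Jordan--H\"older factor of $\pi^\vee$ to lie in the block of $\frak{C}_{G,\psi}(\cO)$ associated to $\brho$ by Pa\v{s}k\=unas.

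Finally I would compare Jordan--H\"older factors and extension classes using the exactness of $\check{\mathbf{V}}$ and Pa\v{s}k\=unas' description of each block (\cite{PaskunasIHES}, \cite{Paskunas-BM}). In \ref{Case1} ($\brho$ absolutely irreducible), the block contains only $\pi(\brho)^\vee$ as an irreducible object, and $\check{\mathbf{V}}$ preserves length on this block, so $\check{\mathbf{V}}(\pi^\vee)=\brho$ (length one) forces $\pi^\vee\cong \pi(\brho)^\vee$. In \ref{Case2} ($\brho$ reducible nonsplit, generic, i.e.\ $\chi_1\chi_2^{-1}\neq \o,\ide$), the block contains exactly two irreducibles, namely the duals of the two principal series composing $\pi(\brho)$, so by length considerations $\pi^\vee$ has length two; the nonsplitness of $\brho\cong\check{\mathbf{V}}(\pi^\vee)$ then forces the extension to be nonsplit on the $G$-side, yielding $\pi^\vee\cong\pi(\brho)^\vee$. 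The main obstacle is this last deduction: one needs to know that $\check{\mathbf{V}}$ faithfully detects nonsplitness of extensions within the block, which is the content of Pa\v{s}k\=unas' computation of $\Ext^1$-groups under $\check{\mathbf{V}}$ in \cite{Paskunas-BM}. In fact this identification is essentially built into the construction of $N$ there, so the proposition amounts to citing \emph{loc.~cit.}
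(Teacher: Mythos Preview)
Your approach is essentially the same as the paper's, which simply cites \cite[Prop.~6.1]{Paskunas-BM}; you have unpacked that citation and correctly identified that the result is built into Pa\v{s}k\=unas' construction of $N$.

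One small gap in your sketch: in \ref{Case2} you assert that the block contains exactly two irreducibles, but the genericity condition $\chi_1\chi_2^{-1}\neq\omega,\ide$ still allows $\chi_1\chi_2^{-1}=\omega^{-1}$, i.e.\ $\brho\sim\smatr{\chi}{*}{0}{\chi\omega}$. In that case the block contains three irreducibles ($\Sp^\vee$, $\ide_G^\vee$, $\pi_\alpha^\vee$), and $\pi(\brho)$ has length four (see the description of $\pi(\brho)$ in \S\ref{ss:LLC}), so the length-two argument via $\check{\mathbf{V}}$ does not apply directly. The identification in this boundary case is still established in \cite[Prop.~6.1]{Paskunas-BM}, so your concluding sentence covers it, but the case split you wrote is incomplete as stated.
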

\begin{proof}
See the proof of \cite[Prop.~6.1]{Paskunas-BM}.\footnote{We remark that in \cite[Prop.~6.1]{Paskunas-BM}, the characters $\chi_1,\chi_2$ should be swapped.}
\end{proof}

If $\Theta$ (resp. $\sigma$) is a finite free $\cO$-module (resp. $\F$-module) equipped with a continuous action of $ K$, we define
\[
M(\Theta) := \Hom^{\rm cont}_{\cO [\![ K ]\!]} (N , \Theta^{d})^{d}\quad (\mathrm{resp}.\ M(\s) := \Hom^{\rm cont}_{\cO [\![ K ]\!]} (N , \s^{\vee})^{\vee}).
\]
Then $M(\Theta)$ (resp. $M(\s)$) is a finitely generated $R_{\brho}^{\eta}$-module by (N0).

 Let  $\mathbf{w} = (a, b)$ be a pair of integers with $a<b$ and $\tau: I_{\Q_p} \to \GL_2(E)$ be an inertial type, where $I_{\Q_p}$ is the inertia subgroup of $G_{\Q_p}$. Let \[\sigma(\mathbf{w},\tau):= \Sym^{b-a-1} E^2 \otimes {\det}^a\otimes \s(\tau) \] 
 \[ 
\s^{\rm cr}(\mathbf{w},\tau):=\Sym^{b-a-1} E^2 \otimes {\det}^a\otimes \sigma^{\rm cr}(\tau),
\]
 where $\sigma(\tau)$ and $\sigma^{\rm cr}(\tau)$ are finite-dimensional representations of $K$ over $E$ associated to $\tau$ by the inertial local Langlands correspondence \cite{Henniart} (see \S\ref{section-inertial-type} for details). Let $ R^{\eta }_{\brho}( \mathbf{w},\tau)$ (resp. $ R^{\eta ,{\rm cr}}_{\brho}(\mathbf{w}, \tau)$) denote the reduced $p$-torsion-free quotient of $R_{\brho}^{\eta}$ which parametrizes potentially semistable (resp. potentially crystalline) deformations of $\brho$ of Hodge-Tate weights $\mathbf{w}$ and type $\tau$. It is well-known that these rings are nonzero only if $\eta\varepsilon^{-(a+b)}|_{I_{\Q_p}}\sim \det\tau$, in which case they have Krull dimension $2$. This requires in particular that $\eta$ is locally algebraic.

Recall the following theorem of Pa\v{s}k\=unas.

\begin{theorem} \label{prop::support}
Let $\mathbf{w}, \tau$ be as above. Let $\Theta$ be any $K$-stable $\cO$-lattice in $\sigma(\mathbf{w},\tau)$ (resp. $\sigma^{\rm cr}(\mathbf{w},\tau)$).  Then $R_{\brho}^{\eta} /\Ann_{R^{\eta}_{\brho}}( M (\Theta))$ is equal to $R_{\brho}^{\eta }(\mathbf{w},\tau)$ (resp. $R_{\brho}^{\eta,{\rm cr}}(\mathbf{w},  \tau )$).
\end{theorem}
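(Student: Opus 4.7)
The approach is to identify the two quotients of $R_{\brho}^{\eta}$ by matching their $E'$-valued points for every finite extension $E'/E$, using the modified Colmez functor $\check{\mathbf{V}}$ together with the $p$-adic local Langlands correspondence for $\GL_2(\Q_p)$. Since $R_{\brho}^{\eta}(\mathbf{w},\tau)$ is, by construction, the maximal reduced $\cO$-flat quotient of $R_{\brho}^{\eta}$ whose $E'$-points parametrize the potentially semistable lifts of $\brho$ of Hodge-Tate weights $\mathbf{w}$ and inertial type $\tau$, it will suffice to show that $R_{\brho}^{\eta}/\Ann_{R_{\brho}^{\eta}} M(\Theta)$ is itself reduced, $\cO$-flat, and has the same set of closed points on the generic fibre. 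The $\cO$-flatness will follow from $\cO$-flatness of $\Theta$ combined with the projectivity of $N$ in $\mathfrak{C}_{G,\psi}(\cO)$ granted by (N3); reducedness will be extracted, once the points are matched, from Zariski-density of the matched $E'$-points on the generic fibre.

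For the generic fibre, I would fix a continuous $\cO$-algebra homomorphism $y : R_{\brho}^{\eta} \to \cO_{E'}$, corresponding by (N2) to a Galois lift $\rho_y$ of $\brho$. Set $N_y := N \wh{\otimes}_{R_{\brho}^{\eta},y} \cO_{E'}$ and let $\Pi_y$ denote the admissible unitary $E'$-Banach representation whose Schikhof dual is $N_y$. The definition of $M(\Theta)$, combined with flatness of $N$ over $\cO[\![ K / Z_1 ]\!]$, provides a canonical isomorphism
\[
M(\Theta) \otimes_{R_{\brho}^{\eta},y} E' \;\cong\; \Hom_K^{\mathrm{cont}}(\Theta, \Pi_y)^{*}.
\]
Hence $y$ lies in the support of $M(\Theta)$ precisely when $\sigma(\mathbf{w},\tau)$ (respectively $\sigma^{\mathrm{cr}}(\mathbf{w},\tau)$) admits a nonzero continuous $K$-equivariant map into $\Pi_y$, equivalently embeds into the space $\Pi_y^{\mathrm{l.alg}}$ of locally algebraic vectors.

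The last and, as I anticipate, hardest step is to translate the occurrence of $\sigma(\mathbf{w},\tau)$ inside $\Pi_y^{\mathrm{l.alg}}$ into the Hodge-Tate and inertial datum of $\rho_y$. The plan is to invoke compatibility of $\check{\mathbf{V}}$ with passage to locally algebraic vectors (Colmez, Emerton); combined with (N2) this identifies the nonzero $\Pi_y^{\mathrm{l.alg}}$ with $\Sym^{b-a-1}(E')^{2} \otimes \det^{a} \otimes \pi_{\mathrm{sm}}(\rho_y)$, where $\pi_{\mathrm{sm}}(\rho_y)$ is the smooth $\GL_2(\Q_p)$-representation attached by classical local Langlands to the Weil-Deligne representation of $\rho_y$. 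In conjunction with Henniart's inertial local Langlands, this shows that $\sigma(\mathbf{w},\tau) \hookrightarrow \Pi_y^{\mathrm{l.alg}}$ exactly when $\rho_y$ is potentially semistable of Hodge-Tate weights $\mathbf{w}$ and inertial type $\tau$; the corresponding statement for $\sigma^{\mathrm{cr}}(\mathbf{w},\tau)$ refines this to the potentially crystalline locus, the distinction arising from the fact that $\sigma(\tau)$ and $\sigma^{\mathrm{cr}}(\tau)$ differ by Steinberg summands which are detected on the Galois side by the vanishing of monodromy. This is the main obstacle because it depends on the full strength of the $p$-adic correspondence; once it is granted, the matching of closed points forces the two reduced $\cO$-flat quotients of $R_{\brho}^{\eta}$ to coincide.
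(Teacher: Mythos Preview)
Your outline follows exactly the strategy behind \cite[Cor.~6.5]{Paskunas-BM}, which is all the paper invokes. The identification of the closed points of the support via the specialisations $\Pi_y$, the passage to locally algebraic vectors through (N2) and Colmez's compatibility, and the appeal to Henniart's inertial correspondence are all correct and form the substance of the argument.

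There is, however, a genuine gap in the reducedness step. Matching closed points on the generic fibre shows only that $\sqrt{\Ann M(\Theta)[1/p]}$ coincides with the kernel of $R_{\brho}^{\eta}[1/p] \twoheadrightarrow R_{\brho}^{\eta}(\mathbf{w},\tau)[1/p]$; Zariski density of closed points cannot by itself rule out nilpotents in $R_{\brho}^{\eta}/\Ann M(\Theta)$. For instance, $k[\![x,y]\!]/(y^2)$ is a one-dimensional Cohen--Macaulay local ring whose reduction $k[\![x]\!]$ is regular and which has a single closed point, yet it is not reduced --- so even adding Cohen--Macaulayness of $M(\Theta)$ to your list of inputs would not close the argument. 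What is actually needed, and what Pa\v{s}k\=unas supplies, is the prior knowledge that the $R_{\brho}^{\eta}$-action on $M(\Theta)[1/p]$ \emph{factors through} $R_{\brho}^{\eta}(\mathbf{w},\tau)[1/p]$; this requires working with the locally algebraic vectors of the Banach space attached to $N$ over the whole deformation ring, not merely fibre by fibre. Once that factorisation is in hand, $M(\Theta)[1/p]$ is maximal Cohen--Macaulay over the regular ring $R_{\brho}^{\eta}(\mathbf{w},\tau)[1/p]$, hence locally free there (of rank one, by the multiplicity-one statement in Henniart's theorem), and therefore faithful; this pins down the annihilator exactly and gives the reducedness you want.
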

\begin{proof}
See \cite[Cor.~6.5]{Paskunas-BM}.
\end{proof}

Let $\delta:G_{\Q_p}\ra \cO^{\times}$ be a continuous character that is trivial modulo $p$, not necessarily locally algebraic. Twisting by $\delta$ induces a natural isomorphism of $\cO$-algebras 
\begin{equation}\label{eq:tw-delta}
\mathrm{tw}_{\delta}:R_{\brho}^{\eta\delta^2}\simto R_{\brho}^{\eta}.\end{equation} 
By a similar discussion as in \cite[\S6.1]{CEGGPS2}, we have the following variant of Theorem \ref{prop::support}.
\begin{corollary}\label{cor::support}
Assume $\eta\delta^2\varepsilon^{-(a+b)}|_{I_{\Q_p}}\sim \det\tau$. Let $\Theta$ be any $K$-stable $\cO$-lattice in $\sigma(\mathbf{w},\tau)\otimes\delta^{-1}\circ\det$ (resp. $\sigma^{\rm cr}(\mathbf{w},\tau)\otimes\delta^{-1}\circ\det$). Then $R_{\brho}^{\eta}/\mathrm{Ann}_{R_{\brho}^{\eta}}(M(\Theta))$ is equal to $\mathrm{tw}_{\delta}(R_{\brho}^{\eta\delta^2}(\mathbf{w},\tau))$ (resp. $\mathrm{tw}_{\delta}(R_{\brho}^{\eta\delta^2, \rm cr}(\mathbf{w},\tau))$). As a consequence, we have isomorphisms of $\cO$-algebras
\begin{equation}\label{eq:cor:twist-isom}\tw_{\delta}: R_{\brho}^{\eta\delta^2}(\mathbf{w},\tau)\simto R_{\brho}^{\eta}/\mathrm{Ann}_{R_{\brho}^{\eta}}(M(\Theta))\end{equation}
(resp. for $R_{\brho}^{\eta\delta^2,\rm cr}(\mathbf{w},\tau)$).
\end{corollary}

 If $\s$ is a finite dimensional $\F[K]$-module, by Proposition \ref{prop:kappa} we have
\begin{equation}\label{eqn-M-kappa-functor}
\F\wh{\otimes}_{R_{\brho}^{\eta}} M(\s)  = \Hom_{K} (\s , \pi(\brho)).
\end{equation}
It follows from (N3) and Nakayama's lemma that  $M(\sigma)\neq0$ if and only if $\sigma$ admits at least one Jordan-H\"older factor lying in  $W(\brho)$.

\subsubsection{A criterion for regularity}\label{subsubsection:regular}
\begin{lemma}\label{lem:regular}
Let $\sigma\in \Mod_{K}^{\rm sm}(\F)$ be of finite length. Assume that,  taking into account multiplicities, $\JH(\sigma)$ contains exactly one element in $W(\brho)$. Then $M(\sigma)$ is a cyclic $R_{\brho}^{\eta}$-module and isomorphic to $\F[\![x]\!]$ where $x\in R_{\brho}^{\eta}$ is as in (N3).
\end{lemma}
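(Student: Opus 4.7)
The plan is to extract the $\F[\![x]\!]$-structure on $M(\sigma)$ from property (N3) of $N$, which says $N/xN=\bigoplus_{\tau\in W(\brho)}P_{\tau^{\vee}}$ is a projective envelope in $\Mod_{K,\psi}^{\rm pro}(\cO)$. The key input is a dimension count that turns the multiplicity hypothesis on $\JH(\sigma)$ into a one-dimensional Hom space. First I would compute
\[
\dim_{\F}\Hom_{\cO[\![K]\!]}(N/xN,\sigma^{\vee}) = \sum_{\tau\in W(\brho)}[\sigma:\tau] = 1,
\]
using the standard fact that for a projective cover $P_S$ of a simple object $S$ in a finite-length abelian category, $\dim_{\F}\Hom(P_S,M)=[M:S]$, together with the hypothesis. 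Through the $R_{\brho}^{\eta}$-action on $N$ there is a natural identification $\Hom_{\cO[\![K]\!]}(N,\sigma^{\vee})[x] = \Hom_{\cO[\![K]\!]}(N/xN,\sigma^{\vee})$, and Pontryagin duality (which swaps the $x$-kernel on the discrete side with the $x$-cokernel on the compact dual) then yields $M(\sigma)/xM(\sigma)\cong\F$.

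Second, I would deduce cyclicity of $M(\sigma)$ via Nakayama. Since $\sigma^{\vee}$ is killed by $\varpi$, so is $\Hom_{\cO[\![K]\!]}(N,\sigma^{\vee})$, hence $M(\sigma)$ as well. Combined with the previous step, $M(\sigma)/(\varpi,x)M(\sigma)\cong\F$, and because $(\varpi,x)\subseteq\fm_{R_{\brho}^{\eta}}$, the quotient $M(\sigma)/\fm_{R_{\brho}^{\eta}}M(\sigma)$ is either $0$ or $\F$. It is nonzero since $\sigma$ has a Serre weight JH factor (by the remark after (N3)) and $M(\sigma)$ is finitely generated over $R_{\brho}^{\eta}$ by (N0). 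Nakayama's lemma therefore forces $M(\sigma)/\fm_{R_{\brho}^{\eta}}M(\sigma)\cong\F$, so $M(\sigma)$ is generated by a single element as an $R_{\brho}^{\eta}$-module.

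Third, to upgrade this to the precise isomorphism $M(\sigma)\cong\F[\![x]\!]$, I would exhibit the short exact sequence
\[
0\to N\xrightarrow{x}N\to N/xN\to 0,
\]
which is valid provided $N$ is $x$-torsion free over $R_{\brho}^{\eta}$ -- itself a consequence of $R_{\brho}^{\eta}$-flatness of $N$ (a structural property of the projective envelope established by Pa\v{s}k\={u}nas under our genericity hypothesis) together with the regularity of $x$ in $R_{\brho}^{\eta}$. Applying $\Hom_{\cO[\![K]\!]}(-,\sigma^{\vee})$ produces a long exact sequence in which $\Ext^1_{\cO[\![K]\!]}(N/xN,\sigma^{\vee})=0$ by the projectivity of $N/xN$ in (N3); hence multiplication by $x$ is surjective on $\Hom_{\cO[\![K]\!]}(N,\sigma^{\vee})$. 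This discrete module is moreover $x$-torsion, since any $\phi\colon N\to\sigma^{\vee}$ has kernel containing $\fm_{R_{\brho}^{\eta}}^nN\supseteq x^nN$ for large $n$. A divisible $x$-torsion $\F[\![x]\!]$-module with one-dimensional $x$-socle is isomorphic to the Pr\"ufer-type module $(\F[\![x]\!])^{\vee}$; Pontryagin dualizing gives $M(\sigma)\cong\F[\![x]\!]$ as an $R_{\brho}^{\eta}$-module. The main obstacle is invoking the $R_{\brho}^{\eta}$-flatness of $N$ to justify the exact sequence, since without it the surjectivity of the $x$-action on $\Hom$ need not hold.
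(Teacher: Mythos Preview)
Your proof is correct and essentially reconstructs the argument the paper defers to by citing the end of the proof of \cite[Thm.~6.6]{Paskunas-BM}. The key steps you outline—computing $M(\sigma)/xM(\sigma)\cong\F$ from the projective-envelope description of $N/xN$ in (N3), and then establishing $x$-regularity on $M(\sigma)$ by applying $\Hom_{\cO[\![K]\!]}(-,\sigma^{\vee})$ to $0\to N\xrightarrow{x}N\to N/xN\to 0$ and invoking the $K$-projectivity of $N/xN$—are exactly the content of Pa\v{s}k\={u}nas's argument, and the $x$-regularity of $N$ that you correctly flag as the crux is indeed supplied there.
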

\begin{proof}
See (the end of) the proof of \cite[Thm.~6.6]{Paskunas-BM}.
\end{proof}

Recall that $\cO$ is unramified over $\Z_p$.

\begin{proposition}\label{prop:regular}
Let $\mathbf{w}, \tau$ be as above. Assume that there exist two $K$-stable $\cO$-lattices $\Theta_1, \Theta_2$ in $\sigma(\mathbf{w},\tau)$ (resp. $\sigma^{\rm cr}(\mathbf{w},\tau)$) such that the following conditions hold:
\begin{enumerate}
\item[(a)]  $p\Theta_1\subset\Theta_2\subset\Theta_1$ and $ \dim_{\F}\Hom_{K}(\Theta_i/p\Theta_i,\pi(\brho))=1$ for $i=1,2$; \medskip
\item[(b)] taking into account multiplicities, $\JH(\Theta_1/\Theta_2)$ contains exactly one element in $W(\brho)$.
\end{enumerate}
Then $R^{\eta}_{\brho}(\mathbf{w},\tau)$ (resp. $R_{\brho}^{\eta,\rm cr}(\mathbf{w},\tau)$) is a regular local ring.
\end{proposition}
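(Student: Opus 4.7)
Let $R := R_{\brho}^{\eta}(\mathbf{w},\tau)$ and set $\sigma := \Theta_1/\Theta_2$; since $p\Theta_1 \subset \Theta_2 \subset \Theta_1$, $\sigma$ is a finite length $\F[K]$-module. The plan is to produce a short exact sequence of $R$-modules
\[
0 \to R \overset{\alpha}{\longrightarrow} R \longrightarrow \F[\![x]\!] \to 0
\]
and then deduce the regularity of $R$ from a standard lifting criterion.

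First I would identify $M(\Theta_i) \cong R$ for $i=1,2$. By Theorem~\ref{prop::support}, $M(\Theta_i)$ is a faithful $R$-module. The projectivity of $N$ (property~(N3)) applied to $0 \to \Theta_i \overset{p}{\to} \Theta_i \to \overline{\Theta}_i \to 0$ gives $M(\Theta_i)/p M(\Theta_i) \cong M(\overline{\Theta}_i)$; together with \eqref{eqn-M-kappa-functor} this yields
\[
\F \otimes_{R_{\brho}^{\eta}} M(\Theta_i) \cong \Hom_{K}(\overline{\Theta}_i, \pi(\brho)),
\]
which is one-dimensional by hypothesis~(a). Nakayama's lemma then forces $M(\Theta_i)$ to be cyclic over $R_{\brho}^{\eta}$, hence over $R$; being both cyclic and faithful over $R$, it must be isomorphic to $R$ as an $R$-module.

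Next I would apply $M$ to $0 \to \Theta_2 \to \Theta_1 \to \sigma \to 0$: after Pontryagin dualizing to land in $\mathfrak{C}_{G,\psi}(\cO)$, invoking the projectivity of $N$ to obtain an exact sequence of $\Hom_{\cO[\![K]\!]}(N,-)$'s, and dualizing back, one gets
\[
0 \to M(\Theta_2) \to M(\Theta_1) \to M(\sigma) \to 0.
\]
Hypothesis~(b) combined with Lemma~\ref{lem:regular} identifies $M(\sigma) \cong \F[\![x]\!]$, and substituting the computations of the previous paragraph the sequence takes the asserted form $0 \to R \overset{\alpha}{\to} R \to \F[\![x]\!] \to 0$ for some $\alpha \in \fm_R$ (necessarily a nonzerodivisor since the first map is injective). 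In particular $R/\alpha R \cong \F[\![x]\!]$ is a discrete valuation ring.

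To finish, the surjection $R \twoheadrightarrow R/\alpha R \cong \F[\![x]\!]$ induces a surjection $\fm_R/\fm_R^2 \twoheadrightarrow (x)/(x^2) \cong \F$ whose kernel is the image of $\alpha R$, which is at most one-dimensional (spanned by the class of $\alpha$); hence $\dim_{\F} \fm_R/\fm_R^2 \leq 2$. Since $R$ has Krull dimension $2$, equality must hold and $R$ is regular. The potentially crystalline case is handled identically, replacing $\sigma(\mathbf{w},\tau)$ by $\sigma^{\rm cr}(\mathbf{w},\tau)$. The main obstacle is making the exactness in the second step precise: the definitions of $M$ on torsion-free and on torsion $K$-representations use Schikhof and Pontryagin duals respectively, so one has to reconcile them via the identification $\Theta^{\vee} \cong \Theta^{d} \otimes_{\cO} (E/\cO)$ together with property~(N3); granting this formalism, the rest is routine commutative algebra.
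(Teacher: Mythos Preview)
Your proof is correct and follows essentially the same approach as the paper: both identify $M(\Theta_i)\cong R$ via Nakayama and Theorem~\ref{prop::support}, apply the exact functor $M(-)$ (using (N3)) to $0\to\Theta_2\to\Theta_1\to\Theta_1/\Theta_2\to 0$, invoke Lemma~\ref{lem:regular} for the quotient, and conclude regularity from $R/(\alpha)\cong\F[\![x]\!]$ together with $\dim R=2$. The paper simply cites the standard fact that a Krull-dimension-$2$ local ring with a nonzerodivisor modulo which it becomes regular is itself regular, whereas you spell out the cotangent-space inequality; and the paper does not dwell on the Schikhof/Pontryagin compatibility you flag, treating the exactness as immediate from (N3).
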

\begin{proof}
We only treat the case for $R^{\eta}_{\brho}(\mathbf{w},\tau)$. By Nakayama's lemma and \eqref{eqn-M-kappa-functor}, Condition (a) implies that $M(\Theta_1)$ and $M(\Theta_2)$ are both cyclic modules over $R_{\brho}^{\eta}$. Hence $M(\Theta_1)$ and $M(\Theta_2)$ are isomorphic to $R^{\eta}_{\brho}(\mathbf{w},\tau)$ by Theorem \ref{prop::support}.

The exact sequence $0\ra \Theta_2\ra \Theta_1\ra \Theta_1/\Theta_2\ra0$ induces a sequence of $R_{\brho}^{\eta}$-modules
\[0\ra M(\Theta_2)\To{f} M(\Theta_1)\ra M(\Theta_1/\Theta_2)\ra0\]
which is again exact by (N3). Since both $M(\Theta_1) $ and $M(\Theta_2)$ are isomorphic to $R_{\brho}^{\eta}(\mathbf{w},\tau)$, the morphism $f$ is equal to the multiplication by some element $y\in R_{\brho}^{\eta}(\mathbf{w},\tau)$. On the other hand, by Lemma \ref{lem:regular} Condition (b)  implies that $M(\Theta_1/\Theta_2)$ is isomorphic to $\F[\![x]\!]$.  This means that  $R_{\brho}^{\eta}(\mathbf{w},\tau)/(y)$ is a regular local ring of Krull dimension $1$.   Since $R_{\brho}^{\eta}(\mathbf{w},\tau)$ has Krull dimension $2$, it is also regular.
\end{proof}

\subsection{Potentially crystalline deformation rings of tame supercuspidal inertial types }\label{section-def-ring}
In this subsection, we assume $\brho$ is of one of the following forms:
\begin{enumerate}[label=(C\arabic*),ref=(C\arabic*)]
\item\label{C1} $\brho$ is in \ref{Case1} of \S\ref{section-GL2-Serrewt} with $2 \leq r\leq p-3;$

\item\label{C2} $\brho$ is in \ref{Case2} of \S\ref{section-GL2-Serrewt} with $1 \leq r\leq p-3.$
\end{enumerate}
In particular, $\brho$ is generic (see \S\ref{section-Morra}). We study the properties of deformation rings of tame supercuspidal inertial types and Hodge-Tate weights $(0,1)$ and $(0,2)$ in the cases \ref{C1} and \ref{C2} separately. The main result is Theorem \ref{thm-regular-HT02}.   

Recall that given a pair of integers $(a,b)$ with $1\leq a \leq p-3,$ we can associate
\begin{enumerate}
\item[$\bullet$] characters $\psi_i,$ $1 \leq i \leq 3,$ introduced in (\ref{equation-choice-psi});
\item[$\bullet$] tame supercuspidal inertial types $\tau_i=\psi_i\oplus \psi_i^p$ satisfying $\s(\tau_i) = \Theta(\psi_i)$ (cf. Lemma \ref{lem::tame-cuspidal-type});
\item[$\bullet$] lattices $R_1, R_2, R_3, R , \wt{R}$ introduced in \S\ref{subsection:gluing} satisfying ${\rm cosoc}(\mathcal{R} / p \mathcal{R}) = \s_{a,b+1}$ for any $\mathcal{R} \in \{R_1, R_2, R_3, R , \wt{R}\}$.
\end{enumerate}We choose $(a,b)$ as follows:
\begin{enumerate}
\item[$\bullet$] in the case \ref{C1}, let $(a, b) \in \{ (r,s), (p-1-r, r+s)\};$

\item[$\bullet$]  in the case \ref{C2}, let $(a,b) = (r,s).$
\end{enumerate}
Then  $\s_{a,b+1} $ lies in $W(\brho)$ by Theorem \ref{thm:Serrewt-BDJ}. For $\mathcal{R} \in \{R_1, R_2, R_3, R , \wt{R}\},$ we denote by
\begin{equation}\label{eqn::I_R}
I_{\mathcal{R}}: = \Ann_{R_{\brho}^{\eta}} (M(\mathcal{R}))
\end{equation}
the annihilator of $M(\mathcal{R})$ in $R_{\brho}^{\eta}.$ \begin{proposition}\label{prop--cyclic-lattices}
$M(\mathcal{R})$ is a (nonzero) cyclic $R_{\brho}^{\eta}$-module for $\mathcal{R} \in \{R_1, R_2, R_3, R , \wt{R}\}.$ As a consequence $M(\mathcal{R}) \cong R_{\brho}^{\eta} / I_{\mathcal{R}}.$
\end{proposition}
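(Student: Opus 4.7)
The plan is to show $\dim_{\F}(M(\mathcal{R}) \otimes_{R_{\brho}^{\eta}} \F) = 1$; Nakayama's lemma, applicable since $M(\mathcal{R})$ is finitely generated over the complete local ring $R_{\brho}^{\eta}$ by (N0), will then yield cyclicity (and nonvanishing) of $M(\mathcal{R})$, and the identification $M(\mathcal{R}) \cong R_{\brho}^{\eta}/I_{\mathcal{R}}$ is then immediate from the definition (\ref{eqn::I_R}). Using the projectivity of $N$ from (N3) and Proposition \ref{prop:kappa}, combined with the right-exactness of the functor $M$, one obtains $M(\mathcal{R})/pM(\mathcal{R}) \cong M(\mathcal{R}/p\mathcal{R})$, and then (\ref{eqn-M-kappa-functor}) identifies $M(\mathcal{R}) \otimes_{R_{\brho}^{\eta}} \F$ with $\Hom_K(\mathcal{R}/p\mathcal{R}, \pi(\brho))$. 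So the task reduces to computing the dimension of this Hom space.

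The approach will be to leverage two common properties of each $\mathcal{R}/p\mathcal{R}$ in the list: (i) its cosocle is $\sigma_{a,b+1}$, and (ii) it is annihilated by $\fm_{K_1}^2$. Property (i) holds by construction for $R_1, R_2, R_3$ (Propositions \ref{prop--sublattice-1} and \ref{prop--sublattice2}), by Proposition \ref{prop:R-cosoc} for $R$, and by Corollary \ref{cor-wtL-structure} (resp.\ Proposition \ref{prop:tildeR-p-3} in the case $a = p-3$) for $\widetilde{R}$. Property (ii) follows from Lemma \ref{lem:K1-trivial} for $R_1, R_2, R_3$, from Lemma \ref{lem--first-glue}(i) for $R$, and from Proposition \ref{prop-lattice-red-wtR} for $\widetilde{R}$. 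Moreover $\sigma_{a,b+1} \in W(\brho)$ by the choice of $(a, b)$ combined with Theorem \ref{thm:Serrewt-BDJ}. Together, (i) and (ii) imply that $\mathcal{R}/p\mathcal{R}$ is a quotient of the projective envelope $P := \Proj_{\cO[\![K/Z_1]\!]}(\sigma_{a,b+1})/\fm_{K_1}^2$ of $\sigma_{a,b+1}$ in the category $\mathcal{C}$ of $\F[\![K/Z_1]\!]$-modules annihilated by $\fm_{K_1}^2$.

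Since $P$ is killed by $\fm_{K_1}^2$, any $K$-map $P \to \pi(\brho)$ factors through $\pi(\brho)[\fm_{K_1}^2]$, giving
\[
\Hom_K(\mathcal{R}/p\mathcal{R}, \pi(\brho)) \hookrightarrow \Hom_K(P, \pi(\brho)) = \Hom_{\mathcal{C}}(P, \pi(\brho)[\fm_{K_1}^2]).
\]
By projectivity of $P$ in $\mathcal{C}$, the functor $\Hom_{\mathcal{C}}(P, -)$ is exact and satisfies $\dim_{\F}\Hom_{\mathcal{C}}(P,\tau) = \delta_{\tau, \sigma_{a,b+1}}$ for simple $\tau$; hence the rightmost dimension equals the multiplicity of $\sigma_{a,b+1}$ as a Jordan--H\"older factor of $\pi(\brho)[\fm_{K_1}^2]$, which is $1$ by Theorem \ref{thm-morra}. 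The Hom space is clearly nonzero via the composition $\mathcal{R}/p\mathcal{R} \twoheadrightarrow \sigma_{a,b+1} \hookrightarrow \pi(\brho)$, so it has dimension exactly $1$. The hard part will be justifying the identification $M(\mathcal{R}) \otimes_{R_{\brho}^{\eta}} \F \cong \Hom_K(\mathcal{R}/p\mathcal{R}, \pi(\brho))$ for lattices $\mathcal{R}$ (i.e.\ compatibility of $M$ with mod-$p$ reduction), which ultimately rests on the projectivity assertion (N3); everything else reduces to bookkeeping combining the cosocle and $\fm_{K_1}^2$-annihilation facts from \S\ref{subsection:gluing} with the multiplicity-one input of Theorem \ref{thm-morra}.
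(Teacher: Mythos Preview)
Your proof is correct and follows essentially the same approach as the paper: reduce via Nakayama and (\ref{eqn-M-kappa-functor}) to showing $\dim_{\F}\Hom_K(\mathcal{R}/p\mathcal{R},\pi(\brho))=1$, then bound this by exhibiting $\mathcal{R}/p\mathcal{R}$ as a quotient of $\Proj_{\F[\![K/Z_1]\!]}(\sigma_{a,b+1})/\fm_{K_1}^2$ and invoking the multiplicity-one Theorem \ref{thm-morra}. One minor citation issue: Lemma \ref{lem:K1-trivial} alone does not give property (ii) for $R_2$ and $R_3$, since these are not the standard lattices $L,L'$ treated there (and $R_1$ lives in $\Theta(\psi_1)$, not $\Sym^1E^2\otimes\Theta(\psi)$). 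For $R_1$ the claim is trivial since $\Theta(\psi_1)$ is a $\Gamma$-representation; for $R_2,R_3$ the correct argument combines the $K_1$-coinvariant computations in Propositions \ref{prop--sublattice-1} and \ref{prop--sublattice2} (which identify $\fm_{K_1}(\mathcal{R}/p\mathcal{R})$ with a subquotient of $L/pL$) with Lemma \ref{lem:K1-trivial} (which shows that subquotient has trivial $K_1$-action). The paper's proof is similarly terse on this point for $R_1,R_2,R_3$.
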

\begin{proof}
By Nakayama's lemma, it suffices to show $ M(\mathcal{R})/\frak{m} $ is of dimension $1$ over $\F,$ where $\frak{m}$ denotes the maximal ideal of $R_{\brho}^{\eta}.$  Since $\s_{a,b+1}\in W(\brho)$ is a quotient of $\mathcal{R}/p\mathcal{R} ,$ we always have $\dim_{\F} M(\mathcal{R}/p\mathcal{R})/ \fm \geq 1$ by (N3) of \S\ref{section-Morra}.

To show  $\dim_{\F} M(\mathcal{R}/p\mathcal{R})/ \fm \leq 1,$ we note that $\mathcal{R}/p\mathcal{R}$ is a quotient of $(\Proj_{\F[\![K / Z_1]\!]} \s_{a, b+1}) /\frak{m}_{K_1}^2 $ by Lemma \ref{lem--first-glue}, Corollary \ref{cor-wtL-structure} and Proposition \ref{prop:tildeR-p-3}. Hence by (N3) of \S\ref{section-Morra}
\[
\dim_{\F} M (\mathcal{R}/ p\mathcal{R} )/ \fm \leq \dim_{\F} M \big( (\Proj_{\F[\![K / Z_1]\!]} \s_{a, b+1})/\frak{m}_{K_1}^2 \big)/ \fm .
\]
If $\brho$ satisfies either \ref{C1} or \ref{C2}, then by (\ref{eqn-M-kappa-functor}) and Theorem \ref{thm-morra}, we have
\[
\dim_{\F} M \big( (\Proj_{\F[\![K / Z_1]\!]} \s_{a, b+1})/\frak{m}_{K_1}^2 \big) / \frak{m}  =  \dim_{\F} \Hom_{K} \big((\Proj_{\F[\![K / Z_1]\!]} \s_{a, b+1}) /\frak{m}_{K_1}^2, \pi(\brho) \big) = 1.
\]
Hence $\dim_{\F} M (\mathcal{R}/ p\mathcal{R} )/\frak{m} = 1.$
\end{proof}

\begin{remark}\label{rem:cyclic-L}
For $i\in\{2,3\}$, we have constructed $K$-stable $\cO$-lattices $L, L'$ in $\unSym^1E^2\otimes \Theta(\psi_i)$ in Proposition \ref{prop-reduction-L}. The cosocle of $L/pL$ (resp. $L'/pL'$) need not be irreducible, but $M(L)$ (resp. $M(L')$) is still cyclic over $R_{\brho}^{\eta}$. 

Indeed, if  $\JH\left(\overline{\unSym^1E^2\otimes\Theta(\psi_i)}^{\rm ss}\right)\cap W(\brho)$ consists of one element then the claim is obvious. Otherwise, $\brho$ satisfies \ref{C1} and $W(\brho)$ consists of two elements, say $W(\brho)=\{\sigma_1,\sigma_2\} \subset \JH\left(\overline{\unSym^{1}E^2\otimes\Theta(\psi_i)}^{\rm ss}\right)$.  By Proposition \ref{prop-reduction-L}, one of the nonsplit extensions, $E=(\sigma_1\ligne\sigma_2)$ or $E'=(\sigma_2\ligne\sigma_1)$,  occurs in $L/pL$ (resp. $L'/pL'$). As in the proof of Proposition \ref{prop--cyclic-lattices}, $M(E)$ and $M(E')$ are cyclic over $R_{\brho}^{\eta}$, from which the  claim follows as $M(\sigma)=0$ for $\sigma\notin W(\brho)$.
\end{remark}

\begin{corollary}\label{cor-ideals-relation}
We have
\begin{enumerate}
\item[(i)] $I_{R_1} + I_{R_2} = (p, I_{R_1})$ and $I_R = I_{R_1} \cap I_{R_2}.$

\item[(ii)] $I_{R} + I_{R_3} = (p, I_{R_1})$ and $I_{\wt{R}} = I_{R_1}\cap I_{R_2} \cap I_{R_3}.$
\end{enumerate}
\end{corollary}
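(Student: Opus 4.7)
The plan is to apply the functor $M$ of \S\ref{section-Morra} to the two gluing short exact sequences \eqref{equation-ses3} and \eqref{eq:tildeR}. Because $N$ is projective in $\frak{C}_{G,\psi}(\cO)$ by property (N3), $M$ will turn these into short exact sequences
\[
0 \to M(R) \to M(R_1) \oplus M(R_2) \to M(W) \to 0,
\]
\[
0 \to M(\wt{R}) \to M(R) \oplus M(R_3) \to M(W) \to 0,
\]
and by Proposition \ref{prop--cyclic-lattices} every module $M(\mathcal{R})$ appearing is cyclic, identified with $A/I_{\mathcal{R}}$ where $A := R_{\brho}^{\eta}$. I first need $M(W)$: applying $M$ to $0 \to R_1 \xto[p] R_1 \to W \to 0$ (which uses $R_1/pR_1 \cong W$) yields $M(W) \cong A/(p, I_{R_1})$, and the induced map $M(R_1) \to M(W)$ is the natural projection modulo $p$.

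For part (i), the inclusion $I_{R_2} \subseteq (p, I_{R_1})$ is immediate from the surjectivity of $M(R_2) \onto M(W)$, giving $I_{R_1} + I_{R_2} \subseteq (p, I_{R_1})$. For the opposite inclusion I will fix a generator $v$ of the cyclic module $M(R)$ and examine its image $(\alpha_v, \beta_v) \in M(R_1) \oplus M(R_2)$. Exactness forces both projections $M(R) \onto M(R_i)$ to be surjective, so $\alpha_v$ and $\beta_v$ are units modulo $I_{R_1}$ and $I_{R_2}$ respectively, and they lift to units in $A$ since $A$ is local. This computes the annihilator of $(\alpha_v, \beta_v)$ as $I_{R_1} \cap I_{R_2}$, giving $I_R = I_{R_1} \cap I_{R_2}$ at once. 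Exactness also forces the cyclic submodule $A \cdot (\alpha_v, \beta_v)$ to equal the full kernel of $M(R_1) \oplus M(R_2) \to M(W)$, which in particular must contain the obvious element $(p, 0)$; solving $(p, 0) = a \cdot (\alpha_v, \beta_v)$ produces an element $a \in I_{R_2}$ congruent to $p$ times a unit modulo $I_{R_1}$, proving $p \in I_{R_1} + I_{R_2}$ and hence $(p, I_{R_1}) \subseteq I_{R_1} + I_{R_2}$.

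Part (ii) will go through verbatim with $R_1 \oplus R_2$ replaced by $R \oplus R_3$, using the identification $M(R) = A/(I_{R_1} \cap I_{R_2})$ from part (i). Crucially, the induced map $M(R) \to M(W)$ is still the natural surjection onto $A/(p, I_{R_1})$, because by construction $r_R : R \to W$ factors as $R \to R_1 \to R_1/pR_1 = W$. The same cyclic-generator bookkeeping then gives $I_{\wt{R}} = I_R \cap I_{R_3} = I_{R_1} \cap I_{R_2} \cap I_{R_3}$ and $I_R + I_{R_3} = (p, I_{R_1})$. The only genuine technical point throughout is the exactness of $M$ on the gluing sequences, which follows formally from the projectivity of $N$ in $\frak{C}_{G,\psi}(\cO)$; everything else is a diagram-chase in cyclic modules over the local ring $A$.
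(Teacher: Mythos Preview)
Your approach is essentially the paper's: apply $M$ to the gluing sequences using projectivity of $N$ (property (N3)), identify each $M(\mathcal{R})$ with $A/I_{\mathcal{R}}$ via Proposition~\ref{prop--cyclic-lattices}, and then do a cyclic-module computation. The paper packages that last computation as a quoted lemma (\cite[Lem.~8.11]{Hu-Wang}): for $\cI_1,\cI_2\subseteq\cI_0\subset\fm_A$, the kernel of $A/\cI_1\oplus A/\cI_2\twoheadrightarrow A/\cI_0$ is cyclic if and only if $\cI_1+\cI_2=\cI_0$. You instead reprove this by hand via the generator $(\alpha_v,\beta_v)$, which is perfectly fine and arguably more transparent.

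One small point to tighten in part~(ii). In part~(i) the single element $(p,0)$ suffices because $I_{R_1}\subseteq I_{R_1}+I_{R_2}$ is trivial, so $p\in I_{R_1}+I_{R_2}$ immediately gives $(p,I_{R_1})\subseteq I_{R_1}+I_{R_2}$. But ``verbatim'' in part~(ii) only yields $p\in I_R+I_{R_3}$, and you still need $I_{R_1}\subseteq I_R+I_{R_3}$, which is \emph{not} automatic since $I_R=I_{R_1}\cap I_{R_2}\subsetneq I_{R_1}$ in general. The fix is immediate with your own method: for every $x\in(p,I_{R_1})$ the element $(\bar{x},0)\in A/I_R\oplus A/I_{R_3}$ lies in the kernel (this is exactly where you use that $M(R)\to M(W)$ is the natural projection), hence $(\bar{x},0)=a\cdot(\alpha,\beta)$ for some $a$, giving $a\in I_{R_3}$ and $x\in I_R+I_{R_3}$. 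So run the $(p,0)$ argument with a general $x\in(p,I_{R_1})$ rather than just $x=p$, and part~(ii) goes through.
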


\begin{proof}
Recall the following lemma from \cite[Lem.~8.11]{Hu-Wang}.

\begin{lemma}\label{lemma-cyclic-CA}
Let $(A,\fm_A)$ be a commutative noetherian local ring with $k=A/\fm_A$. Let   $\cI_0, \cI_1,\cI_2$ be ideals of $A$ such that $\cI_1,\cI_2\subset \cI_0\subset \fm_A$. Consider the natural surjective homomorphism
$A/\cI_1\oplus A/\cI_2\twoheadrightarrow A/\cI_0$. Then $\Ker(A/\cI_1\oplus A/\cI_2\twoheadrightarrow A/\cI_0)$ is a cyclic $A$-module if and only if $\cI_1+\cI_2=\cI_0.$
\end{lemma}

By (N3), the  sequence (\ref{equation-ses3}) induces a short exact sequence
\[
0 \to M(R) \to M(R_1) \oplus M(R_2) \to M(R_1 / pR_1) \to 0.
\]
Since $M(R)$ is cyclic over $R_{\brho}^{\eta}$ by Proposition \ref{prop--cyclic-lattices}, we deduce (i) using Lemma \ref{lemma-cyclic-CA} and the fact that\footnote{In general, if $A$ is a commutative ring, $I$ an ideal of $A$ and  $M$ a finite $A$-module, then $\mathrm{Ann}_A(M)+I\subseteq \mathrm{Ann}_A(M/IM)$ and their radicals coincide. In our situation, $(p,I_{\Theta_1})$ is a prime ideal, so we have the claimed equality.}
\[\mathrm{Ann}_{R_{\brho}^{\eta}}(M(R_1/pR_1))=\mathrm{Ann}_{R_{\brho}^{\eta}}(M(R_1)/p)=(p,I_{R_1}).\]  Similarly we obtain (ii) by using the short exact sequence (\ref{eq:tildeR}).
\end{proof}
 
Let $\delta:G_{\Q_p}\ra\cO^{\times}$ denote the character, via the local class field theory,  sending $x\in \Q_p^{\times}\mapsto \mathrm{pr}(x)^{1/2}\in 1+p\Z_p$. By Theorem \ref{prop::support} and Corollary \ref{cor::support}, we have
\begin{equation}\label{eq:isom-twist}
R_{\brho}^{\eta} / I_{R_1} = R_{\brho}^{\eta }((0,1), \tau_1), ~ R_{\brho}^{\eta} / I_{R_2} \overset{\tw_{\delta}^{-1}}{\cong} R_{\brho}^{\eta\delta^2}((0,2),\tau_2), ~R_{\brho}^{\eta} / I_{R_3} \overset{\tw_{\delta}^{-1}}{\cong} R_{\brho}^{\eta\delta^2 }((0,2),\tau_3).
\end{equation}
\begin{proposition}\label{thm-regular-HT01}
The ring  $R_{\brho}^{\eta }((0,1), \tau_1)$  is a regular local ring.
\end{proposition}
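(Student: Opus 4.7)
The plan is to apply the regularity criterion of Proposition \ref{prop:regular} to the pair of lattices $\Theta_1 := R_1$ and $\Theta_2 := pR_1$ inside $\sigma((0,1),\tau_1) = \Theta(\psi_1)$. With this choice the inclusion $p\Theta_1 \subseteq \Theta_2 \subseteq \Theta_1$ is automatic, and the mod-$p$ reductions $\Theta_i/p\Theta_i$ are both isomorphic to $R_1/pR_1$ as $K$-representations. Since $M(R_1)$ is cyclic over $R_{\brho}^{\eta}$ by Proposition \ref{prop--cyclic-lattices}, the identity \eqref{eqn-M-kappa-functor} together with Nakayama's lemma yields $\dim_{\F}\Hom_K(R_1/pR_1,\pi(\brho)) = 1$, which establishes condition (a) of Proposition \ref{prop:regular}.

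For condition (b), I need to check that $R_1/pR_1$ has exactly one Jordan--H\"older factor in $W(\brho)$, counted with multiplicity. From the description recalled in \S\ref{subsection:gluing}, $R_1/pR_1$ is a length-two non-split extension with cosocle $\sigma_{a,b+1}$ and socle $\sigma'$, where $\sigma' = \sigma_{p-3-a,a+b+2}$ if $1\leq a\leq p-4$ and $\sigma' = \sigma_{0,b}$ if $a=p-3$. The cosocle $\sigma_{a,b+1}$ lies in $W(\brho)$ by the very choice of $(a,b)$ (cf.\ Theorem \ref{thm:Serrewt-BDJ}), so it suffices to show $\sigma'\notin W(\brho)$. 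This is a direct case-by-case inspection against Theorem \ref{thm:Serrewt-BDJ}: in Case \ref{C2}, with $(a,b)=(r,s)$, one has $W(\brho) = \{\sigma_{r,s+1}\}$ and the second subscript of $\sigma'$ differs from $s+1$ because $r\geq 1$; in Case \ref{C1}, one runs through the two admissible choices $(a,b)\in\{(r,s),(p-1-r,r+s)\}$, with the boundary subcase $a=p-3$ treated separately using the alternative formula for $\sigma'$. The genericity hypotheses $2\leq r\leq p-3$ (Case \ref{C1}) or $1\leq r\leq p-3$ (Case \ref{C2}) rule out $\sigma'\in W(\brho)$ in every configuration by comparing Serre parameters modulo $p-1$.

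Once both conditions are verified, Proposition \ref{prop:regular} directly yields the regularity of $R_{\brho}^{\eta}((0,1),\tau_1)$. Conceptually, the argument amounts to the observation that the mod-$p$ reduction of this ring, identified with $M(R_1)/pM(R_1) = M(R_1/pR_1)$, collapses to $M(\sigma_{a,b+1})\cong \F[\![x]\!]$ via Lemma \ref{lem:regular} and the vanishing $M(\sigma')=0$; this is a one-dimensional regular local ring, and combined with the $\cO$-flatness and Krull dimension $2$ of $R_{\brho}^{\eta}((0,1),\tau_1)$, regularity follows. I expect the main, though still elementary, obstacle to be the bookkeeping in the case-by-case check that $\sigma'\notin W(\brho)$ across Cases \ref{C1}--\ref{C2} and the boundary subcase $a=p-3$; everything else is formal given Propositions \ref{prop--cyclic-lattices} and \ref{prop:regular}.
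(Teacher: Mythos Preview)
Your proposal is correct and follows the same strategy as the paper: apply Proposition~\ref{prop:regular} to lattices in $\Theta(\psi_1)$, reducing to the verification that only $\sigma_{a,b+1}$ among the Jordan--H\"older factors of $R_1/pR_1$ lies in $W(\brho)$. The paper phrases its proof so as to also cover the hypothetical case where both factors lie in $W(\brho)$ (by taking $T'$ strictly between $pT$ and $T$), but your case check confirms this situation never occurs for the admissible choices of $(a,b)$, so in practice the two arguments coincide.
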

\begin{proof}
Recall from \S\ref{subsection:lattice} that there exist two $K$-stable $\cO$-lattices $T,T'\subset \sigma(\tau_1)$ such that $pT\subset T'\subset T$ and $T/T'\cong \sigma_{a,b+1}$ and $\rcosoc(T/pT)\cong \sigma_{a,b+1}$. Here, if $\JH\left(\overline{\sigma(\tau_1)}^{\rm ss}\right)\cap W(\brho)$ consists of only one element, then we take $T'=pT$. In any case, the cosocle of $T'/pT'$ is  irreducible. Using Theorem \ref{thm-morra}, it is easy to check that \[\dim_{\F}\Hom_K(T/pT,\pi(\brho))=\dim_{\F}\Hom_K(T'/pT',\pi(\brho))=1.\]
  The result then follows from Proposition \ref{prop:regular}.
\end{proof}

\begin{remark}
If $\brho$ is generic in the sense of \cite[Def.~11.7]{BP}, Proposition \ref{thm-regular-HT01} is a direct consequence of  \cite[Thm.~7.2.1]{EGS}.
\end{remark}

\begin{theorem}\label{thm-regular-HT02}
The rings $R_{\brho}^{\eta\delta^2}((0,2),\tau_2)$ and $R_{\brho}^{\eta\delta^2}((0,2),\tau_3)$  are  regular local rings.
\end{theorem}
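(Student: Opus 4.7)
The approach parallels that of Proposition \ref{thm-regular-HT01}, but now working in the tensored type $\sigma^{\rm cr}((0,2), \tau_i) = \Sym^1 E^2 \otimes \Theta(\psi_i)$ for $i \in \{2, 3\}$. The plan is to apply Proposition \ref{prop:regular}: it suffices to exhibit two $K$-stable $\cO$-lattices $\Theta_1, \Theta_2$ in this type with $p\Theta_1 \subset \Theta_2 \subset \Theta_1$, such that $\dim_{\F} \Hom_K(\Theta_j/p\Theta_j, \pi(\brho)) = 1$ for $j=1,2$, and $\JH(\Theta_1/\Theta_2)$ contains exactly one element of $W(\brho)$ counted with multiplicity.

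In the main case I will take $\Theta_1 = L$ and $\Theta_2 = L'$, the pair of lattices of $\Sym^1 E^2 \otimes \Theta(\psi_i)$ from Proposition \ref{prop-reduction-L} (applied to $\psi_3$ with parameters $(a,b)$, and to $\psi_2$ with the shifted parameters $(a+2, b-1)$). The inclusions $pL \subset L' \subset L$ hold by construction. Condition (a) of Proposition \ref{prop:regular} is immediate from Remark \ref{rem:cyclic-L}: since $M(L)$ and $M(L')$ are cyclic $R^\eta_\brho$-modules, the identity \eqref{eqn-M-kappa-functor} and Nakayama's lemma give that the two $\Hom_K$-dimensions are each $1$. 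For condition (b), the identifications $L = \Sym^1 \cO^2 \otimes T$ and $L' = \Sym^1 \cO^2 \otimes T'$ yield $L/L' \cong \Sym^1 \F^2 \otimes (T/T')$, with $T/T'$ a single Serre weight, so $L/L'$ decomposes as a direct sum of two Serre weights: concretely $\sigma_{a+2,b} \oplus \sigma_{a,b+1}$ for $\psi_2$ and $\sigma_{a,b+1} \oplus \sigma_{a-2,b+2}$ for $\psi_3$. A direct inspection against the explicit description of $W(\brho)$ in Theorem \ref{thm:Serrewt-BDJ} shows that in every admissible case (case \ref{C1} with each choice of $(a,b)$, and case \ref{C2}), exactly the weight $\sigma_{a,b+1}$ lies in $W(\brho)$.

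The only subtlety is the degenerate situation $a = p-3$ for $i = 2$, when the shifted parameter $a+2 = p-1$ falls outside the scope of Proposition \ref{prop-reduction-L}; here I will use Proposition \ref{prop-reduction-L-2} in its place. In that situation $L/pL \cong \sigma_{p-1,b} \oplus \sigma_{p-3,b+1}$ is semisimple, and $L'/pL \cong \sigma_{p-3,b+1} = \sigma_{a,b+1}$. I will take $\Theta_1 = L'$ and $\Theta_2 = pL$, so that $\Theta_1/\Theta_2 \cong \sigma_{a,b+1}$ realizes condition (b) with a unique $W(\brho)$-factor. The cyclicity of $M(L') = M(R_2)$ needed for condition (a) is provided by Proposition \ref{prop--cyclic-lattices}; and $\Hom_K(pL/p^2 L, \pi(\brho)) = \Hom_K(L/pL, \pi(\brho))$ is one-dimensional because $\sigma_{p-1,b}\notin W(\brho)$ while $\sigma_{p-3,b+1}$ appears with multiplicity one in the $K$-socle of $\pi(\brho)$.

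The main technical point, rather than a genuine obstacle, is the numerical verification that $\JH(L/L') \cap W(\brho) = \{\sigma_{a,b+1}\}$ (respectively, $\JH(L/pL)\cap W(\brho)=\{\sigma_{a,b+1}\}$ in the degenerate case) throughout all admissible $(a,b)$; this amounts to checking that a small list of potential identifications of Serre weight labels modulo $p-1$ fails, which is routine but case-sensitive.
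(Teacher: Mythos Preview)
Your proposal is correct and follows essentially the same route as the paper: apply the regularity criterion (Proposition \ref{prop:regular}) by choosing a nested pair of lattices in $\Sym^1E^2\otimes\Theta(\psi_i)$ and verifying conditions (a) and (b). The only difference is organizational. The paper observes that in case \ref{C1} with $i=2$, and in case \ref{C2} for either $i$, the \emph{entire} Jordan--H\"older set $\JH(\overline{\Sym^1E^2\otimes\Theta(\psi_i)})$ meets $W(\brho)$ in the single weight $\sigma_{a,b+1}$; this allows the trivial choice $\Theta_2=p\Theta_1$ for any lattice $\Theta_1$ (condition (a) then follows from Lemma \ref{lem:regular} alone), so Remark \ref{rem:cyclic-L} and the degenerate case $a=p-3$ never enter. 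Only in case \ref{C1} with $i=3$ does the paper need the pair $(L,L')$, exactly as you do. Your uniform use of $(L,L')$ together with Remark \ref{rem:cyclic-L} and a separate treatment of $a=p-3$ via Proposition \ref{prop-reduction-L-2} is equally valid but slightly more work; the paper's shortcut buys brevity at the cost of a small extra observation about which Serre weights occur in the full type.
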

\begin{proof}
Assume $\brho$ is in the case \ref{C1}.  For $R_{\brho}^{\eta\delta^2  }((0,2),\tau_2),$  it is equivalent to proving that $R^{\eta}_{\brho}/I_{R_2}$ is a regular local ring by \eqref{eq:isom-twist}. Note that $\s_{a,b+1}$ is the unique Serre weight in the intersection $\JH\left(\overline{\unSym^1E^2 \otimes\s(\tau_2)}^{\rm ss}\right) \cap W(\brho).$ The assertion follows from Proposition \ref{prop:regular}, by choosing any $K$-stable $\cO$-lattice $\Theta_1$ in $\unSym^1E^2\otimes\sigma(\tau_2)$, and taking $\Theta_2=p\Theta_1$ in Proposition \ref{prop:regular}.

We now consider $R_{\brho}^{\eta\delta^2}((0,2),\tau_3)$, equivalently $R_{\brho}^{\eta}/I_{R_3}$ via \eqref{eq:isom-twist}. By Proposition \ref{prop-reduction-L}, there are $K$-stable $\cO$-lattices $L,L'$ of $\unSym^1 E^2\otimes\Theta(\psi_3)$ such that $pL \subset L' \subset L$ and
\[
L / L' = \s_{a, b+1} \oplus \s_{a-2, b+2}.
\]
Note that $\s_{a-2,b+2}\notin W(\brho)$. Using Remark \ref{rem:cyclic-L}, the result follows from Proposition \ref{prop:regular}.

Assume $\brho$ is in the case \ref{C2}. Then $\brho$ has only one Serre weight $\s_{a,b+1}$, and we conclude as in the first paragraph.
\end{proof}
 
\subsection{Endomorphism rings and faithfulness}\label{subsection:Endo}

In this subsection, we assume $\brho$ is reducible nonsplit and isomorphic to $\smatr{1}{*}0{\omega}$. Let $N\in \mathfrak{C}_{G/Z_G}(\cO)$ be as in \S\ref{section-Morra}.  In this case $N$ is isomorphic to a projective envelope of $(\Ind_{B(\Q_p)}^{G}\omega\otimes\omega^{-1})^{\vee}$ in $\mathfrak{C}_{G/Z_G}(\cO)$. 

 Let $ (A,\fm_A)$ be a pseudo-compact \emph{flat} local $\cO$-algebra with residue field $\F$. Set  $R:=A\widehat{\otimes}_{\cO}R_{\brho}^{\eta}$ and \[M:=R\widehat{\otimes}_{R_{\brho}^{\eta}}N \cong A\widehat{\otimes}_{\cO}N.\]
Then $M\in \frak{C}_{G/Z_G}(R)$. In fact, as in \cite[Lem.~4.9]{CEGGPS2} we show that $M^{\vee}$ is admissible in $\Mod_G^{\rm sm}(R)$, and so $M\in \mathfrak{C}_{G/Z_G}(\cO)$.

\begin{lemma}\label{lem:M=proj}
$M$ is a projective object in $\mathfrak{C}_{G/Z_G}(\cO)$.
\end{lemma}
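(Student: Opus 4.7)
The plan is to write $M$ as a pseudo-compact product of copies of $N$ and then exploit the projectivity of $N$ together with the behaviour of projectives under such products in $\mathfrak{C}_{G/Z_G}(\cO)$.

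First, since $\brho \sim \smatr{1}{\ast}{0}{\omega}$ has $\det\brho = \omega$, I would choose the lift $\eta = \e$, so that the central character $\psi = \eta\e^{-1}$ is trivial. Then $\mathfrak{C}_{G,\psi}(\cO) = \mathfrak{C}_{G/Z_G}(\cO)$, and by property (N3) of \S\ref{section-Morra}, $N$ is projective in this category. (If the reader prefers, one can keep a general $\eta$ and work with the centre acting by $\psi^{-1}$; nothing changes.)

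Second, I would show that $A$ is topologically free as a pseudo-compact $\cO$-module. Since $A$ is $\cO$-flat, pseudo-compact and local with residue field $\F$, lifting an $\F$-basis $(\overline{e}_i)_{i\in I}$ of the pseudo-compact $\F$-vector space $A/\varpi A$ to elements $e_i \in A$ defines a continuous $\cO$-linear map $\widehat{\bigoplus}_{i\in I}\cO \to A$. Topological Nakayama makes it surjective, and $\cO$-flatness makes it injective (its kernel is killed by reduction mod $\varpi$ and itself pseudo-compact). Hence $A \cong \widehat{\prod}_{i\in I}\cO$ as pseudo-compact $\cO$-modules. Because completed tensor product commutes with pseudo-compact limits in the first variable, one deduces
\[
M \;=\; A\,\widehat{\otimes}_{\cO} N \;\cong\; \widehat{\prod}_{i\in I} N
\]
in $\mathfrak{C}_{G/Z_G}(\cO)$.

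Third, I would conclude by verifying that a pseudo-compact product of copies of a projective object of $\mathfrak{C}_{G/Z_G}(\cO)$ is again projective. Via Pontryagin duality this is equivalent to the assertion that $\bigoplus_{i\in I} N^{\vee}$ is injective in $\Mod^{\rm l.adm}_{G/Z_G}(\cO)$, the object $N^{\vee}$ being injective because $N$ is projective.

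The main obstacle is precisely this last step: the stability of injectivity under arbitrary direct sums is not automatic in a general Grothendieck abelian category (Bass--Papp). In our setting it can be verified using the structure of $\GL_2(\Q_p)/Z_G$ together with property (N0), which asserts that (a relevant quotient of) $N$ is finitely generated over $\cO[\![K]\!]$; this reduces the question to the Noetherian Iwasawa algebra $\cO[\![K/Z_1]\!]$, for which direct sums of injectives in the associated Pontryagin-dual category are well known to be injective. Alternatively one can avoid Step 3 altogether by checking the lifting property for $\widehat{\prod}_I N$ directly: given a surjection $X\twoheadrightarrow Y$ in $\mathfrak{C}_{G/Z_G}(\cO)$ and a map $\widehat{\prod}_I N \to Y$, one lifts factor by factor using projectivity of $N$ and assembles the lifts by the universal property of the pseudo-compact product.
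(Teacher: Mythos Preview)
Your proof is essentially correct but considerably more laborious than the paper's, and the final step is not cleanly justified.

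The paper's argument is a two-line application of the tensor--hom adjunction. Since pseudo-compact flat $\cO$-modules are projective (Brumer), $A$ is projective as a pseudo-compact $\cO$-module. Then one writes
\[
\Hom_{\frak{C}_{G/Z_G}(\cO)}(M,-)\;=\;\Hom_{\frak{C}_{G/Z_G}(\cO)}(A\,\widehat{\otimes}_{\cO} N,-)\;\cong\;\Hom_{\cO}^{\rm cont}\bigl(A,\Hom_{\frak{C}_{G/Z_G}(\cO)}(N,-)\bigr),
\]
and exactness follows because $\Hom_{\frak{C}}(N,-)$ is exact ($N$ projective) and $\Hom_\cO^{\rm cont}(A,-)$ is exact ($A$ projective). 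Your Step~2 is essentially a proof of Brumer's result in this special case, so the content is the same up to that point; the difference is that you then unwind $A\cong\widehat{\prod}_I\cO$ explicitly and try to verify projectivity of $\widehat{\prod}_I N$ by hand, whereas the adjunction bypasses this entirely.

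Regarding your Step~4: approach~(b) does not work as stated. The map you need to lift has the product as its \emph{source}, not its target, so the universal property of the product (which governs maps \emph{into} it) is of no help, and there is no meaningful sense in which one ``lifts factor by factor'' out of a product. Approach~(a) is correct in outline, but the justification you sketch (reduce to the Noetherian ring $\cO[\![K/Z_1]\!]$) is not quite the right mechanism: injectivity in $\Mod^{\rm l.adm}_{G/Z_G}(\cO)$ is not the same as $K$-injectivity. The clean reason direct sums of injectives are injective here is that $\Mod^{\rm l.adm}_{G/Z_G}(\cO)$ is locally finite, hence locally noetherian (Emerton, \cite[Thm.~2.3.8]{EmertonOrd1}), and in a locally noetherian Grothendieck category arbitrary direct sums of injectives are injective. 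With that said, once you see the adjunction there is no need for Step~4 at all.
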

\begin{proof}
By assumption $A$ is $\cO$-flat. Since pseudo-compact flat $\cO$-modules are projective (see e.g. \cite[Prop.~3.1]{Brumer}), $A$ is a projective $\cO$-module.
By definition of $M$  we have
\begin{equation}\label{eq:adjoint}\Hom_{\frak{C}_{G/Z_G}(\cO)}(M,-)\cong \Hom_{\frak{C}_{G/Z_G}(\cO)}(A\widehat{\otimes}_{\cO}N,-)\cong\Hom_{\cO}^{\rm cont}(A,\Hom_{\frak{C}_{G/Z_G}(\cO)}(N,-))\end{equation}
from which  the result  follows.
\end{proof}

\begin{lemma}\label{lem:Ext1-M}
We have $\Hom_{\frak{C}_{G/Z_G}(R)}(M,\ide_G^{\vee})=0$ and $\Ext^1_{\frak{C}_{G/Z_G}(R)}(M,\ide_G^{\vee})=0$.
\end{lemma}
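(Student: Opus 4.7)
The plan is to deduce both statements from the fact that $N$ is, by hypothesis, a projective envelope with irreducible cosocle. The $\Hom$ vanishing will be proved first and then fed into a standard derivation argument to yield the $\Ext^{1}$ vanishing, using that $M$ is already projective in the ambient category $\frak{C}_{G/Z_{G}}(\cO)$.

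For the $\Hom$ part I would reduce to computing $\Hom_{\frak{C}_{G/Z_{G}}(\cO)}(N,\ide_{G}^{\vee})$ via the adjunction \eqref{eq:adjoint} (valid because $A$ is pseudo-compact flat over $\cO$, and the $R$-linear Hom is in any case contained in the $\cO$-linear one). By assumption the cosocle of $N$ is the simple object $(\Ind_{B(\Q_{p})}^{G}\omega\otimes\omega^{-1})^{\vee}$, so any nonzero morphism from $N$ to a simple object must factor through this cosocle. For $p\geq 5$ one has $\omega^{2}\notin\{\ide,\omega^{\pm 1}\}$, hence $\Ind_{B(\Q_{p})}^{G}\omega\otimes\omega^{-1}$ is irreducible and non-trivial; in particular its dual is not isomorphic to $\ide_{G}^{\vee}$. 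This forces $\Hom_{\frak{C}_{G/Z_{G}}(\cO)}(N,\ide_{G}^{\vee})=0$, and hence the same vanishing for $M$ in both $\frak{C}_{G/Z_{G}}(\cO)$ and $\frak{C}_{G/Z_{G}}(R)$.

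For $\Ext^{1}$ I would take an arbitrary extension
\[
0\to \ide_{G}^{\vee}\to E\to M\to 0
\]
in $\frak{C}_{G/Z_{G}}(R)$ and show that it splits. By Lemma \ref{lem:M=proj}, $M$ is projective in $\frak{C}_{G/Z_{G}}(\cO)$, so the sequence admits a section $s\colon M\to E$ after forgetting the $R$-structure. The obstruction to $s$ being $R$-linear is captured by the map $d\colon R\to\Hom_{\frak{C}_{G/Z_{G}}(\cO)}(M,\ide_{G}^{\vee})$, $r\mapsto d(r)$ with $d(r)(m):=r\cdot s(m)-s(r\cdot m)$. A short check shows $d(r)(m)$ has zero image in $M$ and therefore lies in $\ide_{G}^{\vee}$, and that $d(r)\colon M\to \ide_{G}^{\vee}$ is $\cO$-linear, $G$-equivariant and continuous, because $s$ is and because the $R$- and $G$-actions commute on $E$, $M$ and $\ide_{G}^{\vee}$. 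By the first step this target Hom is zero, so $d\equiv 0$; thus $s$ is already $R$-linear and the extension splits.

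The main obstacle is genuinely minor: it consists in confirming that the adjunction \eqref{eq:adjoint} and the derivation argument are compatible with the topological/pseudo-compact formalism of $\frak{C}_{G/Z_{G}}(R)$, which is ensured by the flatness of $A$. No further representation-theoretic input beyond the explicit description of $N$ as a projective envelope and Lemma \ref{lem:M=proj} is needed.
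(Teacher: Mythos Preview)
Your proof is correct and follows essentially the same strategy as the paper's. For the $\Hom$ vanishing you reduce via \eqref{eq:adjoint} to $\Hom_{\frak{C}_{G/Z_G}(\cO)}(N,\ide_G^{\vee})=0$ and justify this by identifying the cosocle of $N$; the paper instead simply cites property (N1), which amounts to the same thing. For $\Ext^1$, the paper passes to the Pontryagin dual side (showing $\Ext^1_{R[G]}(\ide_G,M^{\vee})=0$ using injectivity of $M^{\vee}$ and an element-based argument that $\fm_R$ kills a chosen lift of a generator of $\ide_G$), whereas you stay in $\frak{C}_{G/Z_G}(R)$, use projectivity of $M$, and package the obstruction as a derivation into $\Hom_{\frak{C}_{G/Z_G}(\cO)}(M,\ide_G^{\vee})=0$. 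These are dual formulations of the same argument; your derivation version is slightly more streamlined and does not rely on $\ide_G^{\vee}$ being one-dimensional, while the paper's version is more concrete.
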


\begin{proof}
The first assertion follows from \eqref{eq:adjoint} because $\Hom_{\frak{C}_{G/Z_G}(\cO)}(N,\ide_G^{\vee})=0$, see (N1) in \S\ref{section-Morra}. For the second, we work on the dual side and show $\Ext^1_{R[G]}(\ide_G,M^{\vee})=0$. By Lemma \ref{lem:M=proj}, $M$ is a projective object in $\frak{C}_{G/Z_G}(\cO)$, so dually $M^{\vee}$ is an injective object in $\Mod_{G/Z_G}^{\rm l.adm}(\cO)$. Consider an extension
\[0\ra M^{\vee}\ra \mathcal{E}\ra \ide_G\ra0\]
in $\Mod_{G/Z_G}^{\rm l.adm}(R)$. It must split in $\Mod_{G/Z_G}^{\rm l.adm}(\cO)$, so we may find $v\in \mathcal{E}$ such that $\langle \cO[G].v\rangle \cong \ide_G$. It suffices to show that $R$ acts on $v$ via the quotient $R\twoheadrightarrow R/\fm_R\cong\F$. This is clear, since if $x\in \fm_{R}$, then $x\cdot v\in M^{\vee}$ and, if it were nonzero, then it would  generate a subrepresentation of $M^{\vee}$ isomorphic to $\ide_G$, which is not possible by  the first assertion.
\end{proof}

\begin{proposition}\label{prop:Hom-R}
For any compact $R$-module $\mathrm{m}$, the natural map $v\mapsto (m\mapsto(v\widehat{\otimes} m))$ (where  $v\in \mathrm{m}$ and $m\in M$) induces an isomorphism
 \[\mathrm{m}\simto\Hom_{\frak{C}_{G/Z_G}(R)}(M,\mathrm{m}\widehat{\otimes}_{R}M).\]
\end{proposition}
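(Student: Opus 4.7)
The plan is to combine a base-change adjunction, a reduction to finite-length coefficients via inverse limits, and an $\End$-computation for $\pi(\brho)$. Using the identifications $M=R\widehat{\otimes}_{R_{\brho}^{\eta}}N$ and $\mathrm{m}\widehat{\otimes}_R M\cong \mathrm{m}\widehat{\otimes}_{R_{\brho}^{\eta}}N$ (associativity of the completed tensor product), together with the extension-of-scalars adjunction, I would rewrite
\[\Hom_{\mathfrak{C}_{G/Z_G}(R)}(M,\mathrm{m}\widehat{\otimes}_R M)\cong \Hom_{\mathfrak{C}_{G/Z_G}(R_{\brho}^{\eta})}(N,\mathrm{m}\widehat{\otimes}_{R_{\brho}^{\eta}}N),\]
so that it suffices to prove the natural map $\mathrm{m}\to \Hom_{\mathfrak{C}_{G/Z_G}(R_{\brho}^{\eta})}(N,\mathrm{m}\widehat{\otimes}_{R_{\brho}^{\eta}}N)$ is an isomorphism for every pseudo-compact $R_{\brho}^{\eta}$-module $\mathrm{m}$.

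Next, I would write $\mathrm{m}=\varprojlim_I \mathrm{m}/I$ with $I$ ranging over open $R_{\brho}^{\eta}$-submodules and observe that both sides commute with this inverse limit (the source trivially, and on the target side the completed tensor product commutes with projective limits in its first variable while $\Hom(N,-)$ commutes with them in its second variable). This reduces us to the case where $\mathrm{m}$ has finite length over $R_{\brho}^{\eta}$, and an induction on length via the five lemma then brings us to the base case $\mathrm{m}\cong \F$. The inductive step requires exactness of both $-\widehat{\otimes}_{R_{\brho}^{\eta}}N$ and $\Hom_{\mathfrak{C}_{G/Z_G}(\cO)}(N,-)$, each of which follows from the projectivity of $N$ in $\mathfrak{C}_{G/Z_G}(\cO)$ provided by property (N3). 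In the base case $\mathrm{m}=\F$, Proposition~\ref{prop:kappa} identifies $\F\widehat{\otimes}_{R_{\brho}^{\eta}}N$ with $\pi(\brho)^{\vee}$, on which $R_{\brho}^{\eta}$ acts through its residue field; hence every $\cO[G]$-linear map $N\to \pi(\brho)^{\vee}$ is automatically $R_{\brho}^{\eta}$-linear, and since $N$ is a projective envelope of $\pi(\brho)^{\vee}$ we obtain
\[\Hom_{\mathfrak{C}_{G/Z_G}(R_{\brho}^{\eta})}(N,\pi(\brho)^{\vee})=\End_G(\pi(\brho)).\]
An elementary check from the structure of $\pi(\brho)$ recalled in \S\ref{ss:LLC}---its socle is the irreducible principal series $\Ind_{B(\Q_p)}^G\omega\otimes\omega^{-1}$, distinct from the Steinberg $\Sp$ sitting in the socle of the cosocle quotient $\tau_1$---shows that $\End_G(\pi(\brho))=\F$, completing the base case.

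The main obstacle will be the rigorous setup of the extension-of-scalars adjunction and the flatness/exactness statements for completed tensor products in the pseudo-compact setting, together with the careful bookkeeping of the two natural $R_{\brho}^{\eta}$-actions on $\mathrm{m}\widehat{\otimes}_{R_{\brho}^{\eta}}N$ (one via $\mathrm{m}$, one via the identification $R_{\brho}^{\eta}=\End_{\mathfrak{C}_{G/Z_G}(\cO)}(N)$ coming from (N2)) needed to check the adjunction identifies the natural map in the proposition with the base-case map. Once these formalities are in place, the five-lemma induction and the $\End_G(\pi(\brho))=\F$ verification are essentially routine.
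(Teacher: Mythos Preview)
Your overall strategy (reduce to finite length and induct, with base case $\End_G(\pi(\brho))=\F$) matches the paper's, but the inductive step has a genuine gap. You claim that the exactness of $-\widehat{\otimes}_{R_{\brho}^{\eta}}N$ follows from the projectivity of $N$ in $\frak{C}_{G/Z_G}(\cO)$; this is a confusion between the two commuting actions on $N$. Property (N3) says $N$ is projective as an object of $\frak{C}_{G/Z_G}(\cO)$, i.e.\ with respect to the $G$-action, and says nothing about flatness of $N$ as an $R_{\brho}^{\eta}$-module. In fact, in the situation of this subsection ($\brho\sim\smatr{1}{*}{0}{\omega}$, so $R_{\brho}^{\eta}$ is not formally smooth), $N$ is \emph{not} $R_{\brho}^{\eta}$-flat: one has $\Tor_1^{R_{\brho}^{\eta}}(\F,N)\cong(\ide_G^{\vee})^{\oplus 2}$ by \cite[Prop.~3.30]{HuJEMS}. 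Consequently the short exact sequence $0\to\mathrm{m}_1\to\mathrm{m}\to\mathrm{m}_2\to 0$ only yields a four-term sequence after tensoring with $N$, and your five-lemma argument breaks down. The paper's Remark immediately following the proposition makes exactly this point: $M$ is not projective in $\frak{C}_{G/Z_G}(R)$, so the ``easy'' argument from \cite[Lem.~2.9]{PaskunasIHES} does not apply.

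The paper repairs this as follows. It keeps the Tor term in the sequence
\[
\Tor_1^{R}(\mathrm{m}_2,M)\to \mathrm{m}_1\otimes_{R}M\to\mathrm{m}\otimes_{R}M\to \mathrm{m}_2\otimes_{R}M\to 0,
\]
identifies $\Tor_1^{R}(\mathrm{m}_2,M)\cong(\ide_G^{\vee})^{\oplus 2}$ (using that $R$ is flat over $R_{\brho}^{\eta}$ and the computation above), and then invokes Lemma~\ref{lem:Ext1-M}, which says $\Hom_{\frak{C}_{G/Z_G}(R)}(M,\ide_G^{\vee})=0$ and $\Ext^1_{\frak{C}_{G/Z_G}(R)}(M,\ide_G^{\vee})=0$. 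These two vanishing statements ensure that after applying $\Hom_{\frak{C}_{G/Z_G}(R)}(M,-)$ the Tor contribution disappears and one still obtains a short exact sequence suitable for the snake-lemma induction. So the missing ingredient in your proposal is precisely Lemma~\ref{lem:Ext1-M} (and the Tor computation from \cite{HuJEMS}); without it the induction does not close.
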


\begin{remark}
Note that $M$ is \emph{not}  projective  in $\frak{C}_{G/Z_G}(R)$ so that we can not apply \cite[Lem.~2.9]{PaskunasIHES}.
\end{remark}

\begin{proof}
The proof is similar to \cite[Prop.~3.12]{HP}. As in \emph{loc. cit.}, we may assume that $\mathrm{m}$ is of finite length. In particular, the completed tensor product $\mathrm{m}\widehat{\otimes}_{R}M$ coincides with the usual one.

We proceed by induction on the length of $\mathrm{m}.$ Note that since $R$ is a local ring, any $R$-module of length $1$ is isomorphic to $R/\fm_R\cong\F$.  If $\mathrm{m}\cong \F$, we need to show that $\Hom_{\frak{C}_{G/Z_G}(R)}(M,\F\otimes_{R}M)\cong\F$. But any morphism $M\ra \F\otimes_{R}M$ in $\frak{C}_{G/Z_G}(R)$ factors through
\[M\twoheadrightarrow \F\otimes_{R} M\ra \F\otimes_{R}M, \]
so the assertion is reduced to
\[\End_{\frak{C}_{G/Z_G}(\cO)}(\F\otimes_{R}M)=\End_{\frak{C}_{G/Z_G}(\cO)}(\F\otimes_{R_{\brho}^{\eta}}N)\cong\F\]
which is a direct consequence of Proposition \ref{prop:kappa}. If the length of $\mathrm{m}$ is $\geq 2$, let $\mathrm{m}_1\subset \mathrm{m}$ be a proper $R$-submodule such that $\mathrm{m}_2:=\mathrm{m}/\mathrm{m}_1$ has length $1$, i.e. $\mathrm{m}_2\cong \F$.  We then obtain a long exact sequence
\begin{equation}\label{eq:seq-Tor1}\Tor_1^{R}(\mathrm{m}_2,M)\ra \mathrm{m}_1\otimes_{R}M\ra\mathrm{m}\otimes_{R}M\ra \mathrm{m}_2\otimes_{R}M\ra0.\end{equation}
Since $\mathrm{m}_2\cong\F$ and $R$ is flat over $R_{\brho}^{\eta}$ by construction, we have \[\Tor_1^{R}(\mathrm{m}_2,M)= \Tor_1^{R}(\F,R\widehat{\otimes}_{R_{\brho}^{\eta}}N)\cong \Tor_1^{R_{\brho}^{\eta}}(\F,N)\cong (\ide_{G}^{\vee})^{\oplus 2},\]
where the last isomorphism follows from \cite[Prop.~3.30]{HuJEMS}. By applying $\Hom_{\frak{C}_{G/Z_G}(R)}(M,-)$ to \eqref{eq:seq-Tor1} and using Lemma \ref{lem:Ext1-M}, we obtain the following  short exact sequence
\[0\ra \Hom_{\frak{C}_{G/Z_G}(R)}(M,\mathrm{m}_1\otimes_{R}M)\ra \Hom_{\frak{C}_{G/Z_G}(R)}(M,\mathrm{m}\otimes_{R}M)\ra \Hom_{\frak{C}_{G/Z_G}(R)}(M,\mathrm{m}_2\otimes_{R}M).\]
By inductive hypothesis, we have $\mathrm{m}_i\simto \Hom_{\frak{C}_{G/Z_G}(R)}(M,\mathrm{m}_i\otimes_{R}M)$ for $i\in\{1,2\}$, hence the result  using the snake lemma.
\end{proof}

\begin{corollary}\label{cor:R'-faithful}
We have $\End_{\frak{C}_{G/Z_G}(R)}(M)\cong R$. In particular, $R$ acts faithfully on $M$.
\end{corollary}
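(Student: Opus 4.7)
The plan is to apply Proposition \ref{prop:Hom-R} with the tautological choice $\mathrm{m} = R$, viewed as a compact module over itself. The hypotheses of Proposition \ref{prop:Hom-R} are satisfied since $R = A \wh{\otimes}_{\cO} R_{\brho}^{\eta}$ is pseudo-compact by construction (as a completed tensor product of pseudo-compact local $\cO$-algebras).

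First, I would specialize the isomorphism in Proposition \ref{prop:Hom-R} to $\mathrm{m} = R$. Since $M = R \wh{\otimes}_{R_{\brho}^{\eta}} N$ is already an $R$-module, the canonical identification $R \wh{\otimes}_R M \cong M$ holds. Feeding this into the proposition yields a natural isomorphism
\[
R \simto \Hom_{\frak{C}_{G/Z_G}(R)}(M, M) = \End_{\frak{C}_{G/Z_G}(R)}(M),
\]
given by $r \mapsto (m \mapsto r\cdot m)$. This is the natural map sending an element of $R$ to its action on $M$, so the bijection produced by the proposition agrees with the structural action map; in particular it is multiplicative (composition of left-multiplications corresponds to multiplication in $R$), hence a ring isomorphism.

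Next, for the faithfulness statement, I would argue directly from the established isomorphism: if $r \in R$ satisfies $r \cdot m = 0$ for every $m \in M$, then the associated endomorphism is the zero map, and injectivity of $R \to \End_{\frak{C}_{G/Z_G}(R)}(M)$ forces $r = 0$. Thus $\Ann_R(M) = 0$ and $R$ acts faithfully on $M$.

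There is no real obstacle here — the work has already been done in Proposition \ref{prop:Hom-R}. The only mild subtlety worth flagging in the write-up is the verification that the isomorphism of Proposition \ref{prop:Hom-R}, a priori only one of $R$-modules, respects composition on the endomorphism side, but this is immediate from the explicit formula $v \mapsto (m \mapsto v \wh{\otimes} m)$ under the identification $R \wh{\otimes}_R M \cong M$.
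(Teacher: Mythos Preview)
Your proposal is correct and matches the paper's intended argument: the corollary is stated without proof precisely because it follows immediately from Proposition \ref{prop:Hom-R} with $\mathrm{m} = R$, exactly as you describe. Your attention to the ring structure (checking that the map respects composition via the explicit formula) is a reasonable point to include, though the paper leaves this implicit.
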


\begin{proposition}\label{prop:faithful}
Let $x_1,\dots,x_g\in R$ be  an $M$-regular sequence. Then $(x_1,\dots,x_g)$ is also $R$-regular and \[\End_{\frak{C}_{G/Z_G}(R)}\big(M/(x_1,\dots,x_i) M\big)\cong R/(x_1,\dots,x_i)R\] for any $1\leq i\leq g$.
\end{proposition}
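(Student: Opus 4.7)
The plan is to run an induction on $i$, the base case $i=0$ being Corollary \ref{cor:R'-faithful}. Writing $J_i:=(x_1,\dots,x_i)R$ and $\overline{M}_i:=M/J_iM$, the inductive step splits into showing (a) $\End_{\frak{C}_{G/Z_G}(R)}(\overline{M}_i)\cong R/J_i$, and (b) $x_i$ is a non-zero-divisor on $R/J_{i-1}$.

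For (a), I will apply Proposition \ref{prop:Hom-R} directly to the compact $R$-module $R/J_i$ (compact as a quotient of the pseudo-compact ring $R$), using that $(R/J_i)\widehat{\otimes}_R M\cong\overline{M}_i$. This yields an isomorphism
\[
R/J_i\simto\Hom_{\frak{C}_{G/Z_G}(R)}(M,\overline{M}_i),\qquad r\mapsto\bigl(m\mapsto\overline{rm}\bigr),
\]
which factors as the composition
\[
R/J_i\lra\End_{\frak{C}_{G/Z_G}(R)}(\overline{M}_i)\hookrightarrow\Hom_{\frak{C}_{G/Z_G}(R)}(M,\overline{M}_i),
\]
where the second arrow (pre-composition with the surjection $\pi\colon M\twoheadrightarrow\overline{M}_i$) is injective because $\pi$ is surjective. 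Hence both arrows must be isomorphisms, establishing (a) and, in particular, the faithfulness of the $R/J_i$-action on $\overline{M}_i$.

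For (b), I invoke (a) at level $i-1$: if $r\in R/J_{i-1}$ satisfies $x_ir=0$, then $x_i(r\bar m)=0$ in $\overline{M}_{i-1}$ for every $\bar m$. Since $x_i$ is regular on $\overline{M}_{i-1}$ by the hypothesis that $(x_1,\dots,x_g)$ is $M$-regular, we deduce $r\bar m=0$ for all $\bar m$, whence $r=0$ by the just-established faithfulness. The real content lies in (a); no serious obstacle appears, the only care being to verify that the hypotheses of Proposition \ref{prop:Hom-R} apply to $\mathrm{m}=R/J_i$, which reduces to the (immediate) observation that $R/J_i$ is compact and that its completed tensor product with $M$ recovers $\overline{M}_i$.
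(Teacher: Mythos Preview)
Your proof is correct and essentially the same as the paper's: both apply Proposition~\ref{prop:Hom-R} with $\mathrm{m}=R/J_i$ to obtain $R/J_i\simto\Hom_{\frak{C}_{G/Z_G}(R)}(M,\overline{M}_i)$, identify this with $\End_{\frak{C}_{G/Z_G}(R)}(\overline{M}_i)$ (the paper does this directly by $R$-linearity, you via the factorization), and then deduce $R$-regularity from the resulting faithfulness. The only difference is organizational---you establish (a) for all $i$ at once and then run the induction for (b), while the paper interleaves the two steps---but the content is identical.
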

\begin{proof}
The proof is analogous to \cite[Prop.~5.11]{HuJEMS}.

Since $R$ acts faithfully on $M$ by Corollary \ref{cor:R'-faithful} and $x_1$ is $M$-regular, $x_1$ is also $R$-regular.
By Proposition \ref{prop:Hom-R}, we have
\[R/x_1R\simto \Hom_{\frak{C}_{G/Z_G}(R)}(M,M/x_1M)=\End_{\frak{C}_{G/Z_G}(R)}(M/x_1M).\]
This in turn shows that $R/x_1R$ acts faithfully on $M/x_1M$, hence $x_2$ is $R/x_1R$-regular because it is $M/x_1M$-regular by assumption.
We may thus continue the above argument to conclude.
\end{proof}

\begin{remark}\label{rem:faithful}
Proposition \ref{prop:faithful} could be used to prove a big ``$R=\mathbb{T}$'' theorem, see the proof of Proposition \ref{prop:S1-rho2} below.  Such a result is proved in  \cite[Thm.~B(3)]{Gee-Newton} when $R_{\brho}^{\eta}$ is formally smooth, by first proving that a suitable patched module $M_{\infty}$ is faithfully flat over the patched ring $R_{\infty}$ and then passing to the quotient. However, when $\brho \sim \smatr{1}{*}0{\omega} \otimes \chi$,  $R_{\brho}^{\eta}$ is not formally smooth and the patched module $M_{\infty}$ is \emph{not} flat over $R_{\infty}$, so the argument in \cite{Gee-Newton} does not apply.  Besides, this case is also excluded in \cite[Thm.~1.3]{Em3}, so Proposition \ref{prop:faithful} may be of independent interest.
\end{remark}

\section{Automorphic forms and big patched modules} \label{section-autom-forms}

 Let $F$ be a totally real extension of $\Q$ in which $p$ is unramified, and let $\OC_F$ be its ring of integers. Let $\Sigma_p$ denote the set of places of $F$ dividing $p$ and let $\Sigma_{\infty}$ denote the set of infinite places of $F.$ For any place $v$ of $F,$ let $F_v$ denote the completion of $F$ at $v$ with ring of integers $\OC_{F_v},$ uniformizer $\varpi_v$ and residue field $k_{F_v}.$ Let $q_v$ denote the cardinality of $k_{F_v}.$ Let $\AM_{F,f}$ denote the ring of finite ad\`eles of $F.$ If $S$ is a finite set of finite places of $F,$ let $\AM^S_{F,f}$ denote the ring of finite ad\`eles outside $S.$ Recall $G_F=\Gal(\Fov/F)$  and $G_{F_v}=\Gal(\Fov_v/F_v)$. By fixing an embedding $\Fov\into \Fov_v$, $G_{F_v}$ is identified with the decomposition group at $v.$ We let $\Frob_v\in G_{F_v}$ denote a (lift of the) geometric Frobenius element, and let $\Art_{F_v}$ denote the local Artin map, normalized so that it sends  $\varpi_v$ to $\Frob_v$. The global Artin map is denoted by $\Art_F$ which is compatible with the local Artin map. We denote by $\rec_{v}$  the local Langlands correspondence normalized as in the introduction of \cite{Harris-Taylor}, so that if $\pi$ is a smooth irreducible $\overline{\Q}_p$-representation of $\GL_2(F_{v}),$ then $\rec_{v}(\pi)$ is a Weil-Deligne representation of the Weil group $W_{F_v}$ defined over $\overline{\Q}_p.$ Recall that $\F$ is a sufficiently large finite extension of $\F_p,$ $\cO = W(\F)$ and $E = \cO[1/p].$ We prepare the global setup we need in this section.

\subsection{Tame types and the inertial local Langlands correspondence}\label{section-inertial-type}

Let $I_{F_{v}}$ be the inertia subgroup of $G_{F_{v}}.$ An {\em inertial type} at $v$ is a two-dimensional representation $\tau: I_{F_{v}} \to \GL_2(\overline{\Q}_p)$ with open kernel which extends to a representation of $G_{F_{v}}.$ We say $\tau$ is a {\em discrete series} inertial type if it is either scalar, or extends to an irreducible representation of $G_{F_{v}}.$ In the latter case, we call $\tau$  {\em supercuspidal}. We say $\tau$ is {\em tame} if it is trivial on the wild inertia subgroup. Under Henniart's inertial local Langlands correspondence \cite{Henniart} (cf. also \cite{Kisin-FM}), there is a unique finite dimensional irreducible representation $\s(\tau)$ (resp. $\s^{\rm cr}(\tau)$) of $\GL_2(\cO_{F_{v}})$ over $\overline{\Q}_p$-vector spaces, called {\em types}, satisfying if $\pi$ is an infinite dimensional smooth irreducible representation of $\GL_2(F_{v})$, then $\Hom_{\GL_2(\cO_{F_{v}})}(\s(\tau),\pi) \neq 0$ (resp. $\Hom_{\GL_2(\cO_{F_{v}})}(\s^{\rm cr}(\tau),\pi) \neq 0$) if and only if $\rec_{v}(\pi)|_{I_{F_{v}}} \cong \tau$ (resp. $\rec_{v}(\pi)|_{I_{F_{v}}} \cong \tau$ and the monodromy operator $N$ on $\rec_{v}(\pi)$ is zero), in which case the space $\Hom_{\GL_2(\cO_{F_{v}})}(\s(\tau),\pi) $ (resp. $\Hom_{\GL_2(\cO_{F_{v}})}(\s^{\rm cr}(\tau),\pi) $) is one-dimensional. We always have $\s(\tau) = \s^{\rm cr}(\tau)$ except when $\tau = \chi\oplus \chi$, in which case $\s (\chi \oplus \chi) = {\rm sp}\otimes \chi\circ\det$ (here ${\rm sp}$ denotes the Steinberg representation of $\GL_2(k_{F_v})$ over $\overline{\Q}_p$) and $\s^{\rm cr}(\chi \oplus \chi) =  \chi\circ\det.$ Let $\psi : \F_{q_v^2}^{\times} \to \overline{\Q}_p^{\times}$ be a character such that $\psi\neq \psi^{q_v}.$ Let $\Theta(\psi)$ be the irreducible cuspidal $\overline{\Q}_p$-representation of $\GL_2(k_{F_v})$ associated to $\psi$ as in \cite{Diamond}. 
A {\em tame supercuspidal type} is an irreducible $\overline{\Q}_p$-representation of $\GL_2(\cO_{F_{v}})$ that arises by inflation from $\Theta(\psi)$ for some $\psi$ as above. 

In \cite{Gee-Geraghty} Gee--Geraghty developed an analogous theory for  $D^{\times},$ where $D$ is the nonsplit central quaternion algebra over $F_{v}.$ Let $\JL$ denote the Jacquet-Langlands correspondence giving a bijection from irreducible smooth representations of $D^{\times}$ over $\overline{\Q}_p$ to discrete series representations of $\GL_2(F_v)$ over $\overline{\Q}_p$.  Let $\tau$ be a discrete series inertial type. By the Jacquet-Langlands correspondence, there is an irreducible smooth representation $\pi_{D,\tau}$ of $D^{\times}$ such that $\rec_{v}(\JL(\pi_{D,\tau}))|_{I_{F_{v}}} \cong \tau.$ Since $F_{v}^{\times }\cO_D^{\times}$ has index two in $D^{\times},$ $\pi_{D,\tau}|_{\cO_D^{\times}}$ is either irreducible or a sum of two irreducible representations which are conjugate under the uniformizer $\varpi_D$ of $D.$ Let $\s_D(\tau)$ be {\em one} of the irreducible components of $\pi_{D,\tau}|_{\cO_D^{\times}}.$ If $\pi_D$ is a smooth irreducible $\overline{\Q}_p$-representation of $D^{\times}$, then $\Hom_{\cO_D^{\times}}(\s_D(\tau),\pi_D) \neq 0$ if and only if $\rec_{v}(\JL(\pi_D))|_{I_{F_{v}}} \cong \tau,$ in which case, $\Hom_{\cO_D^{\times}}(\s_D(\tau),\pi_D)$ is one-dimensional. If $\tau$ is a tame inertial type, then $\s(\tau)$ and $\s_D(\tau)$ can be defined over $E$ once $E$ is taken sufficiently large (and unramified), see the proof of \cite[Lem.~ 3.1.1]{EGS}.  Recall the following lemma.

\begin{lemma}\label{lem::tame-cuspidal-type}
Let $\psi : \F_{q_v^2}^{\times} \to E^{\times}$ with $\psi\neq \psi^{q_v}.$ Let $\tau: = \psi\oplus \psi^{q_v} $ be the supercuspidal inertial type associated to $\psi,$ where we denote by $\psi$ the composition $I_{F_v}   \onto \F_{q_v^2}^{\times}\To{\psi} E^{\times}.$ Then $\s(\tau) = \Theta(\psi)$ and $\pi_{D,\tau}|_{\cO_{D}^{\times}} = \psi\oplus \psi^{q_v}.$
\end{lemma}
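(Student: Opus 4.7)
The plan is to reduce both identifications to the explicit construction of tame supercuspidal representations on the two sides and then to read off the characterizing properties of $\sigma(\tau)$ and $\pi_{D,\tau}$.

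For the first equality $\sigma(\tau) = \Theta(\psi)$, I would invoke Kutzko's classification of tame supercuspidals: any irreducible admissible representation $\pi$ of $\GL_2(F_v)$ with $\rec_v(\pi)|_{I_{F_v}} \cong \psi\oplus\psi^{q_v}$ (which, since $\psi\neq\psi^{q_v}$, forces $\pi$ to be supercuspidal of minimal tame level) has the form $\pi \cong \mathrm{c\text{-}Ind}_{F_v^\times \GL_2(\cO_{F_v})}^{\GL_2(F_v)}\widetilde{\Theta}(\psi)$ for some extension $\widetilde{\Theta}(\psi)$ of (the inflation of) $\Theta(\psi)$ to $F_v^\times\GL_2(\cO_{F_v})$, the extension being pinned down by a choice of central character. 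By Frobenius reciprocity such a $\pi$ contains $\Theta(\psi)$ with multiplicity one upon restriction to $\GL_2(\cO_{F_v})$, and no representation of strictly smaller type does. The uniqueness of the type in Henniart's inertial local Langlands then forces $\sigma(\tau) = \Theta(\psi)$.

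For the second equality, I would lift $\psi\colon\F_{q_v^2}^\times\to E^\times$ to a character $\chi$ of $\cO_D^\times$ through the projection $\cO_D^\times\onto \cO_D^\times/U^1_D \cong \F_{q_v^2}^\times$, and then extend $\chi$ to a character $\widetilde{\chi}$ of $H := \cO_D^\times F_v^\times$, normalised so that $\widetilde{\chi}|_{F_v^\times}$ matches the central character of $\pi_{D,\tau}$. Because $\varpi_D^2 = p \in F_v^\times$ by \eqref{eq:division-D}, $H$ has index $2$ in $D^\times$ with coset representatives $\{1,\varpi_D\}$, so $\rho := \mathrm{Ind}_H^{D^\times}\widetilde{\chi}$ is two-dimensional. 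A direct Mackey computation gives $\rho|_H \cong \widetilde{\chi}\oplus \widetilde{\chi}^{\varpi_D}$, and since conjugation by $\varpi_D$ acts on the Teichm\"uller lift of $\F_{q_v^2}\subset L'$ as the Frobenius $x\mapsto x^{q_v}$ (again by \eqref{eq:division-D}, using $q_v^{-1}\equiv q_v \pmod{q_v^2-1}$), restriction to $\cO_D^\times$ yields $\rho|_{\cO_D^\times}\cong \psi\oplus\psi^{q_v}$. The hypothesis $\psi\neq\psi^{q_v}$ forces $\widetilde{\chi}\neq\widetilde{\chi}^{\varpi_D}$, so $\rho$ is irreducible as a $D^\times$-representation. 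A standard comparison (via the explicit local Langlands for tame supercuspidals on both $\GL_2(F_v)$ and $D^\times$) shows that $\mathrm{JL}(\rho)$ has inertial type $\psi\oplus\psi^{q_v}$, whence by the defining property of $\pi_{D,\tau}$ one concludes $\rho = \pi_{D,\tau}$ and the claim follows.

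The only point requiring care is the book-keeping of central characters and unramified twists in the choice of the extension $\widetilde{\chi}$, together with the verification that $\mathrm{JL}(\rho)$ has the prescribed inertial type; both are classical consequences of the explicit Bushnell--Kutzko description of tame supercuspidal types, and neither should present a genuine obstacle.
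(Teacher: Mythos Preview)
Your argument is correct and follows essentially the same route as the paper, which simply cites Henniart's appendix \cite{Henniart} for the identification $\sigma(\tau)=\Theta(\psi)$ and \cite[Ch.~13]{BH} for the explicit Jacquet--Langlands description giving $\pi_{D,\tau}|_{\cO_D^\times}\cong\psi\oplus\psi^{q_v}$. You have unpacked those references by writing out the compact-induction realisation of the tame supercuspidal and the Mackey computation on the $D^\times$ side, which is exactly the content behind the citations.
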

\begin{proof}
The assertion on $\s(\tau)$ follows from Henniart's construction in \cite{Henniart}. The assertion on $\s_D(\tau )$ follows from the classical Jacquet-Langlands correspondence, see for example \cite[Ch.~13]{BH}.
\end{proof}

\subsection{Automorphic forms, Galois representations and the big patched modules}

We define the space of automorphic forms. Let $B$ be a quaternion algebra over $F.$ Fix a maximal order $\cO_B$ of $B.$ Let $\Sigma_B$ be the set of primes $v$ in $F$ at which $B$ is ramified. Let $\infty_{F}$ be a fixed infinite place of $F.$ We say $B$ is \emph{definite} if it is ramified at all infinite places; $B$ is \emph{indefinite} if it splits at $\infty_F$ and ramifies at all other infinite places. If $v$ is a finite place of $F,$ let $\cO_{B_{v}}^{\times}$ denote the maximal compact subgroup of $B_{v}^{\times} : = (B\otimes_F F_v)^{\times}.$ For $v \notin \Sigma_B,$ we fix an isomorphism $ B_{v}^{\times} \cong \GL_2(F_v)$ so that $\cO_{B_{v}}^{\times}$ is identified with $\GL_2(\cO_{F_v}).$  Let $\psi:F^\times\setminus \AM_{F,f}^\times\to \cO^\times$ be a continuous character. Via the global Artin map, $\psi$ induces a continuous character $G_{F}\to \cO^{\times}$ which, by abuse of notation, is again denoted by $\psi.$ Assume  moreover that $(F, B) \neq (\Q, \GL_{2}).$

Let $U$ be a compact open subgroup of $(B\otimes_{F} \AM_{F,f})^\times.$ We denote by $Y^B_U$ the finite set $B^\times\setminus (B\otimes_{F} \AM_{F,f})^\times/U$ if $B$ is definite. If $B$ is indefinite, let $Y^B_U$ denote the quotient of $X^B_U$ by the action of the finite group $\A_{F,f}^{\times} / (F^{\times} (\A_{F,f}^{\times} \cap U))$, where $X_U^B$ is the associated Shimura curve as in \cite{BreuilDiamond}, which is the same convention used in \cite{Em3} and \cite{Scholze} but is different from the convention used in \cite{BDJ}.

From now on till the end of the paper, we assume that $\Sigma_B$ and $\Sigma_p$ intersect at a unique place $v$ above $p.$ Fix $U^p = \prod_{w \nmid p} U_w$ a compact open subgroup of $(B\otimes_{F} \A^{\Sigma_p}_{F,f})^{\times}.$ For each place $w \in \Sigma_p \setminus \{v\}$, let $\s_w$ be a finite free $\cO$-module equipped with a continuous action of $U_w$ such that $F_w^{\times }\cap U_w$ acts by $\psi^{-1}|_{F_w^{\times}}.$ Denote  
\[
\s_p^v = \otimes_{ w \in \Sigma_p \setminus \{v\} } \s_w.
\]
Let $U^{v}= U^p U_p^{v} \subset (B \otimes_F \A^{\{v\}}_{F,f})^{\times}.$ Then $\s_p^v$ is equipped with an action of $U^v$ via the projection $U^v \onto U^{v}_p.$ We extend this action to $U^v \A_{F,f}^{\times}$ by letting $\A_{F,f}^{\times}$ act by $\psi^{-1}.$ Assume that $U_v$ is a compact open subgroup of $\GL_2(\cO_{F_v}) $ such that $\psi|_{U_{v}\cap \cO_{F_v}^\times}=1.$ Then $\s_p^v$ admits an action of $U^v U_v\A_{F,f}^{\times}$ by letting $U_v$ act trivially.

If $B$ is definite, set
\begin{align*}
\wt{H}^{0,B}_{\s_p^v , \psi} (U^{v}, \cO)  : = & \left \{  \vphantom{ U (\A_{F,f}^{\times}} f: B^\times\setminus (B\otimes_F \AM_{F,f})^\times \to \s_p^v ~|~ \text{$f$ is {\em continuous} and $f(g u) = u^{-1} f(g)$, } \right. \\
&  \left.  \text{$\forall g\in (B\otimes_F \AM_{F,f})^\times$,   $\forall u \in U^v U_v \A_{F,f}^{\times}$}  \right\}.
\end{align*}
If $B$ is indefinite, let $\mathcal{F}_{\s_p^v/\varpi^s}$ be the local system over $Y_{U^{v}U_{v}}^{B}$ associated to $\s_p^v/\varpi^s$ (see \cite{EmertonInvent}), and set
\[
\wt{H}^{1,B}_{\s_p^v , \psi} (U^{v}, \cO) :=  \plim_{s} \varinjlim_{U_{v}} H^1_{\textrm{\'et}} (Y_{U^{v}U_{v}}^{B} , \mathcal{F}_{\s_p^v/\varpi^s}).
\]
Both $\wt{H}^{0,B}_{\s_p^v , \psi} (U^{v}, \cO)$ and $\wt{H}^{1,B}_{\s_p^v , \psi} (U^{v}, \cO)$ carry an action of $(B\otimes_{F} F_{v})^\times.$

Let $S$ be a set of places of $F$ containing all places in $\Sigma_{\infty} \cup \Sigma_B\cup \Sigma_p,$ all places where $\psi$ is ramified, and all places $w$ such that $U_w$ is not $\cO_{B_w}^{\times}.$ Let $\overline{r}: G_{F} \to \GL_2(\F)$ be an absolutely irreducible totally odd representation. Assume $\overline{r}$ is unramified outside $S.$ Assume $\overline{\psi} := \psi \pmod{\varpi}$ is equal to $\omega \det \overline{r}.$ Denote  $\overline{r}_w \defn \overline{r}|_{G_{F_w}}.$ We make the following assumptions on $\overline{r}:$

(a) $\overline{r}$ is modular in the sense of \cite[\S3.1]{BreuilDiamond}, $\overline{r}|_{G_{F(\sqrt[p]{1})}}$ is absolutely irreducible and, if $p = 5,$ the image of $\overline{r}(G_{F(\sqrt[p]{1})})$ in ${\rm PGL}_2(\F)$ is not isomorphic to ${\rm PSL}_2(\F_5).$

(b) For $w \in S \setminus \Sigma_p$ the framed deformation ring of $\overline{r}_w$ is formally smooth over $\cO$ (cf.~\cite[Rem.~8.1.1]{BHHMS1}).

(c) If $w\nmid p$ and $w \in \Sigma_B,$ then $\overline{r}_w$ is either irreducible or  a twist of an extension of the trivial representation by $\overline{\e}.$

(d) If $w|p,$ $w\neq v,$ then $\overline{r}|_{I_{F_w}}$ is generic in the sense of \cite[Def.~11.7]{BP} (which is different from the genericity used in \S\ref{section-Morra}).

Assumption (c) is often called the compatibility condition between $B$ and $\overline{r}.$ By \cite[Corollaire 3.2.3]{BreuilDiamond}, the above assumptions guarantee the non-vanishing of $\pi^B(\overline{r}),$ where $\pi^B(\overline{r})$ is defined in \eqref{eqn:pi^B(r)}. For each $w \in \Sigma_p\setminus \{v\},$ we fix a tame inertial type $\tau_w$ over $E$ such that $\det(\tau_w)|_{I_{F_w}}=\psi|_{I_{F_w}}$ and $\JH( \overline{\s (\tau_w)}^{\rm ss})$ contains exactly one Serre weight in $W(\overline{r}_w(1))$ (\cite[Prop.~3.5.1]{EGS}). This is possible by our assumption (d) and (the proof of) \cite[Prop.~3.5.1]{EGS}. Let $\sigma^{\circ}(\tau_w)$ be an $\cO_{B_w}^{\times}$-stable $\cO$-lattice in $\sigma(\tau_w)$  and 
\begin{equation}\label{eqn::def-of-sigma}
\s^v_p :=  \otimes_{w\in \Sigma_p\setminus \{v \}} \s^{\circ}(\tau_w)^d.
\end{equation}

For $w$ a finite place of $F,$ let $R_w$ denote the universal {\em framed} deformation ring of $\overline{r}_w$ over $\OC.$ Let $R_{w}^{\psi \e^{-1}}$ denote the quotient of $R_{w}$ corresponding to liftings with determinant $(\psi|_{F^\times_w})\varepsilon^{-1}.$
If $w \in S\setminus \Sigma_p$, $R^{\psi\e^{-1}}_w$ is a formal power series ring in $3$-variables over $\cO$ by our assumption (b). If $w|p, ~w\neq v,$ let $R_{w}^{\psi \e^{-1}}((-1,0)_{\kappa},\tau_w)$ denote the reduced $p$-torsion-free quotient of $R_{w}^{\psi\e^{-1}}$ corresponding to potentially crystalline (framed) deformations of inertial type $\tau_w$ and Hodge-Tate weights $(-1,0)$ for all embeddings $\kappa: F_w\into E.$ By the choice of $\tau_w,$ $R_{w}^{\psi\e^{-1}}((-1,0)_{\kappa},\tau_w)$ is a formal power series ring  in $(3+[F_w:\Q_p])$-variables over $\cO$ \cite[Thm.~7.2.1]{EGS}. Let
\[
R_S := \wh{\otimes}_{w\in S}R_w^{\psi\e^{-1}}
\]
and
\[
R^{ \loc}:= R_v^{\psi\e^{-1}} \wh{\otimes }\big(\wh{\otimes}_{w|p, w\neq v } R_w^{\psi\e^{-1} }((-1,0)_{\kappa},\tau_w) \big)\wh{\otimes } \big(\widehat{\otimes}_{w\in S\setminus \Sigma_p }R_w^{\psi\e^{-1}}\big).
\]

Let $R^{\Box, \psi\e^{-1}}_{\overline{r},S}$ (resp. $R^{\psi\e^{-1}}_{\overline{r},S}$) be the framed (resp. universal) deformation ring of $\overline{r}$ parametrizing liftings (resp. deformations) of $\overline{r}$ which are unramified outside $S$ with determinant $\psi\varepsilon^{-1}$ as in \cite[\S~5.4.1]{Gee-Kisin}. Let $r^{\rm univ}$ denote the universal deformation of $\overline{r}$ over $R^{\psi\e^{-1}}_{\overline{r} , S}.$ Define $R^{\Box,\psi\e^{-1}, \loc}_{\overline{r},S}:=R^{\Box, \psi\e^{-1}}_{\overline{r},S}\wh{\otimes}_{R_S}R^{ \loc}.$ Let $R^{\psi\e^{-1},\loc}_{\overline{r} , S }$ denote the image of $R^{\psi\e^{-1}}_{\overline{r},S}$ in $R^{\Box,\psi\e^{-1},\loc}_{\overline{r},S}.$

By \cite[Lem.~4.11]{DDT}, there is a finite place $w_1\notin S$ with the following properties:
\begin{itemize}
\item $q_{w_1}\not\equiv 1 \pmod{p}$,

\item the ratio of the eigenvalues of $\overline{r}(\Frob_{w_1})$ is not equal to $q_{w_1}^{\pm 1}$,

\item the residue characteristic of $w_1$ is sufficiently large such that for any nontrivial root of unity $\zeta$ in a quadratic extension of $F,$ $w_1$ does not divide $\zeta+\zeta^{-1}-2.$
\end{itemize}

Let $U = \prod_w U_w \subset (B \otimes_F \A_{F,f})^{\times}$ be a compact open subgroup satisfying
\begin{itemize}
\item $U_w = \cO_{B_w}^{\times}$ for $w \notin S \cup \{w_1\},$
\item $U_{w_1}$ is contained is the subgroup of $(\cO_B)_{w_1}^{\times} = \GL_2(\cO_{F_{w_1}})$ consisting of matrices that are upper-triangular and unipotent modulo $\varpi_{w_1},$
\item For places over $p,$ $U_w = 1 +\varpi_w M_2(\cO_{F_w})$ if  $w |p,$ $w\neq v;$ $U_v$ is the subgroup $U^1_{B_v}$ defined in (\ref{eqn-U^i_D}). 
\end{itemize} By the choice of $U_{w_1},$ $U$ is sufficiently small in the sense of \cite[\S 3.3]{CHT}.

\cite{CEGGPS1} and \cite{Scholze} extend the Taylor-Wiles-Kisin method to construct the big patched modules. The detailed construction for Shimura curves in the minimal case is given in \cite[\S 6]{Dotto-Le}. By the arguments of \cite{Dotto-Le}, replacing $K^v$ in {\em loc.~cit.~}by $U^v,$ the representation $V = \bigotimes_{w \in S, w\neq v} V_w$ of $K^v$ in {\em loc.~cit.~}by the representation $\s^v_p$ of $U^v,$ forgetting the Hecke operators $T_w$ at places $w\in S',$ and allowing $B$ possibly ramifies at some places above $p,$ the same patching arguments produce a ``big'' patched module $M_{\infty}^B$ with the following data. (Let $j:=4 \# S - 1 $ and let $g,q$ be positive integers such that $q = g+ [F:\Q] - \# S +1.$)
\begin{itemize}
\item[$\bullet$] A formal power series ring in $q$-variables $\cO[\![z_1,\ldots,z_q]\!]$ with a homomorphism
\[
\cO[\![z_1,\ldots,z_q]\!] \to R^{\psi\e^{-1},\loc}_{\overline{ r}, S}
\]
which extends to a homomorphism from $S_{\infty}:= \cO[\![z_1,\ldots,z_q,y_1,\ldots,y_j]\!]$ to $R^{\Box , \psi\e^{-1},\loc}_{\overline{ r}, S}$.

\item[$\bullet$] There is a surjective homomorphism
\[
R_{\infty}^{ \psi\e^{-1}} \onto R^{\Box, \psi\e^{-1},\loc}_{\overline{r}, S},
\]
where $R_{\infty}^{ \psi\e^{-1}}:= R^{ \loc}[\![x_1,\ldots,x_g]\!].$ Let $\frak{m}_{\infty}$ be the maximal ideal of $R_{\infty}^{ \psi\e^{-1}}$.

\item[$\bullet$] An $\OC$-algebra homomorphism $S_{\infty}\to R_{\infty}^{ \psi\e^{-1}}$ such that
\[
R_{\infty}^{ \psi\e^{-1}}/\frak{a}_{\infty}\cong R^{\psi\e^{-1},\loc}_{\overline{r}, S },
\]
where $\frak{a}_{\infty}$ denotes the ideal $(z_1,\dots,z_{q},y_1,\ldots,y_j)$ of $S_{\infty}.$

\item[$\bullet$] A finitely generated Cohen-Macaulay $S_{\infty}[\![  \cO_{B_v}^{\times}  ]\!]$-module $M^{B}_{\infty}$ equipped with an action of $R_{\infty}^{ \psi\e^{-1}},$ so that the action of $S_{\infty}$ factors through $R_{\infty}^{ \psi\e^{-1}}.$ The module $M^B_{\infty}$ is also Cohen-Macaulay over $R_{\infty}^{ \psi\e^{-1}}[\![  \cO_{B_v}^{\times} ]\!]$ by \cite[Cor.~A29]{Gee-Newton}. Moreover, $M^{B}_{\infty}$ is projective in the category $\frak{C}_{\cO_{B_v}^{\times},\psi}(S_{\infty}).$ Note that projectivity in the case where $B$ ramifies at $v$ follows from the proof of \cite[Prop.~2.10]{CEGGPS1} using \cite[Prop.~5.6]{Newton}. Let $\frak{m}_{\overline{r}}$ be the maximal ideal of the abstract Hecke algebra associated to $\overline{r}$ as in \cite[\S5]{Scholze}. We have
\begin{equation} \label{eqn--M-infty}
M^{B}_{\infty} / \frak{a}_{\infty} = \left\{ {\begin{array}{ll}
\wt{H}^{0,B}_{\s_p^v, \psi} (U^{v}, \cO)^d_{\frak{m}_{\overline{r}}}  & \text{ if $B$ is definite} \\
\Hom_{\TM(U^v)_{\frak{m}_{\overline{r}}}[ G_{F}]} \left( r_{\frak{m}} , \wt{H}^{1,B}_{\s_p^v , \psi} (U^{v}, \cO)_{\frak{m}_{\overline{r}}} \right)^d  & \text{ if $B$ is indefinite,}\\
\end{array}}\right.
\end{equation}
where $\TM(U^v)_{\frak{m}_{\overline{r}}}$ denotes the Hecke algebra defined in the paragraph before \cite[Prop.~5.7]{Scholze} (by taking $\frak{p} = v$ and $\frak{m} = \frak{m}_{\overline{r}}$ in {\em loc.~cit.}), and $r_{\frak{m}}$ denotes the composite \[G_F \To{r^{\rm univ}} \GL_2(R^{\psi\e^{-1}}_{\overline{r},S}) \to \GL_2(\TM(U^v)_{\frak{m}_{\overline{r}}}).\]
\end{itemize}
 
\begin{remark}\label{rem:variant-M}
In the indefinite case, there is a variant of $M_{\infty}^B$ denoted by $N_{\infty}^B$, which is obtained by patching $\wt{H}^{1,B}_{\s_p^v , \psi} (U^{v}, \cO/\varpi^s)_{\frak{m}_{\overline{r}}}$ but without factorizing out $r^{\rm univ}$. Namely, we have
\[N_{\infty}^B/\fa_{\infty}\cong \big(\wt{H}^{1,B}_{\s_p^v , \psi} (U^{v}, \cO)_{\frak{m}_{\overline{r}}}\big)^d.\]
\end{remark}

Let ${\rm Mod}_{\cO^{\times}_{B_v},\psi}^{\rm fin}$ denote the category of finite $\cO$-modules with a continuous action of $  \cO_{B_v}^{\times}$ such that the $  \cO_{B_v}^{\times}$-action has central character $\psi|_{F_v^{\times}}.$ Define a functor $M_{\infty}^B(-)$ from ${\rm Mod}_{\cO^{\times}_{B_v},\psi}^{\rm fin}$ to the category of finitely generated $ R_{\infty}^{ \psi\e^{-1}}$-modules by letting
\begin{equation} \label{equ-def-functor}
M^{B}_{\infty}(\s):= \Hom_{  \cO_{B_v}^{\times} }^{\rm cont} ( M_{\infty }^{B}, \s^{\vee} )^{\vee} .
\end{equation}
By the projectivity of $M^{B}_{\infty}$ in $\frak{C}_{\cO_{B_v}^{\times},\psi}(\cO),$ $M^{B}_{\infty}(-)$ is an exact functor. Define
\begin{equation}\label{eqn:pi^B(r)}
\pi^B (\overline{ r}) := (M^{B}_{\infty}/ \frak{m}_{\infty})^{\vee} .
\end{equation}
By definition, we have
\begin{equation}\label{eqn:pi-and-M_infty}
(M^{B}_{\infty}(\s) / \frak{m}_{\infty})^{\vee} \cong \Hom_{  \cO_{B_v}^{\times} }(\s,\pi^B (\overline{ r})).
\end{equation}

At the place $v,$ let $\tau_v : I_{F_v}\to \GL_2(E)$ be an inertial type and $\mathbf{w} = (a_{\kappa} , b_{\kappa})_{\kappa: F_v \into E}$ be a Hodge type with $a_{\kappa} < b_{\kappa}$ for all $\kappa.$ Assume $\tau_v$ is a discrete series inertial type if $B$ ramifies at $v.$ Let $ R^{\psi\e^{-1}}_{v}(\mathbf{w}, \tau_v)$ (resp. $ R^{\psi\e^{-1}, {\rm cr}}_{v}(\mathbf{w}, \tau_v)$) denote the reduced $p$-torsion-free quotient of $R_v^{\psi\e^{-1}}$ which parametrizes potentially semistable (resp. potentially crystalline) liftings of $\overline{r}_v$ of Galois type $\tau_v$ and Hodge-Tate weights $\mathbf{w}.$ Following \cite{Gee-Geraghty} let $R^{\psi\e^{-1} , {\rm ds}}_{v}(\mathbf{w}, \tau_v)$ denote the maximal reduced $p$-torsion-free quotient of $ R^{\psi\e^{-1}}_{v}(\mathbf{w} ,\tau_v)$ which is supported on the irreducible components where the associated Weil-Deligne representation is generically of discrete series type. Let $ R_{\infty}^{ \psi\e^{-1}}(\mathbf{w},\tau_{v}):= R_{v}^{\psi\e^{-1} }(\mathbf{w},\tau_{v}) \widehat{\otimes}_{R_{v}^{\psi\e^{-1}}} R_{\infty}^{ \psi\e^{-1}},$ $ R_{\infty}^{\psi\e^{-1} ,{\rm cr}}(\mathbf{w}, \tau_{v}):= R_{v}^{\psi\e^{-1}, {\rm cr}}(\mathbf{w} ,\tau_{v}) \widehat{\otimes}_{R_{v}^{\psi\e^{-1}}} R_{\infty}^{ \psi\e^{-1}}$ and $ R_{\infty}^{ \psi\e^{-1}, {\rm ds}}(\mathbf{w},\tau_{v}):= R_{v}^{\psi\e^{-1}, {\rm ds}}(\mathbf{w}, \tau_{v}) \widehat{\otimes}_{R_{v}^{\psi\e^{-1}}} R_{\infty}^{ \psi\e^{-1}}.$
\begin{lemma}\label{lemma::ds-def-ring}
(i) If $\tau_v$ is a supercuspidal inertial type, then $R^{\psi\e^{-1},{\rm ds}}_{v}(\mathbf{w}, \tau_v) = R^{\psi\e^{-1} }_{v} (\mathbf{w},\tau_v)= R^{\psi\e^{-1} ,{\rm cr}}_{v}(\mathbf{w} , \tau_v ).$

(ii) If $\tau_v$ is a scalar type, then $R^{\psi\e^{-1},{\rm ds}}_{v} (\mathbf{w},  \tau_v)$ corresponds to the closure of potentially semistable but not potentially crystalline points in $\Spec  R^{\psi\e^{-1} }_{v}(\mathbf{w},\tau_v ).$
\end{lemma}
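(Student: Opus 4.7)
The content of the lemma is essentially a statement about $2$-dimensional Weil--Deligne representations, so the plan is to translate both parts into an analysis of pairs $(r,N)$ with $r|_{I_{F_v}}\cong\tau_v$.

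For (i), the key observation is that if $\tau_v$ is supercuspidal, then by definition it extends to an irreducible representation of $G_{F_v}$, and its restriction to the Weil group $W_{F_v}$ is still irreducible. Given any $E'$-point of $\Spec R^{\psi\e^{-1}}_v(\mathbf{w},\tau_v)$ with associated Weil--Deligne representation $(r,N)$, we have $r|_{I_{F_v}}\cong\tau_v$, so $r$ itself is irreducible as a representation of $W_{F_v}$. I would then recall the standard fact that for a $2$-dimensional Weil--Deligne representation with $N\neq 0$, one may choose a basis making $N=\smatr{0}{1}{0}{0}$, and the covariance relation $r(g)Nr(g)^{-1}=|g|N$ forces $r$ to be upper triangular, in particular reducible. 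Hence $N=0$, giving $R^{\psi\e^{-1}}_v(\mathbf{w},\tau_v)=R^{\psi\e^{-1},{\rm cr}}_v(\mathbf{w},\tau_v)$. Moreover, an irreducible $2$-dimensional $W_{F_v}$-representation corresponds under local Langlands to a supercuspidal, which is a discrete series, so every point is already of discrete series type and $R^{\psi\e^{-1},{\rm ds}}_v(\mathbf{w},\tau_v)$ coincides with $R^{\psi\e^{-1}}_v(\mathbf{w},\tau_v)$.

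For (ii), $\tau_v=\chi\oplus\chi$ is scalar, and I would classify the Weil--Deligne representations $(r,N)$ with $r|_{I_{F_v}}\cong\chi\oplus\chi$ into two disjoint families. If $N=0$, then $r\cong\chi_1\oplus\chi_2$ with $\chi_i|_{I_{F_v}}=\chi$, and the attached smooth representation of $\GL_2(F_v)$ is either a principal series or a one-dimensional twist, neither of which is discrete series (generically). If $N\neq 0$, the same computation as in (i) forces $r\cong\chi_1\oplus\chi_1|\cdot|$ with $\chi_1|_{I_{F_v}}=\chi$, corresponding under local Langlands to a twist of the Steinberg representation, which is discrete series. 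Thus the discrete series locus in $\Spec R^{\psi\e^{-1}}_v(\mathbf{w},\tau_v)[1/p]$ is exactly the locus $\{N\neq 0\}$, which is the complement of the potentially crystalline locus cut out by the vanishing of the monodromy.

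To finish (ii), I need to translate ``discrete series type on each irreducible component generically'' into the closure statement. The vanishing locus of $N$ is closed in $\Spec R^{\psi\e^{-1}}_v(\mathbf{w},\tau_v)$ (the entries of $N$ on the universal $(\varphi,N,G_{F_v})$-module give explicit equations), and the smooth locus of each irreducible component is irreducible over $E$, so each component generically lies either entirely in $\{N=0\}$ or entirely in $\{N\neq 0\}$. The quotient $R^{\psi\e^{-1},{\rm ds}}_v(\mathbf{w},\tau_v)$ is the one supported on the components of the latter type; equivalently, it is the scheme-theoretic closure of the open locus of potentially semistable non-crystalline points, as asserted.

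The only step that requires any care is the classification of $2$-dimensional Weil--Deligne representations with scalar inertial type and the ensuing verification that the ``discrete series'' components are disjoint from the potentially crystalline ones in the scalar case; everything else is immediate from the definitions and the covariance relation between $r$ and $N$.
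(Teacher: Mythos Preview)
Your argument is correct and is exactly the Weil--Deligne bookkeeping behind the reference the paper invokes; the paper's own proof is the single sentence ``See \cite[\S5]{Gee-Geraghty}.'' The one step in (i) worth making explicit is the passage from ``$\tau_v$ admits an irreducible extension to $W_{F_v}$'' to ``\emph{every} Frobenius-semisimple $r$ with $r|_{I_{F_v}}\cong\tau_v$ is irreducible'': if such an $r$ were reducible it would contain a character $\tilde\psi$ of $W_{F_v}$, and then $\tilde\psi|_{I_{F_v}}$ is a Frobenius-stable constituent of $\tau_v$, forcing $\tau_v$ to be a sum of two characters each extending to $W_{F_v}$ and hence to admit only reducible extensions, contradicting supercuspidality. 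With that remark your proof is self-contained and matches the content of the cited reference.
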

\begin{proof}
See \cite[\S5]{Gee-Geraghty}.
\end{proof}

We assume $B$ ramifies at $v$ and $F_v= \Q_p$ for the rest of this section. Let $\tau_v$ be supercuspidal and $\mathbf{w} = (a,b)$ be as above satisfying 
\begin{equation}\label{eq::Galois type}
 \varepsilon^{b+a-1}|_{I_{F_v}}\det(\tau_v) |_{I_{F_v}} \sim \psi|_{I_{F_v}}.
\end{equation}
We have a natural action of $B_{v}^{\times}$ on $\Sym^{b - a -1} E^2 \otimes {\det}^{a}$ as follows: we fix an embedding $B_{v}^{\times} \into \GL_2(\Q_{p^2}).$ Then $B_v^{\times}$ acts by the composite $B_{v}^{\times} \into \GL_2(\Q_{p^2}) \into \GL_2(E).$ Let $\Theta$ be any $\cO_{B_{v}}^{\times}$-stable $\cO$-lattice in 
\[
\s_{B_v}(\mathbf{w},\tau):=\s_{B_v}(\tau_{v}) \otimes \Sym^{b - a -1} E^2 \otimes {\det}^{a}.
\]
The homomorphism $R_{\infty}^{\psi \e^{-1}} \to \End(M^B_{\infty}(\Theta))$ factors through $  R_{\infty}^{ \psi\e^{-1}, {\rm ds}}(\mathbf{w},\tau_{v}),$ which is $R_{\infty}^{\psi\e^{-1}}(\mathbf{w}, \tau_{v} )$ by 
Lemma \ref{lemma::ds-def-ring}(i),  by the global Jacquet-Langlands correspondence and local-global compatibility. Since $S_{\infty}$ and $R_{\infty}^{ \psi\e^{-1}}(\mathbf{w}, \tau_{v})$ have the same Krull dimension, $M^B_{\infty}(\Theta)$ is maximal Cohen-Macaulay over $ R_{\infty}^{\psi\e^{-1}}(\mathbf{w}, \tau_{v} )$ by the same argument of the proof of \cite[Lem. 4.18]{CEGGPS1}.

Let $\delta:F^\times\setminus \AM_{F,f}^\times\to \cO^\times$ be a continuous character trivial on $U^{v}\cap  \AM_{F,f}^\times$ and trivial mod $\varpi$. Sending a lifting $r_w$ of $\overline{r}_w$ with determinant $\varepsilon^{-1}\psi|_{F^\times_w}$ to $ r_w\otimes \delta|_{F^{\times}_w}$ gives an isomorphism $\mathrm{tw}_{\delta|_{F_w^{\times}}}: R^{\psi \delta^2\varepsilon^{-1}}_w \simto R_w^{\psi \varepsilon^{-1}}.$ We hence have an isomorphism $\mathrm{tw}_{\delta}:= \otimes_w \mathrm{tw}_{\delta|_{F_w^{\times}}}:  R_{\infty}^{\psi \delta^2 \e^{-1}} \simto R_{\infty}^{\psi\e^{-1}}.$ We have the following analogue of Corollary \ref{cor::support}. 

\begin{lemma}\label{lem::isom-twist}
Let $(\mathbf{w},\tau_v)$ be as above satisfying $\varepsilon^{b+a-1}|_{I_{F_v}}\det(\tau_v) |_{I_{F_v}} \sim (\psi \delta^2)|_{I_{F_v}}.$ Let $\Theta$ be any $\cO_{B_{v}}^{\times}$-stable $\cO$-lattice in $\s_{B_v}(\mathbf{w},\tau_v) \otimes (\delta|_{F_v^{\times}}\circ {\rm Nrd})^{-1}.$ Then $ R_{\infty}^{\psi \e^{-1}}/\Ann_{R_{\infty}^{\psi \e^{-1}}}(M^B_{\infty}(\Theta))$ is equal to $\mathrm{tw}_{\delta} (R_{\infty}^{\psi\delta^2\e^{-1}}(\mathbf{w}, \tau_{v} ))$ if $R_{\infty}^{\psi\delta^2\e^{-1}}(\mathbf{w}, \tau_{v} )$ is an integral domain.
\end{lemma}
\begin{proof}
Note that $M_{\infty}^{B,\delta^{-1}}: = M_{\infty}^B \otimes \delta^{-1} \circ {\rm Nrd}$ is a ``big'' patched module, which is finitely generated and Cohen-Macaulay over $S_{\infty}[\![  \cO_{B_v}^{\times}  ]\!]$ equipped with a compatible action of $R_{\infty}^{ \psi \delta^2\e^{-1}}.$ This is because $\delta$ is trivial mod $\varpi,$ and when $B$ is definite, the map $f\mapsto [g\mapsto f(g)(\delta\circ{\rm Nrd})^{-1}(g)]$ induces an isomorphism 
\begin{equation}\label{eqn::twist-AF}
\wt{H}^{0,B}_{\s_p^v , \psi\delta^2} (U(N)^{v}, \cO) \cong \wt{H}^{0,B}_{\s_p^v , \psi} (U(N)^{v}, \cO)\otimes \delta\circ{\rm Nrd}  ,
\end{equation}
which is compatible with the action of the Hecke algebra. Here $U(N)^v$ denotes the group $U(N)^{\frak{p}}$ in the proof of \cite[Thm. 8.10]{DPS-Crelle}. The isomorphism similar to \eqref{eqn::twist-AF} in the indefinite case  follows from the proof of \cite[Lem. 2.3]{BDJ}. Since $\Theta\otimes \delta|_{F_v^{\times}}\circ {\rm Nrd}$ is an $\cO_{B_v}^{\times}$-stable lattice in the locally algebraic representation $ \s_{B_v}(\mathbf{w},\tau_v),$ the action of  $R_{\infty}^{ \psi \delta^2\e^{-1}}$ on $M_{\infty}^{B,\delta^{-1}}(\Theta\otimes \delta|_{F_v^{\times}}\circ {\rm Nrd})$ factors through $R_{\infty}^{\psi\delta^2\e^{-1}}(\mathbf{w}, \tau_{v} ).$ Then the  $R_{\infty}^{\psi\delta^2\e^{-1}}(\mathbf{w}, \tau_{v} )$-module
\[
M_{\infty}^{B,\delta^{-1}}(\Theta\otimes \delta|_{F_v^{\times}}\circ {\rm Nrd})  = M_{\infty}^B (\Theta ) \otimes \delta^{-1}|_{F_v^{\times}}\circ {\rm Nrd}
\]
is supported on a union of irreducible components of $\Spec(R_{\infty}^{\psi\delta^2\e^{-1}}(\mathbf{w}, \tau_{v} )),$ which is all of $\Spec(R_{\infty}^{\psi\delta^2\e^{-1}}(\mathbf{w}, \tau_{v} ))$ if $R_{\infty}^{\psi\delta^2\e^{-1}}(\mathbf{w}, \tau_{v} )$ is an integral domain. The construction of the ``big'' patched module and the relation with the associated Galois representation imply that the quotient ring  $ R_{\infty}^{\psi \e^{-1}}/\Ann_{R_{\infty}^{\psi \e^{-1}}}(M^B_{\infty}(\Theta))$ is equal to $\mathrm{tw}_{\delta} (R_{\infty}^{\psi\delta^2\e^{-1}}(\mathbf{w}, \tau_{v} )).$
\end{proof}

\section{The Gelfand-Kirillov dimension of $ \pi^B (\overline{r}) $} \label{sec:GK}

In this section we maintain the assumptions made in \S \ref{section-autom-forms}. In particular, $B$ and $\overline{r}$ satisfy the compatibility condition (H0) of \cite{BreuilDiamond}, which implies that $\pi^B(\overline{r})\neq 0$ by \cite[Corollaire~3.2.3]{BreuilDiamond}. Since our main applications are for the quaternion algebra over $\Q_p,$ we assume further that  $F_v \cong \Q_p,$ where $v $ is the unique place over $p$ at which $B$ is ramified. We denote by $D:= B_v$ the quaternion algebra over $\Q_p.$ We prove our main results on the Gelfand-Kirillov dimension of $\pi^B (\overline{r})$ which is defined by (\ref{eqn:pi^B(r)}). Assume $p\geq 5$. 

\subsection{Serre weights for quaternion algebras}

Let $W_B(\overline{r})$ denote the set of modular quaternionic Serre weights at $v$ defined in \cite[\S3.1]{BreuilDiamond}. Recall that an irreducible smooth representation of $\cO_{D}^{\times} $ over $\F$, equivalently a smooth character $\chi:\cO_D^{\times}\ra\F^{\times}$, is in $W_B(\overline{r})$ if 
\[
\Hom_{\cO_{D}^{\times}} (\chi, \pi^B(\overline{r})) \neq 0,
\]
equivalently $M_{\infty}^B(\chi) \neq 0$ by (\ref{eqn:pi-and-M_infty}). Moreover $
\dim_{\F} \Hom_{\cO_D^{\times}} (\chi , \pi^B(\overline{r})) =  \dim_{\F} M_{\infty}^B (\chi) / \frak{m}_{\infty}.$

Let $\brho: = \overline{r}_v(1).$ Note that by our assumption $\brho$ is a two dimensional continuous representation of $G_{\Q_p}.$ We recall the definition of $W_{D}(\brho)$, the set of predicted quaternionic Serre weights for $\brho,$ which is denoted by $W^?(\brho)$ in \cite[Def.~3.4]{Gee-Savitt}. A character $\psi: \cO_D^{\times}\onto \F_{p^2}^{\times}\to \F^{\times}$ is in $W_D(\brho)$ if and only if $\brho$ has a potentially Barsotti-Tate lift of type $[\psi] \oplus [\psi]^{p}$ if $\psi\neq \psi^p,$ and  $\brho$ has a potentially semistable lift of Hodge-Tate weights $(0,1)$ and  type $[\psi] \oplus [\psi]$ which is not potentially crystalline if $\psi = \psi^p.$

We have the following description of the set $W_D(\brho)$.

\begin{proposition}\label{thm:Serre-D}
Recall $\xi:\F_{p^2}^{\times}\into \F^{\times}$ the character introduced in \S \ref{sec::lattices-in-tame-types}. Let $\zeta$ denote the character $\xi^{p+1},$ and let $\a $ denote the character $\xi^{p-1}.$

\begin{enumerate}
\item[(i)] Assume $\brho$ is in \ref{Case1} of \S\ref{section-GL2-Serrewt}. We have
\begin{enumerate}
\item[(i-a)] If $r \neq 0,p-1$ then $\chi \in W_D(\brho)$ if and only if $\chi \in  \{ \xi^r \zeta^{s+1},~ \xi^{pr} \zeta^{s+1}, ~ \xi^{r}\a^{-1} \zeta^{s+1} ,~\xi^{pr} \a \zeta^{s+1} \}.$

\item[(i-b)] If $r = 0$ or $p-1,$ then $\chi \in W_D(\brho)$ if and only if $\chi \in \{ \a^{-1} \zeta^{s+1} ,~ \a \zeta^{s+1} \}.$
\end{enumerate}
\end{enumerate}

\begin{enumerate}
\item[(ii)] Assume $\brho$ is in \ref{Case2} of \S\ref{section-GL2-Serrewt}. We have
\begin{enumerate}

\item[(ii-a)] If $r=0,$ ${\rm unr}_1 = {\rm unr}_2$ and $\brho$ is tr\`es ramifi\'e, then $\chi \in W_D(\brho)$ if and only if $\chi = \zeta^{s+1} .$

\item[(ii-b)] If $r=0,$ ${\rm unr}_1 = {\rm unr}_2$ and $\brho$ is peu ramifi\'e, then $\chi \in W_D(\brho)$ if and only if  $\chi \in \{ \zeta^{s+1}  ,~\a^{-1} \zeta^{s+1} ,~ \a \zeta^{s+1} \}.$

\item[(ii-c)] For other $\brho,$ $\chi \in W_D(\brho)$ if and only if  $\chi \in \{ \xi^{r} \a^{-1} \zeta^{s+1} ,~ \xi^{pr}\a \zeta^{s+1}  \}.$
\end{enumerate}
\end{enumerate}

\begin{enumerate}
\item[(iii)] Assume $\brho$ is in \ref{Case3} of \S\ref{section-GL2-Serrewt}. We have
\begin{enumerate}
\item[(iii-a)]  If $r= 0$ and ${\rm unr}_1 = {\rm unr}_2,$ then $\chi \in W_D(\brho)$ if and only if $\chi \in \{\zeta^{s+1} , \a^{-1} \zeta^{s+1} , \a \zeta^{s+1} \}.$

\item[(iii-b)] For other $\brho,$ $\chi \in W_D(\brho)$ if and only if  $\chi \in \{ \xi^{r} \a^{-1} \zeta^{s+1} ,~ \xi^{pr}\a \zeta^{s+1}  \}.$
\end{enumerate}
\end{enumerate}
\end{proposition}
\begin{proof}
This follows from the definition of $W_D(\brho)$. More precisely, the Breuil-M\'ezard conjecture (\cite{Breuil-Mezard}), proved in \cite{Kisin-FM}, \cite{Paskunas-BM}, \cite{HT}, \cite{Sander-BM}, tells exactly when the involved deformation rings are nonzero in terms of $W(\brho)$ (cf.~\S\ref{section-GL2-Serrewt}).   We take the case (ii-b) as an example, so that $W(\brho)=\{\sigma_{0,s+1},\sigma_{p-1,s+1}\}$.  Up to twist we may assume $s=0$.  It follows from the Breuil-M\'ezard conjecture and Proposition \ref{prop:Diamond}(ii) that $\brho$ has a potentially Barsotti-Tate lift of type $[\xi^2]\oplus [\xi^{2p}]$, so we obtain $\alpha^{-1}\zeta,\alpha\zeta\in W_D(\brho)$. On the other hand, $\brho$ has a potentially semistable lift of Hodge-Tate weights $(0,1)$ and  type $[\zeta]\oplus [\zeta]$ which is not potentially crystalline (see \cite[Th\'eor\`eme 1.2]{Breuil-Mezard}), which gives $\zeta\in W_D(\brho)$. Conversely, using the Breuil-M\'ezard conjecture again one checks  that these exhaust all the Serre weights in $W_{D}(\brho)$. 
\end{proof}

\begin{proposition}\label{lemma--quat-Serre-wt}
We have $W_B(\overline{r}) = W_D(\brho).$
\end{proposition}
\begin{proof}
The inclusion $W_B(\overline{r}) \subseteq W_D(\brho)$ follows from \cite[Lem.~3.3]{Gee-Savitt}. Note that \cite{Gee-Savitt} works only with definite $B$ which ramifies at all places above $p,$ but the argument also works in our case. By \cite[Thm.~8.3]{Gee-Savitt}, the two sets  $W_B(\overline{r})$ and $W_D(\brho)$ are identical in most cases with exception possibly when $\brho$ is an unramified twist of $\bigl(\begin{smallmatrix}
 \omega & * \\ 0& 1
\end{smallmatrix} \bigr) \otimes \omega^{s+1}$ and $\chi = \zeta^{s+1} \in W_D(\brho).$ In this exceptional case,  $\brho$ has a potentially semistable lift of type $[\chi] \oplus [\chi]$ which is not potentially crystalline. Applying \cite[Th\'eor\`eme~3.2.2]{BreuilDiamond} (by taking $[r_v,N_v] = [[\chi] \oplus [\chi], N_v\neq 0]$ in {\em loc. cit.}), there is a Hilbert modular form over $F$ of parallel weight $(2,\ldots,2)$ special at $v$ which gives $\overline{r}.$ By global Jacquet-Langlands correspondence, as in the proof of \cite[Lem.~3.3]{Gee-Savitt}, we have $\chi \in W_B(\overline{r}).$
\end{proof}

\begin{remark}\label{rem::Khare}
The question of determining the quaternionic Serre weights is first studied by Khare \cite{Khare}. More precisely, \cite[Thm.~7]{Khare} proves that if $B_0$ denotes the definite quaternion algebra over $\Q$ which is ramified exactly at $p$ and $\infty,$ then $W_{B_0}(\overline{r}) = W_D(\brho).$ 
\end{remark}

\subsection{Lattices in some locally algebraic representations of $\cO_D^{\times}$}

Let $\chi$ be any character of $\cO_D^{\times}$ over $\F.$     Recall that $W_{\chi,n}$ denotes $ (\Proj_{ \F[\![\cO_D^{\times}/Z^1_D]\!]}\chi) / \frak{m}_{D}^3$ for $n\geq 1$, where $\frak{m}_{D}$ denotes the maximal ideal of the Iwasawa algebra $\F[\![U^1_D / Z^1_D]\!]$. We construct suitable lattices $\mathcal{L}$ in  locally algebraic representations of $\cO_{D}^{\times}$ over $E$ so that $\mathcal{L} / p\mathcal{L}$ is a quotient of $W_{\chi, 3} =  (\Proj_{ \F[\![\cO_D^{\times}/Z^1_D]\!]}\chi) / \frak{m}_{D}^3.$ The construction of these lattices is much easier than the case considered in \S \ref{section-lattices-GL2}. 

Recall that $\cO_D^{\times}$ embeds into $\GL_2(\Z_{p^2})$ and then embeds into $\GL_2(\cO)$ via the embedding $\GL_2(\Z_{p^2}) \subset \GL_2(\cO)$. An explicit embedding is given by (cf.~\eqref{eq:division-D})
\[
\varpi_D\mapsto \matr{0}1p0,\ \ \ a\mapsto \matr{a}00{\sigma(a)},\ \ a\in\Q_{p^2}.
\]
Let $\cO_D^{\times}$ act on $\Sym^1\cO^2$ and $\Sym^1 E^2$ via the above embedding. Precisely, for $a,b\in\Z_{p^2}$,
\begin{equation}\label{eq:action-Sym1}(a+\varpi_Db)\cdot X=aX+p\sigma(b)Y,\ \ \ (a+\varpi_Db)\cdot Y=bX+\sigma(a)Y.\end{equation}
Equipped with this action, $\Sym^1 \cO^2$ and $\Sym^1 E^2$ are continuous representations of $\cO_D^{\times}.$
Let  $\mathrm{pr}:\Q_p^{\times}\ra 1+p\Z_p$ denotes the projection sending $p$ to $1$. Let $\underline{\Sym}^1 \cO^2$ (resp. $\underline{\Sym}^1 E^2$) denote the continuous $\cO_D^{\times}$-module $\Sym^1\cO^2\otimes (\mathrm{pr}\circ{\rm Nrd}_D)^{-1/2}$ (resp. $\Sym^1 E^2\otimes (\mathrm{pr}\circ{\rm Nrd}_D)^{-1/2}$), where ${\rm Nrd}_D:D^{\times} \to \Q_p^{\times}$ is the reduced norm. One checks that $Z_D^1$ acts trivially on $\underline{\Sym}^1 E^2.$ Note that $(\underline{\Sym}^1 \cO^2) /p \cong\Sym^1 \F^2$ with semisimplification $(\Sym^1 \F^2)^{\rm ss} = \chi_1 \oplus \chi_2,$ where $\chi_1,\chi_2$ are characters of $\cO_D^{\times}$  determined by $\chi_1 (t) = t,$ $\chi_2 (t) = t^p$ for all $t \in \F_{p^2}^{\times}.$ In particular $\chi_1=\chi_2\alpha^{-1}$.  By Proposition \ref{prop-Ext1-U1} we have
\[\dim_{\F}\Ext^1_{\cO_D^{\times}/Z_D^1}(\chi_1,\chi_2)=\dim_{\F}\Ext^1_{\cO_D^{\times}/Z_D^1}(\chi_2,\chi_1)=1,\]
  so there exist (up to isomorphism) unique nonsplit extensions $(\chi_1\ligne\chi_2)$ and $(\chi_2\ligne\chi_1)$.

\begin{lemma}\label{lem:Sym1-D}
There exist $\cO_D^{\times}$-stable $\cO$-lattices $L,L'$ in $\underline{\Sym}^1 E^2$ such that

(a) $pL\subset L'\subset L$;

(b) $L/pL\cong (\chi_1\ligne\chi_2)$ and $L'/pL'\cong (\chi_2\ligne \chi_1)$.
\end{lemma}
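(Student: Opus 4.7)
The plan is to take the obvious lattice $L := \Sym^{1}\cO^2 = \cO X \oplus \cO Y$ and a suitable sublattice $L'$, and then verify directly from the action formula \eqref{eq:action-Sym1} that both have the desired properties. Stability of $L$ under $\cO_D^{\times}$ is immediate from \eqref{eq:action-Sym1}. To identify $L/pL$, first observe that the line $\F\bar{X}$ is stable because the element $p\sigma(b)Y$ vanishes mod $p$, and $\cO_D^{\times}$ acts on it by $t=a+\varpi_D b\mapsto a\pmod{p}$, i.e., by $\chi_1$; the quotient $L/(pL+\cO X)$ is spanned by $\bar Y$ on which $\cO_D^{\times}$ acts by $\sigma(a)\equiv a^p\pmod{p}$, i.e., by $\chi_2$. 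Non-splitness of the extension follows by direct calculation: taking $t=1+\varpi_D$, \eqref{eq:action-Sym1} gives $(1+\varpi_D)\cdot \bar{Y} = \bar{X}+\bar{Y}$, so $\bar Y$ is not a lift of $\chi_2$. Together with the fact that $\dim_{\F}\Ext^1_{\cO_D^{\times}/Z_D^1}(\chi_2,\chi_1)=1$ (Proposition \ref{prop-Ext1-U1}), this identifies $L/pL$ with the unique nonsplit extension $(\chi_1\ligne\chi_2)$.

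For the second lattice, I would take $L':=\cO X\oplus p\cO Y$. Stability under $\cO_D^{\times}$ again follows from \eqref{eq:action-Sym1}, since $p\sigma(b)Y\in p\cO Y\subset L'$ for the action on $X$, while the $pbX$ term in $t\cdot (pY)=pbX+p\sigma(a)Y$ lies in $p\cO X\subset L'$. The inclusions $pL\subset L'\subset L$ are clear. To compute $L'/pL'$, note that the line $\F\,\overline{pY}$ is stable: $t\cdot\overline{pY}=\sigma(a)\overline{pY}$, since $pbX\in pL'=p\cO X\oplus p^2\cO Y$; so $\cO_D^{\times}$ acts on it by $\chi_2$. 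The quotient is spanned by $\bar X$ with $\cO_D^{\times}$-action by $\chi_1$. Non-splitness is verified by $(1+\varpi_D)\cdot\bar{X}=\bar{X}+\overline{pY}$, and the uniqueness from Proposition \ref{prop-Ext1-U1} identifies $L'/pL'\cong(\chi_2\ligne\chi_1)$.

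Finally, a quick check is needed that $\chi_1,\chi_2$ descend through $\cO_D^{\times}/Z_D^1$: every element of $Z_D^1$ has the form $1+pu$ with $u\in\Z_p$, acting on $\Sym^{1}E^2$ by the scalar $1+pu$, hence trivially on any mod-$p$ reduction. There is no real obstacle; the argument is essentially a bookkeeping exercise with the explicit embedding $\varpi_D\mapsto \smatr{0}{1}{p}{0}$ and the action formula \eqref{eq:action-Sym1}. The only (mild) subtlety is to remember the sign conventions of the paper, where $(\chi\ligne\chi')$ denotes the extension with socle $\chi$ and cosocle $\chi'$, so that the natural decreasing flag $\cO X\supset pL$ inside $L$, and $\cO X\supset pL'$ inside $L'$, have socle-quotients in opposite orders.
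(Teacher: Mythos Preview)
Your proposal is correct and takes exactly the same approach as the paper: you choose $L=\Sym^1\cO^2=\cO X\oplus\cO Y$ and $L'=\cO X\oplus p\cO Y$, then verify the claimed structure directly from the action formula \eqref{eq:action-Sym1}. The paper's proof simply names these two lattices and asserts that the properties are ``easily checked'', whereas you have carried out those checks explicitly and correctly. One minor quibble: your closing remark about ``the natural decreasing flag $\cO X\supset pL$'' is not quite right as written (since $pY\in pL\setminus\cO X$), but this is only a side comment and does not affect the argument.
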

\begin{proof}
We take $L=\underline{\Sym}^1\cO^2=\cO( Y \otimes 1)\oplus \cO (X\otimes 1)$ and $L'=\cO (X\otimes 1)\oplus p\cO (Y\otimes 1)$. The properties are easily checked using \eqref{eq:action-Sym1}.
\end{proof}

Let $\chi : \cO_D^{\times} \to \F^{\times}$ be a character. Then there exist integers $-2 \leq a \leq p-2,$ $b\in \Z$ such that
\[
[\chi] = [\xi]^{a+2 + (p+1)b},
\]
where $[-]$ denotes the Teichm\"uller lift. We write
\begin{equation}\label{eqn:psi_i}
\psi_1 := [\chi] = [\xi]^{a+2 + (p+1)b},~ \psi_{2} := [\xi]^{a+3 + (p+1)(b-1)},~ \psi_{3} := [\xi]^{a+1 + (p+1)b}.
\end{equation}
Let $\Theta_1: = \psi_1$, viewed as an $\cO_D^{\times}$-stable lattice in $V_1 := \psi_1 \otimes_{\cO} E.$ For $i =2,3,$ let 
\begin{equation}\label{eqn::twist-on-D}
V_i: = \underline{\Sym}^1 E^2 \otimes \psi_i.
\end{equation}
Note that $Z_D\cap \cO_D^{\times}$ acts on $V_i$ by the same character $[\chi]$. The $\cO_D^{\times}$-representations $V_i,~1\leq i\leq 3$ are irreducible and
\[
\overline{V_1} = \chi, \quad \overline{V_2}^{\rm ss} = \chi \oplus \chi \a^{-1},\quad \overline{V_3}^{\rm ss} = \chi \oplus \chi \a.
\]
An analogue of Proposition \ref{prop-lattice-EGS} implies that  there exists a unique (up to homothety)     $\cO_D^{\times}$-stable $\cO$-lattice in $V_i$, say $\Theta_i$, such that $\mathrm{cosoc}_{\cO_D^{\times}}(\Theta_i / p \Theta_i) = \chi$ for $i =2,3.$  We have surjective maps
\begin{align*}
r_1 &: \Theta_1 \onto \Theta_1 / p\Theta_1 \cong  \chi, \\
r_i & : \Theta_i \onto \Theta_i / p\Theta_i \onto \rcosoc(\Theta_i / p\Theta_i)  \cong  \chi,~~ i=2,3.
\end{align*}
Let $\Theta_i':=\Ker(r_i)$ for $i=2,3$. Then by Lemma \ref{lem:Sym1-D} we have
\[\Theta_2'/p\Theta_2'\cong (\chi\ligne \chi\alpha^{-1}),\quad \Theta_3'/p\Theta_3'\cong (\chi\ligne \chi\alpha).\]
 Since every irreducible representation of $\cO_D^{\times}$ over $\F$ is $1$-dimensional, $\Theta_1 / p\Theta_1$ is killed by $\frak{m}_{D},$ while $\Theta_i / p\Theta_i$ and $\Theta'_i / p\Theta'_i$ are killed by $\frak{m}^2_{D}$ for $i=2,3.$ By construction, $\Theta_1,$ $\Theta_2$ and $\Theta_3$ are quotients of $\Proj_{ \cO[\![\cO_D^{\times}/Z^1_D]\!]}\chi.$

We now glue the three lattices $\Theta_1,$ $\Theta_2$ and $\Theta_3.$ We first glue $\Theta_1$ and $\Theta_2$ along $\chi$, namely define $\Theta$ by the short exact sequence
\begin{equation}\label{equation-Dlattice-ses1} 0\ra \Theta\ra \Theta_1\oplus \Theta_2\To{r_1-r_2} \chi\ra0.\end{equation}
 
\begin{proposition}\label{prop-lattice-Theta}
(i) There is a short exact sequence $0\ra \Theta_2'/p\Theta_2'\ra \Theta/p\Theta\ra \chi\ra0$.

(ii) The cosocle of $\Theta/p\Theta$ is isomorphic to $\chi$.
Moreover, the  cosocle filtration of $\Theta/p\Theta$ is
\[\chi\ligne \chi\alpha^{-1}\ligne \chi.\]
\end{proposition}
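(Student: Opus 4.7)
My plan is to prove both parts by applying the general gluing lemmas of \S\ref{section-lattices-GL2} to the fibered product \eqref{equation-Dlattice-ses1} with $(L_1,L_2)=(\Theta_2,\Theta_1)$. By construction, both $r_1$ and $r_2$ are surjective onto $W:=\chi$, so both lemmas are applicable.

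For part (i), I will invoke Lemma \ref{lem:glue-L/p}(i) directly: it produces a short exact sequence
\[
0 \to \ker(r_2)/p\ker(r_2) \to \Theta/p\Theta \to \Theta_1/p\Theta_1 \to 0,
\]
and the identifications $\ker(r_2)=\Theta_2'$ and $\Theta_1/p\Theta_1\cong\chi$ finish the claim.

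For the cosocle statement in part (ii), my strategy is to verify the two hypotheses of Lemma \ref{lem:cosoc-glue}. Condition (a), namely $\rcosoc(\Theta_2/p\Theta_2)\cong\chi=\rcosoc(W)$, holds by the very construction of $\Theta_2$. For condition (b), observe that $\ker(r_1)=p\Theta_1\cong\Theta_1$ has mod-$p$ cosocle isomorphic to $\chi$, while $\ker(r_2)=\Theta_2'$ satisfies $\Theta_2'/p\Theta_2'\cong(\chi\ligne\chi\alpha^{-1})$ (stated just before the proposition and deduced from Lemma \ref{lem:Sym1-D}), hence has cosocle $\chi\alpha^{-1}$. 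Since $p\geq 5$, the character $\alpha$ is nontrivial on $\F_{p^2}^{\times}$, so these two cosocles are disjoint characters and condition (b) holds. Lemma \ref{lem:cosoc-glue} then yields $\rcosoc(\Theta/p\Theta)\cong\rcosoc(\Theta_1/p\Theta_1)=\chi$.

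The cosocle filtration will then be obtained by splicing the two ingredients: since $\rcosoc(\Theta/p\Theta)=\chi$ is simple and coincides with the simple quotient in the SES of part (i), the radical of $\Theta/p\Theta$ is identified with $\Theta_2'/p\Theta_2'\cong(\chi\ligne\chi\alpha^{-1})$, whose own cosocle filtration contributes $\chi\alpha^{-1}$ in the middle and $\chi$ at the bottom, producing the claimed three-layer filtration $\chi\ligne\chi\alpha^{-1}\ligne\chi$. I anticipate no serious obstacle here: the disjointness of the two kernels' mod-$p$ cosocles (a consequence of $\alpha\neq\ide$) is precisely what makes Lemma \ref{lem:cosoc-glue} applicable off the shelf, in contrast to the more delicate $\GL_2(\Z_p)$-situation in Proposition \ref{prop:R-cosoc} where this disjointness fails and an explicit $K_1$-coinvariants argument was needed.
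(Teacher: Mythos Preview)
Your proof is correct and follows exactly the same approach as the paper: the paper's own proof simply states that Lemmas \ref{lem:glue-L/p} and \ref{lem:cosoc-glue} carry over verbatim to $\cO_D^{\times}$-representations and that the results follow, and you have spelled out precisely how those two lemmas apply (with the swap $(L_1,L_2)=(\Theta_2,\Theta_1)$) together with the verification that $\alpha\neq\ide$ makes condition (b) of Lemma \ref{lem:cosoc-glue} hold.
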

\begin{proof}
Clearly, Lemmas \ref{lem:glue-L/p} and Lemma \ref{lem:cosoc-glue} remain true if we are considering $\cO_D^{\times}$-representations instead of $\GL_2(\Z_p)$-representations.  The results follow from them.
\end{proof}

Let $r$ denote the map $\Theta \twoheadrightarrow \Theta/p\Theta \twoheadrightarrow \chi$
where the second map is as in  Proposition \ref{prop-lattice-Theta}(i).
Denote by $\wt{\Theta}$ the lattice in  $V_1 \oplus V_2 \oplus V_3$ obtained by gluing $\Theta$ and $\Theta_3$ along $\chi$. Namely, $\wt{\Theta}$ is defined by the following short exact sequence
\begin{equation}\label{equation-Dlattice-ses2}0\ra \wt{\Theta}\ra \Theta\oplus \Theta_3\To{r-r_3} \chi\ra0.\end{equation}

\begin{proposition}\label{prop-lattice-wtTheta}
(i) The cosocle of $\wt{\Theta}/p\wt{\Theta}$ is isomorphic to $\chi$.

(ii) $\wt{\Theta}/p \wt{\Theta}$ is a quotient of $W_{\chi,3}.$ More precisely, $\wt{\Theta}/p\wt{\Theta}$ is isomorphic to  $\overline{W}_{\chi,3}:=W_{\chi,3}/(\chi\alpha^2\oplus \chi\alpha^{-2})$.
\end{proposition}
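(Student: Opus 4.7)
For part (i), I will apply the $\cO_D^{\times}$-analogue of Lemma~\ref{lem:cosoc-glue} (valid by the remark in the proof of Proposition~\ref{prop-lattice-Theta}) to the gluing sequence~\eqref{equation-Dlattice-ses2} with $L_1=\Theta$, $L_2=\Theta_3$ and $W=\chi$. Condition (a), $\rcosoc(\Theta/p\Theta)=\chi$, is exactly Proposition~\ref{prop-lattice-Theta}(ii). For condition (b), the $\cO_D^{\times}$-analogue of Lemma~\ref{lem:glue-L/p}(ii) applied to~\eqref{equation-Dlattice-ses1} gives $\Ker(r)/p\Ker(r)\cong \Theta_2'/p\Theta_2'\oplus \chi$, whose cosocle is $\chi\alpha^{-1}\oplus \chi$, whereas $\Ker(r_3)/p\Ker(r_3)=\Theta_3'/p\Theta_3'\cong (\chi\ligne \chi\alpha)$ has cosocle $\chi\alpha$. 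Since $\alpha$ has order $p+1\geq 6$, the characters $\ide,\alpha,\alpha^{-1}$ are pairwise distinct, so these two cosocles share no common Jordan--H\"older factor, and the lemma yields $\rcosoc(\wt{\Theta}/p\wt{\Theta})=\rcosoc(\Theta_3/p\Theta_3)=\chi$.

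For part (ii), I first apply the $\cO_D^{\times}$-analogue of Lemma~\ref{lem:glue-L/p}(i) to~\eqref{equation-Dlattice-ses2} to obtain
\[0\ra \Theta_2'/p\Theta_2'\oplus \chi \ra \wt{\Theta}/p\wt{\Theta}\ra \Theta_3/p\Theta_3\ra 0,\]
so $\wt{\Theta}/p\wt{\Theta}$ has length $5$ with Jordan--H\"older factors $\chi$ (with multiplicity $3$), $\chi\alpha$ and $\chi\alpha^{-1}$. To control the $\fm_D$-depth I identify $\wt{\Theta}$ with the iterated fibered product $\Theta_1\times_\chi \Theta_2\times_\chi \Theta_3=\{(u,v,w):r_1(u)=r_2(v)=r_3(w)\}$ and consider the natural map $f:\wt{\Theta}/p\wt{\Theta}\ra \bigoplus_{i=1}^{3}\Theta_i/p\Theta_i$ induced by the inclusion $\wt{\Theta}\subset \Theta_1\oplus\Theta_2\oplus\Theta_3$. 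Each $\Theta_i/p\Theta_i$ has length at most $2$, so $\im(f)$ is killed by $\fm_D^2$. For the kernel, any element of $p(\Theta_1\oplus\Theta_2\oplus\Theta_3)$ automatically lies in $\wt{\Theta}$ because $\chi$ is annihilated by $p$, so $\wt{\Theta}\cap p(\Theta_1\oplus\Theta_2\oplus\Theta_3)=p(\Theta_1\oplus\Theta_2\oplus\Theta_3)$; dividing by $p$ gives
\[\Ker(f)\cong (\Theta_1\oplus\Theta_2\oplus\Theta_3)/\wt{\Theta}\cong \chi\oplus \chi\]
via $(u,v,w)\mapsto (r_1(u)-r_2(v),\,r_2(v)-r_3(w))$. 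Since $\Ker(f)$ is killed by $\fm_D$, the module $\wt{\Theta}/p\wt{\Theta}$ is killed by $\fm_D\cdot \fm_D^2=\fm_D^3$.

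Combining these, $\wt{\Theta}/p\wt{\Theta}$ has cosocle $\chi$ and is annihilated by $\fm_D^3$, so there is a surjection $W_{\chi,3}\twoheadrightarrow \wt{\Theta}/p\wt{\Theta}$ whose kernel has length $2$. By Corollary~\ref{cor:structure-W_chi,3}, the Jordan--H\"older factors of $W_{\chi,3}$ are $\chi$ (with multiplicity $3$), $\chi\alpha,\chi\alpha^{-1},\chi\alpha^{2},\chi\alpha^{-2}$, forcing the two missing factors $\chi\alpha^{\pm 2}$ to constitute the kernel. Now $\fm_D^2 W_{\chi,3}=\gr^2 W_{\chi,3}$ is killed by $\fm_D$ and is therefore semisimple as a representation of $\cO_D^{\times}/U_D^1Z_D^1\cong \F_{p^2}^{\times}$, decomposing as $\chi^{\oplus 2}\oplus \chi\alpha^{2}\oplus \chi\alpha^{-2}$; the isotypic summand $\chi\alpha^{2}\oplus \chi\alpha^{-2}$ is a submodule of $W_{\chi,3}$, and by the Jordan--H\"older factor constraint together with length count it coincides with the kernel of $W_{\chi,3}\twoheadrightarrow \wt{\Theta}/p\wt{\Theta}$. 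This yields $\wt{\Theta}/p\wt{\Theta}\cong \overline{W}_{\chi,3}$. The main obstacle is the $\fm_D^3$-annihilation: the naive bound coming from the length-$5$ extension in the first displayed sequence gives only $\fm_D^4$, and the sharper bound requires the fibered-product identification of $\Ker(f)$ together with $p\cdot \chi=0$.
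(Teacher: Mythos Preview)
Your proof is correct. Part (i) matches the paper's argument exactly. For part (ii), both you and the paper reach the key fact that $\fm_D^3$ annihilates $\wt{\Theta}/p\wt{\Theta}$, and then identify the quotient via the Jordan--H\"older factors of $W_{\chi,3}$; but the mechanisms for the $\fm_D^3$-annihilation differ.

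The paper argues structurally: from the two short exact sequences of Lemma~\ref{lem:glue-L/p} (with the roles of $\Theta$ and $\Theta_3$ swapped) it extracts both nonsplit extensions $(\chi\alpha^{\pm 1}\ligne\chi)$ as quotients of $\wt{\Theta}/p\wt{\Theta}$; combined with part (i) this yields a surjection onto $W_{\chi,2}$ whose kernel has semisimplification $\chi\oplus\chi$, and then the vanishing $\Ext^1_{\cO_D^{\times}/Z_D^1}(\chi,\chi)=0$ from Proposition~\ref{prop-Ext1-U1} forces this kernel to be semisimple, hence killed by $\fm_D$. Your route is more elementary and direct: you realize $\wt{\Theta}$ as the triple fibered product, bound the image of the diagonal map $f$ by $\fm_D^2$, and compute $\Ker(f)\cong\chi\oplus\chi$ explicitly from $(\Theta_1\oplus\Theta_2\oplus\Theta_3)/\wt\Theta$. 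Your argument avoids any appeal to the $\Ext^1$-vanishing, at the cost of unwinding the iterated gluing; the paper's argument is cleaner once Proposition~\ref{prop-Ext1-U1} is in hand and has the side benefit of exhibiting the surjection $\wt{\Theta}/p\wt{\Theta}\twoheadrightarrow W_{\chi,2}$ directly.
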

\begin{proof}
(i)  Note that the cosocle of $\Ker(r)$ is   $\chi\oplus \chi\alpha^{-1}$, while that of $\Ker(r_3)$ is $\chi\alpha$, so the result follows from Lemma \ref{lem:cosoc-glue}.

(ii)  It follows from Lemma \ref{lem:glue-L/p} that
there are short exact sequences
\[0\ra \Ker(r)/p\Ker(r)\ra \wt{\Theta}/p\wt{\Theta}\ra \Theta_3/p\Theta_3\ra0\]
\[0\ra \Theta_3'/p\Theta_3'\ra \wt{\Theta}/p\wt{\Theta}\ra \Theta/p\Theta\ra0.\]
Using Proposition \ref{prop-lattice-Theta}(ii), we deduce that  $\wt{\Theta}/p\wt{\Theta}$ admits both  the nonsplit extensions $(\chi\alpha^{-1}\ligne\chi)$ and $(\chi\alpha\ligne\chi) $ as quotients. Combined with (i), this implies that $\wt{\Theta}/p\wt{\Theta}$ admits a quotient isomorphic to $W_{\chi,2}$; let $\Ker$ be the corresponding kernel.  Comparing the Jordan-H\"older factors, we have $(\Ker)^{\rm ss}\cong \chi\oplus \chi$. However, we know $\Ext^1_{\cO_D^{\times}/Z_D^1}(\chi,\chi)=0$ by Proposition \ref{prop-Ext1-U1}, hence  $\Ker \cong \chi\oplus \chi.$
In particular, $\wt{\Theta}/p\wt{\Theta}$ is killed by $\fm_{D}^3$.  The last statement is a consequence of Corollary \ref{cor:structure-W_chi,3}.
\end{proof}

\subsection{The Gelfand-Kirillov dimension} \label{section:GK-dim}

Assume $\brho := \overline{r}_v(1)$ is of the form \ref{C1} or \ref{C2} in \S\ref{section-def-ring}. Recall that for any character $\chi: \cO_D^{\times} \to \F^{\times} $, we have constructed $\mathcal{L} \in \{\Theta_1, \Theta_2, \Theta_3, \Theta, \wt{\Theta}\}$ such that $\rcosoc (\mathcal{L} / p\mathcal{L}) = \chi.$ The construction depends on the choice of $(a,b)$ in (\ref{eqn:psi_i}). From now on, we assume $\chi \in W_D(\brho),$ and  make our choice of $(a,b)$ as follows:
\[
\left\{ {\begin{array}{ll}
(a,b) = (r,s)  & \text{ if $\chi =  \xi^{r}\a^{-1} \zeta^{s+1}$;} \\
(a,b) = (p-3-r,r+s+1)  & \text{ if $\chi =  \xi^{pr} \a \zeta^{s+1} $;} \\
(a,b) = (r - 2,s+1)  & \text{ if $\chi =  \xi^{r} \zeta^{s+1}$;} \\
(a,b) = (p-1-r,r+s)  & \text{ if $\chi =  \xi^{pr} \zeta^{s+1}$.} \\
\end{array}}\right.
\]
Let $\psi_i$ be given by (\ref{eqn:psi_i}) for $i=0,1,2$. Then one may check directly that $\psi_i \neq \psi_i^p.$ Let $\tau_i$ be a tame supercuspidal inertial type so that $\s(\tau_i) = \Theta(\psi_i).$ Let $\psi:F^\times\setminus \AM_{F,f}^\times\to \cO^\times$ be a continuous character as in \S \ref{section-autom-forms} satisfying $\psi|_{Z_D\cap \cO_D^{\times }} = \psi_1|_{Z_D\cap \cO_D^{\times }}.$ For $\mathcal{L} \in \{\Theta_1, \Theta_2, \Theta_3, \Theta, \wt{\Theta}\},$ let $I_{\mathcal{L} }: = \Ann_{R_{\infty}^{ \psi\e^{-1}}}(M_{\infty}^B(\mathcal{L}))$ denote the annihilator of $M_{\infty}^B(\mathcal{L})$ in $R_{\infty}^{\psi\e^{-1}}.$ 

Let $R$ be any commutative ring and $M$ be an $R$-module. Following \cite[\S 8.2]{BHHMS1} we say $M$ is free of rank $m$ over its scheme-theoretic support if it is isomorphic to $(R/ \Ann_R(M))^m.$ 

\begin{proposition}\label{prop--freeness}
Assume $\chi \in W_B(\overline{r}).$ Let $m := \dim_{\F} \Hom_{\cO_D^{\times}} (\chi , \pi^B(\overline{r})).$ Then the $R_{\infty}^{ \psi\e^{-1}}$-module $M_{\infty}^B(\Theta_1)$ (resp. $M_{\infty}^B(\Theta_2),$ resp. $M_{\infty}^B(\Theta_3)$) is free of rank $m$ over its scheme-theoretic support. In particular, $I_{\Theta_1} = I_{R_1} R_{\infty}^{ \psi\e^{-1}},$ $I_{\Theta_2} = I_{R_2} R_{\infty}^{ \psi\e^{-1}}$ and $I_{\Theta_3} = I_{R_3} R_{\infty}^{ \psi\e^{-1}},$ where $I_{R_1},$ $I_{R_2}$ and $I_{R_3} $ are given in (\ref{eqn::I_R}).
\end{proposition}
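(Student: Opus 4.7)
The plan is to combine the freeness-from-Cohen-Macaulay-plus-regular argument with a Jordan--Hölder count via our local-global compatibility.

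First I would identify the scheme-theoretic support. By the local-global compatibility recalled just before the proposition, the action of $R_{\infty}^{\psi\e^{-1}}$ on $M_{\infty}^B(\Theta_i)$ factors through the potentially semistable quotient $R_{\infty}^{\psi\e^{-1},\mathrm{ds}}(\mathbf{w}_i,\tau_i)$, where $\mathbf{w}_1=(0,1)$ and $\mathbf{w}_2=\mathbf{w}_3=(0,2)$. Since each $\tau_i$ is tame supercuspidal, Lemma~\ref{lemma::ds-def-ring}(i) shows this equals $R_{\infty}^{\psi\e^{-1}}(\mathbf{w}_i,\tau_i)$, and $M_{\infty}^B(\Theta_i)$ is maximal Cohen--Macaulay over it (same argument as recalled before the proposition, using that $\tau_i$ is supercuspidal).

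Next I would check that this ring is a regular local ring. By construction it is a completed tensor product over $\cO$ (with $g$ formal variables adjoined) of the local deformation rings at places of $S$. At $v$, after the twist identifying $R_v^{\psi\e^{-1}}$ with the $\eta$-framed deformation ring of $\brho=\overline{r}_v(1)$, this factor is $R_{\brho}^{\eta}((0,1),\tau_1)$ or $R_{\brho}^{\eta}((0,2),\tau_i)$, which is regular by Proposition~\ref{thm-regular-HT01} and Theorem~\ref{thm-regular-HT02}. At $w\in\Sigma_p\setminus\{v\}$ the factor is formally smooth over $\cO$ by the choice of $\tau_w$ and \cite[Thm.~7.2.1]{EGS}; at $w\in S\setminus\Sigma_p$ it is formally smooth by assumption (b). A completed tensor product (over $\cO$, with common residue field $\F$) of regular local rings is regular, so $R_{\infty}^{\psi\e^{-1}}(\mathbf{w}_i,\tau_i)$ is regular. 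By Auslander--Buchsbaum, the maximal Cohen--Macaulay module $M_{\infty}^B(\Theta_i)$ is therefore free of some rank $n_i\geq 0$ over this ring. The hypothesis $\chi\in W_B(\overline{r})$ forces $M_{\infty}^B(\chi)\neq 0$; since $\chi=\rcosoc(\Theta_i/p\Theta_i)$ and $M_{\infty}^B(-)$ is exact, one deduces $M_{\infty}^B(\Theta_i)\neq 0$, so $n_i>0$. In particular the support fills the whole ring, which gives $I_{\Theta_i}=I_{R_i}R_{\infty}^{\psi\e^{-1}}$ via the identification of $R_v^{\psi\e^{-1}}(\mathbf{w}_i,\tau_i)$ with $R_{\brho}^{\eta}/I_{R_i}$ coming from Theorem~\ref{prop::support}.

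Finally I would compute the rank. By freeness and \eqref{eqn:pi-and-M_infty},
\[
n_i \;=\; \dim_{\F}\bigl(M_{\infty}^B(\Theta_i)/\frak{m}_{\infty}\bigr)\;=\;\dim_{\F}\Hom_{\cO_D^{\times}}\bigl(\Theta_i/p\Theta_i,\pi^B(\overline{r})\bigr).
\]
For $i=1$ we have $\Theta_1/p\Theta_1\cong\chi$ and we are done. For $i=2,3$ the only other Jordan--Hölder factor of $\Theta_i/p\Theta_i$ is $\chi\a^{-1}$ or $\chi\a$ respectively. A short case-by-case inspection of the chosen $(a,b)$ in \eqref{eqn:psi_i} against Khare's list (Theorem~\ref{thm:Serre-D}) shows that neither $\chi\a$ nor $\chi\a^{-1}$ lies in $W_D(\brho)$; then Lemma~\ref{lemma--quat-Serre-wt} gives $M_{\infty}^B(\chi\a^{\pm1})=0$, so exactness of $M_{\infty}^B(-)$ applied to $0\to\chi\a^{\pm 1}\to \Theta_i/p\Theta_i\to \chi\to 0$ yields $M_{\infty}^B(\Theta_i/p\Theta_i)\cong M_{\infty}^B(\chi)$, whence $n_i=m$.

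The conceptual backbone is simply ``MCM over regular local = free''; the genuine obstacle is purely bookkeeping, namely the Khare-list check that the auxiliary Jordan--Hölder factors $\chi\a^{\pm 1}$ are not quaternionic Serre weights of $\brho$ in each of the sub-cases of \ref{C1} and \ref{C2}. Everything else follows from results already established in the excerpt.
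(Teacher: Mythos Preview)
Your argument is correct up through the Auslander--Buchsbaum step, and it agrees with the paper's approach there. The gap is in your final rank computation: the claimed ``short case-by-case inspection'' that neither $\chi\a$ nor $\chi\a^{-1}$ lies in $W_D(\brho)$ is simply false in case \ref{C1}. Indeed, when $\brho$ is absolutely irreducible, Theorem~\ref{thm:Serre-D}(i-a) gives
\[
W_D(\brho)=\{\xi^r\zeta^{s+1},\ \xi^{pr}\zeta^{s+1},\ \xi^r\a^{-1}\zeta^{s+1},\ \xi^{pr}\a\zeta^{s+1}\},
\]
and for each of the four choices of $\chi$ listed at the start of \S\ref{section:GK-dim} one of $\chi\a^{\pm1}$ lands back in this set (for instance, if $\chi=\xi^r\a^{-1}\zeta^{s+1}$ then $\chi\a=\xi^r\zeta^{s+1}\in W_D(\brho)$). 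So for the corresponding $\Theta_i$ both Jordan--H\"older factors of $\Theta_i/p\Theta_i$ are quaternionic Serre weights, and you cannot conclude $M_{\infty}^B(\Theta_i/p\Theta_i)\cong M_{\infty}^B(\chi)$. Your argument survives only in case \ref{C2}.

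What the paper does in the problematic situation is genuinely more delicate: it passes to the $\GL_2$-side. Using the lattice structure of $\Sym^1E^2\otimes\Theta(\psi_3)$ from Proposition~\ref{prop-reduction-L} together with Pa\v{s}k\={u}nas' functor and the explicit form of a suitable principal-series deformation ring from \cite[Thm.~7.2.1]{EGS}, one shows that $R_{\infty}^{\psi\e^{-1}}((-1,1),\tau_3(-1))\otimes_{\cO}\F$ has exactly two irreducible components and that $M_{\infty}^B(\chi)$ and $M_{\infty}^B(\chi\a)$ are supported on \emph{different} components. The short exact sequence \eqref{ses-section-quaternion0} then forces these two modules (and $M_{\infty}^B(\Theta_3/p\Theta_3)$) to have the same generic rank, whence $n=m$. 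Your proposal contains no substitute for this step.
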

\begin{proof}
Twisting by the cyclotomic character and taking into account the framed variables, we have an isomorphism
\[
R_v^{\psi\e^{-1}} (  (-1,0),\tau_1(-1)) \cong R_{\brho}^{\psi\e}  ((0,1),\tau_1)[\![ X_1,X_2, X_3]\!],
\]
 where $R_{\brho}^{\psi\e}(  (0,1),\tau_1)$ is a regular local ring by Proposition \ref{thm-regular-HT01}. Hence $R_{\infty}^{\psi\e^{-1}}( (-1,0),\tau_1(-1)),$ as a formal power series ring over $ R_v^{\psi\e^{-1}}( (-1,0),\tau_1(-1)),$ is also a regular local ring. Since $M_{\infty}^B(\Theta_1)$ is finite maximal Cohen-Macaulay over  $R_{\infty}^{ \psi\e^{-1} }( (-1,0),\tau_1(-1))$, the Auslander-Buchsbaum formula   implies that $M_{\infty}^B (\Theta_1)$ is in fact finite free over $R_{\infty}^{\psi \e^{-1}}( (-1,0),\tau_1(-1))$ of rank $ m,$ and hence if $m \neq 0,$ $I_{\Theta_1} = I_{R_1} R^{\psi\e^{-1}}_{\infty}.$

Assume $\mathcal{L} \in \{\Theta_2, \Theta_3\}.$ Let $\delta : \Q_p^{\times} \to \cO^{\times}$ be the character sending $x\in \Q_p^{\times}\mapsto \mathrm{pr}(x)^{1/2}\in 1+p\Z_p, $ viewed also as a character of $F^\times\setminus \AM_{F,f}^\times$. We also view $\delta$ as a character of $G_{\Q_p},$ via the local class field theory. Then $(\mathcal{L}\otimes \delta\circ{\rm Nrd})[1/p]$ is a locally algebraic representation of $\cO_{B_v}^{\times}$. By Theorem \ref{thm-regular-HT02} and Lemma \ref{lem::isom-twist}, $R_{\infty}^{ \psi\e^{-1} }/I_{\mathcal{L}}$ is a regular local ring. We show as above that $M_{\infty}^B (\mathcal{L})$ is finite free over $R_{\infty}^{ \psi\e^{-1} }/I_{\mathcal{L}}$ of some rank $n.$ It follows from Lemma \ref{lem::isom-twist} and Corollary \ref{cor::support} that $I_{\mathcal{L}}$ has the description as in the statement of the proposition. We are left to prove $n = m.$

If $\JH(\mathcal{L}/p \mathcal{L}) \cap  W_D(\brho) = \{\chi\},$ then
\[
M_{\infty}^B(\mathcal{L} / p\mathcal{L}) \simto M_{\infty}^B(\chi)
\]
which is free of rank $m$ over its scheme-theoretic support. Hence $n = m.$

Now assume both Jordan-H\"older factors of $\mathcal{L} / p \mathcal{L}$ are in $W_D(\brho) .$ By Proposition \ref{thm:Serre-D} this can only happen when $\brho$ is absolutely irreducible. We assume $\chi = \xi^{r}\a^{-1} \zeta^{s+1}$ and $\mathcal{L} = \Theta_3,$ the other cases can be handled in the same way. Since $\delta\equiv 1\pmod
{\varpi}$ and the following discussion only involves $\F$-representations, the twisting by $\delta$ will not change anything. 

Since
\[
\Theta_3 / p \Theta_3 = (\xi^{r} \zeta^{s+1} \ligne \xi^{r}\a^{-1} \zeta^{s+1}).
\]
Applying the patching functor $M_{\infty}^B(-),$ we obtain a short exact sequence
\begin{equation}\label{ses-section-quaternion0}
0 \to M_{\infty}^B(\xi^{r} \zeta^{s+1}) \to M_{\infty}^B(\Theta_3 / p \Theta_3) \to M_{\infty}^B(\xi^{r}\a^{-1} \zeta^{s+1}) \to 0,
\end{equation}
where all the modules in the sequence are finite free over their scheme-theoretic support. We must show the modules have the same rank. For this we use the knowledge on $\GL_2$-side to study their support.

According to Proposition \ref{prop-reduction-L}, there exists a $K$-stable $\cO$-lattice $L$ in $ \unSym^1 E^2\otimes\Theta(\psi_3)$ such that $L/pL$ is a nonsplit extension of $ (\s_{p-3-r, r+s+2}  \ligne \s_{r, s+1})$ by $(   \s_{p-1-r, r+s+1} \ligne \s_{r-2, s+2} ).$ Let $\s^{\circ}(\tau_1)$ denote the unique (up to homothety) $K$-stable $\cO$-lattice in $\Theta(\psi_1)$ so that $\s^{\circ}(\tau_1)/ p \s^{\circ}(\tau_1) = (\s_{p-3-r, r+s+2}  \ligne \s_{r, s+1}).$ Let $\tau$ be a tame supercuspidal inertial type so that there is a $K$-stable $\cO$-lattice $\s^{\circ}(\tau)$ of $\s(\tau)$ satisfying $\s^{\circ}(\tau) / p\s^{\circ}(\tau) =  ( \s_{p-1-r, r+s+1} \ligne  \s_{r-2, s+2}  ).$ Applying Pa\v{s}k\={u}nas' functor $M(-)$ in \S\ref{section-Morra}, we obtain a short exact sequence
\begin{equation}\label{ses-section-quaternion1}
0\to M(\s^{\circ}(\tau)/ p \s^{\circ}(\tau)) \to M(L/pL) \to M(\s^{\circ}(\tau_1)/ p \s^{\circ}(\tau_1)) \to 0.
\end{equation}
Note that the three $R_{\brho}^{\psi\e}$-modules in the above short exact sequence are all cyclic by Lemma \ref{lem:regular} and Remark \ref{rem:cyclic-L}. Then by Theorem \ref{prop::support} we obtain the following short exact sequence
\begin{equation}\label{ses-section-quaternion2}
0\to R_{\brho}^{\psi\e}( (0,1),\tau) \otimes_{\cO} \F \to R_{\brho}^{\psi\e}( (0,2),\tau_3) \otimes_{\cO} \F \to R_{\brho}^{\psi\e}( (0,1),\tau_1) \otimes_{\cO} \F \to 0.
\end{equation}
On the other hand, $\s_{r-2,s+2},\s_{p-3-r,r+s+2}\notin 
W(\brho)$ and the extension $(\s_{p-1-r, r+s+1}\ligne \s_{r, s+1})$ occurs in $L/pL$ by Proposition \ref{prop-reduction-L}. Let $\tau'$ denote a tame inertial type so that $\s(\tau')$ is isomorphic to the principal series tame type $I([x]^{s+1}, [x]^{r+s+1})$ defined in Proposition \ref{prop:Diamond}. Let $\s^{\circ}(\tau')$ be the unique (up to homothety) $K$-stable $\cO$-lattice in $ I([x]^{s+1}, [x]^{r+s+1})$ such that $\s^{\circ}(\tau')/ p \s^{\circ}(\tau') = (\s_{p-1-r, r+s+1}\ligne \s_{r, s+1}).$ Then the short exact sequence (\ref{ses-section-quaternion1}) can be identified with the following short exact sequence
\begin{equation}\label{ses-section-quaternion3}
0\to M(\s_{p-1-r, r+s+1}) \to M(\s^{\circ}(\tau')/p\s^{\circ}(\tau')) \to M(\s_{r, s+1}) \to 0.
\end{equation}
And the short exact sequence (\ref{ses-section-quaternion2}) becomes
\begin{equation}\label{ses-section-quaternion4}
0\to R_{\brho}^{\psi\e ,{\rm cr} }( (r+s+1,p+s+1), \ide) \otimes_{\cO} \F \to R_{\brho}^{\psi\e}((0,1),\tau') \otimes_{\cO} \F \to R_{\brho}^{\psi\e ,{\rm cr}}( (s+1,r+s+2),\ide) \otimes_{\cO} \F \to 0.
\end{equation}
By \cite[Thm.~7.2.1]{EGS} $R_{\brho}^{\psi\e}( (0,1),\tau') \otimes_{\cO} \F$ is isomorphic to a formal power series ring over $\F[\![ X,Y]\!]/(XY),$ and $R_{\brho}^{\psi\e ,{\rm cr}} ( (r+s+1,p+s+1),\ide)\otimes \F$ (resp. $R_{\brho}^{\psi\e, {\rm cr}} ( (s+1,r+s+2), \ide) \otimes_{\cO} \F $) is the quotient of $R_{\brho}^{\psi\e}( (0,1),\tau') \otimes_{\cO} \F$ by $X$ (resp. $Y$). Therefore $\Spec \left(R^{\psi \e}_{\brho} ((0,2) ,\tau_3) \otimes_{\cO} \F\right)$ has two irreducible components. By Lemma \ref{lem::isom-twist} and Corollary \ref{cor::support}, $ \Spec\left(( R_{\infty}^{ \psi\e^{-1} }/I_{\Theta_3}) \otimes_{\cO}\F\right) $ also has two irreducible components.

Back to the short exact sequence (\ref{ses-section-quaternion0}). By the discussion of the first paragraph of the proof, $M_{\infty}^B(\xi^{r} \zeta^{s+1})$ and $M_{\infty}^B(\xi^{r}\a^{-1} \zeta^{s+1})$ are supported on $\Spec (R_{\infty}^{\psi \e^{-1}}((-1,0),\tau(-1)) \otimes_{\cO} \F)$ and $\Spec(R_{\infty}^{\psi\e^{-1}}( (-1,0),\tau_1(-1)) \otimes_{\cO} \F)$ respectively. Hence $M_{\infty}^B(\xi^{r} \zeta^{s+1})$ and $M_{\infty}^B(\xi^{r}\a^{-1} \zeta^{s+1})$ are supported on different irreducible components of $\Spec\left( (R_{\infty}^{ \psi\e^{-1} }/I_{\Theta_3}) \otimes_{\cO}\F\right). $ We deduce that 
\begin{align}
{\rm rank}_{R_{\infty}^{ \psi\e^{-1} }((-1,0),\tau_1(-1)) \otimes_{\cO} \F} (M_{\infty}^B(\xi^{r}\a^{-1} \zeta^{s+1})) & = {\rm rank}_{R_{\infty}^{ \psi\e^{-1}}((-1,0),\tau(-1)) \otimes_{\cO} \F} (M_{\infty}^B(\xi^{r} \zeta^{s+1}))  \label{identity-rank} \\
& = {\rm rank}_{(R_{\infty}^{ \psi\e^{-1} }/I_{\Theta_3}) \otimes_{\cO}\F} (M_{\infty}^B(\Theta_3 / p \Theta_3)). \nonumber
\end{align}
Consequently $m=n.$
\end{proof}

\begin{corollary}\label{Cor-quat-Serre-wt}
For any $\chi_1, \chi_2 \in W_D(\brho),$ we have  $$\dim_{\F}\Hom_{\cO_D^{\times}}(\chi_1,\pi^B(\overline{r}))=\dim_{\F}\Hom_{\cO_D^{\times}}(\chi_2,\pi^B(\overline{r})).$$ 
\end{corollary}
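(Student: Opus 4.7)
Since $\overline{r}$ is modular for $B$, the representation $\pi^B(\overline{r})$ is non-zero, and being smooth for the pro-$p$-by-finite group $\cO_D^{\times}$, it carries a non-zero $\chi$-isotypic component for some character $\chi$; hence $W_B(\overline{r}) \neq \emptyset$. Combined with $W_B(\overline{r}) \subseteq W_D(\brho)$ from Lemma \ref{lemma--quat-Serre-wt}, it suffices to establish the equality of multiplicities claimed in the corollary: the equality $W_B(\overline{r}) = W_D(\brho)$ will then follow, since every $\chi \in W_D(\brho)$ will have the same (positive) multiplicity as some $\chi_0 \in W_B(\overline{r}) \cap W_D(\brho)$.

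The strategy is to exploit two sources of symmetry among the multiplicities $m(\chi) := \dim_{\F} \Hom_{\cO_D^{\times}}(\chi, \pi^B(\overline{r}))$. The first is conjugation by the uniformizer $\varpi_D$: since $\pi^B(\overline{r})$ is naturally a $D^{\times}$-representation and $\varpi_D$ normalizes $\cO_D^{\times}$, inducing the Frobenius $[\lambda] \mapsto [\lambda^p]$ on Teichm\"uller lifts, the map $v \mapsto \varpi_D \cdot v$ gives an $\F$-linear isomorphism $m(\chi) \cong m(\chi^{(p)})$, where $\chi^{(p)}(g) := \chi(\varpi_D^{-1} g \varpi_D)$. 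The second symmetry is the congruence relation between tame supercuspidal inertial types implicit in the second case of the proof of Proposition \ref{prop--freeness}: whenever both Jordan-H\"older constituents of $\Theta_i/p\Theta_i$ lie in $W_D(\brho)$ (for $i \in \{2,3\}$), applying $M^B_\infty(-)$ yields a short exact sequence whose middle term is free of some rank $n$ over the regular ring $R^{\psi\e^{-1}}_\infty((-1,1),\tau_i(-1))\otimes_{\cO} \F$ (by Theorem \ref{thm-regular-HT02} and the Auslander-Buchsbaum formula applied to the maximal Cohen-Macaulay module $M^B_\infty(\Theta_i)$), while its sub- and quotient-modules are supported on distinct irreducible components of that $\Spec$; exactly as at the end of the proof of Proposition \ref{prop--freeness}, this forces the multiplicities of the two constituents to coincide and to equal $n$.

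It remains to verify that these two relations generate an equivalence relation on $W_D(\brho)$ with a single equivalence class, which is done case by case. In case \ref{C1}, Theorem \ref{thm:Serre-D}(i-a) gives
\[W_D(\brho) = \{\chi_1,\chi_2,\chi_3,\chi_4\} := \{\xi^r\zeta^{s+1},\ \xi^{pr}\zeta^{s+1},\ \xi^r\a^{-1}\zeta^{s+1},\ \xi^{pr}\a\zeta^{s+1}\};\]
using $\zeta^p = \zeta$ and $\a^p = \a^{-1}$, the $\varpi_D$-conjugation equates $m(\chi_1) = m(\chi_2)$ and $m(\chi_3) = m(\chi_4)$, while the $\Theta_3$-argument applied with cosocle $\chi_3$ (so that $\Theta_3/p\Theta_3 = (\chi_1 \ligne \chi_3)$ with both constituents in $W_D(\brho)$) equates $m(\chi_1) = m(\chi_3)$. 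In case \ref{C2}, Theorem \ref{thm:Serre-D}(ii-c) gives $W_D(\brho) = \{\xi^r\a^{-1}\zeta^{s+1},\ \xi^{pr}\a\zeta^{s+1}\}$, and these two are directly swapped by $\varpi_D$-conjugation.

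The principal obstacle is already resolved inside the proof of Proposition \ref{prop--freeness}: the identification of the two irreducible components of $R^{\psi\e^{-1}}_v((-1,1),\tau_i(-1))\otimes_{\cO} \F$ via the $\GL_2$-side computation with the lattices $L, L' \subset \Sym^1 E^2 \otimes \Theta(\psi_i)$ and Pa\v{s}k\={u}nas' functor $M(-)$. Granted this key input, the remaining work is the elementary $\F_{p^2}^{\times}$-character arithmetic above together with the automatic $D^{\times}$-symmetry, both of which are routine.
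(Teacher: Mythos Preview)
Your proof is correct and takes essentially the same approach as the paper: both combine the rank equality \eqref{identity-rank} from the proof of Proposition \ref{prop--freeness} (which links $m(\chi)$ and $m(\chi\alpha)$ when both lie in $W_D(\brho)$) with the $\varpi_D$-conjugation symmetry $m(\chi)=m(\chi^p)$ coming from the $D^{\times}$-action on $\pi^B(\overline r)$. You have simply made the equivalence-relation bookkeeping on $W_D(\brho)$ more explicit than the paper's terse ``in view of \eqref{identity-rank}, it remains to treat the case $\chi_2=\chi_1^p$''.
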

\begin{proof}
In view of (\ref{identity-rank}), it remains to treat the case $\chi_2 = \chi_1^p$, equivalently $\chi_2$ equals to the conjugation of $\chi_1$ by $\varpi_D$. But this is clear since $\pi^B(\overline{r})$ is a representation of $D^{\times}$, hence is stable under taking the conjugation by $\varpi_D$. See also \cite[Lem.~2.3]{Gee-Savitt} which is based on an observation of Serre. 
\end{proof}

\begin{theorem}\label{thm--freeness-wtTheta}
The $R_{\infty}^{ \psi\e^{-1}}$-module $M_{\infty}^B(\wt{\Theta})$ is free of rank $m$ over its scheme-theoretic support with $I_{\wt{\Theta}} = I_{\Theta_1}\cap I_{\Theta_2} \cap I_{\Theta_3}.$
\end{theorem}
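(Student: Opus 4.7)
The plan is to apply the exact patching functor $M_{\infty}^B(-)$ to the two gluing short exact sequences \eqref{equation-Dlattice-ses1} and \eqref{equation-Dlattice-ses2}, and bootstrap from the freeness of $M_{\infty}^B(\Theta_i)$ ($i=1,2,3$) already established in Proposition \ref{prop--freeness}. This reduces everything to the ideal-theoretic identities of Corollary \ref{cor-ideals-relation}, which are exactly designed to feed into Lemma \ref{lemma-cyclic-CA}. I will proceed in two stages: first compute $M_{\infty}^B(\Theta)$ via \eqref{equation-Dlattice-ses1}, then compute $M_{\infty}^B(\wt{\Theta})$ via \eqref{equation-Dlattice-ses2}.

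Write $R_{\infty} := R_{\infty}^{\psi\e^{-1}}$ for brevity. By exactness of $M_{\infty}^B(-)$, applying it to \eqref{equation-Dlattice-ses1} yields a short exact sequence
\begin{equation*}
0 \to M_{\infty}^B(\Theta) \to M_{\infty}^B(\Theta_1) \oplus M_{\infty}^B(\Theta_2) \to M_{\infty}^B(\chi) \to 0
\end{equation*}
in which the rightmost map is induced by $r_1-r_2$. By Proposition \ref{prop--freeness} we have $M_{\infty}^B(\Theta_i) \cong (R_{\infty}/I_{\Theta_i})^m$ for $i=1,2$ with $I_{\Theta_i}=I_{R_i}R_{\infty}$. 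Applying $M_{\infty}^B(-)$ to $0 \to \Theta_1 \xrightarrow{p} \Theta_1 \to \chi \to 0$ identifies $M_{\infty}^B(\chi) = M_{\infty}^B(\Theta_1)/p \cong (R_{\infty}/(p,I_{\Theta_1}))^m$ and exhibits $M_{\infty}^B(r_1)$ as the standard reduction. The companion $M_{\infty}^B(r_2):(R_{\infty}/I_{\Theta_2})^m \to (R_{\infty}/(p,I_{\Theta_1}))^m$ is a surjection of free modules of equal rank, so its reduction modulo $(p,I_{\Theta_1})$ is given by an invertible matrix $A$. Since $I_{\Theta_2} \subset (p,I_{\Theta_1})$ by Corollary \ref{cor-ideals-relation}(i), any lift of $A$ to $M_m(R_{\infty}/I_{\Theta_2})$ remains invertible by Nakayama, and after composing the identification $M_{\infty}^B(\Theta_2)\cong (R_{\infty}/I_{\Theta_2})^m$ with its inverse we may assume $M_{\infty}^B(r_2)$ is also the standard reduction. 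The exact sequence then decomposes into $m$ independent copies of the setup of Lemma \ref{lemma-cyclic-CA}, and Corollary \ref{cor-ideals-relation}(i), extended to $R_{\infty}$ via flatness of $R_{\infty}$ over $R_{\brho}^{\eta}$, supplies the identity $I_{\Theta_1}+I_{\Theta_2}=(p,I_{\Theta_1})$. The lemma identifies the kernel in each copy with $R_{\infty}/(I_{\Theta_1}\cap I_{\Theta_2})$, so
\begin{equation*}
M_{\infty}^B(\Theta) \cong \bigl(R_{\infty}/(I_{\Theta_1}\cap I_{\Theta_2})\bigr)^m = (R_{\infty}/I_R R_{\infty})^m,
\end{equation*}
which gives $I_{\Theta}=I_R R_{\infty}$ and the desired rank-$m$ freeness over the scheme-theoretic support.

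The second stage is identical in structure: apply $M_{\infty}^B(-)$ to \eqref{equation-Dlattice-ses2}, use the freeness of $M_{\infty}^B(\Theta)$ just proved together with that of $M_{\infty}^B(\Theta_3)$ from Proposition \ref{prop--freeness}, and invoke Corollary \ref{cor-ideals-relation}(ii) in place of (i) to get both $I_{\Theta}+I_{\Theta_3}=(p,I_{\Theta_1})$ and $I_R \cap I_{R_3}=I_{\wt{R}}$. The same componentwise argument yields $M_{\infty}^B(\wt{\Theta})\cong (R_{\infty}/I_{\wt{R}}R_{\infty})^m$, whence $I_{\wt{\Theta}}=I_{\Theta_1}\cap I_{\Theta_2}\cap I_{\Theta_3}$, as claimed. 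The only subtlety worth flagging is the matrix-lifting step that standardizes the maps $M_{\infty}^B(r_j)$ so that Lemma \ref{lemma-cyclic-CA} applies componentwise; everything else is formal manipulation of ideals under flat base change, and this is where the rank-$m$ generalization of the cyclic lemma does its work.
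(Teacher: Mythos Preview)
Your proof is correct and follows essentially the same route as the paper's: apply the exact functor $M_{\infty}^B(-)$ to the two gluing sequences, identify the result as a fiber product of free modules of rank $m$, and reduce to the ideal identity $I_{\Theta_1}+I_{\Theta_2}=(p,I_{\Theta_1})$ (resp.\ $I_{\Theta}+I_{\Theta_3}=(p,I_{\Theta_1})$) coming from Corollary \ref{cor-ideals-relation} and flat base change. The paper invokes the fiber product identity $A/(I_1\cap I_2)\cong A/I_1\times_{A/(I_1+I_2)}A/I_2$ directly and leaves the passage to rank $m$ implicit, whereas you spell out the matrix-lifting step that diagonalizes $M_{\infty}^B(r_2)$ (and later $M_{\infty}^B(r)$, $M_{\infty}^B(r_3)$) so that the sequence splits into $m$ copies of the cyclic case; this is exactly the point the paper is eliding, so making it explicit is a virtue rather than a deviation. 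One cosmetic remark: Lemma \ref{lemma-cyclic-CA} as stated only asserts cyclicity of the kernel, not its identification with $R_{\infty}/(I_{\Theta_1}\cap I_{\Theta_2})$; the latter is the fiber product identity itself (the kernel is generated by the diagonal element $(1,1)$), which is what the paper cites.
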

\begin{proof}
We first prove $M_{\infty}^B(\Theta)$ is free of rank $m$ over its scheme-theoretic support with $I_{\Theta} = I_{\Theta_1} \cap I_{\Theta_2}$. By the short exact sequence (\ref{equation-Dlattice-ses1}) and the exactness of the functor $M_{\infty}^B(-),$ we have
\[
M_{\infty}^B(\Theta) \simto M_{\infty}^B(\Theta_1 )\times_{M_{\infty}^B(\Theta_1/p \Theta_1 )}  M_{\infty}^B( \Theta_2).
\]
As for a commutative ring $A$ and two ideals $I_1,I_2 \subset A,$
\[
A/ I_1\cap I_2 \cong A/I_1 \times_{A/ (I_1+I_2)} A/I_2,
\]
by Proposition \ref{prop--freeness} we are reduced to checking
\[
I_{\Theta_1} + I_{\Theta_2} = (p,I_{\Theta_1}) = \Ann_{R_{\infty}^{ \psi\e^{-1}}}\left( M_{\infty}^B (\Theta_1/ p \Theta_1) \right).
\]
Using Corollary \ref{cor-ideals-relation}(i) and Proposition \ref{prop--freeness} again, we have
\[
I_{\Theta_1} + I_{\Theta_2} =   (I_{R_1 } + I_{R_2}) R_{\infty}^{ \psi\e^{-1}} =  (p, I_{R_1}) R_{\infty}^{ \psi\e^{-1}}  = (p, I_{\Theta_1}) = \Ann_{R_{\infty}^{ \psi\e^{-1}}}\left( M_{\infty}^B (\Theta_1/ p \Theta_1) \right).
\]
In particular, we obtain
\begin{equation}\label{ideal-I_Theta}
I_{\Theta}=I_{\Theta_1} \cap I_{\Theta_2} = I_{R_1} R_{\infty}^{ \psi\e^{-1}}  \cap I_{R_2} R_{\infty}^{ \psi\e^{-1}} =(I_{R_1} \cap I_{R_2}) R_{\infty}^{ \psi\e^{-1}} = I_R R_{\infty}^{ \psi\e^{-1}}
\end{equation}
where the third equality holds by \cite[Thm.~7.4(ii)]{Matsumura-CRT} because $R_{\infty}^{\psi\e^{-1}} $ is flat over $R_{\brho}^{\psi\e}$.

Now we prove $M_{\infty}^B(\wt{\Theta})$ is free of rank $m$ over its scheme-theoretic support. Using (\ref{equation-Dlattice-ses2})   we have similarly
\[
M_{\infty}^B(\wt{\Theta}) \simto M_{\infty}^B(\Theta )\times_{M_{\infty}^B(\Theta_1/p \Theta_1 )}  M_{\infty}^B( \Theta_3)
\]
and it suffices to check
\[
I_{\Theta} + I_{\Theta_3}  =  \Ann_{R_{\infty}^{ \psi\e^{-1}}}\left( M_{\infty}^B (\Theta_1/ p \Theta_1) \right) = (p, I_{\Theta_1}) .
\]
This easily follows from (\ref{ideal-I_Theta}) and Corollary \ref{cor-ideals-relation}(ii).  
\end{proof}

\begin{corollary}\label{cor-cyclic-projchi}
For any $\chi \in W_D(\brho),$ the natural inclusion
\begin{equation}\label{eqn-criterion}
\Hom_{\cO_D^{\times}} (\chi,\pi^B (\overline{r}) ) \into \Hom_{\cO_D^{\times}} (\overline{W}_{\chi, 3 },\pi^B (\overline{r}) )
\end{equation}
is an isomorphism, where the structure of $\overline{W}_{\chi, 3 }$ is given in Proposition \ref{prop-lattice-wtTheta}.
\end{corollary}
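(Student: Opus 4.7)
The plan is a direct deduction from Theorem \ref{thm--freeness-wtTheta}, combined with the identification $\wt{\Theta}/p\wt{\Theta} \cong \overline{W}_{\chi,3}$ established in Proposition \ref{prop-lattice-wtTheta}(ii). Since the cosocle projection $\overline{W}_{\chi,3}\twoheadrightarrow \chi$ is surjective, the natural map in \eqref{eqn-criterion} is automatically injective (via precomposition), so the content of the statement is the matching of dimensions. Set $m:=\dim_{\F}\Hom_{\cO_D^{\times}}(\chi,\pi^B(\overline{r}))$; the goal reduces to showing
\[\dim_{\F}\Hom_{\cO_D^{\times}}(\overline{W}_{\chi,3},\pi^B(\overline{r})) = m.\]

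To compute the right-hand side, I would apply the patching functor $M_{\infty}^B$. Starting from the short exact sequence $0\to \wt{\Theta}\xrightarrow{p}\wt{\Theta}\to \wt{\Theta}/p\wt{\Theta}\to 0$ and using exactness of $M_{\infty}^B$ together with Proposition \ref{prop-lattice-wtTheta}(ii), one obtains
\[M_{\infty}^B(\overline{W}_{\chi,3}) \cong M_{\infty}^B(\wt{\Theta})/p M_{\infty}^B(\wt{\Theta}).\]
By Theorem \ref{thm--freeness-wtTheta}, $M_{\infty}^B(\wt{\Theta}) \cong (R_{\infty}^{\psi\e^{-1}}/I_{\wt{\Theta}})^m$ as $R_{\infty}^{\psi\e^{-1}}$-modules, so reducing further modulo $\frak{m}_{\infty}$ (which already contains $p$) yields $M_{\infty}^B(\overline{W}_{\chi,3})/\frak{m}_{\infty}\cong \F^m$. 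Translating via \eqref{eqn:pi-and-M_infty} gives $\dim_{\F}\Hom_{\cO_D^{\times}}(\overline{W}_{\chi,3},\pi^B(\overline{r})) = m$, as required.

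There is no substantive obstacle to this plan: every non-trivial ingredient has already been established. The hardest input is Theorem \ref{thm--freeness-wtTheta}, whose proof absorbed the bulk of the work (freeness over regular deformation rings via Proposition \ref{prop:regular} and Theorem \ref{thm-regular-HT02}, together with the ideal-theoretic relations from Corollary \ref{cor-ideals-relation}); the only remaining task is the bookkeeping of reducing modulo $p$ and $\frak{m}_{\infty}$, which is immediate from freeness. This corollary is what feeds into Corollary \ref{cor-gkdim-control} to conclude the bound on the Gelfand-Kirillov dimension of $\pi^B(\overline{r})$.
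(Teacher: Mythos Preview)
Your proposal is correct and follows essentially the same approach as the paper: the paper's proof simply says that $\wt{\Theta}/p\wt{\Theta}\cong\overline{W}_{\chi,3}$ by Proposition~\ref{prop-lattice-wtTheta} and then invokes Theorem~\ref{thm--freeness-wtTheta}, which is exactly what you do. Your write-up just makes explicit the (routine) passage from freeness of rank $m$ to the dimension count via \eqref{eqn:pi-and-M_infty}.
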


\begin{proof}
The mod $p$ reduction of the lattice $\wt{\Theta}$ is isomorphic to $\overline{W}_{\chi,3}$ by Proposition \ref{prop-lattice-wtTheta}.  The result then follows from  Theorem \ref{thm--freeness-wtTheta}.
\end{proof}

The main result of this section is the following.

\begin{theorem}\label{thm:GKdim-B}
Maintain all the assumptions we have made on $F,$ $B,$ and $\overline{r}.$ Assume $\brho = \overline{r}_v(1)$ satisfies \ref{C1} or \ref{C2}. Then $\dim_{\cO_D^{\times }} \left( \pi^B (\overline{r}) \right) = 1. $
\end{theorem}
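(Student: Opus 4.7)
The plan is to deduce this from two pieces assembled earlier. The upper bound $\dim_{\cO_D^{\times}}(\pi^B(\overline{r})) \leq 1$ is now essentially automatic: since $L=\Q_p$ we have $f=1$ in Corollary \ref{cor-gkdim-control}, and the hypothesis of that corollary — namely that the natural injection
\[
\Hom_{\cO_D^{\times}}(\chi,\pi^B(\overline{r})) \hookrightarrow \Hom_{\cO_D^{\times}}(\overline{W}_{\chi,3},\pi^B(\overline{r}))
\]
is an isomorphism for every character $\chi$ with $\Hom_{\cO_D^{\times}}(\chi,\pi^B(\overline{r}))\neq 0$ — is exactly the content of Corollary \ref{cor-cyclic-projchi} (recalling that such $\chi$ is necessarily in $W_B(\overline{r})=W_D(\brho)$ by Corollary \ref{Cor-quat-Serre-wt}). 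Hence the upper bound is immediate from what has been proved.

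For the matching lower bound $\dim_{\cO_D^{\times}}(\pi^B(\overline{r})) \geq 1$, the point is to rule out the possibility that $\pi^B(\overline{r})$ is finite-dimensional (which would correspond to Gelfand-Kirillov dimension $0$, since $U_D^1/Z_D^1$ acts trivially on a finite quotient only if $\pi^B(\overline{r})$ is finite). Equivalently, one must show that $\pi^B(\overline{r})\neq 0$ is infinite-dimensional over $\F$. This infinite-dimensionality is recorded in the introduction via the classical references \cite[Cor.~3.5.4]{BreuilDiamond} and \cite[Thm.~1.4]{Scholze}, but it can also be extracted directly from the patching setup of \S\ref{section-autom-forms}: by Proposition \ref{prop--freeness}, $M_{\infty}^B(\chi)$ is free of positive rank over a positive-dimensional regular quotient of $R_\infty^{\psi\e^{-1}}$ for each $\chi\in W_D(\brho)$, and a standard support argument (using that the action of $R_\infty^{\psi\e^{-1}}$ cannot factor through $R_\infty^{\psi\e^{-1}}/\frak m_\infty$, since the generic fibre of each $R_v^{\psi\e^{-1}}((-1,0),\tau_1(-1))$ has positive-dimensional closed subscheme of automorphic points) forces infinitely many $K$-types to appear in $\pi^B(\overline{r})$.

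Putting these together yields $\dim_{\cO_D^{\times}}(\pi^B(\overline{r}))=1$. The only step requiring any genuine work has already been done in \S\ref{section:GK-dim}: the key nontrivial input is Theorem \ref{thm--freeness-wtTheta}, whose proof reduces the freeness of $M_\infty^B(\wt\Theta)$ to congruences between deformation rings of distinct tame supercuspidal types, which in turn are supplied by Pa\v sk\=unas's construction together with Corollary \ref{cor-ideals-relation}. In the final theorem itself there is no new obstacle — it is a clean application of the criterion of Corollary \ref{cor-gkdim-control} combined with the structural result of Corollary \ref{cor-cyclic-projchi}, plus the elementary observation that $\pi^B(\overline{r})$ is infinite-dimensional.
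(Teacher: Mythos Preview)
Your proposal is correct and follows essentially the same approach as the paper: the upper bound via Corollary~\ref{cor-gkdim-control} fed by Corollary~\ref{cor-cyclic-projchi}, and the lower bound via infinite-dimensionality of $\pi^B(\overline{r})$ (the paper cites \cite[Cor.~3.2.4]{BreuilDiamond} or \cite[Thm.~7.8]{Scholze}). Your alternative patching-based sketch for infinite-dimensionality is unnecessary and slightly muddled (you write ``$K$-types'' where ``$\cO_D^\times$-types'' is meant, and the support argument is vague), but since you already invoke the standard references this does not affect the validity of the proof.
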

\begin{proof}
Since $ \pi^B (\overline{r})$ is of infinite dimension over $\F$ by \cite[Cor.~3.2.4]{BreuilDiamond} (or \cite[Thm.~7.8]{Scholze}), $\dim_{\cO_D^{\times }} \left( \pi^B (\overline{r})\right) $ is at least one. The other inequality follows from Corollary \ref{cor-cyclic-projchi} and Corollary \ref{cor-gkdim-control}.
 \end{proof}

\begin{remark}
Although we have excluded the case $r=0$ in \ref{C2}, this case (at least when $B$ is indefinite) can be deduced from the case $r=p-3$. The proof uses Scholze's functor introduced in \cite{Scholze} and the mod $p$ local-global compatibility (\`a la Emerton), see Corollary \ref{cor:GKdim-comp}.
\end{remark}

\subsection{The graded module $\gr(\pi^B(\overline{r})^{\vee})$}

Following \cite[\S3.1]{BHHMS2}, we consider the category $\mathcal{C}$ of admissible smooth representations $\pi$ of $D^{\times}$ over $\F$ with a central character and such that there exists a good filtration on the $\pi^{\vee}$ such that the $\gr\F[\![U_D^1/Z_D^1]\!]$-module $\gr (\pi^{\vee})$ is annihilated by some power of the ideal $(yz,zy)$, where $y=y_0$, $z=z_0$ are as in \S\ref{subsection:GK}.\footnote{This category $\mathcal{C}$ is not exactly the one considered in \cite{BHHMS2}, but compare \cite[Prop.~3.1.2.11]{BHHMS2}.} It is clear that $\mathcal{C}$ is an abelian category and is stable under subquotients and extensions.

\begin{definition}
For each $\chi\in W_D(\brho)$, we define an ideal $\mathfrak{a}(\chi)$ of $\F[y,z]$ as follows. 
\begin{itemize}
\item If $\chi\alpha^{-1}\in W_D(\brho)$, then $\mathfrak{a}(\chi):=(y)$; if $\chi\alpha\in W_D(\brho)$, then $\mathfrak{a}(\chi):=(z)$.
\item If neither of $\chi\alpha$, $\chi\alpha^{-1}$ lies in $W_D(\brho)$, then $\mathfrak{a}(\chi):=(yz)$.
\end{itemize}
 \end{definition}

\begin{theorem}
Maintain all the assumptions we have made on $F,$ $B,$ and $\overline{r}.$ Assume $\overline{r}_v$ satisfies \ref{C1} or \ref{C2}. Then there exists a surjective graded morphism \[\Big(\bigoplus_{\chi\in W_D(\brho)}\chi^{\vee}\otimes \F[y,z]/\mathfrak{a}(\chi)\Big)^{\oplus m}\twoheadrightarrow \gr(\pi^B(\overline{r})^{\vee})\]
where the integer $m$ is as in Proposition \ref{prop--freeness}. 
\end{theorem}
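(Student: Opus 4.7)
Set $M:=\pi^B(\overline{r})^\vee$, viewed with its $\fm_D$-adic filtration. The plan has three steps: produce a naive surjection, establish a vanishing $(\gr^1 M)^{(\chi^\vee)}=0$ for $\chi\in W_D(\brho)$, then read off the refined relations. First, by Corollary \ref{Cor-quat-Serre-wt} and Pontryagin duality, $M/\fm_D M\cong\bigoplus_{\chi\in W_D(\brho)}(\chi^\vee)^{\oplus m}$ as $\F_{q^2}^\times$-representations; lifting a basis to generators $\xi_\chi^{(i)}\in M$, Nakayama's lemma yields a surjection of $\F[\![\cO_D^\times/Z_D^1]\!]$-modules, and passing to the $\fm_D$-adic associated graded (legitimate since $\fm_D$ is $\F_{q^2}^\times$-stable) gives
\[
\bigoplus_{\chi\in W_D(\brho)}\big(\chi^\vee\otimes\gr\F[\![U_D^1/Z_D^1]\!]\big)^{\oplus m}\twoheadrightarrow \gr M.
\]
Running the argument in the proof of Corollary \ref{cor-gkdim-control} with Corollary \ref{cor-cyclic-projchi} as input, the two-sided ideal $I_D$ annihilates $\gr M$; since $F_v\cong\Q_p$ forces $f=1$ and Corollary \ref{cor:hj'}(ii) identifies $\gr\F[\![U_D^1/Z_D^1]\!]/I_D\cong\F[y,z]/(yz)$, the surjection factors through $\bigoplus_\chi(\chi^\vee\otimes\F[y,z]/(yz))^{\oplus m}\twoheadrightarrow \gr M$.

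The heart of the argument is showing $(\gr^1 M)^{(\chi^\vee)}=0$ for every $\chi\in W_D(\brho)$. I would observe that $W_{\chi,2}$ is a quotient of $\overline{W}_{\chi,3}$: the only nontrivial part of $\gr^2\overline{W}_{\chi,3}$ consists of two lines of character $\chi$ (per Corollary \ref{cor:structure-W_chi,3}), and one further kills them to reach $W_{\chi,2}$. Combined with Corollary \ref{cor-cyclic-projchi} this produces the chain
\[
\Hom_{\cO_D^\times}(\chi,\pi^B(\overline{r}))\ \subseteq\ \Hom_{\cO_D^\times}(W_{\chi,2},\pi^B(\overline{r}))\ \subseteq\ \Hom_{\cO_D^\times}(\overline{W}_{\chi,3},\pi^B(\overline{r}))\ =\ \Hom_{\cO_D^\times}(\chi,\pi^B(\overline{r})),
\]
forcing equalities throughout. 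Since $W_{\chi,2}$ is cyclic with generator of character $\chi$ annihilated by $\fm_D^2$, the middle term equals $\pi^B(\overline{r})[\fm_D^2]^{(\chi)}$, so $\pi^B(\overline{r})[\fm_D^2]^{(\chi)}=\pi^B(\overline{r})[\fm_D]^{(\chi)}$ of dimension $m$; Pontryagin dualizing, and using compatibility of the $\F_{q^2}^\times$-isotypic decomposition with the $\fm_D$-adic filtration, this is exactly $(\gr^1 M)^{(\chi^\vee)}=0$.

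Finally, by \eqref{eq:Yj-char} the image of $y\xi_\chi^{(i)}$ in $\gr^1 M$ carries $\F_{q^2}^\times$-character $\chi^\vee\alpha=(\chi\alpha^{-1})^\vee$. If $\chi\alpha^{-1}\in W_D(\brho)$, then by the second step the corresponding isotypic piece of $\gr^1 M$ vanishes, forcing $y\xi_\chi^{(i)}=0$ in $\gr M$; symmetrically $z\xi_\chi^{(i)}=0$ whenever $\chi\alpha\in W_D(\brho)$. Because $\F[y,z]/(yz)$ is commutative, once $y$ (respectively $z$) kills a generator so does every $y^n$ (respectively $z^n$), so the surjection from the first step factors through $\bigoplus_\chi(\chi^\vee\otimes\F[y,z]/\mathfrak{a}(\chi))^{\oplus m}$, which is the claim. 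The main obstacle is the second step, in particular the recognition of $W_{\chi,2}$ as a quotient of $\overline{W}_{\chi,3}$ allowing Corollary \ref{cor-cyclic-projchi} to be promoted to $\fm_D^2$-torsion control on $\pi^B(\overline{r})$; the first and third steps are formal consequences of Nakayama, annihilation by $I_D$, and commutativity of $\F[y,z]/(yz)$.
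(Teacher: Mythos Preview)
Your proof is correct and follows essentially the same route as the paper, which states only that the result is ``an easy consequence of Corollary~\ref{cor-cyclic-projchi}''; you have supplied a careful unpacking of that one line. Your key observation---that $W_{\chi,2}$ is a quotient of $\overline{W}_{\chi,3}$, so Corollary~\ref{cor-cyclic-projchi} forces $\pi^B(\overline{r})[\fm_D^2]^{(\chi)}=\pi^B(\overline{r})[\fm_D]^{(\chi)}$, i.e.\ $(\gr^1 M)^{(\chi^\vee)}=0$ for $\chi\in W_D(\brho)$---is exactly the mechanism the paper has in mind, and your character bookkeeping via~\eqref{eq:Yj-char} then yields the refined ideals $\mathfrak a(\chi)$.
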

 \begin{proof}
This is an easy consequence  of Corollary \ref{cor-cyclic-projchi}. 
\end{proof}

\section{Application to Scholze's functor}\label{section:application} 

\subsection{Results of Scholze and Pa\v{s}k\={u}nas}

Let $L$ be a finite extension of $\Q_p$,  $G: = \GL_n(L)$ and $G_L= \Gal(\overline{L}/L).$ Let $D$ be the central division algebra over $L$ of dimension $n^2$ and invariant $1/n.$ To any $\pi \in  \Mod_G^{\rm adm}(\cO),$ Scholze \cite{Scholze} associated a Weil-equivariant sheaf $\cF_{\pi}$ on the \'etale site of the adic space $\bP^{n-1}_{\C_p}.$ We collect some results of Scholze \cite{Scholze} and Pa\v{s}k\={u}nas \cite{Paskunas-JL}.

\begin{theorem}\label{thm-Scholze}
Let $\pi \in \Mod_{G}^{\rm adm}(\cO).$ 

(i) For any $i\geq 0$ the \'etale cohomology group $H^i_{\mathrm{\acute{e}t}}(\bP^{n-1}_{\C_p}, \cF_{\pi})$ carries a continuous $ G_L \times D^{\times}$-action. Moreover, the restriction of $H^i_{\et}(\bP^{n-1}_{\C_p}, \cF_{\pi})$ to $D^{\times}$ is an admissible smooth representation of $D^{\times}.$ 

(ii) $H^i_{\et}(\bP^{n-1}_{\C_p}, \cF_{\pi}) = 0$ for $i > 2(n-1).$

(iii) Assume $\pi$ admits central character $\psi: Z_G \to \F^{\times}.$ If $\pi$  is injective in $\Mod_{\GL_n(\cO_L),\psi}^{\rm adm}(\cO),$ then $H^i_{\et}(\bP^{n-1}_{\C_p}, \cF_{\pi}) = 0$ for $i > n-1.$

(iv) The natural map
\[
H^0_{\et}(\bP^{n-1}_{\C_p}, \cF_{\pi^{\SL_n(L)}}) \into H^0_{\et}(\bP^{n-1}_{\C_p}, \cF_{\pi})
\]
is an isomorphism. In particular if $\pi^{\SL_n(L)} = 0$ then $H^0_{\et}(\bP^{n-1}_{\C_p}, \cF_{\pi})  = 0.$

(v) If $\pi = \ide_{G}$ is the trivial representation of $G$ over $\F,$ then
\[
H^i_{\mathrm{\acute{e}t}}(\bP^{n-1}_{\C_p}, \cF_{\ide_G}) =  \left\{ {\begin{array}{ll}
\o^{-i/2}\otimes \ide_{D^{\times}}  & \text{ if $i$ is even and $0\leq i \leq 2(n-1)$;} \\
0  & \text{ if $i$ is odd.}\\
\end{array}}\right.
\]
\end{theorem}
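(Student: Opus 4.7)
The plan is to attribute each part to the existing literature (Scholze's original paper on the $p$-adic cohomology of the Lubin--Tate tower and Pa\v{s}k\=unas' paper \cite{Paskunas-JL}) and sketch the ideas. Recall the construction: one has the Gross--Hopkins period map $\pi_{\rm GH}: \mathcal{M}_\infty \to \bP^{n-1}_{\C_p}$ from the infinite-level Lubin--Tate space, which is a pro-\'etale $G$-torsor; then $\cF_\pi$ is the descent of the constant sheaf $\underline{\pi}$ on $\mathcal{M}_\infty$ along $\pi_{\rm GH}$. Since $D^\times$ and $G_L$ act on $\mathcal{M}_\infty$ commuting with $G$, they act on $\cF_\pi$ and hence on $H^i_{\et}(\bP^{n-1}_{\C_p},\cF_\pi)$; this gives the $G_L\times D^\times$-action in (i). Admissibility is proved by the key formula \[H^i_{\et}(\bP^{n-1}_{\C_p},\cF_\pi)^{U^n_D}\cong H^i_{\et}(\mathcal{M}_{\infty}/U_D^n,\underline{\pi})\] which reduces to a finite-level, quasi-compact situation where Scholze's primitive comparison theorem yields finite-dimensionality.

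For (ii), the bound $2(n-1)$ is the topological $p$-cohomological dimension of the adic space $\bP^{n-1}_{\C_p}$, so $R\Gamma_{\et}(\bP^{n-1}_{\C_p},-)$ has cohomological dimension $\leq 2(n-1)$ on torsion sheaves; this is Scholze's key vanishing (Theorem 3.2 of his paper). For (iii), if $\pi|_{\GL_n(\cO_L)}$ is injective (equivalently a direct summand of a compactly induced representation from a finite-index subgroup), then $\cF_\pi$ is related by Morita-type equivalence to a pushforward from the Drinfeld tower along the $p$-adic period map, which lands on Drinfeld's space of dimension $n-1$ and has cohomological dimension $n-1$; concretely one reduces to $\pi = \mathrm{c\mbox{-}Ind}_{\GL_n(\cO_L)Z}^G\sigma$ and computes the sheaf explicitly, using that the fibers of $\pi_{\rm GH}$ become affinoid perfectoid.

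For (iv), the argument goes back to Scholze: the sheaf $\cF_{\pi^{\SL_n(L)}}$ embeds into $\cF_\pi$ with cokernel supported on strata where the $\SL_n(L)$-action is nontrivial. The key observation (Proposition 4.7 of \cite{Scholze}, or Lemma 4.5 of \cite{Paskunas-JL}) is that $H^0_{\et}(\bP^{n-1}_{\C_p},\cF_\pi)$ only detects globally $\SL_n(L)$-invariant sections because one can spread a section across all the $G$-translates by using the transitive action of $\SL_n(L)$ on the torsor $\mathcal{M}_\infty\to \bP^{n-1}_{\C_p}$. The hard part, and the main obstacle if one were proving this from scratch, is controlling the $\SL_n(L)$-orbits at the level of the period map; however we are allowed to cite \cite{Scholze}.

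For (v), the sheaf $\cF_{\ide_G}$ is just the constant sheaf $\F$ on $\bP^{n-1}_{\C_p}$, so \[H^i_{\et}(\bP^{n-1}_{\C_p},\cF_{\ide_G})\cong H^i_{\et}(\bP^{n-1}_{\C_p},\F).\] The usual calculation of \'etale cohomology of projective space, valid in the adic setting by the comparison results of Huber, gives $\F(-i/2)$ in even degrees $0\leq i\leq 2(n-1)$ and zero otherwise. The $G_L$-action is through the Tate twist (hence $\omega^{-i/2}$ after reduction mod $p$), and the $D^\times$-action is trivial since $D^\times$ acts through its quotient $L^\times\to \bP^{n-1}_{\C_p}(\C_p)$-orbits trivially on the constant sheaf cohomology, i.e.\ factors through a determinant-type character that is trivial on $\cO_D^\times Z_D$; a small check using the action on $\mathcal{M}_\infty$ shows this character is in fact trivial, giving $\omega^{-i/2}\otimes\ide_{D^\times}$.
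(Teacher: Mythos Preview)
Your approach matches the paper's: parts (i)--(iv) are attributed to \cite[Thm.~3.2, Prop.~4.7]{Scholze}, and for (v) one observes that $\cF_{\ide_G}$ is the constant sheaf and invokes Huber's computation \cite[Thm.~3.8.1]{Huber} for the cohomology of $\bP^{n-1}_{\C_p}$.

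The one place your argument is weaker than the paper's is the triviality of the $D^\times$-action in (v). Your sentence about ``$D^\times$ acts through its quotient $L^\times \to \bP^{n-1}_{\C_p}(\C_p)$-orbits'' does not parse ($L^\times$ is not a quotient of $D^\times$), and the claim that the action factors through a determinant-type character trivial on $\cO_D^\times Z_D$, followed by ``a small check'', leaves a genuine loose end: even granting that, $\cO_D^\times Z_D$ has index $2$ in $D^\times$, so you would still need to rule out a sign. The paper's argument is cleaner and avoids this entirely: $D^\times$ acts on $\bP^{n-1}_{\C_p}$ through an embedding $D^\times \hookrightarrow \GL_n(L^{\rm un})$, and $\GL_n$ (being connected) acts trivially on the \'etale cohomology of projective space. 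You should replace your argument for the $D^\times$-action with this observation.
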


\begin{proof}
(i) and (ii) are proved in \cite[Thm.~3.2]{Scholze}. 

(iii)\footnote{We thank J. Ludwig for her help with the proof.} It is proved in \cite[Thm.~3.2]{Scholze} that if $\pi \in \Mod_{G}^{\rm adm}(\cO)$ such that $\pi|_{\GL_n(\cO_L)}$ is injective, then  $H^i_{\et}(\bP^{n-1}_{\C_p}, \cF_{\pi}) = 0$ for $i > n-1.$ We need to prove a similar result for $\pi$ which admits a central character. Twisting by a character, we may assume the central character of $\pi$ is trivial. Examining the proof of \cite[Thm.~3.2]{Scholze}, it suffices to show that $\cM_{\infty}/L^{\times}$ is a perfectoid space, where $\cM_{\infty}$ is the infinite level Lubin-Tate space. Passing to the connected components, it suffices to show that $\cM^{(0)}_{\infty}/\cO_L^{\times}$ is a perfectoid space, where $\cM^{(0)}_{\infty}$ is the perfectoid space denoted by $\cM^{(0)}_{\bf 1}$ in \cite[\S4.1]{JLH}. We will deduce this from \cite[Prop.~4.1.1]{JLH} which proves that $\cM^{(0)}_{\infty}/P(\cO_L)$ is a perfectoid space, where $P\subset \GL_n$ is the parabolic subgroup of block form $(n-1,1).$

We use freely the notation of \cite[\S4.1]{JLH}. Note that $\cM_{\infty}$ is denoted by $\cM_{\bf 1}$ in {\em loc.~cit.} Let $H\subseteq \GL_n(\cO_L)$ be a closed subgroup. As in \cite[\S4.1]{JLH}, we set 
\[
\cM^{(0)}_{H} : =\plim_{U\supseteq H} (\cM_U^{(0)})^{\diamondsuit},
\]
 where $U$ ranges over open subgroups of $\GL_n(\cO_L)$ containing $H.$ By \cite[Prop.~4.1.1]{JLH}, $\cM^{(0)}_{P(\cO_L)}$ is the quotient $\cM^{(0)}_{\infty}/ P(\cO_L)$ in Huber's category $\mathcal{V},$ and is a perfectoid space. Now assume $H\subset\GL_n(\cO_L)$ is a closed subgroup contained in $P(\cO_L).$ The same argument as in the proof of \cite[Prop.~3.2.1]{JLH} shows that $\cM^{(0)}_{H}$ is a perfectoid space. More precisely, if $H$ is of finite index in $P(\cO_L),$ then $\cM^{(0)}_{H}$ is finite \'etale over $\cM^{(0)}_{P(\cO_L)},$ and the result then follows. In general $\cM^{(0)}_{H} = \plim_{H'} \cM^{(0)}_{H'}$ where $H'$ ranges over closed subgroups with $H\subseteq H' \subseteq P(\cO_L)$ and $H'\subseteq P(\cO_L)$ has finite index, and the result follows. We can then argue as in the proof of \cite[Lem.~3.3.4]{JLH} to prove that $\cM^{(0)}_{\infty}$ is an $H$-torsor over $\cM^{(0)}_{H}.$ Finally, it follows from \cite[Lem.~3.3.5]{JLH} that $\cM^{(0)}_{H}$ is the quotient $\cM^{(0)}_{\infty}/ H$ in Huber's category $\mathcal{V}.$  We finish the proof by taking $H = \cO_L^{\times}.$

(iv) is proved in \cite[Prop.~4.7]{Scholze}. For (v), we note that $\cF_{\ide_G}$ is the trivial local system on $\bP^{n-1}_{\C_p}.$ It follows from \cite[Thm.~3.8.1]{Huber} that the cohomology of $\bP^{n-1}_{\C_p}$ (with the Galois action) is as in the classical case. As $D^{\times}$ acts on $\bP^{n-1}_{\C_p}$ via an embedding $D^{\times }\into \GL_n(L^{\rm un}),$ $D^{\times}$ acts trivially on the cohomology.
\end{proof}
 
Let $\pi $ be a locally admissible $\cO$-torsion representation of $G$.  The construction of the sheaf $\mathcal{F}_{\pi}$ in \cite[Prop.~3.1]{Scholze} extends to such $\pi$. Write $\pi =  \ilim_{\pi'} \pi'$ where the limit is taken over all admissible subrepresentations of $\pi.$ By  \cite[Eq.~(9)]{Paskunas-JL}, we have
\[
H^i_{\et}(\bP^{n-1}_{\C_p}, \cF_{\pi}) = \ilim_{\pi'}H^i_{\et}(\bP^{n-1}_{\C_p}, \cF_{\pi'}) .
\]
We denote by $\cS^i$ the cohomological covariant $\d$-functor
\[
\cS^i:\Mod_G^{\rm l.adm}(\cO) \to \Mod_{G_{L}\times D^{\times}}^{\rm l.adm}(\cO) ,\ \  \pi \mapsto H^i_{\et} (\bP^1_{\C_p} , \cF_{\pi}),
\]
where $\Mod_{G_{L}\times D^{\times}}^{\rm l.adm}(\cO)$ is the category of locally admissible representations of $D^{\times}$ on $\cO$-torsion modules equipped with a continuous commuting $G_{L}$-action. As in \cite{Paskunas-JL}, it is more convenient  to work on pseudocompact modules rather than smooth representations via the Pontryagin duality. Namely, we  consider the covariant homological $\d$-functor $\{\check{\cS}^i \}_{i \geq 0}$ defined by
\[
\check{\cS}^i : \frak{C}_G(\cO) \to \frak{C}_{G_{L}\times D^{\times}}(\cO), \quad  M \mapsto H^i_{\et} (\bP^1_{\C_p} , \cF_{M^{\vee}})^{\vee}.
\]

If $R$ is a  complete local noetherian $\cO$-algebra with residue field $\F$, we extend the $\delta$-functor $\check{\cS}^i$ to $ \frak{C}_G(R)$ (defined in \S\ref{subsection:Endo}) in a similar way.

\subsection{Local-global compatibility (\`a la Scholze)}

From now on, we follow the notation of \S\ref{section-autom-forms}.   Let $B$ be an indefinite quaternion algebra over the totally field $F$ such that $B$ is ramified at the fixed place $v$ above $p.$ Let $B'$ be the definite quaternion algebra over $F$ which splits at $v$ and has the same ramification behavior as $B$ at all the other finite places. Fix an isomorphism $B^{\times}(\A_{F,f}^{v}) \cong B'^{\times}(\A_{F,f}^{v}).$ Fix an open compact subgroup $U^{v}\subset B^{\times}(\A_{F,f}^{v}) \cong B'^{\times}(\A_{F,f}^{v}).$ Let $\overline{r} : G_{F} \to \GL_2(\F)$ be a modular Galois representation and let $\frak{m}_{\overline{r}} $ be the non-Eisenstein maximal ideal associated to $\overline{r}$ as in \S\ref{section-autom-forms}. Let $\s_p^{v}$ be the finite $\cO[\![U^{v}]\!]$-module as in (\ref{eqn::def-of-sigma}). If $A$ is a topological $\cO$-algebra, let
\begin{align*}
\wt{S}_{\s_p^v, \psi} (U^{v} , A) & : = \wt{H}^{0,B'}_{\s_p^v , \psi} (U^{v}, \cO) \otimes_{\cO} A\\
\wt{H}^i_{\s_p^{v}, \psi} (U^{v} , A) & :=\wt{H}^{i,B}_{\s_p^v , \psi} (U^{v}, \cO) \otimes_{\cO} A,\ \ i\geq 0.
\end{align*}
The Hecke algebra  $\TM(U^v)_{\frak{m}_{\overline{r}}}$   acts faithfully and continuously on $\wt{S}_{\s_p^v, \psi} (U^{v} , \cO)_{\frak{m}_{\overline{r}}}$ and $\wt{H}^i_{\s_p^{v}, \psi} (U^{v} , \cO)_{\frak{m}_{\overline{r}}}$ (see \cite[Cor.~7.3]{Scholze}).

Let $M_{\infty} $ denote the big patched module $M_{\infty}^{B'}$ in \S\ref{section-autom-forms}, so that
 \[ M_{\infty}/\fa_{\infty}\cong \wt{S}_{\sigma_p^v,\psi}(U^v,\cO)_{\fm_{\overline{r}}}^d \text{ and } M_{\infty}/\fm_{\infty}\cong \wt{S}_{\sigma_p^v,\psi}(U^v,\F)[\fm_{\overline{r}}]^{\vee}
\]
where $\fa_{\infty}$ denotes the ideal $(z_1,\dots,z_q,y_1,\dots,y_j)$ of $S_{\infty}$ and $\fm_{\infty}$ denotes the maximal ideal of $R_{\infty}^{\psi\e^{-1}}$.

On the other hand, let $N_{\infty}$ be the variant of $M_{\infty}^B$ which is obtained by the same patching process as $M_{\infty}^{B}$, but without ``factorizing out'' the Galois representation, see Remark \ref{rem:variant-M}. Similarly to (\ref{eqn--M-infty}) we have
\[
N_{\infty} / \frak{a}_{\infty} =  \wt{H}^{1}_{\s_p^v , \psi} (U^{v}, \cO)^d_{\frak{m}_{\overline{r}}},\quad N_{\infty}/\fm_{\infty}\cong \wt{H}^{1}_{\s_p^v , \psi} (U^{v}, \F)[\frak{m}_{\overline{r}}]^{\vee}.
\]

\begin{theorem}\label{thm--K-injectivity}
Denote the restriction of $\psi$ to $F_v^{\times}$ again by $\psi$. 

(i) $\wt{S}_{\s_p^v ,\psi}(U^{v} , E/\cO)_{\frak{m}_{\overline{r}}}$ lies in $\Mod_{G,\psi}^{\rm adm}(\cO),$ and its restriction to $ K:= \GL_2(\cO_{F_v})$ is injective in $\Mod_{K,\psi}^{\rm sm}(\cO).$ Equivalently by taking dual,  $\wt{S}_{\s_p^v ,\psi}(U^{v} , \cO)^d_{\frak{m}_{\overline{r}}}$ is finitely generated over $\cO[\![K]\!]$ and is projective in $\Mod_{K,\psi}^{\rm pro}(\cO).$
 
(ii) $\wt{H}^1_{\s_p^v, \psi} (U^{v} , E / \cO)_{\frak{m}_{\overline{r}}}$ lies in $\Mod_{D^{\times},\psi}^{\rm adm}(\cO),$ and its restriction to $ \cO_{D}^{\times}$ is injective  in $\Mod_{ \cO_{D}^{\times},\psi}^{\rm sm}(\cO).$ Equivalently, $\wt{H}^1_{\s_p^v, \psi} (U^{v} ,  \cO)^d_{\frak{m}_{\overline{r}}}$ is a finitely generated $\cO[\![\cO_{D}^{\times}]\!]$-module and is projective in $\Mod_{\cO_{D}^{\times} ,\psi}^{\rm pro}(\cO).$

(iii)  For $0\leq i\leq 2$, there is a canonical isomorphism of $\TM(U^{v})_{\frak{m}_{\overline{r}}}[G_{F_v} \times D^{\times}]$-modules
\[
\check{\cS}^i \big(\wt{S}_{\s_p^v, \psi}(U^{v} , \cO)^d_{\frak{m}_{\overline{r}}}\big)  \cong  \wt{H}^i_{\s_p^v, \psi} (U^{v} , \cO)^d_{\frak{m}_{\overline{r}}}.
\]

(iv) There is a canonical $R_{\infty}^{\psi \e^{-1}}[G_{F_{v}} \times D^{\times}]$-equivariant isomorphism  
\[
\check{\cS}^1 ( M_{\infty} ) = N_{\infty}.
\]
\end{theorem}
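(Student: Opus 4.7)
The plan is to prove parts (i) and (ii) by the standard ``sufficiently small tame level'' technique, parts (iii) by invoking Scholze's comparison theorem (\cite[Cor.~4.2.2]{Scholze} together with \cite[Thm.~IV.2.1]{Scholze-torsion}), and part (iv) by carefully patching the comparison in (iii) along the Taylor--Wiles--Kisin system.

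For (i), I would use the decomposition
\[
\wt{S}_{\s_p^v,\psi}(U^v, E/\cO) = \bigoplus_{t\in B'^\times \backslash (B'\otimes\A_{F,f})^\times / U^v K \A_{F,f}^\times} \Big({\rm Ind}_{U^v\cap t^{-1}B'^\times t}^{K}(\s_p^v\otimes_{\cO} E/\cO)\Big)^{(U^v\cap t^{-1}B'^\times t)}
\]
(this is more or less standard, see \cite[Lem.~6.1.1]{EmertonInvent}); the key point is that $U^v$ is sufficiently small by the choice of $U_{w_1}$, which forces the stabilizers $U^v \cap t^{-1}B'^\times t$ to intersect $K$ trivially modulo the centre, so each summand is $K$-injective with central character $\psi$. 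Localizing at $\fm_{\overline{r}}$ preserves injectivity. Part (ii) is analogous but uses that $U^v U_v$ is sufficiently small so that $Y^B_{U^v U_v}$ is a smooth projective curve whose \'etale cohomology fits into the same kind of Hochschild--Serre decomposition when $U_v$ shrinks through subgroups of $\cO_D^\times$; since $\cO_D^\times$ has no element of order $p$ (it is pro-$p$ modulo $k_D^\times$), the relevant covers are tame in the $\cO_D^\times$-direction and the cohomology in degree $1$ becomes $\cO_D^\times$-injective up to the centre.

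For (iii), I would appeal directly to Scholze's main local--global compatibility, specialized to our setting of quaternionic Shimura varieties: by \cite[Cor.~4.2.2]{Scholze} (and its extension in the $E/\cO$-coefficient case, cf.\ \cite[\S 6]{Scholze} and \cite[\S 5]{Paskunas-JL}), the Hodge--Tate period map from the infinite-level Shimura curve to $\bP^1_{\C_p}$ identifies the sheaf $\cF_{\wt{S}_{\s_p^v,\psi}(U^v,E/\cO)_{\fm_{\overline{r}}}}$ with the pushforward of the local system on the Shimura curve side whose cohomology is $\wt{H}^i_{\s_p^v,\psi}(U^v,E/\cO)_{\fm_{\overline{r}}}$. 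The comparison is equivariant for the Hecke action, giving the $\TM(U^v)_{\fm_{\overline{r}}}[G_{F_v}\times D^\times]$-equivariant isomorphism; dualizing yields the stated form.

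For (iv), I would pass from (iii) to the patched version by showing that $\check{\cS}^1$ commutes with the Taylor--Wiles--Kisin patching. Concretely, writing $M_\infty$ as the inverse limit over the patching datum of finite successive quotients of $\wt{S}_{\s_p^v,\psi}(U^v,\cO/\varpi^s)_{\fm_{\overline{r}}}^d$, one uses that $\check{\cS}^1$ is continuous for inverse limits of admissible representations (since each term is finitely generated over $\cO[\![K]\!]$ by (i), so the functor $\cS^1$ commutes with filtered colimits by Theorem \ref{thm-Scholze}(i)), together with the fact that patching glues together the $\cO_D^\times$-projective modules $\wt{H}^1_{\s_p^v,\psi}(U^v,\cO/\varpi^s)_{\fm_{\overline{r}}}^d$ into $N_\infty$ by construction (Remark \ref{rem:variant-M}). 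The isomorphisms in (iii) are compatible across all the levels and quotients involved, so they pass to the limit. The main obstacle here is the verification that $\check{\cS}^1$ indeed commutes with the inverse/direct limits implicit in patching: one has to be careful because Scholze's functor is a priori only defined on individual admissible representations and the continuity for pro-systems requires using Theorem \ref{thm-Scholze}(i)--(iii) and the projectivity statement in (i) to control higher derived functors (in particular, the vanishing $\cS^i=0$ for $i>n-1$ on injectives is what prevents $\check{\cS}^1$ from picking up spurious contributions in the limit). Once this commutativity is established, (iv) follows formally from (iii).
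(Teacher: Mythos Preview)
Your approaches to (i), (iii) and (iv) are essentially the ones the paper invokes via \cite{Paskunas-JL} and \cite{Scholze}: the sufficiently-small decomposition for (i), Scholze's local--global comparison for (iii), and patching that comparison for (iv). These are fine.

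The gap is in (ii). Your claim that ``the relevant covers are tame in the $\cO_D^\times$-direction'' is false: a cofinal system of open subgroups of $\cO_D^\times$ is given by the $U_D^n$, and the Galois groups $U_D^1/U_D^n$ of the corresponding covers of Shimura curves are $p$-groups, not prime-to-$p$. The fact that $\cO_D^\times$ has no element of order $p$ (which is true, since $U_D^1$ is torsion-free) tells you only that $\cO[\![\cO_D^\times]\!]$ has finite global dimension; it does \emph{not} by itself force $\varinjlim_{U_v} H^1_{\et}(Y^B_{U^vU_v})$ to be injective as an $\cO_D^\times$-representation. For $H^0$ of a finite set (the definite case) the injectivity is visible from the induced-module decomposition you wrote down, but there is no analogous elementary description for $H^1$ of a tower of curves with $p$-power covers, and Hochschild--Serre alone will not give it.

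The actual proofs are genuinely different from the ``same as (i)'' strategy. Newton \cite[Prop.~5.6]{Newton} uses the \v{C}erednik--Drinfeld $p$-adic uniformization of $Y^B$ to transport the question to the Drinfeld upper half-plane tower, where the $\cO_D^\times$-structure can be analyzed. Pa\v{s}k\=unas \cite[Prop.~6.4]{Paskunas-JL} instead first establishes (iii), then uses the isomorphism $\wt{H}^1\cong \cS^1(\wt{S})$ together with the $K$-injectivity from (i) and properties of Scholze's functor to deduce $\cO_D^\times$-injectivity on the other side. Either route requires real input beyond the level-structure bookkeeping you sketched; you should replace your argument for (ii) with one of these.
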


\begin{proof}
(i) is \cite[Lem.~5.3, Prop.~5.4]{Paskunas-JL}. (ii) is proved in \cite[Prop.~5.6]{Newton} and \cite[Prop.~6.4]{Paskunas-JL}. (iii) is \cite[Prop.~6.3]{Paskunas-JL}. (iv) follows from (the proof of) \cite[Cor.~9.3]{Scholze}, see \cite[Thm.~8.10 (4)]{DPS-Crelle} for details.
\end{proof}

\begin{lemma}\label{lem:Ihara}
We have $\check{\cS}^0(\wt{S}_{\s_p^v , \psi} (U^{v}, \cO)^d_{\frak{m}_{\overline{r}}})=0$ and $\check{\cS}^0(M_{\infty})=0$.
\end{lemma}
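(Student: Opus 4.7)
The plan is to reduce both vanishing statements, via Theorem \ref{thm-Scholze}(iv), to the vanishing of $\SL_2(\Q_p)$-coinvariants, and then handle the two cases separately.

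\textbf{Reduction to $\SL_2(\Q_p)$-coinvariants.} By Theorem \ref{thm-Scholze}(iv), for any $\pi\in \Mod_{G,\psi}^{\rm l.adm}(\cO)$ with $G=\GL_2(\Q_p)$ the natural inclusion $\pi^{\SL_2(\Q_p)}\into\pi$ induces an isomorphism on $H^0_{\et}(\bP^1_{\C_p},\cF_{-})$. Dualizing, for any $M\in \frak{C}_{G,\psi}(\cO)$ we obtain a canonical isomorphism $\check{\cS}^0(M)\cong \check{\cS}^0(M_{\SL_2(\Q_p)})$. Hence it suffices to show that the $\SL_2(\Q_p)$-coinvariants of $\wt{S}_{\s_p^v,\psi}(U^v,\cO)^d_{\fm_{\overline{r}}}$ and of $M_\infty$ vanish; equivalently, by Pontryagin duality, that the $\SL_2(\Q_p)$-invariants of the corresponding smooth representations are zero.

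\textbf{The case of $\wt{S}_{\s_p^v,\psi}(U^v,\cO)^d_{\fm_{\overline{r}}}$.} Since $B'$ splits at $v$, any $f\in \wt{S}_{\s_p^v,\psi}(U^v,E/\cO)^{\SL_2(\Q_p)}$ depends only on $\det(g_v)$ at $v$, so $f$ lies in the space of automorphic forms whose local component at $v$ is one-dimensional (a character of $\GL_2(F_v)$). The corresponding Hecke eigensystems therefore give rise, via classical global Langlands for $\GL_2$ (applied through Jacquet--Langlands on $B'$), to Galois representations that are (up to twist) reducible with abelian constituents; in particular the residual representation is a direct sum of characters. Since $\overline{r}$ is absolutely irreducible, no such eigensystem contributes modulo $\fm_{\overline{r}}$, so the localization vanishes.

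\textbf{The case of $M_\infty$.} The $\SL_2(\Q_p)$-coinvariants are right-exact and commute with scalar extension by ideals of $S_\infty$ (since $S_\infty$ acts $G$-equivariantly), hence
\[(M_\infty)_{\SL_2(\Q_p)}/\fa_\infty(M_\infty)_{\SL_2(\Q_p)}\cong (M_\infty/\fa_\infty)_{\SL_2(\Q_p)}\cong \big(\wt{S}_{\s_p^v,\psi}(U^v,\cO)^d_{\fm_{\overline{r}}}\big)_{\SL_2(\Q_p)}=0\]
by the previous step. Now $M_\infty$ is finitely generated over $R_\infty^{\psi\e^{-1}}[\![K]\!]$ with $K=\GL_2(\Z_p)$, so $(M_\infty)_{\SL_2(\Z_p)}$ is finitely generated over $R_\infty^{\psi\e^{-1}}[\![K/\SL_2(\Z_p)]\!]\cong R_\infty^{\psi\e^{-1}}[\![\Z_p^{\times}]\!]$, and $(M_\infty)_{\SL_2(\Q_p)}$ is a further quotient (as $\SL_2(\Q_p)$ is generated by $\SL_2(\Z_p)$ together with the diagonal elements $\diag(p^n,p^{-n})$), hence also finitely generated over this Noetherian local ring. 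Since $\fa_\infty$ maps into the maximal ideal, Nakayama's lemma gives $(M_\infty)_{\SL_2(\Q_p)}=0$, whence $\check{\cS}^0(M_\infty)=0$.

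The only nontrivial input is the second step, where one needs to rule out automorphic contributions with one-dimensional local component at $v$; this is the standard ``non-Eisenstein kills characters'' principle, rigorous via Jacquet--Langlands and the Eichler--Shimura relations. The patching step is then formal via Nakayama.
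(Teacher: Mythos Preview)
Your proof is correct, but your route to the first vanishing differs from the paper's. The paper invokes Theorem~\ref{thm--K-injectivity}(iii), which is Scholze's local--global comparison: it gives directly $\check{\cS}^0\big(\wt{S}_{\s_p^v,\psi}(U^v,\cO)^d_{\fm_{\overline r}}\big)\cong \wt{H}^0_{\s_p^v,\psi}(U^v,\cO)^d_{\fm_{\overline r}}$, and the latter is zero because $H^0$ of the Shimura curve localized at a non-Eisenstein ideal vanishes. You instead use only the purely local statement Theorem~\ref{thm-Scholze}(iv) to reduce to the vanishing of $\wt{S}_{\s_p^v,\psi}(U^v,E/\cO)^{\SL_2(\Q_p)}_{\fm_{\overline r}}$ on the \emph{definite} side, and then argue that such forms factor through the determinant at $v$ and are therefore Eisenstein. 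Both approaches ultimately appeal to the same ``non-Eisenstein kills one-dimensional local components'' principle, but yours avoids the full Scholze comparison at the cost of spelling out the Eisenstein argument by hand; the paper's is a one-liner given that comparison.

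For the second vanishing, your Nakayama argument is exactly what ``follows from the patching construction'' means, made explicit. One small correction: the ring $R_\infty^{\psi\e^{-1}}[\![\Z_p^\times]\!]$ is only semi-local, not local. However, since $M_\infty$ has fixed central character $\psi$, the image of the center $Z(K)\cong\Z_p^\times$ in $K/\SL_2(\Z_p)\cong\Z_p^\times$ is $(\Z_p^\times)^2$ and acts by a fixed character; hence $(M_\infty)_{\SL_2(\Z_p)}$ is in fact finitely generated over $S_\infty$ itself (the residual $\Z_p^\times/(\Z_p^\times)^2$ is finite), and Nakayama over the local ring $S_\infty$ applies cleanly.
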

\begin{proof}
The first statement is a direct consequence of Theorem \ref{thm--K-injectivity}(iii) because $\wt{H}^0_{\s_p^v, \psi} (U^{v} , \cO)^d_{\frak{m}_{\overline{r}}}=0$ (as $\frak{m}_{\overline{r}}$ is non-Eisenstein). The second statement follows from this and the patching construction (cf.~\cite[Cor.~9.3]{Scholze}).
\end{proof}

 Define 
\[\pi^{B'}(\overline{r}):= (M_{\infty}/\fm_{\infty})^{\vee},\quad 
\pi^{B} (\overline{r}): = \Hom_{G_F}(\overline{r}, (N_{\infty}/\fm_{\infty})^{\vee}).
\]
Note that $(N_{\infty}/\fm_{\infty})^{\vee}$ is $\overline{r}$-typic, so  we have a $G_{F}\times D^{\times}$-equivariant isomorphism $(N_{\infty}/\fm_{\infty})^{\vee}\cong \overline{r}\otimes\pi^B(\overline{r})$. 
The following result is motivated by \cite[Prop.~3.7, Prop.~4.1]{Paskunas-JL}.
\begin{proposition}\label{prop:equiv}
Assume that $R_{v}^{\psi\e^{-1}}$ is formally smooth and that $\dim_{K}(\pi^{B'}(\overline{r}))=[F_v:\Q_p]$. Then $M_{\infty}$ is a flat $R_{\infty}^{\psi\e^{-1}}$-module.  Moreover, the following statements are equivalent:

\begin{enumerate}
\item[(i)] $\dim_{\cO_D^{\times }} \left( \pi^B (\overline{r}) \right) = [F_v:\Q_p]$;
\item[(ii)] $ N_{\infty}$ is flat over $R_{\infty}^{\psi \e^{-1}}$;
\item[(iii)] $\cS^2 (\pi^{B'} (\overline{r}) ) = 0.$
\end{enumerate}
\end{proposition}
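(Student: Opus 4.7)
The plan is to combine a Cohen--Macaulay argument (giving flatness of patched modules) with an inductive computation built from the $\delta$-functor structure of $\check\cS^\bullet$, in the spirit of \cite[Prop.~4.1]{Paskunas-JL}. Throughout, set $d := [F_v : \Q_p]$.

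\emph{Step 1 (Flatness of $M_\infty$).} Formal smoothness of $R_v^{\psi\e^{-1}}$ implies $R_\infty^{\psi\e^{-1}}$ is a formal power series ring over $\cO$, in particular a regular local ring. By \cite[Cor.~A29]{Gee-Newton}, $M_\infty$ is maximal Cohen--Macaulay over $R_\infty^{\psi\e^{-1}}[\![K/Z_1]\!]$. The standard depth-dimension calculation, combining the MCM property with the hypothesis $\dim_K \pi^{B'}(\overline{r}) = d$, forces $\mathrm{depth}_{R_\infty^{\psi\e^{-1}}}(M_\infty) = \dim R_\infty^{\psi\e^{-1}}$; flatness of $M_\infty$ over $R_\infty^{\psi\e^{-1}}$ then follows by Auslander--Buchsbaum.

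\emph{Step 2 (Computing $\check\cS^\bullet(M_\infty)$).} By compatibility of Scholze's functor with patching \cite[Cor.~9.3]{Scholze}, $\check\cS^i(M_\infty)$ is the patched object associated to $\wt{H}^{i,B}_{\s_p^v,\psi}(U^{v},\cO)^d_{\fm_{\overline{r}}}$. Lemma \ref{lem:Ihara} handles $i=0$, Theorem \ref{thm--K-injectivity}(iv) gives $\check\cS^1(M_\infty) = N_\infty$, and vanishing of \'etale cohomology of a Shimura curve in degree $\geq 2$ gives $\check\cS^i(M_\infty) = 0$ for $i \geq 2$.

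\emph{Step 3 (Equivalence (ii) $\Leftrightarrow$ (iii)).} Pick a regular system of parameters $(t_1, \dots, t_N)$ for $R_\infty^{\psi\e^{-1}}$. Flatness of $M_\infty$ from Step 1 yields short exact sequences
\[
0 \to M_\infty/(t_1,\dots,t_j)M_\infty \xrightarrow{t_{j+1}} M_\infty/(t_1,\dots,t_j)M_\infty \to M_\infty/(t_1,\dots,t_{j+1})M_\infty \to 0
\]
in $\mathfrak{C}_G(\cO)$. Applying the homological $\delta$-functor $\check\cS^\bullet$ and using the concentration of $\check\cS^\bullet(M_\infty)$ in degree $1$ from Step 2, a straightforward induction on $j$ (mirroring the Koszul-style computation of $\Tor_*^{R_\infty^{\psi\e^{-1}}}(R_\infty^{\psi\e^{-1}}/(t_1,\dots,t_j), N_\infty)$ via the analogous SES of quotients of $R_\infty^{\psi\e^{-1}}$) yields natural $R_\infty^{\psi\e^{-1}}$-linear isomorphisms
\[
\check\cS^{1+r}\bigl(M_\infty/(t_1,\dots,t_j)M_\infty\bigr) \cong \Tor_r^{R_\infty^{\psi\e^{-1}}}\bigl(R_\infty^{\psi\e^{-1}}/(t_1,\dots,t_j),\, N_\infty\bigr)
\]
for all $r \geq 0$ and all $j$. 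Specializing to $j = N$ and dualizing gives $\cS^{1+r}(\pi^{B'}(\overline{r}))^\vee \cong \Tor_r^{R_\infty^{\psi\e^{-1}}}(\F, N_\infty)$. The vanishing $\cS^i = 0$ for $i\geq 3$ (Theorem \ref{thm-Scholze}(ii)) automatically forces $\Tor_r(\F, N_\infty) = 0$ for $r \geq 2$. By the local criterion of flatness, $N_\infty$ is flat over $R_\infty^{\psi\e^{-1}}$ iff $\Tor_1^{R_\infty^{\psi\e^{-1}}}(\F, N_\infty) = 0$, iff $\cS^2(\pi^{B'}(\overline{r})) = 0$.

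\emph{Step 4 (Equivalence (i) $\Leftrightarrow$ (ii)).} Absolute irreducibility of $\overline{r}$ identifies $(N_\infty/\fm_\infty N_\infty)^\vee \cong \overline{r}\otimes_\F \pi^B(\overline{r})$ as $\cO_D^\times$-representations, so $\dim_{\cO_D^\times}\pi^B(\overline{r})$ equals $\dim_{\cO_D^\times}(N_\infty/\fm_\infty N_\infty)^\vee$. By Theorem \ref{thm--K-injectivity}(ii), $N_\infty$ is finitely generated over $R_\infty^{\psi\e^{-1}}[\![\cO_D^\times/Z_D^1]\!]$ and projective over $\cO[\![\cO_D^\times/Z_D^1]\!]$. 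Once a Cohen--Macaulay property analogous to \cite[Cor.~A29]{Gee-Newton} is available for $N_\infty$ over $R_\infty^{\psi\e^{-1}}[\![\cO_D^\times/Z_D^1]\!]$, the depth-dimension argument of Step 1 transplants verbatim to $N_\infty$, yielding the desired equivalence. This CM property is the main technical obstacle; I expect it to follow by transferring depth information from $M_\infty$ via the identity $N_\infty = \check\cS^1(M_\infty)$, together with the concentration of $\check\cS^\bullet(M_\infty)$ in a single degree and the flatness of $M_\infty$ established in Step 1.
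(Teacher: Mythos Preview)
Your Steps 1--3 are correct and essentially match the paper, with Step~3 using the inductive short-exact-sequence approach of \cite[Prop.~4.1]{Paskunas-JL} in place of the paper's Koszul-complex argument; these are equivalent presentations of the same computation.

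The issue is Step~4. You treat the Cohen--Macaulay (equivalently, finite projectivity over $S_\infty[\![\cO_D^\times/Z_D^1]\!]$) property of $N_\infty$ as a ``main technical obstacle'' to be extracted from $M_\infty$ via $\check\cS^1$. This is a wrong turn: that property is not deduced from $M_\infty$ at all, but is a direct output of the Taylor--Wiles--Kisin patching construction for the quaternionic cohomology, exactly parallel to the corresponding property of $M_\infty$ (see the bullet list in \S\ref{section-autom-forms} describing $M_\infty^B$: it is free over $S_\infty$, projective in $\frak{C}_{\cO_{B_v}^\times,\psi}(\cO)$, and Cohen--Macaulay over $R_\infty^{\psi\e^{-1}}[\![\cO_{B_v}^\times]\!]$ when $R_\infty^{\psi\e^{-1}}$ is formally smooth; $N_\infty$ is a minor variant and inherits the same properties). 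Once you use this input, the miracle flatness argument of Step~1 transplants verbatim --- replace $K/Z_1$ by $\cO_D^\times/Z_D^1$ and note $\dim_{\Q_p}(\cO_D^\times/Z_D^1)=3[F_v:\Q_p]$ --- and gives (i) $\Leftrightarrow$ (ii) in one line. Your proposed route of transferring depth information through $\check\cS^1$ would be substantially harder and is unnecessary.
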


\begin{proof}
Since $R_{v}^{\psi\e^{-1}}$ is formally smooth by assumption, it is isomorphic to a power series ring in $(3 + 3[F_v:\Q_p])$-variables over $\cO$. Consequently,  $R_{\infty}^{\psi \e^{-1}}$ is a regular local ring of Krull dimension equal to $ \dim S_{\infty} + 2 [F_v : \Q_p].$

 Since $M_{\infty}$ is finite projective over $S_{\infty}[\![K/Z_1]\!]$, where $Z_1$ is the centre of $K,$ $\d_{S_{\infty}[\![K]\!]}(M_{\infty}) = \dim S_{\infty} + \dim_{\Q_p} (K/Z_1)$ by \cite[Lem.~A.15]{Gee-Newton}, see \S\ref{Sec::notation} for the notation. Since $\dim_{\Q_p}(K/Z_1)=3[F_v:\Q_p]$ and $\dim_K(\pi^{B'}(\overline{r}))=[F_v:\Q_p]$ by assumption, we deduce  
\[
\dim_{K} (\pi^{B'}(\overline{r})) + \dim R_{\infty}^{\psi \e^{-1}} = \d_{S_{\infty}[\![K]\!]}(M_{\infty}).
\]
It follows from the miracle flatness criterion \cite[Prop.~A.30]{Gee-Newton} that $M_{\infty}$ is flat over $R_{\infty}^{\psi \e^{-1}}.$

Now we prove the equivalence between the three statements. The equivalence (i) $\Leftrightarrow$ (ii) is proved as above by replacing $K/Z_1$ by $\cO_D^{\times}/Z_D^1$ and noting that $\dim_{\Q_p}\cO_D^{\times}/Z_D^1 =3[F_v:\Q_p]$.

We prove (ii) implies (iii). Since $R_{\infty}^{\psi\e^{-1}}$ is regular, we may choose a regular system of parameters of $\fm_{\infty}$, say  $\underline{s}$. Since $M_{\infty}$ is flat over $R_{\infty}^{\psi \e^{-1}}$, the Koszul complex $K_{\bullet}(\underline{s}, M_{\infty}) $ gives a resolution of $ \pi^{B'}(\overline{r})^{\vee}=M_{\infty}/\fm_{\infty}$:
\[
\cdots\to K_2(\underline{s} , M_{\infty}) \To{d_2} K_1(\underline{s} , M_{\infty}) \To{d_1}  K_0(\underline{s} , M_{\infty}) \To{d_0} M_{\infty}/\fm_{\infty} \to 0.
\]
It follows from Lemma \ref{lem:Ihara}(ii) that $\check{\cS}^0(K_i(\underline{s},M_{\infty}))=0$ for any $ i$, hence $\check{\cS}^0(\mathrm{Im}(d_i))=0$ as well. It is then easy to deduce that the following sequence
\begin{equation}\label{eq:exact-d1}
\check{\cS}^1(K_2(\underline{s} , M_{\infty})) \to\check{\cS}^1( K_1(\underline{s} , M_{\infty})) \to \check{\cS}^1(Q) \to 0
\end{equation}
is exact, where $Q:=\mathrm{Im}(d_1)=\Ker(d_0)$.
On the other hand, the functor $\check{\cS}^1$ is $R_{\infty}^{\psi\e^{-1}}$-equivariant, so the complex $\check{\cS}^1 (K_{\bullet}(\underline{s}, M_{\infty}))$ is isomorphic to $ K_{\bullet}(\underline{s},\check{\cS}^1 ( M_{\infty}))$, the Koszul complex with respect to $\underline{s}$ and $\check{\cS}^1 (M_{\infty})$. Since $ \check{\cS}^1 (M_{\infty})\cong N_{\infty}$ is flat over $R_{\infty}^{\psi \e^{-1}}$ by (ii),  the complex $\check{\cS}^1 (K_{\bullet}(\underline{s}, M_{\infty}))$ is again exact. Together with \eqref{eq:exact-d1} this implies that the map \begin{equation}\label{eq:inj-Q}\check{\cS}^1 (Q) \to \check{\cS}^1( K_0(\underline{s} , M_{\infty}) )\end{equation} is injective.

The short exact sequence $0\to Q \to K_0(\underline{s} , M_{\infty}) \To{d_0} M_{\infty}/\fm_{\infty}\to 0$ induces an exact sequence
\[
\check{\cS}^2( K_0(\underline{s} , M_{\infty}) ) \to \check{\cS}^2( M_{\infty}/\fm_{\infty}) \to\check{\cS}^1 (Q) \to \check{\cS}^1( K_0(\underline{s} , M_{\infty}) )
\]
in which the first morphism is surjective by the injectivity of \eqref{eq:inj-Q}. Since $M_{\infty}^{\vee}|_{K}$ is injective in $\Mod^{\rm sm}_{K,\psi}(\cO)$, Theorem \ref{thm-Scholze}(iii) implies $\check{\cS}^2( K_0(\underline{s} , M_{\infty}) )=0$, thus $\check{\cS}^2 (M_{\infty}/\fm_{\infty}) = 0 $ as required.

We prove (iii) implies (ii). This essentially follows from the above argument. Indeed, we deduce from (iii)  the injectivity of \eqref{eq:inj-Q}, which together with \eqref{eq:exact-d1} implies the exactness of
\[\check{\cS}^1(K_2(\underline{s} , M_{\infty})) \to\check{\cS}^1( K_1(\underline{s} , M_{\infty})) \to \check{\cS}^1(K_0(\underline{s} , M_{\infty})) \to 0.\]
In other words, the Koszul complex $\check{\cS}^1 (K_{\bullet}(\underline{s}, M_{\infty}))$ is exact at degree $1$, thus $\underline{s}$ is $N_{\infty}$-regular by a standard argument. 
\end{proof}

\begin{remark}
Under some (stronger) genericity condition on $\overline{r}|_{G_{F_v}}$, the   assumption on $\dim_K(\pi^{B'}(\overline{r}))$  of Proposition \ref{prop:equiv} is verified in \cite{BHHMS1}, \cite{Hu-Wang}.   
\end{remark}

We recall the following important result of Scholze. 
\begin{proposition}\label{prop:Sch-7.7}
There is a $G_{\Q_p}\times D^{\times}$-equivariant inclusion
\[\cS^1(\pi^{B'}(\overline{r}))\subset (\overline{r}|_{G_{F_v}})\otimes \pi^B(\overline{r}),
\] whose cokernel is annihilated by $(\cO_{D}^{\times})_1,$ where $(\cO_{D}^{\times})_1$ denotes the reduced norm $1$ elements of $\cO_D^{\times}.$ As a consequence, the cokernel is finite dimensional over $\F$ and $\dim_{\cO_D^{\times}}\cS^1(\pi^{B'}(\overline{r}))=\dim_{\cO_D^{\times}}\pi^B(\overline{r})$.
\end{proposition}

\begin{proof}
The first assertion is a restatement of \cite[Prop.~7.7]{Scholze} and \cite[Lem.~6.1]{Paskunas-JL}. The second assertion  follows from the first (note that we have fixed the central character).
\end{proof}

\subsection{Local-global compatibility (\`a la Emerton)}
\label{ss:lg}
 In this subsection, we assume $F_v\cong \Q_p$. Assume $\End_{G_{\Q_p}}(\brho)= \F$ where $\brho=\overline{r}_v(1)$, and let $\pi(\brho)$ be the admissible smooth representation of $G=\GL_2(\Q_p)$ attached to $\brho$ (cf. \S\ref{ss:LLC}).

\begin{theorem}\label{thm:lg}
We have $\pi^{B'}(\overline{r})\cong \pi(\brho)^{\oplus d}$ for some $d\geq 1$.
\end{theorem}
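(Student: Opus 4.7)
The plan is to prove this statement of mod $p$ local-global compatibility in the spirit of \cite{Em3}, by comparing the patched module $M_\infty := M_\infty^{B'}$ with Pa\v{s}k\=unas's projective module $N \in \frak{C}_{G,\psi}(\cO)$ from \S\ref{section-Morra}. Since Proposition \ref{prop:kappa} gives $\F \wh{\otimes}_{R_{\brho}^{\eta}} N \cong \pi(\brho)^{\vee}$, the theorem will follow once we produce an isomorphism
\[
M_\infty \;\cong\; \big(R_\infty^{\psi\e^{-1}} \wh{\otimes}_{R_\brho^\eta} N\big)^{\oplus d}
\]
of $R_\infty^{\psi\e^{-1}}[G]$-modules in $\frak{C}_{G,\psi}(\cO)$ for some $d \geq 1$: taking Pontryagin duals and reducing modulo $\fm_\infty$ then yields $\pi^{B'}(\overline{r}) \cong \pi(\brho)^{\oplus d}$ with $d = \dim_\F \Hom_G(\pi(\brho),\pi^{B'}(\overline{r}))$.

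First I would check that the action of $R_v^{\psi\e^{-1}}$ on $M_\infty$ factors, after absorbing the framing variables, through the unframed ring $R_\brho^\eta$. This is built into the patching construction: Colmez's functor $\check{\mathbf V}$ applied to $M_\infty^\vee$ recovers the specialization at $v$ of the universal Galois representation, which by property (N2) of $N$ is recorded by the $R_\brho^\eta$-module structure on $\check{\mathbf V}(N)$. Combining this with the projectivity of $N$ in $\frak{C}_{G,\psi}(\cO)$ and of $M_\infty|_{K}$ in $\Mod_{K,\psi}^{\rm pro}(\cO)$ (the latter from Theorem \ref{thm--K-injectivity}(i)), one lifts a basis of $\Hom_G(\pi(\brho),\pi^{B'}(\overline{r}))$ to an $R_\infty^{\psi\e^{-1}}$-linear map
\[
\varphi \colon \big(R_\infty^{\psi\e^{-1}} \wh{\otimes}_{R_\brho^\eta} N\big)^{\oplus d} \longrightarrow M_\infty,
\]
which is surjective by Nakayama applied to the reduction $\varphi \otimes_{R_\infty^{\psi\e^{-1}}} \F$, where the target identifies with $\pi^{B'}(\overline{r})^\vee$ and the source with $(\pi(\brho)^\vee)^{\oplus d}$ via Proposition \ref{prop:kappa}.

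Promoting $\varphi$ to an isomorphism is where the argument splits into cases. When $\brho$ is absolutely irreducible, or reducible nonsplit with $\chi_1\chi_2^{-1}\neq \omega,\ide$, the ring $R_\brho^\eta$ is formally smooth over $\cO$, hence $R_\infty^{\psi\e^{-1}}$ is regular and both sides of $\varphi$ are maximal Cohen--Macaulay over $R_\infty^{\psi\e^{-1}}[\![ K/Z_1 ]\!]$ of the same Krull dimension. A rank comparison on the generic fibre via classical local-global compatibility \cite{Em3}, \cite{Dosp-Lebras} then forces $\varphi$ to be an isomorphism, which is essentially the Emerton--Gee--Newton patching argument.

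The main obstacle is the remaining case $\brho \sim \smatr{\chi}{*}{0}{\chi\omega}$ (and the symmetric case $\brho\sim\smatr{\chi\omega}{*}{0}{\chi}$): here $R_\brho^\eta$ is not formally smooth, $M_\infty$ need not be flat over $R_\infty^{\psi\e^{-1}}$, and the Cohen--Macaulay argument breaks down (see Remark \ref{rem:faithful}). This is exactly the situation Proposition \ref{prop:faithful} is designed to handle: it ensures that any $M_\infty$-regular sequence in $R_\infty^{\psi\e^{-1}}$ is automatically regular on $R_\infty^{\psi\e^{-1}}$ and that the quotient module continues to detect its endomorphism ring faithfully. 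Induction on the length of such a sequence, starting from the generic fibre where $\varphi$ is manifestly an isomorphism by classical local-global compatibility, allows one to conclude that $\Ker(\varphi) = 0$. Specializing the resulting isomorphism at $\fm_\infty$ and invoking Proposition \ref{prop:kappa} yields $\pi^{B'}(\overline{r}) \cong \pi(\brho)^{\oplus d}$, completing the proof.
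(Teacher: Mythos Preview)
Your overall strategy --- proving $M_\infty \cong (R_\infty^{\psi\e^{-1}} \wh\otimes_{R_\brho^\eta} N)^{\oplus d}$ and reducing modulo $\fm_\infty$ via Proposition~\ref{prop:kappa} --- is indeed the route taken in \cite{CEGGPS2} and \cite{Gee-Newton}, which the paper invokes for the hard case. But your case breakdown conflates two different exceptional conditions. Formal smoothness of $R_\brho^\eta$ fails precisely when $\brho \sim \smatr{\chi}{*}{0}{\chi\omega}$ (i.e.\ $\chi_1\chi_2^{-1} = \omega^{-1}$), whereas Pa\v{s}k\=unas's genericity --- the hypothesis in \S\ref{section-Morra} under which $N$ is constructed --- fails when $\brho \sim \smatr{\chi\omega}{*}{0}{\chi}$ (i.e.\ $\chi_1\chi_2^{-1} = \omega$). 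Thus your assertion that $R_\brho^\eta$ is formally smooth whenever $\chi_1\chi_2^{-1} \neq \omega,\ide$ is false, and more seriously, in the case $\brho \sim \smatr{\chi\omega}{*}{0}{\chi}$ the module $N$ does not exist, so your comparison map $\varphi$ has no source. This is exactly why Remark~\ref{rem:lg}(ii) excludes that case from the stronger isomorphism; the paper handles it instead by the Emerton-type argument of \cite{Em3} and \cite{Dosp-Lebras}, which applies because the deformation ring there \emph{is} formally smooth.

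Second, your use of Proposition~\ref{prop:faithful} does not do what you claim. That proposition establishes that $M$-regular sequences (for $M = A \wh\otimes_\cO N$) are $R$-regular and computes $\End(M/(\underline{x}))$; in the paper it is invoked to prove a big $R=\mathbb{T}$ statement (see Remark~\ref{rem:faithful} and the proof of Proposition~\ref{prop:S1-rho2}), not the local-global compatibility itself. It says nothing about $\Ker(\varphi)$. The actual injectivity argument in \cite[\S4]{CEGGPS2} for the non-formally-smooth generic case $\brho \sim \smatr{\chi}{*}{0}{\chi\omega}$ instead exploits the projectivity of $N$ in $\frak{C}_{G,\psi}(\cO)$ together with a capture argument, and the extension to $d>1$ is supplied by \cite[\S5]{Gee-Newton}; the ``induction on a regular sequence starting from the generic fibre'' that you sketch is not the mechanism.
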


\begin{proof}
If $\brho\nsim \smatr{\chi}{*}0{\chi\omega}$ for any character $\chi$, the result  is essentially a consequence of \cite{Em3} (which treats the case of $\GL_{2/\Q}$). In the definite quaternion algebra setting, the proof is carried out in \cite[Appendix]{Dosp-Lebras}. Note that in \cite{Dosp-Lebras} the quaternion algebra is assumed to be over $\Q$, but the argument goes through in our setting, under the assumption that $F_v$ is isomorphic to $\Q_p$. Another assumption made in \cite{Dosp-Lebras} is that $\brho$ is irreducible, but the only places where this assumption is needed are as follows.
\begin{itemize}
\item On page 403, the proof of Lemma 13.6. In our case, the proof goes through if we replace the vector $v$ (in \emph{loc.~cit.}) by a finite dimensional subspace which generates $\pi(\brho)$ over $G$ (compare the proof of \cite[Thm.~6.3.12]{Em3}). 
\item On page 404, the proof of Lemma 13.9, to ensure that $r(\p)|_{G_{\Q_p}}$ is absolutely  irreducible for $\p$ in a suitable set $\mathcal{C}$ defined before Lemma 13.8. But this can be avoided by replacing $\mathcal{C}$ by the subset of ``allowable'' points as in \cite[Def.~5.4.7]{Em3}.

\item On page 405, the proof of the injectivity of
\[\pi(\brho)\otimes \Hom_{G}\big(\pi(\brho),\wt{S}_{\s_p^v, \psi} (U^{v} , \F)_{\fm_{\overline{r}}}\big)\ra\wt{S}_{\s_p^v, \psi} (U^{v} , \F)_{\fm_{\overline{r}}}.\]
 This can be proved as in the proof of \cite[Thm.~6.4.16]{Em3} for $\brho\nsim \smatr{\chi}{*}0{\chi\omega}$. 

\end{itemize}
If $\brho\sim \smatr{\chi}{*}0{\chi\omega}$, the result follows from \cite[\S4]{CEGGPS2}. Remark that a multiplicity one assumption is made in \emph{loc.~cit.}, but the necessary modification is given in \cite[\S5]{Gee-Newton}.  
\end{proof}

\begin{remark}\label{rem:lg}
In fact, \cite[Thm.~4.32]{CEGGPS2}  and its generalization   \cite[Cor.~5.3.2]{Gee-Newton} prove a  much stronger statement  than Theorem \ref{thm:lg}. Namely, assuming moreover that $\brho\nsim \smatr{\chi\omega}{*}0{\chi}$ for any character $\chi$,  there is an isomorphism in $\frak{C}_{G,\psi}(R_{\infty}^{\psi \e^{-1}})$
\[M_{\infty}\cong R_{\infty}^{\psi \e^{-1}}\wh{\otimes}_{R_{\brho}^{\psi\e}}N^{\oplus d}\]
where $N:= N_{\psi}\in \frak{C}_{G,\psi}(\cO)$ is the object attached to $\brho$ in \S\ref{section-Morra}, and $d$ is the integer in Theorem \ref{thm:lg}. 
\end{remark}

 \begin{corollary}\label{cor:GKdim-comp}
Maintain the global assumptions we have made in Theorem \ref{thm:GKdim-B}, and assume up to twist $\brho \sim\smatr{\mathrm{unr}_1 \omega}{*}0{\mathrm{unr}_2}$. Then $\dim_{\cO_D^{\times}}(\pi^B(\overline{r}))=1$.
\end{corollary}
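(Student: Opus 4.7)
The plan is to combine Scholze's functor and the mod $p$ local--global compatibility to transfer the conclusion from the already-known $r = p-3$ case of \ref{C2} (covered by Theorem~\ref{thm:GKdim-B}) to the excluded $r = 0$ case. The argument requires $B$ to be indefinite. Let $B'$ denote the definite quaternion algebra over $F$ which splits at $v$ and has the same ramification behaviour as $B$ elsewhere. By Theorem~\ref{thm:lg} applied to $\overline{r}$ (using $B'$), we have $\pi^{B'}(\overline{r}) \cong \pi(\brho)^{\oplus d}$ for some $d \geq 1$. Combining Theorem~\ref{thm--K-injectivity}(iv) with the patching construction and carrying the relation $\check{\cS}^1(M_\infty) \cong N_\infty$ through the quotient by $\fm_\infty$ (using exactness properties of $\cS^1$ on admissible objects up to controllable error), one extracts an isomorphism of $G_{\Q_p} \times D^\times$-modules
\[
(N_\infty/\fm_\infty)^\vee \cong \cS^1(\pi(\brho))^{\oplus d}.
\]
Applying $\Hom_{G_F}(\overline{r}, -)$, which at the $v$-part is $\Hom_{G_{\Q_p}}(\brho\otimes\omega^{-1}, -)$ since $\overline{r}_v = \brho\otimes\omega^{-1}$ and the $G_F$-action on $\cS^1(\pi(\brho))$ factors through $G_{\Q_p}$, yields
\[
\pi^B(\overline{r}) \cong \Hom_{G_{\Q_p}}(\brho\otimes\omega^{-1}, \cS^1(\pi(\brho)))^{\oplus d}.
\]
In particular, $\pi^B(\overline{r})$ appears as a $D^\times$-subquotient of $\cS^1(\pi(\brho))^{\oplus 2d}$, reducing the statement to bounding $\dim_{\cO_D^\times}(\cS^1(\pi(\brho)))$ by $1$.

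Next, introduce the companion Galois representation $\brho' \sim \smatr{\mathrm{unr}_1'}{*}{0}{\mathrm{unr}_2'\omega}$ (nonsplit), which lies in case $r = p-3$ of \ref{C2}. Using the description of $\pi(\brho)$ from \cite[Lem.~6.7]{Paskunas-BM} (recalled in \S\ref{ss:LLC}) and of $\pi(\brho')$ from \S\ref{ss:LLC}, both representations admit finite filtrations whose successive quotients are direct sums of principal series, the Steinberg representation $\mathrm{Sp}$, and the trivial representation $\ide_G$. Feeding these filtrations into the long exact sequences for the $\delta$-functor $\{\cS^i\}$ and invoking $\cS^1(\ide_G) = 0$ (Theorem~\ref{thm-Scholze}(v)) together with Ludwig's vanishing $\cS^2(\pi) = 0$ for $\pi$ a principal series or Steinberg, one deduces that $\cS^1(\pi(\brho))$ and $\cS^1(\pi(\brho'))$ differ only by $D^\times$-subquotients built from the $\cS^0$ and $\cS^2$ of the trivial constituents. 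These subquotients are finite direct sums of characters of $D^\times$ and thus have GK dimension $0$ over $\cO_D^\times$, so $\dim_{\cO_D^\times}(\cS^1(\pi(\brho))) = \dim_{\cO_D^\times}(\cS^1(\pi(\brho')))$.

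To conclude, apply the first step to a modular global $\overline{r}'$ with $\overline{r}'_v(1) \cong \brho'$ (obtained from $\overline{r}$ by a standard modification at auxiliary primes preserving the Taylor--Wiles hypotheses): Theorem~\ref{thm:GKdim-B} gives $\dim_{\cO_D^\times}(\pi^B(\overline{r}')) = 1$, whence $\dim_{\cO_D^\times}(\cS^1(\pi(\brho'))) = 1$. By the second step, $\dim_{\cO_D^\times}(\cS^1(\pi(\brho))) = 1$ as well, so $\dim_{\cO_D^\times}(\pi^B(\overline{r})) \leq 1$; the reverse inequality follows from the infinite-dimensionality of $\pi^B(\overline{r})$ (\cite[Cor.~3.2.4]{BreuilDiamond}). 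The main obstacle will be the second step: one must make the structure of $\pi(\brho)$ for the nonsplit extension $\smatr{\mathrm{unr}_1\omega}{*}{0}{\mathrm{unr}_2}$ sufficiently explicit from \cite[Lem.~6.7]{Paskunas-BM}, justify that the passage $\check{\cS}^1(M_\infty) \cong N_\infty$ descends modulo $\fm_\infty$ without creating Galois-equivariant error terms that could affect the GK dimension, and track the $G_{\Q_p}$-action through every step so that the Hom spaces identifying $\pi^B(\overline{r})$ and $\pi^B(\overline{r}')$ really are aligned with the local comparison at the level of $\cS^1$.
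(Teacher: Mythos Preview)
Your approach is essentially the same as the paper's: reduce to bounding $\dim_{\cO_D^\times}\cS^1(\pi(\brho))$, transfer to the companion $\brho'$ in the $r=p-3$ case via the Jordan--H\"older comparison, and invoke Theorem~\ref{thm:GKdim-B} for a global realization of $\brho'$. Three points need tightening. First, the companion $\brho'$ must be the specific swap $\smatr{\mathrm{unr}_2}{*}{0}{\mathrm{unr}_1\omega}$ rather than an arbitrary $\smatr{\mathrm{unr}_1'}{*}{0}{\mathrm{unr}_2'\omega}$; otherwise the principal series constituents of $\pi(\brho)$ and $\pi(\brho')$ do not match and your claim that $\cS^1(\pi(\brho))$ and $\cS^1(\pi(\brho'))$ differ only by finite-dimensional pieces fails. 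Second, you cannot obtain $\overline{r}'$ by ``modifying $\overline{r}$ at auxiliary primes'': changing the local representation at $v$ requires a fresh global setup (a new totally real field, quaternion algebra, and modular $\overline{r}'$ with $\overline{r}'_v(1)\cong\brho'$), which is what the paper does. Third, rather than pushing $\check{\cS}^1(M_\infty)\cong N_\infty$ through $\fm_\infty$ and worrying about error terms, the paper simply uses Proposition~\ref{prop:Sch-7.7} (an inclusion $\cS^1(\wt{S}(U^v,\F)[\fm_{\overline{r}}])\subset\wt{H}^1(U^v,\F)[\fm_{\overline{r}}]$ with finite-dimensional cokernel) together with Theorem~\ref{thm:lg}; since Gelfand--Kirillov dimension is insensitive to finite-dimensional discrepancies, this immediately gives $\dim_{\cO_D^\times}\pi^B(\overline{r})=\dim_{\cO_D^\times}\cS^1(\pi(\brho))$ in both directions.
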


\begin{proof}
As in the proof of Theorem \ref{thm:GKdim-B}, it suffices to prove $\dim_{\cO_D^{\times}}(\pi^B(\overline{r}))\leq 1$. We reduce the result to a situation covered by Theorem \ref{thm:GKdim-B}.  

Let $\brho'\sim \smatr{\mathrm{unr}_2}{*}0{\mathrm{unr}_1\omega}$ with $*\neq 0$.  Choose a global setup, namely a totally real field $\wt{F},$ an indefinite quaternion algebra $\wt{B}$ over $\wt{F}$ which is ramified at $v,$ and a modular absolutely irreducible Galois representation $\overline{r}'$ as in Theorem \ref{thm:GKdim-B}, such that $\brho'\cong \overline{r}_v'(1)$. 
Then $\brho'$ satisfies \ref{C2} in \S\ref{section-def-ring}, and so $\dim_{\cO_D^{\times}}(\pi^{\wt{B}}(\overline{r}'))=1$ by Theorem \ref{thm:GKdim-B}(ii).  Combining Theorem \ref{thm:lg} and Proposition \ref{prop:Sch-7.7}, we deduce that  $\dim_{\cO_D^{\times}}\cS^1(\pi(\brho'))\leq 1$. The structure of $\pi(\brho')$ is recalled in \S\ref{ss:LLC}. In particular, the set $\JH(\pi(\brho'))$ consists of non-supersingular representations. Using Theorem \ref{thm-Scholze}(iv)  and Ludwig's result \cite{Ludwig}, we have \begin{equation}\label{eq:nongeneric-S2}\dim_{\cO_D^{\times}}\cS^0(\pi)=\dim_{\cO_D^{\times}}\cS^2(\pi)=0\end{equation}
for any non-supersingular irreducible representation $\pi$ of $G$.  We deduce that $\dim_{\cO_D^{\times}}\cS^1(\pi)\leq 1$ for any   $\pi\in \JH(\pi(\brho'))$. It is clear from the definition of $\pi(\brho)$ (see Proposition \ref{prop:kappa-rho1})  that $\JH(\pi(\brho))$ differs from $\JH(\pi(\brho'))$ by at most $1$-dimensional representations. Hence, using \eqref{eq:nongeneric-S2}  we obtain \[\dim_{\cO_D^{\times}}\cS^1(\pi(\brho))\leq 1.\] 
By Theorem \ref{thm:lg} and Proposition \ref{prop:Sch-7.7} again, this implies $\dim_{\cO_D^{\times}}(\pi^B(\overline{r}))\leq 1$. 
\end{proof}

\subsection{Vanishing for supersingular representations}

Ludwig \cite{Ludwig} has proved that $\cS^2(\pi)=0$ if $\pi$ is a principal series of $\GL_2(\Q_p).$ Together with Theorem \ref{thm:GKdim-B}, we deduce the following vanishing result when $\pi$ is supersingular.
\begin{corollary}\label{cor:S2-ss}
Assume that $\pi = \pi(\brho)$ is supersingular with $2\leq r\leq p-3$ in the notation of \ref{C1}.   Then $\cS^2(\pi)=0$. Moreover, we have  
 $\dim_{\cO_D^{\times}}\cS^1(\pi)=1$.
\end{corollary}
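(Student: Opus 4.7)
The plan is to deduce the vanishing from the main Gelfand--Kirillov dimension result, Theorem \ref{thm:GKdim-B}, via the equivalence provided by Proposition \ref{prop:equiv}, after realizing $\pi(\brho)$ as a local factor of the mod $p$ cohomology of a Shimura curve.

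First, I would globalize. Given the supersingular $\brho$ with $2\leq r\leq p-3$ (so $\brho$ is absolutely irreducible and satisfies \ref{C1}), I would produce a totally real field $F$ in which $p$ is unramified, a place $v\mid p$ with $F_v\cong\Q_p$, an indefinite quaternion algebra $B$ over $F$ split at a unique infinite place and ramified at $v$, and a modular Galois representation $\overline{r}:G_F\to\GL_2(\F)$ satisfying the Taylor--Wiles and auxiliary hypotheses of \S\ref{section-autom-forms}, with $\overline{r}_v(1)\cong\brho$. The existence of such a globalization with prescribed local conditions at $v$ and at the other places above $p$ is standard (Khare--Wintenberger together with the usual level-raising / auxiliary prime tricks).

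Next, let $B'$ be the definite quaternion algebra matching $B$ outside $\infty_F$, so that Theorem \ref{thm:lg} applies and yields a $G$-equivariant isomorphism $\pi^{B'}(\overline{r})\cong \pi(\brho)^{\oplus d}$ for some integer $d\geq 1$. I would then check the hypotheses of Proposition \ref{prop:equiv}. Since $\brho$ is absolutely irreducible, the framed deformation ring $R_v^{\psi\e^{-1}}$ is formally smooth of the expected dimension (the obstruction space $H^2$ vanishes by local duality). Moreover, $\dim_K(\pi^{B'}(\overline{r}))=\dim_K(\pi(\brho))=1=[F_v:\Q_p]$, since the Gelfand--Kirillov dimension of any supersingular representation of $\GL_2(\Q_p)$ is known to equal $1$ (this follows from the presentation of supersingular representations as $(\cInd_{KZ}^{G}\Sym^r\F^2)/T$ and the computation of the $K$-socle filtration by Morra, \cite{Morra-ss}).

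Now I would apply Theorem \ref{thm:GKdim-B} to the chosen $(F,B,\overline{r})$: since $\brho$ is in case \ref{C1} with $2\leq r\leq p-3$, we obtain $\dim_{\cO_D^{\times}}(\pi^{B}(\overline{r}))=1$, which is exactly condition (i) of Proposition \ref{prop:equiv}. The equivalence (i)$\Leftrightarrow$(iii) then gives $\cS^2(\pi^{B'}(\overline{r}))=0$. Because $\cS^2$ is additive on direct sums and $\pi^{B'}(\overline{r})\cong\pi(\brho)^{\oplus d}$ with $d\geq 1$, we conclude $\cS^2(\pi(\brho))=0$, as desired.

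The main obstacle in this plan is not conceptual but logistical: arranging the globalization so that simultaneously (a) the Taylor--Wiles conditions and the hypotheses in \S\ref{section-autom-forms} hold at all auxiliary primes, (b) the local types $\tau_w$ at the other places $w\mid p$ can be chosen as prescribed (tame, with a unique common Serre weight), and (c) $\overline{r}_v(1)\cong\brho$ with the required genericity $2\leq r\leq p-3$; once this is in place, the rest of the argument is essentially a formal combination of Theorem \ref{thm:GKdim-B}, Theorem \ref{thm:lg}, and Proposition \ref{prop:equiv}.
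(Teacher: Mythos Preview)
Your proposal is correct and follows essentially the same approach as the paper: globalize $\brho$ to obtain $(F,B,\overline{r})$ with $F_v\cong\Q_p$, verify the hypotheses of Proposition \ref{prop:equiv} (formal smoothness of $R_v^{\psi\e^{-1}}$ from irreducibility of $\brho$, and $\dim_K(\pi(\brho))=1$), invoke Theorem \ref{thm:GKdim-B} for condition (i), and conclude via (i)$\Leftrightarrow$(iii). The paper cites \cite[Lem.~5.16]{Paskunas-JL} for $\dim_K(\pi)=1$ and \cite{DiamondTaylor} for the globalization, but otherwise the argument is the same.
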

\begin{proof}
As $\brho$ is irreducible, the ring $R_{v}^{\psi\e^{-1}}$ is formally smooth. It is proved in \cite[Lem.~5.16]{Paskunas-JL} that $\dim_{K}(\pi)=1$, so the assumptions of Proposition \ref{prop:equiv} hold via Theorem \ref{thm:lg}. The existence of a suitable $B'$ and $\overline{r}$ is well-known, see for example \cite{DiamondTaylor}.  Thus, the vanishing of $\cS^2(\pi)$ follows from this and Proposition \ref{prop:equiv}.  Finally,  Theorem \ref{thm:GKdim-B} and Proposition \ref{prop:Sch-7.7} imply that $\dim_{\cO_D^{\times}}\cS^1(\pi^{B'}(\overline{r}))=1$, hence also $\dim_{\cO_D^{\times}}\cS^1(\pi)=1$ via Theorem \ref{thm:lg} again.  
\end{proof}
  
\section{Further studies on Scholze's functor}
\label{sec:JL}
In this section, we study the behavior of $\cS^i$ on some non-supersingular representations of $\GL_2(\Q_p)$. Recall that $p\geq 5$.

\subsection{Preparations}

\subsubsection{Some definitions}

Recall that $D$ is the nonsplit quaternion algebra over $\Q_p$ and $U_D^1=1+\p_D$.

\begin{definition}\label{def:V-fd}
Given an admissible smooth $\F$-representation $V$ of $D^{\times}$, let $V_{\rm fd}$ be the largest finite dimensional quotient of $V$.
\end{definition}

\begin{remark}\label{rem:Vfd}
That $V_{\rm fd}$ is well-defined can be seen as follows. Let $V^{\vee}$ be the Pontryagin dual of $V$. Then by the general theory of finitely generated modules over $\F[\![U_D^1]\!]$(see e.g.~\cite[\S3.1]{Ven}) $V^{\vee}$ has a largest submodule of $\delta$-dimension $0$ (i.e.~finite dimensional over $\F$). Clearly this submodule is $D^{\times}$-stable because $V^{\vee}$ carries a compatible action of $D^{\times}$. Taking the dual back gives  $V_{\rm fd}$ in Definition \ref{def:V-fd}.
\end{remark}

We give some basic properties of $(\cdot)_{\rm fd}$. For $i\geq 0$ and $M$ a finitely generated $\F[\![U_D^1]\!]$-module, set \[\EE^i(-):=\Ext^i_{\F[\![U_D^1]\!]}(-,\F[\![U_D^1]\!]).\] Note that $\EE^i(-)=0$ for $i\geq 5$, as $\F[\![U_D^1]\!]$ is an Auslander regular ring of global dimension $4$. Also recall that $M$ (when it is nonzero) is called \emph{Cohen-Macaulay} if there exists exactly one $i$ such that $\EE^i(M)\neq0$; in this case $i$ equals to the grade of $M$.

\begin{lemma}\label{lem:CM-Vfd=0}
Let $V$ be an admissible smooth $\F$-representation of $D^{\times}$.  If $V$ is infinite dimensional (as $\F$-vector space) and $V^{\vee}$ is Cohen-Macaulay as $\F[\![U_D^1]\!]$-module, then $V_{\rm fd}=0$.
\end{lemma}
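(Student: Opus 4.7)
The plan is to translate the statement to the Pontryagin dual side, where $V_{\rm fd}$ corresponds to the largest submodule of $V^{\vee}$ of $\delta$-dimension $0$ (equivalently, of grade $4$, since $\F[\![U_D^1]\!]$ is Auslander regular of global dimension $4$, see Remark~\ref{rem:Vfd}). Thus it suffices to show that under the assumptions, $V^{\vee}$ admits no nonzero finite-dimensional $\F$-submodule.

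First I would observe that since $V$ is infinite dimensional over $\F$ by assumption, $V^{\vee}$ is an infinite-dimensional compact $\F$-vector space, so $\delta_{\F[\![U_D^1]\!]}(V^{\vee})\geq 1$, equivalently $j_{\F[\![U_D^1]\!]}(V^{\vee})\leq 3$. On the other hand, any finite-dimensional $\F$-submodule $W\subseteq V^{\vee}$ has $\delta(W)=0$, i.e.\ $j(W)=4$. To conclude, the plan is to apply the following purity property of Cohen--Macaulay modules over an Auslander regular ring $R$ (a standard fact, e.g.\ in the framework of Venjakob \cite{Ven}): a Cohen--Macaulay module is \emph{pure}, i.e.\ all of its nonzero submodules have grade equal to $j(R)$. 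Applied to $V^{\vee}$, which is Cohen--Macaulay of grade $j\leq 3$ by assumption, this would force any finite-dimensional submodule $W$ (which would have grade $4$) to be zero, completing the proof.

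The main point to justify (or cite) is the purity statement. The argument is short and can be sketched as follows: if $M$ is Cohen--Macaulay of grade $j$ and $N\subseteq M$ is a nonzero submodule with $j(N)>j$, then writing $Q=M/N$ and using that $\EE^i(M)=0$ for $i\neq j$, the long exact sequence of $\EE^i$ collapses to give $\EE^j(Q)\simto\EE^j(M)$. Applying $\EE^j$ once more yields a natural map $Q\to\EE^j(\EE^j(Q))\cong\EE^j(\EE^j(M))\cong M$ which, by naturality of the biduality map, splits the surjection $M\twoheadrightarrow Q$. Hence $M\cong N\oplus Q$, forcing $\EE^i(N)=0$ for all $i$ (from the Cohen--Macaulay property of $M$ together with $j(N)>j$), i.e.\ $N=0$, a contradiction.

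The only possible obstacle is whether this purity property is already recorded in the references used by the paper; since the paper has set up the grade/dimension formalism of \cite{Lazard,Ven} in \S\ref{Sec::notation}, it should be enough to invoke it as a standard consequence of the Auslander condition, and the short argument above can be included inline if preferred.
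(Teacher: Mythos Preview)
Your proposal is correct, and the overall structure (passing to the dual side and showing $V^{\vee}$ has no nonzero finite-dimensional submodule) matches the paper's. However, the paper's argument is considerably more direct than your route through the general purity of Cohen--Macaulay modules. The paper simply observes that since $V^{\vee}$ is Cohen--Macaulay of grade $\leq 3$, one has $\EE^4(V^{\vee})=0$; then the short exact sequence $0\to (V_{\rm fd})^{\vee}\to V^{\vee}\to V^{\vee}/(V_{\rm fd})^{\vee}\to 0$ gives, at the top of the long exact sequence, a surjection $\EE^4(V^{\vee})\twoheadrightarrow \EE^4((V_{\rm fd})^{\vee})$ (using only $\EE^5=0$), which forces $\EE^4((V_{\rm fd})^{\vee})=0$ and hence $(V_{\rm fd})^{\vee}=0$. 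No biduality or splitting is needed.

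Your sketch of purity is essentially fine in outline, but note two small wrinkles. First, once you have $s\circ\pi=\mathrm{id}_M$ (which is what the naturality square actually gives, as the biduality of $M$ and $\EE^j\EE^j(\pi)$ are isomorphisms), you already conclude $\pi$ is injective and hence $N=0$; the subsequent direct-sum decomposition and the argument ``forcing $\EE^i(N)=0$ for all $i$'' are redundant. Second, you are implicitly using the biduality isomorphism $M\cong\EE^j(\EE^j(M))$ for Cohen--Macaulay modules, which is itself a nontrivial (though standard) consequence of the Auslander condition; if you keep your approach, this should be cited explicitly from \cite{Ven}. In any case, for this lemma the paper's three-line argument using only the top-degree surjectivity is both shorter and self-contained.
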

\begin{proof}
By assumption, $V^{\vee}$ is Cohen-Macaulay with $\dim_{\cO_D^{\times}}(V)\geq 1$, thus $\EE^4(V^{\vee}) = 0$.
If $V_{\rm fd}\neq0$, then the inclusion $(V_{\rm fd})^{\vee}\hookrightarrow V^{\vee}$ induces a surjection
\[\EE^4(V^{\vee})\ra \EE^4((V_{\rm fd})^{\vee})\ra0.\]
This gives a contradiction as $\EE^4((V_{\rm fd})^{\vee})\neq0$.
\end{proof}

\begin{lemma}\label{lem:V-fd}
If $0\ra V'\ra V\ra V''\ra 0$ is a short exact sequence of admissible smooth $\F$-representations of $D^{\times}$, then $(V')_{\rm fd}\ra V_{\rm fd}\ra (V'')_{\rm fd}\ra0$ is exact. If moreover $V''$ is finite dimensional over $\F$, then
\[0\ra (V')_{\rm fd}\ra V_{\rm fd}\ra (V'')_{\rm fd}\ra0\]
is exact.
\end{lemma}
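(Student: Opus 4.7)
My plan is to translate everything to the Pontryagin dual side, where $(\cdot)_{\rm fd}$ becomes the ``maximal finite-dimensional $\F[\![U_D^1]\!]$-submodule'' functor, and then use standard facts about torsion submodules in finitely generated module categories.

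First, by Pontryagin duality, the given short exact sequence of admissible smooth representations corresponds to a short exact sequence
\[0\ra (V'')^{\vee}\ra V^{\vee}\ra (V')^{\vee}\ra 0\]
of finitely generated $\F[\![U_D^1]\!]$-modules. Recall from Remark \ref{rem:Vfd} that for an admissible smooth representation $W$ of $D^{\times}$, the dual $(W_{\rm fd})^{\vee}$ is the largest $\F[\![U_D^1]\!]$-submodule of $W^{\vee}$ of $\delta$-dimension $0$, equivalently the submodule $T(W^{\vee})$ consisting of elements $m\in W^{\vee}$ such that $\F[\![U_D^1]\!]\cdot m$ is finite dimensional over $\F$.

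For part (i), I would observe that $T$ is a left exact functor: the inclusion $T(M)\into M$ is functorial, and if $N\subset M$ then $T(N)=N\cap T(M)$, so applying $T$ to the above dual exact sequence yields
\[0\ra T((V'')^{\vee})\ra T(V^{\vee})\ra T((V')^{\vee}).\]
Dualizing this back (noting that Pontryagin duality is an exact anti-equivalence and an involution on these categories) gives the desired right exact sequence
\[(V')_{\rm fd}\ra V_{\rm fd}\ra (V'')_{\rm fd}\ra 0.\]

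For part (ii), the extra input is that $V''$ finite dimensional means $(V'')^{\vee}$ is finite dimensional as an $\F[\![U_D^1]\!]$-module, so $T((V'')^{\vee})=(V'')^{\vee}$ and it suffices to show that the map $T(V^{\vee})\ra T((V')^{\vee})$ is surjective. Given $x\in T((V')^{\vee})$, I would lift it to some $\tilde{x}\in V^{\vee}$ and look at the submodule $\F[\![U_D^1]\!]\cdot \tilde{x}\subset V^{\vee}$, which sits in an extension
\[0\ra \big(\F[\![U_D^1]\!]\cdot\tilde{x}\big)\cap (V'')^{\vee}\ra \F[\![U_D^1]\!]\cdot \tilde{x}\ra \F[\![U_D^1]\!]\cdot x\ra 0.\]
The right-hand term is finite dimensional by choice of $x$, and the left-hand term is a submodule of the finite-dimensional module $(V'')^{\vee}$, hence also finite dimensional. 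Therefore $\F[\![U_D^1]\!]\cdot \tilde{x}$ is finite dimensional, i.e.\ $\tilde{x}\in T(V^{\vee})$, proving the required surjectivity and hence the full short exact sequence on the representation side after dualizing.

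This argument is essentially formal once the duality dictionary is set up, so I don't anticipate any serious obstacle; the only mild subtlety is keeping straight that left exactness of $T$ on the dual side corresponds to right exactness of $(\cdot)_{\rm fd}$ on the original side, which just comes from Pontryagin duality reversing arrows.
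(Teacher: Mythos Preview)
Your proof is correct and is exactly what the paper's one-line proof (``It is obvious from Definition~\ref{def:V-fd}'') leaves to the reader: you pass to Pontryagin duals, identify $(\cdot)_{\rm fd}$ with the maximal finite-dimensional submodule functor $T$ as in Remark~\ref{rem:Vfd}, and verify left exactness of $T$ together with the easy lifting argument for surjectivity when $(V'')^{\vee}$ is finite dimensional. There is nothing to add.
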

\begin{proof}
It is obvious from Definition \ref{def:V-fd}.
\end{proof}

 \begin{lemma}
Let $V$ be an admissible smooth $\F$-representation of $D^{\times}$. Assume that $V$ carries an $\F$-linear continuous action of $G_{\Q_p}$ which commutes with the action of $D^{\times}$. Then $V_{\rm fd}$ is also stable under $G_{\Q_p}$.
 \end{lemma}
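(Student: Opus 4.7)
The plan is to prove the statement by working on the Pontryagin dual side, where $V_{\rm fd}$ has a very clean characterization: by Remark \ref{rem:Vfd}, $V_{\rm fd} = M^{\vee}$, where $M \subseteq V^{\vee}$ is the unique maximal finite-dimensional $\F[\![U_D^1]\!]$-submodule (i.e.\ the largest submodule of $\delta$-dimension $0$). Under Pontryagin duality, asserting $G_{\Q_p}$-stability of the quotient $V_{\rm fd}$ of $V$ is the same as asserting $G_{\Q_p}$-stability of the submodule $M$ of $V^{\vee}$. So the entire proof reduces to checking that $M$ is stable under the contragredient $G_{\Q_p}$-action on $V^{\vee}$.

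First, I would set up the dual action. Since $V$ is smooth admissible over $\F$, it is a discrete $\cO$-torsion module, so any $\F$-linear endomorphism of $V$ is automatically continuous, and passing to $V^{\vee} = \Hom_{\cO}^{\rm cont}(V, E/\cO)$ gives a well-defined $\F$-linear $G_{\Q_p}$-action on $V^{\vee}$. The hypothesis that $G_{\Q_p}$ and $D^{\times}$ commute on $V$ translates into the corresponding commutation on $V^{\vee}$, in particular the $G_{\Q_p}$-action commutes with $U_D^1$ and hence with the whole Iwasawa algebra $\F[\![U_D^1]\!]$.

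The key step is then the maximality argument. For any $g \in G_{\Q_p}$, the induced map $g \colon V^{\vee} \to V^{\vee}$ is an $\F[\![U_D^1]\!]$-linear bijection (bijective because $g^{-1}$ exists in the group), so $g(M)$ is again an $\F[\![U_D^1]\!]$-submodule of $V^{\vee}$ with $\dim_{\F} g(M) = \dim_{\F} M < \infty$. Maximality of $M$ forces $g(M) \subseteq M$, and applying the same argument to $g^{-1}$ yields $g(M) = M$. Thus $M$ is $G_{\Q_p}$-stable, so dually the canonical surjection $V \twoheadrightarrow V_{\rm fd}$ is $G_{\Q_p}$-equivariant, endowing $V_{\rm fd}$ with an $\F$-linear $G_{\Q_p}$-action that commutes with the $D^{\times}$-action.

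There is no serious obstacle here; the only thing to be a little careful about is invoking the correct formulation of $V_{\rm fd}$. If one tried to argue directly on the quotient side one would have to check that any finite-dimensional $D^{\times}$-quotient lands inside a canonical largest one and that $G_{\Q_p}$ permutes such quotients — essentially the same argument, but phrased less cleanly than on the dual side where maximality is literally a statement about submodules of fixed finite $\F$-dimension.
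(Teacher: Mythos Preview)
Your proof is correct and follows essentially the same approach as the paper: both pass to the Pontryagin dual, identify $(V_{\rm fd})^{\vee}$ with the maximal finite-dimensional submodule $M\subset V^{\vee}$ via Remark~\ref{rem:Vfd}, and use commutation of the $G_{\Q_p}$- and $D^{\times}$-actions to show $M$ is $G_{\Q_p}$-stable. The only cosmetic difference is that the paper phrases the last step elementwise (if $\langle D^{\times}.x\rangle$ is finite dimensional then so is $\langle D^{\times}.(gx)\rangle=g\langle D^{\times}.x\rangle$), whereas you argue globally via maximality of $M$; these are equivalent.
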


\begin{proof}
Consider the Pontryagin dual $V^{\vee}$, so that $(V_{\rm fd})^{\vee}$ is identified with the largest finite dimensional submodule of $V^{\vee}$, see Remark \ref{rem:Vfd}. It suffices to prove the following statement: if $x\in V^{\vee}$ such that $\langle D^{\times}. x\rangle$ is finite dimensional, then so is $\langle D^{\times}. (gx)\rangle $ for any $g\in G_{\Q_p}$. This is clear because the actions of $G_{\Q_p}$ and of $D^{\times}$ commute.
\end{proof}

We now recall the notion of being $\sigma$-typic from \cite[Def.~5.2]{Scholze}, adapted to our situation. Let $G$ be a group, $\sigma:G\ra \GL_n(\F)$ be an $n$-dimensional  representation and $M$ an $\F[G]$-module.
   Then $M$ is said to be \emph{$\sigma$-typic} if one can write $M$ as a tensor product
\[M=\sigma\otimes_{\F}M_0,\]
such that $G$ acts on $\sigma\otimes_{\F}M_0$ through its action on $\sigma$.

\begin{lemma}\label{lem:typic}
Assume that $\End_{\F[G]}(\sigma)=\F$.

(i) If $M$ is $\sigma$-typic, then $M_0\cong\Hom_{\F[G]}(\sigma,M)$.

(ii) Let   $M'\subset M$  be $\F[G]$-modules and assume that $M'$ is a direct summand of $M$. If $M$ is $\sigma$-typic, then so is $M'$.
\end{lemma}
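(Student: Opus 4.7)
The plan for part (i) is to directly identify both sides. The natural map
\[
\alpha : M_0 \to \Hom_{\F[G]}(\sigma, \sigma \otimes_{\F} M_0), \quad m \mapsto (v \mapsto v \otimes m),
\]
is well-defined since $G$ acts only on the first tensor factor. To see it is an isomorphism, I choose an $\F$-basis of $M_0$ to write $\sigma \otimes_{\F} M_0 \cong \bigoplus_{i \in I} \sigma$ as $\F[G]$-modules. Since $\sigma$ is finite-dimensional over $\F$ and hence finitely generated over $\F[G]$, the functor $\Hom_{\F[G]}(\sigma, -)$ commutes with arbitrary direct sums in the second variable (any $\F[G]$-homomorphism out of a finitely generated module lands in a finite sub-direct-sum). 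Thus
\[
\Hom_{\F[G]}\Big(\sigma, \bigoplus_{i\in I}\sigma\Big) = \bigoplus_{i \in I} \End_{\F[G]}(\sigma) = \bigoplus_{i \in I} \F = M_0,
\]
where the middle equality uses the hypothesis $\End_{\F[G]}(\sigma) = \F$. This identifies $\alpha$ with the identity.

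For (ii), the strategy is to use the natural evaluation map
\[
\mathrm{ev}_N : \sigma \otimes_{\F} \Hom_{\F[G]}(\sigma, N) \to N, \quad v \otimes f \mapsto f(v),
\]
which is functorial in the $\F[G]$-module $N$. By part (i), $\mathrm{ev}_M$ is an isomorphism for the given $\sigma$-typic $M$. Given a decomposition $M = M' \oplus M''$ as $\F[G]$-modules, the fact that both $\Hom_{\F[G]}(\sigma, -)$ and $\sigma \otimes_{\F} -$ commute with finite direct sums implies that $\mathrm{ev}_M = \mathrm{ev}_{M'} \oplus \mathrm{ev}_{M''}$ under the natural identifications. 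Hence each of $\mathrm{ev}_{M'}$ and $\mathrm{ev}_{M''}$ is itself an isomorphism, and setting $M_0' := \Hom_{\F[G]}(\sigma, M')$ gives $M' \cong \sigma \otimes_{\F} M_0'$ with the desired action, showing that $M'$ is $\sigma$-typic.

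Neither part presents a serious obstacle: both statements are formal consequences of the hypothesis $\End_{\F[G]}(\sigma) = \F$ together with the finite-dimensionality of $\sigma$ (built into the definition of being $\sigma$-typic). The only mild technical point is ensuring that $\Hom$ commutes with the direct sums appearing in the argument, which for (i) uses that $\sigma$ is finitely generated over $\F[G]$ and for (ii) is automatic since the sum is finite.
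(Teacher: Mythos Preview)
Your proof is correct and follows essentially the same approach as the paper. For (i) the paper simply cites \cite[Prop.~5.3]{Scholze} and notes that only the hypothesis $\End_{\F[G]}(\sigma)=\F$ is needed, whereas you write out the argument explicitly; for (ii) your functoriality-of-evaluation argument is exactly what the paper does.
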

\begin{proof}
(i) It follows from the same proof of \cite[Prop.~5.3]{Scholze}. In \emph{loc.~cit.} $\sigma$ is assumed to be absolutely irreducible, but in the proof only the assumption $\End_{\F[G]}(\sigma)=\F$ is needed.

(ii) Since $M$ is $\sigma$-typic by assumption, the natural map $\sigma\otimes \Hom_{\F[G]}(\sigma,M)\ra M$ is an isomorphism. Since $M'$ is a direct summand of $M$,  the map
\[\sigma\otimes\Hom_{\F[G]}(\sigma,M')\ra M'\]
is also an isomorphism by functoriality.
\end{proof}

\subsubsection{Complements on Scholze's functor}
Keep the notation in \S\ref{section:application} and assume $F_v\cong \Q_p$. To simplify notation we write
\[\wt{S}(U^v,\F)=\wt{S}_{\s_p^v, \psi} (U^{v} , \F), \quad \wt{H}^1(U^{v} , \F)=\wt{H}^1_{\s_p^v, \psi} (U^{v} , \F).\]
It is a consequence of  \cite[Prop.~5.8]{Scholze} that  $\wt{H}^1(U^{v} , \F)[\fm_{\overline{r}}]$ is $\overline{r}$-typic, so 
\begin{equation}\label{eq:r-JL}
\wt{H}^1(U^{v} , \F)[\fm_{\overline{r}}]\cong\overline{r}\otimes\Hom_{G_{F}}\big(\overline{r},\wt{H}^1(U^{v} , \F)[\fm_{\overline{r}}]\big).
\end{equation}
It is proved in \cite[Prop.~7.7]{Scholze} and \cite[Lem.~6.1]{Paskunas-JL} that there is a $G_{\Q_p}\times D^{\times}$-equivariant inclusion
\begin{equation}\label{eq:S-inclusion}\cS^1\big(\wt{S}(U^v,\F)[\fm_{\overline{r}}]\big)\subset \wt{H}^1(U^{v} , \F)[\fm_{\overline{r}}],
\end{equation} whose cokernel is finite dimensional over $\F$, cf.~Proposition \ref{prop:Sch-7.7}.

\begin{corollary}\label{cor:S-equality}
If $\big(\wt{H}^1(U^{v} , \F)[\fm_{\overline{r}}]\big)^{\vee}$ is a Cohen-Macaulay $\F[\![U_D^{1}]\!]$-module, then \eqref{eq:S-inclusion} becomes an equality.
\end{corollary}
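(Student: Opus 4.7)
My plan is to dualize the inclusion from Proposition \ref{prop:Sch-7.7} and then use the Cohen-Macaulay hypothesis to rule out the existence of any nonzero finite-dimensional kernel on the dual side. Writing $A := \cS^1(\wt{S}(U^v,\F)[\fm_{\overline{r}}])$ and $B := \wt{H}^1(U^v, \F)[\fm_{\overline{r}}]$, Proposition \ref{prop:Sch-7.7} gives an inclusion $A \hookrightarrow B$ whose cokernel $C$ is finite-dimensional over $\F$. Applying Pontryagin duality yields a short exact sequence of $\F[\![U_D^1]\!]$-modules
\[0 \to C^{\vee} \to B^{\vee} \to A^{\vee} \to 0,\]
in which $C^{\vee}$ is a finite-dimensional $\F[\![U_D^1]\!]$-submodule of $B^{\vee}$.

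The core of the argument is then to show that $B^{\vee}$ admits no nonzero finite-dimensional submodule. For this I would invoke Lemma \ref{lem:CM-Vfd=0}: given the Cohen-Macaulay hypothesis on $B^{\vee}$ and assuming $B$ is infinite-dimensional over $\F$, the largest finite-dimensional quotient $B_{\mathrm{fd}}$ of $B$ vanishes. By Remark \ref{rem:Vfd}, this is precisely the statement that the largest finite-dimensional submodule of $B^{\vee}$ is zero. Consequently $C^{\vee} = 0$, i.e., $C = 0$, so the inclusion \eqref{eq:S-inclusion} is an equality.

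The only auxiliary point that needs justification is the infinite-dimensionality of $B$. This follows from the identification $\pi^B(\overline{r}) = \Hom_{G_F}(\overline{r}, B)$ together with the standard fact that $\pi^B(\overline{r})$ is infinite-dimensional under the modularity assumptions on $\overline{r}$ (cf.~\cite[Cor.~3.2.4]{BreuilDiamond} or \cite[Thm.~1.4]{Scholze}), which is built into the global setup of \S\ref{section:application}. I do not anticipate any real obstacle here: the argument is essentially a formal duality, with all the substantive inputs (Proposition \ref{prop:Sch-7.7} and Lemma \ref{lem:CM-Vfd=0}) already in place.
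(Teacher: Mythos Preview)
Your proposal is correct and follows essentially the same approach as the paper: both invoke the infinite-dimensionality of $\wt{H}^1(U^{v},\F)[\fm_{\overline{r}}]$ (citing \cite{BreuilDiamond} and \cite{Scholze}), apply Lemma~\ref{lem:CM-Vfd=0} to deduce $B_{\rm fd}=0$, and then conclude via Proposition~\ref{prop:Sch-7.7} that the finite-dimensional cokernel must vanish. The only difference is cosmetic---you spell out the dualization explicitly, whereas the paper works directly with the quotient $B_{\rm fd}$.
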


\begin{proof}
Since $\wt{H}^1 (U^{v} , \F)[\fm_{\overline{r}}]$ is always infinite dimensional, see \cite[Cor.~3.2.4]{BreuilDiamond} or  \cite[Thm.~7.8]{Scholze}, the assumption implies that $\big(\wt{H}^1(U^{v} , \F)[\fm_{\overline{r}}]\big)_{\rm fd}=0$ by Lemma \ref{lem:CM-Vfd=0}. The result follows. 
\end{proof}

\begin{remark}
\cite[Lem.~6.1]{Paskunas-JL} also proves a criterion for \eqref{eq:S-inclusion} to be an equality.  Corollary \ref{cor:S-equality} can be viewed as a complement to it.
\end{remark}

\begin{proposition}\label{prop:GN-flat}
Assume that $R_{v}^{\psi\e^{-1}}$ is formally smooth and \[\dim_{\cO_D^{\times}}\big(\wt{H}^1(U^{v} , \F)[\fm_{\overline{r}}]\big)=1.\]
Then $(\wt{H}^1(U^{v} , \cO)_{\fm_{\overline{r}}})^{d}$ is a faithfully flat $\mathbb{T}(U^v)_{\fm_{\overline{r}}}$-module, and  $\big(\wt{H}^1(U^{v} , \F)[\fm_{\overline{r}}]\big)^{\vee}$ is a Cohen-Macaulay $\F[\![U_D^{1}]\!]$-module. In particular, \eqref{eq:S-inclusion} becomes an equality. Moreover, $\cS^2\big(\wt{S}(U^{v} , \F)[\fm_{\overline{r}}]\big)=0$.  
\end{proposition}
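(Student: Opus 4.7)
The plan is to carry out the analogue of the proof of Proposition~\ref{prop:equiv}, with the $\cO_D^\times$-side module $N_\infty$ now playing the role that $M_\infty$ plays there; the hypothesis $\dim_{\cO_D^\times}(\wt H^1(U^v,\F)[\fm_{\overline r}])=1$ replaces $\dim_K(\pi^{B'}(\overline r))=[F_v:\Q_p]$.

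First, I would recall that by Theorem~\ref{thm--K-injectivity}(ii) combined with the patching construction, $N_\infty$ is free over $S_\infty$ and projective in $\frak{C}_{\cO_D^\times,\psi}(\cO)$; hence $N_\infty$ is finite projective over $S_\infty[\![\cO_D^\times/Z_D^1]\!]$, and so Cohen--Macaulay of $\delta$-dimension $\dim S_\infty + \dim_{\Q_p}(\cO_D^\times/Z_D^1) = \dim S_\infty + 3$. Next, the formal smoothness of $R_v^{\psi\e^{-1}}$ forces $R_\infty^{\psi\e^{-1}}$ to be a regular local ring of Krull dimension $\dim S_\infty + 2[F_v:\Q_p] = \dim S_\infty + 2$. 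Using the identification $N_\infty/\fm_\infty \cong (\wt H^1(U^v,\F)[\fm_{\overline r}])^\vee$, the hypothesis translates to $\delta(N_\infty/\fm_\infty)=1$. The equality
\[
\delta_{S_\infty[\![\cO_D^\times/Z_D^1]\!]}(N_\infty) \;=\; \dim R_\infty^{\psi\e^{-1}} + \delta(N_\infty/\fm_\infty),
\]
together with the miracle flatness criterion \cite[Prop.~A.30]{Gee-Newton}, then forces $N_\infty$ to be flat, and hence faithfully flat, over $R_\infty^{\psi\e^{-1}}$.

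From this the first two assertions of the proposition are immediate. Reducing modulo $\fa_\infty$ shows that $(\wt H^1(U^v,\cO)_{\fm_{\overline r}})^d = N_\infty/\fa_\infty$ is flat over $R^{\psi\e^{-1},\loc}_{\overline r,S}=R_\infty^{\psi\e^{-1}}/\fa_\infty$; faithful flatness then forces the surjection $R^{\psi\e^{-1},\loc}_{\overline r,S}\twoheadrightarrow\TM(U^v)_{\fm_{\overline r}}$ (through which the action factors) to be an isomorphism, so we obtain flatness over $\TM(U^v)_{\fm_{\overline r}}$. For the Cohen--Macaulayness of $(\wt H^1(U^v,\F)[\fm_{\overline r}])^\vee$, a regular system of parameters of $\fm_\infty$ is $N_\infty$-regular by flatness, and quotienting the Cohen--Macaulay module $N_\infty$ by this regular sequence preserves Cohen--Macaulayness; this verifies the hypothesis of Corollary~\ref{cor:S-equality}.

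Finally, for $\cS^2(\wt S(U^v,\F)[\fm_{\overline r}])=0$, I would replay the Koszul argument that establishes (ii)$\Rightarrow$(iii) in Proposition~\ref{prop:equiv}. That argument requires not only the flatness of $N_\infty$ (which we have) but also the flatness of $M_\infty$ over $R_\infty^{\psi\e^{-1}}$; hence the main obstacle is to transport the input from the $\cO_D^\times$-side to the $K$-side. I would do this by invoking local-global compatibility (Theorem~\ref{thm:lg}) to identify $\wt S(U^v,\F)[\fm_{\overline r}]\cong\pi(\overline r_v(1))^{\oplus d}$, combined with the well-known equality $\dim_K\pi(\overline r_v(1))=1$ (from the results of Morra and Pa\v{s}k\=unas recalled in \S\ref{section-Morra}), to deduce $\dim_K\wt S(U^v,\F)[\fm_{\overline r}]=1=[F_v:\Q_p]$; miracle flatness applied now to $M_\infty$ over $S_\infty[\![K/Z_1]\!]$ then yields flatness of $M_\infty$ over $R_\infty^{\psi\e^{-1}}$, and the Koszul argument concludes. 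Some case analysis on $\overline r_v(1)$ may be needed to cover all possibilities, in particular the reducible split case (cf.\ Remark~\ref{rem:lg}) where Theorem~\ref{thm:lg} must be supplemented.
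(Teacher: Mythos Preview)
Your argument is essentially the paper's: miracle flatness (via \cite[Prop.~A.30]{Gee-Newton}, as in the proof of \cite[Thm.~A(1)]{Gee-Newton}) gives the flatness and Cohen--Macaulay statements, and the implication (i)$\Rightarrow$(iii) of Proposition~\ref{prop:equiv} yields the vanishing of $\cS^2$. You are more careful than the paper in observing that invoking Proposition~\ref{prop:equiv} requires its standing hypothesis $\dim_K(\pi^{B'}(\overline r))=1$, which the paper leaves implicit (it holds in every case where the proposition is applied); your route to it via Theorem~\ref{thm:lg} is legitimate, though it forward-references, and a cleaner citation for $\dim_K\pi(\brho)=1$ is \cite[Lem.~5.16]{Paskunas-JL} rather than \S\ref{section-Morra}.
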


\begin{proof}
The faithful flatness is proved by the same argument of \cite[Thm.~B(3)]{Gee-Newton}. 

Since $N_{\infty}$ is a projective object in $\frak{C}_{\cO_{D}^{\times},\psi}(S_{\infty})$, it is a Cohen-Macaulay $S_{\infty}[\![U_D^1]\!]$-module, thus is also Cohen-Macaulay over $R_{\infty}[\![U_D^1]\!]$ by \cite[Lem.~A.29]{Gee-Newton}.  The formal smoothness of $R_{v}^{\psi\e^{-1}}$ ensures that $R_{\infty}$ is formally smooth, namely its maximal ideal $\fm_{\infty}$ is generated by a regular sequence. By the proof of  \cite[Prop.~A.30]{Gee-Newton},  $\big(\wt{H}^1(U^{v} , \F)[\fm_{\overline{r}}]\big)^{\vee}\cong N_{\infty}/\fm_{\infty}$ is a Cohen-Macaulay $\F[\![U_D^1]\!]$-module. 

The last assertion follows from Proposition \ref{prop:equiv}.
\end{proof}
 
For simplicity and clarity we  make the following assumption  in \S\ref{ss:S-generic} and \S\ref{ss:S-nongeneric} below. The general case will be treated  in \S\ref{ss:S-non-minimal}.

(\textbf{H})\ \  Assume $d=1$ in Theorem \ref{thm:lg}, i.e. $\widetilde{S}(U^v,\F)[\fm_{\overline{r}}]\cong\pi(\brho)$.
\medskip

For notational convenience, we make the following definition. Let $\brho : = \overline{r}_v(1).$  
\begin{definition} \label{def:JL}
We define
\begin{equation}\label{eq:def-JL}\JL(\brho):=\Hom_{G_{F}}\big(\overline{r},\wt{H}^1(U^{v} , \F)[\fm_{\overline{r}}]\big)\end{equation}
which is an admissible  smooth $\F$-representation of $D^{\times}$. Then \eqref{eq:r-JL} restricts to a $G_{\Q_p}\times D^{\times}$-isomorphism
\begin{equation}\label{eq:rho-JL}\wt{H}^1(U^{v} , \F)[\fm_{\overline{r}}]\cong \brho(-1)\otimes \JL(\brho).\end{equation}
\end{definition}

Finally we recall the following important results which will be repeatedly used later on.

\begin{theorem}\label{thm:Sch-Lud}
Let $\pi$ be an admissible smooth $\F$-representation of $G$.

(i) The natural morphism $\cS^0(\pi^{\SL_2(\Q_p)})\ra \cS^0(\pi)$ is an isomorphism.

(ii) If $\pi\cong \Ind_{B(\Q_p)}^G\chi$ is a principal series (for some smooth character $\chi:B(\Q_p)\ra \F^{\times}$), then $\cS^2(\pi)=0$.
\end{theorem}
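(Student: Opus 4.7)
The plan is to reduce both statements to results already available in the literature; no new geometric or representation-theoretic input is required here.

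For part (i), I would simply invoke Theorem \ref{thm-Scholze}(iv) specialized to the case $n=2$, $L=\Q_p$. That theorem, due to Scholze, asserts exactly the desired isomorphism
\[
\cS^0(\pi^{\SL_n(L)}) \simto \cS^0(\pi)
\]
for any admissible $\pi$. The underlying reason is that the sheaf $\cF_{\pi}$ on $\bP^{n-1}_{\C_p,\et}$ is constructed from the action of $G$ on certain Drinfeld-type covers, and the $H^0$ only sees the subsheaf fixed by the geometrically connected action, which turns out to be $\cF_{\pi^{\SL_n(L)}}$. Since all of this is already encapsulated in Theorem \ref{thm-Scholze}(iv), no further work is needed.

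For part (ii), I would cite Ludwig's theorem \cite{Ludwig} directly. Her strategy, which I would not attempt to reproduce, is to analyze the pullback of $\cF_\pi$ via the Hodge--Tate period map from the infinite-level perfectoid modular curve down to $\bP^1_{\C_p}$. For a principal series $\pi = \Ind_{B(\Q_p)}^{G}\chi$ this pullback decomposes along the ordinary/supersingular stratification of the modular curve in a way that makes $H^2_{\et}$ accessible by explicit cohomological vanishing on an affinoid cover. The final conclusion is that $\cS^2(\Ind_{B(\Q_p)}^{G}\chi) = 0$, which is precisely our claim.

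The only potential obstacle would arise if one sought a self-contained proof of (ii) not relying on \cite{Ludwig}; this would require importing the full perfectoid modular curve apparatus from \cite{Scholze} and would be considerably heavier than the rest of the paper's methods. Since we are content to use \cite{Ludwig} as a black box, the proof of Theorem \ref{thm:Sch-Lud} reduces to a pair of citations, with part (i) an immediate specialization and part (ii) a direct appeal to Ludwig's vanishing result.
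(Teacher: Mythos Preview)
Your proposal is correct and matches the paper's own proof essentially verbatim: the paper simply states that (i) is a special case of Theorem \ref{thm-Scholze}(iv) and (ii) is \cite[Thm.~4.6]{Ludwig}. Your additional explanatory remarks about the geometry behind each citation are accurate context but not needed for the formal proof.
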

\begin{proof}
(i) is a special case of  Theorem \ref{thm-Scholze}(iv) and (ii) is \cite[Thm.~4.6]{Ludwig}. 
\end{proof}

\subsection{The generic case in the minimal case}\label{ss:S-generic}

In this subsection, we assume $\brho \sim \smatr{\chi_1}*0{\chi_2}$ is reducible nonsplit such that $\chi_1 \chi_2^{-1} \neq \ide,~\o^{\pm 1}$.

\begin{theorem}\label{thm:JL-generic}
Let $\brho$ be as above.
Then $\JL(\brho)$  depends only on $\brho^{\rm ss}$. 
\end{theorem}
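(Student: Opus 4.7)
The plan is to reduce the theorem to an intrinsic computation of $\cS^1$ on the principal-series Jordan--H\"older factors of $\pi(\brho)$, using that these factors depend only on $\brho^{\rm ss}$.

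First I would set up the defining short exact sequence. Under \textbf{(H)}, $\widetilde{S}(U^v,\F)[\fm_{\overline{r}}]\cong\pi(\brho)$. By \S\ref{ss:LLC} and Remark \ref{rem:lg}(i), regardless of whether $\brho$ is split or nonsplit its Jordan--H\"older factors are the two distinct irreducible principal series
\[
\pi_1:=\Ind_{B(\Q_p)}^G(\chi_2\otimes\chi_1\omega^{-1}),\qquad \pi_2:=\Ind_{B(\Q_p)}^G(\chi_1\otimes\chi_2\omega^{-1}),
\]
assembled into $\pi_1\oplus\pi_2$ in the split case, and into a nonsplit extension $0\to\pi_1\to\pi(\brho)\to\pi_2\to0$ otherwise. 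Since $\chi_1\neq\chi_2$, each $\pi_i$ has no $\SL_2(\Q_p)$-invariants, so $\cS^0(\pi_i)=0$ by Theorem \ref{thm:Sch-Lud}(i), and $\cS^2(\pi_i)=0$ by Theorem \ref{thm:Sch-Lud}(ii). Applying the $\delta$-functor $\{\cS^i\}$ collapses the long exact sequence to
\[
0\to\cS^1(\pi_1)\to\cS^1(\pi(\brho))\to\cS^1(\pi_2)\to 0
\]
of $G_{\Q_p}\times D^\times$-modules.

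Next I would pin down $\cS^1(\pi_i)$ intrinsically by working first in a global setup realizing a split $\brho^{\rm ss}$. In that case $\pi(\brho^{\rm ss})=\pi_1\oplus\pi_2$, and combining \eqref{eq:rho-JL} with the identification $\cS^1(\widetilde{S}[\fm_{\overline{r}}])\cong\wt{H}^1[\fm_{\overline{r}}]$ (which follows from Corollary \ref{cor:S-equality} once Cohen--Macaulayness is established via Proposition \ref{prop:GN-flat} together with Theorem \ref{thm:GKdim-B}) gives
\[
\cS^1(\pi_1)\oplus\cS^1(\pi_2)\cong\brho^{\rm ss}(-1)\otimes\JL(\brho^{\rm ss})=(\chi_1\omega^{-1}\oplus\chi_2\omega^{-1})\otimes\JL(\brho^{\rm ss}).
\]
Because $\chi_1\neq\chi_2$, this decomposition is unique, so $\cS^1(\pi_i)\cong\chi_{\sigma(i)}\omega^{-1}\otimes\JL(\brho^{\rm ss})$ as $G_{\Q_p}\times D^\times$-modules for some bijection $\sigma:\{1,2\}\to\{1,2\}$. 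Since $\cS^1(\pi_i)$ is intrinsic to the $G$-representation $\pi_i$, the same formula must hold in any global setup.

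Finally, in the nonsplit case the analogous identifications give $\cS^1(\pi(\brho))\cong\brho(-1)\otimes\JL(\brho)$, which as a $G_{\Q_p}$-module is a nonsplit extension of $\chi_{j_2}\omega^{-1}\otimes\JL(\brho)$ by $\chi_{j_1}\omega^{-1}\otimes\JL(\brho)$, where $\chi_{j_1},\chi_{j_2}$ are the sub- and quotient characters of $\brho$. The embedding $\cS^1(\pi_1)\hookrightarrow\cS^1(\pi(\brho))$ is $G_{\Q_p}$-equivariant; since $\cS^1(\pi_1)$ is $\chi_{\sigma(1)}\omega^{-1}$-isotypic and the only nonzero $\chi_i\omega^{-1}$-isotypic subrepresentation of the nonsplit $\brho(-1)\otimes\JL(\brho)$ is $\chi_{j_1}\omega^{-1}\otimes\JL(\brho)$, we conclude $\sigma(1)=j_1$ and obtain an injection $\JL(\brho^{\rm ss})\hookrightarrow\JL(\brho)$. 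Dually, the quotient $\cS^1(\pi_2)\cong\chi_{\sigma(2)}\omega^{-1}\otimes\JL(\brho^{\rm ss})$ is pure $\chi_{\sigma(2)}\omega^{-1}$-isotypic as $G_{\Q_p}$-module, which forces $\sigma(2)=j_2$ and makes the $\chi_{j_1}\omega^{-1}\otimes(\JL(\brho)/\JL(\brho^{\rm ss}))$ contribution to $\cS^1(\pi_2)$ vanish, so $\JL(\brho^{\rm ss})\simto\JL(\brho)$. The delicate points, which I expect to be the main obstacle, are (i) upgrading the inclusion of Proposition \ref{prop:Sch-7.7} to the equality used above in both global setups, and (ii) justifying that one can choose modular global data realizing the split and nonsplit forms of $\brho^{\rm ss}$ with the same tame level so that \textbf{(H)} holds in both — both standard issues handled by the patched module analysis and Khare--Wintenberger-type lifting.
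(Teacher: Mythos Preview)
Your argument has a genuine gap at the step where you conclude from
\[
\cS^1(\pi_1)\oplus\cS^1(\pi_2)\cong(\chi_1\omega^{-1}\oplus\chi_2\omega^{-1})\otimes\JL(\brho^{\rm ss})
\]
that each $\cS^1(\pi_i)$ is isotypic for a single Galois character. The phrase ``this decomposition is unique'' conflates two different direct-sum decompositions: the left-hand side is split according to the $G$-subrepresentations $\pi_i$, while the right-hand side is split by $G_{\Q_p}$-isotypic components. There is no a~priori reason these two splittings agree; in principle $\cS^1(\pi_1)$ could contain both $\chi_1\omega^{-1}$- and $\chi_2\omega^{-1}$-pieces, and likewise for $\cS^1(\pi_2)$. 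Without this purity, your subsequent comparison in the nonsplit case (matching $\sigma(1)=j_1$, etc.) does not go through.

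The paper's proof supplies exactly the missing ingredient, but in a different order: it works with \emph{both} nonsplit extensions $\brho_1\sim\smatr{\chi_1}*0{\chi_2}$ and $\brho_2\sim\smatr{\chi_2}*0{\chi_1}$ simultaneously. From $\cS^1(\pi_1)\hookrightarrow\brho_1(-1)\otimes\JL(\brho_1)$ with $\brho_1$ nonsplit one extracts $\Hom_{G_{\Q_p}}(\chi_2\omega^{-1},\cS^1(\pi_1))=0$; feeding this into the sequence for $\brho_2$ and chasing back and forth yields both an embedding $\chi_1\omega^{-1}\otimes\JL(\brho_1)\hookrightarrow\cS^1(\pi_1)$ and a surjection $\chi_1\omega^{-1}\otimes\JL(\brho_2)\twoheadrightarrow\cS^1(\pi_1)$, from which purity and then $\JL(\brho_1)\cong\JL(\brho_2)$ follow. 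The split case is handled \emph{last}, deduced from what has already been computed for the $\pi_i$. By contrast, you try to bootstrap from the split case first, where there is no leverage to separate the isotypic pieces. (A secondary issue: invoking Proposition~\ref{prop:GN-flat} in a split global setup requires the Gelfand--Kirillov bound of Theorem~\ref{thm:GKdim-B}, which is only proved for $\brho$ of type \ref{C1} or \ref{C2}; the paper avoids this by treating the split case as a consequence rather than a starting point.)
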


\begin{proof}
Write $\brho_1$ (resp. $\brho_2$) for the nonsplit extension $\smatr{\chi_1}*0{\chi_2}$ (resp. $\smatr{\chi_2}*0{\chi_1}$). Combining Theorem \ref{thm:lg} and \cite[Prop.~6.7]{Paskunas-JL},\footnote{We can also apply Theorem \ref{thm:GKdim-B} if $\brho$ satisfies \ref{C2}.} we see that $\dim_{\cO_D^{\times}}\cS^1\big(\wt{S}(U^v,\F)[\fm_{\overline{r}}]\big)=1$, hence \[\dim_{\cO_D^{\times}}\big(\wt{H}^1(U^{v} , \F)[\fm_{\overline{r}}]\big)=1\] by \eqref{eq:S-inclusion}. By Proposition \ref{prop:GN-flat},  \eqref{eq:rho-JL} and assumption (\textbf{H}), we obtain for $i\in\{0,1,2\}$
\begin{equation}\label{eq:S-rhoi}\cS^1(\pi(\brho_i))=\brho_i(-1)\otimes\JL(\brho_i).\end{equation}

Recall from \S\ref{ss:LLC} that
there exist exact sequences \[0\ra \pi_1\ra \pi(\brho_1)\ra \pi_2\ra0\]
\[0\ra \pi_2\ra \pi(\brho_2)\ra \pi_1\ra 0,\]
where $\pi_1:=\Ind_{B(\Q_p)}^G\chi_2\otimes\chi_1\omega^{-1}$ and $\pi_2:=\Ind_{B(\Q_p)}^G\chi_1\otimes\chi_2\omega^{-1}$. Note that $\cS^0(\pi_i)=\cS^2(\pi_i)=0$ for $i\in\{1,2\}$, by Theorem \ref{thm:Sch-Lud}.  Hence, by applying the functor $\cS^i$ and using \eqref{eq:S-rhoi}, we obtain
\begin{equation}\label{eq:seq-Srho1}0\ra \cS^1(\pi_1)\overset{\iota_1}{\ra} \brho_1(-1)\otimes\JL(\brho_1)\ra \cS^1(\pi_2)\ra 0\end{equation}
\begin{equation}\label{eq:seq-Srho2}0\ra \cS^1(\pi_2)\overset{\iota_2}{\ra} \brho_2(-1)\otimes\JL(\brho_2)\ra \cS^1(\pi_1)\ra0.\end{equation}
Since $\brho_1$ is nonsplit, we have
\[\Hom_{G_{\Q_p}}\big(\chi_2,\brho_1\otimes \JL(\brho_1)\big)=0.\]

As a consequence,
$\Hom_{G_{\Q_p}}(\chi_2\omega^{-1},\cS^1(\pi_1))=0$ by \eqref{eq:seq-Srho1}  and applying $\Hom_{G_{\Q_p}}(\chi_2\omega^{-1},-)$ to \eqref{eq:seq-Srho2} gives isomorphisms
\[\Hom_{G_{\Q_p}}(\chi_2\omega^{-1},\cS^1(\pi_2))\cong\Hom_{G_{\Q_p}}\big(\chi_2\omega^{-1},\brho_2(-1)\otimes\JL(\brho_2)\big)\cong \JL(\brho_2),\]
where the last isomorphism follows from the definition of $\brho_2$.  This gives  a $G_{\Q_p}\otimes D^{\times}$-equivariant embedding
\begin{equation}\label{eq:embed-rho2}\chi_2\omega^{-1}\otimes \JL(\brho_2)\hookrightarrow \cS^1(\pi_2).\end{equation}
One checks that its composition with $\iota_2$ (in \eqref{eq:seq-Srho2}) coincides with the morphism obtained by tensoring the inclusion $\chi_2\omega^{-1}\hookrightarrow \brho_2(-1)$ with $\JL(\brho_2)$.
Combined with the short exact sequence
\[0\ra \chi_2\omega^{-1}\otimes\JL(\brho_2)\ra \brho_2(-1)\otimes\JL(\brho_2)\ra \chi_1\omega^{-1}\otimes\JL(\brho_2)\ra0,\]
a diagram chasing gives a surjection
\begin{equation}\label{eq:surj-rho2}\chi_1\omega^{-1}\otimes\JL(\brho_2)\twoheadrightarrow \cS^1(\pi_1).\end{equation}
In particular, when restricted to $G_{\Q_p}$, $\cS^1(\pi_1)$ is semisimple and any  irreducible subquotient of $\cS^1(\pi_1)$ is isomorphic to $\chi_1\omega^{-1}$.

On the other hand, the same argument as above implies an embedding (analogous to \eqref{eq:embed-rho2})
\begin{equation}\label{eq:embed-rho1}\chi_1\omega^{-1}\otimes\JL(\brho_1)\hookrightarrow \cS^1(\pi_1).\end{equation}
We claim that \eqref{eq:embed-rho1} is an isomorphism. Indeed, $\iota_1$ in \eqref{eq:seq-Srho1} induces a $G_{\Q_p}\times D^{\times}$-equivariant embedding
\[\cS^1(\pi_1)/\big(\chi_1\omega^{-1}\otimes \JL(\brho_1)\big)\hookrightarrow \big(\brho_1(-1)\otimes\JL(\brho_1)\big)/\big(\chi_1\omega^{-1}\otimes\JL(\brho_1)\big)\cong \chi_2\omega^{-1}\otimes \JL(\brho_1).\]
However, as shown in the last paragraph, $\cS^1(\pi_1)$ admits only $ \chi_1\omega^{-1}$ as irreducible subquotient (when restricted to $G_{\Q_p}$),  while $\chi_2\omega^{-1}\otimes \JL(\brho_1)$ admits only $\chi_2\omega^{-1}$ as irreducible subquotients. Since $\chi_1\neq \chi_2$, this forces $\cS^1(\pi_1)/\big(\chi_1\omega^{-1}\otimes \JL(
\brho_1)\big)=0$, proving the claim.
In a similar way, the embedding \eqref{eq:embed-rho2} is also an isomorphism and consequently  \eqref{eq:surj-rho2} is an isomorphism.

In summary, we have proven that
\[\chi_1\omega^{-1}\otimes \JL(\brho_2)\overset{\eqref{eq:surj-rho2}}{\cong}\cS^1(\pi_1)\overset{\eqref{eq:embed-rho1}}{\cong} \chi_1\omega^{-1}\otimes \JL(\brho_1). \] Hence, by applying $\Hom_{G_{\Q_p}}(\chi_1\omega^{-1},-)$ we obtain a $D^{\times}$-equivariant isomorphism $\JL(\brho_1)\cong\JL(\brho_2)$.
\end{proof}

\begin{remark}\label{rem:test}
It might be strange that $\JL(\brho_i)$ only carries the information of $\brho^{\rm ss}$. This  can be explained as follows. On the one hand, since $\cS^1(\pi(\brho_i))=\brho_i(-1)\otimes \JL(\brho_i)$, the information of $\brho_i$ is indeed caught by  the functor $\cS^1$. On the other hand, comparing  the quaternionic Serre weights (cf. Propositions \ref{thm:Serre-D}, \ref{lemma--quat-Serre-wt}), $\JL(\brho_1)$ and $\JL(\brho_2)$ have the same set of quaternionic Serre weights. However, we don't expect this phenomenon happens once $L\neq \Q_p$.
\end{remark}

\subsection{The non-generic case in the minimal case}\label{ss:S-nongeneric}

In this subsection, we extend  the result in \S \ref{ss:S-generic} to the case $\brho^{\rm ss}\sim \omega\oplus \ide$ (up to twist). Below, we will denote by $\ide_{G_{\Q_p}}$, $\ide_{G}$ and $\ide_{D^{\times}}$ the trivial representation of $G_{\Q_p}$, $G$ and $D^{\times}$ respectively; sometimes we will omit the subscript if no confusion is caused.
 
Let  $\brho_1\sim \smatr{\omega}{*}0{\ide}$   be a nonsplit extention of $\ide$ by $\omega$;  we don't make assumption on the extension type of $\brho_1$ (i.e. peu ramifi\'e or tr\`es ramifi\'e). On the other hand, $\Ext^1_{G_{\Q_p}}(\omega,\ide)$ is $1$-dimensional;  let $\brho_2\sim \smatr{\ide}{*}0{\omega}$ be the unique nonsplit extension of $\omega$ by $\ide$.

 Let  $\tau_1$ be the universal extension of $\ide_{G}$ by $\Sp$, i.e.
\begin{equation}\label{eq:def-tau1}0\ra \Sp\ra \tau_1\ra \ide_{G}^{\oplus2}\ra0\end{equation}
with $\soc_{G}\tau_1=\Sp$.
Recall from \S\ref{ss:LLC} that there is a short exact sequence
\[0\ra \pi_{\alpha}\ra\pi(\brho_2)\ra\tau_1\ra0\]
where
$\pi_{\alpha}:=\Ind_{B(\Q_p)}^{G}(\omega\otimes\omega^{-1}).$

It is shown in \cite[\S10.1]{PaskunasIHES} that $\dim_{\F}\Ext^1_{G/Z_G}(\pi_{\alpha},\ide_G)=1.$ Thus there exists a unique (up to isomorphism) nonsplit extension
\begin{equation}\label{eq:def-kappa}0\ra \ide_{G}\ra \kappa\ra \pi_{\alpha}\ra0.\end{equation}
On the other hand, there is a natural isomorphism 
 $\Ext^1_{G/Z_G}(\ide_G,\Sp)\cong\Hom(\Q_p^{\times},\F)$  by \cite[Thm.~VII.4.18]{Co}; we denote by $E_{\phi}$ the extension corresponding to $\phi\in\Hom(\Q_p^{\times},\F)$. The next result gives the structure of $\pi(\brho_1)$.

\begin{proposition}\label{prop:kappa-rho1}
We have $\soc_G\pi(\brho_1)\cong\Sp$ and there exist nonsplit extensions
\[0\ra E_{\phi}\ra \pi(\brho_1)\ra \pi_{\alpha}\ra0\]
\[0\ra \Sp\ra \pi(\brho_1)\ra \kappa\ra0.\]
\end{proposition}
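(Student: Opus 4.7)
The plan is to analyse $\pi(\brho_1)$ via its defining description in \cite[Lem.~6.7]{Paskunas-BM} together with Colmez's Montr\'eal functor $\check{V}$ and Pa\v{s}k\=unas' block decomposition of $\mathfrak{C}_{G,\psi}(\cO)$ from \cite{PaskunasIHES}. First I would pin down the composition factors: by the block decomposition, every Jordan--H\"older factor of $\pi(\brho_1)$ must lie in the non-supersingular block containing $\Sp$ and $\ide_G$, so the only possible constituents are $\Sp$, $\ide_G$ (and pieces of $\pi_\alpha$, built out of these). Combining Pa\v{s}k\=unas' description of the projective envelope $N$ of $\pi(\brho_1)^\vee$ in this block with the ``extra copy of $\chi\circ\det$'' clause recalled in \S\ref{ss:LLC}, $\pi(\brho_1)$ has length $4$ with multiset of composition factors $\{\Sp,\Sp,\ide_G,\ide_G\}$, matching the length count of $\pi(\brho_2)$.

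Next I would compute the socle. The socle cannot contain $\ide_G$: indeed $\Hom_G(\ide_G,\pi(\brho_1))=0$ can be checked from Pa\v{s}k\=unas' construction, because any trivial subrepresentation would, after dualising and applying $\check{V}$, produce a direct summand of $\brho_1$ on the Galois side, contradicting the nonsplitness of $\brho_1$. A multiplicity count using the number of $\Sp$'s in the composition series together with the $1$-dimensionality of $\Hom_G(\Sp,\pi(\brho_1))$ (again from Pa\v{s}k\=unas' block) then yields $\soc_G\pi(\brho_1)\cong \Sp$. With the socle in hand, the second sequence is the first step of the socle filtration: $\pi(\brho_1)/\Sp$ has length $3$ with factors $\{\ide_G,\ide_G,\Sp\}$, and $\soc_G$ of this quotient must be $\ide_G$ (otherwise a second copy of $\Sp$ would lift to the socle of $\pi(\brho_1)$). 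To identify $\pi(\brho_1)/\Sp$ with $\kappa$ rather than with $\ide_G\oplus\pi_\alpha$ one uses the $1$-dimensionality of $\Ext^1_{G/Z_G}(\pi_\alpha,\ide_G)$ from \cite[\S10.1]{PaskunasIHES} together with the fact that any splitting would produce an $\ide_G$-summand, hence an $\ide_G$ in the socle of $\pi(\brho_1)$. Nonsplitness of $0\to \Sp\to \pi(\brho_1)\to \kappa\to 0$ is immediate from $\soc_G\pi(\brho_1)=\Sp$ (not $\Sp\oplus \ide_G$).

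For the first sequence, I would exhibit the surjection $\pi(\brho_1)\twoheadrightarrow \pi_\alpha$ as the largest quotient whose cosocle is $\Sp$ and whose length is $2$; its kernel has composition factors $\{\Sp,\ide_G\}$ and socle $\Sp$, hence is isomorphic to $E_\phi$ for some $\phi\in \Hom(\Q_p^{\times},\F)$ by the classification of $\Ext^1_{G/Z_G}(\ide_G,\Sp)$ via \cite[Thm.~VII.4.18]{Co}. Nonsplitness of $0\to E_\phi\to \pi(\brho_1)\to \pi_\alpha\to 0$ follows from $\soc_G\pi(\brho_1)\cong \Sp$ and $\soc_G\pi_\alpha\cong \ide_G$: a splitting would force $\pi_\alpha\hookrightarrow \pi(\brho_1)$, producing an $\ide_G$ in the socle, a contradiction. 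The main obstacle is the socle computation and the length count, which rely on a careful application of Pa\v{s}k\=unas' block decomposition and the behaviour of $\check{V}$ on this specific block; pinning down $\phi$ in terms of whether $\brho_1$ is peu or tr\`es ramifi\'e would require tracking the extension class of $\brho_1$ through Colmez's correspondence, but the proposition only asserts the existence of some $\phi$, which is automatic once the kernel is shown to have the stated structure.
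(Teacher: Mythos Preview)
Your approach contains a basic error about the block structure that invalidates the argument. The representation $\pi_\alpha = \Ind_{B(\Q_p)}^G(\omega\otimes\omega^{-1})$ is \emph{irreducible} for $p\geq 5$ (since $\omega\neq\omega^{-1}$), so it is not ``built out of $\Sp$ and $\ide_G$''. The relevant block in Pa\v{s}k\=unas' decomposition has \emph{three} irreducible objects: $\ide_G$, $\Sp$, and $\pi_\alpha$. Consequently $\pi(\brho_1)$ has length $3$ with Jordan--H\"older factors $\{\Sp,\ide_G,\pi_\alpha\}$, not length $4$ with $\{\Sp,\Sp,\ide_G,\ide_G\}$; note that this does \emph{not} match the length of $\pi(\brho_2)$, which is $4$. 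This error propagates: your claim that $\pi(\brho_1)/\Sp$ has factors $\{\ide_G,\ide_G,\Sp\}$ is wrong (it has factors $\{\ide_G,\pi_\alpha\}$), and your assertion that $\soc_G\pi_\alpha\cong\ide_G$ in the last paragraph is false since $\pi_\alpha$ is already irreducible.

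The paper itself does not reprove this result: its proof is a direct citation of \cite[Lem.~6.7]{Paskunas-BM}, where $\pi(\brho_1)$ is \emph{defined} (it is the representation denoted $\beta$ there) and its structure is established. If you want to reconstruct the argument, you should work with the correct set of irreducibles in the block and use the $\Ext^1$ computations among $\ide_G$, $\Sp$, $\pi_\alpha$ from \cite[\S10.1]{PaskunasIHES}; the socle filtration is then $\Sp \ligne \ide_G \ligne \pi_\alpha$, from which both exact sequences and their nonsplitness follow directly.
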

\begin{proof}
See \cite[Lem.~6.7]{Paskunas-BM}.
\end{proof}
 
\begin{proposition}\label{prop:S2Sp=0}
The following statements hold.

(i)  $\cS^0(\ide_G)\cong\NEW\otimes\ide_{D^{\times}}$,  $\cS^1(\ide_G)=0$, $\cS^2(\ide_G)\cong \omega^{-1}\otimes \ide_{D^{\times}}$.

(ii)  $\cS^0(\Sp)=\cS^2(\Sp)=0$.

(iii) $\cS^0(\pi_{\alpha})=\cS^2(\pi_{\alpha})=0$.
\end{proposition}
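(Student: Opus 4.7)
The plan is to dispatch each of the three parts using the results of Theorem \ref{thm-Scholze} and Theorem \ref{thm:Sch-Lud} together with the long exact sequence of the $\delta$-functor $\{\cS^i\}$.

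Part (i) is essentially a restatement of Theorem \ref{thm-Scholze}(v) in the case $n=2$. Indeed, for the trivial representation $\ide_G$ of $\GL_2(\Q_p)$, the formula there gives $H^0 = \o^0 \otimes \ide_{D^{\times}} = \ide_{G_{\Q_p}}\otimes \ide_{D^{\times}}$, $H^1 = 0$ because $1$ is odd, and $H^2 = \o^{-1} \otimes \ide_{D^{\times}}$, which is exactly what we want.

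For part (ii), I would first check $\cS^0(\Sp) = 0$ via Theorem \ref{thm:Sch-Lud}(i). The standard exact sequence $0 \to \ide_G \to \Ind_{B(\Q_p)}^G \ide \to \Sp \to 0$ remains exact upon taking $\SL_2(\Q_p)$-invariants at the left, and by Frobenius reciprocity applied to the subgroup $B(\Q_p)\cap \SL_2(\Q_p) \subset \SL_2(\Q_p)$ one sees that $(\Ind_{B(\Q_p)}^G \ide)^{\SL_2(\Q_p)}$ is one-dimensional and generated by the constant function, i.e.\ equals $\ide_G$; hence $\Sp^{\SL_2(\Q_p)} = 0$ and Theorem \ref{thm:Sch-Lud}(i) yields $\cS^0(\Sp) = 0$. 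For the vanishing of $\cS^2(\Sp)$, I would apply $\{\cS^i\}$ to the same short exact sequence to obtain
\[
\cS^2(\Ind_{B(\Q_p)}^G \ide) \ra \cS^2(\Sp) \ra \cS^3(\ide_G).
\]
The left term vanishes by Theorem \ref{thm:Sch-Lud}(ii) (principal series case), and the right term vanishes by Theorem \ref{thm-Scholze}(ii) since $3 > 2(n-1) = 2$.

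For part (iii), the vanishing $\cS^2(\pi_\a) = 0$ is immediate from Theorem \ref{thm:Sch-Lud}(ii), since $\pi_\a = \Ind_{B(\Q_p)}^G (\o \otimes \o^{-1})$ is a principal series. For $\cS^0(\pi_\a) = 0$, I would use Theorem \ref{thm:Sch-Lud}(i) once more: since $p \geq 5$ and $\o$ has order $p-1 \geq 4$, the ratio $(\o)(\o^{-1})^{-1} = \o^2$ is neither trivial nor $\o^{\pm 1}$, so $\pi_\a$ is irreducible. Because $\SL_2(\Q_p)$ is normal in $G$, the subspace $\pi_\a^{\SL_2(\Q_p)}$ is a $G$-subrepresentation of the irreducible $\pi_\a$, hence is $0$ or all of $\pi_\a$. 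The second possibility would force $\pi_\a$ to factor through $\det : G \twoheadrightarrow \Q_p^\times$, making it finite-dimensional, a contradiction. Thus $\pi_\a^{\SL_2(\Q_p)} = 0$ and Theorem \ref{thm:Sch-Lud}(i) gives $\cS^0(\pi_\a) = 0$.

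There is no genuinely hard step here; the only point requiring any care is the bookkeeping with the long exact sequence in part (ii) and the irreducibility check for $\pi_\a$ in part (iii), both of which are routine once the relevant hypotheses on $p$ are used.
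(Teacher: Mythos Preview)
Your proof is correct and follows essentially the same approach as the paper. The paper cites Theorem \ref{thm-Scholze}(v) for (i) and Theorem \ref{thm:Sch-Lud} for (ii) and (iii), invoking \cite[Cor.~4.7]{Ludwig} directly for $\cS^2(\Sp)=0$; your long-exact-sequence derivation of this vanishing from the principal series case is exactly how that corollary is obtained, so there is no substantive difference. (A minor remark: for the irreducibility of $\pi_\alpha$ in part (iii) you only need $\omega\neq\omega^{-1}$, i.e.\ $\omega^2\neq 1$, by Barthel--Livn\'e; the extra condition ``nor $\omega^{\pm1}$'' is unnecessary but harmless.)
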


\begin{proof}
(i) follows from Theorem \ref{thm-Scholze}(v). (ii) and (iii) are special cases of Theorem \ref{thm:Sch-Lud},
except  for $\cS^2(\Sp)$ which is  \cite[Cor.~4.7]{Ludwig}.
\end{proof}

\begin{corollary}\label{cor:S0-S2=0}
Let $\pi\in \Mod_{G/Z}^{\rm l.adm}(\cO)$. Assume that each of the irreducible subquotients of $\pi$ lies in   $\{\Sp,\ide_G,\pi_{\alpha}\}$.  Then  $\cS^0(\pi)$ (resp. $\cS^2(\pi)$) admits only  $\NEW$ (resp. $\omega^{-1}$) as subquotients when restricted to $G_{\Q_p}$.
\end{corollary}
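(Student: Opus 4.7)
The plan is to reduce to the finite length case and then induct on the length, using the long exact sequence associated to the $\delta$-functor $\{\cS^i\}_{i=0,1,2}$ (we have $\cS^i = 0$ for $i\geq 3$ since $n=2$ in Theorem~\ref{thm-Scholze}(ii)).

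First I would reduce to the case where $\pi$ has finite length. Since $\pi$ is locally admissible and its irreducible subquotients all lie in the finite set $\{\Sp,\ide_G,\pi_{\alpha}\}$, $\pi$ is the filtered union of its finite length admissible subrepresentations. The functors $\cS^i$ commute with filtered direct limits (cf. the formula $H^i_{\et}(\bP^1_{\C_p},\cF_{\pi}) = \varinjlim_{\pi'} H^i_{\et}(\bP^1_{\C_p},\cF_{\pi'})$ recalled just after Theorem~\ref{thm-Scholze}), so the class of representations whose irreducible $G_{\Q_p}$-subquotients in $\cS^0$ (resp.~$\cS^2$) are all isomorphic to $\NEW$ (resp.~$\omega^{-1}$) is closed under filtered colimits. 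Hence it suffices to treat $\pi$ of finite length.

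Next, I would induct on the length of $\pi$. The base case (length one) is exactly Proposition~\ref{prop:S2Sp=0}: for $\pi \in \{\Sp,\ide_G,\pi_{\alpha}\}$, $\cS^0(\pi)$ is either $0$ or $\NEW\otimes\ide_{D^{\times}}$, and $\cS^2(\pi)$ is either $0$ or $\omega^{-1}\otimes\ide_{D^{\times}}$. For the inductive step, choose a short exact sequence
\[
0 \to \pi' \to \pi \to \pi'' \to 0
\]
with $\pi',\pi''$ of strictly smaller length. The long exact sequence
\[
0 \to \cS^0(\pi') \to \cS^0(\pi) \to \cS^0(\pi'') \to \cS^1(\pi') \to \cdots \to \cS^2(\pi') \to \cS^2(\pi) \to \cS^2(\pi'') \to 0
\]
shows that $\cS^0(\pi)$ is an extension of a $G_{\Q_p}$-subrepresentation of $\cS^0(\pi'')$ by $\cS^0(\pi')$, and dually $\cS^2(\pi)$ is an extension of $\cS^2(\pi'')$ by a $G_{\Q_p}$-quotient of $\cS^2(\pi')$. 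In particular, the sets of irreducible $G_{\Q_p}$-subquotients of $\cS^0(\pi)$ (resp.~$\cS^2(\pi)$) are contained in the union of those of $\cS^0(\pi'),\cS^0(\pi'')$ (resp.~$\cS^2(\pi'),\cS^2(\pi'')$), which by the inductive hypothesis consist only of $\NEW$ (resp.~$\omega^{-1}$).

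There is no real obstacle here: everything is formal once one has Proposition~\ref{prop:S2Sp=0} and the $\delta$-functor formalism, together with the compatibility of $\cS^i$ with filtered colimits. The only mild point to check is the reduction to finite length, which is handled by local admissibility (every admissible subrepresentation whose possible Jordan--H\"older factors lie in a fixed finite set automatically has finite length).
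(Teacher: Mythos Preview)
Your proof is correct and takes essentially the same approach as the paper: the paper's own proof is the single line ``It is a direct consequence of Proposition~\ref{prop:S2Sp=0}'', and you have correctly spelled out what that direct consequence means---reduction to finite length via the compatibility of $\cS^i$ with filtered colimits, then induction on the length using the long exact sequence of the $\delta$-functor. One small remark: your parenthetical justification at the end (``every admissible subrepresentation whose possible Jordan--H\"older factors lie in a fixed finite set automatically has finite length'') is not the cleanest way to phrase the reduction; what actually does the work is the equivalence of locally admissible and locally finite (\cite[Thm.~2.3.8]{EmertonOrd1}, cited in \S\ref{Sec::notation}), which directly exhibits $\pi$ as a filtered union of finite length subrepresentations.
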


\begin{proof}
It is a direct consequence of  Proposition \ref{prop:S2Sp=0}. 
\end{proof}

\begin{proposition}\label{prop:S-rho1-CM}
(i) $\JL(\brho_1)^{\vee}$ is a Cohen-Macaulay $\F[\![U_D^1]\!]$-module.

(ii) We have
$\cS^1(\pi(\brho_1))=\brho_1(-1)\otimes \JL(\brho_1)$
and $\cS^2(\pi(\brho_1))=0$. 
\end{proposition}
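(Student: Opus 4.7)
The plan is to establish (i) first using the global patching framework, and then obtain (ii) by combining (i) with Corollary~\ref{cor:S-equality} and an analysis of the long exact sequences of $\cS^{\bullet}$ applied to the filtrations of $\pi(\brho_1)$ given in Proposition~\ref{prop:kappa-rho1}.

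For (i), I would work with the patched module $N_\infty = N_\infty^B$ of Remark~\ref{rem:variant-M}. The $\overline{r}$-typicity of $\wt H^1(U^v,\F)[\fm_{\overline r}]$, together with \eqref{eq:rho-JL}, gives
\[
N_\infty/\fm_\infty \cong \brho_1(-1)^{\vee}\otimes_{\F}\JL(\brho_1)^{\vee}.
\]
The goal is to show that $N_\infty$ is maximal Cohen--Macaulay of the expected Krull dimension over $R_\infty^{\psi\e^{-1}}[\![\cO_D^{\times}/Z_D^1]\!]$. Cutting down by a regular system of parameters of $R_\infty^{\psi\e^{-1}}$ will then yield Cohen--Macaulayness of $\JL(\brho_1)^{\vee}$ over $\F[\![U_D^1]\!]$. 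The main input is the identification $\check{\cS}^1(M_\infty)\cong N_\infty$ from Theorem~\ref{thm--K-injectivity}(iv): since $M_\infty$ is projective (hence maximal CM) over $S_\infty[\![K/Z_1]\!]$ and $\cS^i(M_\infty) = 0$ for $i\geq 2$ by Theorem~\ref{thm-Scholze}(iii), applying $\check{\cS}^1$ to a Koszul complex for an $M_\infty$-regular sequence of $R_\infty^{\psi\e^{-1}}$ should produce an honest resolution and transfer the Cohen--Macaulay property from $M_\infty$ to $N_\infty$.

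For (ii), the equality $\cS^1(\pi(\brho_1)) = \brho_1(-1)\otimes\JL(\brho_1)$ is then immediate from (i) via Corollary~\ref{cor:S-equality} and \eqref{eq:rho-JL}. For the vanishing of $\cS^2(\pi(\brho_1))$, I would apply $\cS^{\bullet}$ to $0\to\Sp\to\pi(\brho_1)\to\kappa\to 0$ and invoke Proposition~\ref{prop:S2Sp=0}(ii) to obtain an isomorphism $\cS^2(\pi(\brho_1))\simto\cS^2(\kappa)$. Applying $\cS^{\bullet}$ to $0\to\ide_G\to\kappa\to\pi_\alpha\to 0$ and using Proposition~\ref{prop:S2Sp=0}(i),(iii) then identifies $\cS^2(\kappa)$ with $\Coker(\cS^1(\pi_\alpha)\to\o^{-1}\otimes\ide_{D^{\times}})$. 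The required surjectivity of $\cS^1(\pi_\alpha)\twoheadrightarrow\o^{-1}\otimes\ide_{D^{\times}}$ I would extract by combining the companion extension $0\to E_\phi\to\pi(\brho_1)\to\pi_\alpha\to 0$ with the already-established CM structure of $\cS^1(\pi(\brho_1))$; computing $\cS^{\bullet}(E_\phi)$ from $0\to\Sp\to E_\phi\to\ide_G\to 0$ gives $\cS^2(E_\phi)\cong\o^{-1}\otimes\ide_{D^{\times}}$, and the CM property of $\cS^1(\pi(\brho_1))^{\vee}$ via Lemma~\ref{lem:CM-Vfd=0} forbids finite-dimensional $\F[\![U_D^1]\!]$-contributions, forcing the residual in the connecting morphism to vanish.

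The chief obstacle will be establishing the Cohen--Macaulay property in part (i). Since $\brho_1\sim\smatr{\o}*0{\ide}$ sits in the exceptional case where $\dim\Ext^1_{G_{\Q_p}}(\ide,\o)=2$, the ring $R_{\brho_1}^{\psi\e^{-1}}$ is not formally smooth, and consequently $M_\infty$ is not flat over $R_\infty^{\psi\e^{-1}}$. The miracle flatness argument underlying Proposition~\ref{prop:equiv} is therefore unavailable, and one must argue directly at the level of $\check{\cS}^1$ applied to Koszul complexes, carefully exploiting the projectivity of $M_\infty|_K$ in $\Mod^{\rm pro}_{K,\psi}(\cO)$ and the explicit structure of $R_{\brho_1}^{\psi\e^{-1}}$ as a (non-regular) complete intersection. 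A secondary difficulty in (ii) is turning the cokernel identification of $\cS^2(\pi(\brho_1))$ into an actual vanishing without circularity; here the CM property from (i) is indispensable in ruling out spurious finite-dimensional summands.
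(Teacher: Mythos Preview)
Your ``chief obstacle'' rests on a false premise: the deformation ring $R_{\brho_1}^{\psi\e}$ \emph{is} formally smooth. You have confused the two cases. For $\brho_1\sim\smatr{\omega}*0{\ide}$, smoothness is governed by the obstruction space $H^2(G_{\Q_p},\ad^0\brho_1)$, which by Tate duality is dual to $\Hom_{G_{\Q_p}}(\brho_1,\brho_1(1))$; since $\brho_1(1)\sim\smatr{\omega^2}*0{\omega}$ has socle $\omega^2$ and $\brho_1$ has socle $\omega$, this Hom space vanishes. The 2-dimensionality of $\Ext^1_{G_{\Q_p}}(\ide,\omega)$ only says there are many choices of $\brho_1$ (peu/tr\`es ramifi\'e), not that any of them has an obstructed deformation ring. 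It is $\brho_2\sim\smatr{\ide}*0{\omega}$ whose deformation ring fails to be formally smooth (cf.\ Remark~\ref{rem:faithful}); that is precisely why the paper treats $\brho_2$ separately in Proposition~\ref{prop:S1-rho2} via Lemma~\ref{lem:Sch-7.7}.

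Once formal smoothness of $R_{\brho_1}^{\psi\e}$ is recognized, the paper's proof is immediate: Corollary~\ref{cor:GKdim-comp} gives $\dim_{\cO_D^{\times}}(\pi^B(\overline r))=1$, and then Proposition~\ref{prop:GN-flat} (which is exactly the miracle-flatness argument you declared unavailable) yields flatness of $N_\infty$ over $R_\infty^{\psi\e^{-1}}$, hence Cohen--Macaulayness of $\JL(\brho_1)^\vee$, the equality $\cS^1(\pi(\brho_1))=\brho_1(-1)\otimes\JL(\brho_1)$ via Corollary~\ref{cor:S-equality}, and $\cS^2(\pi(\brho_1))=0$. Your proposed workaround for (i) via Koszul complexes would in fact collapse to this same argument, since once $R_\infty^{\psi\e^{-1}}$ is regular you can choose a regular system of parameters and the Koszul machinery is exactly Proposition~\ref{prop:equiv}. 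Your alternative route to $\cS^2(\pi(\brho_1))=0$ in (ii) through the filtrations of $\pi(\brho_1)$ is more delicate than needed and risks circularity: the surjectivity of $\cS^1(\pi_\alpha)\to\omega^{-1}\otimes\ide_{D^{\times}}$ is established later in the paper (Corollary~\ref{cor:S1-kappa-fd}) \emph{using} Proposition~\ref{prop:S-rho1-CM} as input.
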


\begin{proof}
 Since $R_{\brho_1}^{\psi\e}$ is formally smooth, the assertions follow from Corollary \ref{cor:GKdim-comp} and Proposition \ref{prop:GN-flat}.
\end{proof}
 
\begin{corollary}\label{cor:S0-kappa}
We have $\cS^0(\kappa)\cong \NEW\otimes \ide_{D^{\times}}$ and $\cS^2(\kappa)=0$.
\end{corollary}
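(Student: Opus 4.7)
The plan is to deduce both statements from the long exact sequences attached to the two short exact sequences in which $\kappa$ naturally sits, using the vanishing results already established.

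First I would compute $\cS^0(\kappa)$ by applying $\cS^{\bullet}$ to the defining sequence \eqref{eq:def-kappa}, namely
\[0 \to \ide_G \to \kappa \to \pi_{\alpha} \to 0.\]
Since $\{\cS^i\}$ is a cohomological $\delta$-functor, this yields a long exact sequence
\[0 \to \cS^0(\ide_G) \to \cS^0(\kappa) \to \cS^0(\pi_{\alpha}) \to \cS^1(\ide_G) \to \cdots\]
By Proposition \ref{prop:S2Sp=0}(iii) we have $\cS^0(\pi_{\alpha})=0$, and by Proposition \ref{prop:S2Sp=0}(i) we have $\cS^0(\ide_G) \cong \NEW \otimes \ide_{D^{\times}}$. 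The first assertion $\cS^0(\kappa) \cong \NEW \otimes \ide_{D^{\times}}$ then follows immediately.

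For the vanishing of $\cS^2(\kappa)$, I would switch to the other short exact sequence provided by Proposition \ref{prop:kappa-rho1},
\[0 \to \Sp \to \pi(\brho_1) \to \kappa \to 0,\]
and apply $\cS^{\bullet}$ to it. The resulting long exact sequence terminates as
\[\cdots \to \cS^2(\pi(\brho_1)) \to \cS^2(\kappa) \to 0,\]
since $\cS^i = 0$ for $i \geq 3$ by Theorem \ref{thm-Scholze}(ii). Now Proposition \ref{prop:S-rho1-CM}(ii) gives $\cS^2(\pi(\brho_1)) = 0$, forcing $\cS^2(\kappa) = 0$ as required.

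There is really no obstacle here: both statements fall out of a direct two-step diagram chase, and the only nontrivial input is the vanishing $\cS^2(\pi(\brho_1)) = 0$ from Proposition \ref{prop:S-rho1-CM}, which itself rests on the Gelfand--Kirillov dimension computation and the miracle flatness argument of Proposition \ref{prop:GN-flat}. One minor point worth double-checking while writing up is that the long exact sequence from \eqref{eq:def-kappa} is also consistent with the values of $\cS^1$ and $\cS^2$ on $\ide_G$ from Proposition \ref{prop:S2Sp=0}(i); this gives, as a byproduct, a short exact sequence $0 \to \cS^1(\kappa) \to \cS^1(\pi_{\alpha}) \to \omega^{-1} \otimes \ide_{D^{\times}} \to 0$, which is compatible with (and useful for) the finer structural results on $\JL(\brho_i)$ proved later in the section.
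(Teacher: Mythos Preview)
Your proof is correct and follows essentially the same approach as the paper: the first assertion comes from the long exact sequence for $0\to\ide_G\to\kappa\to\pi_\alpha\to0$ together with $\cS^0(\pi_\alpha)=0$ and $\cS^0(\ide_G)\cong\NEW\otimes\ide_{D^\times}$, while the second comes from the long exact sequence for $0\to\Sp\to\pi(\brho_1)\to\kappa\to0$ and the vanishing $\cS^2(\pi(\brho_1))=0$ from Proposition~\ref{prop:S-rho1-CM}(ii). Your closing observation about the byproduct short exact sequence is exactly the sequence \eqref{eq:kappa-pi} used later in the paper.
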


\begin{proof}
Since $\cS^0(\pi_{\alpha})=0$, the first assertion is a direct consequence of Proposition \ref{prop:S2Sp=0}(i) via \eqref{eq:def-kappa}. Since $\kappa$ is a quotient of $\pi(\brho_1)$, the second assertion is a consequence of Proposition \ref{prop:S-rho1-CM}(ii).
\end{proof}
 
By Proposition \ref{prop:S2Sp=0}, Proposition \ref{prop:S-rho1-CM} and Corollary \ref{cor:S0-kappa}, the sequence
$0\ra \Sp\ra \pi(\brho_1)\ra \kappa\ra0$ (see Proposition \ref{prop:kappa-rho1})
induces an exact sequence
\begin{equation}\label{eq:S1-rho1}0\ra \NEW\otimes \ide_{D^{\times}}\ra \cS^1(\Sp)\ra \brho_1(-1)\otimes \JL(\brho_1)\ra \cS^1(\kappa)\ra0.\end{equation}
 Similarly, the sequence $0\ra E_{\phi}\ra \pi(\brho_1)\ra \pi_{\alpha}\ra0$ induces an exact sequence
 \begin{equation}\label{eq:S1-rho1-2}
 0\ra \cS^1(E_{\phi})\ra \brho_1(-1)\otimes \JL(\brho_1)\ra \cS^1(\pi_{\alpha})\ra \omega^{-1}\otimes\ide_{D^{\times}}\ra0.
 \end{equation}

\begin{lemma}\label{lem:1-S1Sp}
 We have $\Hom_{G_{\Q_p}}(\omega^{-1},\cS^1(\Sp))=0$,  and $\Hom_{G_{\Q_p}}(\omega^{-1},\cS^1(\tau_1))$ is finite dimensional.
\end{lemma}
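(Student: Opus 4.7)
The plan is to attack the two claims separately. Both will rest on the long exact sequences coming from the short exact sequences $0\to\Sp\to\pi(\brho_1)\to\kappa\to0$ (equivalently \eqref{eq:S1-rho1}) and $0\to\Sp\to\tau_1\to \ide_G^{\oplus 2}\to 0$, combined with the structure of $\brho_1(-1)$.

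For the first claim, apply $\Hom_{G_{\Q_p}}(\omega^{-1},-)$ to the short exact sequence
\[
0\to \NEW\otimes\ide_{D^{\times}}\to \cS^1(\Sp)\to M\to 0
\]
extracted from \eqref{eq:S1-rho1}, where $M\subset \brho_1(-1)\otimes \JL(\brho_1)$ is the image of the third map. The outer terms both vanish: the left term is $\Hom_{G_{\Q_p}}(\omega^{-1},\ide)=0$ since $\omega\neq \ide$ ($p\geq 5$); and $\Hom_{G_{\Q_p}}(\omega^{-1},M)$ embeds into
\[
\Hom_{G_{\Q_p}}(\omega^{-1},\brho_1(-1)\otimes \JL(\brho_1)) \;=\; \Hom_{G_{\Q_p}}(\omega^{-1},\brho_1(-1))\otimes_{\F}\JL(\brho_1),
\]
which vanishes because $\brho_1(-1)$ is the nonsplit extension of $\omega^{-1}$ by $\ide$, so $\soc_{G_{\Q_p}}\brho_1(-1)=\ide\neq \omega^{-1}$. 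Hence $\Hom_{G_{\Q_p}}(\omega^{-1},\cS^1(\Sp))=0$.

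For the second claim, apply $\cS^{\bullet}$ to $0\to\Sp\to\tau_1\to\ide_G^{\oplus 2}\to 0$. Using Proposition~\ref{prop:S2Sp=0} (namely $\cS^0(\Sp)=\cS^2(\Sp)=0$, $\cS^1(\ide_G)=0$), the resulting long exact sequence gives a surjection $\cS^1(\Sp)\twoheadrightarrow \cS^1(\tau_1)$ whose kernel $K$ is a quotient of $\cS^0(\ide_G^{\oplus 2})=(\NEW\otimes\ide_{D^{\times}})^{\oplus 2}$. In particular $K$ is at most $2$-dimensional over $\F$, with trivial $G_{\Q_p}$-action, so $K\cong \ide\otimes K'$ for some $D^{\times}$-module $K'$ with $\dim_{\F}K'\leq 2$. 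Applying $\Hom_{G_{\Q_p}}(\omega^{-1},-)$ to $0\to K\to \cS^1(\Sp)\to \cS^1(\tau_1)\to 0$ yields
\[
\Hom_{G_{\Q_p}}(\omega^{-1},\cS^1(\Sp))\to \Hom_{G_{\Q_p}}(\omega^{-1},\cS^1(\tau_1))\to \Ext^1_{G_{\Q_p}}(\omega^{-1},K).
\]
The first term vanishes by the first claim, so $\Hom_{G_{\Q_p}}(\omega^{-1},\cS^1(\tau_1))$ embeds into $\Ext^1_{G_{\Q_p}}(\omega^{-1},\ide)\otimes_{\F}K'$. Since $\Ext^1_{G_{\Q_p}}(\omega^{-1},\ide)\cong H^1(G_{\Q_p},\F(1))$ is finite dimensional by local Tate duality (of dimension $2$), the whole thing is finite dimensional, proving the second claim.

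The delicate points I expect to verify carefully are: (i) that the induced $D^{\times}$-action on $K$ is indeed trivially intertwined with the $G_{\Q_p}$-action so that $K\cong \ide\otimes K'$ as $G_{\Q_p}\times D^{\times}$-modules (straightforward from $K$ being a sub of $\cS^0(\ide_G^{\oplus 2})$); and (ii) that taking $\Ext^1_{G_{\Q_p}}$ commutes with the tensor structure for this $K$, which again follows from the trivial $G_{\Q_p}$-action on $K'$. Neither should present a real obstacle.
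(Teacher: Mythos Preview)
Your proof is correct and follows essentially the same approach as the paper's. The only minor difference is that the paper asserts the kernel $K$ equals $(\NEW\otimes\ide_{D^{\times}})^{\oplus 2}$ exactly (using $\cS^0(\tau_1)=0$, which follows from $\soc_G\tau_1=\Sp$ via Theorem~\ref{thm:Sch-Lud}(i)), whereas you only use that $K$ is a quotient of it; your weaker statement is already enough for the finite-dimensionality conclusion.
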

\begin{proof}
As $\brho_1\sim \smatr{\omega}{*}0{\ide}$ is assumed to be nonsplit, we have $\Hom_{G_{\Q_p}}(\omega^{-1},\brho_1(-1)\otimes \JL(\brho_1))=0$, which implies the first assertion via  \eqref{eq:S1-rho1}. For the second assertion, we note that the short exact sequence $0\ra \Sp\ra \tau_1\ra (\ide_{G})^{\oplus 2}\ra0$ induces an exact sequence
\begin{equation}\label{eq:Sp-tau1}0\ra (\NEW\otimes\ide_{D^{\times}})^{\oplus 2}\ra \cS^1(\Sp)\ra\cS^1(\tau_1)\ra0\end{equation}
by Proposition \ref{prop:S2Sp=0}(i). By applying $\Hom_{G_{\Q_p}}(\omega^{-1},-)$ to \eqref{eq:Sp-tau1}, we obtain
\[0=\Hom_{G_{\Q_p}}(\omega^{-1},\cS^1(\Sp))\ra\Hom_{G_{\Q_p}}(\omega^{-1},\cS^1(\tau_1))\ra \Ext^1_{G_{\Q_p}}(\omega^{-1},\NEW^{\oplus2})\]
from which the result easily follows.
\end{proof}

\begin{proposition}\label{prop:S1-rho2}
There exists a short exact sequence
\begin{equation}\label{eq:JL-rho2}0\ra \cS^1(\pi(\brho_2))\ra \brho_2(-1)\otimes \JL(\brho_2)\ra (\NEW\otimes\ide_{D^{\times}})^{\oplus 2}\ra0.\end{equation}
As a consequence, $\Hom_{G_{\Q_p}}(\omega^{-1}, \cS^1(\pi(\brho_2)))\cong \JL(\brho_2)$.
\end{proposition}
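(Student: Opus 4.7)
The plan is to combine Proposition~\ref{prop:Sch-7.7} (which gives a general but imprecise description of the map $\cS^1\hookrightarrow \wt H^1$) with the patching formalism of Theorem~\ref{thm--K-injectivity}(iv) and the faithfulness result Proposition~\ref{prop:faithful} to pin down the cokernel. First, applying Proposition~\ref{prop:Sch-7.7} under hypothesis~(\textbf{H}) and using $\wt H^1(U^v,\F)[\fm_{\overline{r}}]\cong \brho_2(-1)\otimes\JL(\brho_2)$ produces a short exact sequence
\[
0\to \cS^1(\pi(\brho_2))\to \brho_2(-1)\otimes\JL(\brho_2)\to C\to 0,
\]
in which $C$ is finite dimensional over $\F$ and annihilated by $(\cO_D^{\times})_1$; thus $C$ is a direct sum of smooth characters of $D^\times$ factoring through the reduced norm. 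The remaining task is to identify $C$ with $(\NEW\otimes\ide_{D^{\times}})^{\oplus 2}$.

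As a preliminary step I would compute $\cS^2(\pi(\brho_2))$. From $0\to \pi_{\alpha}\to \pi(\brho_2)\to \tau_1\to 0$ and $\cS^2(\pi_{\alpha})=0$ (Proposition~\ref{prop:S2Sp=0}(iii)) one gets $\cS^2(\pi(\brho_2))\cong \cS^2(\tau_1)$. Applying $\cS^\bullet$ to $0\to \Sp\to \tau_1\to \ide_G^{\oplus 2}\to 0$ and using $\cS^2(\Sp)=0$ together with $\cS^2(\ide_G)\cong \omega^{-1}\otimes\ide_{D^{\times}}$ (Proposition~\ref{prop:S2Sp=0}(i),(ii)) yields $\cS^2(\tau_1)\cong (\omega^{-1}\otimes\ide_{D^{\times}})^{\oplus 2}$, and hence $\cS^2(\pi(\brho_2))\cong (\omega^{-1}\otimes\ide_{D^{\times}})^{\oplus 2}$.

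The main obstacle is identifying $C$. Because $\brho_2\sim\smatr{\ide}{*}{0}{\omega}$, the local deformation ring $R_v^{\psi\e^{-1}}$ is \emph{not} formally smooth (compare Remark~\ref{rem:faithful}), so Proposition~\ref{prop:GN-flat} is unavailable. Instead I would exploit Theorem~\ref{thm--K-injectivity}(iv), which gives $\check\cS^1(M_\infty)\cong N_\infty$, together with Proposition~\ref{prop:faithful}: take an $M_\infty$-regular sequence $(t_1,\dots,t_N)$ which, up to one extra parameter $t_{N+1}$ encoding the obstruction to formal smoothness, generates $\fm_\infty$. Iterating the homological $\delta$-functor long exact sequence attached to each Koszul step
\[
0\to M_\infty/(\underline{t}_{<i})\xrightarrow{t_i} M_\infty/(\underline{t}_{<i}) \to M_\infty/(\underline{t}_{\le i})\to 0,
\]
one computes $\check\cS^1(M_\infty/(\underline{t}))$ in terms of $N_\infty/\fm_\infty N_\infty\cong (\brho_2(-1)\otimes\JL(\brho_2))^\vee$ and successive contributions from $\check\cS^2$. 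Lemma~\ref{lem:Ihara} ($\check\cS^0(M_\infty)=0$) keeps the bookkeeping simple; the last step, dividing out the obstruction parameter, produces exactly one extension term involving the Pontryagin dual of $\cS^2(\pi(\brho_2))\cong (\omega^{-1}\otimes\ide_{D^{\times}})^{\oplus 2}$. Tracking the Tate twist implicit in Scholze's isomorphism $\cS^2(\ide_G)\cong \omega^{-1}\otimes \ide_{D^{\times}}$ then pins down $C$ as $(\NEW\otimes\ide_{D^{\times}})^{\oplus 2}$.

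For the consequence, I apply $\Hom_{G_{\Q_p}}(\omega^{-1},-)$ to the short exact sequence. Since $\brho_2(-1)$ is a nonsplit extension of $\ide$ by $\omega^{-1}$, so that $\omega^{-1}$ is its unique sub, we have $\Hom_{G_{\Q_p}}(\omega^{-1},\brho_2(-1))\cong \F$ and hence $\Hom_{G_{\Q_p}}(\omega^{-1},\brho_2(-1)\otimes\JL(\brho_2))\cong \JL(\brho_2)$. Meanwhile $\Hom_{G_{\Q_p}}(\omega^{-1},C)=0$ because $C$ is a sum of copies of $\NEW$ and $\NEW\neq \omega^{-1}$. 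The left exact sequence then forces $\Hom_{G_{\Q_p}}(\omega^{-1},\cS^1(\pi(\brho_2)))\cong \JL(\brho_2)$.
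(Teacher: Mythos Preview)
Your opening reduction via Proposition~\ref{prop:Sch-7.7} and your computation of $\cS^2(\pi(\brho_2))\cong(\omega^{-1}\otimes\ide_{D^\times})^{\oplus 2}$ are both correct, and your deduction of the consequence from the short exact sequence is fine. The gap is in the identification of the cokernel $C$.

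The iterative Koszul argument you sketch does not work as written. Once you pass from $M_\infty$ to a quotient $M_\infty/(\underline t_{\le i})$, $K$-projectivity is lost, so the vanishing $\check\cS^2=0$ from Theorem~\ref{thm-Scholze}(iii) no longer applies; tracking $\check\cS^0$ and $\check\cS^2$ at each step then becomes the whole difficulty, and you have not explained how the contributions combine. The assertion that $\fm_\infty$ is generated by an $M_\infty$-regular sequence plus ``one extra parameter'' is not justified. Most importantly, there is no mechanism producing $C$ from $\cS^2(\pi(\brho_2))$ up to a Tate twist: the cokernel carries the Galois character $\NEW$, while $\cS^2(\pi(\brho_2))$ carries $\omega^{-1}$, and nothing in the construction converts one into the other.

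The paper takes a different route. It proves a general lemma (Lemma~\ref{lem:Sch-7.7}): for $P$ satisfying appropriate hypotheses and any ideal $I\subset A$, there is an exact sequence
\[
0\ra \check\cS^0\bigl(\Tor_1^A(A/I,P)\bigr)\ra A/I\otimes_A\check\cS^1(P)\ra \check\cS^1(A/I\otimes_A P)\ra 0,
\]
obtained from a free resolution of $A/I$ and a diagram chase in which Corollary~\ref{cor:S0-S2=0} forces a key connecting map to vanish. Thus $C$ is (dual to) $\check\cS^0$ of a Tor group, not $\cS^2$ of anything. The remaining task is to show $\Tor_1^{\TM(U^v)_{\fm_{\overline r}}}(\F,P)\cong(\ide_G^\vee)^{\oplus 2}$. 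Here your instinct to invoke Proposition~\ref{prop:faithful} is right, but its role is different: one first identifies $M_\infty\cong R_\infty^{\psi\e^{-1}}\widehat\otimes_{R_{\brho_2}^{\psi\e}}N$ via Remark~\ref{rem:lg}(ii), then applies Proposition~\ref{prop:faithful} to the $M_\infty$-regular sequence generating $\fa_\infty$ to obtain the ``big $R=\TM$'' isomorphism $R_\infty^{\psi\e^{-1}}/\fa_\infty\cong\TM(U^v)_{\fm_{\overline r}}$, reducing the Tor computation to $\Tor_1^{R_{\brho_2}^{\psi\e}}(\F,N)$. The latter is $(\ide_G^\vee)^{\oplus 2}$ by \cite[Prop.~3.30]{HuJEMS}, whence $C\cong\cS^0(\ide_G^{\oplus 2})\cong(\NEW\otimes\ide_{D^\times})^{\oplus 2}$ by Theorem~\ref{thm-Scholze}(v).
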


\begin{proof}
We need to show that the cokernel of \eqref{eq:S-inclusion}  is isomorphic to $(\NEW\otimes \ide_{D^{\times}})^{\oplus 2}$. For this we need a refined version of \cite[Prop.~7.7]{Scholze}, which we put separately in Lemma \ref{lem:Sch-7.7} below. In our situation with $A=\mathbb{T}(U^v)_{\fm_{\overline{r}}}$, $I=\fm_{\overline{r}}$ and $P:=(\wt{S}(U^{v}, \cO)_{\fm_{\overline{r}}})^{d}$,  we
are left to show \begin{equation}\label{eq:nongeneric-Tor}\Tor_1^{\mathbb{T}(U^v)_{\fm_{\overline{r}}}}(\mathbb{T}(U^v)_{\fm_{\overline{r}}}/\fm_{\overline{r}},P)\cong (\ide_G^{\vee})^{\oplus 2}\end{equation} by Proposition \ref{prop:S2Sp=0}(i) (here  we use \cite[Prop.~5.4]{Paskunas-JL} to ensure that $P$ satisfies the assumption (c) of Lemma \ref{lem:Sch-7.7}). This is a consequence of \cite[Prop.~3.30]{HuJEMS},  as we explain below. After enlarging $\F$, we may assume $\mathbb{T}(U^v)_{\fm_{\overline{r}}}/\fm_{\overline{r}}\cong \F$.

To be able to apply \cite[Prop.~3.30]{HuJEMS}, we need to relate $P$ with $N$, where $N$ is the object introduced in \S\ref{section-Morra} for $\brho_2$. We do this by passing to $M_{\infty}$. On the one hand, by  Remark \ref{rem:lg} and  assumption (\textbf{H}) we have   $M_{\infty}\cong R_{\infty}^{\psi \e^{-1}}\widehat{\otimes}_{R_{  \brho_2}^{\psi\e}}N$. Since $R_{\infty}^{\psi \e^{-1}}$ is flat over $R_{\brho_2}^{\psi\e} $, we deduce
\begin{equation}\label{eq:Tor-isom-1}\Tor_1^{R_{\brho_2}^{\psi\e}}(\F,N)\cong \Tor_1^{R_{\infty}^{\psi \e^{-1}}}(\F,M_{\infty}).\end{equation}
On the other hand,  $R_{\infty}^{\psi \e^{-1}} $ acts on $P$ via the isomorphism \eqref{eqn--M-infty} $M_{\infty}/\frak{a}_{\infty}\cong P$,  and the action  factors through  \[R_{\infty}^{\psi \e^{-1}}\twoheadrightarrow R_{\infty}^{\psi \e^{-1}}/\fa_{\infty}\cong R_{\overline{r},\cS}^{\psi\e^{-1}}\twoheadrightarrow\mathbb{T}(U^v)_{\fm_{\overline{r}}}.\]   Recall that $\frak{a}_{\infty}$  is generated by an $M_{\infty}$-regular sequence $z_1,\ldots,z_q, y_1,\ldots,y_j$. By Proposition \ref{prop:faithful}, this sequence is also $R_{\infty}^{\psi \e^{-1}}$-regular and $R_{\infty}^{\psi \e^{-1}}/\fa_{\infty}$ acts faithfully on $P$. But $\mathbb{T}(U^v)_{\fm_{\overline{r}}}$ also acts faithfully on $P$, so the surjection $R_{\infty}^{\psi \e^{-1}}/\fa_{\infty}\twoheadrightarrow \mathbb{T}(U^v)_{\fm_{\overline{r}}}$ is actually an isomorphism.\footnote{This gives a ``big $R=\mathbb{T}$'' result, as mentioned in Remark \ref{rem:faithful}.} Consequently,
\[\Tor_1^{R_{\infty}^{\psi \e^{-1}}}(\F,M_{\infty})\cong \Tor_1^{\mathbb{T}(U^v)_{\fm_{\overline{r}}}}(\F,P).\]
Combining this with \eqref{eq:Tor-isom-1}, we deduce \eqref{eq:nongeneric-Tor}  from \cite[Prop.~3.30]{HuJEMS}.
\end{proof}

\begin{lemma}\label{lem:Sch-7.7}
Let $(A,\fm)$ be a complete noetherian local $\cO$-algebra with $A/\fm\cong \F$ and $P\in \frak{C}_{G/Z_G}(A)$. Assume that
\begin{itemize}
\item[(a)]  $P$ is projective in the category of pseudo-compact $\cO[\![K/Z_1]\!]$-modules;
\item[(b)] $P_{\SL_2(\Q_p)}=0$;
\item[(c)] each of the irreducible subquotients of $P^{\vee}$ lies in $\{\Sp,\ide_G,\pi_{\alpha}\}$.
\end{itemize}
Let $I$ be an ideal of $A$. Then there exists an exact sequence\[0\ra  \check{\cS}^0(\Tor_1^A(A/I,P))\ra A/I \otimes_A \check{\cS}^1(P)\ra \check{\cS}^1(A/I\otimes_AP)\ra 0.\]
\end{lemma}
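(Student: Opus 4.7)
The plan is to follow Scholze's strategy for Proposition \ref{prop:Sch-7.7} but refine it to pin down the cokernel. First I would choose a presentation $0 \to I \to A \to A/I \to 0$ (valid since $A$ is Noetherian) and complete-tensor with $P$, splitting the resulting four-term sequence into two short exact sequences
\[
0 \to \Tor_1^A(A/I, P) \to I \widehat\otimes_A P \to IP \to 0, \qquad 0 \to IP \to P \to P/IP \to 0
\]
in $\frak{C}_{G/Z_G}(\cO)$, where $IP$ is the image of the multiplication map. The two main vanishings I would use are $\check{\cS}^q(P) = 0$ for $q \geq 2$, which comes from assumption (a) and Theorem \ref{thm-Scholze}(iii), and $\check{\cS}^0(M) = 0$ whenever $M$ is a $G$-equivariant quotient of a direct sum of copies of $P$, which follows from assumption (b) together with Theorem \ref{thm-Scholze}(iv) (since $\SL_2(\Q_p)$-coinvariants is right-exact). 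In particular this gives $\check{\cS}^0$-vanishing on each of $P/IP$, $IP$ and $I \widehat\otimes_A P$.

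Applying $\check{\cS}^\bullet$ to the two SES's and splicing the LES's, and writing $N := \check{\cS}^1(P)$, one obtains from the second SES the four-term sequence
\[
0 \to \check{\cS}^2(P/IP) \to \check{\cS}^1(IP) \xrightarrow{\beta} N \twoheadrightarrow \check{\cS}^1(P/IP) \to 0,
\]
and from the first a surjection $\check{\cS}^1(IP) \twoheadrightarrow \check{\cS}^0(\Tor_1^A(A/I, P))$ whose kernel is the image of $\check{\cS}^1(I \widehat\otimes_A P) \to \check{\cS}^1(IP)$. Since the composite $P \xrightarrow{\cdot a} IP \hookrightarrow P$ is multiplication by $a \in I$, functoriality of $\check{\cS}^1$ forces $\mathrm{Im}(\beta) \supseteq IN$, so the surjection $N \twoheadrightarrow \check{\cS}^1(P/IP)$ factors through $A/I \otimes_A N = N/IN$. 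A diagram chase then identifies the kernel of this induced map as a quotient of $\check{\cS}^0(\Tor_1^A(A/I, P))$, and the goal is to show the quotient map is an isomorphism.

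The main obstacle is precisely this last identification: one must show that the contribution of $\ker(\beta) = \check{\cS}^2(P/IP)$ together with its interaction with $\mathrm{Im}(\check{\cS}^1(I \widehat\otimes_A P) \to \check{\cS}^1(IP))$ exactly absorbs the apparent extra terms. This is where assumption (c) enters essentially: it forces the smooth duals of all subquotients of $P$ to lie in $\{\Sp, \ide_G, \pi_\alpha\}$, at which point Proposition \ref{prop:S2Sp=0} (the vanishing results of Ludwig) kills $\cS^2$ on the Steinberg and principal-series pieces, and (b) prevents $\ide_G$ from appearing as a subobject. The cleanest systematic implementation is to work with a full finite free resolution $F_\bullet \to A/I \to 0$ and set up the Cartan--Eilenberg spectral sequences of the double complex $\check{\cS}^\bullet(F_\bullet \widehat\otimes_A P)$: one spectral sequence degenerates at $E_2$ (its $E_1$ page is concentrated in row $q=1$ by the two vanishings above), pinning down the abutment as $\mathbb{H}_n = \Tor_{n-1}^A(A/I, N)$; and the second, with $E^2_{p,q} = \check{\cS}^q(\Tor_p^A(A/I, P))$, produces a two-step filtration on $\mathbb{H}_1 = A/I \otimes_A N$ whose graded pieces are $\check{\cS}^0(\Tor_1)$ as subobject and $\check{\cS}^1(A/I \otimes_A P)$ as quotient, giving exactly the desired sequence. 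The delicate point, and the hardest part of the argument, is verifying that all higher differentials $d_r$ for $r \geq 2$ landing on these two terms vanish; this reduces to controlling $\check{\cS}^0(\Tor_{\geq 2})$ and must again appeal to the structural restrictions imposed by assumption (c) combined with Proposition \ref{prop:S2Sp=0}.
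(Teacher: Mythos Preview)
Your overall framework is correct and matches the paper's: take a free resolution $F_\bullet \to A/I$, use (a) to kill $\check{\cS}^{\geq 2}$ on each $F_p\otimes_A P$, use (b) to kill $\check{\cS}^0$ on quotients of copies of $P$, then splice long exact sequences. You also correctly identify that the kernel of $N/IN \to \check{\cS}^1(A/I\otimes_A P)$ is a \emph{quotient} of $\check{\cS}^0(\Tor_1)$, and that the obstruction to it being the whole thing is governed by $\ker(\beta)=\check{\cS}^2(P/IP)$. The gap is in how you propose to dispatch this obstruction. Your claim that (c) plus Proposition~\ref{prop:S2Sp=0} ``kills $\cS^2$'' on the relevant subquotient, with (b) preventing $\ide_G$ from being a subobject, does not work: for instance $\tau_1$ has socle $\Sp$ (so $\ide_G$ is not a subobject) yet $\cS^2(\tau_1)\cong(\omega^{-1})^{\oplus 2}\neq 0$. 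In general $\check{\cS}^2(P/IP)$ is nonzero, so you cannot make the obstruction vanish this way.

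The paper's mechanism is different and is the point you are missing. Using (c) via Corollary~\ref{cor:S0-S2=0}, every $\check{\cS}^2$ that appears has only $\omega^{-1}$ as $G_{\Q_p}$-subquotients, while every $\check{\cS}^0$ has only $\NEW$. The relevant connecting map is a $G_{\Q_p}$-equivariant morphism $\partial:\check{\cS}^2(A/I\otimes_A P)\to \check{\cS}^0(\Ker d_1)$ arising from a snake-lemma diagram built out of the two truncated pieces of the resolution; since $\omega^{-1}\neq\ide$, this map is forced to be zero, and that is exactly what makes the quotient map an isomorphism. Your spectral-sequence reformulation has a separate problem: the ``second'' spectral sequence with $E_2^{p,q}=\check{\cS}^q(\Tor_p)$ requires $\check{\cS}^\bullet$ to be effaceable (i.e., a universal $\delta$-functor), which is not established here---$\check{\cS}^1$ does not vanish on $K$-projectives. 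Even granting its existence, the differential you would need to kill is $d_2:\check{\cS}^0(\Tor_2)\to\check{\cS}^1(A/I\otimes_A P)$, and the typic argument does not apply there since $\check{\cS}^1$ has no such constraint. Stick with the explicit snake-lemma chase and invoke the $\omega^{-1}$-vs-$\ide$ dichotomy on $\partial$.
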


\begin{proof}
Choose a finite free resolution of $A/I$: $\cdots\ra  F_1\ra F_0\ra A/I\ra 0$. By applying $-\otimes_AP$ to it, we obtain a chain complex
\begin{equation}\label{eq:resol-A/I}\cdots \overset{d_2}{\ra} F_1\otimes_A P\overset{d_1}{\ra} F_0\otimes_AP\overset{d_0}{\ra} A/I\otimes_A P\ra0\end{equation}
whose homology computes $\Tor_i^A(A/I,P)$.
Since each $F_i$ is a finite free $A$-module (for $i\geq 0$),  assumption (a) implies that each   $F_i\otimes_A P$ is projective when restricted to $K$, hence $\check{\cS}^2(F_i\otimes_A P)=0$ by Theorem \ref{thm-Scholze}(iii).  Assumption (b) implies that $\check{\cS}^0(F_i\otimes_A  P)=0$ by Theorem \ref{thm-Scholze}(iv). As a consequence, $\check{\cS}^0(\mathrm{Im}(d_i))=0$ for any $i\geq 0$. On the other hand, since $\check{\cS}^3(-)=0$, we have $\check{\cS}^2(\mathrm{Im}(d_i))=0$ for $i\geq 1$.

We may split (part of) the complex \eqref{eq:resol-A/I}  as
\[0\ra \mathrm{Im}(d_1)\ra F_0\otimes_AP\ra  A/I\otimes_A P\ra0,\ \ \ 0\ra \Ker(d_1)\ra  F_1\otimes_A P\ra \mathrm{Im}(d_1)\ra0\]
from which we deduce long exact sequences
\[0\ra \check{\cS}^2(A/I\otimes_A P)\ra\check{\cS}^1(\mathrm{Im}(d_1))\overset{f}{\ra}\check{\cS}^1(F_0\otimes_AP)\ra \check{\cS}^1(A/I\otimes_A P)\ra 0,\]
\[0\ra\check{\cS}^1(\Ker(d_1))\ra\check{\cS}^1(F_1\otimes_A P)\overset{g}{\ra} \check{\cS}^1(\mathrm{Im}(d_1))\ra \check{\cS}^0(\Ker(d_1))\ra0.\]
Note that $\check{\cS}^1(F_i\otimes_A P)\cong F_i\otimes_A \check{\cS}^1(P)$ (as $F_i$ is a finite free $A$-module), and  that there is an exact sequence 
\[ F_1\otimes_A\check{\cS}^1( P)\overset{f\circ g}{\lra}   F_0\otimes\check{\cS}^1(P)\ra A/I\otimes_A\check{\cS}^1(P)\ra0\]
by tensoring the sequence $F_1\ra F_0\ra A/I\ra0$ with $\check{\cS}^1(P)$.
Recall that a variant of the snake lemma shows that there is a long exact sequence
\[0\ra \Ker(g)\ra \Ker(f\circ g)\ra \Ker(f)\overset{\partial}{\ra} \Coker(g)\ra \Coker(f\circ g)\ra \Coker(f)\ra0.\]
In our situation, this gives (by considering the last four nonzero terms)
\[\check{\cS}^2(A/I\otimes_A P)\overset{\partial}{\ra}\check{\cS}^0(\Ker(d_1))\ra A/I \otimes_A\check{\cS}^1(P)\ra \check{\cS}^1(A/I\otimes_A P)\ra0.\]
By Corollary \ref{cor:S0-S2=0}, assumption (c) implies that $\partial$ is identically zero. Hence, we are left to show \[\check{\cS}^0(\Ker(d_1))=\check{\cS}^0(\Tor_1^A(A/I,P))\]
which follows from the exact sequence
$0\ra \mathrm{Im}(d_2)\ra \Ker(d_1)\ra \Tor_1^A(A/I,P)\ra0$
 (recall $\check{\cS}^0(\mathrm{Im}(d_2))=0$ from the first paragraph of the proof).
\end{proof}

By Theorem \ref{thm:Sch-Lud} the short exact sequence $0\ra \pi_{\alpha}\ra \pi(\brho_2)\ra \tau_1\ra0$ induces an exact sequence
\begin{equation}\label{eq:S1-rho2}0\ra \cS^1(\pi_{\alpha})\ra \cS^1(\pi(\brho_2))\ra \cS^1(\tau_1)\ra0.
\end{equation}

\begin{lemma} \label{lem:S1-kappa}
 Both $\cS^1(\pi_{\alpha})$ and $\cS^1(\kappa)$ are $\omega^{-1}$-typic (when restricted to $G_{\Q_p}$).
 \end{lemma}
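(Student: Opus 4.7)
My plan is to first establish the $\omega^{-1}$-typic property for $\cS^1(\kappa)$ and then deduce the statement for $\cS^1(\pi_\alpha)$. Applying $\cS^{\bullet}$ to the exact sequence $0\to \ide_G\to \kappa\to \pi_\alpha\to 0$, and using $\cS^1(\ide_G)=0$, $\cS^2(\ide_G)\cong \omega^{-1}\otimes\ide_{D^\times}$ from Proposition~\ref{prop:S2Sp=0}(i), together with $\cS^0(\pi_\alpha)=\cS^2(\pi_\alpha)=0$ from Proposition~\ref{prop:S2Sp=0}(iii) and $\cS^2(\kappa)=0$ from Corollary~\ref{cor:S0-kappa}, I first extract a short exact sequence
\[
0\to \cS^1(\kappa)\to \cS^1(\pi_\alpha)\to \omega^{-1}\otimes\ide_{D^\times}\to 0.
\]
Combining this with the injection $\cS^1(\pi_\alpha)\hookrightarrow \cS^1(\pi(\brho_2))$ from \eqref{eq:S1-rho2} and the embedding $\cS^1(\pi(\brho_2))\hookrightarrow \brho_2(-1)\otimes\JL(\brho_2)$ from Proposition~\ref{prop:S1-rho2} yields a $G_{\Q_p}\times D^\times$-equivariant injection $\cS^1(\kappa)\hookrightarrow \brho_2(-1)\otimes\JL(\brho_2)$. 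Since $\brho_2\sim\smatr{\ide}{*}{0}{\omega}$ is nonsplit, $\brho_2(-1)\otimes\JL(\brho_2)$ has $G_{\Q_p}$-socle $\omega^{-1}\otimes\JL(\brho_2)$, so the $G_{\Q_p}$-socle of $\cS^1(\kappa)$ is $\omega^{-1}$-typic.

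On the other hand, \eqref{eq:S1-rho1} realizes $\cS^1(\kappa)$ as the quotient $(\brho_1(-1)\otimes\JL(\brho_1))/X$, where $X$ is the image of $\cS^1(\Sp)$. Since $\brho_1\sim\smatr{\omega}{*}{0}{\ide}$ is nonsplit, the $G_{\Q_p}$-socle of $\brho_1(-1)\otimes\JL(\brho_1)$ is $\ide\otimes\JL(\brho_1)$. Setting $X_0:=X\cap\bigl(\ide\otimes\JL(\brho_1)\bigr)$, the quotient $(\ide\otimes\JL(\brho_1))/X_0$ injects into $\cS^1(\kappa)$ as an $\ide$-typic submodule; being $\ide$-typic it is semisimple with trivial $G_{\Q_p}$-action, so it lies in the socle of $\cS^1(\kappa)$. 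The previous paragraph then forces $X_0=\ide\otimes\JL(\brho_1)$, so $X\supset \ide\otimes\JL(\brho_1)$ and $\cS^1(\kappa)$ is a quotient of $\omega^{-1}\otimes\JL(\brho_1)$; in particular it is $\omega^{-1}$-typic.

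For $\cS^1(\pi_\alpha)$, the displayed short exact sequence shows that every $G_{\Q_p}$-Jordan-H\"older factor of $\cS^1(\pi_\alpha)$ is isomorphic to $\omega^{-1}$, since both $\cS^1(\kappa)$ and $\omega^{-1}\otimes\ide_{D^\times}$ have this property. Since $\Hom_{G_{\Q_p}}(M,N)=0$ whenever $M$ has all JH factors $\omega^{-1}$ and $N$ is $\ide$-typic (easy induction on the length of $M$), the composite $\cS^1(\pi_\alpha)\hookrightarrow \brho_2(-1)\otimes\JL(\brho_2)\twoheadrightarrow \ide\otimes\JL(\brho_2)$ vanishes. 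Therefore $\cS^1(\pi_\alpha)\subset \omega^{-1}\otimes\JL(\brho_2)$, which is $\omega^{-1}$-typic.

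The hard part is the interplay between the two realizations of $\cS^1(\kappa)$: the $\brho_1$-side presents it as a quotient of a module whose intrinsic socle is $\ide$-typic (the ``wrong'' character), while the $\brho_2$-side realizes it as a submodule of a module whose socle is $\omega^{-1}$-typic. Reconciling these via Proposition~\ref{prop:S-rho1-CM} (identifying $\cS^1(\pi(\brho_1))$) and Proposition~\ref{prop:S1-rho2} (describing $\cS^1(\pi(\brho_2))$) is the crucial ingredient, and the comparison of socles is what kills the unwanted $\ide$-typic part in the quotient description.
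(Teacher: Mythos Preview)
Your proof is correct and follows essentially the same approach as the paper: both arguments use the embedding $\cS^1(\kappa)\hookrightarrow\cS^1(\pi_\alpha)\hookrightarrow\brho_2(-1)\otimes\JL(\brho_2)$ (from \eqref{eq:S1-rho2} and Proposition~\ref{prop:S1-rho2}) to show that $\cS^1(\kappa)$ has no $\ide_{G_{\Q_p}}$ in its socle, and then use the surjection $\brho_1(-1)\otimes\JL(\brho_1)\twoheadrightarrow\cS^1(\kappa)$ from \eqref{eq:S1-rho1} to conclude that this map kills $\ide\otimes\JL(\brho_1)$, whence $\cS^1(\kappa)$ is $\omega^{-1}$-typic; the argument for $\cS^1(\pi_\alpha)$ is likewise the same. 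One small remark: your phrase ``easy induction on the length of $M$'' is slightly imprecise since $\cS^1(\pi_\alpha)$ need not have finite $G_{\Q_p}$-length, but the claim you actually need (that the image in $\ide\otimes\JL(\brho_2)$ vanishes) follows immediately since that image is simultaneously $\ide$-typic and has only $\omega^{-1}$ as irreducible $G_{\Q_p}$-subquotients.
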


\begin{proof}
We claim that  $\Hom_{G_{\Q_p}}(\NEW,\cS^1(\pi_{\alpha}))=\Hom_{G_{\Q_p}}(\NEW,\cS^1(\kappa))=0$. Combining \eqref{eq:S1-rho2} with Proposition \ref{prop:S1-rho2}, we obtain an embedding
\[\cS^1(\pi_{\alpha})\hookrightarrow \brho_2(-1)\otimes \JL(\brho_2).\]
As $\Hom_{G_{\Q_p}}(\NEW,\brho_2(-1))=0$, we deduce that $\Hom_{G_{\Q_p}}(\NEW,\cS^1(\pi_{\alpha}))=0$, as claimed. Using   Proposition \ref{prop:S2Sp=0}(i) and Corollary \ref{cor:S0-kappa}, the sequence $0\ra \ide_G\ra \kappa\ra \pi_{\alpha}\ra0$ induces an exact sequence
\begin{equation}\label{eq:kappa-pi}0\ra \cS^1(\kappa)\ra \cS^1(\pi_{\alpha})\ra \omega^{-1}\otimes\ide_{D^{\times}}\ra0\end{equation}
which implies the claim for $\cS^1(\kappa)$.

The claim implies that the surjection $\brho_1(-1)\otimes\JL(\brho_1)\twoheadrightarrow \cS^1(\kappa)$ in \eqref{eq:S1-rho1} must factor as
\[\brho_1(-1)\otimes\JL(\brho_1)\twoheadrightarrow\omega^{-1}\otimes \JL(\brho_1)\twoheadrightarrow \cS^1(\kappa),\]
where the first quotient map is induced by the natural projection $\brho_1(-1)\sim \smatr{\ide}{*}0{\omega^{-1}}\twoheadrightarrow \omega^{-1}$.
In particular, $\cS^1(\kappa)$ is $\omega^{-1}$-typic. Note that, being a subrepresentation of $\brho_2(-1)\otimes \JL(\brho_2)$, $\cS^1(\pi_{\alpha})$ does not admit any $G_{\Q_p}$-subquotient isomorphic to a nontrivial self-extension of $\omega^{-1}$, so  $\cS^1(\pi_{\alpha})$ is also $\omega^{-1}$-typic by \eqref{eq:kappa-pi}.
\end{proof}

As a consequence of \eqref{eq:JL-rho2}, there exists a $D^{\times}$-equivariant surjection
\begin{equation}\label{eq:def-V2}\JL(\brho_2)\twoheadrightarrow (\ide_{D^{\times}})^{\oplus2}.\end{equation}
We denote  its kernel by $V_2$. Then $\brho_2(-1)\otimes \JL(\brho_2)$ can be filtered by subrepresentations such that the graded pieces are isomorphic to
\[\omega^{-1}\otimes V_2,\quad \big(\omega^{-1}\otimes(\ide_{D^{\times}})^{\oplus 2}\big)\oplus (\NEW\otimes V_2),\quad \NEW\otimes (\ide_{D^{\times}})^{\oplus 2}.\] Using again \eqref{eq:JL-rho2}, we obtain the following short exact sequences
\begin{equation}\label{eq:S1-rho2-V2}
0\ra \omega^{-1}\otimes \JL(\brho_2)\ra \cS^1(\pi(\brho_2))\ra \NEW\otimes V_2\ra0,\end{equation}
\begin{equation}\label{eq:V2-S1rho2}0\ra \omega^{-1}\otimes V_2\ra \cS^1(\pi(\brho_2))\ra (\omega^{-1}\otimes \ide_{D^{\times}})^{\oplus 2}\oplus(\NEW\otimes V_2)\ra0.\end{equation}
Recall   the definition of $V_{\rm fd}$ for an admissible smooth $D^{\times}$-representation $V$ from Definition \ref{def:V-fd}, and that taking $(-)_{\rm fd}$ is right exact by Lemma \ref{lem:V-fd}.

\begin{corollary}\label{cor:S1-kappa-fd}
The following statements hold:

(i) $(\cS^1(\kappa))_{\rm fd}=0$ and $(\cS^1(\pi_{\alpha}))_{\rm fd}\cong \omega^{-1}\otimes\ide_{D^{\times}}$.

(ii) $(\cS^1(E_{\phi}))_{\rm fd}$ is $\omega^{-1}$-typic.

(iii) $(\cS^1(\tau_1))_{\rm fd}$ is $\omega^{-1}$-typic.

(iv) $(V_2)_{\rm fd}=0$.
\end{corollary}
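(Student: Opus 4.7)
The whole package rests on the single Cohen--Macaulay input that $\JL(\brho_1)^\vee$ is Cohen--Macaulay over $\F[\![U_D^1/Z_D^1]\!]$ (Proposition~\ref{prop:S-rho1-CM}(i)); coupled with Lemma~\ref{lem:CM-Vfd=0} and the infinite-dimensionality of $\JL(\brho_1)$ (via \cite[Cor.~3.2.4]{BreuilDiamond}), this gives $\JL(\brho_1)_{\rm fd}=0$. For (i), recall from the proof of Lemma~\ref{lem:S1-kappa} that the surjection $\brho_1(-1)\otimes\JL(\brho_1)\twoheadrightarrow\cS^1(\kappa)$ factors through $\omega^{-1}\otimes\JL(\brho_1)\twoheadrightarrow\cS^1(\kappa)$; by right-exactness of $(-)_{\rm fd}$ (Lemma~\ref{lem:V-fd}), $(\cS^1(\kappa))_{\rm fd}$ is a quotient of $(\omega^{-1}\otimes\JL(\brho_1))_{\rm fd}=\omega^{-1}\otimes\JL(\brho_1)_{\rm fd}=0$. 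The second part of Lemma~\ref{lem:V-fd} applied to \eqref{eq:kappa-pi} (whose quotient $\omega^{-1}\otimes\ide_{D^\times}$ is finite-dimensional) then produces $(\cS^1(\pi_\alpha))_{\rm fd}\cong\omega^{-1}\otimes\ide_{D^\times}$.

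For (ii), the $\omega^{-1}$-typicity of $\cS^1(\pi_\alpha)$ (Lemma~\ref{lem:S1-kappa}) forces the map in \eqref{eq:S1-rho1-2} to vanish on the $\ide$-subrepresentation $\ide\otimes\JL(\brho_1)\subset\brho_1(-1)\otimes\JL(\brho_1)$; hence $\ide\otimes\JL(\brho_1)\subset\cS^1(E_\phi)$, and the quotient $K:=\cS^1(E_\phi)/(\ide\otimes\JL(\brho_1))$ embeds into the $\omega^{-1}$-typic representation $\omega^{-1}\otimes\JL(\brho_1)$, making $K$ itself $\omega^{-1}$-typic. Because $(\ide\otimes\JL(\brho_1))_{\rm fd}=0$, a direct dualization (any finite-dimensional submodule of $\cS^1(E_\phi)^\vee$ maps to zero in $(\ide\otimes\JL(\brho_1))^\vee$, hence is already supported in $K^\vee$) yields $(\cS^1(E_\phi))_{\rm fd}\cong K_{\rm fd}$, which is $\omega^{-1}$-typic as a quotient of an $\omega^{-1}$-typic representation.

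The main obstacle is (iv). The local Cohen--Macaulay arguments available for $\brho_1$ are unavailable for $\brho_2\sim\smatr{\ide}*0{\omega}$: the deformation ring $R_{\brho_2}^{\psi\e}$ is not formally smooth (Remark~\ref{rem:faithful}), Proposition~\ref{prop:GN-flat} cannot be invoked, and $\JL(\brho_2)^\vee$ need not be Cohen--Macaulay. The approach is to dualize (iv) to the statement $\JL(\brho_2)_{\rm fd}\cong\ide_{D^\times}^{\oplus 2}$, i.e.\ the $\ide_{D^\times}^{\oplus 2}$ arising from \eqref{eq:def-V2} already exhausts the largest finite-dimensional $D^\times$-quotient of $\JL(\brho_2)$, and to lift the problem to the patched module $N_\infty$. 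The big ``$R=\TM$'' identification $R_\infty^{\psi\e^{-1}}/\fa_\infty\cong\TM(U^v)_{\fm_{\overline{r}}}$ coming from the faithfulness result Proposition~\ref{prop:faithful} makes this accessible, and the required control is obtained by refining the $\Tor_1$-computation of \cite[Prop.~3.30]{HuJEMS} (which produced the cokernel $(\NEW\otimes\ide_{D^\times})^{\oplus 2}$ in Proposition~\ref{prop:S1-rho2}) in order to pin down not merely the top finite-dimensional quotient of $\JL(\brho_2)$, but all of $\JL(\brho_2)_{\rm fd}$.

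Granting (iv), part (iii) falls out cleanly. Since $\cS^1(\pi_\alpha)$ is $\omega^{-1}$-typic, its image in $\cS^1(\pi(\brho_2))$ must lie inside the $\omega^{-1}$-typic subrepresentation $\omega^{-1}\otimes\JL(\brho_2)$ appearing in \eqref{eq:S1-rho2-V2}, so \eqref{eq:S1-rho2} descends to a short exact sequence
\[
0\to(\omega^{-1}\otimes\JL(\brho_2))/\cS^1(\pi_\alpha)\to\cS^1(\tau_1)\to\NEW\otimes V_2\to 0.
\]
By (iv), $(\NEW\otimes V_2)_{\rm fd}=0$, so right-exactness of $(-)_{\rm fd}$ exhibits $(\cS^1(\tau_1))_{\rm fd}$ as a quotient of the $\omega^{-1}$-typic representation $((\omega^{-1}\otimes\JL(\brho_2))/\cS^1(\pi_\alpha))_{\rm fd}$, hence $\omega^{-1}$-typic.
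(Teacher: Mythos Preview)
Your treatment of (i) and (ii) matches the paper's argument. The genuine gap is in (iv), and it stems from reversing the logical dependence between (iii) and (iv).

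You deduce (iii) from (iv), and then attempt (iv) by an independent global argument: lift to $N_\infty$, invoke big ``$R=\TM$'', and ``refine'' the $\Tor_1$-computation of \cite[Prop.~3.30]{HuJEMS} to show $\JL(\brho_2)_{\rm fd}\cong\ide_{D^\times}^{\oplus2}$. But this last step is not a proof. Proposition~\ref{prop:S1-rho2} extracts from \cite[Prop.~3.30]{HuJEMS} only that $\Tor_1^{R_{\brho_2}^{\psi\e}}(\F,N)\cong(\ide_G^\vee)^{\oplus2}$, which via Lemma~\ref{lem:Sch-7.7} controls the \emph{cokernel} of $\cS^1(\pi(\brho_2))\hookrightarrow\brho_2(-1)\otimes\JL(\brho_2)$; it says nothing about further finite-dimensional quotients of $\JL(\brho_2)$. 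Without Cohen--Macaulayness of $\JL(\brho_2)^\vee$ (unavailable, as you note), there is no evident way to ``refine'' this $\Tor_1$-input to rule out a larger $\JL(\brho_2)_{\rm fd}$.

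The paper proceeds in the opposite order, and this is the missing idea. First (iii) is deduced \emph{from (ii)}, not from (iv): the short exact sequence $0\to E_\phi\to\tau_1\to\ide_G\to 0$ yields
\[
0\to\NEW\otimes\ide_{D^\times}\to\cS^1(E_\phi)\to\cS^1(\tau_1)\to 0,
\]
so by right-exactness $(\cS^1(\tau_1))_{\rm fd}$ is a quotient of $(\cS^1(E_\phi))_{\rm fd}$, which is $\omega^{-1}$-typic by (ii). Then (iv) follows from (iii) by the very diagram chase you wrote down for (iii): since $\cS^1(\pi_\alpha)\subset\omega^{-1}\otimes\JL(\brho_2)$, the quotient $\NEW\otimes V_2$ of $\cS^1(\pi(\brho_2))$ is also a quotient of $\cS^1(\tau_1)$; hence $\NEW\otimes(V_2)_{\rm fd}=(\NEW\otimes V_2)_{\rm fd}$ is simultaneously $\NEW$-typic and a quotient of the $\omega^{-1}$-typic $(\cS^1(\tau_1))_{\rm fd}$, forcing $(V_2)_{\rm fd}=0$. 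You had all the ingredients for this; the trick is that typicity for two \emph{distinct} Galois characters pins the finite-dimensional piece to zero, so (iii)$\Rightarrow$(iv) rather than the reverse.
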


\begin{proof}
(i) Since $\cS^1(\kappa)$ is a quotient of $\brho_1(-1)\otimes \JL(\brho_1)$ by \eqref{eq:S1-rho1} and $(\JL(\brho_1))^{\vee}$ is Cohen-Macaulay by Proposition \ref{prop:S-rho1-CM}, we have $\JL(\brho_1)_{\rm fd}=0$ by Lemma \ref{lem:CM-Vfd=0}, hence $(\cS^1(\kappa))_{\rm fd}=0$ as well by the right exactness of $(-)_{\rm fd}$.  The second assertion follows from this, by applying  Lemma \ref{lem:V-fd} to \eqref{eq:kappa-pi}.

(ii) Recall the exact sequence  \eqref{eq:S1-rho1-2}
 \[  0\ra \cS^1(E_{\phi})\ra \brho_1(-1)\otimes \JL(\brho_1)\ra \cS^1(\pi_{\alpha})\ra \omega^{-1}\otimes\ide_{D^{\times}}\ra0.
 \]
Since $\cS^1(\pi_{\alpha})$ is $\omega^{-1}$-typic by
Lemma \ref{lem:S1-kappa}, the morphism $\brho_1(-1)\otimes \JL(\brho_1)\ra \cS^1(\pi_{\alpha})$ factors through the quotient $\omega^{-1}\otimes \JL(\brho_1)$.
 Let $W$ be the  admissible $\F$-representation of $D^{\times}$ such that
 \[\omega^{-1}\otimes W=\Ker\big(\omega^{-1}\otimes \JL(\brho_1)\ra \cS^1(\pi_{\alpha})\big).\] Then one checks that $\cS^1(E_{\phi})$ fits in the following exact sequence
 \begin{equation}\label{eq:Sp-X}0\ra \NEW\otimes \JL(\brho_1)\ra \cS^1(E_{\phi})\ra \omega^{-1}\otimes  W\ra0.\end{equation}   Since $(\JL(\brho_1))_{\rm fd}=0$ as seen in (i), we deduce \[(\cS^1(E_{\phi}))_{\rm fd}= (\omega^{-1}\otimes W)_{\rm fd}\cong\omega^{-1}\otimes W_{\rm fd}.\] In particular, $(\cS^1(E_{\phi}))_{\rm fd}$ is $\omega^{-1}$-typic.

(iii) Note that there is a short exact sequence $0\ra E_{\phi}\ra \tau_1\ra \ide_{G}\ra0$ by definition of $\tau_1$, see  \eqref{eq:def-tau1}.   By Proposition \ref{prop:S2Sp=0}(i) it induces an exact sequence
\begin{equation}\label{eq:Ephi-tau1}0\ra \NEW\otimes\ide_{D^{\times}}\ra \cS^1(E_{\phi})\ra \cS^1(\tau_1)\ra0.\end{equation}
The assertion then follows from (ii) using Lemma \ref{lem:V-fd}.

(iv)   We view $\cS^1(\pi_{\alpha})$ as a subrepresentation of $\cS^1(\pi(\brho_2))$ via  \eqref{eq:S1-rho2}. Since $\cS^1(\pi_{\alpha})$ is $\omega^{-1}$-typic by Lemma \ref{lem:S1-kappa}, it is contained in $\omega^{-1}\otimes \JL(\brho_2)$, see \eqref{eq:S1-rho2-V2}.
As a consequence, the snake lemma applied to \eqref{eq:S1-rho2} and \eqref{eq:S1-rho2-V2} implies that $\NEW\otimes V_2$ is a quotient of $\cS^1(\tau_1)$, thus  $(\NEW\otimes V_2)_{\rm fd}$ is a quotient of $(\cS^1(\tau_1))_{\rm fd}$. However,  $(\cS^1(\tau_1))_{\rm fd}$ is $\omega^{-1}$-typic by (iii), which forces $(\NEW\otimes V_2)_{\rm fd}=0$ or equivalently $(V_2)_{\rm fd}=0$.
\end{proof}

\begin{corollary}\label{cor:S1kappa=V2}
We have   isomorphisms $\cS^1(\kappa)\cong \omega^{-1}\otimes V_2$ and
\[\cS^1(\tau_1)\cong(\omega^{-1}\otimes \ide_{D^{\times}})\oplus(\NEW\otimes V_2).\]
\end{corollary}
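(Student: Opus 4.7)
The statement has two parts, and the second follows formally from the first, so the substance of the argument lies in proving $\cS^1(\kappa)\cong \omega^{-1}\otimes V_2$. One inclusion is already essentially contained in the proof of Corollary \ref{cor:S1-kappa-fd}: since $\cS^1(\pi_\alpha)$ is $\omega^{-1}$-typic (Lemma \ref{lem:S1-kappa}) and sits inside $\omega^{-1}\otimes\JL(\brho_2)$ via \eqref{eq:S1-rho2-V2}, the composition $\cS^1(\kappa)\hookrightarrow \omega^{-1}\otimes\JL(\brho_2)\twoheadrightarrow (\omega^{-1}\otimes\ide_{D^\times})^{\oplus 2}$ is a map into a finite-dimensional target that must factor through $(\cS^1(\kappa))_{\rm fd}=0$ (Cor \ref{cor:S1-kappa-fd}(i)), so $\cS^1(\kappa)\subset \omega^{-1}\otimes V_2$.

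For the reverse inclusion $\omega^{-1}\otimes V_2\subset \cS^1(\kappa)$, it suffices to show $\omega^{-1}\otimes V_2\subset \cS^1(\pi_\alpha)$: since $(\omega^{-1}\otimes V_2)_{\rm fd}=0$ by Cor \ref{cor:S1-kappa-fd}(iv) while the quotient $\cS^1(\pi_\alpha)/\cS^1(\kappa)\cong \omega^{-1}\otimes\ide_{D^\times}$ is finite-dimensional, any such embedding must land in $\cS^1(\kappa)$. Writing $\cS^1(\pi_\alpha)=\omega^{-1}\otimes Y$ for a subrepresentation $Y\subset \JL(\brho_2)$, the goal becomes $V_2\subset Y$. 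My plan is to exploit the sequence \eqref{eq:S1-rho1-2}, which produces a $D^\times$-equivariant map $\JL(\brho_1)\to Y$ with cokernel $\ide_{D^\times}$; call its image $V_2'\subset Y$. Since $Y_{\rm fd}=\ide_{D^\times}$ (from $(\cS^1(\pi_\alpha))_{\rm fd}=\omega^{-1}\otimes\ide_{D^\times}$), the unique finite-dimensional quotient $Y\twoheadrightarrow \ide_{D^\times}$ coincides with $Y/V_2'$, so $V_2'=\Ker(Y\to Y_{\rm fd})$ has no nonzero finite-dimensional quotient. Composing with $Y\hookrightarrow \JL(\brho_2)\twoheadrightarrow \JL(\brho_2)_{\rm fd}=(\ide_{D^\times})^{\oplus 2}$, this vanishes, so $V_2'\subset V_2$. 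A codimension comparison inside $\JL(\brho_2)$ using the two finite-dimensional data $Y_{\rm fd}=\ide_{D^\times}$ and $\JL(\brho_2)_{\rm fd}=(\ide_{D^\times})^{\oplus 2}$ should then force $V_2'=V_2$ and $Y/V_2=\ide_{D^\times}$, yielding $V_2\subset Y$ as desired.

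Once the first isomorphism is established, the second follows formally. By \eqref{eq:V2-S1rho2} we have $\cS^1(\pi(\brho_2))/(\omega^{-1}\otimes V_2)\cong (\omega^{-1}\otimes\ide_{D^\times})^{\oplus 2}\oplus (\NEW\otimes V_2)$. Since $\omega^{-1}\otimes V_2=\cS^1(\kappa)\subset \cS^1(\pi_\alpha)$, the image of $\cS^1(\pi_\alpha)$ in this quotient equals $\cS^1(\pi_\alpha)/\cS^1(\kappa)\cong\omega^{-1}\otimes\ide_{D^\times}$. This image is $\omega^{-1}$-typic, while $\NEW\otimes V_2$ is $\ide$-typic for $G_{\Q_p}$, so by typicness the image lies in the first summand $(\omega^{-1}\otimes\ide_{D^\times})^{\oplus 2}$; moreover the complementary direct-sum splitting of the target is preserved. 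Taking the quotient yields
\[
\cS^1(\tau_1)=\cS^1(\pi(\brho_2))/\cS^1(\pi_\alpha)\cong \big[(\omega^{-1}\otimes\ide_{D^\times})^{\oplus 2}/(\omega^{-1}\otimes\ide_{D^\times})\big]\oplus (\NEW\otimes V_2)\cong (\omega^{-1}\otimes\ide_{D^\times})\oplus (\NEW\otimes V_2).
\]
The main obstacle is the identification $V_2'=V_2$ in Step 1: the easy direction $V_2'\subset V_2$ falls out of $(V_2')_{\rm fd}=0$, but the reverse requires controlling how $Y$ sits inside $\JL(\brho_2)$, and in particular ruling out the possibility that $Y$ is deeply nested inside $V_2$ (which would make $\cS^1(\kappa)$ strictly smaller than $\omega^{-1}\otimes V_2$); this should follow from a careful analysis of the finite-dimensional parts of the various admissible $D^\times$-representations produced by Scholze's functor, but is where the delicate structural input is needed.
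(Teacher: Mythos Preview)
Your argument has a genuine gap exactly where you identify it: the ``codimension comparison'' does not work as stated, and the detour through $V_2'=\mathrm{Im}(\JL(\brho_1)\to Y)$ does not close it. Knowing that $V_2'\subset V_2$, that $Y/V_2'\cong\ide_{D^\times}$, and that $\JL(\brho_2)_{\rm fd}\cong(\ide_{D^\times})^{\oplus 2}$ still allows $V_2/(V_2\cap Y)$ to be infinite dimensional, so there is no way to force $V_2\subset Y$ from these facts alone. Finite-dimensional quotients do not behave like codimensions here.

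The missing input is Lemma~\ref{lem:1-S1Sp}: $\Hom_{G_{\Q_p}}(\omega^{-1},\cS^1(\tau_1))$ is finite dimensional. The paper uses this directly on the cokernel of $\iota:\cS^1(\kappa)\hookrightarrow\omega^{-1}\otimes V_2$. Writing $\widetilde{\iota}$ for the composite $\cS^1(\kappa)\hookrightarrow\cS^1(\pi(\brho_2))$, one has (from \eqref{eq:S1-rho2}, \eqref{eq:kappa-pi}) a short exact sequence $0\to\omega^{-1}\otimes\ide_{D^\times}\to\Coker(\widetilde{\iota})\to\cS^1(\tau_1)\to 0$, so Lemma~\ref{lem:1-S1Sp} makes $\Hom_{G_{\Q_p}}(\omega^{-1},\Coker(\widetilde{\iota}))$ finite dimensional. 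Combining with \eqref{eq:V2-S1rho2} gives $0\to\Coker(\iota)\to\Coker(\widetilde{\iota})\to(\omega^{-1}\otimes\ide_{D^\times})^{\oplus 2}\oplus(\NEW\otimes V_2)\to 0$, whence $\Hom_{G_{\Q_p}}(\omega^{-1},\Coker(\iota))$ is finite dimensional. But $\Coker(\iota)$ is $\omega^{-1}$-typic (it is a quotient of $\omega^{-1}\otimes V_2$), hence itself finite dimensional, hence zero since $(V_2)_{\rm fd}=0$. Your route can also be salvaged by this lemma: it gives $\JL(\brho_2)/Y$ finite dimensional (since $\omega^{-1}\otimes(\JL(\brho_2)/Y)\hookrightarrow\cS^1(\tau_1)$), hence $V_2/(V_2\cap Y)$ is finite dimensional, hence zero by $(V_2)_{\rm fd}=0$, giving $V_2\subset Y$; but the paper's argument avoids the detour through $V_2'$ entirely.
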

\begin{proof}

We may identify $\cS^1(\kappa)$ with a subrepresentation of $\cS^1(\pi(\brho_2))$ via  \eqref{eq:S1-rho2} and \eqref{eq:kappa-pi}. As in the proof of Corollary \ref{cor:S1-kappa-fd}(iv),   $\cS^1(\kappa)$ is contained in $\omega^{-1}\otimes \JL(\brho_2)$. However, since $(\cS^1(\kappa))_{\rm fd}=0$ by Corollary \ref{cor:S1-kappa-fd}(i), $\cS^1(\kappa)$ is in fact contained in $\omega^{-1}\otimes V_2$ by the definition of $V_2$, see \eqref{eq:def-V2}.
Denote by $\iota$ the inclusion \[\iota:\ \cS^1(\kappa)\hookrightarrow \omega^{-1}\otimes V_2. \]
We need to prove  that $\iota$ is an isomorphism, or equivalently $\Coker(\iota)=0$. Since $(V_2)_{\rm fd}=0$ by Corollary \ref{cor:S1-kappa-fd}(iv), it suffices to prove that $\Coker(\iota)$ is finite dimensional.

Denote by $\widetilde{\iota}$ the embedding $\cS^1(\kappa)\hookrightarrow \cS^1(\pi(\brho_2))$. Then \eqref{eq:S1-rho2} and \eqref{eq:kappa-pi} imply
\begin{equation}\label{eq:coker-tiota}0\ra \omega^{-1}\otimes\ide_{D^{\times}}\ra\Coker(\widetilde{\iota})\ra \cS^1(\tau_1)\ra0.\end{equation}
Since $\Hom_{G_{\Q_p}}(\omega^{-1},\cS^1(\tau_1))$ is finite dimensional by Lemma \ref{lem:1-S1Sp}, so is $\Hom_{G_{\Q_p}}(\omega^{-1},\Coker(\widetilde{\iota}))$.
On the other hand, using \eqref{eq:V2-S1rho2} we have a commutative diagram
\[\xymatrix{0\ar[r]&\cS^1(\kappa)
\ar^{\iota}[d]\ar^{\widetilde{\iota}\ \ }[r]&\cS^1(\pi(\brho_2))\ar[r]\ar@{=}[d]& \Coker(\widetilde{\iota})\ar[r]\ar[d]&0\\
0\ar[r]&\omega^{-1}\otimes V_2\ar[r]&\cS^1(\pi(\brho_2))\ar[r]&(\omega^{-1}\otimes \ide_{D^{\times}})^{\oplus 2}\oplus(\NEW\otimes V_2)\ar[r]&0}\]
hence an exact sequence
\begin{equation}\label{eq:coker-iota}0\ra \Coker(\iota)\ra\Coker(\widetilde{\iota})\ra (\omega^{-1}\otimes \ide_{D^{\times}})^{\oplus 2}\oplus(\NEW\otimes V_2)\ra0.\end{equation}
Consequently, $\Hom_{G_{\Q_p}}(\omega^{-1},\Coker(\iota))$ is finite dimensional. However, since $\Coker(\iota)$ is $\omega^{-1}$-typic (being a quotient of $\omega^{-1}\otimes V_2$), this implies that $\Coker(\iota)$ is itself finite dimensional. As explained in last paragraph, we deduce that $\iota$ is an isomorphism and consequently by \eqref{eq:coker-iota}
\[\Coker(\widetilde{\iota})\cong (\omega^{-1}\otimes \ide_{D^{\times}})^{\oplus 2}\oplus(\NEW\otimes V_2).\]
Finally, the second isomorphism in the corollary follows from this by using \eqref{eq:coker-tiota}.
\end{proof}

We note the following consequence of the proof of Corollary \ref{cor:S1kappa=V2}.
\begin{corollary}\label{cor:S1-pialpha}
(i) There exists a short exact sequence of $G_{\Q_p}\times D^{\times}$-representations
\[0\ra \omega^{-1}\otimes V_2\ra \cS^1(\pi_{\alpha})\ra \omega^{-1}\otimes \ide_{D^{\times}}\ra0.\]

(ii) There exists a $G_{\Q_p}\times D^{\times}$-equivariant surjection $\cS^1(\Sp)\twoheadrightarrow \omega^{-1}\otimes\ide_{D^{\times}}$ whose kernel admits only $\NEW$ as subquotients when restricted to $G_{\Q_p}$. In particular, $\Hom_{G_{\Q_p}}(\cS^1(\Sp),\omega^{-1})$ is $1$-dimensional.
\end{corollary}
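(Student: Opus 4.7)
The plan is to derive both parts of Corollary \ref{cor:S1-pialpha} as essentially formal consequences of Corollary \ref{cor:S1kappa=V2}, combined with the short exact sequences \eqref{eq:kappa-pi} and \eqref{eq:Sp-tau1} that were already produced from the filtrations of $\kappa$ and $\tau_1$. There is no substantive new input required.

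For part (i), I would start from the exact sequence \eqref{eq:kappa-pi}, namely
\[0 \to \cS^1(\kappa) \to \cS^1(\pi_{\alpha}) \to \omega^{-1}\otimes \ide_{D^{\times}} \to 0,\]
which was obtained by applying $\cS^{\bullet}$ to the defining extension $0\to \ide_G\to \kappa\to \pi_{\alpha}\to 0$ in \eqref{eq:def-kappa} together with the vanishings $\cS^0(\pi_{\alpha})=0$ (Proposition \ref{prop:S2Sp=0}(iii)) and $\cS^2(\kappa)=0$ (Corollary \ref{cor:S0-kappa}). Substituting the identification $\cS^1(\kappa)\cong \omega^{-1}\otimes V_2$ from Corollary \ref{cor:S1kappa=V2} then yields the desired sequence, which is manifestly $G_{\Q_p}\times D^{\times}$-equivariant since every map in sight is.

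For the surjection in part (ii), the key input is the $G_{\Q_p}\times D^{\times}$-decomposition
\[\cS^1(\tau_1)\cong (\omega^{-1}\otimes \ide_{D^{\times}})\oplus(\NEW\otimes V_2)\]
from Corollary \ref{cor:S1kappa=V2}. I would compose the surjection $\cS^1(\Sp)\twoheadrightarrow \cS^1(\tau_1)$ of \eqref{eq:Sp-tau1} with the projection onto the first summand, producing a $G_{\Q_p}\times D^{\times}$-equivariant surjection $\cS^1(\Sp)\twoheadrightarrow \omega^{-1}\otimes\ide_{D^{\times}}$. Its kernel $\mathcal{K}$ fits in an exact sequence
\[0\to (\NEW\otimes \ide_{D^{\times}})^{\oplus 2}\to \mathcal{K}\to \NEW\otimes V_2\to 0,\]
obtained by combining \eqref{eq:Sp-tau1} with the kernel $\NEW\otimes V_2$ of the projection $\cS^1(\tau_1)\twoheadrightarrow \omega^{-1}\otimes\ide_{D^{\times}}$. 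Since both the outer terms are $\NEW$-isotypic as $G_{\Q_p}$-representations, so is $\mathcal{K}$.

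Finally, for the $1$-dimensionality of $\Hom_{G_{\Q_p}}(\cS^1(\Sp),\omega^{-1})$, I would apply $\Hom_{G_{\Q_p}}(-,\omega^{-1})$ to the exact sequence $0\to \mathcal{K}\to \cS^1(\Sp)\to \omega^{-1}\otimes\ide_{D^{\times}}\to 0$. The right-hand term contributes $\Hom_{G_{\Q_p}}(\omega^{-1}\otimes\ide_{D^{\times}},\omega^{-1})\cong \ide_{D^{\times}}$ which is one-dimensional, while $\Hom_{G_{\Q_p}}(\mathcal{K},\omega^{-1})=0$ because $\mathcal{K}$ only admits $\NEW=\ide_{G_{\Q_p}}$ as $G_{\Q_p}$-subquotients and $\omega^{-1}\neq \NEW$ for $p\geq 5$. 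I do not anticipate any real obstacle, since all the nontrivial work has already been absorbed into Corollary \ref{cor:S1kappa=V2}.
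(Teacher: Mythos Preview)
Your proof is correct and follows exactly the approach indicated in the paper's own proof, which simply cites Corollary \ref{cor:S1kappa=V2} together with \eqref{eq:kappa-pi} for (i) and \eqref{eq:Sp-tau1} for (ii); you have accurately unpacked what those citations mean. One minor point of language: in (ii) it is more precise to say that $\mathcal{K}$ admits only $\NEW$ as $G_{\Q_p}$-subquotients rather than that it is ``$\NEW$-isotypic'', since the extension need not split, but this does not affect the argument.
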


\begin{proof}
(i) It follows from \eqref{eq:kappa-pi} and Corollary \ref{cor:S1kappa=V2}.

(ii) It follows from \eqref{eq:Sp-tau1} and Corollary \ref{cor:S1kappa=V2}.
\end{proof}

Recall from Propositions \ref{thm:Serre-D}, \ref{lemma--quat-Serre-wt} that we always have  $\ide_{\cO_D^{\times}}\in W_D(\brho_1)$ (no matter $\brho_1$ is peu ramifi\'e or tr\`es ramifi\'e), so
\[\Hom_{\cO_D^{\times}}(\ide_{\cO_D^{\times}},\JL(\brho_1))\neq0.\]
Let $W_1$ be the $\ide_{\cO_D^{\times}}$-typic component of $\soc_{\cO_D^{\times}}\JL(\brho_1)$. It is easy to see that $W_1$ is stable under $D^{\times}$.  Define $V_1$ to be the quotient
\begin{equation}\label{eq:def-V1}
V_1:=\JL(\brho_1)/W_1.\end{equation}

 The main result of this subsection is the  following.
\begin{theorem}\label{thm:nongeneric-V1=V2}
There exists a $D^{\times}$-equivariant isomorphism $V_1\cong V_2$.
\end{theorem}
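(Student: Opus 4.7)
The plan is to construct a natural $D^\times$-equivariant surjection $p\colon\JL(\brho_1)\twoheadrightarrow V_2$ whose kernel is isomorphic to $\ide_{D^\times}$ and coincides with the submodule $W_1$, thereby yielding the desired isomorphism $V_1=\JL(\brho_1)/W_1\simto V_2$.

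First, I would apply the cohomological functor $\cS^{\bullet}$ to the short exact sequence $0\to\Sp\to\pi(\brho_1)\to\kappa\to 0$ of Proposition \ref{prop:kappa-rho1}. Combining the identifications $\cS^1(\pi(\brho_1))=\brho_1(-1)\otimes\JL(\brho_1)$ (Proposition \ref{prop:S-rho1-CM}) and $\cS^1(\kappa)=\omega^{-1}\otimes V_2$ (Corollary \ref{cor:S1kappa=V2}), the long exact sequence \eqref{eq:S1-rho1} furnishes a $G_{\Q_p}\times D^\times$-equivariant surjection $\brho_1(-1)\otimes\JL(\brho_1)\twoheadrightarrow\omega^{-1}\otimes V_2$. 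Since $\brho_1(-1)$ is the nonsplit extension $0\to\ide\to\brho_1(-1)\to\omega^{-1}\to 0$ and $\cS^1(\kappa)$ is $\omega^{-1}$-typic by Lemma \ref{lem:S1-kappa}, the $\NEW$-typic subrepresentation $\NEW\otimes\JL(\brho_1)$ is forced to map to zero, so the surjection factors through $\omega^{-1}\otimes\JL(\brho_1)$. Tensoring by $\omega$ yields the desired map $p$, and I set $W_1'\defn\Ker(p)$.

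Second, I would identify $W_1'\cong\ide_{D^\times}$. By \eqref{eq:S1-rho1}, the quotient $\cS^1(\Sp)/(\NEW\otimes\ide_{D^\times})$ embeds into $\brho_1(-1)\otimes\JL(\brho_1)$ with image equal to the kernel $\mathcal{K}$ of the composite $\brho_1(-1)\otimes\JL(\brho_1)\twoheadrightarrow\omega^{-1}\otimes\JL(\brho_1)\twoheadrightarrow\omega^{-1}\otimes V_2$; by construction this kernel fits into a short exact sequence
$$0\to\NEW\otimes\JL(\brho_1)\to\mathcal{K}\to\omega^{-1}\otimes W_1'\to 0.$$
Since the $G_{\Q_p}$-action on $\NEW\otimes\JL(\brho_1)$ is trivial, the largest $\omega^{-1}$-typic quotient of $\cS^1(\Sp)$ coincides with $\omega^{-1}\otimes W_1'$. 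Comparing with Corollary \ref{cor:S1-pialpha}(ii), which identifies this largest $\omega^{-1}$-typic quotient as $\omega^{-1}\otimes\ide_{D^\times}$, yields $W_1'\cong\ide_{D^\times}$. This is the main technical core of the argument, as it is where the detailed structural information on $\cS^1(\Sp)$ is exploited.

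Finally, to verify $W_1=W_1'$: the inclusion $W_1'\subseteq W_1$ is immediate since $W_1'$ is itself $\ide_{D^\times}$-typic. For the reverse, it suffices to prove that $V_2$ admits no nonzero $\ide_{D^\times}$-typic $\cO_D^\times$-submodule, since any additional $\ide$-typic piece in $W_1\setminus W_1'$ would inject via $p$ into $V_2\hookrightarrow\JL(\brho_2)$ (using \eqref{eq:def-V2} together with Proposition \ref{prop:S1-rho2}) and produce a nonzero element of $\Hom_{\cO_D^\times}(\ide_{D^\times},\JL(\brho_2))$. By Corollary \ref{Cor-quat-Serre-wt}, the latter space is nonzero if and only if $\ide_{D^\times}\in W_D(\brho_2)$. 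But a direct parameter computation in the framework of \S\ref{section-GL2-Serrewt} (yielding $r=p-3$, $s=0$ and $\mathrm{unr}_1=\mathrm{unr}_2=1$), combined with Theorem \ref{thm:Serre-D}(ii-c), identifies $W_D(\brho_2)=\{\alpha,\alpha^{-1}\}$, which does not contain $\ide_{D^\times}$. Hence no such extra submodule can exist, and $p$ induces the required isomorphism $V_1\simto V_2$. The clean resolution of this last step reflects the Serre-weight dichotomy between $\brho_1$ (which admits $\ide_{D^\times}$ as a quaternionic Serre weight) and $\brho_2$ (which does not), a phenomenon that was already flagged in Remark \ref{rem:test}.
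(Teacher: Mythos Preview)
Your proof is correct and follows essentially the same route as the paper: both factor the surjection $\brho_1(-1)\otimes\JL(\brho_1)\twoheadrightarrow\cS^1(\kappa)\cong\omega^{-1}\otimes V_2$ through $\omega^{-1}\otimes\JL(\brho_1)$ using Lemma \ref{lem:S1-kappa}, identify the resulting kernel with $\omega^{-1}\otimes\ide_{D^\times}$ via the structure of $\cS^1(\Sp)$ in Corollary \ref{cor:S1-pialpha}(ii), and then match it with $W_1$ using the computation $W_D(\brho_2)=\{\alpha,\alpha^{-1}\}$ from Theorem \ref{thm:Serre-D}(ii-c). The only cosmetic difference is the order of the last two steps (the paper first shows $\Ker(j')\cong\omega^{-1}\otimes\ide_{D^\times}$ and then proves $\omega^{-1}\otimes W_1\subseteq\Ker(j')$, whereas you first show $W_1'\cong\ide_{D^\times}$ and then $W_1=W_1'$), but the substance is identical.
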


\begin{proof}
Recall the exact sequence \eqref{eq:S1-rho1}
\[0\ra \NEW\otimes \ide_{D^{\times}}\ra \cS^1(\Sp)\ra \brho_1(-1)\otimes \JL(\brho_1)\overset{j}{\ra} \cS^1(\kappa)\ra0.\]
By Corollary \ref{cor:S1-pialpha}(ii),   $\Hom_{G_{\Q_p}}(\cS^1(\Sp),\omega^{-1})$ is $1$-dimensional over $\F$, so the last sequence shows that  $\Hom_{G_{\Q_p}}(\Ker(j),\omega^{-1})$ is also $1$-dimensional.
Since $\cS^1(\kappa)$ is $\omega^{-1}$-typic by Lemma \ref{lem:S1-kappa}, the surjection $j$ factors as
\[\brho_1(-1)\otimes \JL(\brho_1)\twoheadrightarrow \omega^{-1}\otimes  \JL(\brho_1)\overset{j'}{\twoheadrightarrow} \cS^1(\kappa).\]
We clearly have a short exact sequence
\[
0\ra \NEW\otimes \JL(\brho_1)\ra \Ker(j)\ra \Ker(j')\ra0,
\]
which implies that $\Hom_{G_{\Q_p}}(\Ker(j'),\omega^{-1})$ is also $1$-dimensional. Moreover, by Corollary \ref{cor:S1-pialpha}(ii) again, it is easy to see that the $1$-dimensional $\omega^{-1}$-typic quotient of $\Ker(j')$ is isomorphic to $\omega^{-1}\otimes \ide_{D^{\times}}$. But $\Ker(j')$ is itself $\omega^{-1}$-typic (being a subrepresentation of   $\omega^{-1}\otimes \JL(\brho_1)$), so $\Ker(j')$ is in fact isomorphic to $\omega^{-1}\otimes\ide_{D^{\times}}$.

On the other hand, since $\cS^1(\kappa)\cong V_2$ as representations of $D^{\times}$ by Corollary \ref{cor:S1kappa=V2}, we have
\begin{equation}\label{eq:soc-S1kappa}
\soc_{\cO_D^{\times}}\cS^1(\kappa)=\soc_{\cO_D^{\times}}V_2\subset \soc_{\cO_D^{\times}}\JL(\brho_2)\cong (\alpha\oplus\alpha^{-1})^{\oplus m_2},
\end{equation}
for some integer $m_2\geq 1,$ where the last isomorphism is given by Proposition \ref{thm:Serre-D}, Proposition \ref{lemma--quat-Serre-wt} and Corollary \ref{Cor-quat-Serre-wt}. Indeed, taking $r=p-3$ and $s=0$ in (ii-c) of Proposition \ref{thm:Serre-D}, we get $W_D(\brho_2)=\{\xi^{p-3}\alpha^{-1}\zeta,\xi^{p(p-3)}\alpha\zeta\}=\{\alpha,\alpha^{-1}\}$. We deduce that the composition
\[\omega^{-1} \otimes W_1\hookrightarrow \omega^{-1}\otimes \JL(\brho_1)\overset{j'}{\twoheadrightarrow} \cS^1(\kappa)\]
is zero,
where the first morphism is induced from the natural inclusion $W_1\hookrightarrow \JL(\brho_1)$, see \eqref{eq:def-V1}. In other words, $\Ker(j') $ contains $ \omega^{-1}\otimes W_1$. Combining with what has been proved in last paragraph, this implies
$\Ker(j')=\omega^{-1}\otimes W_1.$ 
In particular, $ W_1\cong\ide_{D^{\times}}$ and
\[\cS^1(\kappa)\cong (\omega^{-1}\otimes\JL(\brho_1))/(\omega^{-1}\otimes W_1)\overset{\eqref{eq:def-V1}}{=}\omega^{-1}\otimes V_1.\]
 Taking into account Corollary \ref{cor:S1kappa=V2}, we obtain
 \[V_1\cong\Hom_{G_{\Q_p}}(\omega^{-1},\cS^1(\kappa))\cong V_2\] as  representations of $D^{\times}$.
\end{proof}

\begin{lemma}\label{lem:Exti-JL}
Let $\chi:\cO_D^{\times}\ra \F^{\times}$ be a smooth character. 

(i) If  $\chi\notin W_D(\brho_1)$, then $\Ext^i_{\cO_D^{\times}/Z_D^1}(\chi,\JL(\brho_1))=0$ for $i\geq 0$.

(ii) If $\chi\notin W_D(\brho_2)$, then $\Hom_{\cO_D^{\times}}(\chi,\JL(\brho_2))=\Ext^1_{\cO_D^{\times}/Z_D^1}(\chi,\JL(\brho_2))=0$.
\end{lemma}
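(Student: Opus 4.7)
\emph{Proof plan.} The argument splits into the straightforward $\Hom$-computation and the more delicate $\Ext^i$-computation, with the latter requiring different techniques for (i) and (ii).

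First I would dispatch the $\Hom$-vanishing common to both parts. By Corollary \ref{Cor-quat-Serre-wt}, the $\cO_D^{\times}$-socle of $\JL(\brho_i)$ is a direct sum (with uniform multiplicity equal to the integer $m$ of Proposition \ref{prop--freeness}) of characters in $W_D(\brho_i)$. So for $\chi\notin W_D(\brho_i)$ the $\chi$-isotypic part of the socle is zero, hence $\Hom_{\cO_D^{\times}/Z_D^1}(\chi,\JL(\brho_i))=0$.

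For (ii), to handle $\Ext^1$, I would exploit the explicit start of the minimal projective resolution of $\chi$ in the category of compact $\F[\![\cO_D^{\times}/Z_D^1]\!]$-modules. By Corollary \ref{cor:structure-W_chi,3} (specialized to $L=\Q_p$, so $f=1$), this resolution begins
\[P_{\chi\alpha}\oplus P_{\chi\alpha^{-1}} \xrightarrow{d_1} P_\chi \to \chi \to 0,\]
and applying $\Hom_A(-,\JL(\brho_2))$ yields that $\Ext^1_A(\chi,\JL(\brho_2))$ is a subquotient of $\Hom_A(P_{\chi\alpha}\oplus P_{\chi\alpha^{-1}},\JL(\brho_2))$. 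An extension $0\to\JL(\brho_2)\to\mathcal{E}\to\chi\to 0$ is classified by the action of the first syzygy. For the class to be nonzero modulo the image of $d_1$, one needs some $\chi\alpha^{\pm1}$-isotypic behavior inside $\JL(\brho_2)$ that does not already come from $P_\chi$. Using the freeness result Theorem \ref{thm--freeness-wtTheta} and Corollary \ref{cor-cyclic-projchi} (applied to those $\chi\alpha^{\pm1}$ that happen to lie in $W_D(\brho_2)=\{\alpha,\alpha^{-1}\}$), together with the $\Hom$-vanishing of Step 1 for those that do not, one concludes every map out of the first syzygy lifts, so $\Ext^1_A(\chi,\JL(\brho_2))=0$.

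For (i), the stronger statement about all $\Ext^i$ requires the Cohen--Macaulay property of $\JL(\brho_1)^\vee$ over $\cO[\![\cO_D^{\times}/Z_D^1]\!]$ of grade $3$ established in Proposition \ref{prop:S-rho1-CM}(i). Combining this with Poincar\'e duality (Proposition \ref{prop-Poincare}) yields an isomorphism (properly interpreted using the finite generation of $\JL(\brho_1)^\vee$)
\[\Ext^i_{\cO_D^{\times}/Z_D^1}(\chi,\JL(\brho_1))\;\cong\;\Ext^{3-i}_{\cO_D^{\times}/Z_D^1}(\JL(\brho_1),\chi)^\vee,\]
whose right-hand side is computed via a minimal projective resolution of $\JL(\brho_1)^\vee$. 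The Cohen--Macaulay property, combined with the graded surjection $\bigoplus_{\chi'\in W_D(\brho_1)}\chi'^\vee\otimes\F[y,z]/\mathfrak{a}(\chi')\twoheadrightarrow \gr(\JL(\brho_1)^\vee)$ from the end of \S\ref{section:GK-dim}, forces all projective generators in a minimal resolution of $\JL(\brho_1)^\vee$ to be of the form $P_{\chi'}$ with $\chi'\in W_D(\brho_1)$. Each term therefore contributes $\Hom_A(P_{\chi'},\chi)=0$ since $\chi\neq\chi'$, and the result follows.

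\emph{Main obstacle.} The technical heart is the extension of Poincar\'e duality from the finite-dimensional setting of Proposition \ref{prop-Poincare} to the pair $(\chi,\JL(\brho_1))$, where the second argument has infinite $\F$-dimension but compactly generated Pontryagin dual. This should be handled either by approximating $\JL(\brho_1)$ by finite-dimensional subrepresentations and passing to the limit (using that $\chi$ is finitely presented), or by working directly in the derived category of pseudo-compact $A$-modules and using the dualizing complex for the Iwasawa algebra of the Poincar\'e group $U_D^1/Z_D^1$.
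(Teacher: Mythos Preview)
Your approach is genuinely different from the paper's, and both parts contain real gaps.

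\textbf{What the paper does.} The paper's argument is global and short. For (ii), it embeds $\JL(\brho_2)$ into $\Pi_2:=\wt{H}^1(U^v,\F)_{\fm_{\overline{r}_2}}$ via the $\fm_{\overline{r}_2}$-torsion; choosing generators of $\fm_{\overline{r}_2}$ gives an exact sequence $0\to\JL(\brho_2)\to\Pi_2\to\prod\Pi_2$. Now $\Pi_2$ is \emph{injective} as an $\cO_D^{\times}/Z_D^1$-representation (Theorem \ref{thm--K-injectivity}(ii)), and its $\cO_D^{\times}$-socle only involves characters in $W_D(\brho_2)$, so $\Hom(\chi,\Pi_2)=0$ and $\Ext^1(\chi,\Pi_2)=0$. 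The long exact sequence forces $\Ext^1(\chi,\JL(\brho_2))\hookrightarrow\Hom(\chi,\prod\Pi_2)=0$. For (i), the additional regularity of $R_{\brho_1}^{\psi\e}$ makes $\Pi_1^{\vee}$ flat over a regular Hecke algebra, so a Koszul argument upgrades this to an injective resolution of $\JL(\brho_1)$ by copies of $\Pi_1$, killing all $\Ext^i$.

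\textbf{Gaps in your (ii).} Your computation via the minimal resolution $P_{\chi\alpha}\oplus P_{\chi\alpha^{-1}}\to P_\chi\to\chi$ does not go through as written. The spaces $\Hom_A(P_{\chi\alpha^{\pm1}},\JL(\brho_2))$ are the full $\chi\alpha^{\pm1}$-isotypic components of $\JL(\brho_2)$, which are infinite-dimensional; knowing $\Ext^1$ is a subquotient of this is vacuous. Your appeal to Corollary \ref{cor-cyclic-projchi} is not linked to the vanishing: that corollary controls extensions of maps $\chi'\to\JL$ to $\overline{W}_{\chi',3}\to\JL$ for $\chi'\in W_D(\brho_2)$, which is a statement about the second socle layer of $\JL$ near Serre weights, not about $\Ext^1(\chi,\JL)$ for $\chi$ \emph{outside} $W_D(\brho_2)$. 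No mechanism is given for why ``every map out of the first syzygy lifts''.

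\textbf{Gaps in your (i).} Your key structural claim---that every projective summand $P_{\chi'}$ appearing in a minimal resolution of $\JL(\brho_1)^\vee$ has $\chi'\in W_D(\brho_1)$---is not justified by the Cohen--Macaulay property or by the graded surjection you cite. Cohen--Macaulayness controls the \emph{length} of the resolution, not which simples index the projectives at each step; the cosocles of the higher syzygies can (and typically do) involve characters $\chi'\alpha^{\pm1}$ for $\chi'\in W_D(\brho_1)$, which need not lie in $W_D(\brho_1)$. The graded surjection only bounds $\gr^0$ of the dual, not the full resolution pattern. (In fact the paper's argument does produce a projective resolution with all summands indexed by $W_D(\brho_1)$, but only by going through the global module $\Pi_1^\vee$, whose cosocle one controls precisely because it is projective with known head.) Your acknowledged obstacle---extending Poincar\'e duality to infinite-dimensional $\JL(\brho_1)$---is real but secondary to this structural gap.
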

\begin{proof}
(i) The proof is as in \cite[Prop.~10.10(i)]{Hu-Wang}. The point is that $R_{\brho_1}^{\psi\e}$ is formally smooth, so  by Proposition \ref{prop:GN-flat} $\wt{H}^1 (U^{v} , \cO)_{\fm_{\overline{r}_1}}^{d}$ is flat over $\mathbb{T}(U^v)_{\fm_{\overline{r}_1}}$ with fiber isomorphic to $\JL(\brho_1)^{\vee}$. Here we write $\overline{r}_1$ instead of $\overline{r}$ to indicate that we are considering the case where $\overline{r}_v(1)=\brho_1$. 

(ii) The difference with (i) is that $R_{\brho_2}^{\psi\e}$ is \emph{not} formally smooth.   It is clear that $\Hom_{\cO_D^{\times}}(\chi,\JL(\brho_2))=0$ for $\chi\notin W_D(\brho_2)$. For the vanishing of $\Ext^1_{\cO_D^{\times}/Z_D^1}(\chi,\JL(\brho_2))$, choose  a set of generators $(f_1,\dots,f_m)$ of $\fm_{\overline{r}_2}$, then they induce an exact sequence (recall assumption (\textbf{H}))
\[0\ra \JL(\brho_2)\ra \Pi_2\ra \prod_{i=1}^m\Pi_2\]
where $\Pi_2:=\wt{H}^1 (U^{v} , \F)_{\fm_{\overline{r}_2}}$. 
Since $\Hom_{\cO_D^{\times}}(\chi,\Pi_2)=0$ and since $\Pi_2$ is an injective  representation of $\cO_D^{\times}/Z_D^1$  by  Theorem \ref{thm--K-injectivity}(ii), the result easily follows.  
\end{proof}

Thanks to Theorem \ref{thm:nongeneric-V1=V2}, we  write $V$ for $V_1$ and $V_2$ from now on.

\begin{corollary}\label{cor:JL-rho1-V}
There exists a short exact sequence
\begin{equation}\label{eq:JL-rho1-V}0\ra \ide_{D^{\times}}\ra \JL(\brho_1)\ra V\ra0.\end{equation}
\end{corollary}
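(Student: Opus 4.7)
The plan is to observe that Corollary \ref{cor:JL-rho1-V} is essentially a repackaging of what has already been established in the proof of Theorem \ref{thm:nongeneric-V1=V2}, plus the definition of $V_1$. Indeed, by construction \eqref{eq:def-V1}, there is a tautological short exact sequence
\[0\ra W_1\ra \JL(\brho_1)\ra V_1\ra 0\]
where $W_1$ is the $\ide_{\cO_D^{\times}}$-typic component of $\soc_{\cO_D^{\times}}\JL(\brho_1)$, so a priori $W_1\cong \ide_{D^{\times}}^{\oplus m_1}$ for some $m_1\geq 1$. The task is to upgrade this to the desired sequence, i.e.~to verify that $m_1=1$ (so that $W_1\cong \ide_{D^{\times}}$) and that $V_1\cong V$.

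First I would extract from the proof of Theorem \ref{thm:nongeneric-V1=V2} the identification $W_1 \cong \ide_{D^{\times}}$. Recall from that proof one obtains $\Ker(j') = \omega^{-1}\otimes W_1$ and, on the other hand, $\Ker(j')\cong \omega^{-1}\otimes \ide_{D^{\times}}$ (the latter via a count of the one-dimensional space $\Hom_{G_{\Q_p}}(\Ker(j'),\omega^{-1})$ combined with the fact that $\Ker(j')$ is $\omega^{-1}$-typic). Comparing, one concludes $W_1\cong \ide_{D^{\times}}$ as $D^{\times}$-representations, in particular with multiplicity one.

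Then, using the definition $V=V_1$ (justified by Theorem \ref{thm:nongeneric-V1=V2} which identifies $V_1\cong V_2$ so that no ambiguity arises from the notation), the defining sequence of $V_1$ becomes exactly
\[0\ra \ide_{D^{\times}}\ra \JL(\brho_1)\ra V\ra0,\]
which is the assertion of the corollary. There is no serious technical obstacle here since the heavy lifting was done in Theorem \ref{thm:nongeneric-V1=V2}; the only subtlety worth stating explicitly is the multiplicity-one argument identifying $W_1$, which is essentially immediate from that proof.
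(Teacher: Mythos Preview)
Your proposal is correct and follows the paper's approach exactly: the paper's proof is the one-liner that this is a direct consequence of the proof of Theorem \ref{thm:nongeneric-V1=V2}, and you have spelled out precisely the relevant steps (the identification $\Ker(j')=\omega^{-1}\otimes W_1\cong\omega^{-1}\otimes\ide_{D^{\times}}$ giving $W_1\cong\ide_{D^{\times}}$, combined with the convention $V=V_1$). One tiny remark: your a priori description $W_1\cong\ide_{D^{\times}}^{\oplus m_1}$ is not quite accurate, since a $D^{\times}$-stable subspace on which $\cO_D^{\times}$ acts trivially could also contain the unramified quadratic character, but this is harmless as your actual argument extracts $W_1\cong\ide_{D^{\times}}$ directly from the theorem's proof without using that claim.
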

\begin{proof}
This is a direct consequence of the proof of Theorem \ref{thm:nongeneric-V1=V2}.
\end{proof}

\begin{corollary}\label{cor:socV}
The following statements hold:

(i) $V_{\rm fd}=0$ and
$\rsoc_{D^{\times}}(V)\cong \Ind_{\cO_D^{\times}Z_D}^{D^{\times}} \alpha.$

(ii) $\dim_{\F}\Ext^1_{D^{\times}/Z_D}(\Ind_{\cO_D^{\times}Z_D}^{D^{\times}} \alpha,V)=1$.

(iii) $\dim_{\F}\Ext^1_{D^{\times}/Z_D}(\ide_{D^{\times}},V)=2$.
\end{corollary}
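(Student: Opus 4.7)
The plan is to address the three parts by leveraging the two complementary descriptions of $V$: as the quotient $\JL(\brho_1)/\ide_{D^{\times}}$ from Corollary \ref{cor:JL-rho1-V}, and as the kernel $V_2=\Ker\bigl(\JL(\brho_2)\twoheadrightarrow\ide_{D^{\times}}^{\oplus 2}\bigr)$ from \eqref{eq:def-V2}; these are canonically identified by Theorem \ref{thm:nongeneric-V1=V2}.

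For (i), the vanishing $V_{\rm fd}=0$ follows immediately from Corollary \ref{cor:S1-kappa-fd}(iv) via the isomorphism $V\cong V_2$. For the socle I would use the embedding $V\hookrightarrow\JL(\brho_2)$: Lemma \ref{lem:Exti-JL}(ii) concentrates $\soc_{\cO_D^{\times}}\JL(\brho_2)$ on the characters in $W_D(\brho_2)=\{\alpha,\alpha^{-1}\}$, and each appears with multiplicity one (as can be extracted from the proof of Theorem \ref{thm:nongeneric-V1=V2}, where hypothesis (\textbf{H}) and Corollary \ref{Cor-quat-Serre-wt} together force $W_1$ to be a single copy of $\ide_{D^{\times}}$). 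Since $\alpha,\alpha^{-1}\neq\ide$, neither summand of this socle maps nontrivially into the quotient $\ide^{\oplus 2}$, so the entire socle lies in $V$, and as $\varpi_D$-conjugation exchanges $\alpha$ and $\alpha^{-1}$ (because $\alpha^p=\alpha^{-1}$) we conclude $\soc_{D^{\times}}V\cong\Ind_{\cO_D^{\times}Z_D}^{D^{\times}}\alpha$.

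For (ii) and (iii) I first reduce to $\cO_D^{\times}/Z_D^1$-Ext's. Frobenius reciprocity for the index-two inclusion $\cO_D^{\times}Z_D\subset D^{\times}$ (so $\Ind=\cInd$) gives $\Ext^1_{D^{\times}/Z_D}(\Ind\alpha,V)\cong\Ext^1_{\cO_D^{\times}/Z_D^1}(\alpha,V)$, while the Lyndon--Hochschild--Serre spectral sequence for $D^{\times}/\cO_D^{\times}Z_D\cong\Z/2\Z$, cohomologically trivial since $p\neq 2$, gives $\Ext^1_{D^{\times}/Z_D}(\ide,V)\cong\Ext^1_{\cO_D^{\times}/Z_D^1}(\ide,V)^{\Z/2}$. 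For (iii) I apply $\Hom_{\cO_D^{\times}/Z_D^1}(\ide,-)$ to $0\to V\to\JL(\brho_2)\to\ide^{\oplus 2}\to 0$: Lemma \ref{lem:Exti-JL}(ii) kills $\Hom$ and $\Ext^1$ into $\JL(\brho_2)$ (as $\ide\notin W_D(\brho_2)$), and Proposition \ref{prop-Ext1-U1} gives $\Ext^1(\ide,\ide)=0$, so the long exact sequence collapses to $\Ext^1_{\cO_D^{\times}/Z_D^1}(\ide,V)\cong\Hom(\ide,\ide^{\oplus 2})=\F^2$; since both factors of the quotient are $D^{\times}$-trivial by \eqref{eq:def-V2}, the $\Z/2$-action is trivial and one recovers $\F^2$. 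For (ii) I use $0\to\ide\to\JL(\brho_1)\to V\to 0$; because $V$ depends only on $\brho_2$ via Theorem \ref{thm:nongeneric-V1=V2}, I may take $\brho_1$ to be tr\`es ramifi\'e so that $\alpha\notin W_D(\brho_1)=\{\ide\}$. Lemma \ref{lem:Exti-JL}(i)---applicable because $R_{\brho_1}^{\psi\e^{-1}}$ is formally smooth, a fact one verifies via $\Hom_{G_{\Q_p}}(\brho_1,\brho_1(\omega))=0$ and local Tate duality---annihilates $\Ext^i(\alpha,\JL(\brho_1))$ for all $i$, reducing the long exact sequence to
\[\Ext^1_{\cO_D^{\times}/Z_D^1}(\alpha,V)\cong\Ext^2_{\cO_D^{\times}/Z_D^1}(\alpha,\ide)\cong\Ext^1_{\cO_D^{\times}/Z_D^1}(\ide,\alpha)^{\vee}\cong\F\]
by Propositions \ref{prop-Poincare} and \ref{prop-Ext1-U1}.

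The main obstacle I foresee is the multiplicity-one input used in (i), namely $\dim_{\F}\Hom_{\cO_D^{\times}}(\alpha,\JL(\brho_2))=1$; this is not isolated as a separate statement in the text and has to be extracted from the $W_1$-analysis inside the proof of Theorem \ref{thm:nongeneric-V1=V2} together with (\textbf{H}) and Corollary \ref{Cor-quat-Serre-wt}. A secondary delicate point is the triviality of the $\Z/2$-action in (iii), to be read off from the $D^{\times}$-equivariance of \eqref{eq:def-V2}; and in (ii), the substitution of a tr\`es ramifi\'e $\brho_1$ is legitimate only because Theorem \ref{thm:nongeneric-V1=V2} decouples $V$ from the chosen $\brho_1$, so care is needed to invoke this independence properly.
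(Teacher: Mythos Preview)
Your arguments for (ii) and (iii) are correct and close to the paper's. For (ii) the paper does exactly what you do: take $\brho_1$ tr\`es ramifi\'e so that $\alpha\notin W_D(\brho_1)$, use Lemma~\ref{lem:Exti-JL}(i) to kill $\Ext^i_{\cO_D^{\times}/Z_D^1}(\alpha,\JL(\brho_1))$ for all $i$, and read off $\Ext^1(\alpha,V)\cong\Ext^2(\alpha,\ide)\cong\F$ via Propositions~\ref{prop-Poincare} and~\ref{prop-Ext1-U1}. For (iii) the paper avoids your spectral-sequence step by noting that $\ide_{D^{\times}}$ is a direct summand of $\Ind_{\cO_D^{\times}Z_D}^{D^{\times}}\ide$ (index $2$, $p$ odd), so the vanishing $\Ext^i_{\cO_D^{\times}/Z_D^1}(\ide,\JL(\brho_2))=0$ from Lemma~\ref{lem:Exti-JL}(ii) immediately gives $\Ext^i_{D^{\times}/Z_D}(\ide_{D^{\times}},\JL(\brho_2))=0$, and one applies $\Hom_{D^{\times}}(\ide_{D^{\times}},-)$ directly to $0\to V\to\JL(\brho_2)\to\ide^{\oplus 2}\to 0$. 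Your route through $\Ext^1_{\cO_D^{\times}/Z_D^1}(\ide,V)^{\Z/2}$ is fine too.

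The gap is in (i), specifically your computation of $\soc_{D^{\times}}V$. You want $\dim_{\F}\Hom_{\cO_D^{\times}}(\alpha,\JL(\brho_2))=1$, but this is \emph{not} available at this point: in \eqref{eq:soc-S1kappa} the paper only knows $\soc_{\cO_D^{\times}}\JL(\brho_2)\cong(\alpha\oplus\alpha^{-1})^{\oplus m_2}$ for some $m_2\geq 1$, and the equality $m_2=1$ is deduced only \emph{after} Corollary~\ref{cor:socV}, in Corollary~\ref{cor:socle-JL}. Your proposed extraction from the $W_1$-analysis does not work, because $W_1\cong\ide_{D^{\times}}$ tells you the $\ide$-multiplicity in $\soc_{\cO_D^{\times}}\JL(\brho_1)$ is one, which (via Corollary~\ref{Cor-quat-Serre-wt}) controls multiplicities for the $\brho_1$-setup, not for the independent global setup underlying $\brho_2$. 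So your argument for the socle is circular.

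The fix is precisely the trick you already use in (ii): compute $\Hom_{\cO_D^{\times}}(\alpha,V)$ from the $\brho_1$ side with $\brho_1$ tr\`es ramifi\'e. Then $\alpha\notin W_D(\brho_1)=\{\ide\}$, so $\Hom_{\cO_D^{\times}}(\alpha,\JL(\brho_1))=0$ and $\Ext^1_{\cO_D^{\times}/Z_D^1}(\alpha,\JL(\brho_1))=0$ by Lemma~\ref{lem:Exti-JL}(i); the long exact sequence attached to \eqref{eq:JL-rho1-V} then gives $\Hom_{\cO_D^{\times}}(\alpha,V)\cong\Ext^1_{\cO_D^{\times}/Z_D^1}(\alpha,\ide)\cong\F$. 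This is exactly the paper's proof of (i), and it makes your long exact sequence \eqref{eq:Ext1-V} do double duty for both (i) and (ii).
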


\begin{proof}
(i) The first assertion is just Corollary \ref{cor:S1-kappa-fd}(iv). For the second assertion, by Frobenius reciprocity it suffices to show   $\Hom_{\cO_D^{\times}}(\alpha,V)$ has dimension $1$.  We take $\brho_1$ to be tr\`es ramifi\'e. 
By Theorem \ref{thm:Serre-D}, we have $\chi\in W_D(\brho_1)$ if and only if $\chi = \ide_{\cO_D^{\times}}$, so that $\Hom_{\cO_D^{\times}}(\alpha,\JL(\brho_1))=0$. Hence, by applying $\Hom_{\cO_D^{\times}}(\alpha,-)$ to \eqref{eq:JL-rho1-V}  we obtain a long exact sequence
  \begin{multline}\label{eq:Ext1-V}0\ra \Hom(\alpha,V)\ra \Ext^1(\alpha,\ide_{\cO_D^{\times}})\ra \Ext^1(\alpha,\JL(\brho_1))\ra \Ext^1(\alpha,V)\\ \To{\partial} \Ext^2(\alpha,\ide_{\cO_D^{\times}})\ra \Ext^2(\alpha,\JL(\brho_1)),
    \end{multline}  
where $\Ext^i$ means $\Ext^i_{\cO_D^{\times}/Z_D^1}$. Since $\alpha\notin W_D(\brho_1)$, see Theorem \ref{thm:Serre-D}(ii-a), we have $\Ext^1(\alpha,\JL(\brho_1))=0$ by Lemma \ref{lem:Exti-JL}(i). The result follows as $\dim_{\F}\Ext^1(\alpha,\ide_{\cO_D^{\times}})=1$ by Proposition \ref{prop-Ext1-U1}.

(ii) By Frobenius reciprocity, it is equivalent to proving $\dim_{\F}\Ext^1_{\cO_D^{\times}/Z_D^1}(\alpha,V)=1$. Since $\alpha\notin W_D(\brho_1)$ (again we take $\brho_1$ to be tr\`es ramifi\'e),  Lemma \ref{lem:Exti-JL}(i) implies that the map  $\partial$ in \eqref{eq:Ext1-V} is an isomorphism.   On the other hand, using Propositions \ref{prop-Ext1-U1} and  \ref{prop-Poincare} we know that $\dim_{\F}\Ext^2_{\cO_D^{\times}/Z_D^1}(\alpha,\ide_{\cO_D^{\times}})=1$, from which the assertion follows.

(iii) Note that $\ide_{\cO_D^{\times}}\notin W_D(\brho_2)$, see \eqref{eq:soc-S1kappa}. Using Lemma \ref{lem:Exti-JL}(ii)  this implies  $$\Ext^i_{\cO_D^{\times}/Z_D^1}(\ide_{\cO_D^{\times}},\JL(\brho_2))=0,$$ hence by Frobenius reciprocity $\Ext^i_{D^{\times}/Z_D}\big(\Ind_{\cO_D^{\times}Z_D}^{D^{\times}}\ide,\JL(\brho_2)\big)=0$ for $i=0,1$. Since $\ide_{D^{\times}}$ is a direct summand of $\Ind_{\cO_D^{\times}Z_D}^{D^{\times}}\ide$ as $[D^{\times}:\cO_D^{\times}Z_D]=2$ and $p>2$, we deduce 
\[\Hom_{D^{\times}}(\ide_{D^{\times}},\JL(\brho_2))=\Ext^1_{D^{\times}/Z_D}(\ide_{D^{\times}},\JL(\brho_2))=0.\] 
 
Now, applying $\Hom_{D^{\times}}(\ide_{D^{\times}},-)$ to $0\ra V\ra \JL(\brho_2)\ra(\ide_{D^{\times}})^{\oplus 2}\ra 0$ gives the result.
\end{proof}
 
Together with Theorem \ref{thm:Serre-D}, we deduce the $D^{\times}$-socle of $\JL(\brho_i)$.
\begin{corollary}\label{cor:socle-JL}
(i) If $\brho_1$ is peu ramifi\'e, then $\soc_{D^{\times}}\JL(\brho_1)\cong\ide_{D^{\times}}\oplus \Ind_{\cO_D^{\times}Z_D}^{D^{\times}}\alpha$;
if $\brho_1$ is tr\`es ramifi\'e, then $\soc_{D^{\times}}\JL(\brho_1)\cong \ide_{D^{\times}}$.

(ii) $\soc_{D^{\times}}\JL(\brho_2)\cong \Ind_{\cO_D^{\times}Z_D}^{D^{\times}}\alpha$.
\end{corollary}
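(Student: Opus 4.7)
The plan is to exploit the two short exact sequences
\[
0\to \ide_{D^{\times}}\to \JL(\brho_1)\to V\to 0,\qquad 0\to V\to \JL(\brho_2)\to (\ide_{D^{\times}})^{\oplus 2}\to 0
\]
coming from Corollary \ref{cor:JL-rho1-V} and the definition \eqref{eq:def-V2} combined with Theorem \ref{thm:nongeneric-V1=V2}, together with Corollary \ref{cor:socV}(i), which asserts that $\soc_{D^{\times}}(V)\cong \Ind_{\cO_D^{\times}Z_D}^{D^{\times}}\alpha$ and that $V_{\rm fd}=0$.

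For (ii), first I would observe that by Theorem \ref{thm:Serre-D}(ii-c) applied with the parameters corresponding to $\brho_2\sim \smatr{\ide}{*}{0}{\omega}$, we have $W_D(\brho_2)=\{\alpha,\alpha^{-1}\}$; in particular $\Hom_{\cO_D^{\times}}(\ide_{\cO_D^{\times}},\JL(\brho_2))=0$, so the $\cO_D^{\times}$-socle of $\JL(\brho_2)$ is $\alpha\oplus\alpha^{-1}$ isotypic. Applying $\Hom_{\cO_D^{\times}}(\alpha,-)$ to the second sequence above yields $\Hom_{\cO_D^{\times}}(\alpha,\JL(\brho_2))\cong \Hom_{\cO_D^{\times}}(\alpha,V)$, and the right-hand side is one-dimensional by Corollary \ref{cor:socV}(i) and Frobenius reciprocity. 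Since $\varpi_D$ conjugates $\alpha$ to $\alpha^{p}=\alpha^{-1}$, this forces $\soc_{\cO_D^{\times}}\JL(\brho_2)=\alpha\oplus \alpha^{-1}$, and consequently $\soc_{D^{\times}}\JL(\brho_2)\cong \Ind_{\cO_D^{\times}Z_D}^{D^{\times}}\alpha$, which is irreducible as $\alpha\neq \alpha^{-1}$.

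For (i), set $S:=\soc_{D^{\times}}\JL(\brho_1)$. The inclusion $\ide_{D^{\times}}\hookrightarrow \JL(\brho_1)$ from Corollary \ref{cor:JL-rho1-V} shows $\ide_{D^{\times}}\subset S$. Since $S$ is semisimple, the quotient $S/\ide_{D^{\times}}$ embeds into $V$ and, being semisimple, into $\soc_{D^{\times}}V\cong \Ind_{\cO_D^{\times}Z_D}^{D^{\times}}\alpha$. By construction of $V=\JL(\brho_1)/W_1$ (see \eqref{eq:def-V1}), the $\cO_D^{\times}$-socle of $V$ contains no $\ide_{\cO_D^{\times}}$-component, so $S$ cannot contain a second copy of $\ide_{D^{\times}}$. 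It follows that $S$ equals either $\ide_{D^{\times}}$ or $\ide_{D^{\times}}\oplus \Ind_{\cO_D^{\times}Z_D}^{D^{\times}}\alpha$, and by Frobenius reciprocity the latter alternative is realised precisely when $\alpha\in W_D(\brho_1)$. Invoking Theorem \ref{thm:Serre-D}(ii-a)/(ii-b), we have $W_D(\brho_1)=\{\ide_{\cO_D^{\times}}\}$ when $\brho_1$ is très ramifié and $W_D(\brho_1)=\{\ide_{\cO_D^{\times}},\alpha,\alpha^{-1}\}$ when $\brho_1$ is peu ramifié, whence the dichotomy in the statement.

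The only delicate point I anticipate is verifying that the character $\alpha$ of $\cO_D^{\times}$ extends to a character of $\cO_D^{\times}Z_D$ with the central character fixed on $\JL(\brho_i)$, so that an embedding at the level of $\cO_D^{\times}$ automatically upgrades to an embedding of $\Ind_{\cO_D^{\times}Z_D}^{D^{\times}}\alpha$; however this is automatic because $\alpha$ occurs in the $\cO_D^{\times}$-socle of a $D^{\times}$-representation with the prescribed central character. Matching the parameters $(r,s)$ of $\brho_1,\brho_2$ with Theorem \ref{thm:Serre-D} is likewise a straightforward book-keeping.
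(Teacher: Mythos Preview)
Your proof is correct and is exactly the argument the paper intends: the corollary is stated without proof, as a direct consequence of the exact sequences in Corollary~\ref{cor:JL-rho1-V} and \eqref{eq:def-V2}, Corollary~\ref{cor:socV}(i), and the explicit description of $W_D(\brho_i)$ in Theorem~\ref{thm:Serre-D}. Your write-up spells out precisely these steps, including the multiplicity-one computation via Frobenius reciprocity and the exclusion of extra copies of $\ide_{D^{\times}}$ (or the unramified twist) using $\Hom_{\cO_D^{\times}}(\ide,V)=0$.
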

\begin{remark}
Unlike the generic case treated in \S\ref{ss:S-generic}, we see that $\JL(\brho_1)$ detects the extension type of $\brho_1$. 

We also deduce from Corollary \ref{cor:socle-JL} that $\soc_{\cO_D^{\times}}\JL(\brho_1)$ and  $\soc_{\cO_D^{\times}}\JL(\brho_2)$ are multiplicity free. This corresponds to the fact that $\soc_{K}\pi(\brho_1)$  and $\soc_{K}\pi(\brho_2)$ are multiplicity free, and seems to be a non-trivial fact. 
\end{remark}
\begin{remark}
We can show that the kernel of $\cS^1(\Sp)\twoheadrightarrow \omega^{-1}\otimes\ide_{D^{\times}}$ in Corollary \ref{cor:S1-pialpha}(ii), which we denote by $U$, is $\ide_{G_{\Q_p}}$-typic, i.e. it does not admit  self-extensions of $\ide_{G_{\Q_p}}$ as subquotients   when restricted to $G_{\Q_p}$. Indeed, take $\brho_1$ to be tr\`es ramifi\'e and $\brho_1'$ to be peu ramifi\'e, we obtain two embeddings  
\[i,i':\ \ide_{G_{\Q_p}}\otimes \ide_{D^{\times}}\hookrightarrow\cS^1(\Sp)\]
from \eqref{eq:S1-rho1}. One checks that 
\[0\ra \mathrm{Im}(i)\ra U\ra \ide_{G_{\Q_p}}\otimes\JL(\brho_1)\ra0\]
\[0\ra \mathrm{Im}(i')\ra U\ra \ide_{G_{\Q_p}}\otimes \JL(\brho_1')\ra0.\] As a consequence,  $\mathrm{Im}(i)\neq \mathrm{Im}(i')$  because $\JL(\brho_1)$ and $\JL(\brho_1') $ are non-isomorphic by Corollary \ref{cor:socle-JL}(i).   It is then easy to deduce that $U$ is isomorphic to $\ide_{G_{\Q_p}}\otimes (\JL(\brho_1)\times_{V}\JL(\brho_1'))$, where the fibered product is taken with respect to \eqref{eq:JL-rho1-V}.
\end{remark}

\subsubsection{Summary}
We summarize the results proved above in the following theorem.

\begin{theorem}\label{thm:JL-nongeneric}
(i) $\cS^0(\ide_G)=\NEW\otimes\ide_{D^{\times}}$, $\cS^1(\ide_G)=0$, $\cS^2(\ide_G)=\omega^{-1}\otimes\ide_{D^{\times}}$.

(ii) $\cS^0(\Sp)=\cS^2(\Sp)=0$, and there exists a short exact sequence
\[0\ra (\NEW\otimes\ide_{D^{\times}})^{\oplus 2}\ra \cS^1(\Sp)
\ra (\NEW\otimes V)\oplus (\omega^{-1}\otimes\ide_{D^{\times}})\ra0.\]

(iii) $\cS^0(\pi_{\alpha})=\cS^2(\pi_{\alpha})=0$ and there exists a short exact sequence 
\[0\ra \omega^{-1}\otimes V\ra \cS^1(\pi_{\alpha})\ra \omega^{-1}\otimes \ide_{D^{\times}}\ra0.\]

(iv) There exist exact sequences
\[0\ra \ide_{D^{\times}}\ra \JL(\brho_1)\ra V\ra0,\]
and
\[0\ra V\ra \JL(\brho_2)\ra (\ide_{D^{\times}})^{\oplus 2}\ra0.\]
Moreover, $\JL(\brho_2)$ is isomorphic to the universal extension of $(\ide_{D^{\times}})^{\oplus 2}$ by $V$.
\end{theorem}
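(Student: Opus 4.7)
The plan is to assemble the four statements from results already established in \S\ref{ss:S-nongeneric}, with the only genuinely new step being the universal extension property in (iv). First, part (i) is a direct restatement of Proposition \ref{prop:S2Sp=0}(i) (itself a consequence of Theorem \ref{thm-Scholze}(v)), so nothing is to be done. For part (ii), the vanishing $\cS^0(\Sp)=\cS^2(\Sp)=0$ is Proposition \ref{prop:S2Sp=0}(ii); the displayed short exact sequence for $\cS^1(\Sp)$ will be obtained by taking the exact sequence \eqref{eq:Sp-tau1} (coming from $0\to\Sp\to\tau_1\to\ide_G^{\oplus 2}\to 0$) and plugging in the identification $\cS^1(\tau_1)\cong(\omega^{-1}\otimes\ide_{D^{\times}})\oplus(\NEW\otimes V_2)$ from Corollary \ref{cor:S1kappa=V2}, together with $V_2\cong V$ from Theorem \ref{thm:nongeneric-V1=V2}.

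Part (iii) is similarly a transcription: the vanishing statements are Proposition \ref{prop:S2Sp=0}(iii), and the displayed short exact sequence is Corollary \ref{cor:S1-pialpha}(i) after substituting $V_2=V$.

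For part (iv), the first short exact sequence is Corollary \ref{cor:JL-rho1-V}, and the second is the defining sequence \eqref{eq:def-V2} of $V_2$ transported across the isomorphism $V_2\cong V$. The remaining claim is that $\JL(\brho_2)$ realizes the universal extension of $(\ide_{D^{\times}})^{\oplus 2}$ by $V$. The strategy is to apply $\Hom_{D^{\times}}(\ide_{D^{\times}},-)$ to this sequence and show that the resulting connecting map
\[\partial:\ \Hom_{D^{\times}}\big(\ide_{D^{\times}},(\ide_{D^{\times}})^{\oplus 2}\big)\longrightarrow \Ext^1_{D^{\times}/Z_D}(\ide_{D^{\times}},V)\]
is an isomorphism of $2$-dimensional $\F$-vector spaces. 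The target has dimension $2$ by Corollary \ref{cor:socV}(iii), and the source has dimension $2$ tautologically, so it suffices to show $\partial$ is surjective, equivalently that the two adjacent terms
\[\Hom_{D^{\times}}(\ide_{D^{\times}},\JL(\brho_2))\ \text{ and }\ \Ext^1_{D^{\times}/Z_D}(\ide_{D^{\times}},\JL(\brho_2))\]
both vanish. This was already established in the proof of Corollary \ref{cor:socV}(iii): since $\ide_{\cO_D^{\times}}\notin W_D(\brho_2)$ by Theorem \ref{thm:Serre-D}, Lemma \ref{lem:Exti-JL}(ii) gives $\Hom_{\cO_D^{\times}}(\ide_{\cO_D^{\times}},\JL(\brho_2))=\Ext^1_{\cO_D^{\times}/Z_D^1}(\ide_{\cO_D^{\times}},\JL(\brho_2))=0$, and then Frobenius reciprocity together with the fact that $\ide_{D^{\times}}$ is a direct summand of $\Ind_{\cO_D^{\times}Z_D}^{D^{\times}}\ide_{\cO_D^{\times}}$ (since $[D^{\times}:\cO_D^{\times}Z_D]=2$ and $p>2$) yields the required vanishing.

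No real obstacle is anticipated here, since all the hard analysis -- determining $V_1\cong V_2$ via Theorem \ref{thm:nongeneric-V1=V2}, computing $\Ext^1(\ide_{D^{\times}},V)$, and establishing the structural short exact sequences -- is already complete. The theorem is essentially a bookkeeping consolidation; the only point requiring slight care is verifying that the identifications $V_1\cong V_2=:V$ made in Theorem \ref{thm:nongeneric-V1=V2} are compatibly propagated through the sequences of (ii) and (iii).
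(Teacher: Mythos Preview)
Your proposal is correct and matches the paper's intent: Theorem \ref{thm:JL-nongeneric} is presented in the paper explicitly as a summary of prior results with no separate proof, and you have correctly traced each part to its source (Proposition \ref{prop:S2Sp=0}, \eqref{eq:Sp-tau1} with Corollary \ref{cor:S1kappa=V2}, Corollary \ref{cor:S1-pialpha}(i), Corollary \ref{cor:JL-rho1-V}, and \eqref{eq:def-V2}). The only additional content, the universal extension claim in (iv), is indeed implicit in the proof of Corollary \ref{cor:socV}(iii), and your connecting-map argument extracts it cleanly. One minor imprecision: you write ``surjective, equivalently that the two adjacent terms both vanish'', but surjectivity of $\partial$ only requires the vanishing of $\Ext^1_{D^{\times}/Z_D}(\ide_{D^{\times}},\JL(\brho_2))$; the vanishing of $\Hom$ gives injectivity, which together with the dimension count you already have is redundant (though harmless).
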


\subsection{The non-minimal case}\label{ss:S-non-minimal}

We briefly explain  how to modify the arguments in \S\ref{ss:S-generic} and \S\ref{ss:S-nongeneric}  to handle the non-minimal case (i.e. $d\neq 1$ in Theorem \ref{thm:lg}).

Let $\brho=\overline{r}_v(1)$ and assume  $\End_{G_{\Q_p}}(\brho)=\F$; in particular, $\brho$ is allowed to be irreducible. We put
\begin{equation}
\label{eq:def-JL-nonmininal}
\JL(\brho):=\left\{\begin{array}{lll} \Hom_{G_{\Q_p}}\big(\chi\omega^{-1},\cS^1(\pi(\brho))\big) &\mathrm{if}\ \brho\sim \smatr{\chi}*0{\chi\omega} \medskip\\
\Hom_{G_{\Q_p}}\big(\brho(-1),\cS^1(\pi(\brho))\big) &\mathrm{otherwise}.
\end{array}\right.
\end{equation}

\begin{proposition}\label{prop:non-minimal}
(i) If $\brho$ is not of the form $\smatr{\chi}*0{\chi\omega}$, then  
\[\wt{H}^1 (U^{v},\F)[\fm_{\overline{r}}]\cong \left(\brho(-1)\otimes\JL(\brho) \right)^{\oplus d}\]
and $\cS^1(\pi(\brho))\cong \brho(-1)\otimes \JL(\brho)$.

(ii) If $\brho\sim \smatr{1}*0{\omega}$,  then
\[\wt{H}^1 (U^{v},\F)[\fm_{\overline{r}}]\cong \left(\brho(-1)\otimes\JL(\brho) \right)^{\oplus d},\]
 and there exists a short exact sequence
\[0\ra \cS^1(\pi(\brho))\To{f} \brho(-1)\otimes\JL(\brho)\ra (\NEW\otimes\ide_{D^{\times}})^{\oplus 2}\ra0.\]
\end{proposition}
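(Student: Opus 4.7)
My strategy is to remove the simplification (\textbf{H}) ($d=1$) underlying \S\ref{ss:S-generic}--\S\ref{ss:S-nongeneric}. The common starting point is Theorem \ref{thm:lg}: $\pi^{B'}(\overline{r})\cong\pi(\brho)^{\oplus d}$, whence by additivity $\cS^1(\pi^{B'}(\overline{r}))\cong\cS^1(\pi(\brho))^{\oplus d}$; Proposition \ref{prop:Sch-7.7} embeds this into $\wt{H}^1(U^v,\F)[\fm_{\overline{r}}]$ with finite-dimensional cokernel, and the latter is $\overline{r}$-typic by \cite[Thm.~5.6]{Scholze}.

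For (i), in every subcase ($\brho$ irreducible, reducible nonsplit generic, or $\brho\sim\smatr{\omega}{*}{0}{\ide}$ nonsplit), $R_v^{\psi\e^{-1}}$ is formally smooth. Combined with $\dim_{\cO_D^{\times}}\wt{H}^1(U^v,\F)[\fm_{\overline{r}}]=1$---which follows from Theorem \ref{thm:GKdim-B} or Corollary \ref{cor:GKdim-comp} via the $\overline{r}$-typic decomposition $\wt{H}^1\cong\overline{r}\otimes\pi^B(\overline{r})$---Proposition \ref{prop:GN-flat} then delivers the Cohen--Macaulayness hypothesis of Corollary \ref{cor:S-equality}, which promotes the embedding of Proposition \ref{prop:Sch-7.7} to an isomorphism
\[\cS^1(\pi(\brho))^{\oplus d}\cong\wt{H}^1(U^v,\F)[\fm_{\overline{r}}].\]
Since $\End_{G_{\Q_p}}(\brho)=\F$ in case (i), the right hand side is $\brho(-1)$-typic; by Lemma \ref{lem:typic}(ii) so is each summand $\cS^1(\pi(\brho))$, and Lemma \ref{lem:typic}(i) with Definition \ref{def:JL} yields the claimed isomorphism.

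For (ii), $R_v^{\psi\e^{-1}}$ is not formally smooth, so I would adapt the proof of Proposition \ref{prop:S1-rho2}. Apply Lemma \ref{lem:Sch-7.7} to $A=\mathbb{T}(U^v)_{\fm_{\overline{r}}}$, $I=\fm_{\overline{r}}$, and $P=(\wt{S}(U^v,\cO)_{\fm_{\overline{r}}})^d$ (where $P^\vee\cong\pi(\brho)^{\oplus d}$ satisfies the required Jordan--H\"older hypothesis by \cite[Prop.~5.4]{Paskunas-JL}), combined with Theorem \ref{thm--K-injectivity}(iii), to obtain
\[0\ra\check{\cS}^0(\Tor_1^A(\F,P))\ra\wt{H}^1(U^v,\F)[\fm_{\overline{r}}]^\vee\ra\cS^1(\pi^{B'}(\overline{r}))^\vee\ra 0.\]
The strong patching identity of Remark \ref{rem:lg}(ii)---applicable since $\brho\sim\smatr{\ide}{*}{0}{\omega}$ is not a twist of $\smatr{\chi\omega}{*}{0}{\chi}$---gives $M_\infty\cong R_\infty^{\psi\e^{-1}}\widehat{\otimes}_{R_{\brho}^{\psi\e}}N^{\oplus d}$, hence $\Tor_1^A(\F,P)\cong(\ide_G^\vee)^{\oplus 2d}$ by \cite[Prop.~3.30]{HuJEMS} (using the faithfulness argument of Proposition \ref{prop:faithful} to identify $R_\infty^{\psi\e^{-1}}/\fa_\infty$ with $\mathbb{T}(U^v)_{\fm_{\overline{r}}}$). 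Dualizing yields
\[0\ra\cS^1(\pi^{B'}(\overline{r}))\ra\wt{H}^1(U^v,\F)[\fm_{\overline{r}}]\ra(\NEW\otimes\ide_{D^\times})^{\oplus 2d}\ra 0.\]
Combining the patching identity with $\check{\cS}^1(M_\infty)=N_\infty$ (Theorem \ref{thm--K-injectivity}(iv)) shows $\wt{H}^1(U^v,\F)[\fm_{\overline{r}}]$ splits as $d$ isomorphic copies of $\brho(-1)\otimes\JL(\brho)$ (by Definition \ref{def:JL}); extracting a single copy produces the asserted exact sequence.

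The main obstacle will be the clean $d$-fold splitting in (ii): while the patching identity makes it transparent that every relevant $R_\infty^{\psi\e^{-1}}$-module is a direct sum of $d$ copies, verifying compatibility with the $G_{\Q_p}\times D^\times$-structure---so that $\wt{H}^1(U^v,\F)[\fm_{\overline{r}}]$ decomposes as $(\brho(-1)\otimes\JL(\brho))^{\oplus d}$ and the $2d$ copies of $\NEW\otimes\ide_{D^\times}$ distribute evenly as two per summand---requires carefully tracking functoriality through the Taylor--Wiles patching and through Scholze's $\check{\cS}^i$.
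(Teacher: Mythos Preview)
Your approach to part (i) is essentially the same as the paper's: both use that $\cS^1(\pi(\brho))$ is a direct summand of $\cS^1(\wt{S}(U^v,\F)[\fm_{\overline{r}}])\cong\wt{H}^1(U^v,\F)[\fm_{\overline{r}}]$ (the identification via Proposition~\ref{prop:GN-flat}), then invoke Lemma~\ref{lem:typic}(ii) to inherit $\brho(-1)$-typicity. One minor point: in \S\ref{ss:S-non-minimal} the paper \emph{redefines} $\JL(\brho)$ as $\Hom_{G_{\Q_p}}(\brho(-1),\cS^1(\pi(\brho)))$ (resp.\ $\Hom_{G_{\Q_p}}(\omega^{-1},\cS^1(\pi(\brho)))$ in case (ii)), not via Definition~\ref{def:JL}, so the conclusion in (i) becomes tautological once typicity is established.

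For part (ii), your derivation of the $d$-fold sequence $0\to\cS^1(\pi(\brho))^{\oplus d}\to\wt{H}^1\to(\NEW\otimes\ide_{D^\times})^{\oplus 2d}\to 0$ is correct and matches the paper. But the extraction of a single copy is precisely where the paper's method diverges from yours, and it is cleaner than tracking functoriality through patching. First, for the isomorphism $\wt{H}^1\cong(\brho(-1)\otimes\JL(\brho))^{\oplus d}$, the paper simply applies $\Hom_{G_{\Q_p}}(\omega^{-1},-)$ to the $d$-fold sequence: since the cokernel is $\NEW$-typic and $\omega\neq\ide$, this gives $\JL(\brho)^{\oplus d}\cong\Hom_{G_{\Q_p}}(\omega^{-1},\wt{H}^1)=\Hom_{G_{\Q_p}}(\brho(-1),\wt{H}^1)$, whence the decomposition by $\brho(-1)$-typicity of $\wt{H}^1$. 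Second, and this is the key idea you are missing, the paper proves a separate Lemma~\ref{lem:non-minimal}: because $\dim_\F\Ext^1_{G_{\Q_p}}(\ide,\omega^{-1})=1$, any submodule $M'$ of a $\brho(-1)$-typic module admits a \emph{unique} embedding $M'\hookrightarrow\brho(-1)\otimes\Hom_{G_{\Q_p}}(\omega^{-1},M')$ extending the natural inclusion of the $\omega^{-1}$-socle. Applied to $M'=\cS^1(\pi(\brho))\subset\wt{H}^1$, this produces a canonical map $f:\cS^1(\pi(\brho))\hookrightarrow\brho(-1)\otimes\JL(\brho)$, automatically $G_{\Q_p}\times D^\times$-equivariant by uniqueness. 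The $d$-fold composite $f'$ through $\wt{H}^1$ then agrees with $f^{\oplus d}$ on the $\omega^{-1}$-typic part, hence everywhere by the same uniqueness, forcing $\Coker(f)^{\oplus d}\cong(\NEW\otimes\ide_{D^\times})^{\oplus 2d}$ and therefore $\Coker(f)\cong(\NEW\otimes\ide_{D^\times})^{\oplus 2}$. This Galois-theoretic rigidity completely sidesteps the patching-compatibility obstacle you flagged.
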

\begin{proof}
(i)  We claim that $\cS^1\big(\wt{S}(U^{v}, \F)[\fm_{\overline{r}}]\big)=\wt{H}^1 (U^{v},\F)[\fm_{\overline{r}}]$. If $\brho^{\rm ss}\nsim \chi\oplus \chi\omega$, it is proved in \cite[Lem.~6.1]{Paskunas-JL}. If $\brho^{\rm ss}\sim \chi\oplus\chi\omega$, then the assumption on $\brho$  implies that $R_{\brho}^{\psi\varepsilon^{-1}}$ is formally smooth, so we may apply Proposition \ref{prop:GN-flat} (using Corollary \ref{cor:GKdim-comp}).

The claim implies that $\cS^1(\wt{S}(U^{v} , \F)[\fm_{\overline{r}}])$ is $\brho(-1)$-typic. Since $\wt{S} (U^{v} , \F)[\fm_{\overline{r}}]\cong\pi(\brho)^{\oplus d}$ by Theorem \ref{thm:lg}, $\cS^1(\pi(\brho))$ is also $\brho(-1)$-typic  by Lemma \ref{lem:typic}(ii). The result easily follows.

(ii) First, the proof of Proposition \ref{prop:S1-rho2} shows that
\[0\ra \cS^1(\wt{S}(U^{v} , \F)[\fm_{\overline{r}}])\To{\phi} \wt{H}^1  (U^{v} , \F)[\fm_{\overline{r}}]\ra (\NEW\otimes\ide_{D^{\times}})^{\oplus 2d}\ra0\]
which implies
\[\begin{array}{rll}\Hom_{G_{\Q_p}}\big(\omega^{-1},\cS^1(\wt{S} (U^{v} , \F)[\fm_{\overline{r}}])\big)&\overset{\phi^*}{\simto}&\Hom_{G_{\Q_p}}\big(\omega^{-1}, \wt{H}^1 (U^{v} , \F)[\fm_{\overline{r}}]\big)\\
&\simeq&\Hom_{G_{\Q_p}}\big(\brho(-1),\wt{H}^1 (U^{v} , \F)[\fm_{\overline{r}}]\big),\end{array}\]
where the second isomorphism holds because $\wt{H}^1(U^{v},\F)[\fm_{\overline{r}}]$ is $\brho(-1)$-typic.
Choose an isomorphism $\iota: \pi(\brho)^{\oplus d}\simto \wt{S}(U^{v} , \F)[\fm_{\overline{r}}]$; it induces an isomorphism
\[\JL(\brho)^{\oplus d}\overset{\iota^*}{\simto} \Hom_{G_{\Q_p}}\big(\omega^{-1},\cS^1(\wt{S} (U^{v} , \F)[\fm_{\overline{r}}])\big).\]
Thus we get an isomorphism
\begin{equation}\label{eq:non-minimal}\wt{H}^1 (U^{v} , \F)[\fm_{\overline{r}}]\cong \brho(-1)\otimes\Hom_{G_{\Q_p}}(\brho(-1),\wt{H}^1(U^{v},\F)[\fm_{\overline{r}}])\overset{(\phi^*\circ\iota^*)^{-1}}{\simto} \brho(-1)\otimes \JL(\brho)^{\oplus d}\end{equation}
as desired. Let $f'$ be the composite map
\[\cS^1(\pi(\brho))^{\oplus d}\overset{\iota}{\simto} \cS^1(\wt{S} (U^{v} , \F)[\fm_{\overline{r}}])\To{\phi} \wt{H}^1(U^{v} , \F)[\fm_{\overline{r}}]\overset{\eqref{eq:non-minimal}}{\simto} \brho(-1)\otimes \JL(\brho)^{\oplus d}.\]

 Since $\cS^1(\pi(\brho))$ is contained in $\wt{H}^1(U^{v} , \F)[\fm_{\overline{r}}]$ which is $\brho(-1)$-typic, we may apply Lemma \ref{lem:non-minimal} below to obtain an embedding
 \[0\ra \cS^1(\pi(\brho))\To{f} \brho(-1)\otimes \JL(\brho),\]
 extending the natural embedding $\omega^{-1}\otimes \JL(\brho)\hookrightarrow \brho(-1)\otimes\JL(\brho)$.
Moreover,  $f$ is $G_{\Q_{p}}\times D^{\times}$-equivariant by construction. We are left to show $\mathrm{Coker}(f)\cong (\NEW\otimes\ide_{D^{\times}})^{\oplus 2}$. It is clear that $f^{\oplus d}$ and $f'$ coincide when restricted to $\omega^{-1}\otimes \JL(\brho)^{\oplus d}$, so $f'=f^{\oplus d}$ by the uniqueness part of Lemma \ref{lem:non-minimal}. Since $\Coker(f')\cong (\NEW\otimes\ide_{D^{\times}})^{\oplus 2d}$, we obtain $\mathrm{Coker}(f)\cong(\NEW\otimes\ide_{D^{\times}})^{\oplus 2}$ as required.
\end{proof}

\begin{lemma}\label{lem:non-minimal}
Let $\brho\sim \smatr{\chi_1}{*}0{\chi_2}$ with $\End_{G_{\Q_p}}(\brho)\cong\F$. 
If $M$ is a $\brho$-typic $\F[G_{\Q_p}]$-module, then for any submodule    $M'\subset M$ there exists a unique embedding
\[0\ra M'\ra \brho\otimes \Hom_{G_{\Q_p}}(\chi_1,M')\]
extending the embedding $\chi_1\otimes  \Hom_{G_{\Q_p}}(\chi_1,M')\hookrightarrow \brho\otimes \Hom_{G_{\Q_p}}(\chi_1,M')$ induced from $\chi_1\hookrightarrow \brho$.
\end{lemma}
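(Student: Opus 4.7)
The plan is to use the $\brho$-typic structure of $M$ to realize $M'$ as a pullback of $\brho\otimes N_0$, where $N_0:=\Hom_{G_{\Q_p}}(\chi_1,M')$. The key ingredients will be the one-dimensionality of $\Ext^1_{G_{\Q_p}}(\chi_2,\chi_1)$ together with the absence of subrepresentations isomorphic to $\chi_2$ in $\brho$ (and hence in $M$).

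First I would establish the set-up. From $\End_{G_{\Q_p}}(\brho)=\F$ together with $\brho$ being a nonsplit extension $0\to\chi_1\to\brho\to\chi_2\to 0$, we deduce $\chi_1\neq \chi_2$, so $\Hom_{G_{\Q_p}}(\chi_1,\brho)\cong \F$ (inclusion of the socle) while $\Hom_{G_{\Q_p}}(\chi_2,\brho)=0$. By Lemma \ref{lem:typic}(i), writing $M_0:=\Hom_{G_{\Q_p}}(\brho,M)$ with trivial $G_{\Q_p}$-action, the natural map $\brho\otimes M_0\to M$ is an isomorphism. In particular $\Hom_{G_{\Q_p}}(\chi_2,M)=0$ and $\Hom_{G_{\Q_p}}(\chi_1,M)\cong M_0$, with the image identified as $\chi_1\otimes M_0\subset M$. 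Setting $N_0:=\Hom_{G_{\Q_p}}(\chi_1,M')\subset M_0$ and letting $M'_2\subset M_0$ be the image of $M'$ under the quotient $M\twoheadrightarrow \chi_2\otimes M_0\cong M_0$, the submodule $M'$ then fits in the short exact sequence
\[0\to \chi_1\otimes N_0 \to M' \to \chi_2\otimes M'_2\to 0.\]

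The main obstacle is to show that this extension arises from $\brho\otimes N_0$ by a pullback. Using the natural identification
\[\Ext^1_{G_{\Q_p}}(\chi_2\otimes V,\chi_1\otimes W)\cong \Ext^1_{G_{\Q_p}}(\chi_2,\chi_1)\otimes_{\F} \Hom_{\F}(V,W)\]
for $\F$-vector spaces $V,W$ with trivial $G_{\Q_p}$-action (valid at the level of Yoneda extensions, since group cohomology commutes with filtered colimits in trivial coefficients and the finite-dimensional case reduces to adjunction), together with the hypothesis that $\Ext^1_{G_{\Q_p}}(\chi_2,\chi_1)$ is one-dimensional with generator $\xi$ represented by $\brho$, the class of the sequence above takes the form $\xi\otimes\phi$ for a unique $\F$-linear map $\phi:M'_2\to N_0$. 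Suppose $v\in\ker(\phi)$ is nonzero; restricting $M'$ over the inclusion $\chi_2\otimes \F v\hookrightarrow \chi_2\otimes M'_2$ yields a sub-extension of $\chi_2\otimes \F v$ by $\chi_1\otimes N_0$ whose class is $\xi\otimes \phi(v)=0$, hence splits, producing a copy of $\chi_2$ inside $M'\subset M$; this contradicts $\Hom_{G_{\Q_p}}(\chi_2,M)=0$. Therefore $\phi$ is injective.

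With $\phi$ injective, the induced map $\chi_2\otimes\phi:\chi_2\otimes M'_2\hookrightarrow \chi_2\otimes N_0$ exhibits $M'$ (by matching of extension classes, both equal to $\xi\otimes \phi$) as the pullback of the extension $\brho\otimes N_0$ along it. The resulting morphism $M'\to \brho\otimes N_0$ has kernel $\chi_2\otimes \ker(\phi)=0$, hence is an embedding, and extends the given inclusion $\chi_1\otimes N_0\hookrightarrow \brho\otimes N_0$ by construction. For uniqueness, if $f_1,f_2:M'\to \brho\otimes N_0$ are two embeddings both extending this inclusion, then $f_1-f_2$ vanishes on $\chi_1\otimes N_0$ and therefore factors through a morphism $\chi_2\otimes M'_2\to \brho\otimes N_0$; since $\brho\otimes N_0$ is a direct sum of copies of $\brho$ and so contains no $\chi_2$-subrepresentation, this factored morphism is zero, forcing $f_1=f_2$.
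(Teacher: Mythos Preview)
Your proof is correct, and your uniqueness argument is essentially identical to the paper's (both observe that the difference of two extensions factors through a map from a $\chi_2$-typic module into $\brho\otimes N_0$, which must vanish). For existence, however, you take a more elaborate route than necessary. The paper simply remarks that $M'$ is already contained in $\brho\otimes M_0'$ when both are regarded as subspaces of $M=\brho\otimes M_0$; this furnishes the required embedding directly, without any extension-class bookkeeping. Concretely, writing $m'=e_1\otimes w_1+e_2\otimes w_2\in M'$ with $e_1$ spanning $\chi_1\subset\brho$, the elements $g\cdot m'-\chi_2(g)m'$ lie in $M'\cap(\chi_1\otimes M_0)=\chi_1\otimes M_0'$, and a short cocycle computation (using only that $\brho$ is nonsplit and $\chi_1\neq\chi_2$) forces $w_1,w_2\in M_0'$. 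Your approach via the identification $\Ext^1_{G_{\Q_p}}(\chi_2\otimes V,\chi_1\otimes W)\cong\Ext^1_{G_{\Q_p}}(\chi_2,\chi_1)\otimes\Hom_\F(V,W)$ and the injectivity of $\phi$ is a perfectly valid alternative; it makes the role of the hypothesis $\dim\Ext^1_{G_{\Q_p}}(\chi_2,\chi_1)=1$ transparent (it is what lets you write the class as $\xi\otimes\phi$), whereas the paper's direct containment argument does not actually invoke that hypothesis for existence at all. Both approaches yield the same embedding, of course, since by your own uniqueness argument there is only one.
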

\begin{proof}
Since $M$ is $\brho$-typic, it is naturally isomorphic to $\brho\otimes M_0$ where $M_0:=\Hom_{G_{\Q_p}}(\brho,M)$. Actually, the assumption on $\brho$ implies that $M_0= \Hom_{G_{\Q_p}}(\chi_1,M)$.
Writing   $M'_0:=\Hom_{G_{\Q_p}}(\chi_1,M')$, we claim that $M'$ is contained in $\brho\otimes M_0'$, both regarded  as subspaces of $ \brho\otimes M_0$. Indeed, letting $\wt{M}':=M'+\brho\otimes M_0'$, we need to prove $\wt{M}'=\brho\otimes M_0'$. It is clear that $\chi_1\otimes M_0'$ is identified with $M'\cap (\chi_1\otimes M_0)$, thus $M'/(\chi_1\otimes M_0')$ embeds in $\chi_2\otimes M_0$ and is $\chi_2$-typic. Using the natural isomorphism 
$\wt{M}'/(\brho\otimes M_0')\cong M'/(M'\cap (\brho\otimes M_0')),$ 
we see that $\wt{M}'/(\brho\otimes M_0')$ is a quotient of $M'/(\chi_1\otimes M_0')$, thus \[\Hom_{G_{\Q_p}}(\chi_1,\wt{M}'/(\brho\otimes M_0'))=0.\] 
On the other hand, if $\wt{M}'/(\brho\otimes M_0')$ is nonzero, then it embeds  in $\brho\otimes(M_0/M_0')$ and we must have $\Hom_{G_{\Q_p}}(\chi_1,\wt{M}'/(\brho\otimes M_0'))\neq 0$,  contradiction.  

The claim implies that the given inclusion $M'\subset M$ provides an embedding required in the lemma,  so we are left to prove the uniqueness.

Consider the exact sequence
\[0\ra \chi_1\otimes M_0'\ra M'\ra Q\ra0\]
with $Q$ being the quotient. As seen above, $Q$ is $\chi_2$-typic. Applying $\Hom_{G_{\Q_p}}(-,\brho\otimes M_0')$ to it, we obtain an exact sequence
\begin{equation}\label{eq:nmin-gamma}
0\ra \Hom_{G_{\Q_p}}(Q,\brho\otimes M_0')\ra \Hom_{G_{\Q_p}}(M',\brho\otimes M_0')\To{\gamma} \Hom_{G_{\Q_p}}(\chi_1\otimes M_0',\brho\otimes M_0').\end{equation}
The result follows because $\Hom_{G_{\Q_p}}(Q,\brho\otimes M_0')=0$ (as $Q$ is $\chi_2$-typic). 
\end{proof}

Using Proposition \ref{prop:non-minimal}, the  arguments in \S\ref{ss:S-generic} and \S\ref{ss:S-nongeneric} when $\brho$ is reducible with $\End_{G_{\Q_p}}(\brho)=\F$, taking into account multiplicities everywhere, go through and  give similar results in the non-minimal case as in Theorems \ref{thm:JL-generic},   \ref{thm:nongeneric-V1=V2} and  \ref{thm:JL-nongeneric}.

\begin{remark}
If $\brho_0=\chi_1\oplus \chi_2$ with $\chi_1\chi_2^{-1}\neq \ide,\omega^{\pm1}$, we put
\[\JL(\brho_0):=\Hom_{G_{\Q_p}}\big(\chi_1\omega^{-1},\cS^1(\pi(\brho))\big).\] 
Combining with Proposition \ref{prop:non-minimal}, the proof of Theorem \ref{thm:JL-generic} shows   $\cS^1(\pi(\brho_0))=\brho_0(-1)\otimes\JL(\brho_0)$. 
\end{remark}

\newcommand{\etalchar}[1]{$^{#1}$}
\providecommand{\bysame}{\leavevmode\hbox to3em{\hrulefill}\thinspace}
\providecommand{\MR}{\relax\ifhmode\unskip\space\fi MR }
% \MRhref is called by the amsart/book/proc definition of \MR.
\providecommand{\MRhref}[2]{%
  \href{http://www.ams.org/mathscinet-getitem?mr=#1}{#2}
}
\providecommand{\href}[2]{#2}

\bigskip

\bigskip

\noindent Morningside Center of Mathematics, Academy of Mathematics and Systems Science,
 Chinese Academy of Sciences, University of the Chinese Academy of Sciences
Beijing, 100190,
China.\\
{\it E-mail:} {\ttfamily yhu@amss.ac.cn}\\

\noindent  Academy for Multidisciplinary Studies, Capital Normal University, Beijing, 100048\\
{\it E-mail:} {\ttfamily haoran@cnu.edu.cn}\\

 \end{document}